\documentclass[11pt, letterpaper, reqno]{amsart}
\usepackage[utf8]{inputenc} 	
\usepackage{microtype} 			
\usepackage{geometry}           
\usepackage{amsmath}			
\usepackage{perpage}			
\usepackage{amsthm}		 		
\usepackage{amssymb}	 
\usepackage{bm}					
\usepackage{mathrsfs}			
\usepackage{xcolor}				
\definecolor{darkred}{rgb}{0.6, 0.1, 0.1}
\definecolor{darkblue}{rgb}{0.2, 0.2, 0.6}
\definecolor{darkgreen}{rgb}{0.2, 0.4,.1}
\definecolor{bettercyan}{rgb}{0.1, 0.4, 0.7}
\definecolor{bettermagenta}{rgb}{0.6, 0.2, 0.6}
\usepackage{tikz}				
\usepackage{tikz-3dplot}
\usetikzlibrary{patterns}
\usetikzlibrary{arrows}
\usetikzlibrary{decorations.markings,decorations.pathmorphing,shapes}
\usetikzlibrary{arrows.meta,calc}
\usepackage[all]{xy}			
\usepackage{graphicx}			
\usepackage{pgfplots}\pgfplotsset{compat=1.18} 
\usepackage{enumitem} 			
\usepackage{array}
\usepackage{sidecap}	
\usepackage{lmodern}			
\usepackage[T1]{fontenc}		
\usepackage{cancel}
\usepackage{comment}
\numberwithin{equation}{section}
\usepackage[breaklinks=true, colorlinks=true, citecolor=bettercyan, linkcolor=darkred, urlcolor=bettermagenta]{hyperref}
\usepackage{cleveref}

\usepackage{multirow}
\usepackage{caption} 
\usepackage[backend=biber, style=alphabetic, backref=true, giveninits=true, isbn=false, date=year]{biblatex}
\usepackage{aligned-overset}
\usepackage{accents}
\usepackage{blindtext}
\newcommand{\dbtilde}[1]{\accentset{\approx}{#1}}
\newlength{\dhatheight}
\newcommand{\dbhat}[1]{%
    \settoheight{\dhatheight}{\ensuremath{\hat{#1}}}%
    \addtolength{\dhatheight}{-0.2ex}%
    \hat{\vphantom{\rule{1pt}{\dhatheight}}%
    \smash{\hat{#1}}}}

\makeatletter
\renewbibmacro{in:}{}
\DeclareFieldFormat{pages}{#1}
\renewcommand*{\bibnamedash}{%
	\leavevmode\raise +0.6ex\hbox to 5.5ex{\hrulefill}.\space\space}

\InitializeBibliographyStyle{\global\undef\bbx@lasthash}

\newbibmacro*{bbx:savehash}{\savefield{fullhash}{\bbx@lasthash}}

\renewbibmacro*{author}{%
	\ifboolexpr{
		test \ifuseauthor
		and
		not test {\ifnameundef{author}}
	}
	{%
		\iffieldequals{fullhash}{\bbx@lasthash}
		{\bibnamedash\addcomma\space}
		{\printnames{author}}%
		\usebibmacro{bbx:savehash}%
		\iffieldundef{authortype}
		{}
		{%
			\setunit{\addcomma\space}%
			\usebibmacro{authorstrg}%
		}%
	}
	{\global\undef\bbx@lasthash}%
}
\makeatother

\MakePerPage{footnote}

\newtheorem{propositionx}{Proposition}[section]
\newenvironment{proposition}
{\pushQED{\qed}\propositionx}
{\popQED\endpropositionx}
\newenvironment{propositionp}
{\pushQED{\qed}\propositionx}
{\popQED\endpropositionx}

\newtheorem{theorem}{Theorem}

\newtheorem*{theorem*}{Theorem}

\newtheorem{corollaryx}[propositionx]{Corollary}
\newenvironment{corollary}
{\pushQED{\qed}\corollaryx}
{\popQED\endcorollaryx}

\newtheorem{lemmax}[propositionx]{Lemma}
\newenvironment{lemma}
{\pushQED{\qed}\lemmax}
{\popQED\endlemmax}

\theoremstyle{remark}
\newtheorem{remark}[propositionx]{Remark}
\newtheorem*{remark*}{Remark}

\theoremstyle{definition}
\newtheorem*{examplex}{Example}
\newenvironment{example}
{\pushQED{\qed}\examplex}
{\popQED\endexamplex}

\newcommand{\bbC}{\mathbb{C}}

\newcommand{\bbN}{\mathbb{N}}

\newcommand{\bbR}{\mathbb{R}}
\newcommand{\bbS}{\mathbb{S}}

\newcommand{\bbZ}{\mathbb{Z}}

\newcommand{\calA}{\mathcal{A}}

\newcommand{\calC}{\mathcal{C}}
\newcommand{\calD}{\mathcal{D}}
\newcommand{\calE}{\mathcal{E}}
\newcommand{\calF}{\mathcal{F}}

\newcommand{\calI}{\mathcal{I}}

\newcommand{\calK}{\mathcal{K}}

\newcommand{\calR}{\mathcal{R}}
\newcommand{\calS}{\mathcal{S}}

\newcommand{\calV}{\mathcal{V}}

\newcommand{\calX}{\mathcal{X}}
\newcommand{\calY}{\mathcal{Y}}
\newcommand{\calZ}{\mathcal{Z}}

\newcommand{\frakS}{\mathfrak{S}}

\newcommand{\frakW}{\mathfrak{W}}

\newcommand{\bfM}{\mathbf{M}}

\newcommand{\bfc}{\mathbf{c}}

\newcommand{\bfk}{\mathbf{k}}

\newcommand{\bfv}{\mathbf{v}}
\newcommand{\bfw}{\mathbf{w}}
\newcommand{\bfx}{\mathbf{x}}
\newcommand{\bfy}{\mathbf{y}}
\newcommand{\bfz}{\mathbf{z}}

\newcommand{\dd}{\,\mathrm{d}}

\newcommand{\p}{\partial}

\newcommand{\set}[1]{ \left \{ #1 \right  \}}

\usetikzlibrary{arrows.meta, positioning, calc}

\makeatletter
\@namedef{subjclassname@2020}{\textup{2020} Mathematics Subject Classification}
\makeatother

\title{The asymptotic structure of forward scattering}
\subjclass[2020]{Primary 35P25. Secondary 58J50.}

\date{August 27, 2026\, (Last updated).}

\author{Nicholas Lohr}
\address[Nicholas Lohr]{Department of Mathematics, Purdue University, West Lafayette, Indiana}
\email{nlohr@purdue.edu}
\author{Izak Oltman}
\address[Izak Oltman]{Department of Mathematics, Northwestern University, Evanston, Illinois}
\email{ioltman@northwestern.edu}
\author{Ethan Sussman}
\address[Ethan Sussman]{Department of Mathematics, Northwestern University, Evanston, Illinois}
\email{ethan.sussman@northwestern.edu}
\author{Yuzhou Joey Zou}
\address[Joey Zou]{Department of Mathematics and Statistics, Oakland University, Rochester, Michigan}
\email{yzou@oakland.edu}

\begin{document}
	
\begin{abstract}
    Perturbed plane waves are fundamental objects in scattering theory on Euclidean space and asymptotically Euclidean spaces.
    In this paper, we investigate the structure of perturbed plane waves in the \emph{forward} direction, in which the outgoing spherical wave is typically singular and conjoined to the incoming plane wave.  Melrose \& Zworski provided a microlocal description (in the more general setting of asymptotically conic manifolds) using their notion of Lagrangian distributions associated to pairs of intersecting Legendrian submanifolds, on the way to proving that the S-matrix is an FIO.
    Here, we revisit the problem in the asymptotically Euclidean case, for which the oscillatory integrals used by Melrose--Zworski attain their most complicated form (relative to the more general asymptotically conic case). We seek a more elementary description in terms of physical-space asymptotics. These are specified using a two-faced compactification $X\hookleftarrow \bbR^d$, with one face for each asymptotic regime. We prove full polyhomogeneity.
    A transport equation arises as a model problem at the main face (`bf').
    The quantum inverted harmonic oscillator arises as a model problem at the front face (`ff'). 
\end{abstract}

\maketitle

\tableofcontents

\section{Introduction}\label{sec:intro} 
Perturbed plane waves are certain functions on $\bbR^d$, $d\geq 2$, 
which model the result when a monochromatic wave impinges on some inhomogeneity. Monochromatic waves are governed by the
(Schr\"odinger--)Helmholtz equation 
\begin{equation}
    (\triangle-k^2)u=Qu, 
\end{equation}
where $k>0$ is the frequency of the wave and $Q\in \operatorname{Diff}^2(\bbR^d)$ is some (classical) variable-coefficient second-order differential operator encoding the inhomogeneity. Without loss of generality, we assume $k=1$. 
In potential scattering, $Q$ is a multiplication operator 
\begin{equation} 
    Q:u\mapsto-V u
\end{equation} 
for some sufficiently regular potential $V$. 
In the presence of a magnetic field, $Q$ contains first derivatives.
In the presence of a metric perturbation, second derivatives are present as well. The perturbations we consider here are all short-range, which means that the coefficients in $Q$ are all 
\begin{equation}
    O\Big(\frac{1}{r^2} \Big)\text{ as }r\to\infty, 
\end{equation}
where $r$ is the Euclidean radial coordinate. This introduction discusses only potential scattering. More general situations are considered in the main body of the paper.

\vspace{.25em}
\noindent 
\begin{minipage}{.5\linewidth}
    	\hspace{1em}
    Among all monochromatic waves, the perturbed plane waves are characterized by their asymptotic form. 
    Split 
    \begin{equation} 
    \bbR^d=\bbR_x\times \bbR^{d-1}_{\bfy}, \quad r=\sqrt{x^2+y^2},\quad y=\lvert \bfy \rvert.  
    \end{equation}
    Then, the Laplacian, with our sign convention, can be written 
    \begin{equation} 
    \triangle = - \partial_x^2+\triangle_{\bfy}.
    \end{equation} 
    A perturbed plane wave traveling in the $x$-direction is supposed to be a solution of the Helmholtz equation of the form 
    \begin{equation}
    	u \approx  \overbrace{e^{i x}}^{\mathclap{\text{plane wave}}} + \underbrace{\frac{f(\omega) }{r^{(d-1)/2}} e^{ir}}_{\mathclap{\text{spherical wave}}} ,
    	\label{eq:u_form_rough}
    \end{equation}
    for $r\gg 1$, where $\omega=(x/r,\bfy/r)$ 
    and $f$ is some function defined almost everywhere on the unit sphere $\bbS^{d-1}$.
    We discuss the meaning of `$\approx$' below. 
\end{minipage}
    \hspace{2em}
\begin{minipage}[l]{.4\linewidth}
    \hspace{2em}
		\begin{tikzpicture}[scale=.8]

          \def\bandwidth{0.45}
          \def\ringgap{0.45}
          \def\H{5}

          \pgfmathsetmacro{\halfW}{\bandwidth/2}
          \pgfmathsetmacro{\halfH}{\H/2}

          \foreach \n in {0,1,2} {
            \pgfmathsetmacro{\rin}{0.45 + \n*(\bandwidth + \ringgap)}
            \pgfmathsetmacro{\rout}{\rin + \bandwidth}
            \path[fill=gray!45, fill opacity=0.35, even odd rule]
              (0,0) circle[radius=\rout]
              (0,0) circle[radius=\rin];
          }

          \foreach \x in {-2.10,-1.2,-0.44,0.44,1.2,2.10} {
            \path[fill=gray!45, fill opacity=0.35]
              ({\x - \halfW},-\halfH) rectangle ({\x + \halfW},\halfH);
          }

          \draw[-{Stealth[length=3mm,width=1mm]}, line width=0.45pt]
            (-3,0) -- (3,0);
        
\end{tikzpicture}
        
		\textsc{Figure 1}. A schematic of a perturbed plane wave as consisting of an incoming plane wave superposed with an outgoing spherical wave. \stepcounter{figure}
\end{minipage}
\vspace{.1em}

\noindent Thus, $u$ consists of an incoming plane wave, traveling to the right, and an outgoing spherical wave 
\begin{equation} 
\propto r^{-(d-1)/2}e^{ir}
\end{equation} 
(plus subleading corrections),
and should be the unique Helmholtz solution in this class. 
The function $f$ is of central importance, as it is the ``scattering amplitude'' that physicists measure in scattering experiments, up to a multiplicative constant.

Perturbed plane waves are known to exist for all (sufficiently regular) short-range potentials \cite{Agmon1975, AH, MZ}. However, the precise interpretation of `$\approx$' depends on the decay rate of the potential, $L\in \bbN^{\geq 2}$, defined by: 
\begin{equation}
    V = O\Big( \frac{1}{r^L} \Big)\text{ for }r>1.
\end{equation}
The simplest case is when the potential is decaying at a sufficiently fast rate, namely $L>d$.
This was established in classical papers of Povzner \cite{Pov1, Pov2}. We call $L=d$ the \emph{Povzner threshold}. When the potential decays faster than the Povzner threshold, the precise interpretation of `$\approx$' is the natural one:
\begin{equation}\label{eq:u_form_precise}
    u = e^{ix} + \frac{f(\omega)}{r^{(d-1)/2}} e^{ir} +o\Big( \frac{1}{r^{(d-1)/2}} \Big), \quad f\in C^0(\bbS^{d-1})
\end{equation}
for $r>1$. 
Potentials decaying faster than the Povzner threshold are called \emph{very} short-range.

Unfortunately, the very short-range case excludes typical electromagnetic potentials. In the physical $d=3$ case, the electrostatic potential generated by a charge distribution is decaying at a rate $L>d$ only if its moments up to and including the quadrupole moment all vanish. The Coulomb potential is long-range, while most other electromagnetic potentials are short- but not very short-range.
For such potentials, the description above typically fails: $\not\exists f\in C^0(\bbS^{d-1})$ such that \cref{eq:u_form_precise} holds
\cite{YafaevInverse}.

Our main theorem is a direct description of the perturbed plane wave, partially anticipated by Skriganov. It makes precise the following heuristic:

\vspace{.25em}
\noindent 
\begin{minipage}[l]{.5\linewidth}
    \begin{theorem*}[Heuristic version]
    The perturbed plane wave is a superposition of an incoming plane wave (including decaying corrections) and an outgoing spherical wave conjoined by a forward parabolic wake.
\end{theorem*}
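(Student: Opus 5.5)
We make the heuristic precise by working on a compactification $X \hookleftarrow \bbR^d$ obtained by radially compactifying $\bbR^d$ and then blowing up the point $\omega=(1,\mathbf{0})$ of the sphere at infinity parabolically, i.e.\ homogeneously in the coordinates $(1/x, \bfy/\sqrt{x})$. This produces a main face $\mathrm{bf}$ (the old sphere at infinity, away from the forward direction) and a front face $\mathrm{ff}$ parametrized by $\bfy/\sqrt{x}\in\bbR^{d-1}$. The claim to prove is then that
\[
u = e^{ix}(1+v) + e^{ir} w ,
\]
where $v,w$ are polyhomogeneous on $X$, $v$ decays at $\mathrm{bf}$, and $w$ vanishes to order $(d-1)/2$ at $\mathrm{bf}$. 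Near $\mathrm{ff}$ the two terms merge: since $r - x \approx y^2/(2x)$ is bounded there, both phases reduce to $e^{ix}$ times a function of $\bfy/\sqrt{x}$, and this merged part is the \emph{parabolic wake}.

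\textbf{Step 1: formal construction.} Conjugate $\triangle - 1 - Q$ by $e^{ix}$, which turns it into $-\partial_x^2 - 2i\partial_x + \triangle_{\bfy} - Q$. Restricted to $\mathrm{ff}$ in the variables $X = 1/x$, $\bfY=\bfy/\sqrt{x}$, the leading part is a Schr\"odinger-type operator $-2i\partial_x + \triangle_{\bfy}$. After a Gaussian rescaling, this becomes the inverted harmonic oscillator in $\bfY$. At $\mathrm{bf}$, conjugating by $e^{ir}$ instead gives a radial transport equation. We would solve iteratively:
\begin{itemize}
\item first the plane-wave corrections $e^{ix}v$ at $\mathrm{bf}$, by transport in $x$; these grow in the forward direction because $\int V\,dx$ does not converge uniformly when $L\le d$;
\item then the resulting inhomogeneity is pushed to $\mathrm{ff}$ and solved there with the outgoing condition for the inverted oscillator;
\item finally we read off the asymptotics of that solution as $|\bfY|\to\infty$ (the corner $\mathrm{ff}\cap\mathrm{bf}$) to seed the spherical-wave transport equations at $\mathrm{bf}$.
\end{itemize}
Index sets are tracked through each model solve, and Borel summation gives an approximate solution $\tilde u$ with an error that is Schwartz, or at least decays arbitrarily fast, on $X$.

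\textbf{Step 2: correction.} We set $u = \tilde u - R(1+i0)(\text{error})$, using the outgoing resolvent of the perturbed operator (limiting absorption, available for short-range $Q$). The outgoing resolvent applied to a rapidly decaying function is, by the standard radial-estimate or Melrose result on asymptotically conic spaces, $e^{ir}r^{-(d-1)/2}$ times a polyhomogeneous function on the radial compactification. That term is therefore absorbed into $w$. Uniqueness within the class of outgoing perturbations of $e^{ix}$ identifies the result with the perturbed plane wave.

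\textbf{Main obstacle.} The hardest step is the front-face model problem. Solving the inverted harmonic oscillator with polyhomogeneous, possibly growing, right-hand sides and the correct outgoing behavior requires an explicit Fourier or Mehler-type representation. From that representation one must extract the joint asymptotics at the corner $\mathrm{ff}\cap\mathrm{bf}$, including any log terms, and check that the index sets stay consistent so that the iteration closes. We would first try this by taking the partial Fourier transform in $\bfy$, where the equation becomes an ODE in $x$.
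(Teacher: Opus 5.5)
Your overall architecture is the paper's: plane-wave transport at $\mathrm{bf}$, an inverted-oscillator model at $\mathrm{ff}$, spherical transport at $\mathrm{bf}$, then $R(1+i0)$ on a Schwartz error. But the front-face step, as you set it up, does not close.

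You keep the plane-wave conjugation at $\mathrm{ff}$. You take the paraxial operator $-2i\partial_x+\triangle_{\bfy}=N_{\mathrm{ff}}(\tilde{P})$ as the model, to be solved by Fourier transform in $\bfy$ and evolution in $x$. This is exactly the normal operator the paper declines to invert.
\begin{itemize}
\item A plane-wave-type inverse merely pushes $e^{ix}$-oscillating errors back to $\mathrm{bf}$, undoing step one.
\item An outgoing inverse produces the Fresnel phase $\phi=x+y^2/(2x)=x+\hat{y}^2/2$. Your QIHO identification is fine here: the two indicial families agree up to conjugation by $e^{i\hat{y}^2/2}$ in the interior of $\mathrm{ff}$. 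The problem is at the corner. One has $|\nabla\phi|^2-1=y^4/(4x^4)$, which relative to the model operator has size $y^4/x^3=\rho_{\mathrm{ff}}^2/\rho_{\mathrm{bf}}$ near $\mathrm{bf}\cap\mathrm{ff}$ (with $\rho_{\mathrm{bf}}=x/y^2$, $\rho_{\mathrm{ff}}=y/x$). So it gains two orders at $\mathrm{ff}$ but loses one at $\mathrm{bf}$.
\end{itemize}
Each step of your recursion therefore costs a factor $\hat{y}^2$ at the corner, and the index-set consistency you defer to the end fails. Equivalently, the paraxial evolution spreads into $y\gtrsim x^{3/4}$, where $r-x-\hat{y}^2/2\approx -y^4/(8x^3)$ is no longer small.

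The missing idea is to switch oscillations at this point. The step-one error is Schwartz at $\mathrm{bf}$, and $e^{i(x-r)}$ is smooth on $X\setminus\mathrm{bf}$, so $e^{ix}\calA^{\infty,*}(X)=e^{ir}\calA^{\infty,*}(X)$. You may therefore work with $\hat{P}=e^{-ir}Pe^{ir}$. Its normal operator $N_{\mathrm{ff}}(\hat{P})=\triangle_{\bfy}-2i\partial_r-i(d-1)/r$ satisfies $\hat{P}-N_{\mathrm{ff}}(\hat{P})\in\operatorname{Diff}_{\mathrm{b}}^{2,-2,-4}(X)$ uniformly up to the corner.

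Then run a Frobenius recursion in $\rho_{\mathrm{ff}}$, not an evolution in $x$. The indicial family $I(j)$ is exactly conjugate, by $e^{i\hat{y}^2/4}\varrho^{-j/2}$, to $L(j)=\triangle_{\hat{\bfy}}-\hat{y}^2/4+i(j-(d-1)/2)$. The inverse you need is $R_-(j)$, which is generally \emph{not} the $L^2$ resolvent. It is obtained by giving the spring constant a small imaginary part. This is made rigorous via Vasy's Fredholm theory, after the substitution $z=\hat{y}^2$ turns $L(j)$ into an asymptotically conic Helmholtz operator with a complex Coulomb term. The resulting mapping property is $R_-(j):e^{i\hat{y}^2/4}\calA^{((d+1-j)/2,0)}(\mathrm{ff})\to e^{i\hat{y}^2/4}\calA^{((d-1-j)/2,0)}(\mathrm{ff})$.

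A second piece is missing: ``solve with the outgoing condition'' is not always possible, and these failures are unavoidable and are where the logarithms come from. Consider angular modes with $a=(d-1)/2-j/2+\ell/2\in-\bbN$; this happens for infinitely many of the integer weights $j$ your recursion meets. There the Wronskian $\frakW_-\propto\Gamma(a)^{-1}$ vanishes and the recessive and outgoing solutions coincide. So $I(j)$ has a finite-dimensional kernel (restrictions to $\mathrm{ff}$ of the smooth outgoing quasimodes $r^{-(d-1)/2}e^{ir}f_\kappa$) and a cokernel. The recursion survives only because $[I'(j)]:\ker I(j)\to\operatorname{coker}I(j)$ is onto, which follows from the simple zero of $\frakW_-$ (or a Grushin/Schur-complement argument). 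This lets you fix the kernel component of the coefficient of the next-higher power of $\log\rho_{\mathrm{ff}}$ so that each equation is solvable, and it is what produces $\calF=\{(j,k):k\le j+1\}$.

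Two smaller points.
\begin{itemize}
\item In step one, $\int V\,dx$ converges for every $L\ge 2$, so that is not the obstruction. The real obstruction is that the iterates $N^{-1}(EN^{-1})^j\tilde f$ grow like $x^j$ at fixed $\bfy$, because the term $\triangle_{\bfy}$ in $E$ gives no decay at the front face of the polar blowup. On the parabolic blowup, cut off away from the midpoint of $\mathrm{ff}$, they lie uniformly in $\rho_{\mathrm{bf}}^{L-1+j}\rho_{\mathrm{ff}}^{L-1}C^\infty(X)$. That is why the blowup must be parabolic, and it has to be proved.
\item The identification with the perturbed plane wave cannot rest on uniqueness among outgoing perturbations of $e^{ix}$. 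For $L\le(d+1)/2$, near the backward direction the correction $e^{ix}a/r^{L-1}$ is an incoming-type term of size $r^{-(L-1)}\gtrsim r^{-(d-1)/2}$, so $u-e^{ix}$ violates the Sommerfeld condition. The paper instead shows that $\int u[\omega]g(\omega)\,d\omega$ has incoming data proportional to $g$ plus an outgoing remainder, which needs an oscillatory-integral estimate at the forward singularity.
\end{itemize}
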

See \Cref{thm:main} for the precise statement. \Cref{thm:main_black-box} contains a more general version for black-box scattering on an asymptotically Euclidean manifold. 
\begin{remark*}
    A modified version of the theory applies for Coulombic potentials (cf.\ \S\ref{sec:Coulomb}), but our focus is on the short-range case. 
\end{remark*}
\end{minipage}
    \hspace{2em}
\begin{minipage}[l]{.4\linewidth}
    \hspace{2em}
    \begin{tikzpicture}[scale=.8]
    
      \def\bandwidth{0.45}
      \def\ringgap{0.45}
      \def\H{5}
      \def\parabcoef{0.22}   
      \def\Ymax{2.35}        
    
      \pgfmathsetmacro{\halfW}{\bandwidth/2}
      \pgfmathsetmacro{\halfH}{\H/2}
    
      \foreach \n in {0,1,2} {
        \pgfmathsetmacro{\rin}{0.45 + \n*(\bandwidth + \ringgap)}
        \pgfmathsetmacro{\rout}{\rin + \bandwidth}
        \path[fill=gray!45, fill opacity=0.35, even odd rule]
          (0,0) circle[radius=\rout]
          (0,0) circle[radius=\rin];
      }
    
      \foreach \x in {-2.10,-1.2,-0.44,0.44,1.2,2.10} {
        \path[fill=gray!45, fill opacity=0.35]
          ({\x - \halfW},-\halfH) rectangle ({\x + \halfW},\halfH);
      }
    
      \foreach \s in {.6,1.40,2.20,3.00} {
        \pgfmathsetmacro{\sin}{\s}
        \pgfmathsetmacro{\sout}{\s + \bandwidth}
        \path[fill=gray!45, fill opacity=0.45]
          plot[domain=-\Ymax:\Ymax, samples=120, variable=\y]
            ({\sout - \parabcoef*\y*\y},{\y})
          --
          plot[domain=\Ymax:-\Ymax, samples=120, variable=\y]
            ({\sin - \parabcoef*\y*\y},{\y})
          -- cycle;
      }
    
      \draw[-{Stealth[length=3mm,width=1mm]}, line width=0.45pt]
        (-3,0) -- (3,0);
    
    \end{tikzpicture}
    
	\textsc{Figure 2}. A schematic amending the previous one by adding \emph{a parabolic wake} in the forward direction. Compare with \Cref{fig:dipole_plane_wave_dif}. \stepcounter{figure}
\end{minipage}

\vspace{.25em}
For example, we will be able to prove: 
\begin{theorem}[Away from forward direction] 
    There exist $f,a\in C^\infty(\bbS^{d-1}\backslash \{\rightarrow\})$, smooth except at the forward direction $\rightarrow \in \bbS^{d-1}$, such that, in any region of the form $\{x<C y\} \subset \bbR^d$, for $C>0$, the approximation
    \begin{equation}
        u = e^{ix} \Big(1+\frac{a(\omega)}{r^{L-1}}  \Big) + e^{ir} \frac{f(\omega)}{r^{(d-1)/2}} + O\Big(\frac{1}{r^{\min\{L,(d+1)/2\}} } \Big)
    \end{equation}
    holds, 
    where $\omega = (x/r,\bfy/r) \in \bbS^{d-1}$.
    
    Thus, away from the forward direction, $u$ is a sum of a plane wave (with a correction $\propto a/r^{L-1}$) and a spherical wave.
\end{theorem}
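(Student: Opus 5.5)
We split $u = e^{ix}(1+w) + v$. Here $w$ is an explicit corrector that kills the plane-wave forcing to high order in the region $\{x<Cy\}$. The remainder $v$ is then an outgoing resolvent applied to a rapidly decaying source, and $v$ has the standard spherical-wave expansion away from the forward direction.

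\textbf{Step 1: the corrector.} Conjugating by $e^{ix}$, we have $e^{-ix}(\triangle-1-Q)e^{ix} = \triangle - 2i\partial_x - Q'$, where $Q'$ has coefficients $O(r^{-L})$. We seek $w$ with
\begin{equation}
    -2i\partial_x w = Q'1 + (\text{lower-order terms}),
\end{equation}
which is a transport equation along the lines $\bfy=\mathrm{const}$. We solve it by integrating from $x=-\infty$:
\begin{equation}
    w(x,\bfy) = \frac{i}{2}\int_{-\infty}^x (Q'1)(s,\bfy)\,ds .
\end{equation}
If $Q'1 \sim r^{-L}b(\omega)$, then integrating a homogeneous function of degree $-L$ in $x$ gives a homogeneous function of degree $-(L-1)$. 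This defines $a(\omega)/r^{L-1}$, smooth on $\bbS^{d-1}\setminus\{\rightarrow\}$. The integral converges uniformly only while $y\gtrsim r$ or $x<0$. Along rays toward the forward direction, $y$ may stay bounded while $x\to+\infty$; there $a$ is singular (it behaves like $y^{-(L-1)}$ in the limit). This is why the statement is restricted to the cones $\{x<Cy\}$. Iterating the transport equation once more produces terms of order $r^{-L}$, which we absorb into the error. After cutting $w$ off away from a neighborhood of the forward ray, the residual
\begin{equation}
    g = (\triangle-1-Q)\big(e^{ix}(1+w)\big)
\end{equation}
is $O(r^{-L-1})$ (times $e^{ix}$) in the cone. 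It is localized to the forward region otherwise.

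\textbf{Step 2: the outgoing part.} We set $v = -R(1+i0)g$, where $R$ is the outgoing resolvent of $\triangle+Q$. In the cone, the Fourier content of $e^{ix}g$ is concentrated near frequency $(1,0)$. We use the standard microlocal description of $R(1+i0)$ at the sphere at infinity (a Legendrian/radial-point argument in the scattering calculus, as in Melrose--Zworski). Away from the direction $\rightarrow$, this gives
\begin{equation}
    v = e^{ir}r^{-(d-1)/2}f(\omega) + O\big(r^{-(d+1)/2}\big),
\end{equation}
with $f$ smooth off $\rightarrow$. The sum of the two pieces yields the claimed error $O(r^{-\min\{L,(d+1)/2\}})$.

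\textbf{Main obstacle.} The hard step is justifying Step 2 despite the part of $g$ that does not decay: near the forward ray, $g$ is only $O(r^{-L})$, which is not integrable enough when $L\le d$. That part feeds a singular spherical amplitude at $\rightarrow$. We must show that its contribution, restricted to $\{x<Cy\}$, is still a smooth spherical wave plus $O(r^{-(d+1)/2})$. The first approach is propagation of singularities and radial estimates in variable-order scattering Sobolev spaces. The forward region is microlocally disjoint from the outgoing radial set over directions $\omega\ne\rightarrow$, except through the outgoing flow. Hence the forward-localized source can influence $v$ in the cone only through its outgoing radial component, whose symbol is smooth off $\rightarrow$. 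If this fails, we fall back on the two-face polyhomogeneity theorem (\Cref{thm:main}) and restrict its expansion at bf to the cone.
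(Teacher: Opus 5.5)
Your primary route has a genuine gap, located in Step~2, which carries all the real content. There are three concrete problems.

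(i) The asserted form of $v$ in the cone is false whenever $L<(d+1)/2$. In the cone the residual is $g=e^{ix}h$ with $h=O(r^{-L-1})$ classical. The outgoing solution of $Pv=e^{ix}h$ contains the next transport iterate $e^{ix}N^{-1}h$, where $N^{-1}h=-\frac{1}{2i}\int_{-\infty}^x h(s,\bfy)\,\dd s=O(r^{-L})$, and this is not $O(r^{-(d+1)/2})$. That term is harmless for the final bound. However, it shows that the cone part of $g$ is not handled by a ``standard description'' of $R(1+i0)$. You have to keep iterating the transport equation. Each iterate is carried along the lines $\bfy=\mathrm{const}$ into the forward region and grows there, so the cutoffs must be redone at every stage.

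(ii) $R(1+i0)g$ need not even be defined. Over the backward direction, $\operatorname{WF}_{\mathrm{sc}}(e^{ix})$ meets $\calR_-$. There, $e^{ix}O(r^{-L-1})$ generically lies above the incoming threshold only if $L+1>(d+1)/2$, i.e.\ $L>(d-1)/2$, which fails e.g.\ for $d\ge 5$, $L=2$. So one transport step is not enough; the paper iterates to all orders and Borel-sums on $X$ (\Cref{prop:I_main}).

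(iii) For the forward-localized source you give a heuristic, not an argument. The microlocal picture is plausible: bicharacteristics entering $\calR_+$ over $\omega_0\neq\rightarrow$ have fixed fiber $\zeta=\omega_0$, which avoids $\operatorname{WF}_{\mathrm{sc}}(g)\subset\{\zeta=(1,\mathbf{0})\}$. But radial-point estimates only yield Legendrian (module) regularity of $v$ at $\calR_+$ over the cone. To extract $e^{ir}r^{-(d-1)/2}f(\omega)$ with $f$ smooth and an $O(r^{-(d+1)/2})$ remainder, you still need an $\omega$-localized normal-operator/polyhomogeneity argument, with the $e^{ix}$ part of $v$ split off microlocally. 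That amounts to a localized piece of the Melrose--Zworski analysis, and none of it is supplied.

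Completed, your route would have the merit of bypassing the front-face (QIHO) analysis entirely, but as it stands the load-bearing step is missing. Your fallback, by contrast, is exactly how the paper obtains this statement, and it is a complete proof on its own. The closure in $X$ of $\{x<Cy,\ r>1\}$ is disjoint from $\mathrm{ff}$ and meets $\partial X$ only in $\mathrm{bf}$. There $\rho_{\mathrm{ff}}$ is bounded below and $\rho_{\mathrm{bf}}\sim 1/r$. Hence in \Cref{thm:main} (or \Cref{thm:main_black-box} for the singular potentials of the introduction), both $a\in\rho_{\mathrm{ff}}^{-(L-1)}C^\infty(X)$ and $w\in\calA^{(0,0),\calF}(X)$ are smooth up to $\mathrm{bf}$ on that region. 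A first-order Taylor expansion at $\mathrm{bf}$ then gives the claim with $a(\omega)=a|_{\mathrm{bf}}$ and $f=w|_{\mathrm{bf}}$. Both are smooth on $\mathrm{bf}\setminus\mathrm{ff}=\bbS^{d-1}\setminus\{\rightarrow\}$, and the remainders are $e^{ix}O(r^{-L})$ and $e^{ir}O(r^{-(d+1)/2})$. With that, your Steps~1--2 are unnecessary.
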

\begin{theorem}[Structure of parabolic wake]
    Suppose $L\neq d-1,d,d+1$. 
    There exists a function $g\in C^\infty(\bbR^{d-1}_{\hat{\bfy}})$ and a constant $\digamma\in \bbC$ such that, in any neighborhood of the form $\{y^2<Cx\}$, for $C>0$, the approximation
    \begin{equation}
        u = e^{ix} \Big( 1+\frac{g(\hat{\bfy})}{r^{(L-1)/2}} + \frac{ e^{i\hat{y}^2/2} \digamma}{r^{(d-1)/2}} \Big) + O \Big( \frac{1}{r^{\frac{1}{2}\min\{L,d\} } } \Big)
    \end{equation}
    holds, where $\hat{\bfy}=\bfy/\sqrt{x}$ and $\hat y = |\hat{\bfy}|$. 
\end{theorem}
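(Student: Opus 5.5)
We will prove the statement by writing $u=e^{ix}(1+w)$ and constructing $w$ in the parabolic region $\{y^2<Cx\}$ by an iterative asymptotic parametrix, in coordinates adapted to the front face: $\rho=x^{-1/2}$ and $\hat{\bfy}=\bfy/\sqrt{x}$. Conjugating, $(\triangle-1-Q)e^{ix}(1+w)=0$ becomes
\begin{equation*}
\big(-2i\partial_x-\partial_x^2+\triangle_{\bfy}+V\big)w=-V .
\end{equation*}
In $(\rho,\hat{\bfy})$ coordinates the leading part is $\rho^2 P$, with
\begin{equation*}
P=i\big(\rho\partial_\rho+\hat{\bfy}\cdot\partial_{\hat{\bfy}}\big)+\triangle_{\hat{\bfy}} .
\end{equation*}
After the substitution $w=e^{i\hat y^2/4}v$, the operator $P$ becomes, up to a multiple, a quantum inverted harmonic oscillator acting in $\hat{\bfy}$, with $\rho\partial_\rho$ acting as a spectral parameter. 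This is the model problem at ff. The term $\partial_x^2$ contributes terms lower order by $\rho^2$.

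The forcing satisfies $V=O(r^{-L})=\rho^{2L}\tilde V$ near ff, where $\tilde V$ is smooth in $\hat{\bfy}$. Solving $\rho^2 P w=-\rho^{2L}\tilde V$ with the ansatz $w=\rho^{L-1}g(\hat{\bfy})$ gives
\begin{equation*}
\Big(i\big(L-1+\hat{\bfy}\cdot\partial_{\hat{\bfy}}\big)+\triangle_{\hat{\bfy}}\Big)g=-\tilde V|_{\rho=0},
\end{equation*}
since $\rho^{L-1}=x^{-(L-1)/2}\sim r^{-(L-1)/2}$. We solve this ODE/PDE in $\hat{\bfy}$, most easily by Fourier transform in $\hat{\bfy}$, which turns it into a first-order transport equation. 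The solution is unique in the class of $g$ with symbolic behaviour as $\hat y\to\infty$ that matches the plane-wave correction $a/r^{L-1}$ of the previous theorem at the corner ff$\cap$bf.

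The homogeneous solutions of $P$ with outgoing behaviour are $\rho^{d-1}e^{i\hat y^2/4}\times(\text{Gaussian-type})$. Concretely, the free outgoing spherical wave $e^{ir}r^{-(d-1)/2}$ restricted to the wake is $e^{ix}e^{iy^2/(2x)}x^{-(d-1)/2}(1+\dots)$, which gives the term $\digamma e^{i\hat y^2/2}r^{-(d-1)/2}$. Its coefficient $\digamma$ is not determined locally. We fix it by matching with the spherical wave $f(\omega)e^{ir}r^{-(d-1)/2}$ of the previous theorem as $\omega\to\rightarrow$, i.e.\ at the corner, using the known global existence of $u$ (Agmon, Melrose--Zworski).

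The hypothesis $L\neq d-1,d,d+1$ is exactly what keeps the index $L-1$ from colliding with the indicial roots $d-1$ of the homogeneous solution, and with the next shifted roots. Such a collision would produce logarithms. Under the hypothesis, the two terms are distinct, and the next corrections are of order $\rho^{\min(L,d)}$ relative to $1$, i.e.\ $O(r^{-\frac12\min\{L,d\}})$. Those corrections come from the $\rho^2\partial_x^2$ error, from the subleading terms of $V$, and from the interaction of $g$ with the potential.

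The main obstacle is \emph{uniqueness}: showing that the true $u$ agrees with the constructed approximant modulo the claimed error, rather than differing by some other homogeneous solution of the ff model. For this we use the global polyhomogeneity of $u$ on the two-faced compactification, i.e.\ the main theorem (\Cref{thm:main}) for which this statement is a corollary. Its ff expansion has index set generated by $L-1$ and $d-1$, and we read off its leading terms as above.

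If we instead work directly, we apply an invertibility or Fredholm statement for the inverted harmonic oscillator model on weighted spaces. This shows that the remainder $u-e^{ix}(1+\rho^{L-1}g+\digamma\rho^{d-1}e^{i\hat y^2/2})$ solves a Helmholtz equation with faster-decaying forcing and the outgoing radiation condition. Hence it is $O(\rho^{\min(L,d)})$ by a limiting-absorption estimate on parabolic scales. Establishing that estimate uniformly up to the corner ff$\cap$bf is the delicate part.
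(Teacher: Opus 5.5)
Your proof ultimately rests on \Cref{thm:main}, which is also how the paper obtains this statement. However, the mechanism you give for the $r^{-(L-1)/2}$ term is wrong, and the ``reading off'' needs an ingredient you only assert.

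First, the power counting in your ansatz fails. With $\rho=x^{-1/2}$ and your model operator $P$, one has $\rho^2P(\rho^{L-1}g)=\rho^{L+1}\big(i(L-1+\hat{\bfy}\cdot\partial_{\hat{\bfy}})+\triangle_{\hat{\bfy}}\big)g$, while the forcing is $V=O(\rho^{2L})$, and $2L>L+1$ for $L\geq 2$. So $g$ is not driven by $V|_{\mathrm{ff}}$ at all. It solves the \emph{homogeneous} equation $\big(i(L-1+\hat{\bfy}\cdot\partial_{\hat{\bfy}})+\triangle_{\hat{\bfy}}\big)g=0$. A local analysis at ff with local forcing would only produce corrections of size $\rho^{2L-2}\sim r^{-(L-1)}$.

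The wake is instead generated globally by the transport step. As $x\to\infty$, $\frac{1}{2i}\int_{-\infty}^{x}V(s,\bfy)\dd s\to\frac{1}{2i}\int_{\bbR}V(s,\bfy)\dd s\sim y^{1-L}$. This has index $L-1$ at bf and $0$ at ff of $Y$, hence is $\rho_{\mathrm{ff}}^{L-1}\hat y^{1-L}\times(\text{angular})$ on $X$. This corner profile, together with regularity at $\hat{\bfy}=0$, is the data that determines $g$. Your corner-matching remark is the right instinct, but it must be applied to the homogeneous equation. Also, $g$ is not symbolic as $\hat y\to\infty$, since it generally carries an outgoing $e^{i\hat y^2/2}$ component. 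In the paper this is Step 1, cut off near the midpoint of ff; its residual in $\rho_{\mathrm{bf}}^\infty\rho_{\mathrm{ff}}^{L+1}C^\infty(X)$ is what the QIHO inversion of Step 2 then solves away.

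Second, \Cref{thm:main} as stated only gives $\calF\subset(\bbN\times\{0\})\cup(\bbZ^{\geq L-d}\times\bbN)$. This permits $\rho^{L-1}(\log\rho)^k$ and $\rho^{L}(\log\rho)^k$ terms in $u$, which for $L\leq d-2$ would contradict the claimed form. To exclude them, note that $e^{-ir}P$ applied to the plane-wave part is log-free near the interior of ff. Hence the top-log coefficient at each of these levels lies in $\ker I(L-1)$, resp.\ $\ker I(L)$. Then use \S\ref{subsec:obstruction}: $\ker I(j)\neq 0$ iff $j-(d-1)-\ell\in 2\bbN$ for some angular momentum $\ell$, so it vanishes for $j<d-1$.

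The same computation is what makes $\digamma$ a constant when $L\geq d+2$. For $L\leq d-2$ that term lies inside the error anyway. When $L\geq d+2$, the $\rho^{d-1}$ coefficient is unforced, so it lies in $\ker I(d-1)$. That kernel is one-dimensional (only $\ell=0$) and spanned by the element corresponding to $e^{ir}x^{-(d-1)/2}$, i.e.\ $\rho^{d-1}e^{i\hat y^2/2}$ in your normalization. No matching with $f$ is required, since only existence of $\digamma$ is claimed. The excluded values $L=d-1,d,d+1$ are precisely those for which $L-1$ or $L$ meets the roots $d-1$ ($\ell=0$) or $d$ ($\ell=1$) below the error order.

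Finally, your alternative direct route invokes a ``limiting-absorption estimate on parabolic scales'' yielding pointwise $O(\rho^{\min\{L,d\}})$ bounds. No such estimate is available off the shelf. The paper's three-step construction exists precisely to avoid needing one: it reduces the error to a Schwartz function before applying the classical limiting absorption principle.
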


In both parts of the statement, the estimate of the big-$O$ term can depend on the parameter $C$.
\begin{remark*}
    The cases $L=d-1,d,d+1$ are very similar, but with a few extra logarithms.
\end{remark*}

Our main theorem will upgrade these theorems to full (term-by-term differentiable) asymptotic expansions, and will articulate the sense in which the parabolic wake dovetails with the incoming plane wave and outgoing spherical wave.

\subsection{Discussion of theorem}

This theorem emphasizes two features of perturbed plane waves that are missed by the traditional asymptotic description of the wave as consisting of an $O(1)$ plane wave plus outgoing $O(1/r^{(d-1)/2})$ spherical wave. These features are:
\begin{enumerate}[label=(\roman*)] 
    \item polynomially-decaying corrections oscillating like the plane wave, taking the form $e^{ix}  a/r^{L-1}$ for non-oscillatory $a$, 
    \item the parabolic wake left by the passing wave, which we will see propagates along lines of constant $y^2/x$. 
\end{enumerate}

Subleading corrections of this sort
are present for \emph{any} classical potential that decays polynomially but not superpolynomially as $r\to\infty$; see below (cf.\ \cite{YafaevInverse}). (Since we restrict attention to classical potentials, superpolynomial decay is equivalent to being Schwartz.)
For potentials decaying faster than the Povzner threshold $L=d$, the corrections are $o(1/r^{(d-1)/2})$ and therefore faster decaying than the outgoing spherical wave, hence consistent with the traditional description of the perturbed plane wave.   
For short-range potentials that are not very short-range, one or both corrections are asymptotically bigger than $1/r^{(d-1)/2}$ along some rays $\bbR^+ \omega$, thereby invalidating the traditional description. 
Thus, the corrections are not a pathology limited to slowly decaying potentials, though they are most prominent for slowly decaying potentials.

Any ray $\bbR^+\omega$ emanating from the origin -- except the one exactly in the forward direction -- eventually leaves any parabolic neighborhood 
\begin{equation}
    \{(x,\bfy)\in \bbR^d: y^2 < c x\},\quad c>0. 
\end{equation}
Thus, parabolic features are invisible in the large-$r$ asymptotics of $u$ inside any cone disjoint from the forward direction. 
Away from the forward direction, only one correction is relevant, the $o(1)$ corrections to the plane wave. 
We will see below that 
the first plane wave correction has size $O(1/r^{L-1})$ away from the forward ray. 
Thus, the plane wave correction becomes $\Omega(1/r^{(d-1)/2})$, hence the same size or larger than the outgoing spherical wave, exactly at the \emph{Ikebe threshold} $L=(d+1)/2$. Slower decay results in a bigger plane wave correction.

If we follow $u$ exactly along the forward ray, then $u-e^{ix}$ can decay more slowly. 
A sharp statement is
\begin{equation} \label{eq:correction_heuristic}
V=O \Big( \frac{1}{r^L} \Big)\Longrightarrow 
    u-e^{ix} = O\Big( \max \Big\{ \frac{1}{r^{(d-1)/2}}, \frac{1}{r^{(L-1)/2} } \Big\}\Big) \text{ along the forward ray},
\end{equation}
unless $L=d$, in which case an extra log is present. Thus, the Povzner threshold $L=d$ is exactly the threshold for having a large wake. If $L>d$, then the wake is suppressed. Otherwise, it is large. 
In summary, the phenomenologically important thresholds (for $d\geq 3$) are 
\begin{equation}
    2\leq \underbrace{\frac{d+1}{2}}_{\text{Ikebe}} < \underbrace{d}_{\mathclap{\text{Povzner}}}, 
\end{equation}
with the Povzner threshold being that for a large wake and the Ikebe threshold being that for large plane wave corrections. Depending on which of these thresholds $L$ exceeds, different corrections among those described above may be important. 

Throughout this paper, we use the following $L=2$ example to illustrate the main ideas:
\begin{example}[Inverse-square potential]
    Consider 
    \begin{equation}
    V(r) = \frac{\alpha}{r^2},\quad \alpha> - \frac{1}{4}
\end{equation}
on $\bbR^d$, $d=3$. 
We refer to the corresponding perturbed plane wave as the \emph{inverse-square plane wave}. 
Its explicit construction can be found in \S\ref{sec:central}, via separation of variables. This presents the inverse-square plane wave as an infinite series of special functions, 
\begin{equation}
    u= \sqrt{\frac{\pi}{2r}} \sum_{\ell=0}^\infty i^{2\ell-\mu_\ell+\frac{1}{2}} (2\ell+1) J_{\mu_\ell}(r) P_\ell (\cos \theta),\quad \mu_\ell = \sqrt{\alpha+\Big(\ell+\frac{d-2}{2}\Big)^2}.
\end{equation}
Partially summing this series provides an accurate numerical approximation to the plane wave, shown in \Cref{fig:dipole_plane_wave}. For $\alpha>0$ (e.g.\ $\alpha=2$, the case plotted in the figure), the potential is repulsive, leaving a shadow as the wave passes, as seen in the figure. 

\begin{figure}[!htbp]
    \centering
    \includegraphics[scale=1.1]{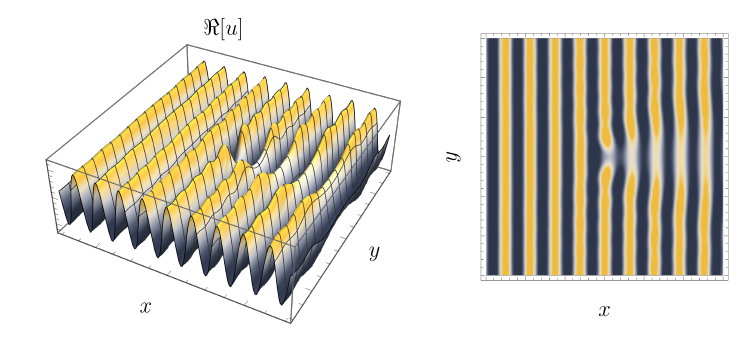}
    \caption{
    The (real part of the) inverse-square plane wave $u$ with parameter $\alpha=2$, calculated using the explicit formula above. The imaginary part looks similar.}
    \label{fig:dipole_plane_wave}
\end{figure}

The difference $u-e^{ix}$ is plotted in \Cref{fig:dipole_plane_wave_dif}. Most prominent among the plot's features is the wake, which is decaying, but only at a $O(1/\sqrt{r})$ rate as $r\to\infty$ (see below). 
It appears to spread along parabolic trajectories $x=c y^2$, $c\geq 0$. In fact, it does --- numerical evidence is provided later. 
A faint outgoing spherical wave can be discerned, in addition to what appears to be a residual plane wave $\sim r^{-1} e^{ix}$. Both these features are $O(1/r)$ within cones away from $\theta=0$ (which is why they are faint relative to the larger wake, being smaller than the latter by a factor of $1/\sqrt{r}$), but they grow as $\theta\to 0$, merging to form the wake. 

\begin{figure}[!htbp]
    \centering
    \includegraphics[scale=1.1]{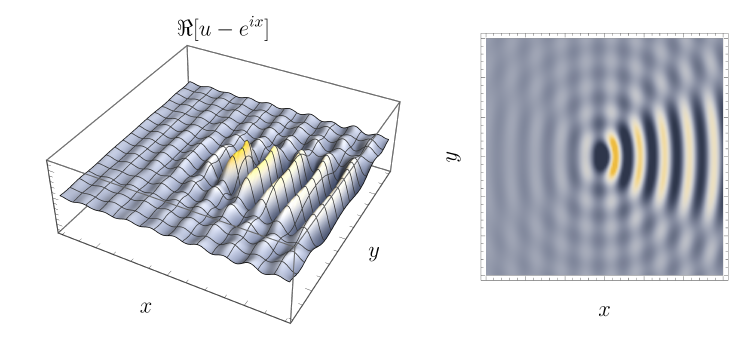}
    \caption{The (real part of the) difference $u-e^{ix}$ between the perturbed plane wave $u$ and the exact plane wave $e^{ix}$. The imaginary part looks similar.}
    \label{fig:dipole_plane_wave_dif}
\end{figure}

The hierarchy of sizes for this particular example is therefore:
\begin{equation}
    \underbrace{\begin{array}{r}
         \text{outgoing spherical wave $\sim  f(\theta) r^{-1} e^{ir}$}  \\
           \text{plane wave correction $\sim a(\theta) r^{-1} e^{ix}$}
    \end{array}}_{O(1/r)\text{ away from }\theta=0} \leq \underbrace{\text{wake}}_{\mathclap{O(1/\sqrt{r})}} \leq \underbrace{\text{main }e^{ix}\text{ term}}_{O(1)}.
\end{equation}
When $d=3$, the Ikebe threshold is exactly $L=2$, so this example is borderline. This is why the first plane wave correction has the same size as the outgoing spherical wave. The occurrence of a large wake of size $\sqrt{r}$ relative to the outgoing spherical wave is a consequence of being one order under the Povzner threshold $d=3$.
\end{example}

\subsection{The S-matrix's forward singularity}
\label{subsec:S-matrix_forward_singularity}
For any short-range potential, the S-matrix coefficient $f$ is defined (see below) and smooth away from the forward direction: 
\begin{equation}
    f\in C^\infty(\bbS^{d-1}\backslash \{\text{forward}\}).
\end{equation}
As long as the potential is not Schwartz, the singularity in the forward direction is genuine:
\begin{equation}
    f\notin C^\infty(\bbS^{d-1}) 
\end{equation}
\cite{YafaevInverse}.
The structure of the singularity has been a topic of some interest \cite{YafaevDiagonal}. 
Weder--Yafaev \cite{YafaevInverse} have even proven that, for sufficiently short-range potentials, the germ of the potential at infinity can be recovered from the singularity. Melrose--Zworski \cite{MZ} and Vasy \cite{VasyLongRange} have proven that the S-matrix is a Fourier integral operator (FIO) associated to the antipodal map on $\bbS^{d-1}$. Such an FIO transports singularities to the opposite side of the sphere, which is why $f$ is smooth away from the forward direction.

Here, we take the perspective that the forward singularity is the price paid for ignoring the parabolic wake when describing the perturbed plane wave as a plane wave + outgoing spherical wave. 
The slower the potential is decaying, the larger the parabolic wake, so the worse $f$'s forward singularity is. The Povzner assumption $V=o(1/r^d)$ is exactly what is required to guarantee that $f$ is continuous. For $V$ decaying faster than the Povzner threshold, 
\begin{equation}
    f\in C^k(\bbS^{d-1}) 
\end{equation}
for some finite $k\in \bbN$.  The exact numerology relating the decay rate of $V$ to the largest such $k$ can be understood in several ways. One is using the Born approximation ---  see \S\ref{sec:Born}. We get one extra order of differentiability for every extra order of decay. 
The conclusion is  
\begin{equation}
    V = O\Big(\frac{1}{r^L} \Big) \Longrightarrow f(\omega) \in
    \begin{cases}
        \theta^{-(d-L)} C^0(\bbS^{d-1}) & (L<d), \\
        (\log \theta )C^0(\bbS^{d-1})  & (L=d).
    \end{cases}
\end{equation}
So, $f(\omega)$ blows up as $\theta\to 0$ like some power of $\theta$, with a logarithmic loss in the threshold case.

A more precise description of $f$'s forward singularity is that it is \emph{polyhomogeneous}. 
Polyhomogeneity \cite{Me93} roughly means smoothness modulo log terms.
So, letting $\theta=\operatorname{arccos}(x/r)$ denote the angle from the forward direction, $f$ admits an asymptotic expansion in the $\theta\to 0$ limit involving terms of the form
\begin{equation}
    a_{j,k} \theta^j (\log \theta)^k,\quad a_{j,k}\in C^\infty(\bbS^{d-2}_{\bfy/y}),\quad (j,k)\in \bbR\times \bbN.
\end{equation}
The pairs $(j,k)$ appearing in this expansion are called ``indices.'' 
The singularity is dominated by the index with the most negative $j$. 

\begin{example}[Inverse-square potential, cont.]
    For the inverse-square plane wave on $\bbR^3$,  
    \begin{equation}
        f = \sum_{\ell=0}^\infty (2\ell+1) e^{i\delta_\ell} \sin(\delta_\ell) P_\ell(\cos \theta), \quad \delta_\ell = \frac{\pi}{4} \big(\nu-\sqrt{4\alpha+\nu^2}\, \big), 
    \end{equation}
    where $\nu=2\ell+1$. 
    It can be shown that 
    \begin{equation}\label{eq:dipole_scat_sing}
        f = - \frac{\pi\alpha}{2|\theta|} - \frac{i\pi^2 \alpha^2}{8} \log |\theta |  + O(1)
    \end{equation}
    as $\theta\to 0$. See \Cref{fig:dipole_scat_sing}.

    \begin{figure}[!htbp]
        \centering
        \includegraphics[scale=.7]{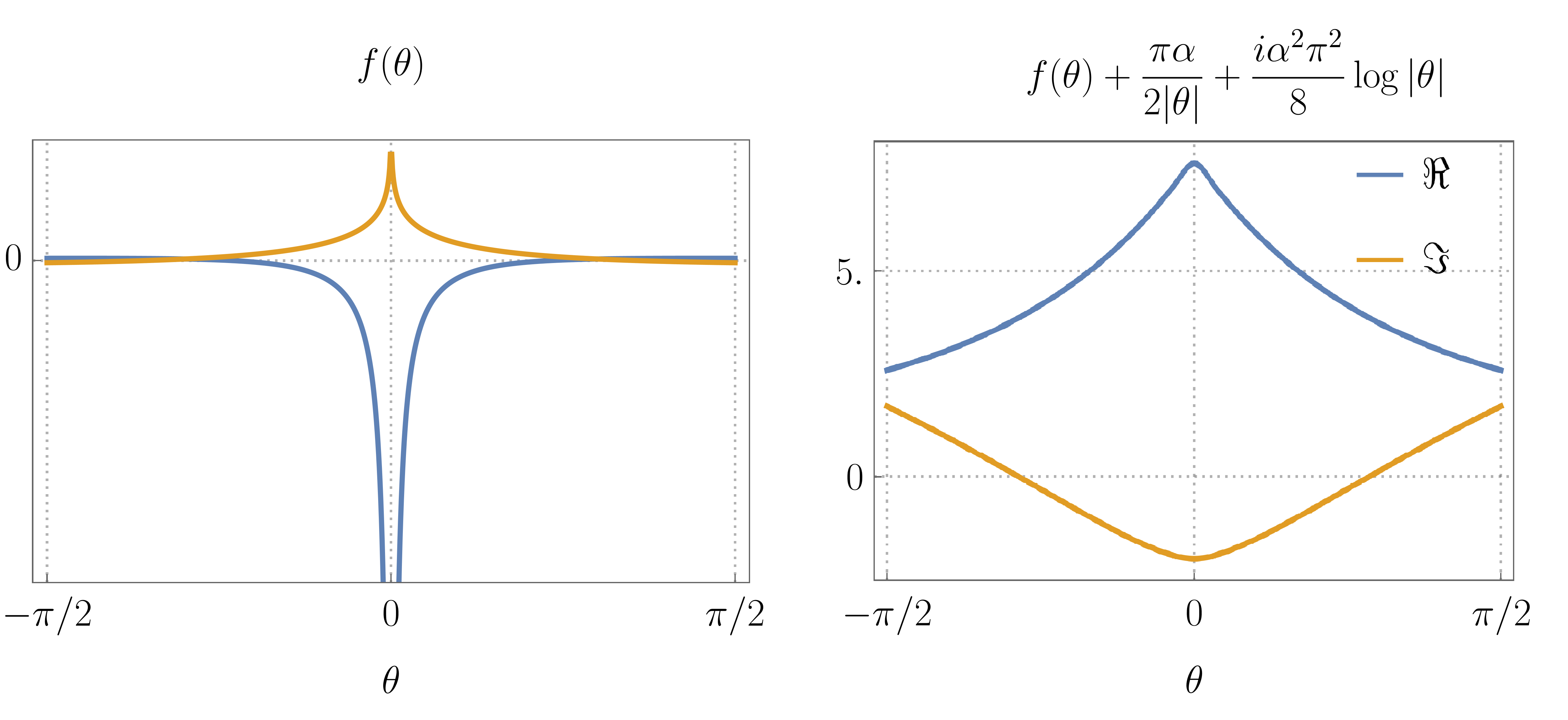}
        \caption{The scattering amplitude $f(\theta)$ for the inverse-square potential $V(r)=2/r^2$.}
        \label{fig:dipole_scat_sing}
    \end{figure}
    
    Thus, $f$ blows up at $\theta=0$, with a $1/\theta$ singularity and a subleading $\log \theta$. 
\end{example}

The notion of polyhomogeneity is needed precisely because of the possibility of log terms, which the previous example shows can actually occur.

\begin{remark*}
	Physicists call $|f(\omega)|^2 $ the ``differential cross-section.'' Its physical interpretation is (roughly) that it measures the scattered flux of particles in the direction $\omega\in \bbS^{d-1}$ per unit of incoming flux. 
    This interpretation is viable for sufficiently short-range potentials, but $f(\omega)$ blows up as $\theta\to 0$ like $\theta^{-(d-L)}$, and 
    \begin{equation}
        \theta^{-(d-L)} \in L^2(\bbS^{d-1}) \iff L>\frac{d+1}{2} . 
    \end{equation}
    So, a sufficiently small sector around the forward directions accounts for negligible outgoing flux \emph{only if the potential is decaying faster than the Ikebe threshold}. Then, physicists are justified in ignoring the parabolic wake, because a scattered particle, in the far future, is unlikely to be found in the wake. Nevertheless, it is still of interest to understand the structure of the wave, including the $L^2$ mass at intermediate times. 
    But if the potential is decaying at the Ikebe threshold rate or slower, then $f\notin L^2$, so the usual interpretation is problematic.

	For a careful introduction to the physics of non-relativistic scattering, see \cite{Taylor}\cite{TaylorAgain}\cite{TaylorAlt}. Making precise the physical interpretation of perturbed plane waves requires relating the Schr\"odinger equation to the Helmholtz equation.
\end{remark*}

\begin{remark*}
    The celebrated \emph{optical theorem} is supposed to relate the imaginary part $\Im f(\theta)$ of the forward scattering amplitude to the ``total cross section'' $\lVert f \rVert_{L^2(\bbS^{d-1})}$. When $d=3$, it states
    \begin{equation}
        \lVert f \rVert_{L^2(\bbS^{d-1})}^2 = 4\pi \Im f(0). 
    \end{equation}
    Physicists rarely spell out the precise hypotheses required. 
    As the inverse-square example shows, $\lVert f \rVert_{L^2}$ can fail to exist. But in this case, $\lim_{\theta\to 0} \Im f(\theta)=\infty$, so the optical theorem can still be read as holding.
    
    Simon \cite[Chp.\ 11, \S6]{ReedSimonIII} proves the optical theorem under the assumption that the potential is $L^1$ and Rollnik class; this excludes the inverse-square example above. 
\end{remark*}

\subsection{Previous work}

This subsection provides an overview of older work on the subject. Readers uninterested in the history may skip directly to the next section of our paper, which presents the main theorem. 

\subsubsection{Classical results}
The work of Povzner noted previously was the first in a series of classical papers establishing the existence and properties of perturbed plane waves $u$ for potential scattering, 
\begin{equation*}\label{eq:Helmholtz_loc}
    (\triangle -1 +V) u =0, \tag{$\bigstar$}
\end{equation*}
in increasing generality. 
The story hinges on the decay rate $L$ of $V=O(1/r^L)$. The upshot is the following trichotomy, as presented by Yafaev \cite{OldYafaev} (take $V\in C^\infty(\bbR^d)$ in each case):
\begin{enumerate}[label=(\Roman*)]
    \item (\cite{Pov1, Pov2}) in the very short-range case, suppose $L >d$, then there exists a unique 
    solution $u \in C^{\infty}(\bbR^d)$ of the Helmholtz equation  
    \cref{eq:Helmholtz_loc}
    satisfying
    \begin{equation}
        u(x,\bfy)=e^{ix}+ \frac{f}{r^{(d-1)/2}} e^{ir} + o\Big(\frac{1}{r^{(d-1)/2}}\Big)
    \end{equation}
    for some $f\in C^0(\bbS^{d-1})$. In fact, letting $k\in \mathbb N$ be the largest nonnegative integer such that $L > d + k$, then $f \in C^k(\bbS^{d-1})\cap C^\infty(\bbS^{d-1}\backslash \{\text{forward}\})$ \cite[Proposition 3.1]{YafMag}. 
    \item If $L \in (\frac{d+1}{2}, d]$, then there exists a unique solution $u\in C^{\infty}(\bbR^d)$ of \cref{eq:Helmholtz_loc}
    satisfying
    \begin{equation}\label{eq:ppwa}
        u(x,\bfy)=e^{ix}+ \frac{f}{r^{(d-1)/2}} e^{ir} + \calE 
    \end{equation}
    for $f \in L^2(\bbS^{d-1})\cap C^\infty(\bbS^{d-1}\backslash \{\text{forward}\})$ and $\calE=o_{\mathrm{av}}(r^{-\frac{d-1}{2}})$, which means 
    \begin{equation}
         \lim_{R \to \infty}\frac{1}{R}\int_{|(x,\bfy)|\leq R}|\calE|^2\dd x\dd \bfy=0.
    \end{equation}
    In this case, the perturbed plane waves were constructed by Ikebe \cite{Ikebe, IkebeErrata} using the Lippmann--Schwinger method (see below). The actual characterization of the error is due to Agmon--H\"ormander \cite{AH}. 
    
    \item If $L \in (1,(d+1)/2]$, then the na\"ive construction is frustrated by the presence of large plane wave corrections. Yafaev \cite{YafAvg} retreats to studying spherical-wave to spherical-wave scattering.
    In the $d=3$ case, for a short-range symbolic potential with decay rate $L\in (1,2]$, Buslaev--Skriganov \cite{buslaev1974coordinate, Sk, SkPhD} announced that there exists a unique solution $u \in C^{\infty}(\bbR^3)$ satisfying \cref{eq:Helmholtz_loc} with $r\to\infty$ asymptotics 
    \begin{equation}\label{eq:sk}
        u(x,\bfy)=e^{ix+(2i)^{-1}\int_{-\infty}^xV(t ,\bfy) \dd t}+\frac{f}{r}e^{ir}+o\Big(\frac{1}{r} \Big),\quad f \in C^{\infty}(\bbS^{2}\backslash \{ \rightarrow \})
    \end{equation}
    away from the forward direction. The authors claimed (without proof) a generalization to $d\geq 4$, but the statement \cref{eq:sk} requires modification. A natural generalization is 
    \begin{equation}\label{eq:sk_mod}
        u(x,\bfy)=e^{ix}\times\text{plane wave corrections}+\frac{f}{r^{(d-1)/2}}e^{ir}+o\Big(\frac{1}{r^{(d-1)/2}} \Big),
    \end{equation}
    but the form of the plane wave corrections does not carry over.
\end{enumerate}

The basic strategy to construct perturbed plane waves goes back to Lippmann--Schwinger \cite{LippmannSchwinger}. 
When $V$ decays faster than the Ikebe threshold $O(1/r^{(d+1)/2})$, then 
\begin{equation}\label{eq:russianwave}
u(x,\bfy)=e^{ix}-R(1 + i0)Ve^{ix},\qquad R(\lambda) \coloneqq (\triangle +V-\lambda)^{-1}.
\end{equation}
The ``limiting resolvent'' $R(1+i0)$ is defined via the usual \textit{limiting absorption principle}: for any short-range $V$, the limit 
\begin{equation} 
    R(1\pm i0) = \lim_{\varepsilon \to 0^+} R(1 \pm i \varepsilon)
    \end{equation}
exists (with respect to the strong operator topology on suitable weighted Sobolev spaces)
and has the mapping property $R(1\pm i0):\langle r \rangle^{-1/2-\delta}L^2 \to \langle r \rangle^{1/2+\delta}L^2$ \cite{Me93}, for any $\delta>0$. The key observation is that
\begin{equation}
    L> \frac{d+1}{2} \Longrightarrow \langle r \rangle^{\frac{1}{2}+\delta} Ve^{ix}\in L^2\text{ for some }\delta>0. 
\end{equation}
Consequently, the right-hand side of \cref{eq:russianwave} is defined as
\begin{equation} 
R(1+i0)[Ve^{ix}] \coloneqq R(1+i0)\big[\langle x \rangle^{-\frac{1}{2}-\delta}\underbrace{\langle r \rangle^{\frac{1}{2}+\delta}Ve^{ix}}_{\in L^2}\big].
\end{equation}

When $1<L\leq (d+1)/2$, the story becomes more complicated, because $\langle r \rangle^{\frac{1}{2}}V e^{ix}  \notin L^2$, so the formula $R(1+i0) Ve^{ix}$ does not make sense, a priori.
Here, the advantages of a microlocal framework, to which we turn, are most clear.

\subsubsection{The Melrose school}\label{sec:melrose}

In \cite{Me94}, Melrose defined the Poisson map for the Helmholtz equation on general asymptotically conic manifolds. We explain the case of Euclidean potential scattering. 
The \emph{Poisson map} $\Pi:b_-\mapsto u$ constructs, for each $ b_-\in C^\infty(\bbS^{d-1})$, a Helmholtz solution $u\in C^\infty(\bbR^{d})$ with ``incoming data'' $b_-$. This means that the Sommerfeld radiation condition 
\begin{equation}\label{eq:poisson}
  \exists \varepsilon>0\text{ s.t. }r^{\varepsilon+(d-1)/2} ( i\partial_r+1 )  \Big[  u - \frac{b_-(\omega)}{r^{(d-1)/2}} e^{-i r }  \Big]  \in L^\infty (\{r>1\})
\end{equation}
is satisfied. Under the assumption that the potential is classical, Melrose proves that $u$ is the unique solution with this property (given fixed $b_-$), and in fact there exist $B_\pm \in C^\infty(\overline{\bbR^d})$ with $B_-|_{\infty \bbS^{d-1}} = b_-$ such that 
\begin{equation}
    u = \frac{B_-}{\langle r \rangle^{(d-1)/2}} e^{-i \langle r \rangle} + \frac{B_+}{\langle r \rangle^{(d-1)/2}} e^{i \langle r \rangle} .
\end{equation}
The scattering matrix is $S:b_-\mapsto b_+ = B_+|_{\infty \bbS^{d-1}}$. 

Here, $\overline{\bbR^d}\hookleftarrow \bbR^d$ is the radial compactification of Euclidean space \cite{HintzMicrolocal}. We write 
\begin{equation}
  \overline{\bbR^d}=  \bbR^d\sqcup \overbrace{\infty \bbS^{d-1}}^{\partial \overline{\bbR^d}}
\end{equation}
to denote the bulk and boundary, the latter of which is the sphere at infinity. 
Thus, we have a full asymptotic expansion in powers of $1/r$, with coefficients that are smooth functions in the angular coordinate $\omega\in \bbS^{d-1}$.

\begin{figure}[!htbp]
    \centering
    \begin{tikzpicture}[scale=1.5, every node/.style={font=\small}]

  \fill[gray!20] (-3.5,-1) rectangle (-1.5,1);  
    \foreach \x in {-3.2,-2.85,-2.5,-2.15,-1.8}
    \draw[black!45]  (\x,-1) -- (\x,1);
    \foreach \x in {-.8,-.4,0,.4,.8}
    \draw[black!45]  (-3.5,\x) -- (-1.5,\x);
    
  \fill[gray!20] (0,0) circle (1);

  \foreach \x in {-0.55,-0.25,0,0.25,0.55}
    \draw[black!45] (0,1) .. controls (\x,0.55) and (\x,-0.55) .. (0,-1);

  \foreach \y in {-0.55,-0.25,0,0.25,0.55}
    \draw[black!45] (-1,0) .. controls (-0.55,\y) and (0.55,\y) .. (1,0);

  \draw[black] (0,0) circle (1);

  \node[right] at (1,0) {$\rightarrow$};
  \node at (-2.5,-1.3) {$\bbR^d$};
  \node[below] at (0,-1.1) {$\overline{\bbR^d}$};
  \node[below] at (-1.2,-1.2) {$\hookrightarrow$};
  \draw[dashed] (-4,-1.5) rectangle (1.5,1.3);
  \node at (-3.8,1.1) {(a)};
\end{tikzpicture}
\hspace{.25em} 
\begin{tikzpicture}[
  scale=1.5,
  every node/.style={font=\small},
  line cap=round,
  line join=round,
]
    
  \draw[dashed] (-1.5,-1.5) rectangle (2,1.3);
  \node at (-1.3,1.1) {(b)};
    \begin{scope}[
  x={(1cm,0cm)},
  y={(0cm,0.28cm)},   
  z={(0cm,1.15cm)},   
  shift = ({0,0,-.1})
]
  \draw[orange!90!black]
    plot[domain=90:270, samples=91, smooth, variable=\t]
      ({cos(\t)}, {sin(\t)}, {.6*cos(\t)});

  \path[fill=gray!20]
    plot[domain=0:360, samples=121, smooth, variable=\t]
      ({cos(\t)}, {sin(\t)}, 0) -- cycle;

  \foreach \xcoord in {-0.55,-0.25,0,0.25,0.55}
    \draw[black!45]
      (0,1,0)
        .. controls (\xcoord,0.55,0) and (\xcoord,-0.55,0)
        .. (0,-1,0);

  \foreach \ycoord in {-0.55,-0.25,0,0.25,0.55}
    \draw[black!45]
      (-1,0,0)
        .. controls (-0.55,\ycoord,0) and (0.55,\ycoord,0)
        .. (1,0,0);

  \draw[black]
    plot[domain=0:360, samples=121, smooth, variable=\t]
      ({cos(\t)}, {sin(\t)}, 0);

  \draw[darkred, thick]
    plot[domain=0:360, samples=121, smooth, variable=\t]
      ({cos(\t)}, {sin(\t)}, .6);

  \draw[darkred, thick, dashed]
    plot[domain=0:360, samples=121, smooth, variable=\t]
      ({cos(\t)}, {sin(\t)}, -.6);

  \draw[orange!90!black, thick]
    plot[domain=-90:90, samples=91, smooth, variable=\t]
      ({cos(\t)}, {sin(\t)}, {.6*cos(\t)});

  \node[left, darkred, dashed]  at (-.95,0,-.6) {$\calR_-$};
  \node[above right, darkred]  at (.9,0,.6) {$\calR_+$};
  \node[right, orange!90!black]  at (.8,0,.25) {$\operatorname{WF}_{\mathrm{sc}} (e^{ix})$};
  \node[right] at (1,0,0) {$\rightarrow$};
  \node[left]  at (-1,0,0) {$\leftarrow$};
    \end{scope} 
\end{tikzpicture}
    \caption{(a) The radial compactification $\bbR^d\hookrightarrow \overline{\bbR^d} $. (b) The subsets $\calR_\pm,\operatorname{WF}_{\mathrm{sc}}(e^{ix})\subset {}^{\mathrm{sc}}T^* \overline{\bbR^d}$, as they sit over the boundary. The vertical direction is the $\nu$-axis, where $\nu$ is the frequency coordinate dual to $r$. The point in $\infty\bbS^{d-1}$ in the backwards direction is labeled `$\leftarrow$,' and the point in the forward direction is labeled `$\rightarrow$.' }
    \label{fig:radial_compactification}
\end{figure}

In \cite{MZ}, Melrose--Zworski investigated the structure of the S-matrix \begin{equation}
S\colon b_-\mapsto b_+ \coloneq B_+|_{\infty \bbS^{d-1}}.
\end{equation}
In the process, they provided a general treatment of perturbed plane waves. One upshot is that a perturbed plane wave $u$, incoming from the $\leftarrow\, \in \bbS^{d-1}$ direction, can be defined as 
\begin{equation}
    u \propto \Pi \delta, 
\end{equation}
where $\delta\in \calD'(\bbS^{d-1})$ is a Dirac-$\delta$ in the direction from which the perturbed plane wave is coming, and the constant of proportionality is explicit. Equivalently, 
\begin{equation}
    u \propto K|_{\{\leftarrow \} \times \bbR^d }
\end{equation}
is the restriction of the Schwartz kernel $K\in \calD'(\bbS^{d-1}\times \bbR^d)$ of the Poisson map to the submanifold $\{\leftarrow \}\times \bbR^d$. The fact that this restriction makes sense must be proven. But this definition has the advantage of making sense in full generality. The outgoing spherical wave amplitude $f(\omega)$ can then be defined as the restriction of the Schwartz kernel of the S-matrix to $\{\leftarrow\}\times \bbS^{d-1}$, minus $\delta_{\rightarrow}$. When the potential is very short-range, these definitions agree with those in the classical literature discussed above. 

Melrose and Zworski's argument proceeds in the reverse order.  
First, the perturbed plane wave is constructed; then a map $\Pi$ is defined via its Schwartz kernel, which is defined as having its cross-sections be the specified perturbed plane waves. Then it can be proven that $\Pi$ is the Poisson map. 
This requires understanding the forward singularity.
In \cite{MZ}, this is done using the novel notion of Legendrian distributions associated to a pair of intersecting Legendrian submanifolds with conic points. The meaning of this notion is best explained using Melrose's sc-calculus. 

In \cite{Me94, MelroseGeometric}, Melrose systematized the scattering- (sc-) calculus, a pseudodifferential calculus first introduced in the Euclidean case by Parenti \& Shubin. This calculus is ``purely symbolic,'' in that pseudodifferential operators are controlled by their symbols modulo compact errors. Symbols live on the compactification 
\begin{equation}
    {}^{\mathrm{sc}} \overline{T}^* \bbR^d = (\overline{\bbR^d})_{\text{base}}\times (\overline{\bbR^d})_{\text{fiber}}
\end{equation}
resulting from radially compactifying the base and the fiber individually, after trivializing $\overline{T}^* \bbR^d $ in the usual way. Associated to this calculus is a refined notion of wavefront set, 
\begin{equation}
    \operatorname{WF}_{\mathrm{sc}} : u\mapsto \operatorname{WF}_{\mathrm{sc}}(u) \subseteq \partial {}^{\mathrm{sc}} \overline{T}^* \bbR^d.
\end{equation}
In this paper, we only care about the wavefront set in ${}^{\mathrm{sc}}T^*_{\infty\bbS^{d-1}} \overline{\bbR^d}=(\infty \bbS^{d-1})\times (\bbR^d)_{\text{fiber}}$, i.e.\ over base infinity and outside of fiber infinity. There, the interpretation of the sc-wavefront set is particularly simple: the wavefront set over $\omega\in \bbS^{d-1}$ consists of the frequencies $\zeta\in \bbR^d$ of oscillations $z=(x,\bfy)\mapsto e^{i\zeta\cdot z}$ present (without superpolynomial decay) in the large-$r$ asymptotics in arbitrarily thin cones around the ray $\bbR^+ \omega$.

In Melrose's analysis, the ``radial sets'' 
\begin{equation}
    \calR_\pm = \operatorname{WF}_{\mathrm{sc}}(e^{\pm i \langle r \rangle })
\end{equation}
play a key role; $\calR_-$ is the sc-wavefront set of an incoming spherical wave, and $\calR_+$ is the sc-wavefront set of an outgoing spherical wave. These are disjoint sets, each consisting of a single point in frequency space over each point of base infinity; see \Cref{fig:radial_compactification}(b). They are a prototypical example of Legendrian submanifolds of ${}^{\mathrm{sc}} T^* \overline{\bbR^d}$. 
In the discussion of perturbed plane waves, the other relevant Legendrian submanifold is  
\begin{equation}
    \operatorname{WF}_{\mathrm{sc}}(e^{ix}) =\{ (\omega,\zeta)\in \infty \bbS^{d-1} \times \bbR^d:  \xi=1\text{ and }\eta=0\} 
\end{equation}
where $\zeta=(\xi,\eta)$, with $\xi\in \bbR$ the $x$-frequency and $\eta\in \bbR^{d-1}$ the $y$-frequency.  
This intersects the incoming radial set $\calR_-$ in exactly one point, over the backwards direction, and it intersects $\calR_+$ in exactly one other point, over the forward direction.
Intuitively, $e^{ix}$ agrees with $e^{\pm ir}$ on the backward/forward ray, respectively. 

Given the heuristic picture of a perturbed plane wave $u$ as consisting of an incoming plane wave and the resultant outgoing spherical wave, it is natural to expect 
\begin{equation}
    \operatorname{WF}_{\mathrm{sc}}(e^{ix}) \subseteq 
    \operatorname{WF}_{\mathrm{sc}}(u) \subseteq 
    \operatorname{WF}_{\mathrm{sc}}(e^{ix}) \cup \calR_+.
\end{equation}
This is in fact the case, typically with the second `$\subseteq$' being equality:
\begin{equation}
    \operatorname{WF}_{\mathrm{sc}}(u) =
    \operatorname{WF}_{\mathrm{sc}}(e^{ix}) \cup \calR_+,
\end{equation}
as can be shown in several different ways. The two halves $\operatorname{WF}_{\mathrm{sc}}(e^{ix}), \calR_+$ are each perfectly smooth Legendrian submanifolds of the sc-cotangent bundle, but their union is not, because of how they meet up in the forward direction. This is indicative of a singularity there. 

In order to produce functions with the desired sc-wavefront set, Melrose and Zworski use a complicated oscillatory ansatz. This approach has the advantage that it works in the more general setting of asymptotically conic manifolds (see \cite[\S2]{MZ} for an example rather different from the asymptotically Euclidean case), but it obscures physical-space asymptotics. The purpose of the present paper is to provide a physical-space description of perturbed plane waves in the asymptotically Euclidean setting. 
The basic idea is to resolve the singularity in the forward direction by blowing up $\rightarrow \,\in \!\infty \bbS^{d-1}$. The correct blowup ends up being parabolic. We will explain the main antecedents in the next subsection, but an analogue of the idea can already be found propounded by Melrose for the purpose of splitting Lagrangian distributions associated to intersecting pairs of Lagrangian submanifolds of the cosphere bundle $\bbS^* \bbR^d$. This involves using a ``parabolic neighborhood'' to decompose the cosphere bundle into two sets, one associated with each parabolic neighborhood \cite{Melrose1987MarkedLagrangianDistributions}\cite{Greenleaf}\cite{Joshi1994PreciseCalculus}\cite{UhlmannMarked}\cite{FeleaI, FeleaII}. These parabolic tools do not seem developed enough to be applied here. 

Before moving on, we explain how a microlocal refinement of the limiting absorption principle applies to the construction of perturbed plane waves. First, we recall that there exist refinements $\operatorname{WF}_{\mathrm{sc}}^{m,s}$ of the sc-wavefront set measuring a failure to lie in the Sobolev space $\langle r \rangle^{-s} H^m(\bbR^d)$.  
The limiting absorption principle can be stated for Sobolev spaces in which the decay rate depends on frequency, with different decay rates at the two radial sets. It turns out that a tempered distribution $f$ lies in the domain of $R(1+i0)$ if
\begin{equation}
\operatorname{WF}_{\mathrm{sc}}^{m,1/2+\varepsilon} (f) \cap \calR_- = \varnothing
\end{equation}
for some $m\in \bbR$ and $\varepsilon>0$. That is, $f$ lies ``above threshold'' at the incoming radial set $\calR_-$. The key point is that $f$ can be arbitrarily slowly decaying everywhere else in phase space. 
Because the sc-wavefront set of a function of the form $b e^{ix}$  for $b\in C^\infty(\overline{\bbR^d})$ lies in $\operatorname{WF}_{\mathrm{sc}}(e^{ix})$, the only obstruction to $be^{ix}$ lying in the domain of $R(1+i0)$ is near the backwards direction, in an arbitrarily small backwards cone. This means that we can define a Helmholtz solution by
\begin{equation}
    u =  \overbrace{e^{ix} \Big(1 + \frac{a}{r^{L-1}} \Big)}^{u_1} - R(1+i0) P u_1 ,\qquad P=(\triangle-1+V) 
\end{equation}
if $a \in C^\infty(\overline{\bbR^d})$ has been chosen such that $Pu_1$ has sufficient decay near the backwards direction. This turns out to be elementary, using the technique described below, in \S\ref{sec:bf}. This generalizes the method of Buslaev--Skriganov described above.

\subsubsection{Buslaev--Skriganov's work}
The papers \cite{buslaev1974coordinate, Sk, SkPhD} are relevant to the present paper not because of their construction of a perturbed plane wave, but because of their investigation of the forward asymptotic structure of the whole perturbed plane wave, not just the scattering amplitude. This structure was first conjectured in \cite{buslaev1974coordinate}, where the authors gave a non-rigorous argument based on a formal construction. A rigorous result was announced in \cite{Sk}, but the proof does not appear there, as ``[its] rigorous justification is lengthy.'' For that, Skriganov cites his thesis \cite{SkPhD} (which we were unable to acquire a copy of). 

Unfortunately, the statement \cite[Thm. 1]{Sk} is highly technical. 
It is of interest to us not because of its content (or even its proof), but rather because the formal construction on which the initial paper (with Buslaev) was based made use of parabolic coordinates. In that sense, the authors anticipated the existence of the parabolic wake and thus a key aspect of our formalism.

\section{Main theorem}

We now assemble the ingredients needed to state our main theorem, which makes precise the heuristic statement
given 
in the introduction.
Skip to \Cref{thm:main} in \S\ref{subsec:thm_potential} for the actual statement. 

\subsection{The compactification}

The coexistence in a perturbed plane wave of a parabolic wake and an outgoing spherical wave necessitates a multiscale analysis, which we carry out using the language of compactifications.

In the introduction, we considered the asymptotic behavior of the perturbed plane wave along different rays, and we observed that the asymptotic behavior along a forward ray was different than along any other ray (starting at the origin). More generally, we can follow the wave along any smooth curve of the form 
\begin{equation}
    \Gamma = \operatorname{graph} \gamma =\{(x,\gamma(x)):x\geq 1\} \text{ for }\gamma:[1,\infty)\to \bbR^{d-1}\text{ smooth}.
\end{equation}
Assuming that $\lim_{x\to\infty}\gamma'(x)$ exists, then $\Gamma$ tends to a point on the boundary $\infty \bbS^{d-1} \subset \overline{\bbR^d}$ of the radial compactification.
This probes the structure of scattering in the direction 
\begin{equation} 
	\omega= \Big(\frac{1}{\sqrt{1+|\bfv|^2} },\frac{\bfv}{\sqrt{1+|\bfv|^2}}\Big),\quad \bfv = \lim_{x\to\infty} \gamma'(x).
\end{equation} 
In particular, if $\bfv=\bf0$, then this probes the asymptotic structure of forward scattering.
For $\omega\in \bbS^{d-1}$ in the open forward hemisphere, the ray $\bbR^+\omega$ has the form $\Gamma = \operatorname{graph} \gamma$ for 
\begin{equation} 
    \gamma(x) = \bfv x,
\end{equation} 
where $\bfv$ is the $\bfy$-component of $\omega$ divided by the $x$-component. 
Consider instead the path $\Gamma=\operatorname{graph} \gamma$ for 
\begin{equation} 
\gamma(x) = \bfc x^{1/2}
\end{equation} 
for some $\bfc\in \bbR^{d-1}$. This is one-half of a forward parabola. It turns out that the asymptotic behavior previously observed along the forward ray applies to all forward parabolas (with different constants in the big-$O$). This is why the wake left by a passing wave is described as \emph{parabolic}.

We could state two separate theorems, one about the asymptotic behavior observed along rays and another along forward parabolas. Besides being unsatisfying, this piecemeal treatment would fail to answer what happens for intermediate curves, like 
\begin{equation}\label{eq:intermediate_curve}
    \gamma(x) = \bfc x^{2/3}. 
\end{equation}
Compactification via manifolds-with-corners (mwc) provides a language for keeping track of all possible asymptotic regimes at once, as well as certifying a set of asymptotic behaviors as complete --- equivalently of certifying that a set of asymptotic expansions fit together consistently --- as described in the next subsection. 
To capture the specific multiscale behavior described above, we use a compactification $X\hookleftarrow \bbR^d$ refining the radial compactification, namely 
\begin{equation}
	X = [ \overline{\bbR^d}; \rightarrow ]_{\mathrm{par}}, \label{eq:1205}
\end{equation}
which means the result of performing a parabolic blowup of the forward direction $\rightarrow \in \infty \bbS^{d-1} = \partial \overline{\bbR^d}$. 
See \Cref{fig:X}.
A more explicit definition is by means of a complete atlas of coordinate charts, which show how to identify subsets of $X$ with subsets of $[0,\infty)^2\times \bbR^{d-2}$. Away from the forward direction, the atlas is the same as for the radial compactification. Coordinate charts near the forward direction are specified below. These can be derived from the geometric definition of $X$, or they can be taken as the definition of $X$ (once it is checked that they are compatible).

\begin{figure}[!htbp]
	\begin{center}
		\begin{tikzpicture}
			\begin{scope} 
				\filldraw[fill=lightgray!50] (0,0) circle (2);
				\begin{scope}
					\clip (0,0) circle (2.025);
					\filldraw[fill=white] (2,0) circle (1);
				\end{scope}
				\fill[white] (1.85,1) circle (.1);
				\fill[white] (1.85,-1) circle (.1);
			\end{scope} 
			\node () at (0,2.2) {$X$};
			\draw[darkred,->] (.9,0) to[out=90, in=230] (1.22,.8) node[left] {$\hat{\bfy}=\frac{\bfy}{\sqrt{x}}$};
			\draw[darkred,->] (.9,0) -- (.3,0) node[left] {$\frac{1}{\sqrt{x}}$};
			\draw[darkred,->] (-1.9,0) -- (-1.2,0) node[below] {$\frac{1}{r}$};
			\draw[darkred,->] (-1.9,0) to[out=90,in=240] (-1.6,1) node[right] {$\frac{\pi}{2}-\theta$};
			\draw[darkred,->] (1.6,-1.05) to[out=235,in=30] (.85,-1.7) node[left] {$\theta$};
			\draw[darkred,->] (1.6,-1.05) to[out=160, in=-60] (1,-.5) node[below left] {$\frac{x}{y^2}$};
			\node () at (1.25,0) {ff};
			\node () at (-2.3,0) {bf};
		\end{tikzpicture}
	\end{center}
	\caption{The compactification $X$ with an atlas of coordinate charts depicted. 
		Here, $\theta=\operatorname{arctan}(y/x)$ when $x>0$. Not depicted is the azimuthal coordinate $\bfy / y \in \bbS^{d-2}$. Recall that $y=|\bfy|$.}
	\label{fig:X}
\end{figure}

Note that $X$ has two different boundary hypersurfaces, each corresponding to an asymptotic regime:
\begin{itemize}
	\item bf, for ``boundary face,'' consisting of all of the non-forward points $\infty \omega \in \infty \bbS^{d-1}$, 
	\item the \emph{forward face} $\mathrm{ff}$, which can be considered to consist of all of the endpoints ``at infinity'' of the various forward parabolas $\{(x,\bfc x^{1/2}) :x\ge 0\}$, $\bfc\in \bbR^{d-1}$. 
\end{itemize}

An especially useful coordinate is 
\begin{equation}
        \hat{\bfy} \coloneqq  \frac{\bfy}{x^{1/2}} .
\end{equation}
(Warning: the hat does not mean a unit vector.)
The map $\bbR^+\times \bbR^{d-1}\ni (x,\bfy) \mapsto (1/\sqrt{x},\hat{\bfy})$ extends to a diffeomorphism 
\begin{equation}
    X\supset \{ |\theta| < \pi/2 \}\backslash \mathrm{bf} \to [0,\infty)_{1/\sqrt{x}} \times \bbR^{d-1}_{\hat{\bfy}}.
\end{equation}
So, away from $\mathrm{bf}$, the function $\rho_{\mathrm{ff}}=1/\sqrt{x}$ serves as a boundary-defining-function for $\mathrm{ff}$. The coordinate $\hat{\bfy}$ parametrizes $\mathrm{ff}^\circ$.

Near the corner $\mathrm{bf}\cap \mathrm{ff}$, we use the coordinates 
\begin{equation}\label{eq:corner_coords}
    \rho_{\mathrm{bf}} = \frac{1}{\hat{y}^2},\quad \rho_{\mathrm{ff}} = |\theta| ,\quad \phi=\frac{\bfy}{y}.
\end{equation}
These coordinates yield a diffeomorphism 
\begin{equation}
    X \supset \{ |\theta| < \pi/2\} \backslash \operatorname{cl}_X \{r=x\} \to [0,\infty)_{\rho_{\mathrm{bf}}} \times [0,\pi/2)_{\rho_{\mathrm{ff}}} \times \bbS^{d-2}_\phi.    
\end{equation}
From this, we can identify 
\begin{equation} 
    \mathrm{ff} = (\overline{\bbR^{d-1}_{\hat{\bfy}}})_2,
\end{equation} 
where the subscript `$2$' denotes a  change of smooth structure at the boundary making $\rho_{\mathrm{bf}}=1/\hat{y}^2$ a boundary-defining-function instead of $1/\hat{y}$. It is this choice of smooth structure that is compatible with the smooth structure of the radial compactification, where $1/r$ is a boundary-defining-function. 
(Instead of $|\theta|$, we could use $y/x$ as a local boundary-defining-function of $\mathrm{ff}$.)

A projective variant of the coordinates \cref{eq:corner_coords} is useful for explicit computations. Specifically, in the region $\hat{y}_1>0$ (the other sectors are similar),  
\begin{equation}\label{eq:corner_coords_0}
        \rho_{\mathrm{bf}} = \frac{x}{y_1^2}, \quad \rho_{\mathrm{ff}} = \frac{y_1}{x},\quad w_k = \frac{y_k}{y_1}\text{ for }k\geq 2
\end{equation}
    serve as a valid system of coordinates.

One empirical motivation for working on this compactification is the apparently parabolic nature of the wake seen in concrete examples. A more conceptual perspective is the following: the ratio 
\begin{equation}
    e^{ix}/e^{ir} = e^{i(x-r)} 
\end{equation}
measuring the difference between the oscillations characterizing a plane wave and characterizing a spherical wave depends on the difference $r-x = r(1-\cos \theta)$. 
Away from the forward direction, this is related to $r$ by a non-vanishing proportionality factor, but its structure near the forward direction is more interesting: 
\begin{equation}
    r-x = \hat{y}^2 \aleph(\theta),\quad  \aleph(\theta) = \frac{\cos \theta}{1+\cos \theta} \in C^\infty ( (-\pi,\pi)_\theta ).
\end{equation}
Thus, we could equally well consider $r-x$ as a radial coordinate of $\mathrm{ff}^\circ \backslash \mathrm{fd}$, where $\mathrm{fd}\in \mathrm{ff}$ is the midpoint of $\mathrm{ff}$ (where $\theta,\hat{y}$ cease to be smooth coordinates). This means that $e^{i(x-r)}$ is \emph{non-oscillatory} at $\mathrm{ff}$, while oscillating upon approach to $\mathrm{bf}$. Put more intuitively, the oscillations $e^{ix},e^{ir}$ differ by a smooth multiple near any point in $\mathrm{ff}^\circ$, and the parabolic blowup of $\rightarrow \in \infty \bbS^{d-1}$ is the natural one which achieves that aim.

\subsection{Function spaces}\label{s:function_spaces}
To be smooth on a mwc means to extend smoothly to a manifold in which the mwc is embedded.\footnote{This does not depend on the choice of embedding. Throughout this paper, we will conflate smooth functions on a mwc with their restrictions to the interior.}
If 
\begin{equation}
    w\in C^\infty(X) \subsetneq  C^\infty(\bbR^d), 
\end{equation}
then, by Taylor's theorem, $w$ can be expanded in powers of a boundary-defining-function $\rho_{\mathrm{f}}$ of either $\mathrm{f}=\mathrm{bf},\mathrm{ff}$: 
\begin{equation}
     w \sim \sum_{j=0}^\infty w_{j;\mathrm{f}} \rho_{\mathrm{f}}^j,\quad w_{j;\mathrm{f}}\in C^\infty(\mathrm{f}), 
\end{equation}
which, as usual, means that 
\begin{equation}
    \text{for all }K\in \bbN, w-\sum_{j=0}^K w_{j;\mathrm{f}} \rho_{\mathrm{f}}^j \in \rho_{\mathrm{f}}^{K+1}C^\infty(X).
\end{equation}
We have one expansion at $\mathrm{bf}$ and one at $\mathrm{ff}$. Smoothness at the corner means that the two expansions are compatible, and there exists a joint Taylor series in both defining functions. 

To make contact with the discussion in the previous subsection, consider a smooth curve $\Gamma:\bbR^+\to X^\circ$ such that $\Gamma(\infty)=\lim_{t\to\infty} \Gamma(t)$ exists as a point in $\partial X$. Then, 
\begin{equation} 
    \lim_{t\to\infty} w(\Gamma(t))
\end{equation} 
exists and depends only on $\Gamma(\infty)$. 
Thus, $\partial X$ serves as a mnemonic for the different possible limits. Applied to the case at hand: we have one limit for each non-forward outgoing direction, but in the forward direction we have a whole continuum of possible limits, parametrized by the points in $\mathrm{ff}^\circ$. Note how this framework addresses the problem of intermediate curves. Any intermediate curve hits the corner $\mathrm{bf}\cap \mathrm{ff}$, so the limit depends only on the azimuthal coordinate $\bfy/|\bfy| \in \bbS^{d-2}$.

As already mentioned, we need to make use of Melrose's notion of polyhomogeneity. Recall that an index set $\calE$ is a subset $\calE\subset \bbC\times \bbN$ such that 
\begin{enumerate}[label=(\roman*)] 
    \item for any $\alpha\in \bbR$, only finitely many $(j,k)\in \calE$ have $\Re j<\alpha$, 
    \item $(j,k)\in \calE\Longrightarrow (j+1,k)\in \calE$, 
    \item $(j,k+1)\in \calE \Longrightarrow (j,k)\in \calE$ for all $k\in \bbN$. 
\end{enumerate}
For any two index sets $\calE,\calF$, we have a function space 
\begin{equation}
    \calA^{\calE,\calF}(X)\subset C^\infty(\bbR^d) 
\end{equation}
consisting of polyhomogeneous functions on $X$ with index set $\calE$ at $\mathrm{bf}$ and $\calF$ at $\mathrm{ff}$. Thus, we have two asymptotic expansions, one at $\mathrm{bf}$ and one at $\mathrm{ff}$:
\begin{align}
\begin{split} 
    \exists a_{j,k} \in \calA^\calF(\mathrm{bf})\text{ s.t. } &w\sim \sum_{(j,k)\in \calE} a_{j,k} \rho_{\mathrm{bf}}^j (\log \rho_{\mathrm{bf}})^k , \\
    \exists b_{j,k} \in \calA^\calE(\mathrm{ff})\text{ s.t. } & w\sim \sum_{(j,k)\in \calF} b_{j,k} \rho_{\mathrm{ff}}^j (\log \rho_{\mathrm{ff}})^k, 
\end{split} 
\end{align}
and the two are compatible, so 
\begin{align}
\begin{split} 
    \exists a_{j,k,j',k'}=b_{j',k',j,k} \in C^\infty(\bbS^{d-2}_\phi)\text{ s.t. } &a_{j,k} \sim \sum_{(j',k')\in \calF} a_{j,k,j',k'}(\phi) \rho_{\mathrm{ff}}^{j'} (\log \rho_{\mathrm{ff}})^{k'}, \\ 
    &b_{j',k'} \sim \sum_{(j,k)\in \calE} b_{j',k',j,k}(\phi) \rho_{\mathrm{bf}}^j (\log \rho_{\mathrm{bf}})^k.
    \end{split} 
\end{align}

The precise meaning of `$\sim$' is the usual one, with respect to the conormal function spaces 
\begin{equation} \label{eq:1349}
    \calA^{\alpha,\beta}(X) = \rho_{\mathrm{bf}}^\alpha \rho_{\mathrm{ff}}^\beta \calA^{0,0}(X),
\end{equation} 
\begin{equation}\label{eq:1352}
    \calA^{0,0}(X) = \{g\in C^\infty(\bbR^d): \operatorname{Diff}_{\mathrm{b}}(X) g\in L^\infty \}, 
\end{equation}
where 
$\operatorname{Diff}_{\mathrm{b}}(X)$  is the algebra of differential operators generated over $C^\infty(X)$ by the smooth vector fields on $X$ tangent to both $\mathrm{bf},\mathrm{ff}$.

We refer to \cite{Me93} for a fuller introduction to polyhomogeneity and conormality. 

\subsection{Theorem statement}
\label{subsec:thm_potential}
Our main theorem, as it applies to potential scattering, states the following.  Let 
\begin{equation} 
V \in \langle r \rangle^{-L}C^{\infty}(\overline{\bbR^d}\backslash \{0\};\bbR)
\end{equation} 
denote a short range potential, with decay rate $L\in \bbN^{\geq 2}$. 
Note that $V$ is required to be smooth on the radial compactification (which is what it means for a potential to be classical), except possibly at the origin. We assume the following about the singularity at the origin:
\begin{equation}
    \exists \alpha> - \frac{(d-2)^2}{4}\text{ and } \mathsf{Z}\in \bbR \text{ s.t. }V-\frac{\alpha}{r^2} + \frac{\mathsf{Z}}{r \langle r \rangle} \in \langle r \rangle^{-2}C^{\infty}(\overline{\bbR^d};\bbR),
\end{equation}
so that the perturbed plane wave  $u \in C^{\infty}(\bbR^d\backslash \{0\})$ corresponding to $V$ is well-defined, where we use the Friedrichs extension for $\bigtriangleup+V-1$ (see \Cref{examp:singularities} for details on the functional analysis).
Then:
\begin{theorem*}Away from the origin, the perturbed plane wave has the form  
    \begin{equation}
        u(x,\bfy) = e^{ix} \Big( 1 + \frac{a}{r^{L-1}} \Big) + e^{ir} \frac{w}{r^{(d-1)/2}}
    \end{equation}
    for some $a,w \in C^\infty(\bbR^{d})$ polyhomogeneous on $X$ and smooth at the boundary hypersurface $\mathrm{bf}\subset X$. More precisely, 
    \begin{equation}
        a \in \rho_{\mathrm{ff}}^{-(L-1)} C^\infty(X),\quad w\in  \calA^{(0,0),\calF}(X)  
    \end{equation}
    for some index set $\calF\subset (\bbN\times \{0\})\cup (\bbZ^{\geq L-d}\times \bbN )$.   
\end{theorem*}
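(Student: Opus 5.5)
We construct $u$ as an approximate solution built face by face on $X$, then correct it using the limiting absorption principle, and finally read off polyhomogeneity of the correction.

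\textbf{Step 1: the plane-wave ansatz at $\mathrm{bf}$.} Write $P=\triangle-1+V$ and conjugate: $e^{-ix}Pe^{ix}=\triangle-2i\partial_x+V$. Acting on $r^{-(L-1)}a$, the leading part is the transport operator $-2i\partial_x$. It lowers decay by one order along $\mathrm{bf}$ away from the forward direction. So we solve $-2i\partial_x(r^{-(L-1)}a_0)=-V$ by integrating from $x=-\infty$ along lines $\bfy=\mathrm{const}$, and then iterate in Taylor series at $\mathrm{bf}$. The resulting $a$ is smooth on $\overline{\bbR^d}$ away from $\rightarrow$, and in particular smooth near the backwards direction.

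Near $\rightarrow$ these line integrals behave like $\int_{-\infty}^x (t^2+y^2)^{-L/2}\,dt\sim y^{-(L-1)}$. Hence $r^{-(L-1)}a$ is of size $x^{-(L-1)/2}\hat y^{-(L-1)}$, which is exactly $a\in\rho_{\mathrm{ff}}^{-(L-1)}C^\infty(X)$ near the corner. Into $\mathrm{ff}^\circ$, we instead blow up and solve the $\mathrm{ff}$ model problem. In coordinates $(1/\sqrt x,\hat{\bfy})$, the conjugated operator has principal part at $\mathrm{ff}$ equal to $x^{-1}(\triangle_{\hat\bfy}-i\hat{\bfy}\cdot\partial_{\hat\bfy}-2i x\partial_x)$, acting on terms $x^{-s}$. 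This is a quantum inverted-harmonic-oscillator/Mehler type operator. Dilating by the phase $e^{i\hat y^2/4}$ turns it into $-\partial^2+\hat y^2/4$ plus a spectral shift.

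So we solve the model equations term by term in $\rho_{\mathrm{ff}}$. We take solutions with no incoming oscillation as $\hat y\to\infty$, allowing outgoing solutions $e^{i\hat y^2/2}(\cdots)$. These match, at the corner, the $\mathrm{bf}$ expansion of $e^{i(r-x)}$ times a polynomial in $\rho_{\mathrm{bf}}$. The outgoing pieces are what we place in $w$.

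The result is $u_1=e^{ix}(1+r^{-(L-1)}a)+e^{ir}r^{-(d-1)/2}w_1$ with $Pu_1\in\calA^{\infty,\infty}$ modulo contributions whose $\mathrm{sc}$-wavefront set avoids $\calR_-$.

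\textbf{Step 2: correction by the resolvent.} Set $u=u_1-R(1+i0)Pu_1$; this is legitimate because $Pu_1$ lies above threshold at $\calR_-$, as explained in \S\ref{sec:melrose}. Uniqueness of the Poisson-map description, via Melrose's theorem, identifies this $u$ with the perturbed plane wave. Classical radial-point/Melrose results show that $R(1+i0)$ applied to a Schwartz-type error gives $e^{ir}r^{-(d-1)/2}C^\infty(\overline{\bbR^d})$, which lies in the allowed class. The real content is therefore arranging that the error $Pu_1$ is rapidly decaying at both faces, which is Step 1 carried out to infinite order.

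\textbf{Step 3: index sets.} We then track the exponents. At $\mathrm{ff}$, the model operator has indicial roots at integer shifts, and the sources carry the powers $\rho_{\mathrm{ff}}^{L-1}$ relative to $e^{ix}$. Relative to $e^{ir}r^{-(d-1)/2}$, whose prefactor is $\rho_{\mathrm{ff}}^{d-1}$ at $\mathrm{ff}$, the exponents become integers $\geq L-d$. Logarithms arise when a source exponent hits an indicial root of the harmonic-oscillator eigenproblem, or of the corner matching between $\mathrm{bf}$ and $\mathrm{ff}$. This yields $\calF\subset(\bbN\times\{0\})\cup(\bbZ^{\geq L-d}\times\bbN)$, the $(\bbN,0)$ part coming from the smooth resolvent contribution.

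\textbf{Main obstacle.} I expect the hardest step to be solving the $\mathrm{ff}$ model problem with the right asymptotics uniformly up to the corner, while keeping logs under control. Concretely, this means inverting $\triangle_{\hat\bfy}-i\hat{\bfy}\cdot\partial_{\hat\bfy}+c$ on polyhomogeneous spaces on $(\overline{\bbR^{d-1}})_2$, with outgoing behavior at infinity. I would first try an explicit Fourier/Mehler-kernel representation, since the operator is Gaussian-conjugate to a harmonic oscillator. I would then read off the corner expansion via stationary phase.
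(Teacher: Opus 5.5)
\textbf{Gap 1: smoothness of $a$ is not proved.} Your outline has the same architecture as the paper: transport at $\mathrm{bf}$, an inverted-oscillator model at $\mathrm{ff}$, and limiting absorption on the remainder. The step that gives the precise conclusion $a\in\rho_{\mathrm{ff}}^{-(L-1)}C^\infty(X)$ is missing. You infer it from the size of the first iterate $\int_{-\infty}^x(t^2+y^2)^{-L/2}\,dt$ alone. For the full series $\sum_j N^{-1}(-EN^{-1})^j\tilde f$, with $N=-2i\partial_x$, two further facts are needed.

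First, on the polar blowup each application of $N^{-1}E$ gains one order at $\mathrm{bf}$ but loses one at the front face, because the $\triangle_{\bfy}$ in $E$ gives no decay there. Only on the parabolic $X$, after cutting off near $\mathrm{bf}$, do the iterates lie uniformly in $\rho_{\mathrm{bf}}^{L-1+j}\rho_{\mathrm{ff}}^{L-1}$.

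Second, and more seriously, forward integration generically creates $\log\rho_{\mathrm{ff}}$ terms, up to $j$ powers in the $j$-th iterate. Their absence does not follow from index-set bookkeeping. It rests on a cancellation among the different orderings of the building blocks of $E$ in $(N^{-1}E)^j$, together with classicality of the coefficients on $\overline{\bbR^d}$. It genuinely fails for non-symmetrized compositions, or for coefficients that are smooth only on the polar blowup.

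Without this you only get $a$ polyhomogeneous with logs at $\mathrm{ff}$. Such logs cannot be moved into $w$, because $e^{i(r-x)}$ oscillates near $\mathrm{bf}\cap\mathrm{ff}$. Your Step 3 even predicts logs from ``corner matching'' without explaining why none of them land in $a$.

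\textbf{Gap 2: the $\mathrm{ff}$ step is not uniform up to the corner, and the closing steps are incomplete.} You solve the model problem for $e^{-ix}Pe^{ix}$ and keep the $e^{i\hat y^2/2}$-oscillating solutions. Their matching with $e^{i(r-x)}$ is not uniform at the corner. There $r-x-\hat y^2/2\approx-\hat y^2\theta^2/8=-\rho_{\mathrm{ff}}^2/(8\rho_{\mathrm{bf}})$ is unbounded. So the eikonal defect near $\mathrm{bf}$ (e.g.\ $\partial_x^2$ falling on the phase, of size $\rho_{\mathrm{ff}}^4$) is not lower order than the model operator (size $\rho_{\mathrm{bf}}\rho_{\mathrm{ff}}^2$), and you do not land in $e^{ir}\calA^{(0,0),\calF}(X)$.

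The remedy is as follows.
\begin{itemize}
\item Note that the Step 1 error is Schwartz at $\mathrm{bf}$, and $e^{ix}\calA^{\infty,*}(X)=e^{ir}\calA^{\infty,*}(X)$.
\item Work with the exact conjugation $e^{-ir}Pe^{ir}$, in coordinates valid up to the corner.
\item Its indicial family is conjugate to $\triangle_{\hat{\bfy}}-\hat y^2/4+i(j-\tfrac{d-1}{2})$. This is the inverted oscillator; your $+\hat y^2/4$ is the ordinary one.
\item Invert it by absorption in the spring constant. This loses exactly one order in $\varrho=(1+\hat y^2)^{-1}$.
\end{itemize}

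At the exceptional $j$, the indicial operator $I(j)$ has nontrivial finite-dimensional kernel and cokernel. To continue the recursion with logs, you need that $I'(j)$ maps $\ker I(j)$ onto the cokernel. The paper obtains this from a simple zero in $j$ of the recessive/outgoing Wronskian; saying that logs arise ``at indicial roots'' does not supply it.

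Finally, the $\mathrm{ff}$ step only leaves an error in $e^{ir}\langle r\rangle^{-(d+3)/2}C^\infty(\overline{\bbR^d})$, not in $\calA^{\infty,\infty}$. A spherical transport step at $\mathrm{bf}$, with $-2i\partial_r-i(d-1)/r$, is still needed before the Schwartz-input resolvent result applies. Since $V$ may have an $r^{-2}$ singularity at the origin, the limiting absorption principle and uniqueness for the Friedrichs realization must also be justified separately.
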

\begin{remark*}
    The scattering amplitude $f:\bbS^{d-1}\backslash\{\rightarrow\}\to \bbC$ is the restriction $w|_{\infty \bbS^{d-1}}$ of $w$ to the sphere at infinity. This makes literal sense away from the forward direction. We will see below, based on the particular index set $\calF$, that \begin{equation} 
    w|_{\infty \bbS^{d-1}\backslash \{\rightarrow\}}\in L^1(\infty \bbS^{d-1}),
    \end{equation} 
    so $w|_{\infty \bbS^{d-1}}$ is well-defined as a distribution.
\end{remark*}
    
    Here, $(0,0)=\bbN\times \{0\}$ is (standard) shorthand for the index set describing smoothness. 
    Thus, $u$ has the form claimed in the introduction:
    \begin{equation}
        u = e^{ix}  + \underbrace{\frac{ e^{ix} a}{r^{L-1}}}_{\mathclap{\text{plane wave correction}}}+ \overbrace{ \frac{ e^{ir}w}{r^{(d-1)/2}}}^{\mathclap{\text{spherical wave}}},
    \end{equation}
    with the plane wave corrections and parabolic wake (split among $a,w$) of the claimed sizes.

    Because $e^{i(x-r)}$ is smooth at $\mathrm{ff}\backslash \mathrm{bf}$, we can trade elements of $C_{\mathrm{c}}^\infty(X\backslash \mathrm{bf})$ between the plane wave and spherical wave parts of $u$. The two parts are therefore indistinguishable away from $\mathrm{bf}$. On the other hand, the germs of $a,w$ at $\mathrm{bf}$ can be shown to be unique.

\begin{remark*}
    We assume that $V$ is smooth at $\infty \bbS^{d-1}$, but one can allow controlled polyhomogeneity instead. Then, the corrections above would be merely polyhomogeneous at $\mathrm{bf}$, instead of smooth.  
\end{remark*}

\subsection{Example: inverse-square potential, cont.}\label{subsec:inverse_square_example}
In order to illustrate the main theorem, we return to the example of the inverse-square plane wave $u\in C^\infty(\bbR^3\setminus \{0\})$ (\cref{eq:special_dipole_plane_wave}).
Our starting point is the following: for any $r_0>0$, in the region $\{r>r_0\}$, the approximation  
\begin{equation}\label{eq:dipole_good}
        u = e^{ix}  +\frac{e^{ir}}{\sqrt{r}} a_0 e^{-iz} J_0(z)  + \frac{e^{ir}}{r} a_1  \log \bigg(\theta^2 + \frac{1}{r} \bigg) + O \Big(\frac{1}{r} \Big),\qquad z=\frac{r-x}{2}
\end{equation}
holds, 
where 
\begin{equation}\label{eq:dipole_constants}
    \quad a_0 = - \alpha e^{\frac{i\pi}{4}}  \Big(\frac{\pi}{2} \Big)^{\frac{3}{2}},\quad a_1 = -\frac{i\pi^2\alpha^2}{16}
\end{equation}
are some numerical constants. See \S\ref{sec:dipole_some_more} for a derivation.

The constant in the big-$O$ is uniform, meaning that it does not depend on $\theta \in [0,\pi]$. In particular, it holds uniformly near the forward direction $\theta=0$. As noted below, all of the terms on the right-hand side of \cref{eq:dipole_good}, except $e^{ix}$, are $O(1/r)$ outside of any small sector 
\begin{equation}
\{ |\theta|<\varepsilon, r>r_0>0 \},\quad \varepsilon>0
\end{equation}
around the forward direction. But they are \emph{not} $O(1/r)$ globally in any angular sector containing the forward direction; the first correction term is only $O(1/\sqrt{r})$ globally, and the second is only $O( (\log r)/r)$:
\begin{equation}
     u = \underbrace{e^{ix}}_{O(1)}  +\underbrace{\frac{e^{ir}}{\sqrt{r}} a_0 e^{-iz} J_0(z)}_{u^{[1]}=O(1/\sqrt{r})}  + \underbrace{\frac{e^{ir}}{r} a_1  \log \bigg(\theta^2 + \frac{1}{r} \bigg)}_{u^{[2,0]}=O((\log r)/r)}  +  O\Big(\frac{1}{r} \Big).
\end{equation}
This substantiates our claims about this example made in the introduction.

\begin{figure}[!htbp]
    \centering
    \includegraphics[scale=1.1]{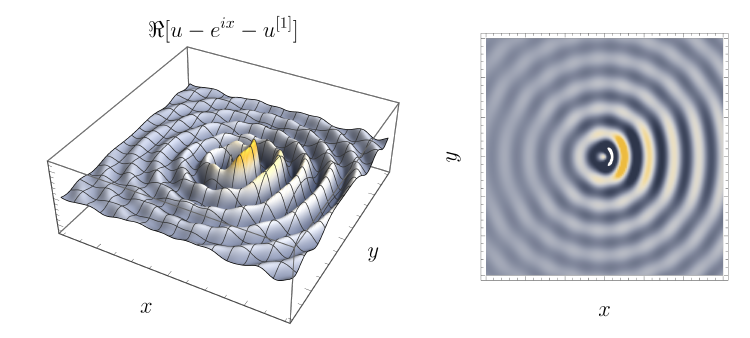}
    \caption{The (real part of the) difference $u-e^{ix}-u^{[1]}$, where $u$ is the inverse-square plane wave, and $u^{[1]}$ is \cref{eq:u_brak1}. The vertical scale is the same as in \Cref{fig:dipole_plane_wave}, \Cref{fig:dipole_plane_wave_dif}.}
    \label{fig:dipole_plane_wave_res}
\end{figure}

Notice that the primary correction 
\begin{equation}\label{eq:u_brak1}
    u^{[1]} = \frac{e^{ir}}{\sqrt{r}}  a_0 e^{-iz} J_0(z) 
\end{equation}
depends prominently on the difference $r-x=2z$.
Since this coordinate parametrizes the interior of the front face $\mathrm{ff}\subset X$, $u^{[1]}$ is polyhomogeneous on $X\backslash \mathrm{bf}$ with one order of decay (meaning $O(1/\sqrt{r})$) at $\mathrm{ff}^\circ$. This is why the forward wake left by the wave spreads out parabolically as it propagates.
To analyze the situation at the remaining face $\mathrm{bf}\subset X$, we use the large-argument approximation of the Bessel function:
\begin{equation}
    u^{[1]} \approx  - \alpha \Big(\frac{\pi}{2} \Big)^{\frac{3}{2}} \frac{1 }{r} \frac{e^{ir} + ie^{ix} }{\sqrt{ \pi (1-\cos \theta ) }}  = - \frac{\pi \alpha}{4} \frac{1}{r} \frac{e^{ir} + ie^{ix} }{ \sin(\theta/2) } \text{ when }z\gg 1.
\end{equation}
Thus, $u^{[1]}$ consists of an outgoing spherical wave and a correction to the incoming plane wave. These are both $O(1/r)$ with a singular amplitude $\propto (1-\cos \theta)^{-1/2}$. 
The large-argument approximation of the Bessel function is the just the leading term in the Hankel expansion. The Hankel expansion (which can be differentiated term-by-term) gives a (noncanonical) splitting 
\begin{equation} 
    J_0(z) = \frac{1}{\langle z \rangle^{1/2}} (e^{iz}f_1+e^{-iz}f_2)
\end{equation} 
for $f_1,f_2\in C^\infty([0,\infty]_{z})$. (This splitting is noncanonical because we can trade an arbitrary Schwartz function between the two $f_j$'s.) 
Thus, after redefining $f_1$ and $f_2$,
\begin{equation}
    u^{[1]}= \frac{e^{ir}}{r} \underbrace{\frac{f_1}{\sqrt{1-\cos \theta }}}_{w^{[1]} } + \frac{e^{ix}}{r} \underbrace{\frac{f_2}{\sqrt{1-\cos \theta} }}_{a^{[1]} }   = \frac{e^{ix}}{r} a^{[1]} + \frac{e^{ir}}{r} w^{[1]}  
\end{equation}
for $a^{[1]},w^{[1]}$ defined as indicated. 
These are polyhomogeneous functions on $X$. Specifically, $a^{[1]}, w^{[1]} \in \rho_{\mathrm{ff}}^{-1} C^\infty(X)$.
Note the singularity in the forward direction.
Thus, the correction $u^{[1]}$ is exactly of the form captured by our main theorem. 

Regarding the correction 
\begin{equation}\label{eq:u_brak20}
    u^{[2,0]}=\frac{e^{ir}}{r} a_1  \log \bigg(\underbrace{\theta^2 + \frac{1}{r} }_{\sim \rho_{\mathrm{ff}}^2}\bigg), 
\end{equation}
we remark only that it contributes a logarithmic term at the front face $\mathrm{ff}$, reiterating the need for working with the notion of polyhomogeneity.

In order to numerically test the form of $u^{[2,0]}$, it is prudent to use the following refinement:
\begin{equation}
    u^{[2]} = \frac{e^{ir}}{r} \bigg[ R(\theta) + \frac{i\pi^2\alpha^2}{8} \bigg( K \bigg( \cos \frac{\theta}{2} \bigg) -\frac{1}{2} E_1(i(r-x)) \bigg) \bigg],
\end{equation}
where 
\begin{itemize}
    \item $R(\theta)\in C^0(\bbS^1_\theta)$ is defined by 
    \begin{equation}
        R(\theta) = \sum_{\ell=0}^\infty \bigg[a_\ell +\frac{\pi \alpha}{2} - \frac{i\pi^2\alpha^2}{8(\ell+1/2)} \bigg] P_\ell(\cos \theta) ,
    \end{equation}
    for 
    \begin{equation}
         a_\ell = (2\ell+1) e^{i\delta_\ell} \sin \delta_\ell,\quad \delta_\ell = \frac{\pi}{2} \bigg( \ell+\frac{1}{2} - \sqrt{\alpha+\Big(\ell+\frac{1}{2}\Big)^2 }\, \bigg) 
    \end{equation}
    \item 
        $K$ is the complete elliptic integral of the first kind:
        \begin{equation}\label{eq:K}
            K (s) =\int_0^{\pi/2} \frac{\dd \phi}{\sqrt{1-s^2 (\sin \phi)^2}}
        \end{equation}
        \item and 
        \begin{equation}
            E_1(z) = \int_z^\infty \frac{e^{-t}}{t} \dd t 
        \end{equation} 
        is the exponential integral, whose principal branch is defined on $\bbC \setminus (-\infty,0]$.
\end{itemize}
The difference $u^{[2]}-u^{[2,0]}$ is uniformly $O(1/r)$ as $r\to\infty$. Indeed, $R(\theta)$ is a continuous function of $\theta$, and: 

\begin{proposition}
    The quantity $\Lambda = K (\cos (\theta/2)) -\frac{1}{2} E_1(i(r-x))$ is given by 
    \begin{equation}
         \Lambda = - \frac{1}{2} \log \bigg( \theta^2+\frac{1}{r} \bigg) + O(1) \text{ in }r>1.
    \end{equation}
\end{proposition}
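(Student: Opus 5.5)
The plan is to estimate the two pieces of $\Lambda$ separately, using their classical asymptotics, and then compare the resulting logarithms. Throughout, $\theta\in[0,\pi]$ and $r>1$. Since $\theta^2+1/r$ is bounded above and below on $\{\theta\geq \pi/2\}$, we may concentrate on small $\theta$.

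\emph{The elliptic integral.} Set $s=\cos(\theta/2)$, so the complementary modulus is $s'=\sin(\theta/2)$. The classical expansion
\[
K(s)=\log\frac{4}{s'}+O\big(s'^2\log s'\big)\quad\text{as } s\to 1^-
\]
gives $K(\cos(\theta/2))=-\log\theta+O(1)$ uniformly for $\theta\in(0,\pi]$. Away from $\theta=0$, $K$ is continuous and bounded, so this form of the estimate holds on the whole range. This can also be checked directly from \cref{eq:K} by splitting the integral near $\phi=\pi/2$.

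\emph{The exponential integral.} Set $t=r-x=r(1-\cos\theta)\geq 0$. On the imaginary axis we use two standard bounds:
\[
E_1(it)=-\gamma-\log t-\tfrac{i\pi}{2}+O(t)\ \text{ for } 0<t\leq 1, \qquad E_1(it)=O(1/t)\ \text{ for } t\geq 1.
\]
The second follows by integrating by parts along the ray in $\int_{it}^{\infty}e^{-\tau}\tau^{-1}\,\dd\tau$, or equivalently from $E_1(it)=-\mathrm{Ci}(t)+i\,\mathrm{si}(t)$. Hence
\[
E_1(i(r-x))=-\log\min\{t,1\}+O(1).
\]
Since $1-\cos\theta\asymp\theta^2$, we have $t\asymp r\theta^2$.

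\emph{Combining.} With $t\asymp r\theta^2$ we get $-\log\min\{t,1\}=-\log\min\{r\theta^2,1\}+O(1)$. Therefore
\[
\Lambda=-\log\theta+\tfrac12\log\min\{r\theta^2,1\}+O(1).
\]
The elementary identity to use is
\[
-\log\theta+\tfrac12\log\min\{r\theta^2,1\}=-\tfrac12\log\max\{\theta^2,1/r\}.
\]
To verify it, take the case $r\theta^2\leq1$: the left side is $-\log\theta+\tfrac12\log r+\log\theta=-\tfrac12\log(1/r)$, which matches. In the case $r\theta^2>1$, both sides equal $-\log\theta$. Finally, $\max\{a,b\}\asymp a+b$, so the right side equals $-\tfrac12\log(\theta^2+1/r)+O(1)$, which is the claim.

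\emph{Where the care is needed.} The main point is the interplay at $\theta=0$. There $K$ blows up while $E_1(i t)$ also blows up, since $t\to0$. The argument works because the singular logarithms cancel exactly in the overlap region $r\theta^2\lesssim 1$, where both expansions must be used with uniform error terms. I would make sure the $O(t)$ and $O(s'^2\log s')$ remainders are uniform; they are, since both arguments stay in compact ranges. I would also check that the transition at $t\approx 1$ costs only $O(1)$.
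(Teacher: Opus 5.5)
Your proof is correct and follows essentially the paper's argument. Both use the same two inputs, namely $K(\cos(\theta/2))=-\log\theta+O(1)$ and $E_1(i(r-x))$ being $-\log(r-x)+O(1)$ for $r-x\le 1$ and $O(1)$ otherwise, matched in the regimes $r\theta^2\gtrsim 1$ and $r\theta^2\lesssim 1$. The only difference is bookkeeping: you use $r-x\asymp r\theta^2$ and the explicit $\min/\max$ identity, whereas the paper argues separately on two overlapping regions of $X$ (away from the forward ray, and away from $\mathrm{bf}$).
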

\begin{proof}
    First, note that 
    \begin{equation} 
        K \Big( \cos \frac{\theta}{2} \Big) = -\log \theta + O(1) 
    \end{equation} 
    as $\theta\to 0$,  while, for $r-x>0$, 
    \begin{equation*}
        E_1(i(r-x))=\int_{i(r-x)}^{i} \frac{e^{-t}}{t} \dd t +  O(1) = \int_{r-x}^1 \frac{e^{-it}}{ t} \dd t+O(1)= \begin{cases} -\log(r-x)+O(1) & ( r-x<1),\\
        O(1) &(r-x\geq 1).\end{cases}
    \end{equation*}
    So, away from the ray $\operatorname{cl}_X\{r=x\}$ (hitting the front face $\mathrm{ff}$ at its midpoint), we have $\Lambda = - \log \theta+O(1)$. For comparison, $-2^{-1} \log(\theta^2+1/r) = - \log \theta + O(1)$ there, since 
    \begin{equation} 
    1/r \in \theta^2 C^\infty(X\backslash \mathrm{cl}_X\{r=x\}).
    \end{equation} 
    The two agree.  
    On the other hand, away from $\mathrm{bf}$, we have 
    \begin{equation}
        \Lambda = - \log \theta  + \frac{1}{2} \log (r-x) + O(1) = \frac{1}{2} \log r +   \underbrace{\frac{1}{2} \log \Big( \frac{1-\cos \theta}{\theta^2} \Big) }_{=O(1)}+ O(1), 
    \end{equation}
    whereas, since $\theta^2 \in r^{-1} C^\infty(X\backslash \mathrm{bf})$,
    \begin{equation}
        -\frac{1}{2} \log \Big(\theta^2 +\frac{1}{r} \Big) = \frac{1}{2} \log r+ O(1).
    \end{equation}
    Thus, we have agreement here as well.
\end{proof}

Although $u^{[2]}-u^{[2,0]}=O(1/r)$ as $r\to\infty$, the constant is big, so the difference is numerically significant for our plots.  
A plot of 
\begin{equation} 
u-e^{ix}-u^{[1]}- u^{[2]} = O(1/r)
\end{equation} 
is shown in \Cref{fig:dipole_plane_wave_Out}. 

\begin{figure}[!htbp]
    \centering
    \includegraphics[scale=1.1]{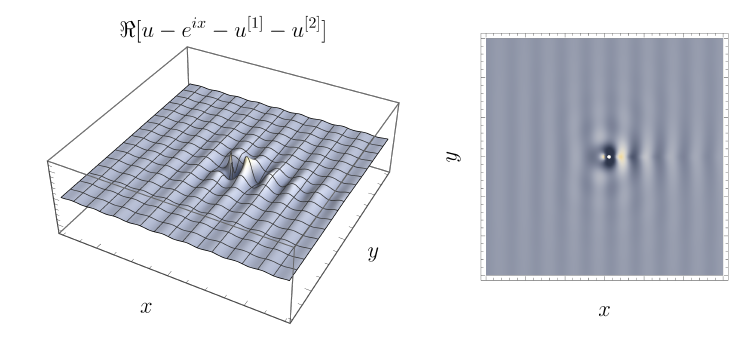}
    \caption{The (real part of the) difference $u-e^{ix}-u^{[1]}-u^{[2]}$, for the example above.}
    \label{fig:dipole_plane_wave_Out}
\end{figure}

\subsection{Asymptotically Euclidean data}
We now state our theorem for general asymptotically Euclidean settings.
\label{subsec:asymptotically_Euclidean}

Consider $P\in \operatorname{Diff}^2(\bbR^d)$ given by 
\begin{align}\label{eq:P}
\begin{split} 
    P &= P_0 + Q,\qquad P_0=\triangle-1, \\
    Q&= \sum_{i,j=1}^d \partial_i ( a^{ij} \partial_j)  + i\sum_{j=1}^d (b^j \partial_j + \partial_j b^j ) + V 
\end{split} 
\end{align}
where $\triangle = -\partial_x^2+\triangle_{\bfy}$ is the positive semidefinite Laplacian, as usual, and 
\begin{equation}
a^{ij} ,b^j, V \in \langle r \rangle^{-2} C^\infty(\overline{\bbR^d};\bbR)\quad \text{such that}\quad a^{ij}=a^{ji}
\end{equation}
are classical short-range perturbations of the coefficients. 
In addition, assume that the form $g^{ij}=\{\delta^{ij} - a^{ij}\}_{i,j=1}^d$ is positive definite: 
\begin{equation}
    |v|^2 - \sum_{i,j=1}^d v_i a^{ij} v_j >0\text{ for all nonzero }v\in \bbR^d,  
\end{equation}
so that $P$ is elliptic in the ordinary sense. This is equivalent to saying that $P$ is a short-range perturbation of $\bigtriangleup_g-1$ for some asymptotically Euclidean metric $g$.

Let $L\in \bbN^{\geq 2}$ be such that $Q$ has the form above at large-$r$, with $L-2$ extra orders of decay:
\begin{equation}\label{eq:L_def}
a^{ij} ,b^j, V \in \langle r \rangle^{-L} C^\infty(\overline{\bbR^d};\bbR).
\end{equation}
\begin{theorem} 
    \label{thm:main}
Away from the origin, the perturbed plane wave has the form  
    \begin{equation}
        u(x,\bfy) = e^{ix} \Big( 1 + \frac{a}{r^{L-1}} \Big) + e^{ir} \frac{w}{r^{(d-1)/2}}
    \end{equation}
    for some $a,w \in C^\infty(\bbR^{d})$ polyhomogeneous on $X$ and smooth at the boundary hypersurface $\mathrm{bf}\subset X$. More precisely, 
    \begin{equation}
        a \in \rho_{\mathrm{ff}}^{-(L-1)} C^\infty(X),\quad w\in  \calA^{(0,0),\calF}(X)  
    \end{equation}
    for some index set $\calF\subset (\bbN\times \{0\})\cup (\bbZ^{\geq L-d}\times \bbN )$.  
\end{theorem}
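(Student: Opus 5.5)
We would prove \Cref{thm:main} by a parametrix construction on $X$ followed by an outgoing resolvent correction, in three stages. First, at $\mathrm{bf}$ away from the forward direction, we solve for the plane wave correction. Writing $u_1=e^{ix}(1+a r^{-(L-1)})$ and conjugating, $e^{-ix}Pe^{ix}=-2i\partial_x+\triangle+e^{-ix}Qe^{ix}$. The leading operator at $\mathrm{bf}$ is the transport operator $-2i\partial_x$, which lowers decay by one order in $1/r$, while $e^{-ix}Qe^{ix}\,1\in r^{-L}C^\infty(\overline{\bbR^d})$. We therefore solve the transport equations $2i\partial_x a_j=\dots$ order by order in Taylor series at $\mathrm{bf}$. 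Integrating from $x=-\infty$ along lines $\bfy=$ const produces coefficients that are smooth on $\infty\bbS^{d-1}\setminus\{\rightarrow\}$. Each step loses a factor of $x/y^2$ or $1/\theta$ near the forward direction. Bookkeeping in the corner coordinates \cref{eq:corner_coords_0} shows the $j$-th term is $\rho_{\mathrm{ff}}^{-(L-1)}$ times something smooth on $X$, since the orders of $\rho_{\mathrm{bf}}$ and $\rho_{\mathrm{ff}}$ match. Borel summation then gives $a\in\rho_{\mathrm{ff}}^{-(L-1)}C^\infty(X)$ with $Pu_1\in \calA^{\infty,\beta}$, i.e.\ residual at $\mathrm{bf}$ but not at $\mathrm{ff}$.

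Second, we handle $\mathrm{ff}$. Rescaling near $\mathrm{ff}$ with $\rho_{\mathrm{ff}}=x^{-1/2}$ and $\hat{\bfy}$, the conjugated operator $e^{-ix}Pe^{ix}$ has leading model $x^{-1}\big(-2i(x\partial_x)-i\hat{\bfy}\cdot\partial_{\hat{\bfy}}+\triangle_{\hat{\bfy}}\big)$ acting on homogeneous terms $\rho_{\mathrm{ff}}^{s}v(\hat{\bfy})$. This is the quantum inverted harmonic oscillator, a Schr\"odinger-type normal operator on $\mathrm{ff}=(\overline{\bbR^{d-1}_{\hat{\bfy}}})_2$. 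For each order $s$ we must solve $N(s)v=h$, where $h$ comes from the error of the previous step, choosing the solution that is outgoing at $\partial\mathrm{ff}$. Outgoing means that near $\mathrm{bf}\cap\mathrm{ff}$ it splits as a smooth term plus $e^{i\hat y^2/2}$ times a smooth term, matching $e^{i(r-x)}$. This is the main obstacle. We need an explicit inverse of $N(s)$ between spaces with this two-oscillation asymptotic behavior at $\partial\mathrm{ff}$, which we would obtain by conjugating by $e^{i\hat y^2/4}$ to a harmonic-oscillator/Mehler or Fourier-transform representation (the $J_0$ of the inverse-square example suggests Fourier in $\hat{\bfy}$ gives a first-order ODE in a dilation variable). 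We also need to identify the indicial roots. The $\hat{\bfy}$-homogeneous kernel elements of the model at $\mathrm{bf}\cap\mathrm{ff}$ correspond to $\rho_{\mathrm{ff}}^{d-1}$ relative to $e^{ix}$, i.e.\ the spherical-wave normalization $r^{-(d-1)/2}$. Coincidences between these and the forcing orders $L-1,L,\dots$ produce the logarithms and restrict $\calF\subset(\bbN\times\{0\})\cup(\bbZ^{\ge L-d}\times\bbN)$. Iterating and asymptotically summing yields $\tilde u=e^{ix}(1+a r^{1-L})+e^{ir}\tilde w r^{-(d-1)/2}$ with $P\tilde u\in\dot C^\infty(X)$, i.e.\ Schwartz.

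Third, we set $u=\tilde u-R(1+i0)P\tilde u$. Since $P\tilde u$ is Schwartz, Melrose's result on the outgoing resolvent \cite{Me94} gives $R(1+i0)P\tilde u\in e^{ir}r^{-(d-1)/2}C^\infty(\overline{\bbR^d})$. This is smooth on $X$ after pullback, so it is absorbed into $w$ with index $(0,0)$ at $\mathrm{ff}$. Uniqueness of the perturbed plane wave, identifying this $u$ with the Melrose--Zworski one, follows because the difference is an outgoing solution with no incoming data, hence zero. Near the origin, the Friedrichs extension and the singular terms in $V$ play no role, since only compactly supported modifications are involved.
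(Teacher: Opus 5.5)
Your architecture (transport along $x$ at $\mathrm{bf}$, the QIHO at $\mathrm{ff}$ with an outgoing inverse, then $R(1+i0)$) is the paper's, but several steps fail as written.

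\textbf{Stage 1.} Matching the $\rho_{\mathrm{bf}}$ and $\rho_{\mathrm{ff}}$ orders only places the $j$th iterate in $\rho_{\mathrm{bf}}^{L-1+j}\rho_{\mathrm{ff}}^{L-1}$ times something polyhomogeneous on $X\setminus\mathrm{fd}$. It does not exclude $\log\rho_{\mathrm{ff}}$ terms. Forward integration creates a logarithm whenever the integrand decays only like $1/x$ at fixed $\bfy$. Since $\triangle_{\bfy}$ gains no decay at the front face of $Y$, this does happen for individual iterates; the a priori index sets are those of \Cref{prop:mainI_init0}.

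That the logs cancel is a nontrivial algebraic fact (\S\ref{sec:logs}). It uses the symmetrized structure of $(N^{-1}E)^j[1]$ and smoothness of the coefficients on $\overline{\bbR^d}$, not merely on $Y$; both counterexamples there show it fails otherwise. Because $e^{i(x-r)}$ oscillates at $\mathrm{bf}$, such logs cannot be moved from $a$ into $w$. So the claim $a\in\rho_{\mathrm{ff}}^{-(L-1)}C^\infty(X)$ needs this argument. You also have to localize away from $\mathrm{fd}$, where the iterates grow like $x^j$.

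\textbf{End of stage 2.} Stage 2 cannot end with a Schwartz error. Summing in $\rho_{\mathrm{ff}}$ kills the error to infinite order at $\mathrm{ff}$. But the terms only decay like $\langle\hat y\rangle^{-(d-1)}$ toward $\partial\mathrm{ff}$, so what remains lies in $e^{ir}\langle r\rangle^{-(d+3)/2}C^\infty(\overline{\bbR^d})$ (\Cref{prop:II_main}). You need a further Frobenius step at $\mathrm{bf}$ with $N_{\mathrm{bf}}(\hat P)=-2i\partial_r-i(d-1)/r$ (\Cref{prop:III_main}) before Melrose's result for Schwartz forcing applies.

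\textbf{Stage 2 itself.} This is the main gap: it carries the content of the theorem but is only sketched.

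Your criterion for ``outgoing'' (a smooth term plus $e^{i\hat y^2/2}$ times a smooth term near $\partial\mathrm{ff}$) is just the general asymptotic form of solutions of the model, so it selects nothing. The branch that is non-oscillatory relative to $e^{ix}$ is what the other inverse $R_+(j)$ produces; this is the $L^2$-bounded inverse when $\Re j>(d-1)/2$. That branch has plane-wave character at $\mathrm{bf}$. With finite decay there, it reintroduces exactly the errors stage 1 removed.

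You need $R_-(j)$, which after conjugation by $e^{ir}$ has non-oscillatory polyhomogeneous output with the weights of \Cref{cor:main_II}. The paper obtains it in three moves. It gives the spring constant a small imaginary part. It substitutes $z\sim\hat y^2$, turning $L(j)$ into a Coulomb-type Helmholtz operator on an asymptotically conic space, where Vasy's Fredholm theory applies. It then upgrades conormality to polyhomogeneity.

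Your account of the logarithms is also misplaced. The corner model $-i\hat{\bfy}\cdot\nabla_{\hat{\bfy}}-i(d-1)$ does not depend on $j$, so there is no corner indicial coincidence. Logs originate at the discrete orders $j=d-1+\ell+2m$, where $I(j)$ has global kernel on $\mathrm{ff}$ (the Laguerre-type elements of \cref{eq:K_ker}) and a cokernel of equal dimension. At those orders \cref{eq:II_RHS} is solvable because $I'(j)\ker I(j)$ spans the cokernel. This is \Cref{lem:coker_fund}, proved via the simple zero of the Wronskian \cref{eq:key_Wronskian}. Your plan supplies no Fredholm setup or other mechanism guaranteeing this.

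\textbf{Smaller points.} With $x\partial_x$ taken at fixed $\hat{\bfy}$, your model has the sign of $\hat{\bfy}\cdot\nabla_{\hat{\bfy}}$ appropriate to $e^{-ir}Pe^{ir}$, not $e^{-ix}Pe^{ix}$. Your uniqueness claim is too quick. The difference of the $e^{ix}$-parts of two such solutions is not a priori outgoing. Moreover $w|_{\mathrm{bf}}$ can blow up like $\theta^{L-d}$, which is not in $L^2$ when $L\le(d+1)/2$. The paper instead identifies $u$ through the Poisson map (\Cref{prop:Poisson}).
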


The proof is spread over the rest of the body of this paper.

\begin{remark*}
    The potential scattering version of our theorem allowed a mild singularity at the origin and is therefore technically not covered by the version of the theorem just stated.  In \S\ref{sec:black-box}, we will state a version (\Cref{thm:main_black-box}) of our main theorem allowing for general sorts of singularities, obstacles, etc.
\end{remark*}

\section{Outline of proof}

\label{sec:outline}
Let us restate our goal. 
Consider the Helmholtz--Schr\"odinger operator 
\begin{equation}\label{eq:general_P}
    P=\triangle -1+Q,\text{ where }
    Q = \sum_{i,j=1}^d \partial_{i}( a^{ij} \partial_j) + i\sum_{j=1}^d (b^j \partial_j +\partial_j b^j) + V 
\end{equation}
for $a^{ij},b^j, V\in \langle r \rangle^{-L} C^\infty(\overline{\bbR^d})$, $L\in \bbN^{\geq 2}$.
Our goal is to produce a solution $u$ of the Helmholtz--Schr\"odinger equation $Pu=0$,
possibly satisfying some black-box admissibility criterion, of the form 
\begin{equation}
   u = e^{i  x} \Big(1+\frac{a}{r^{L-1}}\Big) + e^{ir} \frac{1}{r^{(d-1)/2}} w, 
   \label{eq:form}
\end{equation}
for $a,w$ polyhomogeneous functions on $X$ with some sufficiently mild singularity in the forward direction. 
First, we algorithmically construct a quasimode that satisfies the desired equation modulo a Schwartz error. This is defined on $\bbR^d$, outside of some large ball.
Second, we solve away the remaining Schwartz error using the limiting resolvent $R(1+i0)$, leveraging our knowledge about its mapping properties when applied to Schwartz functions.

\subsection*{Structure of the algorithm} 
We divide the steps of the algorithm into \emph{main} steps and \emph{sub}steps. 

The algorithm begins with the crude approximation 
\begin{equation}
    u_0  =  e^{i x}
\end{equation}
to $u$. Of course, this will not solve 
the PDE exactly. It does solve $P_0 u_0=0$, where $P_0=\triangle-1$ is the free Helmholtz operator. Because $P$ differs from $P_0$ by decaying terms, $u_0$ is an \emph{approximate} solution (quasimode) of $Pu=0$. 
The goal of each step in the algorithm is to improve our approximate solution to a better approximate solution (by adding small terms), until we arrive at a quasimode with a Schwartz error, at which point we solve that Schwartz error away via the limiting absorption principle. 
The algorithm consists of three main steps, each of which consists of infinitely many substeps, which are followed by a Borel-type construction to tie the infinitely many sub-steps together. 
After the third step, we apply the limiting absorption principle to the Schwartz remainder.

We will denote the output of the $k$th little step within the $j$th main step as $u_{j,k}$. Thus, 
\begin{equation}
\boxed{ 
     u \coloneq u_{4}
     }
\end{equation}
will be the final output of the algorithm. We use $u_{j}$ to denote the final output of the $j$th main step, for $j=1,2$. 
Thus, we have an ordinal sequence 
\begin{align}
\begin{split} 
     u_0=&u_{1,0}, \\
     &u_{1,1},\\ 
     &\;\; \vdots\\ &u_{1}=u_{2,0}, \\
    &\hspace{3.3em} u_{2,1},\\
    &\hspace{4em} \vdots\\
    &\hspace{3.3em} u_{2}=u_{3,0},\\ 
    &\hspace{6.6em} u_{3,1}, \\
    &\hspace{7.3em} \vdots\\
    &\hspace{7em} u_{3} ,
    \end{split} 
\end{align}
of better approximate solutions of the PDE. 
For each $j=1,2,3$ and substep label $k\in \bbN^+$, let
\begin{equation}
    f_{j,k} \coloneq -Pu_{j,k-1}
\end{equation}
denote the error by which $u_{j,k-1}$ fails to solve the Helmholtz--Schr\"odinger equation. Then, we will construct 
\begin{equation}
    u_{j,k} \coloneq u_{j,k-1} + v_{j,k} 
\end{equation}
by adding to $u_{j,k-1}$ a correction $v_{j,k}$, which will be chosen such that 
\begin{equation} \label{eq:approx_goal}
P v_{j,k}\approx  f_{j,k},
\end{equation} 
where the ``$\approx$'' means modulo terms which are considered negligible at this point in the algorithm, usually in the sense of decay rate, either at bf or ff.
Given this choice of $v_{j,k}$, our $u_{j,k}$ satisfies $P u_{j,k} \approx 0$. 
So, $f_{j,k+1} = -Pu_{j,k}$ will have better decay rate than $f_{j,k}$, in the relevant sense. Now repeat.

For $j=1,2,3$, we will use 
\begin{equation} 
    v_j \coloneq \mathsf{Borel}\bigg(\sum_{k} v_{j,k}\bigg)
\end{equation} 
to denote the result of asymptotically summing the various $v_{j,k}$'s in the $j$th step. More accurately, we asymptotically sum the polyhomogeneous part of the $v_{j,k}$'s, and then multiply back in the oscillations.

Exactly solving 
\begin{equation} \label{eq:exact_goal}
    P v_{j,k}= f_{j,k}
\end{equation} 
would be equivalent in difficulty to exactly solving the original PDE $Pu=0$. If that were understood, then we would have no need for the algorithm above. Consequently, the algorithm's utility derives from the fact that solving the approximation problem \cref{eq:approx_goal} is easier than solving the exact analog
\cref{eq:exact_goal}.
Notwithstanding that, we \emph{will} solve the PDE 
\begin{equation}
    P v_{3,1} =  \underbrace{f_{3,1}}_{= -P u_{3,0}}
\end{equation}
modulo Schwartz error in the third main step of the algorithm, so that $u_3=u_{3,0}+v_{3,1}$ satisfies $Pu_3\in\mathcal S(\bbR^d)$. 
Subsequently, we apply the limiting absorption principle to the remaining
Schwartz error, using the limiting resolvent
\begin{equation}
    R(1+i0) = \lim_{\varepsilon \to 0^+} (P-i\varepsilon)^{-1}.
\end{equation}
This operator produces, when applied to $f$ in its domain, a solution $v=R(1+i0)f$ of the PDE $Pv=f$. 

We could apply $R(1+i0)$ to $f_{\bullet}$ in some earlier step in the algorithm to produce an exact solution to $Pv_{\bullet}=f_{\bullet}$, thus yielding an exact solution to $Pu=0$---in fact, the same one as above. The reason we do not take this shortcut is because we understand the mapping properties of 
\begin{equation} 
f\mapsto R(1+i0)f
\end{equation} 
in full detail \emph{only 
when $f$ has the form of a spherical wave}, meaning that $e^{-ir} f$ is polyhomogeneous on the radial compactification $\smash{\overline{\bbR^d}}\hookleftarrow \bbR^d$. The $f_{j,k}$ arising in the $j=1,2$ steps of the algorithm are either oscillating like plane waves or are well-behaved only on the parabolic compactification $X\hookleftarrow \bbR^{d}$.
The purpose of the algorithm above is to improve $f_{\bullet}$ so
that the resolvent can be applied with understood output. At the end
of the third step, we have constructed $u_3$ such that
\begin{equation}
    f_4\coloneqq -Pu_3\in\mathcal{S}(\bbR^d).
\end{equation}
We then define
\begin{equation}\label{eq:v31_res_out}
    v_4\coloneqq R(1+i0)f_4
    \in
    \langle r\rangle^{-(d-1)/2}e^{ir}
    C^\infty(\overline{\bbR^d}).
\end{equation}
Thus, the final correction is a pure outgoing spherical wave.

On the other hand, the $v_{j,k}$ in steps $j=1,2,3$ are all constructed explicitly, so we know which function spaces they lie in. 
Consequently, all contributions to $u$ are understood. Our main theorem, \Cref{thm:main}, will be read off this. In the formula \cref{eq:form} for $u$, the plane wave contribution $a$ is produced by the first step of the algorithm:
\begin{equation}
    a = \langle r \rangle^{L-1} e^{-ix}(\underbrace{u_{1} - e^{ix}}_{v_1})\sim \langle r \rangle^{L-1} e^{-ix} \sum_{k=1}^\infty v_{1,k}. 
\end{equation}

The contribution $w$ is a combination of the outputs from the second
and third steps, together with the final limiting-absorption
correction:
\begin{equation}\label{eq:w_def}
\begin{aligned}
    w
    &= \langle r\rangle^{(d-1)/2}e^{-ir}(u_4-u_1) = \langle r\rangle^{(d-1)/2}e^{-ir}
       \bigl(v_2+v_3+\underbrace{R(1+i0)f_4}_{v_4}\bigr).
\end{aligned}
\end{equation}
The only contribution here that is not constructed explicitly is
$v_4=R(1+i0)f_4$. Consequently:

\vspace{.5em}
\noindent \fbox{%
		\parbox{.98\textwidth}{%
            The construction produces the expansion of $a$ at $\mathrm{bf}$,
as well as $w\bmod C^\infty(\overline{\bbR^d})$, using only the
germs of the PDE coefficients at infinity.
		}%
	}
	
	\vspace{.5em} 
	\noindent

\begin{remark*}
	When a Euclidean plane wave scatters off  a compact obstacle, $a=0$, but $w \in C^\infty(\overline{\bbR^d})$ is usually nonzero. Consequently, $w$ is not determined by the germs of the PDE coefficients at infinity. Only the singular part $w\bmod C^\infty(\overline{\bbR^d})$ is so-determined. 
\end{remark*}

\subsection{Structure of an individual substep}
The content of the algorithm is specifying how each $v_\bullet$ is constructed. Like many constructions in asymptotic analysis, we use a variant of the Frobenius method. The basic idea takes many forms, such as the Liouville--Green/WKB expansion. We look for $v_\bullet$ of the form 
\begin{equation}\label{eq:ind_form}
    v_\bullet = s \rho_{\mathrm{f}}^k (\log \rho_{\mathrm{f}} )^\ell a(\theta), \quad s\in \{e^{ix},e^{ir}\}
\end{equation}
where $\rho_{\mathrm{f}}$ is a boundary-defining function of one of the boundary hypersurfaces $\mathrm{f}\in \{ \mathrm{bf},\mathrm{ff}\}$, and $a\in C^\infty(\mathrm{f}^\circ)$. 
In order to interpret $v_\bullet$ as a function on some open neighborhood $U\supset \mathrm{f}$, we identify 
\begin{equation} 
U \cong [0,\delta)\times \mathrm{f}
\end{equation} 
using the coordinates $(\rho_{\mathrm{f}},\theta)$, for some $\delta>0$ and (smooth) $\theta: U\to \mathrm{f}$. An explicit atlas was presented above.

\begin{remark*}
As written, $v_\bullet$ may only make sense near $\mathrm{f}$; in order to interpret $v_\bullet$ as a function on all of $\bbR^d$, we may need to insert some cutoffs.    
\end{remark*}

So, at each step in the algorithm, $v_\bullet$ will oscillate either
\begin{enumerate}[label=(\roman*)]
	\item like the plane wave $e^{ix}$ or
	\item like the spherical wave $e^{ir}$.
\end{enumerate}
These two possibilities are essentially mutually exclusive. If $f_{\bullet}$ is oscillating like $s\in \{e^{ix},e^{ir}\}$, then it is reasonable to expect that 
\begin{equation}
    \exists v_\bullet\text{ of form above s.t. }Pv_\bullet \approx f_\bullet. 
\end{equation}
This expectation is borne out below. 
Key to the algorithm's success is that when $v_{j,1},\cdots ,v_{j,k}$ have the form above, so will $f_{j,k+1}$, with a better rate of decay.

The question before us is how to find $v_\bullet$, given $f_\bullet$ of the form \cref{eq:ind_form} (possibly with different weights $j,k$). The method is to throw out from $P$ unimportant terms, resulting in another (simpler!) operator $N$ such that the PDE 
\begin{equation}
    N v_\bullet = f_\bullet 
\end{equation}
can be solved exactly. The operator $N$ is called the \emph{normal operator}; there exists a systematic framework due to Melrose for finding such operators.\footnote{We refer to \cite{Me93} for the systematic exposition. Our use of the b-calculus is light, so an imprecise description suffices.} They depend on the face $\mathrm{f}$ and the oscillation $s$, \emph{but not on $f_\bullet$}. First, one forms the ``conjugated operator''
\begin{equation}\label{eq:883}
    \bar{P}\coloneq s^{-1}Ps =
    \begin{cases} 
    \tilde{P}\coloneq e^{-ix} P e^{ix} & (s=e^{ix}),\\ 
    \widehat{P}\coloneq  e^{-ir} P e^{ir}& (s=e^{ir}).
    \end{cases} 
\end{equation}
Here, we mean the result of conjugating $P$ by the multiplication operator $f\mapsto s f$. Thus, $\bar{P} g = s^{-1} P (s g)$.
Next, take the \emph{b-normal operator} 
\begin{equation} 
N=N_{\mathrm{f}}(\bar{P}). 
\end{equation}
Concretely, this means assigning to each term in $\bar{P}$ a ``b-growth order'' $\in \bbZ$ by saying that $\smash{\rho_{\mathrm{f}}\partial_{\rho_{\mathrm{f}}}},\partial_\theta$ have weight zero, and each additional $\smash{\rho_{\mathrm{f}}^{-1}}$ gives one extra order of b-growth. 
Then, the b-normal operator is defined as
\begin{equation} 
N_{\mathrm{f}}(\bar{P})=\text{terms in $\bar{P}$ with least b-decay at $\mathrm{f}$}.
\end{equation} 
We will demonstrate this below for the cases of interest. 

\begin{remark*}
    More precisely, the normal operator should be defined as the operator modulo the space of operators with greater b-decay. However, it is standard practice to conflate this family with a representative thereof.
\end{remark*}

For $v_\bullet$ of the form \cref{eq:ind_form},  $P v_\bullet =  s \bar{P} ( \bar{v}_\bullet)$,
where 
\begin{equation}\label{eq:ind_form_bar}
     \bar{v}_\bullet = s^{-1} v_\bullet = \rho_{\mathrm{f}}^k (\log \rho_{\mathrm{f}} )^\ell a(\theta).
\end{equation}
The significance of the b-decay of an operator is that it counts how many extra $\rho_{\mathrm{f}}$ are generated when applying the operator to a function of the form \cref{eq:ind_form_bar}. Thus, compared to the normal operator $N_{\mathrm{f}}(\bar{P})$, the other terms in $\bar{P}$, when applied to $\bar{v}_\bullet$, generate terms with strictly more decay at $\mathrm{f}$. Such terms can be neglected when trying to approximately solve $Pv_\bullet\approx f_\bullet$, and will presumably contribute to \emph{later} $f_\bullet$.

A key property of $N=N_{\mathrm{f}}(\bar{P})$ is that it does not see the metric perturbation $g$ or the potential $V$. Thus, 
\begin{equation}
    N=N_{\mathrm{f}}(\bar{P}_0)
\end{equation}
is the same as for the free Helmholtz operator $P_0=\triangle-1$, which we can invert on appropriate function spaces explicitly. Actually, $N$ can be solved via the Mellin transform, in the same way that constant-coefficient operators on Euclidean space can be solved via the Fourier transform. 
Indeed, letting $w\in \bbZ$ be the b-growth order of $N$, the weighted operator $\rho_{\mathrm{f}}^w N$ is a linear combination over $C^\infty(\mathrm{f}^\circ_\theta)$ of $\rho_{\mathrm{f}}\partial_{\rho_{\mathrm{f}}},\partial_\theta$, which are invariant under the dilations 
\begin{equation}
    (\rho_{\mathrm{f}},\theta ) \mapsto 
    (\lambda \rho_{\mathrm{f}},\theta ),\qquad \lambda>0 .
\end{equation}
Consequently, $\rho_{\mathrm{f}}^w N$ is invariant under dilations.  
The Mellin transform is used to solve dilation-invariant PDE on a cone in the same way that the Fourier transform is used to solve translation-invariant PDE on Euclidean space.  
Either way, $N$ is an operator whose solvability is understood.

See \cref{eq:Ns} for the normal operators used below.
    
Altogether, the result of the substeps within a single step of the algorithm will be a sequence of $\bar{v}_{j,k}$ with $\rho_{\mathrm{f}}^k$ orders of decay at $\mathrm{f}$. These can be combined in a formal series 
\begin{equation}
    \sum_{k} \bar{v}_{j,k} = \sum_{k,\ell} \rho_{\mathrm{f}}^k (\log \rho_{\mathrm{f}})^\ell a_{k,\ell}(\theta)  .
\end{equation}
Finally, these infinitely many contributions must be asymptotically summed. That is, we must find a $u_{j,\infty}$ such that the asymptotic expansion of $u_{j,\infty}-u_{j,0}$ at $\mathrm{f}$ is the prescribed series. This is a standard construction, but, owing to its status as a folklore lemma, we provide a full proof in the appendix \S\ref{sec:Borel}.

\subsection{Computation of normal operators}
Explicitly, the conjugated free Helmholtz operators $\bar{P}_0$ are
	\begin{align}
		\begin{split} 
			\tilde{P}_0 &= -\partial_x^2+\triangle_{\bfy}-2i \partial_x \in \operatorname{Diff}_{\mathrm{b}}^{2,-1,-2}(X) \\ 
			\hat{P}_0&=  -\partial_x^2+\triangle_{\bfy} -2i \partial_r -i(d-1)/r \in \operatorname{Diff}_{\mathrm{b}}^{2,-1,-2}(X).
		\end{split} \label{eq:conj_P0}
	\end{align}
    (In the latter case, we are working away from the origin.) The partial-$r$ derivative can be rewritten 
    \begin{equation}
        \partial_r = r^{-1} (x\partial_x + \bfy\cdot \nabla_{\bfy}) .
    \end{equation}
    
    The b-decay orders of the terms above are given in \Cref{tab:b_orders}. The operator $-\partial_x^2$ is always lower-order (in the sense of decay---not regularity), but the other terms may show up in the normal operator. The four possibilities are 
    \begin{align}\label{eq:Ns}
    \begin{split}
        N_{\mathrm{bf}}(\tilde{P}) &= N_{\mathrm{bf}}(\tilde{P}_0) = -2i\partial_x, \\
        N_{\mathrm{bf}}(\hat{P}) &=N_{\mathrm{bf}}(\hat{P}_0) = -2i\partial_r -  i(d-1) /r,\\
        N_{\mathrm{ff}}(\tilde{P}) &= N_{\mathrm{ff}}(\tilde{P}_0) = \triangle_{\bfy}-2i\partial_x ,\\
        N_{\mathrm{ff}}(\hat{P}) &= N_{\mathrm{ff}}(\hat{P}_0) =  \triangle_{\bfy}-2i\partial_r-i(d-1)/r.
        \end{split} 
    \end{align}
    Our argument makes use of three of these four, the exception being $N_{\mathrm{ff}}(\tilde{P})$ (the paraxial Helmholtz operator).

    See \Cref{lem:M_to_X_b_vec} for the lemma guaranteeing that $N_\bullet(\bar{P})=N_\bullet(\bar{P}_0)$. 
     \begin{table}[t!]
        \centering
        \begin{tabular}{c|cc}
             & bf & ff \\
             \hline\hline 
             $-\partial_x^2$ & $-2$ & $-4$ \\
             $\triangle_{\bfy}$ & $-2$ & $-2$ \\
             $-2i\partial_x$, $-2i\partial_r$ & $-1$ & $-2$  \\
             $-r^{-1} i(d-1)$ & $-1$ & $-2$
        \end{tabular}
        \caption{The b-growth orders of the terms in \cref{eq:conj_P0}. Thus, negative means decay. The values in this table are calculated using \Cref{lem:M_to_X_b_vec}. }
        \label{tab:b_orders}
    \end{table}
    
\subsection{Specifics of steps}
\subsubsection*{Step one}

In step $j=1$ of the algorithm, we start with the plane wave $u_{1,0}=u_0=e^{ix}$, and then we proceed to solve away the error at $\mathrm{bf}$, allowing an error at $\mathrm{ff}$. This means that, in this step, errors of the form 
\begin{equation}\label{eq:I_goal}
    e^{i x} \calA^{\infty,*}(X) 
\end{equation}
are considered completely negligible; `$*$' refers to a to-be-determined index set that is unimportant as far as this outline is concerned. Such errors are Schwartz as $r\to\infty$, except in the forward direction, where more subtle behavior is allowed. After each individual substep, the error $f_{1,k}$ will lie in $e^{i x} \calA^{k,*}(X)$. This gets better at $\mathrm{bf}$ as $k\to\infty$, tending to \cref{eq:I_goal}.

The quasimodes produced in this step have the form 
\begin{equation}
    v\in  e^{ix} \calA^{*,*}(X)
\end{equation}
as well. These oscillate like plane waves; they are the plane wave corrections alluded to in the introduction.
According to the b-philosophy above, the normal operator $N$ which must be inverted is 
\begin{equation}
		N_{\mathrm{bf}}(\tilde{P}) = - 2i \partial_x,  \label{eq:937}
\end{equation}
because we are trying to solve away a plane wave-like error at $\mathrm{bf}$. 
Indeed, if $\tilde{v} = e^{-ix} v$, $\tilde{f} = e^{-ix} f $
    are the result of peeling the oscillation off $v,f$, 
    then 
    \begin{align}
    \begin{split} 
        N_{\mathrm{bf}}(\tilde{P})\tilde{v} = \tilde{f} &\iff \tilde{P}\tilde{v} \approx  \tilde{f} \\ 
        &\iff Pv\approx f,
        \end{split}
    \end{align}
    where `$\approx$' means modulo terms with better decay at $\mathrm{bf}$. 
The operator $\partial_x$ is inverted just by integrating. The constant of integration should be chosen so that the result is decaying as $x\to-\infty$.  This means that the lower limit of integration should be $-\infty$. Forward integration produces a singularity in the forward direction, which we will grapple with later. Our goal at present is to produce a good solution away from the forward direction.

The first plane wave correction has the form  
\begin{equation}
    v=e^{ix} \tilde{v}\text{ for }
    \tilde{v}=-N_{\mathrm{bf}}(\tilde{P})^{-1} V  =  \frac{1}{2i} \int_{-\infty}^x V(s,\bfy) \dd s ,
\end{equation}
so $u\approx e^{ix} + \frac{e^{ix}}{2i}\int_{-\infty}^x V(s,\bfy)\dd s $.

\begin{example}[Inverse-square potential, cont.]
    For the inverse-square potential, 
    \begin{equation}
        \tilde{v} =   \frac{\alpha }{2i} \int_{-\infty}^x \frac{ \dd s}{s^2+y^2} = \frac{\alpha }{2iy}\Big[\frac{\pi}{2} + \operatorname{arctan} \frac{x}{y}\Big]  =  \frac{\alpha(\pi-\theta)}{2iy} = \frac{\alpha }{2i r} \frac{\pi-\theta}{ \sin \theta}. 
    \end{equation}
    This is evidently singular on the forward ray $\{r=x\}$. 
    At first glance, it also appears singular near the \emph{backward} ray $\{r=-x\}$. However, since $\operatorname{arctan}(t)= -\pi/2-1/t+t^{-2} C^\infty([0,1)_{-1/t})$ as $t\to-\infty$, 
    \begin{equation}
       2i  \tilde{v} = - \frac{\alpha}{x} + \frac{y}{x^2} C^\infty([0,\infty)_{-y/x} )
    \end{equation}
    in the half-plane $\{x<0\}$. This is finite as $y\to 0^+$ for $x<0$ fixed.  

    Thus, the first plane wave correction to our eventual perturbed plane wave has the form 
    \begin{equation}\label{eq:misc_143}
        v= e^{ix} \tilde{v} = \frac{e^{ix} \alpha }{r} \underbrace{\Big( \frac{\pi-\theta}{2i\sin \theta} \Big)}_{a_1 }.
    \end{equation}
    Part of this correction is in what we termed $u^{[1]}$ in \cref{eq:u_brak1}. That correction embedded the most forward-singular part of $v$ into a term with nontrivial front face behavior and thereby regularized the singularity. 
The remainder term is
\begin{align}
      \frac{\alpha e^{ix}}{r}q(\theta),
\end{align}
where
\begin{equation}\label{eq:q_theta}
    q(\theta)
    \coloneqq
    \frac{\pi-\theta}{2i\sin\theta}
    +\frac{\pi i}{4\sin(\theta/2)}.
\end{equation}
The angular coefficient $q(\theta)$ is not singular at $\theta=0$.

    This remainder can be seen in \Cref{fig:dipole_plane_wave_Out}. Subtracting this last term leaves a remainder that is $O(1/r^2)$ away from the forward direction; see \Cref{fig:dipole_plane_wave_whole}. 
\end{example}

\begin{figure}[t!]
    \centering
    \includegraphics[scale=1.1]{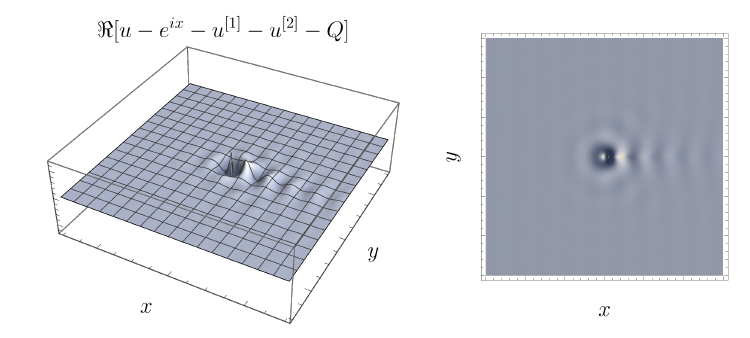}
    \caption{The real part of $u-e^{ix}-u^{[1]}
-u^{[2]}-\alpha e^{ix}r^{-1}q(\theta)$,
with $q$ as in \cref{eq:q_theta}. Notice the absence of the plane wave term (which, in contrast, is visible in \Cref{fig:dipole_plane_wave_Out}).}
    \label{fig:dipole_plane_wave_whole}
\end{figure}

This step corresponds to the transport equation in \cite{MZ}.  

Some details are outsourced to the appendices \S\ref{sec:forward}, \S\ref{sec:logs}.

\subsubsection*{Step two}
Next, we solve away, at $\mathrm{ff}$, the error term at the end of the previous step. By construction, this looks like a unidirectional Gaussian beam 
\begin{equation}
    e^{ix}f(\hat{\bfy})
    =
    e^{ix}f\Big(\frac{\bfy}{x^{1/2}}\Big),
    \quad f\in\calS(\bbR^{d-1}),\quad x\gg1.
    \label{eq:misc_010}
\end{equation}
	pointed in the forward direction.
    An important observation, making good on the intuition that a spherical wave and a plane wave cannot be distinguished in the forward direction, is that 
	\begin{equation}
		e^{ir} \calA^{\infty,*}(X) = e^{ix} \calA^{\infty,*}(X);
		\label{eq:null_magic}
	\end{equation}
	see \Cref{lem:gaussian_conversion}. A consequence is that the error from the previous step could equally well have been written with $e^{ir}$ in place of $e^{ix}$.

    At this point in the argument we switch from working with plane waves to working with spherical waves. 
    The reason why this is necessary is that, were we to solve away the error from the previous step by inverting 
    \begin{equation} 
    N_{\mathrm{ff}}(\tilde{P})=\triangle_{\bfy}-2i\partial_x.
    \end{equation}
    the result would be undoing the work in the previous step. After all, cutting off the exact plane wave $u_0=e^{ix}$ near the forward direction produces an error which is as well behaved as possible near $\mathrm{ff}$. 
    Thus, it is essential that the allowed errors produced in the present step are oscillating differently, like a spherical wave instead of like a plane wave.

    According to the b-philosophy above, 
    since we are trying to solve away an error at $\mathrm{ff}$, while allowing errors that oscillate like a spherical wave, we must invert the operator  
    \begin{equation}\label{eq:Nff}
        N_{\mathrm{ff}}(\hat{P}) = \triangle_{\bfy}-2i\partial_r - i(d-1)/r .
    \end{equation} 
    Indeed, letting $\hat{v}=e^{-ir}v$, $\hat{f}=e^{-ir} f$, then 
    \begin{align}
        \begin{split} 
        N_{\mathrm{ff}}(\hat{P}) \hat{v} = \hat{f} &\iff \hat{P}\hat{v}\approx \hat{f} \\ 
        &\iff Pv\approx f, 
        \end{split} 
    \end{align}
    where `$\approx$' means modulo terms with better decay at $\mathrm{ff}$. 
    The operator $N_{\mathrm{ff}}(\hat{P})$ is closely related to the paraxial Helmholtz operator, which is used in the study of Gaussian beams. Evidently, \cref{eq:Nff} is a spherical wave analog.  Later, it will be shown to be related to the Hamiltonian of the quantum inverted harmonic oscillator (QIHO).

    This step accomplishes the same task that Melrose--Zworski solve in \cite{MZ} using their distributions associated to pairs of intersecting Legendrian submanifolds.  
    The errors allowed at the end are of the form
    \begin{equation}\label{eq:II_allowed_errors}
            e^{ir} \langle r \rangle^{-(d+3)/2}\calA^{(0,0),\infty}(X)\subset e^{ir} \langle r \rangle^{-(d+3)/2}C^\infty(\overline{\bbR^d} ).
    \end{equation}

    Appendix \S\ref{sec:QHO} contains the theory of the QIHO required here.
    We were unable to locate the desired results in the literature, so have provided our own proofs. A computation via separation of variables serves as a cross-check.    
\subsubsection*{Step three}

While the mapping properties of the resolvent on the output of Step 2 can be understood (cf. \cite[\S 3]{phgFull}), Step 3 further improves the remaining error to a Schwartz function. This reduces the final correction to the most classical form of the limiting absorption principle.

Concretely, in view of \eqref{eq:II_allowed_errors}, Step 3 shows that for
\begin{align}
      f\in e^{ir}\langle r\rangle^{-(d+3)/2}
    C^\infty(\overline{\mathbb R^d}),
\end{align}
one can construct
\begin{align}
     v\in e^{ir}\langle r\rangle^{-(d+1)/2}
    C^\infty(\overline{\mathbb R^d})
\end{align}
such that
\begin{align}
     Pv-f\in \mathcal S(\mathbb R^d).
\end{align}
This is done by a Frobenius-type construction at the boundary face $\mathrm{bf}$: after conjugating by \(e^{ir}\), one recursively inverts the normal operator
\begin{align}
     N_{\mathrm{bf}}(\widehat P)
    =
    -2i\partial_r-\frac{i(d-1)}{r},
\end{align}
thereby determining the coefficients of a formal asymptotic expansion for \(v\). Borel's lemma is then used to realize this formal expansion by an actual smooth function with the stated asymptotics.

After step 3, the remaining Schwartz error is then removed by applying \(R(1+i0)\), whose action on Schwartz functions produces a pure outgoing spherical wave, and hence preserves the final asymptotic form claimed in the theorem.

The full algorithm is summarized in \Cref{fig:Izak}.

\begin{figure}[!htbp]
\centering
\begin{tikzpicture}[
  >=Latex,
  node distance=25mm,
  every node/.style={font=\normalsize},
  state/.style={draw, rounded corners, minimum width=18mm, minimum height=8mm, align=center},
  step-label/.style={midway, above, yshift=2mm, font=\footnotesize},
  codebox/.style={
    draw,
    rounded corners,
    align=left,
    inner xsep=3mm,
    inner ysep=3mm,
    font=\footnotesize\ttfamily,
    text width=30mm
  }
]

\node[state] (u0) {$u_{1,0}\coloneq e^{ix}$};
\node[state, right=of u0] (u1inf) {$ u_{2,0}\coloneq u_{1} $};
\node[state, right=of u1inf] (u2inf) {$u_{3,0}\coloneq u_{2}$};
\node[state, right=of u2inf] (u3) {$u_{4,0}\coloneqq u_{3} $};

\draw[->, line width=0.9pt] (u0) -- node[step-label]{Step 1 (\S\ref{sec:bf})} (u1inf);
\draw[->, line width=0.9pt] (u1inf) -- node[step-label]{Step 2 (\S\ref{sec:ff})} (u2inf);
\draw[->, line width=0.9pt] (u2inf) -- node[step-label]{Step 3 (\S\ref{sec:outgoing})} (u3);

\coordinate (m01) at ($ (u0)!0.5!(u1inf) $);
\coordinate (m12) at ($ (u1inf)!0.5!(u2inf) $);
\coordinate (m23) at ($ (u2inf)!0.5!(u3) $);

\node[codebox, below=10mm of m01] (code1) {%
  \textbf{Step 1} (near $\mathrm{bf}$)\\
  $N\coloneq N_{\mathrm{bf} }(\tilde P)$\\
  for $j = 1,\dots,\infty$\\
  \{\\
  \quad $f_{1,j} \coloneq - Pu_{1,j-1}$ \\
  \quad $\tilde f_{1,j} \coloneq  e^{-ix} f_{1,j} $\\
  \quad $\tilde v_{1,j} \coloneq  N^{-1}(\tilde f_{1,j})$\\
  \quad $v_{1,j}=e^{ix} \tilde v_{1,j}$\\
 \quad  {$u_{1,j} \coloneq u_{1,j-1}+ v_{1,j}$}\\
  \}\\
  $\tilde v_{1} \coloneq \text{Borel}(\set{\tilde v_{1,j}})$\\
  $v_{1}\coloneq e^{ix}\tilde v_{1}$\\
  $u_{1} \coloneq u_{1,0} + v_{1} $ \\
  $a \coloneq \langle r \rangle^{L-1}\tilde v_1$
  
};

\node[codebox, below=10mm of m12] (code2) {%
  \textbf{Step 2} (near $\mathrm{ff}$)\\
 $N\coloneq N_{\mathrm{ff} }(\hat P)$ \\
for $j= 1,\dots,\infty$\\
\{\\
\quad $f_{2,j} \coloneq -Pu_{2,j-1}$\\
\quad $\hat f_{2,j} \coloneq e^{-ir}  f_{2,j}$\\
\quad $\hat v_{2,j} \coloneq  N^{-1}(\hat f_{2,j})$ \\
\quad $u_{2,j} \coloneq u_{2,j-1}+ e^{ir} \hat v_{2,j}$ \\
\}\\
$\hat v_{2} \coloneq \text{Borel}(\set{\hat v_{2,j}})$\\
$v_{2} \coloneq e^{ir}\hat v_{2}$\\
$u_{2} \coloneq u_{2,0}+v_2$ 
};

\node[codebox, text width=42mm, below=10mm of m23, xshift=4mm] (code3) {%
  \textbf{Step 3} (near $\infty \mathbb S^{d-1}$)\\
  $N \coloneq N_{\mathrm{bf}}(\hat P)$\\
  for $j=1,\dots,\infty$\\
  \{ \\
  \quad $f_{3,j} \coloneq - Pu _{3,j-1}$\\
  \quad $\hat f_{3,j} \coloneq e^{-ir}f_{3,j}$\\
  \quad $\hat v_{3,j} \coloneq N^{-1} (\hat f_{3,j})$\\
  \quad $u_{3,j} = u_{3,j-1} + e^{ir} \hat v_{3,j}$\\
  \}\\
  $\hat v_3 \coloneq \texttt{Borel}(\set{\hat v_{3,j}})$ \\
  $v_3 \coloneq e^{ir} \hat v_3$\\
  $u_{3} \coloneq u_{3,0} + v_3$\\
};
\node[codebox, text width=42mm, below=70mm of m23, xshift=4mm] (code4) {%
\textbf{LAP} (global) \S \ref{s:complete_proof}\\
 $f_{4} \coloneq -P u_{3}$\\
 $v_{4} \coloneq R(1+i0)f_{4}$\\
 $u_{4} \coloneq u_{3} +v_{4}$ \\
 $w\coloneq \langle r \rangle^{(d-1)/2}e^{-ir} ( u_4- u_{1})$ 
};
\draw[-{Latex}, line width=0.9pt]
  ([yshift=10pt]code1.east) -- (code2.west);
\draw[->, line width = 0.9pt] (code2) -- (code3);
\draw[->, line width = 0.9pt] (code3) -- (code4);
\end{tikzpicture}
\caption{Pseudo-code for the three-step construction of perturbed plane waves (along with an application of the limiting absorption principle).
The initial input is $u_0 = e^{ix}$ (the exact plane wave). 
The operators $\tilde P$ and $\hat P$ are the conjugations of $P$ given in \cref{eq:883}. 
The normal operators are specified above. 
By $\texttt{Borel}$, we mean an application of \Cref{lem:Borel}.
The functions $a$ and $w$ are exactly the ones appearing in \cref{eq:form}.
} \label{fig:Izak}
\end{figure}

\subsection{The geometry of the parabolic compactification}
\label{subsec:geo}

\subsubsection*{b-operators}
Here, we record the conversion rate between b-operators on $M=\overline{\bbR^d}$ and b-operators on $X$.  Recall that $\operatorname{Diff}_{\mathrm{b}}^{1,0,0}(X)=\calV_{\mathrm{b}}(X)$ consists of all smooth vector fields on $X$ tangent to every boundary hypersurface, and 
\begin{equation}
    \operatorname{Diff}_{\mathrm{b}}^{1,s,\ell}(X) = \rho_{\mathrm{bf}}^{-s}\rho_{\mathrm{ff}}^{-\ell} \operatorname{Diff}_{\mathrm{b}}^{1,0,0}(X)
\end{equation}
for $s,\ell\in \bbR$.

\begin{lemma}
	\label{lem:M_to_X_b_vec}
    $x\partial_x \in \operatorname{Diff}_{\mathrm{b}}^{1,0,0}(X)$, $x\partial_{y_j}\in \operatorname{Diff}_{\mathrm{b}}^{1,0,1}(X)$. 
\end{lemma}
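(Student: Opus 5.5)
The claim is local on $X$, and both vector fields are smooth on $\bbR^d$. Away from $\mathrm{ff}$, $X$ coincides with the radial compactification $\overline{\bbR^d}$, so I will check membership chart by chart using the atlas given above. There are three regions: away from the forward direction, where only $\mathrm{bf}$ is present; near $\mathrm{ff}^\circ$, in the coordinates $(\rho_{\mathrm{ff}},\hat{\bfy})=(x^{-1/2},\bfy/x^{1/2})$; and near the corner $\mathrm{bf}\cap\mathrm{ff}$, in the projective coordinates \cref{eq:corner_coords_0}, namely $\rho_{\mathrm{bf}}=x/y_1^2$, $\rho_{\mathrm{ff}}=y_1/x$, $w_k=y_k/y_1$ (other sectors by symmetry). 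In each chart I will write $x\partial_x$ and $x\partial_{y_j}$ via the chain rule. For the first operator I aim to show it is a $C^\infty$-combination of $\rho\partial_\rho$ and tangential derivatives. For the second I aim to show it equals $\rho_{\mathrm{ff}}^{-1}$ times such a combination. Away from $\mathrm{ff}$ there is no weight at $\mathrm{ff}$ to track, so there the claim for $x\partial_{y_j}$ just says it is a b-vector field.

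\textbf{Away from the forward direction.} Here the statement is the standard fact on $\overline{\bbR^d}$ that $z_i\partial_{z_k}$ is a b-vector field; I would cite that and not redo it. Both $x$ and $y_j$ are linear coordinates, so $x\partial_x$ and $x\partial_{y_j}$ are of this form.

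\textbf{Near $\mathrm{ff}^\circ$.} Put $t=x^{-1/2}$. Then $x\partial_x=-\tfrac12 t\partial_t-\tfrac12\hat{\bfy}\cdot\nabla_{\hat{\bfy}}$ and $x\partial_{y_j}=x^{1/2}\partial_{\hat y_j}=t^{-1}\partial_{\hat y_j}$. The first is b, and the second lies in $\rho_{\mathrm{ff}}^{-1}\calV_{\mathrm{b}}$. This computation alone already explains the weight $1$ at $\mathrm{ff}$.

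\textbf{Near the corner.} With $\rho_{\mathrm{bf}}=x/y_1^2$ and $\rho_{\mathrm{ff}}=y_1/x$, I get
\[
x\partial_x=\rho_{\mathrm{bf}}\partial_{\rho_{\mathrm{bf}}}-\rho_{\mathrm{ff}}\partial_{\rho_{\mathrm{ff}}},
\]
which is b. For $k\ge2$, $\partial_{y_k}=y_1^{-1}\partial_{w_k}$, so $x\partial_{y_k}=(x/y_1)\partial_{w_k}=\rho_{\mathrm{ff}}^{-1}\partial_{w_k}$. For $j=1$,
\[
x\partial_{y_1}=\frac{x}{y_1}\bigl(-2\rho_{\mathrm{bf}}\partial_{\rho_{\mathrm{bf}}}+\rho_{\mathrm{ff}}\partial_{\rho_{\mathrm{ff}}}-\textstyle\sum_k w_k\partial_{w_k}\bigr),
\]
which is $\rho_{\mathrm{ff}}^{-1}$ times a b-vector field. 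No factor of $\rho_{\mathrm{bf}}^{-1}$ appears, consistent with the claimed order $0$ at $\mathrm{bf}$.

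The main obstacle is purely bookkeeping. One must check that these charts together with the radial ones actually cover $X$, including the transition region $\theta\sim\pi/2$ and the other sectors $\hat y_k\gtrless0$, and that the weights $\rho_{\mathrm{bf}},\rho_{\mathrm{ff}}$ in different charts agree up to smooth positive factors, so that the class $\operatorname{Diff}_{\mathrm{b}}^{1,0,1}(X)$ is chart-independent. This follows from compatibility of the atlas, which the paper asserts.
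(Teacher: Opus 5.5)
Your proposal is correct and follows the paper's proof essentially line by line. You use the same three regions: the region away from $\mathrm{ff}$, delegated to the standard fact on $\overline{\bbR^d}$ (the paper cites Melrose's Lemma 1); the chart $(x^{-1/2},\hat{\bfy})$ near $\mathrm{ff}^\circ$; and the projective corner chart $\rho_{\mathrm{bf}}=x/y_1^2$, $\rho_{\mathrm{ff}}=y_1/x$, $w_k=y_k/y_1$. Your chain-rule formulas for $x\partial_x$ and $x\partial_{y_j}$ in these charts agree with the paper's.
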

\begin{proof}
    Only the situation near $\mathrm{ff}$ is new. (The content of the lemma away from $\mathrm{ff}$ is contained in \cite[Lemma 1]{Me94}.)

    Near $\mathrm{ff}$ but outside $\mathrm{bf}$, we can use $\rho=x^{-1/2}$ as a bdf and $\hat{\bfy} = \bfy/x^{1/2}$ for the remaining $d-1$  coordinates. Then, 
	\begin{align}
	\begin{split} 
		x\frac{\partial}{\partial x} &= \frac{1}{\rho^2}\Big(\frac{\partial \rho}{\partial x} \frac{\partial}{\partial \rho} + \sum_{j=1}^{d-1} \frac{\partial \hat{y}_j}{\partial x} \frac{\partial}{\partial \hat{y}_j}\Big) = - \frac{\rho}{2} \frac{\partial}{\partial \rho} - \sum_{j=1}^{d-1}\frac{\hat{y}_j}{2}  \frac{\partial}{\partial \hat{y}_j}, \\ 
		x\frac{\partial}{\partial y_j}
&=
x\frac{\partial\hat y_j}{\partial y_j}
\frac{\partial}{\partial\hat y_j}
=
x^{1/2}\frac{\partial}{\partial\hat y_j}
=
\frac{1}{\rho}
\frac{\partial}{\partial\hat y_j},
	\end{split} 
	\label{eq:jacoby}
	\end{align}
	which evidently lie in the desired spaces,  locally. 
    
    In order to understand the situation near the corner $\mathrm{bf}\cap \mathrm{ff}$, we can use the coordinates in \cref{eq:corner_coords_0}. Executing this coordinate change, 
    \begin{align}
        x\frac{\partial}{\partial x} &= x \Big( \frac{\partial \rho_{\mathrm{bf}}} {\partial x}\frac{\partial}{\partial \rho_{\mathrm{bf}}} + \frac{\partial \rho_{\mathrm{ff}}} {\partial x}\frac{\partial}{\partial \rho_{\mathrm{ff}}}   \Big) =  \Big( \rho_{\mathrm{bf}} \frac{\partial}{\partial \rho_{\mathrm{bf}}} - \rho_{\mathrm{ff}} \frac{\partial}{\partial \rho_{\mathrm{ff}}}   \Big) \in \calV_{\mathrm{b}}, \\ 
        \begin{split} 
        x\frac{\partial}{\partial y_1} &= x  \Big( \frac{\partial \rho_{\mathrm{bf}}} {\partial y_1}\frac{\partial}{\partial \rho_{\mathrm{bf}}} + \frac{\partial \rho_{\mathrm{ff}}} {\partial y_1}\frac{\partial}{\partial \rho_{\mathrm{ff}}} + \sum_{k=2}^{d-1} \frac{\partial w_k} {\partial y_1}\frac{\partial}{\partial w_k }      \Big)    \\ 
        &=\Big( -\frac{2\rho_{\mathrm{bf}}}{\rho_{\mathrm{ff}}}\frac{\partial}{\partial \rho_{\mathrm{bf}}} + \ \frac{\partial}{\partial \rho_{\mathrm{ff}}} - \sum_{k=2}^{d-1}  \frac{w_k}{\rho_{\mathrm{ff}}} \frac{\partial}{\partial w_k }      \Big) \in \rho_{\mathrm{ff}}^{-1} \calV_{\mathrm{b}} , 
        \end{split} 
        \intertext{and, for $k=2,\dots, d-1$,}
        x \frac{\partial}{\partial y_k} &= x \frac{\partial w_k}{\partial y_k}\frac{\partial}{\partial w_k} = \frac{1}{\rho_{\mathrm{ff}}}\frac{\partial}{\partial w_k} \in \rho_{\mathrm{ff}}^{-1} \calV_{\mathrm{b}} . 
    \end{align}
    So, the desired conclusion holds near the corner as well. 
\end{proof}

\begin{corollary}\label{cor:basicb}
$\operatorname{Diff}^{m,s}_{\mathrm{b}}(M)\subseteq \operatorname{Diff}_{\mathrm{b}}^{m,s,2s+m}(X)$.
	\label{lem:M_to_X_b}
\end{corollary}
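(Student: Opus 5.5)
The plan is to reduce the corollary to \Cref{lem:M_to_X_b_vec} by multiplicativity of the filtrations, after first understanding how weights transform. By definition, $\operatorname{Diff}^{m,s}_{\mathrm{b}}(M) = \rho_M^{-s}\operatorname{Diff}^m_{\mathrm{b}}(M)$, where $\rho_M = \langle r\rangle^{-1}$ is a boundary-defining function of $\partial M = \infty\bbS^{d-1}$. Also, $\operatorname{Diff}^m_{\mathrm{b}}(M)$ is generated over $C^\infty(M)$ by products of at most $m$ elements of $\calV_{\mathrm{b}}(M)$.

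\textbf{Step 1: the weight.} Under the blowdown map $X\to M$, the pullback of $\rho_M$ vanishes to first order at $\mathrm{bf}$ and to second order at $\mathrm{ff}$, so
\[
\rho_M \in \rho_{\mathrm{bf}}\rho_{\mathrm{ff}}^2\, C^\infty(X)
\]
with a nonvanishing smooth factor. This can be checked in the charts already given. Away from $\mathrm{bf}$ we have $1/r \approx 1/x = \rho_{\mathrm{ff}}^2$. Near the corner we have $1/r \approx 1/y_1 = \rho_{\mathrm{bf}}\rho_{\mathrm{ff}}$ times $\rho_{\mathrm{ff}}$, because $x/y_1^2\cdot y_1/x = 1/y_1$; hence $1/y_1=\rho_{\mathrm{bf}}\rho_{\mathrm{ff}}$. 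The correct count near the corner must be checked carefully: since $1/r\sim 1/x$ there, we get $1/x = \rho_{\mathrm{bf}}\rho_{\mathrm{ff}}^2$. Consequently $\rho_M^{-s}\in \rho_{\mathrm{bf}}^{-s}\rho_{\mathrm{ff}}^{-2s}C^\infty(X)$, and the same holds for $C^\infty(M)\subset C^\infty(X)$ coefficients.

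\textbf{Step 2: vector fields.} Show that $\calV_{\mathrm{b}}(M)\subseteq \operatorname{Diff}^{1,0,1}_{\mathrm{b}}(X)$. Near $\partial M$, $\calV_{\mathrm{b}}(M)$ is spanned over $C^\infty(M)$ by $r\partial_{x}$ and $r\partial_{y_j}$; equivalently, in the forward cone, by $x\partial_x$ and $x\partial_{y_j}$, since $r/x\in C^\infty(M)$ there. \Cref{lem:M_to_X_b_vec} gives exactly $x\partial_x\in\operatorname{Diff}^{1,0,0}_{\mathrm{b}}(X)$ and $x\partial_{y_j}\in\operatorname{Diff}^{1,0,1}_{\mathrm{b}}(X)$. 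Away from $\mathrm{ff}$, the blowup is a diffeomorphism, so nothing changes. In the interior, the claim is trivial.

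\textbf{Step 3: products.} Use that $\operatorname{Diff}^{m_1,s_1,\ell_1}_{\mathrm{b}}(X)\cdot\operatorname{Diff}^{m_2,s_2,\ell_2}_{\mathrm{b}}(X)\subseteq \operatorname{Diff}^{m_1+m_2,s_1+s_2,\ell_1+\ell_2}_{\mathrm{b}}(X)$. This holds because b-vector fields map $\rho_{\mathrm{bf}}^{a}\rho_{\mathrm{ff}}^{b}C^\infty(X)$ into itself, which lets us commute weights past the vector fields. A product of at most $m$ elements of $\calV_{\mathrm{b}}(M)$ then lies in $\operatorname{Diff}^{m,0,m}_{\mathrm{b}}(X)$. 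Multiplying by the weight from Step 1 gives $\operatorname{Diff}^{m,s,2s+m}_{\mathrm{b}}(X)$.

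The only delicate point is Step 1, namely the exact vanishing orders of $1/r$ at the two faces in the corner chart. This is a direct computation.
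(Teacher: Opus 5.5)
Your argument is correct and is essentially the paper's proof: you combine \Cref{lem:M_to_X_b_vec}, which gives $\calV_{\mathrm{b}}(M)\subseteq\operatorname{Diff}^{1,0,1}_{\mathrm{b}}(X)$ near the forward direction, with the additivity of orders under composition and the fact that $\langle r\rangle^{-1}$ pulls back to $\rho_{\mathrm{bf}}\rho_{\mathrm{ff}}^2$ times a positive smooth function. One correction: in Step 1 the intermediate assertion $1/r\approx 1/y_1$ is false near the corner, so delete it and keep only your subsequent (correct) computation $1/r\sim 1/x=\rho_{\mathrm{bf}}\rho_{\mathrm{ff}}^2$.
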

This follows from \Cref{lem:M_to_X_b_vec}, since differential orders
and boundary weights add under composition, while the pullback of the
weight $s$ on $M$ contributes weights $s$ and $2s$ at
$\mathrm{bf}$ and $\mathrm{ff}$, respectively.

The next proposition is stated in terms of 
\begin{equation}
    \operatorname{Diff}_{\mathrm{sc}}^{m,-2}(M) = \sum_{k=0}^m \operatorname{Diff}_{\mathrm{b}}^{k,-2-k}(M) .
\end{equation}

\begin{lemma} 
For any classical asymptotically Euclidean metric $g$ on $\bbR^d$ and $Q\in \operatorname{Diff}^{1,-2}_{\mathrm{sc}}(M)$, the operator $P=\triangle_g + Q-1$ satisfies, for each choice of oscillation $s\in \{e^{ix}, e^{ir} \}$, 
	 \begin{equation} 
     s^{-1}(P-P_0)s\in \operatorname{Diff}_{\mathrm{b}}^{2,-2,-4}(X),
    \end{equation} 
    where $P_0=\triangle-1$ is the free Helmholtz operator.
	 \label{lem:rem} 
\end{lemma}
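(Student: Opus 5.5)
The plan is to reduce everything to the structure of $P-P_0$ on the radial compactification $M=\overline{\bbR^d}$, and then to transfer to $X$ using \Cref{cor:basicb}. Write $\triangle_g - \triangle$ in coordinates as
\[
\sum_{i,j}\partial_i(\tilde a^{ij}\partial_j) + \sum_j c^j\partial_j ,
\]
with $\tilde a^{ij}\in\langle r\rangle^{-2}C^\infty(M)$ and $c^j\in\langle r\rangle^{-3}C^\infty(M)$. This uses the classical asymptotically Euclidean assumption: $g-\delta$ is symbolic of order $-2$, so the Christoffel-type terms gain one more order. Adding $Q$, we get
\[
P-P_0=\sum_{|\alpha|\le 2} c_\alpha(z)\,\partial_z^\alpha,\qquad c_\alpha\in \langle r\rangle^{-2}C^\infty(M) \text{ for all } |\alpha|\le 2,
\]
since the lower-order coefficients decay at least as fast. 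In b-language, each $\partial_{z_i}=r^{-1}(r\partial_{z_i})$ with $r\partial_{z_i}\in\calV_{\mathrm b}(M)$. Hence $c_\alpha\partial^\alpha\in \operatorname{Diff}_{\mathrm b}^{|\alpha|,-2-|\alpha|}(M)$, and $P-P_0\in\operatorname{Diff}^{2,-2}_{\mathrm{sc}}(M)$.

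Next I conjugate by $s$. Conjugation acts on derivatives by $s^{-1}\partial_{z_i}s=\partial_{z_i}+\partial_{z_i}(\log s)$. For $s=e^{ix}$ this is $\partial_{z_i}+i\delta_{i1}$. For $s=e^{ir}$ it is $\partial_{z_i}+i z_i/r$ away from the origin, where $z_i/r\in C^\infty(M)$. In either case the added term is a zeroth-order operator with coefficient in $C^\infty(M)$, i.e.\ in $\operatorname{Diff}_{\mathrm b}^{0,0}(M)$. So each $\partial^\alpha$ becomes a sum of terms $\partial^\beta$, $\beta\le\alpha$, with $C^\infty(M)$ coefficients. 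Since $|\beta|\le|\alpha|\le 2$, the conjugated operator is
\[
s^{-1}(P-P_0)s=\sum_{|\beta|\le 2}\tilde c_\beta\,\partial^\beta,\qquad \tilde c_\beta\in\langle r\rangle^{-2}C^\infty(M).
\]
The crucial point is that conjugation destroys the gain from derivatives. A term like $c_\alpha\partial^\alpha$ with $|\alpha|=2$ produces $c_\alpha\cdot(\text{const})$, which is only in $\operatorname{Diff}_{\mathrm b}^{0,-2}(M)$ rather than $\operatorname{Diff}_{\mathrm b}^{0,-4}(M)$. Near the origin, where $e^{ir}$ is not smooth, I insert a cutoff; the lemma concerns behavior at $\partial X$ only.

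Then I apply \Cref{cor:basicb} term by term. A term $\tilde c_\beta\partial^\beta$ lies in $\operatorname{Diff}_{\mathrm b}^{|\beta|,-2-|\beta|}(M)$, so it maps into
\[
\operatorname{Diff}_{\mathrm b}^{|\beta|,\,-2-|\beta|,\,-4-|\beta|}(X).
\]
For $|\beta|\le2$, the worst case at each face is $\beta=0$, giving orders $-2$ at bf and $-4$ at ff. Since $\operatorname{Diff}_{\mathrm b}^{m,s,\ell}\subseteq\operatorname{Diff}_{\mathrm b}^{2,-2,-4}$ whenever $m\le 2$, $s\le-2$, and $\ell\le-4$, the sum lies in $\operatorname{Diff}_{\mathrm b}^{2,-2,-4}(X)$, as claimed.

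The main thing to check carefully is the index bookkeeping in \Cref{cor:basicb}. The weight $-2-k$ for a $k$-th order b-operator on $M$ must become $-4-k$ at ff: weight $2s+m$ with $s=-2-k$, $m=k$ gives $-4-k$. This relies on $r^{-1}$ vanishing to second order at ff ($\rho_{\mathrm{ff}}^2\sim 1/x$ near ff), together with \Cref{lem:M_to_X_b_vec}'s loss of one ff-order per $x\partial_{y_j}$. I would recheck this directly in the corner coordinates \cref{eq:corner_coords_0}, verifying that $\partial_{y_j}=x^{-1}(x\partial_{y_j})\in\rho_{\mathrm{bf}}\rho_{\mathrm{ff}}^{2}\cdot\rho_{\mathrm{ff}}^{-1}\calV_{\mathrm b}(X)$ and $\partial_x\in\rho_{\mathrm{bf}}\rho_{\mathrm{ff}}^2\calV_{\mathrm b}(X)$, both using $1/x\sim\rho_{\mathrm{bf}}\rho_{\mathrm{ff}}^2$ there. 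Finally, I would note that the $C^\infty(M)$ coefficients pull back to $C^\infty(X)$, since the blow-down map is smooth.
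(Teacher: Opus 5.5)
Your proposal is correct and follows essentially the same route as the paper. Both show $P-P_0\in\operatorname{Diff}^{2,-2}_{\mathrm{sc}}(M)$, note that conjugation by $s$ preserves this space (at the cost of losing the extra decay carried by the derivatives), and push the pieces $\operatorname{Diff}_{\mathrm{b}}^{k,-2-k}(M)$, $k=0,1,2$, to $X$ via \Cref{cor:basicb}, where the zeroth-order piece gives the worst orders $(-2,-4)$. Your explicit conjugation formulas and the corner-coordinate check $1/x=\rho_{\mathrm{bf}}\rho_{\mathrm{ff}}^2$ simply fill in details the paper leaves implicit.
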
 
\begin{proof}
	It suffices to note that $\triangle_g + Q - \triangle \in \operatorname{Diff}^{2,-2}_{\mathrm{sc}}(M)$,
	and conjugating an element of $\operatorname{Diff}^{2,-2}_{\mathrm{sc}}(M)$ by $s$ produces another element of the same space. Consequently, 
	\begin{align}
		\begin{split} 
			s^{-1}(P-P_0)s \in \operatorname{Diff}^{2,-2}_{\mathrm{sc}}(M) &\subseteq \operatorname{Diff}_{\mathrm{b}}^{2,-4}(M)+\operatorname{Diff}_{\mathrm{b}}^{1,-3}(M) + \langle r \rangle^{-2} C^\infty(M) \\
			&\subseteq \operatorname{Diff}_{\mathrm{b}}^{2,-4,-6}(X)+\operatorname{Diff}_{\mathrm{b}}^{1,-3,-5}(X) + \operatorname{Diff}_{\mathrm{b}}^{0,-2,-4}(X),
		\end{split} 
	\end{align}
	using \Cref{lem:M_to_X_b}.
\end{proof}

\subsubsection*{A Gaussian beam lemma}

Recall that $\calA^{(\calE,\alpha)}=\calA^\calE+\calA^\alpha$ denotes partial polyhomogeneity with a conormal error of order $\alpha\in \bbR$. 
\begin{lemma}\label{lem:gaussian_conversion}
	For any $\alpha\in \bbR$ and index set $\calE$, we have $e^{i r  } \calA^{\infty,(\calE,\alpha)}(X) = e^{i  x} \calA^{\infty,(\calE,\alpha)}(X)$.
\end{lemma}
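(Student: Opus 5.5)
The two sides differ only by multiplication by $e^{\pm i(r-x)}$, so it suffices to show that
\begin{equation*}
e^{i(r-x)}\calA^{\infty,(\calE,\alpha)}(X)\subseteq \calA^{\infty,(\calE,\alpha)}(X).
\end{equation*}
The reverse inclusion then follows by applying the same argument to $e^{-i(r-x)}$, since the argument is insensitive to the sign of the phase. The plan is to localize the space. Functions in $\calA^{\infty,\bullet}(X)$ vanish to infinite order at $\mathrm{bf}$. Away from a neighborhood of $\mathrm{ff}$, the space therefore consists of Schwartz-type functions: superpolynomially decaying with all $\operatorname{Diff}_{\mathrm{b}}$-derivatives bounded. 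There, multiplication by $e^{i(r-x)}$ is harmless. Indeed, $r-x$ is a symbol of order $1$ on $\overline{\bbR^d}$, so each b-derivative of $e^{i(r-x)}$ costs at most a power of $r$, which is absorbed by the infinite-order decay. I will split with a cutoff $\chi\in C^\infty(X)$ supported near $\mathrm{ff}$.

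Near $\mathrm{ff}$ but away from $\mathrm{bf}$, the paper has already shown $r-x=\hat y^2\aleph(\theta)$ with $\aleph$ smooth. In the coordinates $(\rho_{\mathrm{ff}}=x^{-1/2},\hat{\bfy})$ we have $\theta$ smooth, so $e^{i(r-x)}\in C^\infty(X\setminus\mathrm{bf})$. Multiplication by a smooth function preserves both polyhomogeneity with index set $\calE$ at $\mathrm{ff}$ (Taylor expand the smooth factor in $\rho_{\mathrm{ff}}$; index sets are closed under $j\mapsto j+1$) and the conormal error space $\calA^\alpha$. This handles compact subsets of $\mathrm{ff}^\circ$ together with their collars.

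The main obstacle is the corner $\mathrm{bf}\cap\mathrm{ff}$. There the phase $r-x\sim \hat y^2/2=\rho_{\mathrm{bf}}^{-1}/2$ blows up, and $e^{i(r-x)}$ is not smooth. I will work in the coordinates \cref{eq:corner_coords}. There $r-x=\rho_{\mathrm{bf}}^{-1}\aleph(\theta)$ with $\theta=\rho_{\mathrm{ff}}$, so $e^{i(r-x)}=e^{i\aleph(\rho_{\mathrm{ff}})/\rho_{\mathrm{bf}}}$. Applying b-vector fields $\rho_{\mathrm{bf}}\partial_{\rho_{\mathrm{bf}}}$, $\rho_{\mathrm{ff}}\partial_{\rho_{\mathrm{ff}}}$, $\partial_\phi$ produces factors that are polynomial in $\rho_{\mathrm{bf}}^{-1}$, with coefficients smooth in $\rho_{\mathrm{ff}}$, and in particular uniform in $\rho_{\mathrm{ff}}$. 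Thus $e^{i(r-x)}\in\calA^{-\infty,0}$ in a strong sense: conormal at $\mathrm{ff}$ and smooth in $\rho_{\mathrm{ff}}$, with polynomial growth at $\mathrm{bf}$. Any function in $\calA^{\infty,\bullet}$ has infinite-order decay at $\mathrm{bf}$ that absorbs this growth. For polyhomogeneity at $\mathrm{ff}$, I will expand the product term by term. Each coefficient $b_{j,k}\in\calA^\infty(\mathrm{ff})$ near the corner is multiplied by $e^{i\aleph(\rho_{\mathrm{ff}})/\rho_{\mathrm{bf}}}$. Taylor expanding $\aleph(\rho_{\mathrm{ff}})=\aleph(0)+\rho_{\mathrm{ff}}\aleph_1+\dots$ in the exponent, each term $\rho_{\mathrm{ff}}^m\rho_{\mathrm{bf}}^{-m'}$ is times a rapidly decaying coefficient in $\rho_{\mathrm{bf}}$. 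The Taylor remainder is controlled uniformly because the $\rho_{\mathrm{bf}}^{-N}$ losses are again absorbed. This gives an expansion at $\mathrm{ff}$ with the same index set $\calE$ (shifted by nonnegative integers only), plus an $\calA^{\infty,\alpha}$ error. I expect the careful bookkeeping of these uniform remainder estimates at the corner to be the only delicate point.
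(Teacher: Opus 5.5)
Your proposal is correct and is essentially the paper's argument. You write $e^{ir}=e^{ix}e^{i\hat y^2\aleph(\theta)}$ and observe that the ratio is smooth near $\mathrm{ff}\setminus\mathrm{bf}$. At the corner $\mathrm{bf}\cap\mathrm{ff}$ you use that its b-derivatives grow only polynomially in $\hat y$, which the infinite-order vanishing at $\mathrm{bf}$ absorbs, while the Taylor expansion in $\theta$ preserves the index set $\calE$; the paper packages this corner step as the invariance of $\calA^{(\calE,\alpha)}([0,1)_\theta;\calS(\bbR^{d-1}_{\hat{\bfy}}))$ under multiplication by $e^{i\hat y^2\aleph(\theta)}$.

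One small slip: $\theta$ itself is not smooth on the forward axis $\hat{\bfy}=0$. However, $\aleph(\theta)=\cos\theta/(1+\cos\theta)$ is a smooth function of $x/r$, and that is all you actually use.
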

\begin{proof}
	Away from $\mathrm{ff}$, the two function spaces just consist of Schwartz functions. 
	
	For $x\geq \lVert \bfy \rVert+1$, we can expand 
	\begin{equation}
	r = \sqrt{x^2 + \lVert \bfy \rVert^2} = x \sqrt{1+ \frac{\lVert \bfy \rVert^2}{x^2} } = x + \frac{\lVert \bfy \rVert^2}{ x} f(\theta), \qquad \theta = \operatorname{tan}^{-1}(\lVert \bfy \rVert/x ) 
	\end{equation}
	for some function $f\in C^\infty(\bbR)$. So, in terms of the coordinate $\hat{y}=\lVert \bfy \rVert/x^{1/2}$, 
    \begin{equation}\label{eq:trans_lem}
    e^{i  r} = e^{i x} \cdot e^{i \hat{y}^2 f(\theta)}.
    \end{equation} 

    Next, we verify 
    that 
    \begin{equation}\label{eq:trans_goal}
        e^{i \hat{y}^2 f(\theta)} \calA^{\infty,(\calE,\alpha)} =     \calA^{\infty,(\calE,\alpha)}.
    \end{equation} 
    holds near $\mathrm{ff}$. The function $f(\theta)$ is smooth on $\smash{\overline{\bbR^d}}$ therefore $X$. Since $\hat{y}^2$ is smooth near any point in $\mathrm{ff}^\circ=\mathrm{ff}\backslash \mathrm{bf}$, $e^{\smash{i \hat{y}^2 f(\theta)}}$ is smooth there. 
    This establishes \cref{eq:trans_goal}, except at the corner $\mathrm{bf}\cap\mathrm{ff}$. 
    Near the corner, 
    \begin{equation}
    \calA^{\infty,(\calE,\alpha)}(X)  = \calA^{(\calE,\alpha)}([0, 1)_\theta; \calS(\bbR^{d-1}_{\hat{\bfy}}) ),
    \end{equation}
    and so
    \begin{align}
    \begin{split} 
        e^{i \hat{y}^2 f(\theta)} \calA^{\infty,(\calE,\alpha)} &= e^{i  \hat{y}^2 f(\theta)} \calA^{(\calE,\alpha)}([0, 1)_\theta;    \calS(\bbR^{d-1}_{\hat{\bfy}}) ) \\
        &=\calA^{(\calE,\alpha)}([0, 1)_\theta;\calS(\bbR^{d-1}_{\hat{\bfy}}) )  = \calA^{\infty,(\calE,\alpha)},
        \end{split}
    \end{align}
    as claimed. The key equality is the second, where we make crucial use of the Schwartz decay at bf.

   Finally, \cref{eq:trans_goal}, combined with \cref{eq:trans_lem}, gives 
   \begin{equation} 
    e^{i r  } \calA^{\infty,(\calE,\alpha)}(X) =e^{i  x}e^{i \hat{y}^2 f(\theta)}  \calA^{\infty,(\calE,\alpha)}(X) = e^{i  x} \calA^{\infty,(\calE,\alpha)}(X).
    \end{equation} 
\end{proof}

\section{Step one: the incoming plane wave (the transport equation)}
\label{sec:bf}
This section contains step one in the construction of the perturbed plane wave. We have already summarized the main aspects of the argument in \S\ref{sec:outline}. 
Our goal is to solve, for suitable polyhomogeneous forcing $\tilde{f}$, 
\begin{equation}
    \underbrace{\tilde{P}}_{\mathclap{\tilde{P} = e^{-i x} P e^{i  x}}}\tilde{v}\approx \tilde{f} 
    \label{eq:misc_037}
\end{equation}
where the $\approx$ means that we are allowing a $\calA^{\infty,*}(X)$ error (so Schwartz at $\mathrm{bf}$); the `$*$' means the index set at $\mathrm{ff}$ is not specified as part of the problem statement. The specific forcing $\tilde{f}$ of interest is
\begin{equation} 
    \tilde{f}\coloneq -  e^{-ix} P [e^{i x}] = -  e^{-ix} (P-P_0) [e^{i x}] \in \rho_{\mathrm{bf}}^LC^{\infty}(\overline{\bbR^d}),
    \label{eq:f_init}
\end{equation} 
where $P_0=\triangle-1$ is the free Helmholtz operator. Here, $L\in \bbN^{\geq 2}$ is the decay order of the coefficients of $P-P_0$. For the special case of potential scattering, 
\begin{equation} 
    \tilde{f}=-  V.
\end{equation} 

The main proposition of this section is:
\begin{proposition}\label{prop:I_main}
For $\tilde f$ defined by \cref{eq:f_init}, there exists $\tilde{v} \in \rho_{\mathrm{bf}}^{L-1}\rho_{\mathrm{ff}}^{L-1} C^{\infty}(X)$ satisfying
\begin{equation}
\tilde P\tilde{v} -\tilde f\in \rho_{\mathrm{bf}}^{\infty}\rho_{\mathrm{ff}}^{L+1}C^{\infty}(X),
\end{equation}
where $X=[\overline{\bbR^d};\rightarrow]_{\mathrm{par}}$.
\end{proposition}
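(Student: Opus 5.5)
The plan is to run the Step~1 Frobenius iteration at $\mathrm{bf}$ using the normal operator $N_{\mathrm{bf}}(\tilde P)=-2i\partial_x$, keeping careful track of the behavior at $\mathrm{ff}$, and then Borel-sum. Write $\tilde P = -2i\partial_x + R$ with
\begin{equation*}
R = -\partial_x^2+\triangle_{\bfy} + e^{-ix}(P-P_0)e^{ix}.
\end{equation*}
By \Cref{tab:b_orders} and \Cref{lem:rem}, the remainder satisfies $R\in \operatorname{Diff}_{\mathrm{b}}^{2,-2,-2}(X)$. Here $\triangle_{\bfy}$ is the worst term at $\mathrm{ff}$; the perturbation part even lies in $\operatorname{Diff}_{\mathrm{b}}^{2,-2,-4}(X)$. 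Meanwhile $-2i\partial_x\in \operatorname{Diff}_{\mathrm{b}}^{1,-1,-2}(X)$. Thus, relative to $\partial_x$, the operator $R$ gains one order at $\mathrm{bf}$ but nothing at $\mathrm{ff}$.

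The basic inversion is $\tilde v = \frac{1}{2i}\int_{-\infty}^x \tilde g(s,\bfy)\dd s$. The first key lemma is a mapping property. For $\tilde g\in \rho_{\mathrm{bf}}^{k}\rho_{\mathrm{ff}}^{m}C^\infty(X)$ with $k\ge 2$ and, say, $\tilde g$ smooth on $\overline{\bbR^d}$ with respect to $\rho_{\mathrm{bf}}$ near the backward direction, the integral $\partial_x^{-1}\tilde g$ lies in $\rho_{\mathrm{bf}}^{k-1}\rho_{\mathrm{ff}}^{m-2}C^\infty(X)$. The heuristic is the model $\tilde g = r^{-k}a(\theta)$. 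Integrating in $x$ at fixed $y$ gives $y^{1-k}$ times a smooth function of $\theta\in(0,\pi)$, which is exactly what the inverse-square example shows. Near $\mathrm{ff}$ one writes $y^{1-k}=r^{1-k}(\sin\theta)^{1-k}$ and uses $\theta\sim\rho_{\mathrm{ff}}$ near the corner. Away from the corner, in the $(1/\sqrt{x},\hat{\bfy})$ chart, the result is homogeneous of degree $-(k-1)$ in $\sqrt{x}$, and so contributes $\rho_{\mathrm{ff}}^{2(k-1)}\hat y^{1-k}$ behavior. I will verify this by splitting the integral into $s<-|\bfy|$, where one expands at the backward direction, and $s>-|\bfy|$, using scaling $s = y\sigma$. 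The main technical issue is showing that the result is genuinely smooth, not merely polyhomogeneous, at the corner $\mathrm{bf}\cap\mathrm{ff}$ and at $\mathrm{ff}^\circ$; this is outsourced to the appendix \S\ref{sec:forward}. A secondary issue is checking that no logarithms arise: these would come from the $s^{-1}$ term in the backward expansion. Since $k\ge2$, integrating $|s|^{-k}$ produces no log in the leading term, but lower-order terms of the expansion need the parity or structure argument of \S\ref{sec:logs}. I expect this log-free smoothness to be the main obstacle.

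With that lemma in hand, I will iterate. Start with $\tilde f_1=\tilde f\in\rho_{\mathrm{bf}}^L\rho_{\mathrm{ff}}^{2L}C^\infty(X)$, noting that $\langle r\rangle^{-L}$ pulls back with weight $2L$ at $\mathrm{ff}$. Set $\tilde v_1=-(2i)^{-1}\cdots\in\rho_{\mathrm{bf}}^{L-1}\rho_{\mathrm{ff}}^{2L-2}C^\infty(X)$. The new error $R\tilde v_1$ lies in $\rho_{\mathrm{bf}}^{L+1}\rho_{\mathrm{ff}}^{2L-4}C^\infty(X)$. In general, the $j$-th term satisfies $\tilde v_j\in\rho_{\mathrm{bf}}^{L-2+j}\rho_{\mathrm{ff}}^{2L-4j+2}C^\infty(X)$, so $\mathrm{ff}$-decay is lost at each step while $\mathrm{bf}$-decay is gained. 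To reconcile this with the claimed uniform $\rho_{\mathrm{ff}}^{L-1}$, I will instead track weights in terms of $r^{-1}$ and $\theta$ at the corner. Here $\rho_{\mathrm{bf}}\rho_{\mathrm{ff}}^2\sim r^{-1}$ and the effective gain per step is $r^{-1}\theta^{-2}\cdot\theta\cdots$. The precise bookkeeping will be to show $\tilde v_j\in \rho_{\mathrm{bf}}^{L-2+j}\rho_{\mathrm{ff}}^{L+1-j}\,C^\infty$ for the right normalization. This is consistent with the statement's $\rho_{\mathrm{ff}}^{L-1}$ for $j=1$, and the errors lose one order at $\mathrm{ff}$ per step.

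Because the $\mathrm{ff}$-weights decrease with $j$, a naive Borel sum fails at $\mathrm{ff}$. So I will Borel sum only at $\mathrm{bf}$, using \Cref{lem:Borel} with cutoffs $\chi(\rho_{\mathrm{bf}}/\epsilon_j)$. These cutoffs localize each $\tilde v_j$ to $\rho_{\mathrm{bf}}\lesssim\epsilon_j$, and on that region $\rho_{\mathrm{ff}}^{-j}$ is compensated by $\rho_{\mathrm{bf}}^{j}$ in the product weight. More precisely, I will exploit that the correction terms beyond $j=1$ can be absorbed into $\rho_{\mathrm{bf}}^{L}\rho_{\mathrm{ff}}^{L-1}C^\infty$ after cutting off where $\rho_{\mathrm{bf}}\le\rho_{\mathrm{ff}}$. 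The result is $\tilde v\in\rho_{\mathrm{bf}}^{L-1}\rho_{\mathrm{ff}}^{L-1}C^\infty(X)$ whose error $\tilde P\tilde v-\tilde f$ vanishes to infinite order at $\mathrm{bf}$. The commutator with the cutoffs, together with the residual $\triangle_{\bfy}$ acting on the leading term at $\mathrm{ff}$, accounts for the $\rho_{\mathrm{ff}}^{L+1}$ weight: $\rho_{\mathrm{ff}}^{L-1}$ times the two orders from $\triangle_{\bfy}$ relative to the $\mathrm{ff}$ normalization.
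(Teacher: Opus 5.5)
Your overall architecture (Frobenius iteration with $N=-2i\partial_x$, then Borel summation at $\mathrm{bf}$) is the paper's. You also correctly observe that on $X$ the remainder gains one order at $\mathrm{bf}$ and none at $\mathrm{ff}$ relative to $N$.

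The gap is your forward-integration lemma on $X$, which is false. Integrating from $x=-\infty$ produces the full-line integral $\int_{\bbR}\tilde g(s,\bfy)\dd s$, a function of $\bfy$ alone that collects the whole passage near $\mathrm{bf}$. For $\tilde g=\langle r\rangle^{-k}\in\rho_{\mathrm{bf}}^{k}\rho_{\mathrm{ff}}^{2k}C^\infty(X)$ it equals $c_k\langle y\rangle^{1-k}$ with $c_k>0$. Writing $y=\hat y\,x^{1/2}$, this is $\sim c_k\hat y^{1-k}\rho_{\mathrm{ff}}^{k-1}$ on $\mathrm{ff}^\circ$. Your own heuristic (homogeneous of degree $1-k$ in $\sqrt x$) gives $\rho_{\mathrm{ff}}^{k-1}$, not $\rho_{\mathrm{ff}}^{2(k-1)}$.

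Moreover, as $x\to+\infty$ every line $\{\bfy=\mathrm{const}\}$ converges in $X$ to the midpoint $\mathrm{fd}$. The limits along these lines depend on $\bfy$: $c_k\langle y\rangle^{1-k}$ here, and $\alpha\pi/(2iy)$ in the inverse-square example. So the output is not even continuous at $\mathrm{fd}$. The correct statement is $N^{-1}\tilde f\in\rho_{\mathrm{bf}}^{L-1}\rho_{\mathrm{ff}}^{L-1}C^\infty(X\setminus\mathrm{fd})$. The $\mathrm{ff}$-weight of the output is capped by the $\mathrm{bf}$-weight of the input (here $k-1$, not $m-2$), and every term must be cut off away from $\mathrm{fd}$; the paper uses $\chi(1/\hat y^2)$, and your plan never does this.

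This is why \S\ref{sec:forward} works on the polar blowup $Y$, where these lines meet $\mathrm{ff}^\circ$ transversally, rather than on $X$. Relatedly, the logs handled in \S\ref{sec:logs} arise at the forward corner of $Y$ (integrand index $1$ at $\mathrm{ff}$). They do not come from the backward expansion, which is log-free because the $\mathrm{bf}$-order exceeds $1$.

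The iteration then goes astray. Your exponents are internally inconsistent: $R\in\operatorname{Diff}_{\mathrm{b}}^{2,-2,-2}(X)$ gains two orders at $\mathrm{ff}$, so $R\tilde v_1$ cannot land in $\rho_{\mathrm{ff}}^{2L-4}$. More importantly, the conclusion that $\mathrm{ff}$-weights degrade with $j$ on $X$ is wrong. That degradation is what happens on $Y$, and undoing it is the whole point of passing to $X$.

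The paper proves $N^{-1}(EN^{-1})^j\tilde f\in\rho_{\mathrm{bf}}^{L-1+j}\rho_{\mathrm{ff}}^{-j}C^\infty(Y\setminus\mathrm{fd})$, using \Cref{prop:intmain} plus the log cancellation. It then blows up the corner of $Y$. Orders add at the new face, giving the uniform bound $\rho_{\mathrm{bf}}^{L-1+j}\rho_{\mathrm{ff}}^{L-1}C^\infty(X\setminus\mathrm{fd})$ of \Cref{cor:mainI}. After that, ordinary Borel summation at $\mathrm{bf}$ and the telescoping identity finish. The weight $\rho_{\mathrm{ff}}^{L+1}$ comes from $\tilde P\in\operatorname{Diff}_{\mathrm{b}}^{2,-1,-2}(X)$ acting on $\rho_{\mathrm{ff}}^{L-1}$.

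Your proposed rescue cannot replace this uniformity.
\begin{itemize}
\item The region $\{\rho_{\mathrm{bf}}\lesssim\epsilon_j\}$ contains the corner, where $\rho_{\mathrm{ff}}\to0$, so $\rho_{\mathrm{bf}}^{j}$ does not compensate $\rho_{\mathrm{ff}}^{-j}$.
\item Cutting off to $\{\rho_{\mathrm{bf}}\le\rho_{\mathrm{ff}}\}$ uses a function that is not smooth on $X$ at the corner.
\item That cutoff also leaves unsummed terms in a wedge reaching the corner. There the error decays only to finite order in $\rho_{\mathrm{bf}}$, not $O(\rho_{\mathrm{bf}}^\infty)$.
\end{itemize}
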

\begin{remark}\label{rem:I_main_tilde}
    The conclusion of the proposition also holds with $\tilde{f}$ defined by $\tilde{f}=-e^{-i x} P[\chi e^{i  x}]$ for $\chi \in C^\infty(X)$ such that $\chi=1$ near $\partial X$. Indeed, the same $\tilde{v}$ works, since the terms that result from $\chi\neq 1$ are all in $C_{\mathrm{c}}^\infty(\bbR^d)$.
\end{remark}

The proof of \Cref{prop:I_main} is postponed to \S \ref{subsec:stepI}.
The relevant ``b-normal operator'' $N$ at this stage is 
\begin{equation}\label{eq:nbf}
    N_{\mathrm{bf}}(\tilde{P})= -2i \partial_x\in \operatorname{Diff}_{\mathrm{b}}^{1,-1,-2}(X),
\end{equation}
as recorded in \cref{eq:Ns}. Inverting $N\coloneq N_{\mathrm{bf}}(\tilde{P})$ just means integration:
\begin{equation}
    Nu=f \Rightarrow u(x,\bfy) = C(\bfy) -\frac{1}{2i}\int_0^x f(s,\bfy) \dd s.
\end{equation}
Each choice of constant of integration $\bfy\mapsto C(\bfy)$ gives a different one-sided inverse. The one we are interested in is 
integration from $x=-\infty$:
\begin{equation}\label{eq:ninvdef}
    (N^{-1} f)(x,\bfy) = -\frac{1}{2i}\int_{-\infty}^x f(s,\bfy) \dd s. 
\end{equation}
(We only consider $f$ such that the integral above is absolutely convergent.) We will call $N^{-1}$ \emph{forward integration}.

That $N$ is the b-normal operator of $\tilde{P}$ means that the difference $E=\tilde{P}-N$ produces better decay at $\mathrm{bf}$ compared to $\tilde{P}$, $N$ (`$E$' for ``error''). Consider the formal series 
\begin{equation}\label{eq:formu}
    \tilde{\mathsf{v}}_{\circ} = \sum_{j=0}^\infty N^{-1} ( -E N^{-1})^j  \underbrace{\tilde{f}}_{\mathclap{= -E [1] }} = \sum_{j=0}^\infty  (-N^{-1} E)^{j+1} [1].
\end{equation}
This formally satisfies $\tilde{P} \tilde{\mathsf{v}}_{\circ} = (N+E) \tilde{\mathsf{v}}_{\circ} = \tilde{f}$. 
To \emph{asymptotically sum} $\tilde{\mathsf{v}}_{\circ}$ on $\overline{\bbR^d}$ would mean constructing some function $\tilde{v}_{1,\circ}$ such that, for each $J$, the difference 
\begin{equation}
    \tilde{v}_{\circ} - \sum_{j=0}^{J-1} (-N^{-1} E)^{j+1} [1]
\end{equation}
has $J$ orders of decay (relative to, say, the conormal function spaces). That is, 
\begin{equation}
     \tilde{v}_{\circ} - \sum_{j=0}^{J-1} N^{-1} (-EN^{-1})^j \tilde f = O\Big( \frac{1}{r^J} \Big)\text{ as }r\to\infty. 
\end{equation}
Then, one writes $\tilde{v}_{\circ}\sim \tilde{\mathsf{v}}_{\circ}$, and $\tilde{v}_{\circ}$ would be an actual solution to the PDE $\tilde{P}\tilde{v}_{\circ}=\tilde{f}$ modulo a negligible error. 

Unfortunately, this does not work as described, because the terms in $\tilde{\mathsf{v}}_{\circ}$ are not increasingly decaying. Already the first term, $N^{-1} \tilde{f}$, is not decaying in the forward direction. Instead, 
\begin{equation}
    N^{-1}\tilde{f}(x,\bfy) = - \frac{1}{2i} \int_{-\infty}^x \tilde{f}(s,\bfy)\dd s = o(1)-\frac{1}{2i} \int_{-\infty}^\infty \tilde{f}(s,\bfy) \dd s
\end{equation}
as $x\to\infty$, and the integral on the right-hand side is a typically nonzero function of $\bfy$, $\bfy\mapsto \int_{-\infty}^\infty \tilde{f}(s,\bfy)\dd s$. 
\begin{example}[Inverse-square potential, cont.]
    For the inverse-square potential with strength $\alpha\neq 0$, 
    \begin{equation}
        \tilde{f} =-V = -\alpha/r^2, \quad E = \triangle+\alpha/r^2.
    \end{equation}
    So, 
    \begin{align}
        N^{-1} \tilde{f} &= \frac{\alpha}{2iy} \Big[ \frac{\pi}{2}+\operatorname{arctan} \Big( \frac{x}{y} \Big) \Big].
    \end{align}
    The singularity when $y=0$ and $x>0$ is due to the potential's singularity at the origin and is therefore irrelevant for the present demonstration. 
    The important observation is that $N^{-1} \tilde{f}$ has more decay away from the forward direction, when $y\to\infty$, than in the forward direction, when $x\to\infty$ with $\bfy\in \bbR^{d-1}$ fixed. Indeed, 
    \begin{equation}
        \lim_{x\to\infty} N^{-1} \tilde{f} = \frac{\alpha \pi}{2iy} \neq 0.
    \end{equation}
\end{example}

If this were the only issue, one fix would be to perform a polar blowup of the forward direction and then weaken the notion of asymptotic summation to only stipulate that truncations approximate the sum modulo terms with better decay at $\mathrm{bf}$, not the front face $\mathrm{ff}$. In other words, the asymptotic summation should take place on the compactification
\begin{equation} 
    Y=[\overline{\bbR^d};\rightarrow] \hookleftarrow \bbR^d,
\end{equation}
not $\overline{\bbR^d}\hookleftarrow \bbR^d$. The lack of subscript on the ``[$\cdots\!\,$]'' indicates that the blowup of $\{\rightarrow\}\subset \infty \bbS^{d-1}$ used to construct $Y$ is the polar blowup (as opposed to the parabolic blowup used to construct $X$). 
Like $X$, the manifold-with-corners $Y$ has two boundary hypersurfaces. 

The mapping properties of forward integration on $Y$ are collected in \S\ref{sec:forward}.

\begin{remark*}
As a manifold-with-corners, $Y$ is in fact diffeomorphic to $X$. But the category relevant to us is not that of manifolds-with-corners, but compactifications of Euclidean space via manifolds-with-corners. 

The identity map $\bbR^d\to\bbR^d$ does not extend to a diffeomorphism $X\to Y$, so $X,Y$ are inequivalent as compactifications of $\bbR^d$. 
\end{remark*}

Unfortunately, there is another issue: successive terms in $\tilde{\mathsf{v}}_{\circ}$ blow up. The $j$th term should blow up like $x^j$ as $x\to\infty$.
This is because $E$ has $\triangle_{\bfy}$ in it, and \begin{equation} 
    \triangle_{\bfy}\in \operatorname{Diff}^{2,*,0}(Y\backslash \mathrm{bf}),
\end{equation} 
so when $E$ is applied to $ (N^{-1}E)^{j+1} [1]$ we do not necessarily get any additional decay in the forward direction. Then, applying $N^{-1}$ worsens the decay by one order.

The solution turns out to be working on the parabolic blowup $X=[\overline{\bbR^d},\rightarrow]_{\mathrm{par}}$. 
The notion of asymptotic summation is correspondingly weakened to only stipulate that truncations approximate the sum modulo terms with better decay at the boundary hypersurface $\mathrm{bf}$ \emph{of $X$}. Then, we try to asymptotically sum 
\begin{equation}  \tilde{\mathsf{v}}\coloneq \chi\Big(\frac{1}{\hat{y}^2}\Big)\tilde{\mathsf{v}}_{\circ},\quad \hat{y} = \frac{y}{x^{1/2}}, 
\end{equation} 
where $\chi \in C_{\mathrm{c}}^\infty(\bbR)$ is $=1$ identically near the origin. This cuts off $\tilde{\mathsf{v}}_{\circ}$ before reaching the problematic center of $\mathrm{ff}$. Crucially, successive terms in $\tilde{\mathsf{v}}$ are \emph{not} getting worse at $\mathrm{ff}$. The strategy above thus goes through --- asymptotically sum $\tilde{\mathsf{v}}$ on $X$, and the result $\tilde{v}$ solves $\tilde{P}\tilde{v}= \tilde{f}$ modulo the unavoidable error in the asymptotic summation procedure.

\begin{figure}[!htbp]
	\begin{center}
		\begin{tikzpicture}[ decoration={
				markings,
				mark=at position 0.6 with {\arrow[scale=1.5,>=latex]{>}},
			}
			]
			\begin{scope} 
				\filldraw[fill=lightgray!50] (0,0) circle (2);
				\begin{scope}
					\clip (0,0) circle (2.025);
					\filldraw[fill=white] (2,0) circle (1);
				\end{scope}
				\fill[white] (1.85,1) circle (.1);
				\fill[white] (1.85,-1) circle (.1);
			\end{scope} 
			\draw[darkgray, postaction={decorate},dashed] (-2,0) -- (1,0);
			\draw[darkgray, postaction={decorate},dashed] (-2,0) to[out=50,in=140] (1,0);
			\draw[darkgray, postaction={decorate}, dashed] (-2,0) to[out=-50,in=220] (1,0);
			\draw[darkgray, postaction={decorate},dashed ] (-2,0) to[out=-70,in=180] (0,-1.3) to[out=0,in=240] (1,0);
			\draw[darkgray, postaction={decorate}, dashed] (-2,0) to[out=70,in=180] (0,1.3) to[out=0,in=-240] (1,0);
			\draw[darkgray, postaction={decorate}, dashed] (-2,0) to[out=-85, in=180] (0,-1.8) to[out=0,in=224] (1.35,-1.2) to[out=44,in=-30 ] (1.45,-1) to[out=150,in=260] (1,0);
			\draw[darkgray, postaction={decorate}, dashed] (-2,0) to[out=85, in=180] (0,1.8) to[out=0,in=-224] (1.35,1.2) to[out=-44,in=30 ] (1.45,1) to[out=-150,in=-260] (1,0);
			\node () at (1.25,0) {ff};
			\node () at (-2.3,0) {bf};
		\end{tikzpicture}
	\end{center}
	\caption{Level sets of constant ${\bfy}$, in $X$, along which we are integrating in this step of the overall argument.}
\end{figure}

\subsection{Why $X$?}

Above, we have asserted that the parabolic blowup is the choice of quasihomogeneous blowup of the forward direction allowing the argument above. 
We now explain why. 

The normal operators of $\tilde{P}$ at the front face $\mathrm{ff}$ in $X,Y$ are 
\begin{align}
    N_{\mathrm{ff}}(\tilde{P}) &= -2i\partial_x +\triangle_{\bfy} \\ 
    N_{\mathrm{ff}}(\tilde{P}) &= \triangle_{\bfy}, 
\end{align}
respectively. Note that $N=N_{\mathrm{bf}}(\tilde{P})=-2i\partial_x$ is contained in the former, but not the latter.

So, on $X$, the operators $N$ and $E=\tilde{P}-N$ have the same $b$-decay order at $\mathrm{ff}$:
\begin{equation}
    N \in \operatorname{Diff}_{\mathrm{b}}^{1,-1,-2}(X),\quad E \in \operatorname{Diff}_{\mathrm{b}}^{2,-1,-2}(X) ,
\end{equation}
where the key order is the final one, $-2$. 
Consequently, the product $N^{-1} E$ does not worsen decay. Applying $E$ produces two orders of decay, but $N^{-1}$ loses these. (Here, we are restricting attention to a small neighborhood of $\mathrm{bf}$.)

In contrast, on $Y$,  the operator $N$
produces one more order of decay at $\mathrm{ff}$ than $E$, because 
\begin{equation}
    N \in \operatorname{Diff}_{\mathrm{b}}^{1,-1,-1}(Y),\quad E\in \operatorname{Diff}_{\mathrm{b}}^{2,-1,0}(Y); 
\end{equation}
the key order is the final one, and $-1\neq 0$. 
Consequently, $N^{-1} E$ should worsen decay at $\mathrm{ff}$ each time it is applied, because $E$ fails to produce any decay, and then $N^{-1}$ loses one order. 
This applies arbitrarily close to $\mathrm{bf}$, so inserting cutoffs does not help.

Another way of getting at the same conceptual point is to compare the behavior of successive terms in $\tilde{\mathsf{v}}_{\circ}$ at the front faces of $X,Y$. 
In order to see this, we use the fact that the front face of $X$ arises from blowing up the corner of $Y$. More precisely, let 
\begin{equation}
    \tilde{Y} = [Y; \mathrm{bf}\cap \mathrm{ff}];
\end{equation}
see \Cref{fig:Y}. Then, the identity map $\bbR^d\to \bbR^d$ extends to a diffeomorphism $\tilde{Y} \backslash \mathrm{ff}_0 \to X\backslash \mathrm{fd}$,
where $\mathrm{ff}_0$ is the closure in $\tilde{Y}$ of the interior of the original front face $\mathrm{ff}\subset Y$, and $\mathrm{fd}\in X$ is the midpoint of $X$'s front face. For example, the interior of the new front face in $\tilde{Y}$ is parametrized by the ratio of boundary-defining-functions on $Y$,
\begin{equation}
    \frac{\overbrace{y/x}^{\text{old }\rho_{\mathrm{ff}}}}{\underbrace{1/y}_{\text{old }\rho_{\mathrm{bf}} }} =\frac{y^2}{x} = \hat{y}^2,
\end{equation}
in addition to the longitudinal coordinate $\phi = \bfy/y$. These are combined into the coordinate $\hat{y}\phi = \hat{\bfy}\in \bbR^{d-1}\backslash \{0\}$, which happens to parametrize the front face of $X$. A more careful analysis proves the diffeomorphism claim above.

\begin{figure}[!htbp]
    \centering
         \begin{minipage}{.33\textwidth}
         \centering
     	\begin{tikzpicture}
			\begin{scope} 
				\filldraw[fill=lightgray!50] (0,0) circle (2);
				\begin{scope}
					\clip (0,0) circle (2.025);
					\filldraw[fill=white] (2,0) circle (1);
				\end{scope}
				\fill[white] (1.85,1) circle (.1);
				\fill[white] (1.85,-1) circle (.1);
			\end{scope} 
			\node () at (0,2.2) {$X$};
			\draw[darkred,->] (.9,0) to[out=90, in=230] (1.22,.8) node[left] {$\frac{\bfy}{\sqrt{x}}$};
			\draw[darkred,->] (.9,0) -- (.3,0) node[left] {$\frac{1}{\sqrt{x}}$};
			\draw[darkred,->] (-1.9,0) -- (-1.2,0) node[below] {$\frac{1}{r}$};
			\draw[darkred,->] (-1.9,0) to[out=90,in=240] (-1.6,1) node[right] {$\frac{\pi}{2}-\theta$};
			\draw[darkred,->] (1.6,-1.05) to[out=235,in=30] (.85,-1.7) node[left] {$\theta$};
			\draw[darkred,->] (1.6,-1.05) to[out=160, in=-60] (1,-.5) node[below left] {$\frac{x}{y^2}$};
			\node () at (1.25,0) {ff};
			\node () at (-2.3,0) {bf};
		\end{tikzpicture}
        \end{minipage}%
      \begin{minipage}{.33\textwidth}
         \centering
            		\begin{tikzpicture}
			\begin{scope} 
				\filldraw[fill=lightgray!50] (0,0) circle (2);
				\begin{scope}
					\clip (0,0) circle (2.025);
					\filldraw[fill=white] (2,0) circle (1);
				\end{scope}
				\fill[white] (1.85,1) circle (.1);
				\fill[white] (1.85,-1) circle (.1);
			\end{scope} 
			\node () at (0,2.2) {$Y$};
			\draw[darkred,->] (.9,0) to[out=90, in=230] (1.22,.8) node[left] {$\bfy$};
			\draw[darkred,->] (.9,0) -- (.3,0) node[left] {$\frac{1}{x}$};
			\draw[darkred,->] (-1.9,0) -- (-1.2,0) node[below] {$-\frac{1}{x}$};
			\draw[darkred,->] (-1.9,0) to[out=90,in=240] (-1.6,1) node[right] {$-\frac{\bfy}{x}$};
			\draw[darkred,->] (1.6,-1.05) to[out=235,in=30] (.85,-1.7) node[above] {$\frac{y}{x}$};
			\draw[darkred,->] (1.6,-1.05) to[out=160, in=-60] (1,-.5) node[left] {$\frac{1}{y}$};
			\node () at (1.25,0) {ff};
			\node () at (-2.3,0) {bf};
		\end{tikzpicture}
        \end{minipage}%
    \begin{minipage}{.33\textwidth}
     \centering
    \begin{tikzpicture}
    \tikzset{
    partial ellipse/.style args={#1:#2:#3}{
        insert path={+ (#1:#3) arc (#1:#2:#3)}
    }
}
			\begin{scope} 
				\filldraw[fill=lightgray!50] (0,0) circle (2);
				\begin{scope}
                    \clip (0,0) circle (2.025);
                    \filldraw[fill=white, overlay] (2.2,0) [partial ellipse=90:270:2cm and 1.3cm];
                    \fill[white] (1.44,1.35) circle (0.7);
                    \fill[white] (1.44,-1.35) circle (0.7);
                    \draw (.888,1.79) arc (141.2:214.2:0.7cm);
                    \draw (.888,-1.79) arc (218.8:145.8:0.7cm);
				\end{scope}
			\end{scope} 
			\node () at (0,2.2) {$\vphantom{Y}\smash{\tilde{Y}}$};
			\draw[darkred,->] (.1,0) to[out=90, in=230] (.3,.6) node[left] {$\bfy$};
			\draw[darkred,->] (.1,0) -- (-.5,0) node[below] {$\frac{1}{x}$};
			\draw[darkred,->] (-1.9,0) -- (-1.2,0) node[below] {$-\frac{1}{x}$};
			\draw[darkred,->] (-1.9,0) to[out=90,in=240] (-1.6,1) node[right] {$-\frac{\bfy}{x}$};
			\draw[darkred,->] (.7,-1.75) to[out=205,in=0] (0,-1.9) node[above] {$\frac{y}{x}$};
			\draw[darkred,->] (.7,-1.75) to[out=115, in=250] (0.65,-1.2) node[right] {$\;\frac{x}{y^2}$};
            \draw[darkred,->] (.7,1) to[out=115,in=240] (.7,1.6) node[right] {$\;\hat{y}^2=\frac{y^2}{x}$};
			\draw[darkred,->] (.7,1) to[out=205, in=55] (0.35,0.67) node[above] {$\frac{1}{y}\,$};
			\node () at (0.5,0) {ff};
			\node () at (-2.3,0) {bf};
		\end{tikzpicture}
        \end{minipage}%
        \caption{The compactifications $X,Y,\tilde Y$ with an atlas of coordinate charts for each depicted. Not depicted is the azimuthal coordinate $\bfy / y \in \bbS^{d-2}$. Recall that $y=|\bfy|$.}
    \label{fig:Y}
\end{figure}

Consequently, a function such as
\begin{equation} 
\rho_{\mathrm{bf}}^{j}\rho_{\mathrm{ff}}^k \in C^\infty(Y) 
\end{equation} 
(where the boundary-defining-functions are those for $Y$)
has $j+k$ orders of decay at the front face of $X$ (away from the midpoint). This is analogous to how, on the quadrant $[0,\infty)^2\subset \smash{\bbR^2_{x,y}}$, the monomial $x^jy^k$ has $r^{j+k}$ orders of decay as $r\to 0^+$. 
So, since $N^{-1} E$ induces one order of decay at $\mathrm{bf}\subset Y$ and one order of growth at $\mathrm{ff}\subset Y$, it breaks even at the corner, hence at the front face of $X$.

\subsection{Mapping properties of forward integration on $X$}
\label{subsec:int}

Let $E=\tilde{P}-N$, as above.

\begin{proposition}\label{prop:mainI_init0}
    For $\tilde f$ defined in \cref{eq:f_init} and $j \geq 0$, 
\begin{equation}\label{eq:misc_eskimo}
    N^{-1}(EN^{-1})^j\tilde f \in \mathcal{A}^{(L-1+j,0),\mathcal{E}^{+\cdots +}}(Y\backslash \mathrm{fd}), 
\end{equation}
for some index set $\calE^{+\cdots +}$, 
where $\mathrm{fd}$ denotes the midpoint of $\mathrm{ff}$.  Specifically, 
    \begin{equation} \label{eq:E++_form}
    \mathcal{E}^{+\cdots++}=\{(n-j,\ell):n \in \bbN,0\leq \ell\leq j\}. 
    \end{equation} 
\end{proposition}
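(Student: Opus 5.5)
We argue by induction on $j$, on the polar blowup $Y=[\overline{\bbR^d};\rightarrow]$ away from the midpoint $\mathrm{fd}$ of $\mathrm{ff}$. The induction rests on two mapping properties.

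\emph{Forward integration.} We need that $N^{-1}=-(2i)^{-1}\int_{-\infty}^x$ maps
\begin{equation*}
\mathcal{A}^{(k,0),\mathcal{F}}(Y\backslash\mathrm{fd}) \to \mathcal{A}^{(k-1,0),\mathcal{F}^+}(Y\backslash\mathrm{fd}), \qquad k\geq 2,
\end{equation*}
where $\mathcal{F}^+=\{(n-1,\ell):(n,\ell)\in\mathcal{F}\}\cup\{(n-1,\ell+1):\ldots\}$ is the index set obtained by shifting down by one and adjoining at most one extra power of $\log$ at each step. This is the content of the appendix on forward integration (\S\ref{sec:forward}, \S\ref{sec:logs}), which we take as given.

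Heuristically, in the corner coordinates of $Y$ ($\rho_{\mathrm{bf}}=1/y$, $\rho_{\mathrm{ff}}=y/x$), write $f=y^{-k}g(x/y,\phi)$. Then
\begin{equation*}
\int_{-\infty}^x f\,\dd s = y^{1-k}\int_{-\infty}^{x/y} g(t,\phi)\,\dd t .
\end{equation*}
Since $k\geq 2$, the integrand is integrable near the backward direction, so the result is smooth with weight $k-1$ at $\mathrm{bf}$ away from the forward direction. As $x/y=1/\rho_{\mathrm{ff}}\to\infty$, the polyhomogeneous expansion of $g$ in $1/t$ integrates term by term. The term $t^{-1}$ produces a $\log\rho_{\mathrm{ff}}$, and the constant of integration (the total integral) produces a term of order $0$ at $\mathrm{ff}$. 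Together with the prefactor $y^{1-k}=\rho_{\mathrm{bf}}^{k-1}$, this shifts the $\mathrm{ff}$-index down by one relative to the total weight. That is the $(n-1,\ell+1)$ shift.

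The base case $j=0$ follows from this directly. By \cref{eq:f_init}, $\tilde f\in\rho_{\mathrm{bf}}^L C^\infty(\overline{\bbR^d})$, and this space pulls back to $\mathcal{A}^{(L,0),(L,0)}(Y)$. Applying $N^{-1}$ gives $\mathcal{A}^{(L-1,0),\mathcal{E}^+}$ with $\mathcal{E}^+\subset\{(n,\ell):n\in\bbN,\ \ell\leq 1\}$. Since $L\geq 2$, this is contained in the claimed $\{(n,\ell):\ell\le 0\}$ only after a finer look. The constant of integration is $\int_{-\infty}^\infty \tilde f\,\dd s$, which is homogeneous of degree $1-L$ in $y$, hence of order $\rho_{\mathrm{bf}}^{L-1}\rho_{\mathrm{ff}}^0$. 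No $\log$ arises because the expansion of $\tilde f$ at the forward direction begins at $x^{-L}$ with $L\geq 2$, so there is no $t^{-1}$ term. This is the $j=0$ case of \cref{eq:E++_form}.

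\emph{The operator $E$.} We need that $E=\tilde P-N$ improves the $\mathrm{bf}$ weight by one and costs at most one order at $\mathrm{ff}$ of $Y$. From \cref{eq:conj_P0} and \Cref{lem:rem}, on $Y$ we have
\begin{equation*}
E\in\operatorname{Diff}_{\mathrm b}^{2,-2,0}(Y)+(\text{terms with better weights}).
\end{equation*}
The main term is $\triangle_{\bfy}$, which equals $y^{-2}$ times a b-operator near the corner and near $\mathrm{ff}^\circ\setminus\mathrm{fd}$. The term $-\partial_x^2$ is similarly of weight $\rho_{\mathrm{bf}}^2\rho_{\mathrm{ff}}^2$. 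The perturbation terms are of weight $\rho_{\mathrm{bf}}^{L}$, with a worse $\mathrm{ff}$ weight only where second $y$-derivatives appear, and that is again bounded as above. Consequently
\begin{equation*}
E:\mathcal{A}^{(k,0),\mathcal{F}}\to\mathcal{A}^{(k+2,0),\mathcal{F}-2}+\cdots \subset \mathcal{A}^{(k+2,0),\mathcal{F}}.
\end{equation*}
Here $\mathcal{F}-2$ is understood after rewriting $y^{-2}=\rho_{\mathrm{bf}}^{2}$; near the corner, $\triangle_{\bfy}$ in these coordinates is $\rho_{\mathrm{bf}}^2$ times a b-operator whose $\mathrm{ff}$ weight is $0$, so no index is lost at $\mathrm{ff}$.

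\emph{Inductive step.} Assume the claim for $j$ and apply $N^{-1}E$. Applying $E$ raises the $\mathrm{bf}$ weight from $L-1+j$ to $L+1+j$. Applying $N^{-1}$ then lowers it to $L+j$, which is the claimed weight $L-1+(j+1)$. At $\mathrm{ff}$, $N^{-1}$ shifts $(n-j,\ell)$ to $(n-j-1,\ell')$ with $\ell'\le\ell+1\le j+1$, matching \cref{eq:E++_form} for $j+1$. The indices $n\in\bbN$ are closed under this operation because $\mathcal{E}^{+\cdots+}$ is generated from $\bbN$.

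The main obstacle is the forward-integration mapping lemma near the corner and near $\mathrm{ff}^\circ$. Away from $\mathrm{fd}$, one must show that integrating along lines of constant $\bfy$ preserves full polyhomogeneity, including smooth dependence in the azimuthal and $\hat y$-directions and joint expansions at the corner. In particular, one must check that each application produces exactly one new log and one shift, not more, and that smoothness at $\mathrm{bf}$ is kept near the backward direction. We would prove this by reducing to the one-dimensional model $\int_{-\infty}^{T}g(t)\,\dd t$ with $g$ polyhomogeneous at $t=+\infty$ and Schwartz-like (weight $\geq 2$) at $t=-\infty$, with parameters entering smoothly.
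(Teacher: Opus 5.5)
Your argument is correct and is essentially the paper's proof: you induct on $j$ using the forward-integration mapping property of \Cref{prop:intmain} together with $E\in\operatorname{Diff}_{\mathrm b}^{2,-2,0}(Y)$, where \Cref{prop:intmain} gives one downward shift at $\mathrm{ff}$, a constant-of-integration term, and a new log only from the index $1$ (absent at the start because $\tilde f\in\rho_{\mathrm{bf}}^L\rho_{\mathrm{ff}}^L C^\infty(Y)$ with $L\ge 2$). Do clean up your description of $E$: it gains two orders at $\mathrm{bf}$ and preserves the $\mathrm{ff}$ index set exactly, and that is what your inductive step actually uses; the phrase ``improves the $\mathrm{bf}$ weight by one and costs one order at $\mathrm{ff}$'' describes $N^{-1}E$ rather than $E$, and the inclusion of $\mathcal{F}-2$ in $\mathcal{F}$ is false as written.
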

\begin{proof}
    The mapping properties of $N^{-1}$ vis-\`a-vis polyhomogeneity are described in \Cref{prop:intmain}, in the appendix. 
    Using that proposition and the fact that $E \in \operatorname{Diff}_{\mathrm{b}}^{2,-2,0}(Y)$ (and thus induces two orders of decay at $\mathrm{bf}$), 
    \begin{equation}\label{eq:en}
		EN^{-1} :  \mathcal{A}^{(\alpha,0),\mathcal{E}}(Y\backslash \mathrm{fd}) \to \mathcal{A}^{(\alpha+1,0),\calE^+}( Y\backslash \mathrm{fd}).
	\end{equation}
    Now $\tilde f \in \rho_{\mathrm{bf}}^L\rho_{\mathrm{ff}}^LC^{\infty}(Y) = \calA^{(L,0),\calE}$, for $\calE = (L,0)$. Thus, 
    \begin{equation}
        (EN^{-1})^j \tilde{f} \in \calA^{(L+j),\calE^{+\cdots +}}(Y\backslash \mathrm{fd}), 
    \end{equation}
    and thus $N^{-1}(EN^{-1})^j\tilde f \in \mathcal{A}^{(L-1+j,0),\mathcal{E}^{+\cdots ++}}(Y\backslash \mathrm{fd})$. There are $j+1$ plus symbols in the final superscript. An induction yields \cref{eq:E++_form}. 
\end{proof}

It turns out that the log terms vanish. This (unexpected) fact, which depends on particular algebraic cancellations, is recorded in an appendix, \S\ref{sec:logs}.\footnote{
Otherwise, could only conclude polyhomogeneity. Polyhomogeneity suffices for the rest of our main argument --- our main theorem would remain unchanged, except it would have to allow more log terms. Thus, we consider the absence of logs an inessential point. }
We can therefore conclude:
\begin{propositionp}\label{prop:mainI_init}
    For $\tilde{f}$ as above, $N^{-1} (EN^{-1})^j \tilde{f} \in \rho_{\mathrm{bf}}^{L-1+j} \rho_{\mathrm{ff}}^{-j} C^\infty(Y\backslash \mathrm{fd})$.
\end{propositionp}

Finally:
\begin{proposition}\label{cor:mainI} For $\tilde f$ defined in \cref{eq:f_init} and $j \geq 0$, 
\begin{equation}\label{eq:misc_eskimo2}
    N^{-1}(EN^{-1})^j\tilde f \in \rho_{\mathrm{bf}}^{L-1+j}\rho_{\mathrm{ff}}^{L-1}C^{\infty}(X \backslash \mathrm{fd}), 
\end{equation}
where $\mathrm{fd}$ denotes the midpoint of $\mathrm{ff}$.  
\end{proposition}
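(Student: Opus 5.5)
The plan is to transfer the conclusion of \Cref{prop:mainI_init} from $Y\backslash\mathrm{fd}$ to $X\backslash\mathrm{fd}$. This uses the diffeomorphism $\tilde{Y}\backslash\mathrm{ff}_0\cong X\backslash\mathrm{fd}$ from the discussion of why $X$ is the right compactification, where $\tilde Y=[Y;\mathrm{bf}\cap\mathrm{ff}]$. Since blowing down is smooth, any function in $C^\infty(Y\backslash\mathrm{fd})$ pulls back to $C^\infty(\tilde Y)$. Near $\mathrm{ff}_0$ we have $\mathrm{fd}$ removed, so away from $\mathrm{fd}$ it defines an element of $C^\infty(X\backslash \mathrm{fd})$. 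The only work is bookkeeping of weights: we must express the $Y$-boundary-defining-functions in terms of those on $X$.

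First I will verify the weight relations. Write $\rho_{\mathrm{bf}}^Y,\rho_{\mathrm{ff}}^Y$ for bdfs on $Y$, for instance $1/y$ and $y/x$ near the corner, and $1/r$ away from $\mathrm{ff}$. On the blowup $\tilde Y$, the new front face is the one identified with $\mathrm{ff}\subset X$ (coordinate $\hat y^2=y^2/x$). A standard fact about blowing up a corner is that
\begin{equation}
\rho_{\mathrm{bf}}^Y = \rho_{\mathrm{bf}}^X\rho_{\mathrm{ff}}^X\cdot(\text{smooth positive}),\qquad \rho_{\mathrm{ff}}^Y = \rho_{\mathrm{ff}}^X \rho_{\mathrm{ff}_0}\cdot(\text{smooth positive}).
\end{equation}
Away from $\mathrm{ff}_0$, i.e.\ away from $\mathrm{fd}$, the factor $\rho_{\mathrm{ff}_0}$ is a nonvanishing smooth function. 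I will check these relations directly in the coordinates \cref{eq:corner_coords_0}. There $\rho^X_{\mathrm{bf}}=x/y_1^2$ and $\rho^X_{\mathrm{ff}}=y_1/x$, so $\rho^X_{\mathrm{bf}}\rho^X_{\mathrm{ff}}=1/y_1\sim 1/y=\rho^Y_{\mathrm{bf}}$ and $\rho^X_{\mathrm{ff}}\sim y/x=\rho^Y_{\mathrm{ff}}$. I will also check them in the chart $(1/\sqrt x,\hat{\bfy})$ near $\mathrm{ff}^\circ\backslash\mathrm{fd}$, where $\hat y\neq 0$. There $\rho^Y_{\mathrm{bf}}=1/y=\hat y^{-1}x^{-1/2}$ and $\rho^Y_{\mathrm{ff}}= y/x=\hat y x^{-1/2}$, which are smooth positive multiples of $\rho^X_{\mathrm{ff}}=x^{-1/2}$, consistent with the relations since $\rho^X_{\mathrm{bf}}$ is nonvanishing there.

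Second, I will conclude. By \Cref{prop:mainI_init},
\begin{equation}
N^{-1}(EN^{-1})^j\tilde f=(\rho^Y_{\mathrm{bf}})^{L-1+j}(\rho^Y_{\mathrm{ff}})^{-j}g,\qquad g\in C^\infty(Y\backslash\mathrm{fd}).
\end{equation}
Substituting the relations above gives
\begin{equation}
(\rho^X_{\mathrm{bf}})^{L-1+j}(\rho^X_{\mathrm{ff}})^{L-1+j-j}\tilde g=(\rho^X_{\mathrm{bf}})^{L-1+j}(\rho^X_{\mathrm{ff}})^{L-1}\tilde g,
\end{equation}
with $\tilde g$ smooth on $X\backslash\mathrm{fd}$. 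This is \cref{eq:misc_eskimo2}. Away from $\mathrm{ff}$ both compactifications agree with $\overline{\bbR^d}$, so there is nothing further to check there.

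The main obstacle is making the identification $\tilde Y\backslash \mathrm{ff}_0\cong X\backslash\mathrm{fd}$ rigorous, including the smooth structure. Near $\mathrm{bf}\cap\mathrm{ff}$ on $X$, $\rho_{\mathrm{bf}}=1/\hat y^2$ (not $1/\hat y$), and this has to match the smooth structure from the blowup of the corner of $Y$. I would do this by writing explicit transition maps between the projective coordinates $(1/y_1,\,y_1/x,\,w)$ on $Y$, their blowup coordinates $(1/y_1,\,y_1^2/x,\,w)$ and $(y_1/x,\,x/y_1^2,\,w)$ on $\tilde Y$, and the charts of $X$ given in \cref{eq:corner_coords_0} and in $(1/\sqrt x,\hat{\bfy})$. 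I would check that each transition map is smooth with smooth inverse, and that the pullback of smooth functions on $Y$ is smooth in these charts. Given that, the weight computation above is immediate.
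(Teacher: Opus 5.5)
Your proposal is correct and follows the paper's own argument. The paper likewise starts from \Cref{prop:mainI_init}, passes to $X\backslash\mathrm{fd}$ through $\tilde Y=[Y;\mathrm{bf}\cap\mathrm{ff}]$ away from the lift of $Y$'s old front face, and gets the order $(L-1+j)-j=L-1$ at the new face because orders add there. Your explicit boundary-defining-function relations and chart-by-chart checks just spell out what the paper compresses into ``index sets add at the new face.''
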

\begin{proof}
    Beginning with \Cref{prop:mainI_init},  we then pass to $X$, via $\tilde{Y}$. It suffices to analyze the behavior of $N^{-1}(EN^{-1})^j\tilde{f}$ on $\tilde{Y}$ away from the old front face of $Y$.
    Index sets add at the new face formed from the blow-up, so the order at the new front face is the sum $(L-1+j)-j=L-1$ of the old orders. This gives \cref{eq:misc_eskimo2}.
\end{proof}

\subsection{Proof of Proposition \ref{prop:I_main}}\label{subsec:stepI} 
Define the formal sum
\begin{equation}
    \tilde{\mathsf{v}}\coloneqq \sum_{j=0}^{\infty} N^{-1}(-EN^{-1})^j\tilde f = \sum_{j=0}^{\infty}(-N^{-1}E)^{j+1} [1] .
\end{equation}
This formally satisfies $\tilde P  \tilde{\mathsf{v}}=\tilde f$.

By \Cref{cor:mainI}, the $j$th term in the summand belongs to $\rho_{\mathrm{bf}}^{L-1+j}\rho_{\mathrm{ff}}^{L-1}C^{\infty}(X)$. Asymptotically summing $\tilde{\mathsf{v}}$ (say, using \Cref{lem:Borel}), and localizing near $\mathrm{bf}$ we obtain a function 
\begin{equation} 
    \tilde{v}\in \rho_{\mathrm{bf}}^{L-1}\rho_{\mathrm{ff}}^{L-1}C^{\infty}(X)
\end{equation} 
supported away from the midpoint of $\mathrm{ff}$. 
By construction, $\tilde P \tilde{v}-\tilde f \in \rho_{\mathrm{bf}}^{\infty}\rho_{\mathrm{ff}}^{L+1}C^{\infty}(X)$, 
which is the desired result. 

\begin{remark*}
    For the reader not familiar with these sorts of Frobenius constructions, we expand on the final step in the argument above. 
Recall that $\tilde P \in \operatorname{Diff}_{\mathrm{b}}^{2,-1,-2}(X)$. Also, $\tilde f \in \rho_{\mathrm{bf}}^L\rho_{\mathrm{ff}}^{2L}C^{\infty}(X)$, because 
\begin{equation} 
    \rho_{\mathrm{bf}}^LC^{\infty}(\overline{\bbR^d}) \subseteq \rho_{\mathrm{bf}}^L\rho_{\mathrm{ff}}^{2L}C^{\infty}(X).
\end{equation}     
It follows that
\begin{equation} 
    \tilde P \tilde{v} -\tilde f \in \rho_{\mathrm{bf}}^{L}\rho_{\mathrm{ff}}^{L+1}C^{\infty}(X).
\end{equation} 
Thus, it makes sense to speak of the Taylor expansion of $\smash{\rho_{\mathrm{bf}}^{-L} (\tilde{P}\tilde{v}-\tilde{f})}$ at $\mathrm{bf}$. 

The algebraic construction of $\tilde{\mathsf{v}}$ guarantees that this Taylor expansion vanishes term-by-term. 
Indeed, for any $J\in \bbN$, 
\begin{equation}
    \tilde{v} - \sum_{j=0}^J N^{-1}(-EN^{-1})^j \tilde{f} \in \rho_{\mathrm{bf}}^{L+J} \rho_{\mathrm{ff}}^{L-1} C^\infty(X) , 
\end{equation}
by construction. Applying $\tilde{P}$ to both sides yields
\begin{equation}
    \tilde{P} \tilde{v} -  \sum_{j=0}^J \tilde{P} N^{-1}(-EN^{-1})^j  \tilde{f} \in \rho_{\mathrm{bf}}^{L+J+1} \rho_{\mathrm{ff}}^{L+1} C^\infty(X). 
\end{equation}
The sum telescopes:
\begin{align}
\begin{split} 
    \sum_{j=0}^J \tilde{P} N^{-1}(-EN^{-1})^j \tilde{f} &= \sum_{j=0}^J (N+E) N^{-1}(-EN^{-1})^j \tilde{f} \\
    &= \sum_{j=0}^J ((-EN^{-1})^j  - (-EN^{-1})^{j+1}) \tilde{f} =  \tilde{f} - \underbrace{(-E N^{-1})^{J+1} \tilde{f}}_{\in \rho_{\mathrm{bf}}^{L+1+J} \rho_{\mathrm{ff}}^{L+1} C^\infty(X)  }.\end{split} 
\end{align}
The claim that the last term lies in $\rho_{\mathrm{bf}}^{L+1+J} \rho_{\mathrm{ff}}^{L+1} C^\infty(X)$ follows from \Cref{cor:mainI} and the fact that $E\in \operatorname{Diff}_{\mathrm{b}}^{2,-2,-2}(X)$, hence induces two orders of decay at each boundary hypersurface. 

So, we conclude that 
\begin{equation}
    \tilde{P}\tilde{v} - \tilde{f} \in \rho_{\mathrm{bf}}^{L+J+1} \rho_{\mathrm{ff}}^{L+1} C^\infty(X). 
\end{equation}
Since $J$ can be taken arbitrarily large, we are done.

\end{remark*}

\section{Step two: the parabolic juncture (front face model problem)}
\label{sec:ff}

Next, we turn to the second step of the construction.
The PDE we would like to solve is 
\begin{equation}
    \hat{P} \hat{v}_{2} \approx \hat{f},
\end{equation} 
where $\hat{f}\in  \calA^{\infty,(L+1,0)}(X) = \rho_{\mathrm{bf}}^\infty \rho_{\mathrm{ff}}^{L+1} C^\infty(X)$, for $L\geq 2$, and $\hat{P} = e^{-ir}Pe^{ir}$.
The $\approx$ in ``$\hat{P} v \approx f$'' means we are allowing an error of the form $f_1 \in \langle r \rangle^{-(d+3)/2} C^\infty(\overline{\bbR^d})$. Thus, $\hat{P}v=f+f_1$. We will prove:

\begin{proposition}
	\label{prop:II_main}
	Let $L\in \bbN^{\geq 2}$. There exists an index set $\calF\subset \bbN^2$ such that, given $\hat{f}\in \rho_{\mathrm{bf}}^{(d+3)/2 } \rho_{\mathrm{ff}}^{L+1} C^\infty(X)$, there exists a function 
    \begin{equation}
        \hat{v} \in \rho_{\mathrm{bf}}^{(d-1)/2}\rho_{\mathrm{ff}}^{L-1} \calA^{ (0,0) ,\calF}(X)  
    \end{equation}
    such that $\hat{P} \hat{v}-\hat{f} \in \langle r \rangle^{-(d+3)/2} \calA^{(0,0),\infty}(X)\subseteq 
    \langle r \rangle^{-(d+3)/2} C^\infty(\overline{\bbR^d}).$
\end{proposition}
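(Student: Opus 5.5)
We will run a Frobenius-type iteration at $\mathrm{ff}$, using the model operator $N\coloneq N_{\mathrm{ff}}(\hat P)=\triangle_{\bfy}-2i\partial_r-i(d-1)/r$ from \cref{eq:Ns}. By \Cref{lem:rem} and \Cref{tab:b_orders}, $\hat P=N+E$ with $E\in\operatorname{Diff}_{\mathrm{b}}^{2,-1,-3}(X)$ near $\mathrm{ff}$ (the $-\partial_x^2$ term and all perturbation terms gain decay at $\mathrm{ff}$). Concretely, we work in the coordinates $\rho=x^{-1/2}$, $\hat{\bfy}$ and write $N=\rho^{4}\mathcal{N}$, where $\mathcal{N}$ is a dilation-invariant operator on $[0,\infty)_\rho\times\bbR^{d-1}_{\hat{\bfy}}$. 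Expanding $\partial_r=r^{-1}(x\partial_x+\bfy\cdot\nabla_{\bfy})$ and $\triangle_{\bfy}=x^{-1}\triangle_{\hat{\bfy}}$, and using $x\partial_x=-\tfrac12(\rho\partial_\rho+\hat{\bfy}\cdot\nabla_{\hat{\bfy}})$, the $\hat{\bfy}\cdot\nabla$ contributions partially cancel. The result is an operator of the form $\triangle_{\hat{\bfy}}+i(\rho\partial_\rho-\hat{\bfy}\cdot\nabla_{\hat{\bfy}})-i(d-1)+(\text{lower})$. After conjugating by the smooth-at-$\mathrm{ff}^\circ$ factor $e^{i\hat y^2\aleph}$, this becomes, at each Mellin frequency $\sigma$ in $\rho$, $\mathcal{N}(\sigma)=H+c(\sigma)$, where $H$ is the quantum inverted harmonic oscillator Hamiltonian on $\bbR^{d-1}$ and $c(\sigma)$ is affine in $\sigma$.

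The solvability step on the face is to invert $\mathcal{N}(\sigma)$ on forcing that is Schwartz at $\mathrm{bf}\cap\mathrm{ff}$ relative to the correct oscillation (the forcing lies in $\rho_{\mathrm{bf}}^{(d+3)/2}$), choosing the \emph{outgoing} inverse. We will cite the QIHO theory of Appendix \S\ref{sec:QHO} for the following: for $\sigma$ off a discrete set, there is an inverse producing $g\in\calA^{(0,0)}$ at the corner times $\rho_{\mathrm{bf}}^{(d-1)/2}$, i.e.\ decaying like $\hat y^{-(d-1)}$ with outgoing spherical oscillation $e^{ir}$ but no $e^{i(x-\cdot)}$ plane-wave component. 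This is where the weight $\rho_{\mathrm{bf}}^{(d-1)/2}$ in $\hat v$ comes from. Outgoingness is what prevents undoing Step one, and it is essential that the corner expansion be smooth in $\rho_{\mathrm{bf}}=\hat y^{-2}$, with no half-integer powers or logs; we would verify this against separation of variables in $\hat y$ and spherical harmonics in $\phi$, where confluent hypergeometric functions appear.

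The iteration then runs as follows. Starting from $\hat f\in\rho_{\mathrm{bf}}^{(d+3)/2}\rho_{\mathrm{ff}}^{L+1}C^\infty$, set $\hat v_1=N^{-1}\hat f\in\rho_{\mathrm{bf}}^{(d-1)/2}\rho_{\mathrm{ff}}^{L-1}\calA^{(0,0),\cdot}$; the loss of two orders at $\mathrm{ff}$ reflects $N$ having order $-2$ there. The new error $-E\hat v_1$ is one order better at $\mathrm{ff}$, and keeps the $\mathrm{bf}$ weight $(d+3)/2$ modulo terms already in $\langle r\rangle^{-(d+3)/2}\calA^{(0,0),\infty}$. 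Iterating gives terms in $\rho_{\mathrm{ff}}^{L-1+k}$. Whenever $L-1+k$ hits an indicial root of $\mathcal{N}$ (the poles of $\mathcal{N}(\sigma)^{-1}$, which we expect to lie at integers $\ge L-d$ by the numerology of the main theorem), a logarithm is produced, via the standard Mellin contour shift with a residue. Collecting all exponents and log powers defines $\calF\subset\bbN^2$ after factoring out $\rho_{\mathrm{ff}}^{L-1}$; the set is independent of $\hat f$, since it depends only on the roots and on $L$. We then Borel-sum at $\mathrm{ff}$ (\Cref{lem:Borel}). The residual error is $\calA^{\cdot,\infty}$ at $\mathrm{ff}$ and carries weight $\rho_{\mathrm{bf}}^{(d+3)/2}$ with smooth corner expansion. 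Since $\rho_{\mathrm{bf}}^{(d+3)/2}\rho_{\mathrm{ff}}^{\infty}$ near the corner equals $\langle r\rangle^{-(d+3)/2}$ times a function rapidly vanishing at $\mathrm{ff}$, this error lies in $\langle r\rangle^{-(d+3)/2}\calA^{(0,0),\infty}(X)$, which is the claimed form.

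The main obstacle is the model-problem inversion: establishing outgoing solvability of $H+c(\sigma)$ with precise polyhomogeneous (indeed smooth in $\hat y^{-2}$) asymptotics at the corner, uniformly meromorphic in $\sigma$, and locating the poles. We would attack it first by separation of variables, which reduces to radial ODEs with explicit Whittaker/Kummer solutions whose large-$\hat y$ asymptotics are classical. A secondary check is that $E$ preserves the outgoing corner structure, so that the $\mathrm{bf}$ weight does not degrade during the iteration.
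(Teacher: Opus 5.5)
Your architecture is the paper's: Frobenius at $\mathrm{ff}$ with $N=N_{\mathrm{ff}}(\hat P)$, an indicial family conjugate to the QIHO spectral family at imaginary energy, the outgoing inverse built from $R_-$ mapping $\langle\hat y\rangle^{-(d+1)}C^\infty\to\langle\hat y\rangle^{-(d-1)}C^\infty$ on $\mathrm{ff}$, and Borel summation.

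The genuine difference is at the exceptional parameters $\sigma=d-1+\ell+2m$ (where $a\in-\bbN$), at which $I(\sigma)$ has kernel and cokernel. The paper runs an explicit recursion and needs \Cref{lem:coker_fund}: $[I'(j)]$ maps $\ker I(j)$ onto $\operatorname{coker}I(j)$, which comes from the simple zero of $\frakW_-$ in \cref{eq:key_Wronskian} together with \S\ref{sec:Grushin}. That buys the explicit $\calF=\{(j,k):k\le j+1\}$ and identifies the log coefficients as kernel elements.

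You instead take residues of a meromorphic inverse. This is legitimate, and it makes \Cref{lem:coker_fund} unnecessary: the pole order only changes how many logs appear, and the proposition asks only for some $\calF\subset\bbN^2$. It must, however, be done locally. For forcing $\varrho\rho^{2+\sigma_0}(\log\rho)^p g$, set $u=\frac{p!}{2\pi i}\oint_{|\sigma-\sigma_0|=\epsilon}\rho^{\sigma}I(\sigma)^{-1}g\,(\sigma-\sigma_0)^{-p-1}\dd\sigma$, using $N(\rho^\sigma w)=\varrho\rho^{2+\sigma}I(\sigma)w$ for the model in \cref{eq:new_rN}.

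A global Mellin contour shift, and the "uniform" meromorphy you list as the main obstacle, would need control of $R_-(\sigma)$ as $|\Im\sigma|\to\infty$. There the QIHO energy runs parallel to its continuous spectrum, nothing in the paper supplies this, and term-by-term you never need it. Local meromorphy with finite-rank Laurent coefficients comes from three ingredients:
\begin{itemize}
\item the Fredholm setup of \S\ref{subsec:QIHO_Fredholm}, whose domains are $j$-independent (\Cref{rem:j-independence});
\item invertibility at generic $j$;
\item the analytic Fredholm theorem.
\end{itemize}
Separation of variables alone cannot give this, since it has no uniformity in the angular quantum number. Regularity of the log coefficients then follows from their triangular system and the bootstrap of \S\ref{subsec:QIHO_mapping}.

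Several details need repair.
\begin{itemize}
\item With $\rho=x^{-1/2}$ one has $N=\rho^2\mathcal N$, not $\rho^4\mathcal N$.
\item The conjugating factor is wrong. On $\mathrm{ff}$, $e^{i\hat y^2\aleph}=e^{i(r-x)}$ is $e^{i\hat y^2/2}$. Conjugating the Ornstein--Uhlenbeck operator by $e^{\pm i\hat y^2/2}$ leaves a first-order term; in one direction you simply recover the paraxial operator $N_{\mathrm{ff}}(\tilde P)$. The QIHO appears only for $e^{i\hat y^2/4}$, as in \cref{eq:misc_0tt}.
\item Your order $E\in\operatorname{Diff}_{\mathrm{b}}^{2,-1,-3}$ is too weak for the bookkeeping you then claim. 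With only one order gained at $\mathrm{bf}$, $E\hat v_1$ sits in $\rho_{\mathrm{bf}}^{(d+1)/2}$, which is exactly the threshold for $I(\sigma)^{-1}$, and $\log\rho_{\mathrm{bf}}$ terms would appear. You need $E=-\partial_x^2+(\hat P-\hat P_0)\in\operatorname{Diff}_{\mathrm{b}}^{2,-2,-4}(X)$ from \cref{eq:N_tot}.
\item The extra "lower" terms created when you replace $1/r$ by $1/x$ to get a dilation-invariant model are only of order $-1$ at $\mathrm{bf}$. They gain the second order only through the corner cancellation $I(\sigma)\colon\varrho^{(d-1)/2}C^\infty\to\varrho^{(d+1)/2}C^\infty$.
\item $x^{-1/2}$ is not a boundary-defining function of $\mathrm{ff}$ at the corner; there it equals $\rho_{\mathrm{ff}}\rho_{\mathrm{bf}}^{1/2}$. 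In your normalization the coefficient spaces are therefore $\varrho^{(d-1-\sigma)/2}C^\infty$, which move with $\sigma$, and that is awkward for residues. Use $\rho=\sqrt{x+y^2}/x$, i.e.\ conjugate by $\varrho^{\sigma/2}$ as in \Cref{lem:QHO_final}, so the spaces are $\sigma$-independent.
\item The poles sit at $\sigma=d-1+\ell+2m$, integers $\ge d-1$ in the normalization of $\hat v$. The "$L-d$" you quote belongs to the normalization of $w$ in the main theorem.
\end{itemize}
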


To do so, it will be convenient to consider ``the'' b-normal operator $N=N_{\mathrm{ff}}(\hat{P})$ of $\hat{P}$ at $\mathrm{ff}$, defined by
\begin{equation}
			N  :=\triangle_\bfy- \frac{2i}{r} \Big(x\frac{\partial}{\partial x} + \bfy\cdot \nabla_\bfy \Big)  -\frac{i(d-1)}{r} \in \operatorname{Diff}_{\mathrm{b}}^{2,-1,-2}(X).
			\label{eq:NffhatP}
\end{equation} 
This being ``the'' b-normal operator of $\hat{P}$ at $\mathrm{ff}$ means that $N-\hat{P}$ has one more order of b-decay at $\mathrm{ff}$ than $\hat{P} \in \operatorname{Diff}_{\mathrm{b}}^{2,-1,-2}(X)$
itself, i.e.
$N-\hat{P} \in \operatorname{Diff}_{\mathrm{b}}^{2,-1,-3}(X) $. In this case, more is true: 
\begin{equation}\label{eq:N_tot}
    N -\hat{P} = \underbrace{(N-\hat{P}_0)}_{\mathclap{=\partial_x^2 \in \operatorname{Diff}_{\mathrm{b}}^{2,-2,-4}}} - \overbrace{(\hat{P}-\hat{P}_0 )}^{\mathclap{\in \operatorname{Diff}_{\mathrm{b}}^{2,-2,-4}\text{ by \Cref{lem:rem}.}}} \in \operatorname{Diff}_{\mathrm{b}}^{2,-2,-4}(X) .
\end{equation}
Thus, $N$ captures $\hat{P}$ modulo subsubprincipal errors at $\mathrm{ff}$ (it turns out the observation 
\begin{equation} 
    N-\hat{P} \in \operatorname{Diff}_{\mathrm{b}}^{2,-2,-3}(X)
\end{equation}
would suffice for our purposes).

We build $v$ by asymptotically summing a formal series in $\rho_{\mathrm{ff}}$ (which we will abbreviate $\rho$ below). Rather than directly inverting $N$ as we did in the previous step, we use the ``Frobenius method.'' This means that the terms $v_\bullet$ in the series are \emph{functions of $\hat{\bfy}=\bfy/x^{1/2}$ alone} and given by solving an inhomogeneous problem 
of the form 
\begin{equation} 
        N  (\rho^\bullet v_\bullet(\hat{\bfy})) = F_\bullet,
\end{equation}
for  $F_\bullet$ some particular function given in terms of the previous $v_\bullet$'s and the terms $f_\bullet$ in $f$'s polyhomogeneous expansion at $\mathrm{ff}$.
We can use 
\begin{equation} 
    \rho=\frac{\sqrt{x+y^2}}{x}
\end{equation} 
as a boundary-defining-function of $\mathrm{ff}$ valid near all of $\mathrm{ff}$, including the midpoint. 
In addition, we will use $\hat{\bfy}\in \bbR^{d-1}$
to parametrize $\mathrm{ff}^\circ$ itself, as well as the other level sets of $\rho$. 
As a locally valid boundary-defining-function for $\mathrm{bf}$, we can take 
\begin{equation} 
    \varrho = \frac{x}{ x+y^2}=\frac{1}{1+\hat{y}^2},
\end{equation} 
where $\hat{y}=|\hat{\bfy}|$. Keep in mind that $\rho_{\mathrm{bf}}\sim 1/\hat{y}^2$ when $\hat{y}\gg 1$, not $\rho_{\mathrm{bf}} \sim 1/\hat{y}$ (since $\rho_{\mathrm{bf}}\sim 1/r$ away from the forward direction).
So, 
\begin{equation}
		X\cap \operatorname{cl}_X\{\lVert \bfy \rVert \leq x\} =  [0, 1)_\rho \times  (\overline{\bbR^{d-1}_{\hat{\bfy}}})_{2} 
\end{equation}
canonically, where the `$2$' in  $\smash{(\overline{\bbR^{d-1}_{\hat{\bfy}}})_{2}}$ just means that we are radially compactifying $\smash{\bbR^{d-1}_{\hat{\bfy}}}$ and then changing the smooth structure at infinity so that, instead of $\varrho^{1/2}$ being a boundary-defining-function, $\varrho$ becomes a boundary-defining-function (of $\mathrm{ff}$). This change of smooth structure is an inessential point conceptually speaking, but must be remembered when bookkeeping.

Central to carrying out the strategy above is a certain family $\{I(j)\}_{j\in \bbR}$ of operators, 
\begin{equation} 
	I(j)\in \operatorname{Diff}^2(\bbR^{d-1}_{\hat{\bfy}}) =  \operatorname{Diff}^2(\mathrm{ff}^\circ ),
\end{equation} 
\emph{on $\mathrm{ff}$}, the ``indicial family,'' which captures how $N$ acts on asymptotic series in $\rho_{\mathrm{ff}}=\rho$ valued in $C^\infty(\mathrm{ff}^\circ)$. Its definition is simply
\begin{equation}
	I(j) g(\hat{\bfy}) = M_{\rho^{-j}} (x N) M_{\rho^j} g(\hat{\bfy}) = \rho^{-j} (xN) (\rho^j g(\hat{\bfy}))
	\label{eq:I_def}
\end{equation}
So, $I(j)$ is formed by writing $xN$ in terms of the coordinate system $\rho,\hat{\bfy}$ and then replacing $\rho\partial_\rho$ by $j$; the factor of $x$ in \cref{eq:I_def} guarantees that the result has coefficients \emph{without any $\rho$'s}. In this way, we trade one independent variable, $\rho$, for a parameter, $j\in \bbR$, getting a one-parameter family of differential operators with fewer independent variables. Then,  $I(j)$ can be considered a differential operator on \smash{$\bbR^{d-1}_{\hat{\bfy}}$}, or, equivalently, on $\mathrm{ff}^\circ$.\footnote{We will not distinguish between $\bbR^{d-1}_{\hat{\bfy}},\mathrm{ff}^\circ$ below.}

In order to accomplish the goal of this section, we need a right inverse $I(j)^{-1}$ (on a sufficiently large domain).
Key to our analysis is that $I$ is a conjugated form of the spectral family of the \emph{quantum inverted harmonic oscillator} (QIHO): 
\begin{equation}
    L(j)=\triangle_{\hat{\bfy}} - \frac{\hat{y}^2}{4}  + i\Big(j-\frac{d-1}{2}\Big).
\end{equation}
This is worked out in \S\ref{subsec:I_comp}, first in a computationally simplified setting where $\rho$ is replaced in the definition \cref{eq:I_def} of $I$ by $x^{-1/2}$. This simplifies the algebra, but the resultant proxy $\tilde{I}$ for $I$ is only satisfactory away from $\mathrm{bf}$.
The actual $I$ is treated afterward. It can either be handled from scratch, or otherwise related to $\tilde{I}$. The two families are related by conjugation by a power of $\varrho$, due to the identity $x^{-1}=\varrho \rho^2$.
In \S\ref{subsec:indicial}, we deduce mapping properties of $I(j)$ from those of $L(j)$.
Invertibility between appropriate function spaces, modulo a finite-dimensional obstruction, will come from a variant of the limiting absorption principle in which the \emph{spring constant}, rather than the spectral parameter, is given a small imaginary part:
\begin{equation}
    L(j)\rightsquigarrow \triangle_{\hat{\bfy}} - (1+i\varepsilon) \frac{\hat{y}^2}{4}  + i\Big(j-\frac{d-1}{2}\Big).
\end{equation}
This procedure produces a (partial) right-inverse to $L(j)$.\footnote{For most $j$ of interest to us, this is \emph{not} the $L^2$-bounded right-inverse which the essential self-adjointness of the QIHO guarantees exists for almost all $j\in \bbC$. The latter produces problematic oscillations at large-$\hat{y}$ which prevent it from being used in the construction of our perturbed plane waves. We will comment more on this below.} 
Undoing the conjugation, we get the desired (partial) right-inverse to $I(j)$.  
The already mentioned finite-dimensional obstruction is discussed in \S\ref{subsec:obstruction}.
This is the source of $\log \rho_{\mathrm{ff}}$ terms in our main theorem (which we have already seen may be present).

These ingredients are assembled in \S\ref{subsec:v2_construction}  to prove \Cref{prop:II_main}.

\subsection{Computation of $I(j)$}
\label{subsec:I_comp}
\subsubsection{Preliminary calculation}
\label{subsubsec:QHO_prelim}
First, we compute $N= \smash{N_{\mathrm{ff}}(\hat{P})}$ in terms of the coordinates $t=x^{-1/2},\smash{\hat{\bfy}}$. The coordinate $t$ serves as a local boundary-defining-function for $\mathrm{ff}$ away from $\mathrm{bf}$. 
The result of the rewriting is
\begin{align}
\begin{split} 
	rN =  \underbrace{\frac{r}{x}\triangle_{\hat{\bfy}}}_{\approx \triangle_{\hat{\bfy}}}+i  \Big( t \frac{\partial}{\partial t} - \hat{\bfy}\cdot \nabla_{\hat{\bfy}} \Big) - i(d-1)  
	,
	\end{split} 
	\label{eq:misc_yyy}
\end{align}
where 
\begin{equation}
	\triangle_{\hat{\bfy}} = - \sum_{j=1}^{d-1}\frac{\partial^2}{\partial \hat{y}_j^2}, \qquad \nabla_{\hat{\bfy}}=(\partial_{\hat{y}_1},\dots,\partial_{\hat{y}_{d-1}}),
\end{equation}
and where the ``$\approx$'' means that we are throwing away $(r-x)x^{-1} \triangle_{\hat{\bfy}}$, which lies in $\operatorname{Diff}_{\mathrm{b}}^{2,0,-2}(X)$, and is therefore lower-order than 
\begin{equation} 
	rN\in \operatorname{Diff}_{\mathrm{b}}^{2,0,0}
\end{equation} 
at $\mathrm{ff}$. 
Since $rN$ is only defined modulo $\operatorname{Diff}_{\mathrm{b}}^{2,0,-1}(X)$ anyways (for now), we can conflate
\begin{equation}
rN=\triangle_{\hat{\bfy}}+i  \Big( t \frac{\partial}{\partial t} - \hat{\bfy}\cdot \nabla_{\hat{\bfy}} \Big) - i(d-1)  .
\end{equation}

Instead of $I(j) = \rho^{-j}(xN) \rho^j$, let us compute $\tilde{I}(j) = t^{-j} (rN) t^j$.
The effect of the $t\partial_t$ term in  \cref{eq:misc_yyy} will be to contribute to $\tilde{I}$ the multiplication operator $\bullet\mapsto ij\bullet $, which we can group with $i(d-1)$ to get some complex constant $C=ij - i(d-1)$.
Then, the rest of the right-hand side of \cref{eq:misc_yyy} is the \emph{Ornstein--Uhlenbeck operator} in $d-1$ dimensions,  
\begin{align}
	\begin{split}  
	\tilde{I}(j)&=\triangle_{\hat{\bfy}} + \nu \hat{\bfy}\cdot \nabla_{\hat{\bfy}} + C \\
	&: g(\hat{\bfy})\mapsto t^{-j} (r N) (t^j  g(\hat{\bfy}))
	\end{split} 
	\label{eq:tildeIdef}
\end{align}  
with an imaginary friction $\nu=-i$.
The nature of $\tilde{I}$ becomes clearer upon passing to the conjugated operator 
\begin{equation}
	L(j) \coloneqq  M_{e^{i  \hat{y}^2/4}} \tilde{I}(j) M_{e^{ -i  \hat{y}^2/4}}  = \triangle_{\hat{\bfy}} - \frac{\hat{y}^2}{4}+ i\Big(j -\frac{(d-1)}{2}\Big), \quad \hat{y}^2 = |\hat{\bfy}|^2.
	\label{eq:misc_0tt}
\end{equation} 
This is the inverted quantum harmonic oscillator (QIHO) at imaginary ``energy'' $i(j-(d-1)/2)$. 

Recall that we are using the positive semi-definite Laplacian $\triangle_{\hat{\bfy}}$. So, the potential term $-\hat{y}^2/4$ in \cref{eq:misc_0tt} counteracts the ``kinetic'' term $\smash{\triangle_{\hat{\bfy}}}$. This is why $L(j)$ is the \emph{inverted} QHO --- the ordinary QHO would have $+\smash{\hat{y}^2}/4$ instead. 
The coefficient $-1/4$ is the ``spring constant.''

\subsubsection{Full calculation}
\label{subsubsec:I_full}

\begin{lemma}
	\label{lem:coordinate_change}
	Under the coordinate transformation from $(x,\bfy)$ to $(\rho,\hat{\bfy})=(x^{-1} (x+y^2)^{1/2}, \bfy/x^{1/2})$:
	\begin{align}
		\frac{\partial}{\partial x} &=  - \rho^3 \varrho\Big(1-\frac{\varrho}{2}\Big) \frac{\partial}{\partial \rho} -\sum_{j=1}^{d-1}\frac{\hat{y}_j}{2} \rho^2\varrho \frac{\partial}{\partial \hat{y}_j},
		\label{eq:misc_p11}  \\
		\frac{\partial}{\partial y_j} &=  \hat{y}_j \rho^2 \varrho^{3/2}\frac{\partial}{\partial \rho}+ \rho \varrho^{1/2} \frac{\partial}{\partial \hat{y}_j}. \label{eq:misc_p12}
	\end{align}
\end{lemma}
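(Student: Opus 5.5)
The plan is a direct chain-rule computation. Regard $(\rho,\hat{\bfy})$ as functions of $(x,\bfy)$ on the region $x>0$. Then for any function $g(\rho,\hat{\bfy})$ the chain rule gives
\begin{equation*}
\frac{\partial}{\partial x} = \frac{\partial \rho}{\partial x}\frac{\partial}{\partial \rho} + \sum_{k=1}^{d-1}\frac{\partial \hat{y}_k}{\partial x}\frac{\partial}{\partial \hat{y}_k},\qquad
\frac{\partial}{\partial y_j} = \frac{\partial \rho}{\partial y_j}\frac{\partial}{\partial \rho} + \sum_{k=1}^{d-1}\frac{\partial \hat{y}_k}{\partial y_j}\frac{\partial}{\partial \hat{y}_k}.
\end{equation*}
The proof therefore reduces to computing four partial derivatives and rewriting each in terms of $\rho$, $\varrho=x/(x+y^2)$ and $\hat{\bfy}$.

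The identities I will use to convert back are the following, each immediate from the definitions:
\begin{equation*}
\frac{1}{x}=\rho^2\varrho,\qquad \rho^3\varrho=\frac{(x+y^2)^{1/2}}{x^2},\qquad \rho\varrho^{1/2}=x^{-1/2},\qquad \rho^2\varrho^{3/2}=x^{-1/2}(x+y^2)^{-1/2}.
\end{equation*}

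The $\hat{\bfy}$-derivatives come first. Since $\hat{y}_k=y_kx^{-1/2}$, we have
\begin{equation*}
\frac{\partial \hat{y}_k}{\partial x}=-\frac{\hat{y}_k}{2x}=-\frac{\hat{y}_k}{2}\rho^2\varrho,\qquad \frac{\partial \hat{y}_k}{\partial y_j}=\delta_{jk}\,x^{-1/2}=\delta_{jk}\,\rho\varrho^{1/2}.
\end{equation*}
These give the $\partial_{\hat{y}}$ terms in both \cref{eq:misc_p11} and \cref{eq:misc_p12}.

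Next come the $\rho$-derivatives. Differentiating $\rho=(x+y^2)^{1/2}x^{-1}$ in $x$ gives
\begin{equation*}
\frac{\partial\rho}{\partial x}=\frac{1}{2x(x+y^2)^{1/2}}-\frac{(x+y^2)^{1/2}}{x^2}.
\end{equation*}
The second term is exactly $-\rho^3\varrho$. The first term equals $\rho^3\varrho\cdot x/(2(x+y^2))=\rho^3\varrho^2/2$. Together they give $-\rho^3\varrho(1-\varrho/2)$. In $y_j$, we have $\partial\rho/\partial y_j=y_jx^{-1}(x+y^2)^{-1/2}$. Writing $y_j=\hat{y}_jx^{1/2}$ turns this into $\hat{y}_jx^{-1/2}(x+y^2)^{-1/2}=\hat{y}_j\rho^2\varrho^{3/2}$.

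There is no real obstacle. The only step needing care is the bookkeeping of the powers of $\varrho$ in $\partial\rho/\partial x$: one must notice that the two terms combine into the factor $(1-\varrho/2)$ rather than collapsing to a single monomial.
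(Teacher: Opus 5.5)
Your proof is correct and follows the same route as the paper: a direct chain-rule computation of the four Jacobian entries, rewritten in terms of $\rho,\varrho,\hat{\bfy}$ via identities such as $x^{-1}=\rho^2\varrho$. The only difference is cosmetic. The paper differentiates $\rho^2$ and then uses $2\rho\,\partial_{\rho^2}=\partial_\rho$, whereas you differentiate $\rho$ directly, which skips that final conversion.
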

\begin{proof}
	From the Chain Rule, we get
	\begin{align}
		\begin{split} 
			\frac{\partial}{\partial x} &= \frac{\partial \rho^2}{\partial x}\frac{\partial}{\partial \rho^2} + \sum_{j=1}^{d-1}\frac{\partial \hat{y}_j}{\partial x}\frac{\partial}{\partial \hat{y}_j} = - \rho^4 \varrho(2-\varrho) \frac{\partial}{\partial \rho^2} -\sum_{j=1}^{d-1}\frac{\hat{y}_j}{2} \rho^2\varrho \frac{\partial}{\partial \hat{y}_j}, \\
			\frac{\partial}{\partial y_j} &=  \frac{\partial \rho^2}{\partial y_j}\frac{\partial}{\partial \rho^2}+\frac{\partial\hat{y}_j}{\partial y_j}\frac{\partial}{\partial \hat{y}_j}= 2 \hat{y}_j \rho^3 \varrho^{3/2}\frac{\partial}{\partial \rho^2}+ \rho \varrho^{1/2} \frac{\partial}{\partial \hat{y}_j}.
		\end{split} 
	\end{align}
    Equations \eqref{eq:misc_p11} and \eqref{eq:misc_p12} then follow upon noting that $2\rho\frac{\partial}{\partial\rho^2} = \frac{\partial}{\partial\rho}$.
\end{proof}
\begin{lemma} 
	Near $\mathrm{ff}$, the operator $N$ defined by \cref{eq:NffhatP} is given by 
	\begin{multline}
		N= \frac{1}{r}\Big[ - \hat{y}^2 \varrho^2 (\rho\partial_\rho)^2  - 2 \varrho (\rho \partial_\rho ) (\hat{\bfy}\cdot \nabla_{\hat{\bfy}})+ \triangle_{\hat{\bfy}}+( i\varrho   - (d-3)\varrho - 2\varrho^2 ) (\rho \partial_\rho)\\ -i\hat{\bfy} \cdot \nabla_{\hat{\bfy}}  - i(d-1)	\Big]  \bmod \operatorname{Diff}_{\mathrm{b}}^{2,-2,-4}.
		\label{eq:new_rN_0}
	\end{multline}
	Consequently, near $\mathrm{ff}$, 
	\begin{multline}
		N= \varrho \rho^2\Big[ - \hat{y}^2 \varrho^2 (\rho\partial_\rho)^2  - 2 \varrho (\rho \partial_\rho ) (\hat{\bfy}\cdot \nabla_{\hat{\bfy}})+ \triangle_{\hat{\bfy}}+( i\varrho   - (d-3)\varrho - 2\varrho^2 ) (\rho \partial_\rho)\\ -i\hat{\bfy} \cdot \nabla_{\hat{\bfy}}  - i(d-1)	\Big]  \bmod \operatorname{Diff}_{\mathrm{b}}^{2,-1,-4}.
		\label{eq:new_rN}
	\end{multline}
\end{lemma}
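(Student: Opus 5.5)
Substituting the chain rule of \Cref{lem:coordinate_change} into the definition \cref{eq:NffhatP} of $N$ is the whole proof; the only care needed is in tracking which remainders are negligible. I would first rewrite the three ingredients $\triangle_{\bfy}$, $x\partial_x+\bfy\cdot\nabla_{\bfy}$, and $r^{-1}$ in the coordinates $(\rho,\hat{\bfy})$. The basic identities are
\[
x^{-1}=\varrho\rho^2,\qquad \frac{x}{x+y^2}=\varrho,\qquad \hat y^2\varrho=1-\varrho,
\]
together with $r=x(1+O(y^2/x^2))$. The last one means $r/x-1\in\rho_{\mathrm{ff}}^2 C^\infty$ near $\mathrm{ff}$, and this lets us trade $1/x$ for $1/r$ modulo two extra orders at $\mathrm{ff}$.

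\textbf{The dilation term.} By \cref{eq:misc_p11,eq:misc_p12},
\[
x\partial_x+\bfy\cdot\nabla_{\bfy}=\Big[-x\rho^3\varrho\Big(1-\frac{\varrho}{2}\Big)+x\rho^2\hat y^2\varrho^{3/2}\cdot\frac{y}{\sqrt{x}\,\hat y}\cdots\Big]\partial_\rho+(\text{$\hat{\bfy}$-part}).
\]
Rather than expand this blindly, I would use that $x\partial_x+\bfy\cdot\nabla_{\bfy}$ is the Euler field. Under $(x,\bfy)\mapsto(\lambda x,\lambda\bfy)$ we have $\hat{\bfy}\mapsto\lambda^{1/2}\hat{\bfy}$ and $\rho\mapsto\lambda^{-1}\sqrt{\lambda x+\lambda^2y^2}/x$. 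Differentiating at $\lambda=1$ gives
\[
x\partial_x+\bfy\cdot\nabla_{\bfy}=\frac12\hat{\bfy}\cdot\nabla_{\hat{\bfy}}+c(\varrho)\,\rho\partial_\rho
\]
with $c$ explicit and affine in $\varrho$: the $\rho$-exponent of the Euler action is $-1+\frac12\varrho+(1-\varrho)=-\frac12\varrho$, which I would double-check against the displayed coefficients. Multiplying by $-2i/r$ produces the terms $-i\hat{\bfy}\cdot\nabla_{\hat{\bfy}}$ and $i\varrho\,\rho\partial_\rho$ inside the bracket. The term $-i(d-1)/r$ is already in the required form.

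\textbf{The Laplacian.} Write $\triangle_{\bfy}=-\sum_j\partial_{y_j}^2$ and square \cref{eq:misc_p12}, with $A_j=\hat y_j\rho^2\varrho^{3/2}\partial_\rho+\rho\varrho^{1/2}\partial_{\hat y_j}$. Factor out $\rho^2\varrho\sim 1/x$, then use $1/x\equiv1/r$. The surviving terms are:
\begin{itemize}
\item the cross terms, giving $-2\varrho(\rho\partial_\rho)(\hat{\bfy}\cdot\nabla_{\hat{\bfy}})$;
\item the $\hat y_j^2$ squares, giving $-\hat y^2\varrho^2(\rho\partial_\rho)^2$;
\item the pure $\hat{\bfy}$ part, giving $\triangle_{\hat{\bfy}}$;
\item the first-order commutator terms, which arise when $\partial_{\hat y_j}$ or $\rho\partial_\rho$ hits $\hat y_j\rho\varrho^{3/2}$ or $\varrho^{1/2}$. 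Here I use $\partial_{\hat y_j}\varrho=-2\hat y_j\varrho^2$ and $\sum_j\partial_{\hat y_j}\hat y_j=d-1$, and these should assemble into $(-(d-3)\varrho-2\varrho^2)\rho\partial_\rho$. This is the coefficient I expect to be the main bookkeeping obstacle, and I would verify it carefully, for example by checking the action on $\rho^j$ and $\hat y^2\rho^j$.
\end{itemize}

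\textbf{The error bookkeeping.} Each replacement of $1/x$ by $1/r$, applied to a b-operator of order $0$ at $\mathrm{ff}$, costs $\rho_{\mathrm{ff}}^2\cdot\rho_{\mathrm{ff}}^2$. At $\mathrm{bf}$ it costs one extra order, since $r/x-1$ is merely bounded there, while $1/r\in\rho_{\mathrm{bf}}\rho_{\mathrm{ff}}^2$. This yields the remainder class $\operatorname{Diff}_{\mathrm{b}}^{2,-2,-4}$ claimed in \cref{eq:new_rN_0}. I would also confirm that the factors $\varrho^{1/2}$ appear only in even combinations, so that every coefficient is smooth in the smooth structure of $(\overline{\bbR^{d-1}})_2$.

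Finally, \cref{eq:new_rN} follows from \cref{eq:new_rN_0} by replacing $1/r$ with $\varrho\rho^2=1/x$. The difference is $(1/r-1/x)$ times a $\operatorname{Diff}_{\mathrm{b}}^{2,0,0}$ operator. Since $1/r-1/x=(x-r)/(rx)$ and $r-x=\hat y^2\aleph(\theta)$ is bounded near $\mathrm{ff}$ but of size $r$ near $\mathrm{bf}$, this difference lies in $\rho_{\mathrm{bf}}^{1}\rho_{\mathrm{ff}}^{4}$ times b-operators. That is the weaker class $\operatorname{Diff}_{\mathrm{b}}^{2,-1,-4}$ stated.
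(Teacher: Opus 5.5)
Your route is the paper's. The dilation part of $N$ already carries the exact prefactor $1/r$, and $x\triangle_{\bfy}$ is rewritten exactly in $(\rho,\hat{\bfy})$, so the only approximation is the prefactor $1/x$ versus $1/r$ in front of the Laplacian. Your Euler-field derivation of $x\partial_x+\bfy\cdot\nabla_{\bfy}=-\tfrac{\varrho}{2}\rho\partial_\rho+\tfrac12\hat{\bfy}\cdot\nabla_{\hat{\bfy}}$ is correct, and a bit slicker than the chain rule. For the coefficient you flagged, there is a quick exact route:
\begin{itemize}
\item Write $\partial_{y_j}=x^{-1/2}(\hat y_j\varrho\,\rho\partial_\rho+\partial_{\hat y_j})$ and note $\partial_{y_j}x^{-1/2}=0$. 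Hence $x\triangle_{\bfy}=-\sum_j(\hat y_j\varrho\,\rho\partial_\rho+\partial_{\hat y_j})^2$ exactly.
\item Expanding and using $\hat y^2\varrho=1-\varrho$ gives the first-order coefficient $-(d-1)\varrho+2\hat y^2\varrho^2=-(d-3)\varrho-2\varrho^2$.
\item No half-integer powers of $\varrho$ ever appear.
\end{itemize}

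The gap is in your error bookkeeping for the first display. You estimate $(1/x-1/r)B$ for $B$ ``a b-operator of order $0$''. The factors you list are $1/r\in\rho_{\mathrm{bf}}\rho_{\mathrm{ff}}^2$ and $r/x-1\in\rho_{\mathrm{ff}}^2C^\infty$, the latter merely bounded at $\mathrm{bf}$. Together these give only $\rho_{\mathrm{bf}}\rho_{\mathrm{ff}}^4$. So for $B\in\operatorname{Diff}_{\mathrm{b}}^{2,0,0}$ you land in $\operatorname{Diff}_{\mathrm{b}}^{2,-1,-4}$, not $\operatorname{Diff}_{\mathrm{b}}^{2,-2,-4}$. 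That is exactly what your last paragraph obtains from the same estimate, so as written your two paragraphs apply one estimate and reach two different classes.

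The missing order at $\mathrm{bf}$ must come from $B$ itself. The replacement is applied only to $B=x\triangle_{\bfy}$, and this lies in $\operatorname{Diff}_{\mathrm{b}}^{2,-1,0}(X)$. The reason is that $\triangle_{\bfy}\in\operatorname{Diff}_{\mathrm{b}}^{2,-2,-2}(X)$ while $x=\varrho^{-1}\rho^{-2}\in\rho_{\mathrm{bf}}^{-1}\rho_{\mathrm{ff}}^{-2}C^\infty$ near $\mathrm{ff}$. You can also see it directly: every Laplacian-derived term in the bracket is $\varrho$ times a b-operator, including $\triangle_{\hat{\bfy}}$, since $\hat y^{-2}\sim\varrho$ near $\mathrm{bf}$. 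The error is therefore $\frac{r-x}{r}\triangle_{\bfy}\in\operatorname{Diff}_{\mathrm{b}}^{2,-2,-4}$, which is how the paper phrases it.

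By contrast, the terms $-i\hat{\bfy}\cdot\nabla_{\hat{\bfy}}-i(d-1)$ carry no factor of $\varrho$. That is precisely why replacing $1/r$ by $1/x$ in front of the whole bracket, in the second display, only yields $\operatorname{Diff}_{\mathrm{b}}^{2,-1,-4}$. You should state this distinction explicitly.

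One smaller inaccuracy: near the corner, $r-x=\hat y^2\aleph(\theta)$ is neither bounded nor of size $r$; it behaves like $\tfrac12 r\theta^2$. What your final estimate actually uses is $r-x\in\rho_{\mathrm{bf}}^{-1}C^\infty(X)$ with no growth at $\mathrm{ff}$, and your conclusion there is correct.
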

\begin{proof}
	We want to rewrite $rN= - 2 i r\partial_r - i(d-1) + x \triangle_{\bfy} + (r-x)\triangle_{\bfy}$
	in terms of $\rho,\hat{\bfy}$, near $\mathrm{ff}$. All computations below are near $\mathrm{ff}$. For example, when we write $f\in C^\infty(X)$, we really mean $f\in C^\infty(U)$ for some open neighborhood $U\supset \mathrm{ff}$. 
	\begin{itemize}
		\item First, we rewrite $r\partial_r = x\partial_x + \bfy\cdot \nabla_{\bfy}$. Using \Cref{lem:coordinate_change}, 
		\begin{equation}
			\bfy \cdot \nabla_{\bfy} =  \frac{\hat{\bfy}}{\rho \varrho^{1/2}}\cdot \nabla_{\bfy} =  \hat{y}^2 \rho \varrho \frac{\partial}{\partial \rho}+  \hat{\bfy}\cdot \nabla_{\hat{\bfy}}.
			\label{eq:misc_uuu}
		\end{equation}
		Inserting this into the definition of $r\partial_r$, we get
		\begin{equation}
			r\partial_r =   -\frac{\rho \varrho}{2} \frac{\partial}{\partial \rho} +\frac{\hat{\bfy}}{2}  \cdot \nabla_{\hat{\bfy}}.
		\end{equation}
		\item Next, we rewrite $x\triangle_{\bfy}=-x\sum_{j=1}^{d-1} \partial_{y_j}^2$. 
		The individual $\partial_{y_j}^2$ are
		\begin{multline}
			\frac{\partial^2}{\partial y_j^2}= \hat{y}_j^2 \rho^4 \varrho^{3}\frac{\partial^2}{\partial \rho^2}+2 \hat{y}_j \rho^3 \varrho^2 \frac{\partial^2}{\partial \rho \partial \hat{y}_j}+ \rho^2 \varrho \frac{\partial^2}{\partial \hat{y}_j^2} \\ +\rho^3 \varrho\Big[ 2\hat{y}_j^2\varrho^2 + \varrho + \frac{3}{2} \hat{y}_j \frac{\partial \varrho}{\partial \hat{y}_j}   \Big]\frac{\partial}{\partial \rho}  + \rho^2  \Big[ \hat{y}_j  \varrho^{2}+\frac{1}{2}  \frac{\partial \varrho}{\partial \hat{y}_j} \Big]\frac{\partial}{\partial \hat{y}_j}.
		\end{multline}
		Because $\partial_{\hat{y}_j}\varrho = - 2 \hat{y}_j (1+\hat{y}^2)^{-2}$, the final term is exactly zero. Summing over $j$:
		\begin{align}
			\begin{split} 
			\triangle_{\bfy}&= -\hat{y}^2 \rho^4 \varrho^{3}\frac{\partial^2}{\partial \rho^2} - 2 \rho^3 \varrho^2  \hat{\bfy}\cdot \nabla_{\hat{\bfy}}\partial_\rho+ \rho^2 \varrho \triangle_{\hat{\bfy}}- \rho^3 \varrho^2 ( (d-2)  +\varrho )\frac{\partial}{\partial \rho} \\
			x\triangle_{\bfy} &=  -\hat{y}^2 \rho^2 \varrho^{2}\frac{\partial^2}{\partial \rho^2} - 2 \rho \varrho \hat{\bfy}\cdot \nabla_{\hat{\bfy}}\partial_\rho+  \triangle_{\hat{\bfy}}- \rho \varrho ( (d-2)  +\varrho )\frac{\partial}{\partial \rho} ,  
			\end{split} 
		\end{align}
		since $x^{-1}=\varrho \rho^{2}$. 
	\end{itemize} 
	Adding up these two terms, the result is $r\times$ the right-hand side of \cref{eq:new_rN}. So far, this has been exact.
	The last thing to note in order to get \cref{eq:new_rN_0} 
	is that \begin{equation} 
		(r-x)\triangle_{\bfy} \in \operatorname{Diff}^{2,-1,-2}_{\mathrm{b}}.
		\label{eq:misc_102071}
	\end{equation} 
	Indeed, since $x=r\cos(\theta)$, we have $x-r \in \rho_{\mathrm{bf}}^{-1} C^\infty(X)$.
	Combining this with $\triangle_{\bfy}\in \operatorname{Diff}_{\mathrm{b}}^{2,-2,-2}$, we get \cref{eq:misc_102071}.
	
	\Cref{eq:new_rN} follows from \cref{eq:new_rN_0} similarly.
\end{proof}

We now pass to the indicial family  
\begin{equation}
	I(j) = M_{\rho^{-j}} (x N) M_{\rho^j}=  \triangle_{\hat{\bfy}} -(i+2j\varrho)\hat{\bfy} \cdot \nabla_{\hat{\bfy}}
	- \hat{y}^2 \varrho^2  j^2 +( i\varrho - (d-3)\varrho - 2\varrho^2 ) j - i(d-1).
	\label{eq:misc_102074}
\end{equation} 
\begin{lemma}
	\label{lem:QHO_final}
     $L(j)= (e^{i \hat{y}^2/4} \varrho^{-j/2}) I(j) (e^{-i\hat{y}^2/4} \varrho^{j/2})$.
\end{lemma}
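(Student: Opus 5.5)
The identity is a direct computation, and I would organize it as two successive conjugations, since the multiplication operators $e^{\mp i\hat y^2/4}$ and $\varrho^{\pm j/2}$ commute. First, conjugate \cref{eq:misc_102074} by $\varrho^{j/2}$ alone. The claim to verify is
\begin{equation*}
\varrho^{-j/2} I(j)\, \varrho^{j/2} = \triangle_{\hat{\bfy}} - i\hat{\bfy}\cdot\nabla_{\hat{\bfy}} + ij - i(d-1),
\end{equation*}
which is exactly the Ornstein--Uhlenbeck operator $\tilde I(j)$ of \cref{eq:tildeIdef}. This is what one expects heuristically: $t=x^{-1/2}=\rho\varrho^{1/2}$, so $t^j=\rho^j\varrho^{j/2}$. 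Second, conjugate $\tilde I(j)$ by the Gaussian factor $e^{-i\hat y^2/4}$, reproducing \cref{eq:misc_0tt}, but now as an exact identity rather than modulo lower-order terms.

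For the first conjugation I would use the general formula, for a smooth function $\psi$,
\begin{equation*}
e^{-\psi}\triangle e^{\psi}=\triangle-2\nabla\psi\cdot\nabla-\nabla\psi\cdot\nabla\psi+\triangle\psi ,
\end{equation*}
where $\triangle\psi=-\sum_k\partial_k^2\psi$ with the paper's sign convention. Also, a drift $b\,\hat{\bfy}\cdot\nabla$ conjugates to $b\,\hat{\bfy}\cdot\nabla + b\,\hat{\bfy}\cdot\nabla\psi$. Take $\psi=\frac{j}{2}\log\varrho$. From $\partial_{\hat y_k}\varrho=-2\hat y_k\varrho^2$ we get $\nabla\psi=-j\varrho\hat{\bfy}$, and
\begin{equation*}
\triangle\psi = j\,\mathrm{div}(\varrho\hat{\bfy}) = j\big((d-1)\varrho-2\hat y^2\varrho^2\big).
\end{equation*}
Collecting terms gives the following.
\begin{itemize}
\item The first-order parts are $+2j\varrho\,\hat{\bfy}\cdot\nabla$ from the Laplacian and $-(i+2j\varrho)\hat{\bfy}\cdot\nabla$ from the drift. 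These combine to $-i\hat{\bfy}\cdot\nabla$.
\item The $j^2$ terms are $-j^2\varrho^2\hat y^2$ (twice) and $+2j^2\varrho^2\hat y^2$, which cancel.
\item The terms linear in $j$ sum to $j\big[2\varrho-2\varrho^2(1+\hat y^2)+i\varrho(1+\hat y^2)\big]$.
\end{itemize}
Using the defining relation $\varrho(1+\hat y^2)=1$, the last bracket collapses to $j\cdot i$. This leaves $\tilde I(j)$ with constant term $ij-i(d-1)$.

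For the second step, take $\psi=-i\hat y^2/4$. Then $\nabla\psi=-i\hat{\bfy}/2$, $\nabla\psi\cdot\nabla\psi=-\hat y^2/4$, and $\triangle\psi=i(d-1)/2$. The Laplacian contributes
\begin{equation*}
\triangle+i\hat{\bfy}\cdot\nabla+\hat y^2/4+i(d-1)/2,
\end{equation*}
and the drift $-i\hat{\bfy}\cdot\nabla$ contributes $-i\hat{\bfy}\cdot\nabla-\hat y^2/2$. The first-order terms cancel, the quadratic potential becomes $-\hat y^2/4$, and the constants become $ij-i(d-1)+i(d-1)/2=i\big(j-\frac{d-1}{2}\big)$. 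This is $L(j)$.

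There is no conceptual obstacle here. The only step I expect to require care is the bookkeeping in the first conjugation: tracking the $\varrho$-dependent coefficients and seeing that everything reduces via $\varrho(1+\hat y^2)=1$. In particular, one must not drop the divergence term $\mathrm{div}(\varrho\hat{\bfy})$, since it is what cancels the $(d-3)\varrho-2\varrho^2$ contributions in \cref{eq:misc_102074}. Signs also need care because of the positive-Laplacian convention, so I would check the $d=2$ case (one variable $\hat y$) by hand as a sanity check.
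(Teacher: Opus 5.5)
Your computation is correct. Splitting the conjugation into $\varrho^{-j/2}I(j)\varrho^{j/2}=\tilde I(j)$ (exactly, using $\varrho(1+\hat y^2)=1$) followed by the Gaussian conjugation is the paper's primary justification, which goes through $t^j=\rho^j\varrho^{j/2}$ and $\tilde{I}(j)$, now carried out as an exact identity rather than modulo lower-order terms; the paper's alternative derivation does the same direct computation in one step by conjugating $\partial_{\hat y_k}$ to $\partial_{\hat y_k}-(i/2+j\varrho)\hat y_k$ and squaring, so the two differ only in bookkeeping.
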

\begin{proof}
    Immediate from the computation in \S\ref{subsubsec:QHO_prelim} and $x^{-1} = \varrho \rho^2$. See \S\ref{subsubsec:alternative_QHO} for the computation done from scratch. 
\end{proof}

\subsubsection{Alternative derivation of the conjugated front face model problem}
\label{subsubsec:alternative_QHO}

We provide an alternative proof of \Cref{lem:QHO_final}, via a direct computation. This can be skipped on first reading.

We first compute the conjugation $e^{i\hat{y}^2/4}\varrho^{-j/2}\partial_{\hat{y}_k}e^{-i\hat{y}^2/4}\varrho^{j/2}$. Using $\partial_{\hat{y}_k}\varrho = -2\hat{y}_k\varrho^2$, we have
\begin{align}
\begin{split} 
e^{i\hat{y}^2/4}\varrho^{-j/2}\partial_{\hat{y}_k}e^{-i\hat{y}^2/4}\varrho^{j/2} &= \partial_{\hat{y}_k} + e^{i\hat{y}^2/4}\varrho^{-j/2}\partial_{\hat{y}_k}(e^{-i\hat{y}^2/4}\varrho^{j/2}) \\
&= \partial_{\hat{y}_k} -i\hat{y}_k/2+  -j\hat{y}_k\varrho = \partial_{\hat{y}_k} - (i/2+j\varrho)\hat{y}_k.
\end{split} 
\end{align}
It follows that $e^{i\hat{y}^2/4}\varrho^{-j/2}(\hat{\bfy} \cdot \nabla_{\hat{\bfy}})e^{-i\hat{y}^2/4}\varrho^{j/2} = \hat{\bfy} \cdot \nabla_{\hat{\bfy}} - (i/2+j\varrho)\hat{y}^2$. Moreover,
\begin{align}
\begin{split} 
e^{i\hat{y}^2/4}\varrho^{-j/2}\partial_{\hat{y}_k}^2e^{-i\hat{y}^2/4}\varrho^{j/2} &= (\partial_{\hat{y}_k} - (i/2+j\varrho)\hat{y}_k)^2 \\
&= \partial_{\hat{y}_k}^2 - 2(i/2+j\varrho)\hat{y}_k\partial_{\hat{y}_k} - \partial_{\hat{y}_k}((i/2+j\varrho)\hat{y}_k) + ((i/2+j\varrho)\hat{y}_k)^2 \\
&= \partial_{\hat{y}_k}^2 - (i+2j\varrho)\hat{y}_k\partial_{\hat{y}_k} - j(-2\hat{y}_k\varrho^2)\hat{y}_k - (i/2+j\varrho) + \hat{y}_k^2(i/2+j\varrho)^2.
\end{split} 
\end{align}
Hence,
\begin{align}
\begin{split} 
e^{i\hat{y}^2/4}\varrho^{-j/2}\triangle_{\hat{\bfy}}e^{-i\hat{y}^2/4}\varrho^{j/2} &= -\sum_{k=1}^{d-1} e^{i\hat{y}^2/4}\varrho^{-j/2}\partial_{\hat{y}_k}^2e^{-i\hat{y}^2/4}\varrho^{j/2} \\
&= -\sum_{k=1}^{d-1} \left(\partial_{\hat{y}_k}^2 - (i+2j\varrho)\hat{y}_k\partial_{\hat{y}_k} + 2j\hat{y}_k^2\varrho^2 - (i/2+j\varrho) + \hat{y}_k^2(i/2+j\varrho)^2\right),
\end{split} 
\end{align}
which is 
\begin{equation}
    \triangle_{\hat{\bfy}} + (i+2j\varrho)\hat{\bfy} \cdot \nabla_{\hat{\bfy}} - 2j\hat{y}^2\varrho^2 + \left(\frac{i}{2}+j\varrho\right)(d-1) - \hat{y}^2\left(\frac{i}{2}+j\varrho\right)^2.
\end{equation}
Thus, 
\begin{align}
\begin{split} 
L(j) &= e^{i\hat{y}^2/4}\varrho^{-j/2}\triangle_{\hat{\bfy}}e^{-i\hat{y}^2/4}\varrho^{j/2} \\
&\quad -(i+2j\varrho)e^{i\hat{y}^2/4}\varrho^{-j/2}(\hat{\bfy} \cdot \nabla_{\hat{\bfy}})e^{-i\hat{y}^2/4}\varrho^{j/2} - \hat{y}^2\varrho^2 j^2 + (i\varrho - (d-3)\varrho-2\varrho^2)j-i(d-1) \\
&= \triangle_{\hat{\bfy}} + (i+2j\varrho)\hat{\bfy} \cdot \nabla_{\hat{\bfy}} - 2j\hat{y}^2\varrho^2 + (i/2+j\varrho)(d-1) - \hat{y}^2(i/2+j\varrho)^2 \\
&\quad -(i+2j\varrho)(\hat{\bfy} \cdot \nabla_{\hat{\bfy}} - (i/2+j\varrho)\hat{y}^2)- \hat{y}^2\varrho^2 j^2 + (i\varrho - (d-3)\varrho-2\varrho^2)j-i(d-1) \\
&= \triangle_{\hat{\bfy}} - \hat{y}^2/4 + ij\hat{y}^2\varrho + ij\varrho - (d-1)j\varrho - i(d-1)/2 + j\varrho(d-1),
\end{split} 
\end{align}
which simplifies (after using $(\hat{y}^2+1)\varrho=1$) to
\begin{equation}
    L(j) =  \triangle_{\hat{\bfy}} - \frac{\hat{y}^2}{4} + ij - i(d-1)/2 = \triangle_{\hat{\bfy}} - \frac{\hat{y}^2}{4} + i\left(j-\frac{d-1}{2}\right). 
\end{equation}

\subsection{The mapping properties of $L(j)$}\label{subsec:indicial}

The operator $L(j)$ is  in the spectral family $\{H-\lambda\}_{\lambda\in \bbC}$ of the QIHO Hamiltonian
\begin{equation} 
    H=\triangle_{\hat{\bfy}}-\hat{y}^2/4,
\end{equation} 
evaluated at spectral parameter $\lambda=-i(j-(d-1)/2)$.

The QIHO Hamiltonian $H$ is essentially self-adjoint acting on $C_{\mathrm{c}}^\infty(\bbR^{d-1})$, with respect to the $L^2$ inner product; see \cite[Thm. X.38]{ReedSimon2}. 
As a consequence of essential self-adjointness, for all $j\in \bbC$ except those of the form $(d-1)/2+i\bbR$, 
\begin{equation}
	L(j)^{-1} =\bigg[ \triangle_{\hat{\bfy}}-\frac{\hat{y}^2}{4} +i\Big(j-\frac{d-1}{2}\Big)\bigg]^{-1}
	\label{eq:L2_bounded_inverse}
\end{equation}
is well-defined, via the functional calculus, as a bounded operator on $L^2$, mapping $L^2$ into the domain $\calD(H)\subset L^2$ on which $H$ extends to be self-adjoint. This defines a two-sided inverse to $L(j)$.

\begin{figure}[!htbp]
	\includegraphics[scale=.66]{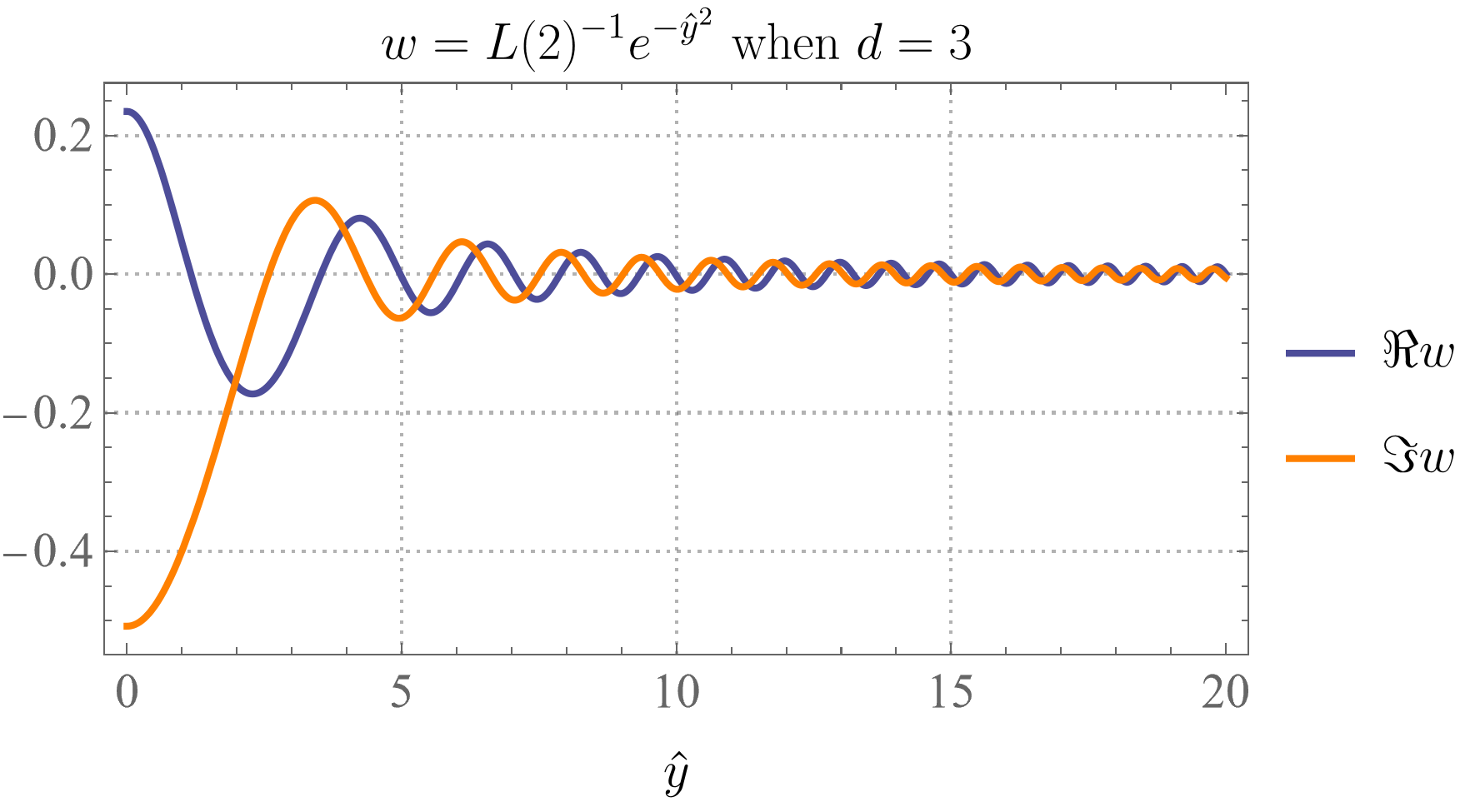}
	\caption{The function $w=L(j)^{-1} f$ (computed numerically) vs. $\hat{y}$ when $d=3$, $j=2$, for $f=e^{-\hat{y}^2/4}$ a Gaussian. }
	\label{fig:L(j)inv}
\end{figure}

Unfortunately, this is not the solution that we want when constructing perturbed plane waves.
The reason, the full justification of which we leave to the appendix (\S\ref{subsec:QHO_separation}), is that $w=L(j)^{-1} f$ oscillates like 
\begin{equation}
    e^{-i\hat{y}^2/4}\text{ as }\hat{y}\to\infty, 
\end{equation}
for most $j$ of interest to us, even if $f$ itself is nice (e.g.\ Schwartz).
These oscillations can be seen in \Cref{fig:L(j)inv}. 

Consequently, since we wanted to solve
\begin{equation}
    I(j)u = f\iff L(j)e^{i\hat{y}^2/4}\varrho^{-j/2}u = e^{i\hat{y}^2/4}\varrho^{-j/2}f,
\end{equation}
naively applying this $L(j)^{-1}$ gives $u = e^{-i\hat{y}^2/4}\varrho^{j/2}w$ (where $w=L(j)^{-1}\left( e^{i\hat{y}^2/4}\varrho^{-j/2}f\right)$), which is highly oscillatory (like $e^{-i\hat{y}^2/2}$), whereas our goal was polyhomogeneity.
An alternative right-inverse is needed. 

As already mentioned in the introduction to this section, we employ a variant of the limiting absorption principle in which the spring constant is given a small imaginary part $\pm i\varepsilon$ and then the limit $\varepsilon \to 0^+$ is taken. 
This results in two possible one-sided inverses,
\begin{equation} 
	R_\pm(j) = \lim_{\varepsilon \to 0^+} \bigg[\triangle_{\hat{\bfy}} +\Big(- \frac{1}{4}\pm i\varepsilon \Big) \hat{y}^2 +  i\Big(j-\frac{d-1}{2}\Big)\bigg]^{-1}
	\label{eq:R_pm_def}
\end{equation} 
where the limit is taken with respect to the strong operator topology between suitable weighted $L^2$-spaces. See  \S\ref{sec:QHO}, which includes a rigorous proof of the existence of the limit \cref{eq:R_pm_def} for most $j$. However, a finite-dimensional space must be excluded from the domain for some $j$, which include those of most interest for us.

The relation of $R_\pm(j)$ to the $L^2$-bounded inverse $L(j)^{-1}$ depends on the sign of $j-(d-1)/2$. Intuitively, one expects 
\begin{equation}
	L(j)^{-1} = 
	\begin{cases}
		R_-(j) & (\Re j<(d-1)/2), \\
		R_+(j) & (\Re j>(d-1)/2),
	\end{cases}
	\label{eq:resolvent_relations}
\end{equation}
because if the signs of the imaginary parts of $-1/4+i\varepsilon$ and $-\lambda=i(j-(d-1)/2)$ agree, then the two should work together. This intuition is correct.

The inverse we will use is $R_-(j)$, regardless of the sign of $j-(d-1)/2$, because it is this one that outputs the desired oscillations. See \S\ref{sec:QHO}.
An alternative heuristic: replacing 
\begin{equation} 
    -1/4 \rightsquigarrow -1/4\pm i \varepsilon
\end{equation} 
yields 
\begin{equation}
e^{i \hat{y}^2/4} \rightsquigarrow e^{-i \hat{y}^2 (-1/4\pm i \varepsilon)} = e^{i\hat{y}^2/4} e^{\pm \varepsilon \hat{y}^2 },
\end{equation}
and the right-hand side is in $L^2$ if and only if we choose the `$-$' case of `$\pm$.'

We need the mapping properties of $R_-(j)$ on certain function spaces. The question is:
\begin{itemize}
	\item[\textbf{Q.}] if $f\in e^{i\hat{y}^2/4}\calA^{\calE}(\mathrm{ff})=e^{i\hat{y}^2/4} \calA^\calE((\overline{\bbR^{d-1}_{\hat{\bfy}}})_2)$ for some (sufficiently fast decaying) index set $\calE$,  then what spaces does $R_-(j) f$ lie in?
\end{itemize}
The answer, for  those $j$ relevant to us, is:
\begin{proposition}
	\label{prop:QHO_mapping}
	On its domain, $R_-(j)$ maps $e^{i\hat{y}^2/4}\calA^{((d+1-j)/2,0) }\big(\mathrm{ff})\to  e^{i\hat{y}^2/4} \calA^{((d-1-j)/2,0) }\big(\mathrm{ff})$.
\end{proposition}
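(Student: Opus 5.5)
The plan is to strip off the oscillation and reduce to a Frobenius construction at $\mathrm{bf}\cap\mathrm{ff}$ in the variable $\hat y$. This is followed by a uniqueness/selection argument showing that $R_-(j)$ produces no $e^{-i\hat y^2/2}$-oscillating piece.

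\textbf{Conjugation.} Write $f=e^{i\hat y^2/4}g$ with $g\in\calA^{((d+1-j)/2,0)}(\mathrm{ff})$. Since $\rho_{\mathrm{bf}}\sim\hat y^{-2}$, this means $g$ has an expansion in $\hat y^{-(d+1-j)-2k}a_k(\phi)$, $k\in\bbN$. Conjugating $L(j)$ by $e^{i\hat y^2/4}$ reverses \cref{eq:misc_0tt} and gives
\begin{equation*}
	K(j)\coloneqq e^{-i\hat y^2/4}L(j)e^{i\hat y^2/4}=\triangle_{\hat{\bfy}}-i\,\hat{\bfy}\cdot\nabla_{\hat{\bfy}}+i\big(j-(d-1)\big),
\end{equation*}
an Ornstein--Uhlenbeck operator with imaginary friction, essentially $\tilde I(j)$. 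Near $\hat y=\infty$, with $\rho_{\mathrm{bf}}=\hat y^{-2}$, its leading part is the b-operator $-i\hat y\partial_{\hat y}+i(j-(d-1))$ of order $0$ at $\mathrm{bf}$. The term $\triangle_{\hat{\bfy}}$ gains one order of $\rho_{\mathrm{bf}}$. Hence
\begin{equation*}
	K(j)\big(\hat y^{-\alpha}a(\phi)\big)=i\big(\alpha+j-(d-1)\big)\hat y^{-\alpha}a(\phi)+O(\hat y^{-\alpha-2}).
\end{equation*}
The indicial root is therefore $\alpha=d-1-j$, i.e.\ $\rho_{\mathrm{bf}}$-order $(d-1-j)/2$.

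\textbf{Formal solution.} Solve $K(j)h\approx g$ term by term. The forcing orders $\alpha=d+1-j+2k$ never hit the root, so each step divides by the nonzero constant $i(2+2k)$ and corrects the angular Laplacian errors at the next order. Borel summation via \Cref{lem:Borel}, together with a cutoff near $\hat y=0$, gives $h_0\in\calA^{((d+1-j)/2,0)}(\mathrm{ff})$. It satisfies $K(j)h_0-g\in\calA^{\infty}(\mathrm{ff})$, i.e.\ the error is Schwartz in $\hat{\bfy}$.

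\textbf{Residual.} It remains to show that $R_-(j)$ maps $e^{i\hat y^2/4}\calS(\bbR^{d-1})$ (in its domain) into $e^{i\hat y^2/4}\calA^{((d-1-j)/2,0)}$. The solution also has to match the one produced by the formal construction, i.e.\ $R_-(j)f=e^{i\hat y^2/4}h_0+R_-(j)(\text{Schwartz error})$. This follows from linearity once we know $R_-(j)$ kills no genuinely bounded-by-weight solution: $e^{i\hat y^2/4}h_0$ is itself in the range of the $\varepsilon\to0^+$ limit, since it lies in the weighted space and carries no incoming oscillation. Let $w=e^{-i\hat y^2/4}R_-(j)e^{i\hat y^2/4}s$ with $s$ Schwartz. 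Then $K(j)w=s$, and $w$ is smooth in the interior by ellipticity. At infinity, solutions of $K(j)w=0$ have two behaviours, $\hat y^{-(d-1-j)}\cdot(\text{smooth in }\hat y^{-2})$ and $e^{-i\hat y^2/2}\hat y^{j-1}\cdots$. The heuristic at the end of \S\ref{subsec:indicial} (the factor $e^{-\varepsilon\hat y^2}$ being $L^2$ only for the minus sign) says that the limit $R_-$ selects the first.

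To make this rigorous I would separate variables in spherical harmonics on $\bbS^{d-2}_\phi$, using the appendix \S\ref{sec:QHO}. Each mode is a confluent hypergeometric (Kummer) equation in $z=i\hat y^2/2$. The $\varepsilon$-regularized resolvent is built from the solution regular at $0$ and the solution decaying at $\infty$; as $\varepsilon\to0^+$, the decaying one converges to Tricomi's $U$, which has a full expansion in powers of $z^{-1}$ with no $e^{-z}$ term. So each mode of $w$ is $\hat y^{-(d-1-j)}$ times a series in $\hat y^{-2}$, plus a Schwartz correction coming from the variation-of-parameters integral of $s$.

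\textbf{Main obstacle.} The hardest step is upgrading this mode-by-mode conclusion to polyhomogeneity on $\mathrm{ff}$. That requires bounds on the Kummer asymptotics uniform in the angular momentum $\ell$, with polynomial control so that the sum over modes converges in every b-seminorm. It also requires handling the finitely many $(j,\ell)$ where the Wronskian vanishes, which are exactly the excluded finite-dimensional domain. I would try first to avoid uniformity entirely. The idea is to apply a b-regularity / radial-point estimate at $\mathrm{bf}$ for $K(j)$: the leading vector field $\hat y\partial_{\hat y}$ is radial there, and below-threshold decay propagates to conormality, giving $w\in\calA^{(d-1-j)/2-0}$. The expansion is then obtained by iterating the indicial computation above, which yields index set $((d-1-j)/2,0)$ without logs because the forcing is Schwartz.
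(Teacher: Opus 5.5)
Your conjugation is correct: $e^{-i\hat y^2/4}L(j)e^{i\hat y^2/4}=\tilde I(j)$. So are the indicial root $\hat y^{-(d-1-j)}$ and the formal inversion on $\hat y^{-(d+1-j)-2k}$, which only divides by $i(2+2k)\neq 0$. Your closing iteration of the normal operator to pass from conormality to the index set $((d-1-j)/2,0)$ is the paper's argument in \S\ref{subsec:QIHO_mapping}; there the normal operator is $-\frac{i}{2}\partial_z+\frac{i}{4z}(j-d+1)$, which is your $2i\rho\partial_\rho+i(j-d+1)$ divided by $4z$. Subtracting a formal solution first, so that only Schwartz forcing remains, is a harmless reorganization.

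The gap is the one step that carries all the analysis. You need that $R_-(j)$, applied to $e^{i\hat y^2/4}$ times Schwartz data, exists as a limit on suitable spaces. You need that it selects the solution with no $e^{-i\hat y^2/2}$ component after conjugation. And you need its output to be $e^{i\hat y^2/4}$ times a function conormal at $\mathrm{bf}$ of order $(d-1-\Re j)/2-\epsilon$ for every $\epsilon>0$. You give only a heuristic for this. The separation-of-variables route needs bounds uniform in the angular momentum that you do not supply. The radial-point route is not specified: in your $\hat y$-picture the second-order part of $K(j)$ lies in $\rho_{\mathrm{bf}}\operatorname{Diff}^2_{\mathrm{b}}$, so $K(j)$ degenerates at $\mathrm{bf}$. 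You name neither the calculus, the spaces, nor the threshold conditions under which such an estimate would hold.

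The missing idea is the change of variables of \S\ref{subsec:QHO_rig}: take $z=\hat y^2$ near infinity and $\bfz=z\hat{\bfy}/\hat y$. Then
$\frac{1}{4\langle z\rangle}L(j)=\triangle_{g_{\mathrm{QHO}}}-\frac{1}{16}+\frac{i}{4\langle z\rangle}\big(j-\frac{d-1}{2}\big)+\Gamma+W$
on the asymptotically conic space $(\bbR^{d-1}_{\bfz},g_{\mathrm{QHO}})$. Here $\Gamma$ is a subprincipal first-order term and $W=O(\langle z\rangle^{-2})$.

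Thus, up to smooth nonvanishing factors, your $\rho_{\mathrm{bf}}K(j)$ is the $e^{i\langle z\rangle/4}$-conjugate of a Helmholtz operator with a complex Coulomb term. Your imaginary spring constant becomes an imaginary spectral parameter. Vasy's theorem \cite[Thm.~1.1]{VasyLA} then gives the following, for every b-regularity order, with decay below threshold at the outgoing radial set:
\begin{enumerate}
\item the conjugated operator is Fredholm on b-Sobolev spaces;
\item its inverse agrees with the limiting resolvent.
\end{enumerate}
Letting the regularity order tend to infinity and using Sobolev embedding yields exactly the conormality you need.

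The radial structure you point to, $\hat y\partial_{\hat y}$, is what drives that estimate. b-regularity is propagated from the incoming radial set into the outgoing one, which becomes the zero section after conjugation, and the below-threshold decay there closes the estimate.

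The same Fredholm setup is what justifies your step $R_-(j)L(j)(e^{i\hat y^2/4}h_0)=e^{i\hat y^2/4}h_0$. For generic $j$ it follows from membership in the domain space plus injectivity. When $a\in-\bbN$ it holds modulo the explicit kernel, whose Kummer-polynomial elements already lie in the target space. The paper uses separation of variables only to show that the range is everything for generic $j$ and to describe the finite-dimensional obstruction; it avoids uniformity in the angular momentum entirely.
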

Again, the proof is relegated to \S\ref{sec:QHO}. 
We deduce:
\begin{corollary}
	\label{cor:main_II}
	The operator defined by $I(j)^{-1}=(e^{-i \hat{y}^2/4} \varrho^{j/2}) R_-(j) (e^{i\hat{y}^2/4} \varrho^{-j/2})$
    satisfies  
    \begin{equation} 
        I(j)^{-1} :\underbrace{ \langle \hat{y} \rangle^{-(d+1)} C^\infty((\overline{\bbR^{d-1}_{\hat{y}} })_2 ) }_{\calA^{((d+1)/2,0)}(\mathrm{ff})}\to  \underbrace{ \langle \hat{y} \rangle^{-(d-1)} C^\infty((\overline{\bbR^{d-1}_{\hat{y}} })_2 ) }_{\calA^{((d-1)/2,0)}(\mathrm{ff})}
    \end{equation} 
    on its domain. 
\end{corollary}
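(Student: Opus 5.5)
The plan is to deduce the corollary directly from \Cref{prop:QHO_mapping} by tracking what the two conjugating factors do to the relevant function spaces. By \Cref{lem:QHO_final}, $I(j)=(e^{-i\hat y^2/4}\varrho^{j/2})L(j)(e^{i\hat y^2/4}\varrho^{-j/2})$. Hence the operator $I(j)^{-1}$ defined in the statement satisfies $I(j)I(j)^{-1}f=f$ on the domain, because $L(j)R_-(j)=\mathrm{id}$ there. That identity comes from taking the $\varepsilon\to0^+$ limit in \cref{eq:R_pm_def}. So the content of the corollary is entirely the mapping property, and I would prove it by composing three maps.

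\textbf{Step 1.} Given $f\in\langle\hat y\rangle^{-(d+1)}C^\infty((\overline{\bbR^{d-1}_{\hat{\bfy}}})_2)$, note first that $\varrho=(1+\hat y^2)^{-1}$ is a boundary-defining function of $\partial\mathrm{ff}$ in the smooth structure `$2$'. Thus $\langle\hat y\rangle^{-(d+1)}C^\infty=\varrho^{(d+1)/2}C^\infty=\calA^{((d+1)/2,0)}(\mathrm{ff})$, which is the identification in the underbraces. Multiplying by $\varrho^{-j/2}$ shifts the index set by $-j/2$. Therefore
\[
e^{i\hat y^2/4}\varrho^{-j/2}f\in e^{i\hat y^2/4}\calA^{((d+1-j)/2,0)}(\mathrm{ff}).
\]

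\textbf{Step 2.} Apply \Cref{prop:QHO_mapping}. This puts $R_-(j)(e^{i\hat y^2/4}\varrho^{-j/2}f)$ in $e^{i\hat y^2/4}\calA^{((d-1-j)/2,0)}(\mathrm{ff})$, provided the input lies in the domain of $R_-(j)$; this is exactly the ``on its domain'' proviso of the corollary.

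\textbf{Step 3.} Multiply by $e^{-i\hat y^2/4}\varrho^{j/2}$. The oscillatory factors cancel exactly, and the power of $\varrho$ shifts the index back by $+j/2$. The result lies in $\calA^{((d-1)/2,0)}(\mathrm{ff})=\varrho^{(d-1)/2}C^\infty=\langle\hat y\rangle^{-(d-1)}C^\infty((\overline{\bbR^{d-1}_{\hat{\bfy}}})_2)$.

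The only point needing care is that $\varrho^{\pm j/2}$ acts on smooth index sets $(\alpha,0)$ simply by shifting $\alpha$. This holds because $\varrho$ is a genuine boundary-defining function for the `$2$' structure and is positive and smooth on $\mathrm{ff}^\circ$. There is no real obstacle here: the substantive difficulty, namely the limiting absorption construction and the finite-dimensional obstruction restricting the domain, is entirely contained in \Cref{prop:QHO_mapping}.
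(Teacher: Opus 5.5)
Your proposal is correct and is essentially the argument the paper intends; the paper gives no written proof and simply says the corollary is deduced from \Cref{prop:QHO_mapping}. That deduction is exactly your conjugation by $e^{\pm i\hat y^2/4}\varrho^{\mp j/2}$ via \Cref{lem:QHO_final}, using that $\varrho$ is a positive boundary-defining function for $(\overline{\bbR^{d-1}_{\hat{\bfy}}})_2$, so the powers of $\varrho$ only shift the index sets by $\mp j/2$ and the domain restriction passes through the conjugation.
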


Although the proofs of the propositions above require the limiting absorption principle, the two most important numerological aspects, 
\begin{enumerate}[label=(\roman*)]
	\item the $(d-1)/2$ ``threshold,'' indicating the $\langle \hat{y} \rangle^{-(d-1)}\sim \rho_{\mathrm{bf}}^{(d-1)/2} $ large-$\hat{y}$ asymptotic behavior of $R_-(j)$'s output \emph{even on $C_{\mathrm{c}}^\infty$ input},
	\item  the relationship between the index sets on the domain and codomain --- the forcing has one order more decay at $\mathrm{bf}$ 
\end{enumerate}
can be understood using heuristics from the theory of regular singular ODE (i.e.\ b-PDE). The operator $L(j)$ is not regular singular at $\hat{y}=\infty$, where $L(j)^{-1}$ produces oscillations. Neither is $I(j)$. However, the normal operator 
\begin{equation}
	N_{\mathrm{bf}\cap\mathrm{ff}}(I(j)) = -i \hat{\bfy}\cdot \nabla_{\hat{\bfy}}   - i(d-1) \in \operatorname{Diff}^{1,0}_{\mathrm{b}}(\mathrm{ff}) 
	\label{eq:misc_102084a}
 \end{equation}
does yield, after separating variables, a regular singular ODE
\begin{equation} 
    O(j)=-i \hat{y}\partial_{\hat{y}} - i(d-1).\;\;\,
\end{equation}
The indicial root of $O(j)$, which governs thresholds, is the $c\in \bbC$ such that 
\begin{equation} 
    O(j) \hat{y}^{-2c} =0.
\end{equation} 
Then, $\hat{y}^{-2c}$ is the threshold decay rate. This ODE amounts to a linear equation for $c$, whose solution is $c=(d-1)/2$. Hence (i) holds.
It is because $N_{\mathrm{bf}\cap\mathrm{ff}}(I(j))$ has b-decay order 0 that (ii) holds. Indeed, this suggests that $I(j)^{-1}$ maps $\langle \hat{y} \rangle^{-(d-1)} C^\infty \to \langle \hat{y} \rangle^{-(d-1)} C^\infty$. This is not quite true, because being exactly at threshold entails a logarithmic loss. If we demand that the forcing has one more order of decay, then we avoid the logarithmic loss.

\subsection{Kernel and cokernel of $I(j)$}
\label{subsec:obstruction}

\subsubsection{Kernel}

For $j\in\bbR$,
let 
\begin{multline}\label{eq:K_ker}
	\calK= \operatorname{span}_\bbC\Big\{  \frac{\hat{y}^{-(d-3)/2+\alpha}}{(1+\hat{y}^2)^{j/2}} \cdot {}_1F_1\Big( a,1+\alpha,-\frac{i\hat{y}^2}{2}  \Big) Y\Big( \frac{\hat{\bfy}}{\hat{y}} \Big)  \\ :Y\in \calY,\;
	\alpha  = \ell + \frac{d-3}{2},\; a=\frac{d+1}{4} - \frac{j}{2} + \frac{\alpha}{2}\text{ s.t. } a\in -\bbN
	\Big\}  \subset C^\infty(\mathrm{ff}^\circ \backslash \text{origin}) , 
\end{multline}
where $Y$ is a spherical harmonic and $\ell=\ell(Y)\in \bbN$ is its azimuthal quantum number, with $\lambda_Y=\ell(\ell+d-3)\geq 0$ the eigenvalue of $Y$ with respect to $\triangle_{\bbS^{d-2}}$. 
Note that $\calK$ is finite dimensional, because the condition $a\leq 0$ rules out all but finitely many spherical harmonics. 

Above, ${}_1F_1$ denotes the usual confluent hypergeometric function --- see \S\ref{subsec:QHO_separation}; in particular, ${}_1F_1(-,-,z)$ is analytic near $z=0$ (and nonvanishing there). When $a\in - \bbN$, then ${}_1F_1(a,-,z)$ is actually a polynomial in $z$ of degree $|a|$; see \cref{eq:Laguerre_relation}. 

Let $\operatorname{ker} I(j) = \{ u\in \calA^{-\infty}(\mathrm{ff}) : I(j)u=0 \}$, where $\calA^{-\infty}(\mathrm{ff})$ is the space of all conormal functions on $\mathrm{ff}$ of some polynomial growth rate. 
\begin{proposition}
    $\calK=\operatorname{ker} I(j)$. 
\end{proposition}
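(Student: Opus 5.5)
We plan to reduce the statement to the QIHO $L(j)$ via \Cref{lem:QHO_final}, and then solve the resulting radial ODE in each spherical-harmonic sector. Conjugating by $e^{-i\hat y^2/4}\varrho^{j/2}$ is a bijection between polynomially bounded conormal functions on $\mathrm{ff}$ and functions of the form $e^{i\hat y^2/4}\calA^{-\infty}(\mathrm{ff})$. Hence $u\in\ker I(j)$ if and only if $w=e^{i\hat y^2/4}\varrho^{-j/2}u$ satisfies $L(j)w=0$ and $e^{-i\hat y^2/4}w$ is conormal of polynomial growth.

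We will expand $u$ in spherical harmonics on $\bbS^{d-2}_{\hat{\bfy}/\hat y}$, writing $u=\sum_Y u_Y(\hat y)Y$. Since $I(j)$ is rotation invariant, each coefficient $u_Y$ solves a radial ODE. Smoothness of $u$ at the origin forces $u_Y=O(\hat y^{\ell})$ there, and conormality at $\mathrm{bf}$ is inherited by each $u_Y$ with uniform bounds.

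The key step is the substitution $w_Y=\hat y^{-(d-3)/2+\alpha}e^{-i\hat y^2/4}F(z)$, with $z=-i\hat y^2/2$ and $\alpha=\ell+(d-3)/2$. This turns the radial QIHO equation into Kummer's equation $zF''+(1+\alpha-z)F'-aF=0$, with $a$ as in \cref{eq:K_ker}. This is a routine computation, cross-checked against \S\ref{subsec:QHO_separation}. Regularity at $\hat y=0$ selects $F={}_1F_1(a,1+\alpha,z)$, up to a constant; the other Frobenius solution behaves like $\hat y^{-(d-3)/2-\alpha}$, which is too singular (with log corrections when $\alpha$ is an integer). Undoing the conjugation, the $e^{-i\hat y^2/4}$ in $w_Y$ cancels the $e^{i\hat y^2/4}$, so $u_Y=\hat y^{-(d-3)/2+\alpha}(1+\hat y^2)^{-j/2}\,{}_1F_1(a,1+\alpha,-i\hat y^2/2)$.

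The main obstacle is the behavior at $\hat y\to\infty$, i.e.\ at $\mathrm{bf}$. Here we use the standard large-$|z|$ asymptotics along $z\in -i\bbR^+$:
\begin{equation*}
{}_1F_1(a,b,z)\sim\frac{\Gamma(b)}{\Gamma(b-a)}(-z)^{-a}+\frac{\Gamma(b)}{\Gamma(a)}e^{z}z^{a-b}.
\end{equation*}
The second term contributes $e^{-i\hat y^2/2}$ times a power. This factor is not conormal (it oscillates), and it cannot cancel against the first term, since the two have distinct oscillation. So conormality forces $1/\Gamma(a)=0$, i.e.\ $a\in-\bbN$. Conversely, when $a\in-\bbN$ the function ${}_1F_1$ is a polynomial in $\hat y^2$ (Laguerre), so $u_Y$ is a rational-type expression in $\hat y$ that is smooth on $(\overline{\bbR^{d-1}_{\hat{\bfy}}})_2$ away from the origin; it is therefore conormal, and lies in the kernel. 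One care point is checking that $\Gamma(b)/\Gamma(b-a)$ does not vanish, which holds since $b-a>0$ when $a\le0$.

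Finally, only finitely many $\ell$ satisfy $a\le 0$. The remaining coefficients $u_Y$ vanish, so the sum is finite, and hence $u\in\calK$. This gives $\ker I(j)\subseteq\calK$; the reverse inclusion follows from the converse direction above.
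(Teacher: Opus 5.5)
Your argument is correct and is essentially the paper's: its proof simply defers to the separation-of-variables analysis of the QIHO appendix, where a regular mode is conormal at $\hat y=\infty$ exactly when $v_0\propto v_{\infty,-}$, i.e.\ when the Wronskian $\frakW_-\propto 1/\Gamma(a)$ vanishes, and this is the same connection coefficient $\Gamma(1+\alpha)/\Gamma(a)$ that you read off from the large-$|z|$ asymptotics of ${}_1F_1$. Two small slips need fixing. First, the exponential in $w_Y$ should be $e^{+i\hat y^2/4}$: with $e^{-i\hat y^2/4}$ and $z=-i\hat y^2/2$ one gets Kummer's equation in $-z$ with parameters $(1+\alpha-a,1+\alpha)$, and $u=e^{-i\hat y^2/4}\varrho^{j/2}w$ would then carry $e^{-i\hat y^2/2}$. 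Second, for $d=2$, $\ell=0$ (so $\alpha=-1/2$), the second Frobenius solution behaves like $\hat y$, which is not worse than $\hat y^{\ell}$; it must be excluded because its even extension is not smooth at the origin, not because it is too singular.
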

\begin{proof}
    See \S\ref{subsec:QHO_separation}.
\end{proof}

There is a conceptual explanation for the presence of kernel: 
\begin{remark*}
    An outgoing spherical wave $w=r^{-(d-1)/2} e^{ir} f(\theta)$ on $\bbR^d$, with smooth profile $f\in C^\infty(\bbS^{d-1}_\theta)$, satisfies 
    \begin{equation}
        P w = O \Big( \frac{1}{r^{(d+3)/2} } \Big)\text{ as }r\to\infty .  
    \end{equation}
    This has one more order of decay than we would expect based on the fact that $\hat{P} \in \operatorname{Diff}_{\mathrm{b}}^{2,-1,-2}(X)$. 
    It is therefore a quasimode. Quasimodes correspond to nontrivial kernel in indicial operators, at the relevant weight.

    More precisely, consider the Taylor expansion of $f \sim \sum_{\kappa=0}^\infty f_\kappa$ around the forward direction $\rightarrow \in \!\infty \bbS^{d-1}$. 
    For each $\kappa$, the function $f_\kappa \in C^\infty(\bbS^{d-1})$ vanishes to $\kappa$th order at the forward direction. Thus, 
    \begin{equation}
        r^{-(d-1)/2} e^{ir} f_\kappa(\theta) 
    \end{equation}
    is a quasimode. This has $\kappa+d-1$ orders of decay at $\mathrm{ff}$. So, 
    \begin{equation}
        w_\kappa=\rho_{\mathrm{ff}}^{-(\kappa+d-1)} r^{-(d-1)/2} e^{ir} f_\kappa(\theta) 
    \end{equation}
    can be restricted to $\mathrm{ff}\subset X$. 
    Because $w_\kappa$ is a quasimode, $w_\kappa|_{\mathrm{ff}}$ should be in the kernel of the relevant operator in the indicial family. The $w_\kappa|_{\mathrm{ff}}$ are elements of $\calK$.  
\end{remark*}

\subsubsection{Range}
We need to discuss the range of $I(j)$ in $ \varrho^{(d+1)/2} C^\infty(\mathrm{ff}) = \langle \hat{y} \rangle^{-(d+1)} C^\infty(( \overline{\bbR^{d-1}_{\hat{\bfy}}})_2 )$, 
i.e.,\ 
\begin{equation}
    \operatorname{range} I(j) = \{ f\in \varrho^{(d+1)/2} C^\infty(\mathrm{ff})  : f \in I(j) \calA^{-\infty}(\mathrm{ff})  \}.
\end{equation}

Let 
\begin{equation}\label{eq:I_prime}
    I'(j)=\partial_j I(j) = - 2\varrho\hat{\bfy} \cdot \nabla_{\hat{\bfy}}
	- 2\hat{y}^2 \varrho^2  j +( i\varrho - (d-3)\varrho - 2\varrho^2 ) \in \operatorname{Diff}_{\mathrm{b}}^{1,-1}(\mathrm{ff}) .
\end{equation}
This induces one order of decay at $\mathrm{ff}$, owing to the factors of $\varrho$. 

Let $\operatorname{coker} I(j)$ denote the quotient of $\varrho^{(d+1)/2} C^\infty(\mathrm{ff})$ via $\operatorname{range}I(j)$. Then $I'(j) : \operatorname{ker} I(j) \to  \varrho^{(d+1)/2} C^\infty(\mathrm{ff})$, so we can define 
\begin{align}
\begin{split} 
    [I'(j)] &: \operatorname{ker} I(j) \to \operatorname{coker} I(j) \\ 
    &: k\mapsto I'(j) k \bmod \operatorname{range} I(j). 
\end{split}
\end{align}
\begin{lemma}\label{lem:coker_fund}
    $[I'(j)] : \operatorname{ker} I(j) \to \operatorname{coker} I(j)$ is an onto map of vector spaces.
\end{lemma}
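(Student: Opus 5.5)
The plan is to reduce everything to one ODE per spherical harmonic and check surjectivity mode by mode. First, conjugate by $e^{i\hat{y}^2/4}\varrho^{-j/2}$ as in \Cref{lem:QHO_final}. This replaces $I(j)$ by the QIHO family $L(j)$ and the function spaces by their $e^{i\hat{y}^2/4}$-twisted versions. The same conjugation sends $I'(j)$ to $\partial_j L(j)=i$ plus the term coming from $\partial_j$ of the conjugating factor, which is multiplication by a function of order $\varrho$.

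Next, decompose in spherical harmonics $Y\in\calY$ on $\bbS^{d-2}_{\hat{\bfy}/\hat{y}}$. Since $I(j)$, $I'(j)$ and the spaces $\varrho^{(d+1)/2}C^\infty(\mathrm{ff})$, $\calA^{-\infty}(\mathrm{ff})$ are rotation invariant, it suffices to prove surjectivity for each harmonic separately. In the variable $s=\hat{y}^2$, each mode of $L(j)u=f$ becomes an inhomogeneous confluent hypergeometric (Kummer) equation. One basis solution is regular at $s=0$, namely $s^{\alpha/2-(d-3)/4}\,{}_1F_1(a,1+\alpha,-is/2)$ up to the conjugating factor. The other basis solution has the Tricomi-type behaviour, and the decay requirement at $s=\infty$ selects the $R_-(j)$ branch.

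For a single mode I would proceed by variation of parameters. This gives an explicit solution operator whose output lies in the class of \Cref{cor:main_II} provided one linear functional of the forcing vanishes. That functional is $\ell_Y(f)=\int_0^\infty f\,k^\dagger\,\dd\mu$, where $k^\dagger$ is the regular solution of the transposed equation. The functional is obstructive only when the regular solution itself has the forbidden oscillation at infinity. Reading off the asymptotics of ${}_1F_1$, the regular solution is non-oscillatory ($e^{i\hat{y}^2/4}$-polyhomogeneous) exactly when the coefficient $1/\Gamma(a)$ of its oscillatory branch vanishes, i.e.\ when $a\in-\bbN$. Consequently each mode contributes at most one dimension to $\operatorname{coker}I(j)$, and it contributes exactly when it also contributes one element $k_Y$ to $\ker I(j)=\calK$. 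The map $[I'(j)]$ is then block diagonal, and it remains to show $\ell_Y(I'(j)k_Y)\neq 0$ for each such $Y$.

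To compute this pairing I will use the analytic family in $j$. Write $k_Y(j')$ for the regular-at-origin solution defined by the same ${}_1F_1$ formula for all $j'$ near $j$. Then $I(j')k_Y(j')=0$ identically, and differentiating at $j'=j$ gives
\begin{equation}
I'(j)k_Y=-I(j)\,\partial_{j}k_Y .
\end{equation}
Pairing with $k^\dagger$ and integrating by parts, the interior term vanishes because $I(j)^{\mathsf T}k^\dagger=0$, so only a boundary term at $s=\infty$ survives. This boundary term is a Wronskian of $\partial_j k_Y$ against $k^\dagger$. Since $\partial_a(1/\Gamma(a))\neq 0$ at $a\in-\bbN$, the derivative $\partial_j k_Y$ acquires a nonzero multiple of the oscillatory branch. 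Its Wronskian with $k^\dagger$ is a nonzero constant, so the pairing is nonzero, and $[I'(j)]$ is onto, in fact an isomorphism mode by mode.

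The main obstacle is making this boundary computation rigorous. One must check that $I'(j)k_Y$ does lie in $\varrho^{(d+1)/2}C^\infty$; this follows from the extra factor $\varrho$ in $I'(j)$ and the $\hat{y}^{-(d-1)}$ decay of $k_Y$. One must also identify the cokernel functional precisely with the Wronskian pairing, using the asymptotics established for $R_-(j)$ in \S\ref{sec:QHO}. I would first verify these asymptotics with the explicit Kummer/Laguerre connection formulas, and handle uniformity in $Y$ only afterwards, since only finitely many $Y$ are obstructed.
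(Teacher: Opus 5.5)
Your argument is essentially the paper's special-function-theoretic proof of this lemma, recast dually: rather than supposing $I'(j)k\in\operatorname{range}I(j)$ and deriving a contradiction from $B'(j)\neq 0$, you pair $I'(j)k_Y=-I(j)\partial_jk_Y$ against the cokernel functional. You then read off the same nonvanishing coefficient $B'(j)\propto\partial_a(1/\Gamma(a))$ of the forbidden branch from the boundary Wronskian at $\hat y=\infty$. The step you leave implicit, and which needs to be stated and proved, is that the rest of $\partial_jk_Y$ (which involves $\partial_aU(a,1+\alpha,-i\hat y^2/2)$) stays conormal at $\hat y=\infty$, so that it contributes nothing to the oscillatory-regularized boundary term; the paper proves exactly this separately in \S\ref{sec:tech}. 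Also, conjugation actually sends $I'(j)$ to $i-\tfrac12[L(j),\log\varrho]$, which is a first-order operator (not $i$ plus a multiplication operator of order $\varrho$) and gains a factor of $\varrho$ only on $e^{i\hat y^2/4}$-polyhomogeneous functions; your main computation, done in the $I$-picture, does not rely on this.
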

\begin{proof}[Proof idea]
    The reason why $\operatorname{coker} I(j)$ is nonzero for certain values $j_0$ of $j$ is that, for some azimuthal quantum numbers $\ell\in \bbN$,  the Wronskian 
    \begin{equation} 
        \frakW_-=\frakW[v_0,v_{\infty,-}]  \overset{\text{\cref{eq:key_Wronskian}}}{=} 2 \Gamma\bigg( \underbrace{\frac{d-1}{2}-\frac{j}{2}+\frac{\ell}{2}}_{a=a(j)} \bigg)^{-1} \Gamma(\alpha+1) (-i/2)^{-\alpha}, \quad \alpha= \ell+\frac{d-3}{2} \label{eq:restated_Wronskian}
    \end{equation} 
    computed in the appendix
    has a zero at $j=j_0$, namely when $a\in -\bbN$, where $\Gamma(a)=\infty$. The lemma statement will follow from the fact that the vanishing of $\frakW_-$ at $j=j_0$ is simple. (The $\Gamma$-function has only simple poles.)

    We need to know that $\dim \ker I(j)\geq \dim \operatorname{coker} I(j)$, but this is a consequence of analytic Fredholm theory. In fact the two dimensions are equal, but we do not need this.
\end{proof}
Though done below, the reduction of the lemma to the Wronskian encounters technical challenges. For this reason, it is worth observing that a failure of the lemma would be, in some sense, unlucky. Indeed, given that the kernel and cokernel of $I(j)$ have the same nonzero finite dimension, then $[I'(j)]$ is an isomorphism if it is injective (by rank-nullity), which is true unless some element of $I'(j)\ker I(j)$ lands exactly in $\operatorname{range} I(j)$. But this latter space has positive codimension, meaning that such a coincidence ought to be unlikely.

\begin{proof}
We provide an abstract gloss on this argument in \S\ref{sec:Grushin}. In order to fit the present problem into that framework, we need a Fredholm setup with $j$-independent spaces. In \S\ref{subsec:QIHO_Fredholm}, we provide such a setup for $L(j)$, constructing $j$-independent\footnote{See \Cref{rem:j-independence}.} Hilbert function spaces $\calX,\calY \subset \calS'(\bbR^{d-1})$ such that  
\begin{equation}
    L(j) : \calX\to \calY 
\end{equation}
is Fredholm. The precise function spaces allowed depend on $j$, but given one particular value $j_0$, the same spaces work for all $j$ sufficiently close to $j_0$. 
We know that $L(j)$ is invertible for all but a discrete set of points $j$,\footnote{By direct computation, we know that $\ker_\calX L(j)=\ker L(j)$ is zero for all but discrete set of $j$. 
The resolvent construction in \S\ref{subsec:QHO_separation} constructs the resolvent at the level of separation-of-variables for $j$ for which $\ker L(j)$ is empty. Applied to a function consisting of finitely many angular modes times $C_{\mathrm{c}}^\infty(\bbR_{\hat{y}}^+)$ functions, this resolvent outputs something lying in 
\begin{align}
    e^{i\hat{y}^2/4} \langle \hat{y} \rangle^{-(d-1)+j} C^\infty(\mathrm{ff})\subseteq \calX.
\end{align}
Thus, $L(j) \calX$ is dense $\calY$, which means that it cannot have positive codimension. This means that $\operatorname{coker}_\calY L(j)$ has dimension zero for every such nonexceptional $j$.} so the hypotheses of \S\ref{sec:Grushin} are fulfilled. 

A consequence is that $L(j)$ has Fredholm index 0, meaning that 
\begin{equation} 
    \ker_{\calX} L(j), \operatorname{coker}_\calY L(j) =\calY/\operatorname{range}_\calX L(j)
\end{equation} 
have the same finite dimension. The subscripts `$\calX$' and `$\calY$' to emphasize the ambient space. However, $\ker_{\calX} L(j) \subset e^{i\hat{y}^2/4} \langle \hat{y} \rangle^{-(d-1)+j} C^\infty(\mathrm{ff})$ automatically (by \S\ref{subsec:QIHO_mapping}), so 
\begin{equation}
    \ker_{\calX} L(j) = \ker L(j), 
\end{equation}
where
\begin{equation}
    \ker L(j) = \{ u\in e^{i\hat{y}^2/4} \langle \hat{y} \rangle^{-(d-1)+j} C^\infty(\mathrm{ff})  :L(j)u=0 \}.
\end{equation} 
This can also be seen by the explicit computations above, via separation-of-variables (using that $\calX$ is acted on by the group of rotations). 

Now we want to apply \Cref{lem:Grushin_magic}. This relates the invertibility of 
\begin{equation}\label{eq:L_iso}
    [L'(j_0)] : \ker L(j_0) \to \operatorname{coker}_{\calY} L(j_0)  
\end{equation}
to the simple vanishing of a certain Schur complement, which is a map between the finite-dimensional spaces $\ker_{\calX} L(j_0), \operatorname{coker}_{\calY} L(j_0)$. The Schur complement commutes with rotations of $\bbR^{d-1}$, so, separating variables, the Schur complement can be studied mode by mode. The ODE computation reveals that the Schur complement is vanishing at the level of an individual angular mode with the Wronskian $\frakW_-$ above.
Hence, the vanishing is simple in the desired sense and \Cref{lem:Grushin_magic} gives that  \cref{eq:L_iso} is an isomorphism.

Thus, 
\begin{equation}
    \calY = \operatorname{range}_{\calX} L(j_0)\oplus  L'(j_0) \ker L(j_0). 
\end{equation}
In particular, 
\begin{equation}
    \underbrace{e^{i\hat{y}^2/4} \langle \hat{y} \rangle^{-(d+1)+j} C^\infty(\mathrm{ff})}_{\subset \calY}  \subseteq \operatorname{range}_{\calX} L(j_0)\oplus  L'(j_0) \ker L(j_0). 
\end{equation}
This means that any $f$ in the space on the left-hand side can be written $  f = L(j_0) u_{\calX} + L'(j_0)k_{\calX}$
for some $k_{\calX}\in \ker L(j_0)$. 

We would like a statement about $I'(j_0)$, not $L'(j_0)$. Let $T= e^{i\hat{y}^2/4} \varrho^{-j_0/2} I'(j_0) e^{-i\hat{y}^2/4} \varrho^{j_0/2}$. Since
\begin{equation}
    L'(j) = \partial_j ( e^{i\hat{y}^2/4} \varrho^{-j/2} I(j) e^{-i\hat{y}^2/4} \varrho^{j/2} ),
\end{equation}
and the $\partial_j$ can fall on $\varrho^{\pm j/2}$, 
it follows that
\begin{equation}
    L'(j_0)-T = \frac{1}{2}(L(j_0) \log \varrho - (\log \varrho) L(j_0)).
\end{equation}
On $\ker L(j_0)$, the second term vanishes, leaving just the first term. Thus, $L'(j_0) k_\calX = T k_\calX +  \frac{1}{2}L(j_0) (k_\calX \log \varrho)$. Because $k_\calX$ lies in $\calX$ with at least some weight $\varrho^\epsilon$ to spare, the product $k_\calX \log \varrho$ lies in $\calX$. We can therefore combine this with $u_\calX$ to write 
\begin{equation}
    f = L(j_0) v_\calX + T k_\calX, 
\end{equation}
where $v_\calX = u_\calX + \frac{1}{2}k_\calX\log \varrho$. 

As recorded in \cref{eq:I_prime}, the operator $I'(j)$ induces one order of decay at bf (unlike $L'(j)$), so $T$ also induces one order of decay. More precisely, 
\begin{equation} 
    T: e^{i\hat{y}^2/4} \langle \hat{y} \rangle^{-(d-1)+j} C^\infty(\mathrm{ff})\to e^{i\hat{y}^2/4} \langle \hat{y} \rangle^{-(d+1)+j} C^\infty(\mathrm{ff}).
\end{equation} 
(Recall that $\langle \hat{y}\rangle^{-2}$ has one order of decay, according to our conventions.) Thus, $f-T k_\calX$ lies in the same space as $f$ itself. 
We deduce that 
\begin{equation}
     v_{\calX}\in R_-(j_0)  (e^{i\hat{y}^2/4} \langle \hat{y} \rangle^{-(d+1)+j_0} C^\infty(\mathrm{ff})) \subset e^{i\hat{y}^2/4} \langle \hat{y} \rangle^{-(d-1)+j_0} C^\infty(\mathrm{ff})
\end{equation}
(as in \S\ref{subsec:QIHO_Fredholm}). Thus, we have shown that 
\begin{equation}
    e^{i\hat{y}^2/4} \langle \hat{y} \rangle^{-(d+1)+j_0} C^\infty(\mathrm{ff})\subseteq \operatorname{range} L(j_0)+ T \ker L(j_0).
\end{equation}
The reverse inclusion is immediate from the mapping properties of $L(j_0),T$. In summary:
\begin{equation}\label{eq:misc_5aa}
    e^{i\hat{y}^2/4} \langle \hat{y} \rangle^{-(d+1)+j_0} C^\infty(\mathrm{ff})=\operatorname{range} L(j_0)+  T \ker L(j_0).
\end{equation}

This means that $[T]$, with domain $\ker L(j_0)$, maps onto 
\begin{equation}
\operatorname{coker} L(j_0) = e^{i\hat{y}^2/4} \langle \hat{y} \rangle^{-(d+1)+j_0} C^\infty(\mathrm{ff}) / \{ L(j_0)u:u\in e^{i\hat{y}^2/4} \langle \hat{y} \rangle^{-(d-1)+j_0} C^\infty(\mathrm{ff})  \} .   
\end{equation}

Undoing the conjugation, we conclude that $[I'(j_0)] $ is an onto map $\ker I(j_0)\to \operatorname{coker} I(j_0)$.
\end{proof}

Since the kernel and cokernel are both special-function theoretic, the reader may prefer the following more concrete proof:
\begin{proof}[Special-function theoretic proof]
    It suffices to work mode by mode.
    Thus, fix a spherical harmonic $Y$. All of the functions below will have the form 
    \begin{equation}
        \text{special function of $\hat{y}$} \times Y(\hat{\bfy}/\hat{y}).
    \end{equation}
    Let $\ell\in \bbN $ be the azimuthal quantum number of $Y$.  In the following argument, 
    \begin{equation}
        a=\frac{d+1}{4} - \frac{j}{2} + \frac{\alpha}{2}
    \end{equation}
    depends on $j$, but this will be implicit in the notation. Note that $\alpha$ is independent of $j$.
    
    The lemma statement is vacuous for azimuthal quantum numbers such that $a\not\in -\bbN$ (because both the kernel and cokernel are 0), so suppose $a\in -\bbN$. Then, the kernel is one-dimensional at the level of the angular mode at which we are working, and we know that the cokernel is at most one-dimensional. So, it suffices to prove
    \begin{equation} 
        I'(j)k\notin \operatorname{range} I(j)
    \end{equation} 
    for 
    \begin{equation}\label{eq:ethansk}
        k = \frac{\hat{y}^{-(d-3)/2+\alpha}}{(1+\hat{y}^2)^{j/2}} \cdot {}_1F_1\Big( a,1+\alpha,-\frac{i\hat{y}^2}{2}  \Big) Y\Big( \frac{\hat{\bfy}}{\hat{y}} \Big) \in \calK.   
    \end{equation}
    (Which shows that the cokernel is exactly one-dimensional, at the level of this single angular mode.)
    
    Let us write $k=k[j]$ to make the dependence on $j$ explicit. The formula above makes sense for all $j$. Moreover, 
    \begin{equation} 
        I(j)k[j]=0
    \end{equation} 
    for all $j$; the only reason $k[j]$ is not counted as an element of $\ker I(j)$ for generic $j$ is because this function is usually oscillatory at large-$\hat{y}$, with the exception being if $a\in -\bbN$. What we mean by ``the kernel'' requires conormality at $\hat{y}=\infty$.  

    Note that $k[j] \in C^\infty(\bbR^{d-1}_{\hat{\bfy}})$, for all $j\in \bbR$. Indeed, 
    \begin{equation} 
        \hat{y}^{-(d-3)/2+\alpha}Y\Big( \frac{\hat{\bfy}}{\hat{y}} \Big)=\hat{y}^{\ell}Y\Big( \frac{\hat{\bfy}}{\hat{y}} \Big)
    \end{equation} 
    is a harmonic polynomial of degree $\ell$ on $\bbR^{d-1}$ and hence smooth.

     Differentiating $I(j) k[j]=0$ in $j$, we get 
     \begin{equation}
         I'(j) k[j] = - I(j)  k'[j], 
     \end{equation}
     where by $k'[j]$ we mean $\partial_j k[j]$. 
     
     If there were some $v\in \calA^{-\infty}(\mathrm{ff})$ such that 
     \begin{equation} 
        I'(j) k[j]= I(j) v,
    \end{equation} 
    there would be some such $v$ possessing only the same spherical harmonic as $k$ itself. 
     Let $h\coloneqq v+ k'[j]$, which satisfies the PDE $I(j) h=0$. This is an ODE for the radial profile.
    Differentiating \cref{eq:ethansk}, we see that $k'[j]$ is smooth at the origin. Indeed, the $j$-dependence of \cref{eq:ethansk} is only in the Japanese bracket factor $\smash{(1+\hat y^2)^{-j/2}}$ and the variable $a=a(j)$ in the first slot of the ${}_1F_1$ function. The Japanese bracket factor remains smooth after differentiating in $j$. 
    Moreover, ${}_1F_1(-,1+\alpha,-)$ is simultaneously holomorphic in the first and third slots (for generic values of the second argument, namely $1+\alpha\notin -\bbN$; fortunately, $1+\alpha\geq 1/2$). See \cite[\href{http://dlmf.nist.gov/13.2.ii}{\S13.2.(ii)}]{NIST}. We then conclude $k'[j]$ is smooth at the origin, like $k[j]$ itself.

    Our tentative preimage $v$ is also required to be regular at the origin,
    hence the same applies to $h$. And since $h$ satisfies the radial ODE $I(j)=0$, it must be proportional to the regular solution thereof:
    \begin{equation} 
        \exists  c \in\bbC\text{ s.t. }h=  c  k[j].
    \end{equation}  
    But when $a\in -\bbN$, the function $k[j]$ is conormal at $\hat{y}=\infty$. Since $v$ also has this property, $k'[j]$ must as well. Once we rule this out, it can be concluded that no $v$ with the properties required above exists. This is the desired result. Our goal has therefore become 
    \begin{equation}
        a\in -\bbN\Longrightarrow k'[j]\text{ not conormal at }\hat{y}=\infty.
    \end{equation}

    For any $j$, we can write, for some $A,B\in \bbC$,
    \begin{equation} 
        k[j]= A(j) g_-[j] + B(j) g_+[j],
    \end{equation} 
 where $g_\pm$ are the solutions of the homogeneous PDE proportional to $v_{\infty,\pm}(\hat y )Y(\hat \bfy/\hat y)$ of \S\ref{subsec:odesolutions} via the conjugating factors relating $I(j)$ to $L(j)$, found in \Cref{lem:QHO_final}. Explicitly,
    \begin{equation}
    \begin{gathered}\label{eq:thegs}
        g_{-}[j] \coloneqq \frac{\hat y^{\alpha-\frac{d-3}{2}}}{(1+\hat y^2)^{j/2}}U\Big(a,1+\alpha,-\frac{i\hat y^2}{2} \Big)Y\Big( \frac{\hat\bfy}{\hat y}\Big),\\ g_{+}[j] \coloneqq \frac{\hat y^{\alpha-\frac{d-3}{2}}e^{-i\hat y^2/2}}{(1+\hat y^2)^{j/2}}U\Big(a+j-\frac{d-1}{2},1+\alpha,-\frac{i\hat y^2}{2} \Big)^*Y\Big( \frac{\hat\bfy}{\hat y}\Big).
    \end{gathered}
    \end{equation}
    The coefficient $B(j)$ vanishes when $a\in -\bbN$, since then $k[j]$ is purely outgoing. We may differentiate in $j$, getting 
    \begin{align}
        \begin{split} 
        k'[j] &= A'(j) g_-[j] + A(j) g_-'[j] + B'(j) g_+[j]+B(j) g_+'[j] \\
        \overset{a\in -\bbN}&{=}  \underbrace{A'(j) g_-[j] + A(j) g_-'[j]}_{\text{conormal at bf}} +B'(j) \underbrace{g_+
        [j]}_{\mathclap{\text{not conormal at bf}}}.
        \end{split} 
    \end{align}
    We claim $g_{-}'[j]$ is conormal at bf. Indeed, this follows from the fact that $\partial_aU(a,b,-is)$ is conormal at $s =\infty$. Since $U(a,b,-is)$ is conormal at $s=\infty$ for each individual $a$, so the claim is highly plausible. Nevertheless, a rigorous argument is required. One rigorous argument involves directly analyzing an integral formula for $U(a,b,-is)$; the details of this are in \S\ref{sec:tech}.
    
    Now $g_+$ is \textit{not} conormal at bf due to the oscillatory prefactor, so the only way for $k'[j]$ to be conormal at $\mathrm{bf}$, for some particular value of $j$, is if $B'(j)=0$. 
    
    Consider the Wronskian $\frakW[k,g_-](\hat{y})$ of $k[j]$ with $g_-[j]$. 
    (Because $I(j)$ has first-order terms, the Wronskian will not be constant but instead will depend on $\hat{y}$.) We now compute $\frakW[k,g_-]$ in two ways.
    \begin{enumerate}
        \item 
        Because $k[j],g_-[j]$ are proportional to the $v_0,v_{\infty,-}$ of \S\ref{sec:QHO}, $\frakW$ is proportional to their Wronskian $\frakW_-=\frakW[v_0,v_{\infty,-}]$ (where the constant of proportionality is nonvanishing). 
        \item 
        On the other hand, 
    \begin{equation}
        \frakW[k,g_-] = B(j) \underbrace{\frakW[g_+,g_-]}_{\neq 0}.
    \end{equation}
    \end{enumerate}
    Thus, we can write 
    \begin{equation}
        B(j) \propto \frakW_-, 
    \end{equation}
    where the proportionality constant depends on $j$ but is smooth and nonvanishing near the critical $j$.  
    The Wronskian $\frakW_-$ has a simple zero at the critical $j$, 
    so we deduce that $B(j)$ has a simple zero at the critical $j$, hence $B'(j)\neq 0$ there. This completes our proof by contradiction.
\end{proof}

\begin{corollary}
   $\varrho^{(d+1)/2} C^\infty(\mathrm{ff}) = \operatorname{range} I(j) + I'(j) \operatorname{ker} I(j)$.
\end{corollary}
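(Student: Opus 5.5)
The corollary should follow from \Cref{lem:coker_fund} by unwinding the definition of the cokernel, together with one check: that the right-hand side actually sits inside $\varrho^{(d+1)/2} C^\infty(\mathrm{ff})$. The argument has two inclusions.

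For ``$\supseteq$'', we use two facts. First, $\operatorname{range} I(j)$ is by definition a subset of $\varrho^{(d+1)/2} C^\infty(\mathrm{ff})$. Second, we need $I'(j)\ker I(j) \subseteq \varrho^{(d+1)/2} C^\infty(\mathrm{ff})$; this was already asserted when $[I'(j)]$ was defined. To justify it, take $k\in\calK$. It is smooth on $\bbR^{d-1}_{\hat{\bfy}}$, since $\hat{y}^\ell Y$ is a harmonic polynomial and ${}_1F_1(a,\cdot,\cdot)$ is a polynomial when $a\in-\bbN$. Moreover, $k\in \varrho^{(d-1)/2}C^\infty(\mathrm{ff})$ at infinity: the degree count is $\hat{y}^{\ell + 2|a| - j} = \hat{y}^{-(d-1)}$ exactly when $a = \frac{d+1}{4}-\frac{j}{2}+\frac{\alpha}{2}\in-\bbN$. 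The expansion also proceeds in powers of $\hat{y}^{-2}$, which matches the `$2$' smooth structure. Since $I'(j)\in \operatorname{Diff}^{1,-1}_{\mathrm{b}}(\mathrm{ff})$ gains one order of $\varrho$ by \cref{eq:I_prime}, we get $I'(j)k\in\varrho^{(d+1)/2}C^\infty(\mathrm{ff})$.

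For ``$\subseteq$'', take $f\in\varrho^{(d+1)/2}C^\infty(\mathrm{ff})$ and consider its class $[f]\in\operatorname{coker} I(j)$. By the surjectivity in \Cref{lem:coker_fund}, there is $k\in\ker I(j)$ with $[I'(j)k]=[f]$. This means $f - I'(j)k\in \operatorname{range} I(j)$, so $f = I(j)u + I'(j)k$ for some $u\in\calA^{-\infty}(\mathrm{ff})$. This is exactly the claimed decomposition.

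The only step needing care is the membership $I'(j)\ker I(j)\subseteq \varrho^{(d+1)/2}C^\infty(\mathrm{ff})$, since the smooth structure at $\mathrm{bf}\cap\mathrm{ff}$ is the modified one. I would verify it by expanding $(1+\hat{y}^2)^{-j/2}$ and the Laguerre polynomial in $\hat{y}^{-2}$. I would also note that for $j$ where $\ker I(j)=0$ the statement reduces to surjectivity of $I(j)$, which is again the content of \Cref{lem:coker_fund}, because the cokernel is then zero.
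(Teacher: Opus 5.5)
Your proof is correct and matches the paper's intent: the paper states this corollary without proof as the direct unwinding of the surjectivity of $[I'(j)]\colon \ker I(j)\to\operatorname{coker} I(j)$ from \Cref{lem:coker_fund}, and the Fredholm proof of that lemma derives the conjugated form of the identity, \cref{eq:misc_5aa}, directly. Your degree count showing $\ker I(j)\subset\varrho^{(d-1)/2}C^\infty(\mathrm{ff})$, hence $I'(j)\ker I(j)\subset\varrho^{(d+1)/2}C^\infty(\mathrm{ff})$, is also correct, and it fills in an inclusion the paper only asserts when it defines $[I'(j)]$.
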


\subsection{Proof of \Cref{prop:II_main}}
\label{subsec:v2_construction}

We are now in a position to prove the main result of this section. 

In this proof, restrict attention to $x>0$.
Let $\hat{g} = (r/x) \hat{f}$ and $E = (r/x) \hat{P}-N$. Then, if $\hat{v}$ is supported near $\mathrm{ff}$, 
\begin{multline}
    \hat{P} \hat{v}-\hat{f} \in \langle r \rangle^{-(d+3)/2}C^\infty(\overline{\bbR^d}) \iff  (r/x)\hat{P} \hat{v}-\hat{g} \in \langle r \rangle^{-(d+3)/2}C^\infty(\overline{\bbR^d})\text{ near ff} \\ 
    \iff  N\hat{v} + E \hat{v} -\hat{g} \in \langle r \rangle^{-(d+3)/2}C^\infty(\overline{\bbR^d})\text{ near ff}. 
\end{multline}
We will construct 
a formal polyhomogeneous series 
\begin{equation}
    \hat{\mathsf{v}} = \sum_{(j,k)\in \calF} \rho^{j+L-1}  (\log \rho)^k \hat{v}_{j,k} ,\quad \hat{v}_{j,k} \in \langle \hat{y} \rangle^{-(d-1)} C^\infty(\mathrm{ff}),
    \label{eq:misc_102084}
\end{equation}
such that $N\hat{\mathsf{v}} = \hat{g} - E\hat{\mathsf{v}}$ holds at the level of formal series. 
Here,  $\calF \subset \bbN^2$ is some index set to be specified. It turns out we will take
\begin{equation} 
\calF = \{(j,k)\in\bbN^2\,:\,k\le j+1\};
\end{equation}
the reason will become clear in the proof. 
Then, asymptotically summing $\hat{\mathsf{v}}$ (and inserting a cutoff localizing near $\mathrm{ff}$) yields, in the desired space, a solution $\hat{v}$ to 
\begin{equation} 
    \hat{P}\hat{v}=\hat{f}
\end{equation} 
modulo an error term lying in 
\begin{equation}
    \langle r \rangle^{-(d+3)/2} \calA^{(0,0),\infty}(X)\subseteq 
    \langle r \rangle^{-(d+3)/2} C^\infty(\overline{\bbR^d}).
\end{equation}
Here we are using that a polyhomogeneous function $w$ on $X$ that is smooth at $\mathrm{bf}$ and Schwartz at $\mathrm{ff}$ is smooth already on the blowdown $X\twoheadrightarrow \smash{\overline{\bbR^d}}$. Indeed, the terms in the expansion of $w$ at $\mathrm{bf}$ are smooth functions on $\bbS^{d-1}$ (that happen to be Schwartz in the forward direction), and the partial Taylor series of $w$ at $\mathrm{bf}$ also serves as a Taylor series at $\infty \bbS^{d-1}$ on the radial compactification.

Consider the result of applying $N$ to a function of the form $\rho^j w(\hat{\bfy})$. This is 
\begin{equation}\label{eq:misc_305}
    N(\rho^j w(\hat{\bfy})) =x^{-1} \rho^j (\rho^{-j} xN (\rho^j w) ) = \varrho \rho^{2+j} I(j) w.
\end{equation}
If we instead apply $N$ to $\rho^j (\log \rho)^k w(\hat{\bfy})$ for some $k\geq 1$, then the result involves the derivatives $I^{(\kappa)} (j) = \partial_j^\kappa I(j)$:
\begin{equation}
    N(\rho^j (\log \rho)^k w(\hat{\bfy})) = \varrho \rho^{2+j} \sum_{\kappa=0}^k \binom{k}{\kappa} (\log \rho)^{k-\kappa} I^{(\kappa)}(j) w(\hat{\bfy}). 
\end{equation}
(This can be proven algebraically, or by differentiating \cref{eq:misc_305} in $j$.)
So, 
\begin{align}
    \begin{split} 
    N \hat{\mathsf{v}} = \sum_{(j,k)\in \calF}  N\rho^{L-1+j} (\log \rho)^k \hat{v}_{j,k}(\hat{\bfy}) &= \sum_{(j,k)\in \calF}\sum_{\kappa=0}^k \binom{k}{\kappa}\varrho\rho^{L+1+j} (\log \rho)^{k-\kappa}I^{(\kappa)}(L-1+j) \hat{v}_{j,k}(\hat{\bfy}) \\
    &= \sum_{(j,k)\in \calF} \varrho\rho^{L+1+j} (\log \rho)^{k} \sum_{\substack{\kappa\in \bbN\\ \mathclap{\text{s.t. } (j,k+\kappa)\in \calF}}} \binom{k+\kappa}{\kappa} I^{(\kappa)}(L-1+j) \hat{v}_{j,k+\kappa}(\hat{\bfy}).
    \end{split} 
\end{align} 

Let 
\begin{equation}
    \hat{g} \sim \hat{\mathsf{g}}= \sum_{j=0}^\infty  \rho^{j+L+1}  \hat{g}_{j}, \quad \hat{g}_j\in \langle \hat{y} \rangle^{-(d+3)} C^\infty(\mathrm{ff})
\end{equation}
denote the expansion of $\hat{g}$ at $\mathrm{ff}$. 

By \cref{eq:N_tot}, $E\in \operatorname{Diff}^{2,-2,-3}_{\mathrm{b}}(X)$, so we can write 
\begin{equation}
    E\sim \mathsf{E} = \sum_{\ell=0}^\infty \sum_{m=0}^2  \rho^{\ell+3} E_\ell^{(m)} (\rho\partial_\rho)^m ,
\end{equation}
for some $E_{\ell}^{(m)} \in \operatorname{Diff}_{\mathrm{b}}^{2,-2}(\mathrm{ff})$.
Thus, 
\begin{multline}
    \mathsf{E} \hat{\mathsf{v}} = \sum_{(j,k)\in \calF} \sum_{\ell=0}^\infty \sum_{m=0}^2 \rho^{\ell+3}(\rho \partial_\rho)^m (\rho^{j+L-1} (\log \rho)^k) E_\ell^{(m)} \hat{v}_{j,k}(\hat{\bfy})\\ 
    = \sum_{(j,k)\in \calF} \sum_{\substack{(j',k')\in \calF\\ j\geq j' }}  \rho^{j+L+2} (\log \rho)^k  E_{j,k,j',k'} \hat{v}_{j',k'} (\hat{\bfy}), 
\end{multline}
where $E_{j,k,j',k'}\in \operatorname{Diff}_{\mathrm{b}}^{2,-2}(\mathrm{ff})$ is the sum of differential operators obtained by collecting the coefficient of $\rho^{j+L+2}(\log\rho)^k$.
The equation $N\hat{\mathsf{v}} = \hat{\mathsf{g}} - \mathsf{E} \hat{\mathsf{v}}$ reads
\begin{multline}
        \sum_{(j,k)\in \calF} \varrho\rho^{j+L+1} (\log \rho)^{k} \sum_{\substack{\kappa\in \bbN\\ \mathclap{\text{s.t. } (j,k+\kappa)\in \calF}}} \binom{k+\kappa}{\kappa} I^{(\kappa)}(L-1+j) \hat{v}_{j,k+\kappa}(\hat{\bfy}) \\ = 
        \sum_{j=0}^\infty \rho^{j+L+1}  \bigg[ \hat{g}_j - \sum_{(j-1,k)\in \calF} (\log \rho)^k\sum_{(j',k')\in \calF\text{ s.t. }j\geq j'+1 }  E_{j-1,k,j',k'} \hat{v}_{j',k'}(\hat{\bfy}) \bigg]. 
\end{multline}
Collecting terms, setting the coefficient of $\rho^{j+L+1} (\log \rho)^k$ equal to $0$ yields 
\begin{equation}
    \varrho \sum_{\substack{\kappa\in \bbN\\ \mathclap{\text{s.t. } (j,k+\kappa)\in \calF}}} \binom{k+\kappa}{\kappa}I^{(\kappa)}(L-1+j) \hat{v}_{j,k+\kappa}(\hat{\bfy}) = \delta_{k0} \hat{g}_j - \sum_{\substack{ (j',k')\in \calF \\ j\geq j'+1} } E_{j-1,k,j',k'} \hat{v}_{j',k'}(\hat{\bfy}) .
\end{equation}
Rearranging:
\begin{equation}\label{eq:II_RHS}
    I(L-1+j) \hat{v}_{j,k}(\hat{\bfy}) = - \sum_{\substack{\kappa\in \bbN^+ \\ (j,k+\kappa)\in \calF}} \binom{k+\kappa}{\kappa} I^{(\kappa)} (L-1+j) \hat{v}_{j,k+\kappa} + \frac{1}{\varrho} \bigg[ \delta_{k0} \hat{g}_j - \sum_{\substack{ (j',k')\in \calF \\ j\geq j'+1} } E_{j-1,k,j',k'} \hat{v}_{j',k'}(\hat{\bfy}) \bigg] 
\end{equation}
Note that the $\hat{v}_{j,k}$'s on the right-hand side either have smaller $j$ or larger $k$, as compared to the left-hand side. This means that, if the right-hand side lies in the image of $I(L-1+j)$ acting on $\langle \hat{y} \rangle^{-(d-1)} C^\infty(\mathrm{ff})$, then 
\begin{multline}
    \hat{v}_{j,k}(\hat{\bfy}) = -   I(L-1+j)^{-1} \bigg( \sum_{\substack{\kappa\in \bbN^+ \\ (j,k+\kappa)\in \calF}} \binom{k+\kappa}{\kappa} I^{(\kappa)} (L-1+j) \hat{v}_{j,k+\kappa} \\ - \frac{1}{\varrho} \bigg[ \delta_{k0} \hat{g}_j - \sum_{\substack{ (j',k')\in \calF \\ j\geq j'+1} } E_{j-1,k,j',k'} \hat{v}_{j',k'}(\hat{\bfy}) \bigg]\bigg) 
\end{multline}
can be used to define the $\hat{v}_{j,k}\in \langle \hat{y} \rangle^{-(d-1)} C^\infty(\mathrm{ff})$ via recursion.
Then, 
\begin{equation}
    \frac{1}{\varrho} \bigg[ \delta_{k0} \hat{g}_j - \sum_{\substack{ (j',k')\in \calF \\ j\geq j'+1} } E_{j-1,k,j',k'} \hat{v}_{j',k'}(\hat{\bfy}) \bigg] \in \langle \hat{y} \rangle^{-(d+1)} C^\infty(\mathrm{ff}).  
\end{equation}
Indeed, applying one of the $E_{\bullet}$'s gains two orders of decay $\varrho\sim \hat{y}^{-2}$, and dividing by $\varrho$ loses one order of decay. We see from the formula for $I'(j)$ (see \cref{eq:I_prime}) that $I^{(\kappa)}(L-1+j)\in \operatorname{Diff}^{1,-1}_{\mathrm{b}}(\mathrm{ff})$ for $\kappa\geq 1$, so 
\begin{equation}
    \sum_{\substack{\kappa\in \bbN^+ \\ (j,k+\kappa)\in \calF}} \binom{k+\kappa}{\kappa}I^{(\kappa)} (L-1+j) \hat{v}_{j,k+\kappa} \in \langle \hat{y} \rangle^{-(d+1)} C^\infty(\mathrm{ff})
\end{equation}
as well. 

We consider $\hat{v}_{j,k}$ as defined for all $k\in \bbN$, just zero for all but finitely many $k$.

Thus, as long as the right-hand side of \cref{eq:II_RHS} lies in the range of $I(L-1+j)$, then $\hat{v}_{j,k}$ can be defined. Note that \cref{eq:II_RHS} is not spoiled if we add to $\hat{v}_{j,k}$ an element of 
\begin{equation} 
    \ker I(L-1+j) \subset \langle \hat{y} \rangle^{-(d-1)} C^\infty(\mathrm{ff}).
\end{equation} 
So the recurrence relation only defines $\hat{v}_{j,k}$ modulo this kernel, and we are free to modify our definition of $\hat{v}_{j,k}$ by adding an element of this kernel. 

The only subtlety is arranging for the parenthetical term  to lie in the domain on which $I(L-1+j)^{-1}$ is defined. Fortunately, 
$I'(L-1+j)\hat{v}_{j,k+1}$ appears on the right-hand side multiplied by a nonzero coefficient.
\Cref{lem:coker_fund} therefore tells us that, given $\hat{v}_{j,k+2},\hat{v}_{j,k+3},\cdots$,
we may modify $\hat{v}_{j,k+1}$ by adding to it an element of $\ker I(L-1+j)$ (which as just noted we are free to do), in order to arrange for the parenthetical term to which $I(L-1+j)^{-1}$ is applied to lie in the range. 
Thus,
$\hat{v}_{j,k}$ is well-defined modulo $\ker I(L-1+j)$. If $k\geq 1$, the component in $\ker I(L-1+j)$ will be determined by ensuring that $\hat{v}_{j,k-1}$ is well-defined. 

The index set $\calF\subset \bbN^2$ is defined at level $j=0$ by having only $(0,0)$ and $(0,1)$. Then, at level $j\geq 1$, it is defined by having as many logarithmic terms at level $j-1$ and then possibly one more. We start our definitions at this level with $\hat{v}_{j,k_{\mathrm{max}}+1}$, where $k_{\mathrm{max}}$ is the largest power of logarithm encountered so far. This is chosen so that $\hat{v}_{j,k_{\mathrm{max}}}$ is well-defined, via \cref{eq:II_RHS}. Then, if $k_{\mathrm{max}}\geq 1$, we modify this definition by adding to it an element of $\ker I(L-1+j)$ so that $\hat{v}_{j,k_{\mathrm{max}}-1}$ is well-defined, and so on.
This leads to the formula for $\calF$ above. 

Thus, we have recursively defined $\{\hat{v}_{j,k}\}_{(j,k)\in\calF}$ to satisfy \Cref{eq:II_RHS} for all $(j,k)\in\calF$. The corresponding formal series $\hat{\mathsf{v}}$ then solves $N\hat{\mathsf{v}} = \hat{g}-E\hat{\mathsf{v}}$; as noted above, this leads to a solution $\hat{v}$ to $\hat{P}\hat{v} = \hat{f}$ modulo $\langle r \rangle^{-(d+3)/2} C^\infty(\overline{\bbR^d})$ after an asymptotic summation.

\begin{remark*}
    The previous proof did not fix the component of $\hat{v}_{j,0}$ in $\ker I(L-1+j)$. Thus, $\hat{v}$ is not unique. However, different choices disagree only modulo a term of the form $\langle r \rangle^{-(d-1)/2} C^\infty(\overline{\bbR^d})$.
\end{remark*}
 
\section{Step three: regular outgoing spherical wave}
\label{sec:outgoing}

In this short section, we give a self-contained exposition of the following classical result: 
\begin{proposition}\label{prop:III_main}
    Let $P$ be as in \cref{eq:P}.
    For any $\hat{f}\in \langle r \rangle^{-(d+3)/2} C^\infty(\overline{\bbR^d})$, there exists a $\hat{v}\in \langle r \rangle^{-(d+1)/2} C^\infty(\overline{\bbR^d})$ such that 
    \begin{equation}
        \hat{P} \hat{v} - \hat{f} \in  \calS(\bbR^d).  
    \end{equation}
\end{proposition}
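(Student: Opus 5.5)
We run a Frobenius construction at $\infty\bbS^{d-1}$ on the radial compactification $M=\overline{\bbR^d}$, using the conjugated operator $\hat{P}=e^{-ir}Pe^{ir}$, followed by Borel summation.

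Work in polar coordinates $(\rho,\omega)$ near $\partial M$, with $\rho=1/r$ and $\omega\in\bbS^{d-1}$. Then $r\partial_r=-\rho\partial_\rho$. By \cref{eq:conj_P0} and \Cref{lem:rem},
\begin{equation}
    \hat{P}=N+E,\qquad N=-2i\partial_r-\frac{i(d-1)}{r},
\end{equation}
where $E$ lies in $\rho^2\operatorname{Diff}_{\mathrm{b}}^2(M)$, i.e.\ $\operatorname{Diff}_{\mathrm{b}}^{2,-2}(M)$. The membership of $E$ comes from three facts:
\begin{itemize}
    \item $\triangle-(-\partial_r^2-\frac{d-1}{r}\partial_r)=r^{-2}\triangle_{\bbS^{d-1}}$;
    \item $-\partial_r^2-\frac{d-1}{r}\partial_r$ is $\rho^2\times$(b-operator);
    \item $s^{-1}(P-P_0)s$ has b-decay order $2$ at $\partial M$.
\end{itemize}

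The key computation is the action of $N$ on a single term:
\begin{equation}
    N\big(\rho^{k}a(\omega)\big)=-i\big(d-1-2k\big)\rho^{k+1}a(\omega).
\end{equation}
So $N$ maps $\rho^k C^\infty(\bbS^{d-1})$ to $\rho^{k+1}C^\infty(\bbS^{d-1})$, and this map is invertible exactly when $k\neq (d-1)/2$.

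The recursion is as follows. Write the Taylor expansion $\hat f\sim\sum_{m\ge0}\rho^{(d+3)/2+m}f_m(\omega)$. We seek a formal series
\begin{equation}
    \hat{\mathsf v}=\sum_{m\ge0}\rho^{(d+1)/2+m}a_m(\omega).
\end{equation}
Every exponent $k=(d+1)/2+m$ satisfies $k>(d-1)/2$, so the indicial root $(d-1)/2$ is never hit and no logarithms arise. This is the only real point to check, and it is where the extra order of decay assumed for $\hat f$ (relative to the threshold $(d+1)/2$) is used.

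Expand $E\sim\sum_{\ell\ge0}\rho^{2+\ell}\sum_{n\le 2}E_{\ell,n}(\rho\partial_\rho)^n$ with $E_{\ell,n}\in\operatorname{Diff}^2(\bbS^{d-1})$. Matching coefficients of $\rho^{(d+3)/2+m}$ gives
\begin{equation}
    -i(-2-2m)\,a_m=f_m-\sum_{m'+\ell+1=m}\sum_{n}E_{\ell,n}\Big(\tfrac{d+1}{2}+m'\Big)^n a_{m'}.
\end{equation}
The right-hand side involves only $a_{m'}$ with $m'\le m-1$, because $E$ gains one more order than $N$. This determines each $a_m\in C^\infty(\bbS^{d-1})$ uniquely, with the division by $2+2m\neq0$ always legitimate.

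To conclude, apply Borel's lemma (\Cref{lem:Borel}) on $M$. This gives $\hat v\in\rho^{(d+1)/2}C^\infty(M)=\langle r\rangle^{-(d+1)/2}C^\infty(\overline{\bbR^d})$ with $\hat v\sim\hat{\mathsf v}$, cut off to be supported near $\partial M$. By the same telescoping argument as in the Remark after the proof of \Cref{prop:I_main}, $\hat P\hat v-\hat f$ vanishes to infinite order at $\partial M$. Since this function is also smooth in the interior (away from the cutoff region, where any singularity of $Q$ at the origin is irrelevant once we localize), it lies in $\calS(\bbR^d)$.
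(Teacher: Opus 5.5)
Your proposal is correct and follows essentially the same Frobenius construction at $\infty\mathbb{S}^{d-1}$ as the paper. You invert $N=-2i\partial_r-i(d-1)/r$ term by term, with coefficient $2i(m+1)\neq 0$, and solve the triangular recursion, which works because $E=\hat P-N\in\operatorname{Diff}_{\mathrm{b}}^{2,-2}$ gains one more order than $N$; you then Borel-sum and cut off away from the origin, and your remark that the exponents stay above the indicial root $(d-1)/2$, so no logarithms arise, makes explicit what the paper leaves implicit.
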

\begin{proof}
This is a simple example of the Frobenius method. 
After constructing the asymptotic solution near infinity, we multiply by a cutoff which is identically one near infinity and extend smoothly to the interior. The additional error is compactly supported and smooth, hence Schwartz, so it does not affect the conclusion.
    
    Let $N\coloneqq N_{\mathrm{bf}}(\hat{P}) = -2i \partial_r - i(d-1)/r$. Then, $E\coloneqq \hat{P}-N$ satisfies 
    \begin{equation}
        E \in \operatorname{Diff}_{\mathrm{b}}^{2,-2}(\overline{\bbR^{d}} ),
    \end{equation}
    whereas $N\in \operatorname{Diff}_{\mathrm{b}}^{1,-1}(\overline{\bbR^{d}} )$. 
    
    Let 
    \begin{equation}
        \hat{\mathsf{f}} = \sum_{j=0}^\infty r^{-j-(d+3)/2} \hat{f}_j \in r^{-(d+3)/2}C^\infty(\bbS^{d-1})[[r^{-1}]] ,\quad \hat f_j\in C^\infty(\bbS^{d-1})
    \end{equation}
    denote the expansion of $\hat{f} \sim \hat{\mathsf{f}}$ at $\infty \bbS^{d-1}$. We will construct a formal series 
    \begin{equation} 
        \hat{\mathsf{v}} = \sum_{j=0}^\infty r^{-j-(d+1)/2} \hat{v}_j \in r^{-(d+1)/2} C^\infty(\bbS^{d-1})[[r^{-1}]], \quad \hat{v}_j \in C^\infty(\bbS^{d-1}) 
    \end{equation} 
    such that $\hat{\mathsf{P}}\hat{\mathsf{v}} = \hat{\mathsf{f}}$ formally. Then, asymptotically summing $\hat{\mathsf{v}}$ (using \Cref{lem:Borel}) yields a solution $\hat{v}$ to $\hat{P}\hat{v} = \hat{f}\bmod \calS$. 

    We can rewrite $\hat{\mathsf{P}} \hat{\mathsf{v}} = \hat{\mathsf{f}}$ as
    \begin{equation}
        N \hat{\mathsf{v}} = \hat{\mathsf{f}} - \mathsf{E} \hat{\mathsf{v}}, 
    \end{equation}
    where 
    \begin{equation}
        \mathsf{E} = \sum_{k=0}^\infty r^{-2-k} \sum_{m=0}^2  (r\partial_r)^m E_k^{(m)},\quad E_k^{(m)} \in \operatorname{Diff}^{2-m}(\bbS^{d-1}).
    \end{equation}
    For $\ell\in \bbR$, let $E_{k,\ell}\in \operatorname{Diff}^2(\bbS^{d-1})$ be defined by $E_{k,\ell} = \sum_{m=0}^2 (-\ell)^m E_k^{(m)} $. Then
    \begin{align}
        \mathsf{E} r^{-\ell} =  \sum_{k=0}^\infty r^{-2-k-\ell} E_{k,\ell}.
    \end{align}
    So,
    \begin{equation}
        \mathsf{E}\hat{\mathsf{v}} =  \sum_{j,k=0}^\infty  r^{-j-k-(d+5)/2} E_{k,j+(d+1)/2} \hat{v}_j  = \sum_{j=0}^\infty r^{-j-(d+3)/2} \sum_{k=0}^{j-1} E_{k,j-k+(d-1)/2} \hat{v}_{j-1-k}.
    \end{equation}
    Also, 
    \begin{multline} 
        \qquad N\hat{\mathsf{v}} = 2i \sum_{j=0}^\infty r^{-j-(d+3)/2} (j+1) \hat{v}_j \in  r^{-(d+3)/2} C^\infty(\bbS^{d-1})[[r^{-1}]]\\ \subset r^{-(d+1)/2} C^\infty(\bbS^{d-1})[[r^{-1}]].\quad 
    \end{multline} 
    Thus, $N\hat{\mathsf{v}}=\hat{\mathsf{f}} - \mathsf{E}\hat{\mathsf{v}}$ reads 
    \begin{equation}
        0= \sum_{j=0}^\infty r^{-j-(d+3)/2} \Big[ 2i (j+1) \hat{v}_j -\hat{f}_j +\sum_{k=0}^{j-1} E_{k,j-k+(d-1)/2} \hat{v}_{j-1-k} \Big] .
    \end{equation}
    Setting each coefficient equal to zero, we get a recursion relation  
    \begin{equation}
        \hat{v}_j = \frac{1}{2i(j+1)} \Big[ \hat{f}_j - \sum_{k=0}^{j-1} E_{k,j-k+(d-1)/2} \hat{v}_{j-1-k}  \Big]
    \end{equation}
    determining, for $j\in \bbN$, the coefficient $\hat{v}_j$ in terms of the previous ones. 
    So setting $\hat{v}_0 = (2i)^{-1} \hat{f}_0$ and then proceeding recursively, we get $\hat{\mathsf{v}}$ with the desired property. 
\end{proof}

\begin{corollary}\label{cor:outgoing_spherical_wave}
    For any $f\in e^{ir} \langle r \rangle^{-(d+3)/2} C^\infty(\overline{\bbR^d})$ supported away from the origin, there exists a $v\in e^{i r} \langle r \rangle^{-(d+1)/2} C^\infty(\overline{\bbR^d})$ such that 
    \begin{equation}
        Pv-f \in  \calS(\bbR^d),   
    \end{equation}
    away from the origin. 
\end{corollary}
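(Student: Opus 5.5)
The corollary follows from \Cref{prop:III_main} by conjugation; the only things to check are the support near the origin and that multiplication by $e^{\pm ir}$ is harmless there. Write $f=e^{ir}\hat f$ with $\hat f\coloneq e^{-ir}f\in\langle r\rangle^{-(d+3)/2}C^\infty(\overline{\bbR^d})$. Since $f$ vanishes near the origin and $e^{-ir}$ is smooth on $\bbR^d\backslash\{0\}$, $\hat f$ is a genuine smooth function on $\bbR^d$ (zero near $0$) with the stated expansion at $\infty\bbS^{d-1}$. We apply \Cref{prop:III_main} to $\hat f$ to get $\hat v\in\langle r\rangle^{-(d+1)/2}C^\infty(\overline{\bbR^d})$ with $\hat P\hat v-\hat f\in\calS(\bbR^d)$.

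The proof of \Cref{prop:III_main} first builds $\hat v$ near infinity and then cuts it off. We choose that cutoff $\chi$ to vanish near the origin, with $\chi=1$ near $\infty\bbS^{d-1}$. Then $\chi\hat v$ still lies in $\langle r\rangle^{-(d+1)/2}C^\infty(\overline{\bbR^d})$. The extra error $[\hat P,\chi]\hat v$, together with the difference $\chi\hat f-\hat f$, is compactly supported and smooth on $\bbR^d\backslash\{0\}$, hence Schwartz there.

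We set $v\coloneq e^{ir}\chi\hat v$. This is smooth because $e^{ir}$ is smooth away from $0$ and $\chi$ vanishes near $0$, and it lies in $e^{ir}\langle r\rangle^{-(d+1)/2}C^\infty(\overline{\bbR^d})$. By the definition $\hat P=e^{-ir}Pe^{ir}$ of the conjugated operator in \cref{eq:883}, we have
\begin{equation}
    Pv-f=e^{ir}\big(\hat P(\chi\hat v)-\hat f\big).
\end{equation}

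The remaining point is that $e^{ir}$ times a Schwartz function is Schwartz away from the origin. This holds because every derivative of $e^{ir}$ is polynomially bounded on $\{r>1\}$, since $\partial^\alpha e^{ir}\in e^{ir}\langle r\rangle^{0}C^\infty$ for $r>1$. Hence $Pv-f\in\calS$ away from the origin, as claimed.

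No step is a real obstacle. The only care needed is keeping all cutoffs supported away from $0$, where $r$ fails to be smooth.
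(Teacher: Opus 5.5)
Your proposal is correct and is essentially the paper's own argument: the corollary is stated without a separate proof, as the $e^{ir}$-conjugate of \Cref{prop:III_main}. Your choice of a cutoff vanishing near the origin, where $\hat P$ has singular coefficients, is the right way to make the paper's ``extend smoothly to the interior'' precise. The remaining checks are also handled correctly: the cutoff errors lie in $C_{\mathrm{c}}^\infty$, and multiplication by $e^{ir}$ preserves Schwartz decay on $\{r>1\}$.
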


\section{Finishing the main proof}
\label{sec:final}

We can now carry out our proof of \Cref{thm:main}. This follows faithfully the outline presented in \S\ref{sec:outline}.

\subsection{Completing the proof of existence}\label{s:complete_proof}

Fix $\chi\in C^\infty(\bbR^d)$ such that $\chi=0$ identically in some large open ball and $\chi=1$ identically outside of some larger open ball. (This is mostly useful for generalizing the proof to the black-box setting below.)

Let $f_1 = -P [ \chi e^{ix}]$ and $\tilde{f}_1 = e^{-ix} f_1$. 
Then, \Cref{prop:I_main} (or the version with a cutoff, see \Cref{rem:I_main_tilde}) gives a 
\begin{equation} 
    \tilde{v}_1 \in \rho_{\mathrm{bf}}^{L-1} \rho_{\mathrm{ff}}^{L-1}C^\infty(X)
\end{equation} 
(supported outside some big ball) such that $\tilde{P} \tilde{v}_1 - \tilde{f}_1 \in \rho_{\mathrm{bf}}^\infty \rho_{\mathrm{ff}}^{L+1} C^\infty(X)$. 
This means that $u_1\coloneqq e^{ix}+v_1$ satisfies 
\begin{equation}
    f_2 \coloneqq -Pu_1 \in e^{ix} \rho_{\mathrm{bf}}^\infty \rho_{\mathrm{ff}}^{L+1} C^\infty(X) \overset{\text{Lem.\ \ref{lem:gaussian_conversion} }}{=}  e^{ir} \rho_{\mathrm{bf}}^\infty \rho_{\mathrm{ff}}^{L+1} C^\infty(X) .
\end{equation}
Here $v_1 \coloneqq e^{ix} \tilde{v}_1$.
Now let $\hat{f}_2 = e^{-ir} f_2 \in \rho_{\mathrm{bf}}^\infty \rho_{\mathrm{ff}}^{L+1} C^\infty(X)$. \Cref{prop:II_main} then yields a 
\begin{equation} 
    \hat{v}_2 \in \rho_{\mathrm{bf}}^{(d-1)/2} \rho_{\mathrm{ff}}^{L-1} \calA^{(0,0),\calF}(X)
\end{equation} 
(supported outside some big ball), for some index set $\calF\subset \bbN^2$, such that $\hat{P} \hat{v}_2 - \hat{f}_2 \in \langle r \rangle^{-(d+3)/2} C^\infty(\overline{\bbR^d})$. Thus, setting $v_2 \coloneqq e^{ir} \hat{v}_2$ and $u_2 = u_1 + v_2$, 
\begin{equation}
    f_3\coloneqq -Pu_2  \in e^{ir}  r^{-(d+3)/2} C^\infty(\overline{\bbR^d}). 
\end{equation}
Next, \Cref{cor:outgoing_spherical_wave} says that there exists some $v_3\in e^{ir}  r^{-(d+1)/2}C^\infty(\overline{\bbR^d}) $, supported outside of some big ball, such that $Pv_3 - f_3\in \calS(\bbR^d)$. Then, $u_3 \coloneqq u_2+v_3$ satisfies 
\begin{equation}
    f_4\coloneqq -Pu_3  \in \calS(\bbR^d).   
\end{equation}
Finally, the limiting absorption principle gives a $v_4 = \lim_{\varepsilon \to0^+}(P-i\varepsilon)^{-1} f_4 \in e^{i r} r^{-(d-1)/2} C^\infty(\overline{\bbR^d})$ such that $Pv_4 = f_4$. Then, 
\begin{equation} 
    u_4\coloneqq u_3 + v_4 = e^{ix} + v_1 +v_2 + v_3 + v_4 
\end{equation} 
satisfies $Pu_4 = 0$. 

Writing $a = e^{-ix} r^{L-1} v_1$ and $w=e^{-ir} r^{(d-1)/2} (v_2+v_3+v_4)$, we have  
\begin{equation}
        u = e^{ix} \Big( 1 + \frac{a}{r^{L-1}} \Big) + e^{ir} \frac{w}{r^{(d-1)/2}}
\end{equation}
away from the origin, and $a,w$ lie in the desired spaces. 
This completes the construction of the perturbed plane wave.

\subsection{Relation to the Poisson map}
We end by presenting a self-contained proof that the perturbed plane wave constructed above is (up to a constant of proportionality) a slice 
\begin{equation}
    K|_{\omega=\leftarrow} \in C^\infty(\bbR^d)
\end{equation}
of the Schwartz kernel $K\in \calS'(\bbS^{d-1}_\omega\times \bbR^d) $ of the Poisson map $\Pi:C^\infty(\bbS^{d-1})\to C^\infty(\bbR^d)$. This will justify calling our perturbed plane wave \emph{the} perturbed plane wave. A byproduct is a proof that the Schwartz kernel $K$ can actually be restricted to a single incoming direction $\omega\in \bbS^{d-1}$.

\begin{proposition} \label{prop:Poisson}  
Set $C=e^{\pi i(d-1)/4} (2\pi)^{(d-1)/2}$.
    Let $u[\omega]$ denote the perturbed plane wave with incoming direction $\omega\in \bbS^{d-1}$. Then, for any $g\in C^\infty(\bbS^{d-1})$, 
    \begin{equation}
        \Pi g = \frac{1}{C} \int_{\bbS^{d-1}}  u[\omega](x,\bfy) g(\omega) \dd \omega .
    \end{equation}
\end{proposition}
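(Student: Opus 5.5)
Let $w_g \coloneqq C^{-1}\int_{\bbS^{d-1}} u[\omega]\, g(\omega)\dd\omega$. We will show that $w_g$ is a Helmholtz solution satisfying the characterization \cref{eq:poisson} of $\Pi g$, with incoming datum $b_-=g$. Uniqueness in Melrose's theory then gives $w_g=\Pi g$.

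The construction of \S\ref{s:complete_proof} is rotation-covariant. Running it with incoming direction $\omega$ (replacing $e^{ix}$ by $e^{-i\omega\cdot z}$ or $e^{i\omega\cdot z}$ according to the orientation convention) gives $u[\omega]$ smoothly in $\omega$ away from the origin. The reason is that all choices (cutoffs, Borel sums, the recursion) can be made by rotating a fixed model. The only exception is the coefficients $V,a^{ij},b^j$, which are not rotation invariant, but these enter only smoothly. Hence $Pw_g=0$ follows by differentiating under the integral.

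We next split $u[\omega]=e^{i\omega'\cdot z}+v_1[\omega]+(v_2+v_3+v_4)[\omega]$ and integrate each piece against $g$.
\begin{itemize}
\item \emph{Free plane wave.} Stationary phase in $\omega$ at the two critical points $\pm z/|z|$ gives
\[
\int e^{i\omega'\cdot z}g\dd\omega \;=\; (2\pi)^{(d-1)/2}r^{-(d-1)/2}\big(e^{\mp i\pi(d-1)/4}e^{-ir}g(\cdot)+e^{\pm\ldots}e^{ir}g(\cdot)\big)+O(r^{-(d+1)/2}).
\]
The incoming coefficient is exactly $C\,g$ evaluated at the appropriate antipodal point, which explains the constant $C$.
\item \emph{Plane wave corrections $v_1$.} Here $v_1=e^{ix}\tilde v_1$ with $\tilde v_1\in\rho_{\mathrm{bf}}^{L-1}\rho_{\mathrm{ff}}^{L-1}C^\infty(X)$. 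Integrating in $\omega$ is again a stationary phase with an amplitude that is conormal and singular only at the forward point. This yields $r^{-(d-1)/2}e^{\pm ir}$ times an amplitude with an extra $L-1\ge1$ orders of decay, plus a contribution from the forward singularity. The forward singularity produces only outgoing oscillation $e^{ir}$, since at the forward point $e^{ix}$ matches $e^{ir}$.
\item \emph{The pieces $v_2,v_3,v_4$.} These are outgoing, i.e.\ of the form $e^{ir}r^{-(d-1)/2}w$, with $w$ integrable in $\omega$. After integration they still have scattering wavefront set in $\calR_+$, so they are harmless for \cref{eq:poisson}.
\end{itemize}

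The main obstacle is the uniformity of these $\omega$-integrals near the forward singularity, which lies on the ff face of the $\omega$-dependent space $X_\omega$. There the polyhomogeneous expansions of $w[\omega]$ involve the indices $\calF\subset\bbZ^{\ge L-d}\times\bbN$, and these may be non-integrable pointwise when $L<d$.

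The cleanest route is microlocal. It suffices to show that $\operatorname{WF}_{\mathrm{sc}}(w_g-C^{-1}\int e^{i\omega'\cdot z}g\dd\omega)$ is disjoint from $\calR_-$, with decay above threshold there, i.e.\ in $\langle r\rangle^{-1/2-\epsilon}$-weighted spaces microlocally near $\calR_-$. For each fixed $\omega$, the correction $u[\omega]-e^{i\omega'\cdot z}$ is microlocally $O(r^{-(L-1)})$ near $\calR_-$, since it lies in $e^{i\omega'\cdot z}\rho_{\mathrm{bf}}^{L-1}$ away from the forward point. The forward point of $X_\omega$ projects into $\calR_+$, not $\calR_-$. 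These microlocal estimates are uniform in $\omega$, because the construction is smooth in $\omega$. Integrating over $\omega$ then preserves the estimate with a smooth test function $g$.

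Finally, Melrose's uniqueness theorem \cite{Me94} applies: a Helmholtz solution that is incoming with datum $b_-=g$ modulo a term above threshold at $\calR_-$ equals $\Pi g$. If needed, the remainder can instead be shown to equal $R(1+i0)$ applied to an error in its domain, hence it is outgoing and must vanish. The orientation $\omega\leftrightarrow-\omega$ implicit in "incoming direction" must be tracked so that the sign in $C=e^{\pi i(d-1)/4}(2\pi)^{(d-1)/2}$ comes out right.
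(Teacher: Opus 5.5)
Your decomposition (free plane wave, plane-wave correction, outgoing part) followed by uniqueness for the Sommerfeld problem matches the paper's skeleton. However, the step carrying all the difficulty is asserted rather than proved. The hard term is the $\omega$-integral of the plane-wave correction near the forward submanifold, $r^{-(L-1)}\int e^{-i\omega\cdot z}a_{\rightarrow}g\,\dd\omega$. There $a_{\rightarrow}\sim\rho_{\mathrm{ff}}^{-(L-1)}$ is singular exactly at the stationary point of the phase, so stationary phase does not apply. Your statement that ``the forward singularity produces only outgoing oscillation'' is a heuristic, not an estimate.

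Your microlocal substitute does not close this gap, for two reasons.
\begin{itemize}
\item For fixed $\omega$, near the backward point of $\calR_-$ the correction is an $r^{-(L-1)}$ plane wave. This lies in $\langle r\rangle^{1/2-\epsilon}L^2$ microlocally only if $L>(d+1)/2$. So for $2\le L\le(d+1)/2$ (which includes the inverse-square example) the per-$\omega$ bound is \emph{below} threshold. Integration in $\omega$ must therefore \emph{improve} it by $(d-1)/2$ orders, not merely ``preserve'' it.
\item You claim the sc-wavefront bounds are uniform in $\omega$ ``because the construction is smooth in $\omega$.'' That smoothness holds on $X_{\mathrm{ext}}$, a parabolic blowup along the moving forward submanifold, not on $\overline{\bbR^d}\times\bbS^{d-1}$. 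Showing that integrating such a kernel against $g$ produces nothing at $\calR_-$, with quantitative decay, is exactly what must be proved.
\end{itemize}

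The paper does this by hand. It splits $a=a_\circ+a_\rightarrow$ on $X_{\mathrm{ext}}$ and treats $a_\circ$ by ordinary stationary phase, gaining $(d-1)/2$ orders. For $a_\rightarrow$ it fixes $z/r$, parametrizes $\omega$ near the forward point by corner coordinates, and reduces to $x^{-(L-1)}\int_0^\infty e^{ix/\sqrt{1+\rho^2}}\beta(\rho,\sqrt{1+\rho^2}/(\rho^2x))\rho^{d-L-1}\dd\rho$. \Cref{lem:Poisson_helper} (repeated integration by parts, trading $\rho^2$ for $x^{-1}$) shows that this and all its $x$-derivatives are $O(x^{-(d-2+L)/2+\epsilon})$. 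That is at least $1/2-\epsilon$ orders better than the Sommerfeld threshold, so its oscillation never has to be identified.

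Your worry about the outgoing part is unfounded. Since $\calF\subset(\bbN\times\{0\})\cup(\bbZ^{\ge L-d}\times\bbN)$ and $L\ge2$, one has $|w[\omega]|\lesssim\theta^{-(d-2)-\epsilon}$. This is integrable against $\theta^{d-2}\dd\theta$, so $w\in L^\infty_zL^1_\omega$. Because $r\partial_r$ is a b-vector field on $X$, the same holds for $r\partial_r w$. Computing $(i\partial_r+1)$ of that term directly then gives $O(r^{-(d+1)/2})$. Your claim that this part has sc-wavefront set in $\calR_+$ after integration would need angular regularity of $\int w[\omega]g\,\dd\omega$ that you have not established, and it is not needed.
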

\begin{proof}
    The proof will use only those properties of the perturbed plane wave stated in \Cref{thm:main_black-box}, besides sufficiently smooth dependence on the incoming direction. The latter follows from the proof, since the construction of the coefficients in the transport/model inverse steps, as well as the Borel summation process, are both continuous in the smooth parameter $\omega$, with uniform seminorm estimates.

   In the proof below, we use the space
       \begin{equation}
        X_{\mathrm{ext}} = [\overline{\bbR^d_z}\times \bbS^{d-1}_\omega; \rightarrow]_{\mathrm{par}}.
    \end{equation}
    This is an appropriate quasihomogeneous blowup of the forward submanifold 
    \begin{equation}
    \rightarrow=\{ (\infty\omega',\omega)\in \infty\bbS^{d-1} \times \bbS^{d-1}:\omega'=-\omega\}
    \end{equation}
    (note the minus sign arises since we consider $\omega$ as the \emph{incoming} direction). For a fixed $\omega$, the corresponding $d$-dimensional slice of $X_{\mathrm{ext}}$ is just the space $X$ defined in \eqref{eq:1205}, up to rotating the forward direction to $-\omega$.
    Then \Cref{thm:main_black-box} gives 
    \begin{equation}
        u[\omega]=e^{-i\omega \cdot z}  + e^{-i\omega \cdot z}\rho_{\mathrm{bf}}^{L-1}\rho_{\mathrm{ff}}^{L-1}C^{\infty}(X_{\mathrm{ext}} )  +  \frac{e^{ir}}{r^{(d-1)/2}}\calA^{(0,0),\calF}(X_{\mathrm{ext}}),\quad z=(x,\bfy) 
    \end{equation}
    (away from the origin).

    Let 
    \begin{equation} 
        \psi = \frac{1}{C} \int_{\bbS^{d-1}}  u[\omega](x,\bfy) g(\omega) \dd \omega.
    \end{equation} 
    This is a function of $z\in \bbR^d$ (which we keep implicit in the notation).
    Differentiating under the integral sign, we see that $\psi\in \calS'$ is a solution of the Helmholtz equation $P\psi=0$. To prove that $\psi = \Pi g$, we use the following characterization of $\Pi g$: it is the unique tempered solution of the Helmholtz equation (outside the black-box, and satisfying the black-box condition) such that the difference 
    \begin{equation}
        \Psi\coloneqq \psi - \frac{e^{-ir} }{r^{(d-1)/2}} g \in C^\infty(\bbR^d\backslash \{0\})  
    \end{equation} 
    satisfies the Sommerfeld radiation condition 
    \begin{equation}
        \exists \varepsilon>0\text{ s.t. }
       r^{ \varepsilon+(d-1)/2 }  (i\partial_r + 1) \Psi \in L^\infty(\{r>1\} )  .
    \end{equation}

    Now split $C\psi = I_1+I_2+I_3$,
    where 
    \begin{equation}
        I_1=\int_{\bbS^{d-1}}  e^{-i\omega\cdot z } g(\omega) \dd \omega,\quad  
        I_2=\int_{\bbS^{d-1}}  \frac{e^{-i\omega\cdot z }}{r^{L-1}} a   g(\omega) \dd \omega,\quad 
        I_3= \frac{e^{ir}}{r^{(d-1)/2}} \int_{\bbS^{d-1}}  w[\omega]g(\omega) \dd \omega, 
    \end{equation}
    where $a,w$ are as in \Cref{thm:main_black-box}. 
    We analyze each of $I_1,I_2,I_3$ in turn. 
    First consider $I_1$. This is the solution of the \emph{free} Helmholtz equation with incoming data $g$. Its analysis is standard, proceeding via stationary phase; see e.g.\ \cite[\S1]{MelroseGeometric}. For instance, 
     \begin{equation}
     \int_{\bbS^{d-1}}e^{-i \omega \cdot z}g(\omega)\dd\omega=C r^{-\frac{d-1}{2}}e^{-ir}g\Big( \frac{z}{r} \Big)+ \langle r \rangle^{-(d+1)/2} e^{-i\langle r \rangle} C^\infty(\overline{\bbR^d}) + \langle r \rangle^{-(d-1)/2} e^{i\langle r \rangle} C^\infty(\overline{\bbR^d}).
    \end{equation}
    This shows that $I_1 - Ce^{-ir}r^{-(d-1)/2 }g$ satisfies the Sommerfeld radiation condition. 
    Next, we will show that $I_2,I_3$ each satisfy the radiation condition 
    \begin{equation} \label{eq:Sommerfeld_sub}
        r^{ \varepsilon+(d-1)/2 }  (i\partial_r + 1) I_j \in L^\infty(\{r>1\} ) 
    \end{equation}
    individually. 
    This will complete the proof.

    We begin with $I_3$. Because $(i\partial_r +1)e^{ir}=0$, 
    we have 
\begin{equation}\label{eq:I3_Sommerfeld}
       -i (i\partial_r+1)I_3= - \frac{d-1}{2} \frac{e^{ir}}{r^{(d+1)/2}} \int_{\bbS^{d-1}}  w[\omega]g(\omega) \dd \omega + \frac{e^{ir}}{r^{(d+1)/2}} \int_{\bbS^{d-1}}  (r\partial_r w[\omega])g(\omega) \dd \omega.
    \end{equation}
    Because $w\in \calA^{(0,0),\calF}(X_{\mathrm{ext}})$ for $\calF\subset(\bbN\times\{0\})
\cup(\bbZ^{\ge L-d}\times\bbN)$, 
    \begin{equation}
        w[\omega] \in L^\infty (\bbR^d_z; \theta^{\min\{0,L-d\}-} L^\infty(\bbS^{d-1}_\omega )) \subset  L^\infty (\bbR^d_z;L^1(\bbS^{d-1}_\omega ))
    \end{equation}
    away from the origin. Here, $\theta=\theta(\omega)$ is the polar angle between $z$ and the forward direction. Indeed, since $L\geq 2$, the worst possible singularity occurs for $L=2$, where it is $O(\theta^{-(d-2)-\varepsilon} )$ for any $\varepsilon>0$. In spherical coordinates on $\bbS^{d-1}$, the volume element carries a factor of $\theta^{d-2}$, so 
    \begin{equation} 
        \theta^{-(d-2)-\varepsilon} \in L^1(\bbS^{d-1})
    \end{equation}
    for $\varepsilon$ small enough. 
    From $w[\omega] \in L^\infty (\bbR^d_z;L^1(\bbS^{d-1}_\omega ))$, we conclude 
    \begin{equation}
        \int_{\bbS^{d-1}}  w[\omega]g(\omega) \dd \omega  \in L^\infty(\bbR^d_z) 
    \end{equation}
    (away from the origin). Therefore, the first term on the right-hand side of \cref{eq:I3_Sommerfeld} has the desired size. 
    The other term on the right-hand side is estimated analogously; because $r\partial_r \in \operatorname{Diff}_{\mathrm{b}}^{1,0,0}(X)$,
    \begin{equation} 
        r\partial_r w[\omega] \in \calA^{(0,0),\calF}
    \end{equation} 
    as well. Thus, the estimate used to control the first term on the right-hand side of \cref{eq:I3_Sommerfeld} also works for the second term.

    Finally, consider $I_2$. 
    We show that this will satisfy $ \partial_r I_2 , I_2 \in r^{-(\varepsilon+(d-1)/2)}L^\infty(\{r>1\} )$ for some $\varepsilon>0$.
    This shows that it will satisfy
    the radiation condition without needing to take into account any cancellations between the two summands in $(i\partial_r+1) I_2$. 
    Using a partition of unity, break up 
    \begin{equation} 
        a=a_{\circ}+a_{\rightarrow}
    \end{equation} 
    into a portion $a_\circ \in C^\infty(\overline{\bbR^d_z}\times \bbS^{d-1}_\omega)$ supported away from the forward submanifold $\rightarrow$ and a remainder supported near it. We may assume without loss of generality that $a$ is supported away from the midpoint of $\mathrm{ff}$ as in the construction above\footnote{Note that we may trade any portion of $a$ with $w$ near the midpoint.}, so assume the same of $a_\rightarrow$. Then, $a_\rightarrow$ is supported near the corner $\mathrm{bf}\cap\mathrm{ff}$ of $X_{\mathrm{ext}}$. We have 
    \begin{equation}
        I_2 = \underbrace{\frac{1}{r^{L-1}} \int_{\bbS^{d-1}} e^{-i\omega\cdot z} a_\circ g(\omega) \dd \omega}_{I_{2,\circ}}+\underbrace{\frac{1}{r^{L-1}} \int_{\bbS^{d-1}} e^{-i\omega\cdot z} a_\rightarrow g(\omega) \dd \omega}_{I_{2,\rightarrow}} 
    \end{equation}
    The stationary phase analysis used to control $I_1$ works verbatim for $I_{2,\circ}$, because $a_\circ \in C^\infty(\overline{\bbR^d_z}\times \bbS^{d-1}_\omega)$. 
    In fact, the situation is now better because of the prefactor of $1/r^{L-1}$. Thus, 
    \begin{equation}
        I_{2,\circ} \in e^{ir} r^{-L-(d-3)/2} C^\infty(\overline{\bbR^d}) + e^{-ir} r^{-L-(d-3)/2} C^\infty(\overline{\bbR^d})
    \end{equation}
    away from the origin. Since $L\geq 2$, this piece of $I_2$ satisfies the Sommerfeld condition \cref{eq:Sommerfeld_sub}, with $1-\varepsilon$ orders to spare. 
    
    In order to analyze the remaining piece, we rewrite $a_{\rightarrow}$ in coordinates adapted to the geometric structure of $X_{\mathrm{ext}}$ near its corner. It suffices to prove the Sommerfeld condition for fixed direction $z/r\in \bbS^{d-1}$, as long as the estimates are uniform in this angle (as they will be). For notational simplicity, fix
    the direction $z/r$ to be $(1,\bf0)$, in which case we can write
    $z=(x,\bf0)$, $x>0$.  
    For this fixed angle, the remaining coordinates on $X_{\mathrm{ext}}$ are $x>0$ and the wave's incoming direction $\omega\in \bbS^{d-1}$; note that since $a_{\rightarrow}$ is supported near $\rightarrow$, it suffices to consider 
    \begin{equation} 
    \omega\approx(-1,\bf0).
    \end{equation} 
    We can form the new coordinate $w=-x \omega \in \bbR^d$. Then, the slice $\{z=(x,\bf0)\}$ of $X_{\mathrm{ext}}$ 
    is naturally identified with 
    \begin{equation}
        X\hookleftarrow \bbR^d_{w}.  
    \end{equation}
    Note the change in perspective: in the body of this paper, we considered $\omega$ as fixed and $X$ as the compactification of the space of possible $z=(x,\bfy)$'s. Instead, here we consider the direction $z/r$
    as fixed and consider $X$ as the compactification of the space of possible pairs $(r,\omega)$.

    Write $w=(w',\bfw)$ for $w'>0$ and $\bfw\in \bbR^{d-1}$. 
    Then, 
    \begin{equation}
        \rho=\frac{|\bfw|}{w' }\geq 0,\quad \varrho=\frac{w'}{|\bfw|^2}\geq 0,\quad \phi=\frac{\bfw}{|\bfw|}\in \bbS^{d-2} 
    \end{equation}
    serve as valid coordinates near the corner. This means that, for fixed $z/r$, 
    \begin{equation}
        a_{\rightarrow} = \rho^{-L+1}\alpha(\rho,\varrho,\phi) \text{ for some }\alpha \in C^\infty_{\mathrm{c}}([0,\infty)_\rho \times [0,\infty)_{\varrho}\times \bbS^{d-2}_\phi  ). 
    \end{equation}
    The two coordinates $\rho,\phi$ depend on $\omega$ alone, not on $x$. We can use them to parametrize $\bbS^{d-1}_\omega$ near the forward direction, where $a_\rightarrow$ is supported. Then, for some smooth factor 
    \begin{equation} 
        J\in C^\infty_{\mathrm{c}}([0,\infty)_\rho \times [0,\infty)_{\varrho}\times \bbS^{d-2}_\phi  )
    \end{equation} 
    coming from the Jacobian of the coordinate transformation, $\dd \omega = J \rho^{d-2} \dd \rho \dd \phi$,
    where $\dd \phi$ denotes the usual measure on $\bbS^{d-2}$. Hence, 
    since $e^{-i \omega\cdot z} = e^{i/ (\varrho \rho^2 )}$, we can write 
    \begin{equation}
        I_{2,\rightarrow} = \frac{1}{x^{L-1}} \int_{\bbS^{d-2}} \int_{0}^\infty  e^{i/(\varrho \rho^2) } \alpha(\rho,\varrho,\phi) g(\omega(\rho,\phi)) J \rho^{d-L-1}  \dd \rho  \dd \phi.
    \end{equation}
    We can do the integral over $\phi$ first: letting 
    \begin{equation}
        \beta(\rho,\varrho) = \int_{\bbS^{d-2}} \alpha(\rho,\varrho,\phi) g(\omega(\rho,\phi)) J \dd \phi \in C_{\mathrm{c}}^\infty([0,\infty)_\rho \times [0,\infty)_\varrho), 
    \end{equation}
    we have 
    \begin{equation}
        I_{2,\rightarrow} = \frac{1}{x^{L-1}} \int_0^\infty e^{i/(\varrho \rho^2)} \beta(\rho,\varrho) \rho^{d-L-1} \dd \rho .
    \end{equation}
    Here, $\varrho=\varrho(x,\rho)$ depends on $x$ and the integration variable $\rho$: 
    \begin{equation}
        \varrho = \frac{w'}{|\bfw|^2} = \frac{1}{\rho x} \sqrt{1+ \frac{1}{\rho^2}} = \frac{\sqrt{1+\rho^2}}{\rho^2 x} .
    \end{equation}
    So, 
    \begin{equation}
        I_{2,\rightarrow} = \frac{1}{x^{L-1}} \int_0^\infty e^{\frac{ix}{\sqrt{1+\rho^2}}} \beta\left(\rho, \frac{\sqrt{1+\rho^2}}{\rho^2 x}  \right) \rho^{d-L-1} \dd \rho .
    \end{equation}
    Note that, because $\beta$ has compact support, the integral is finite. 
    Integrals of the form above are investigated in \Cref{lem:Poisson_helper}, which gives a $O(1/x^{(d-2+L)/2-\varepsilon})$ 
    bound for $I_{2,\rightarrow}$ and its $x$-derivative. Here $\varepsilon>0$ is arbitrarily small. So, $I_{2,\rightarrow}$ satisfies the desired radiation condition, with $1/2-\varepsilon$ orders to spare. 
\end{proof}

\begin{lemma}\label{lem:Poisson_helper}
    Consider an integral of the form
    \begin{equation}
        \calI =
        \frac{1}{x^j} \int_0^\infty e^{\frac{ix}{\sqrt{1+\rho^2}}} \beta\left(\rho, \frac{\sqrt{1+\rho^2}}{\rho^2 x}  \right) \rho^{k} \dd \rho
    \end{equation} 
    for $j\in \bbR$, $k\in \bbZ$, and $\beta\in C_{\mathrm{c}}^\infty([0,\infty)^2)$.This obeys the estimate
    \begin{equation}
        |\partial_x^m \calI| = O \Big( \frac{1}{x^{j+ (k+1)/2  -\varepsilon}} \Big) \text{ as }x\to\infty ,
    \end{equation}
    for any $m\in \bbN$ and $\varepsilon>0$. 
\end{lemma}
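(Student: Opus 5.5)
The plan is to rescale the integration variable to the natural parabolic scale $\rho\sim x^{-1/2}$ of the stationary point of the phase at $\rho=0$, and then integrate by parts in the rescaled variable, uniformly in $x$. Write $\varphi(\rho)=(1+\rho^2)^{-1/2}$. Then $\varphi'(\rho)=-\rho(1+\rho^2)^{-3/2}$, so the only critical point on $\operatorname{supp}\beta$ is the nondegenerate one at $\rho=0$. The support condition on $\beta$ in its second slot forces $\sqrt{1+\rho^2}/(\rho^2x)\le C$, i.e.\ $\rho\ge c\,x^{-1/2}$. Compact support in the first slot gives $\rho\le C$. So the integral lives on $\{c x^{-1/2}\le\rho\le C\}$, and the stationary point itself is excised.

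Substitute $\rho=x^{-1/2}s$. This gives
\begin{equation}
    \calI = x^{-j-(k+1)/2}\int_0^\infty e^{ix\varphi(x^{-1/2}s)}\, b_x(s)\, s^k \dd s,\qquad b_x(s)=\beta\Big(x^{-1/2}s,\ \frac{\sqrt{1+s^2/x}}{s^2}\Big).
\end{equation}
It then suffices to show that the remaining integral is $O(x^{\varepsilon})$, in fact $O(1)$, uniformly as $x\to\infty$. On the support we have $c\le s\le Cx^{1/2}$. The $s$-derivative of the phase is
\begin{equation}
    \Phi_s := x^{1/2}\varphi'(x^{-1/2}s) = -s\,(1+s^2/x)^{-3/2},
\end{equation}
and $|\Phi_s|\simeq s$ there, since $s^2/x$ is bounded.

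The key estimate is that the amplitude $b_x(s)s^k$ is a symbol of order $k$ in $s$, uniformly in $x$, that is, $|\partial_s^\ell(b_x s^k)|\lesssim s^{k-\ell}$. A derivative falling on the first slot of $\beta$ produces $x^{-1/2}\lesssim s^{-1}$. A derivative falling on the second slot produces $O(s^{-3})$. The factor $(1+s^2/x)^{-3/2}$ in $\Phi_s$ is a symbol of order $0$ in $s$ for the same reason.

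Integrate by parts $N$ times with $\mathcal{L}=(i\Phi_s)^{-1}\partial_s$, using $\mathcal{L}e^{ix\varphi}=e^{ix\varphi}$. There are no boundary terms: the amplitude vanishes near $s=c$ and for $s$ beyond $Cx^{1/2}$. Each application of the transpose of $\mathcal{L}$ lowers the symbol order by $2$. Hence the integrand is bounded by $s^{k-2N}$ on $[c,\infty)$, which is integrable once $2N>k+1$, with a bound independent of $x$. This gives $\calI=O(x^{-j-(k+1)/2})$, so the stated $\varepsilon$-loss is more than enough.

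For $\partial_x^m\calI$ the plan is to differentiate in the original $\rho$-variable before rescaling, and to check that the class of integrands is preserved. A derivative on $x^{-j}$ only improves the power of $x$. A derivative on the phase multiplies the amplitude by the bounded smooth factor $i\varphi(\rho)$. A derivative on the second slot of $\beta$ gives $-x^{-1}\cdot\frac{\sqrt{1+\rho^2}}{\rho^2x}\,(\partial_2\beta)$, which is $x^{-1}$ times a function of the same form; the extra factor is harmless because the second slot is bounded on the support. So $\partial_x^m\calI$ is a finite sum of integrals of the same type, with $\beta$ replaced by functions of the form $\tilde\beta(\rho,\cdot)$ compactly supported in both slots and smooth in $\rho$, and the first step applies to each.

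The step I expect to need the most care is the uniform symbol estimate for $b_x(s)s^k$ near both ends of the support, where the two slots of $\beta$ interact. Only the ratio $x^{-1/2}\lesssim s^{-1}$ from the support makes derivatives in the first slot symbolic. If a cruder bound were needed there, one would lose a power of $\log x$ or $x^{\varepsilon}$ in the resulting estimate, which the lemma's $\varepsilon$ allows.
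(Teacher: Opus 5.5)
Your proof is correct and uses the same mechanism as the paper. You integrate by parts against $\partial_\rho$ of the phase $x(1+\rho^2)^{-1/2}$, and the support condition in the second slot of $\beta$ excises the stationary point $\rho=0$ at scale $\rho\gtrsim x^{-1/2}$. The difference is organizational. The paper stays in the variable $\rho$ and integrates by parts one step at a time. It tracks the exponents through the moves $(j,k)\to(j+1,k-2)$ and $(j+2,k-4)$, which preserve $j+k/2$, and inducts down to base cases $k\in\{-4,\dots,-1\}$ handled by the crude bound $x^{-j}\int_{cx^{-1/2}}^{C}\rho^{k}\,\mathrm{d}\rho$.

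You instead rescale to the parabolic variable $s=x^{1/2}\rho$. There the amplitude is a symbol of order $k$ uniformly in $x$ and the phase gradient is $\simeq s$, so all integrations by parts are done at once by standard non-stationary phase. This makes the uniformity in $x$ explicit rather than implicit in the induction.

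It also gives a slightly sharper result, $O(x^{-j-(k+1)/2})$ with no $\varepsilon$-loss. In the paper the $\varepsilon$ comes only from the base case $k=-1$, where $\int_{cx^{-1/2}}^{C}\rho^{-1}\,\mathrm{d}\rho\sim\log x$; one more integration by parts at that step would remove it there too.

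Your reduction of $\partial_x^m\mathcal{I}$ is equivalent to the paper's. You write the derivative landing on the second slot as $x^{-1}\,\tau\partial_\tau\beta(\rho,\tau)$, which keeps $k$ and raises $j$ by one. This is the same term the paper records as $(j,k)\to(j+2,k-2)$.
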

\begin{proof}
    Note that 
    \begin{multline}
        \partial_x \calI = -\frac{j}{x} \calI+ \frac{i}{x^{j}} \int_0^\infty e^{\frac{ix}{\sqrt{1+\rho^2}}} \frac{1}{\sqrt{1+\rho^2}}  \beta\left(\rho, \frac{\sqrt{1+\rho^2}}{\rho^2 x}  \right) \rho^{k} \dd \rho \\ - \frac{1}{x^{j+2}}  \int_0^\infty e^{\frac{ix}{\sqrt{1+\rho^2}}} \sqrt{1+\rho^2}  (\partial_2\beta)\left(\rho, \frac{\sqrt{1+\rho^2}}{\rho^2 x}  \right) \rho^{k-2} \dd \rho, 
    \end{multline}
    where $\partial_j$ is the derivative in the $j$th slot. 
    The right-hand side is a linear combination of integrals of the same form as $\calI$, with different  $\beta\in C_{\mathrm{c}}^\infty([0,\infty)^2)$.
    The first two terms have the same $j,k$ as $\calI$ itself, and the last has $j+2$ in place of $j$ and $k-2$ in place of $k$: 
    \begin{equation}
        (j,k) \to (j+2,k-2).
    \end{equation}
    Considering that the desired estimate depends on $j+k/2$, this means the last term should have better decay than $\calI$ itself. This sets up an inductive argument where the lemma statement with $m\geq 1$ derivatives follows from the lemma statement with $m-1$ derivatives. Thus, it suffices to prove the $m=0$ case. This is the goal of the rest of the proof. 
    
    We proceed via induction on $k$, taking $k\in \{-1,-2,-3,\cdots\}$ as separate base cases.  
    Choose $C>c>0$ such that $\operatorname{supp} \beta \subset [0,C]\times [0,1/\sqrt{c}]$. Suppose $k\leq -1$. Then, for $x>1$, we have 
    \begin{equation}
        |\calI| \leq \frac{\lVert \beta \rVert_{L^\infty} }{x^j} \int_{c/x^{1/2}}^C \rho^k \dd \rho = 
        O\bigg( \frac{1}{x^{j+\frac{k+1}{2} -\varepsilon}}\bigg)   
    \end{equation}
    for the $k$ above. The $\varepsilon$ shift in the exponent is just to handle the logarithmic loss when $k=-1$. So, the estimate holds in this case. 
    
    Now suppose $k\geq 0$. 
    To exhibit the decay of the integral above in the $x\to\infty$ limit, we integrate by parts, using 
    \begin{equation}
        e^{\frac{ix}{\sqrt{1+\rho^2}}} = -\frac{(1+\rho^2)^{3/2}}{ix \rho }  \frac{\partial}{\partial \rho}  e^{\frac{ix}{\sqrt{1+\rho^2}}} ,
    \end{equation}
    which gives 
\begin{equation}
        \calI = \frac{i}{x^{j+1}} \int_0^\infty \Big( \frac{\partial}{\partial \rho} e^{\frac{ix}{\sqrt{1+\rho^2}}} \Big)   (1+\rho^2)^{3/2} \beta \left(\rho, \frac{\sqrt{1+\rho^2}}{\rho^2 x} \right) \rho^{k-1} \dd \rho .
    \end{equation}
    Now integrate by parts. There are no boundary terms, owing to the compact support of $\beta$ in both slots (note for any $x>0$ that the second argument goes to $\infty$ as $\rho\to 0^+$). We may therefore write 
    \begin{equation}
        \calI = \calI_0+\calI_1+\calI_2+\calI_3, 
    \end{equation}
    where 
    \begin{align}
    \begin{split} 
        \calI_0 &= \frac{k-1}{ix^{j+1}} \int_0^\infty  e^{\frac{ix}{\sqrt{1+\rho^2}}}  (1+\rho^2)^{3/2}\beta \left(\rho,\frac{\sqrt{1+\rho^2}}{\rho^2 x}  \right) \rho^{k-2} \dd \rho, \\
        \calI_1 &=- \frac{i}{x^{j+1}} \int_0^\infty  e^{\frac{ix}{\sqrt{1+\rho^2}}}  \rho (\partial_1 \check{\beta} )\left(\rho, \frac{\sqrt{1+\rho^2}}{\rho^2 x} \right) \rho^{k-2} \dd \rho, \\
        \calI_2 &= \frac{2i}{x^{j+2}} \int_0^\infty  e^{\frac{ix}{\sqrt{1+\rho^2}}}   (1+\rho^2)^2  (\partial_2 \beta)\left(\rho, \frac{\sqrt{1+\rho^2}}{\rho^2 x} \right) \rho^{k-4} \dd \rho, \\
        \calI_3 &= -\frac{i}{x^{j+2}} \int_0^\infty  e^{\frac{ix}{\sqrt{1+\rho^2}}}   \rho^2 (1+\rho^2) (\partial_2 \beta) \left(\rho, \frac{\sqrt{1+\rho^2}}{\rho^2 x} \right) \rho^{k-4} \dd \rho,
        \end{split} 
    \end{align}
    where $\check{\beta}= (1+\rho^2)^{3/2} \beta$.
    Each of these integrals has the same form as our original integral. The numerology is that we trade two factors of $\rho$ in the integrand for every factor of $x^{-1}$ out front. So $k$ decreases by two and $j$ increases by one: 
    \begin{equation}
        (j,k)\to (j+1,k-2)\text{ or }(j+2,k-4)
    \end{equation}
    preserving the sum $j+k/2$, which means the desired estimate for $\calI$ follows from those for $\calI_\bullet$.  
    This sets up the induction. Among the $\calI_\bullet$ above, we have decreased $k$ by at most $4$, so the base cases $k=-4,-3,-2,-1$ suffice to reduce all $k\geq 0$. 
\end{proof}

In \cite[\S14]{MZ} appears a microlocal argument fulfilling an analogous role to that above, and covering the result above as a special case.

\section{Theorem with black-box}
\label{sec:black-box}
Let $P$ be as in \S\ref{subsec:asymptotically_Euclidean}. Suppose we wish to modify $P$ in some compact subset in a manner leaving the class of operators considered thus far. This could mean allowing the coefficients to have singularities, or it could mean introducing some boundary conditions on some hypersurface. It could mean something else entirely. To state and prove a theorem in this generality, we utilize a custom ``black-box'' formalism.

Let $U\subset \bbR^d$ denote a bounded open set and $A\subset U$ an open subset, such that $U\backslash A \Subset U$. We think of $B=U\backslash A$ as the black-box. Then, we study the PDE  
\begin{equation}\label{eq:bbsteup}
    Pu=f 
\end{equation}
on $U^\complement \cup A = \bbR^d\backslash B$, that is outside of the black-box. We do not modify $P$; it does not take into account whatever we envision as taking place within the black-box. Instead, we formalize the restriction the latter places on $u|_A$, that is on the behavior of $u$ in the ``matching region'' $A$. 
We require that $u|_A$ lies within a specified set of functions which we consider \emph{admissible}.
If $u|_A$ is admissible, then we say that $u$ satisfies the \emph{black-box condition}.

One should imagine $A$ as an annulus around the black-box (see \Cref{fig:black-box_sets}) and the black-box condition as enforcing, based purely on behavior outside the black-box, that $u$ extend to be well-behaved within the black-box. This formalism is agnostic to where the restriction on $u|_A$ is coming from --- for example, this might entail solving some PDE $P_{\mathrm{true}}u=0$ on $U$, where $P_{\mathrm{true}}$ is a differential operator with $P|_{B^\complement} = P_{\mathrm{true}}|_{B^\complement}$. It could entail that $u$ extends to a solution of the Helmholtz operator on some manifold in which Euclidean space has been modified by introducing some non-trivial topology.

\begin{remark*}
    It should be emphasized that although $P$ is defined globally on $\bbR^d$, the values of its coefficients within $B$ are irrelevant and only stipulated to exist for convenience.
\end{remark*}

\begin{figure}[!htbp]
    \centering
\begin{tikzpicture}[scale=.8]

\tikzset{
  Ustyle/.style={thin, dashed},
  Astyle/.style={dashed, pattern=dots},
  Bstyle/.style={black!10},
  Outstyle/.style={fill=white},
}

\fill[Outstyle] (-4,-4) rectangle (4,4);
\draw[dashed] (-4,-4) rectangle (4,4);
\node at (4.5,4.5) {$\bbR^d$};

\begin{scope}
  \clip (-4,-3) rectangle (4,3);
  \fill[white]
    plot[smooth cycle, tension=0.9]
      coordinates {(-3,-0.5) (-2,2) (0.5,2.4) (2.8,1.2) (2.2,-1.8) (0,-2.5)};
\end{scope}

\draw[Ustyle]
  plot[smooth cycle, tension=0.9]
    coordinates {(-3,-0.8) (-2,2.4) (0.8,2.8) (2.8,1.6) (2.2,-2) (0,-2.8)};

\node at (3.2,2.0) {$U$};

\def\Bmin{-1.5}
\def\Bmax{1.5}

\fill[Astyle, even odd rule]
  plot[smooth cycle, tension=0.9]
    coordinates {(-3,-0.8) (-2,2.4) (0.8,2.8) (2.8,1.6) (2.2,-2) (0,-2.8)}
  (\Bmin,\Bmin) rectangle (\Bmax,\Bmax);

\fill[Bstyle] (\Bmin,\Bmin) rectangle (\Bmax,\Bmax);
\draw[dashed] (\Bmin,\Bmin) rectangle (\Bmax,\Bmax);
\node at (0,0) {\small $B$};

\filldraw[fill=white] (0,-0.8) circle (10pt);
\node at (0,-0.8) {$\Omega$};

\node at (-0.8,0.8) {\large $\times$};
\node at (-0.8,1.15) {$s$};

\node at (.7,.4) {$\mathfrak{g}$};
\node[inner sep=0pt]  at (.7,.9)
    {\includegraphics[scale = .04]{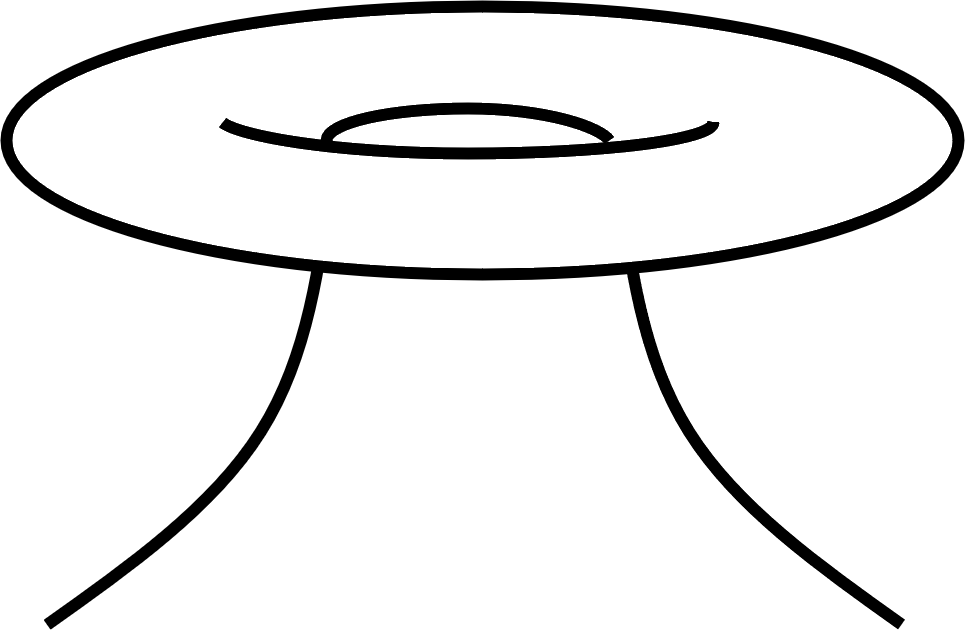}};

\end{tikzpicture}
\caption{Schematic of the black-box setup. The annulus $A=U\backslash B$ is dotted. The black-box region $B$ contains an obstacle $\Omega$, a singularity $s$, and non-Euclidean topology $\mathfrak{g}$.}
\label{fig:black-box_sets}
\end{figure}
In the same way that one can ask about the well-posedness of a PDE with certain boundary conditions imposed, one can ask about well-posedness with black-box conditions imposed. We make the following assumption about well-posedness:
\begin{itemize}
    \item[\hypertarget{WP_asump}{\textbf{(WP)}}] 
    \textit{For every Schwartz $f\in \calS(\bbR^d)$ supported outside the black-box region $U$, there exists a unique solution  
\begin{equation}
    u\in C^\infty( U^\complement \cup A)
\end{equation}
to the inhomogeneous Helmholtz equation $Pu=f$, depending linearly on $f$, such that 
\begin{enumerate}[label=(\roman*)] 
    \item near spatial infinity, $u\in e^{ir} r^{-(d-1)/2} C^\infty(\overline{\bbR^d})$, and $u$ depends smoothly on $f\in \calS$ with respect to this topology,
    \item $u$ satisfies the black-box condition, i.e., $u|_A$ is admissible.
\end{enumerate}
}

\end{itemize}

Here, we are only asking that $Pu=f$ holds in $U^\complement \cup A$ (as $u$ is not even defined on all of $\bbR^d$). So, we are assuming well-posedness of the ordinary Sommerfeld problem with the black-box condition imposed on $A$. 
\begin{remark*}
The requirement (i) is a particularly strong form of the Sommerfeld radiation condition, but Melrose's results in \cite{Me94} show that any weak form of the radiation condition implies the strong form. 
Indeed, choosing 
\begin{equation}
    \chi \in C^\infty(\overline{\bbR^d}), \quad \chi =1\text{ near }\infty\bbS^{d-1},\quad \chi=0\text{ near }\overline{U}, 
\end{equation}
we have 
\begin{equation}
  Pu=f\text{ on }U^\complement\cup A \longrightarrow  P(\chi u) = \chi f+\underbrace{[P,\chi]u}_{\mathclap{\in C_{\mathrm{c}}^\infty\text{ by elliptic regularity}}}\text{ on }\bbR^d,  
\end{equation}
and the radiation condition holds for $\chi u$ if and only if it does for $u$. We may apply the cited paper directly to $\chi u$. 
This means that, in the assumption above, the strong form of the radiation condition can be replaced by any weak form. 

See \Cref{prop:Sommerfeld_reduction} for a version of this. 
\end{remark*}

\begin{example}[Asymptotically Euclidean manifolds]
    Suppose that $(M,g)$ is an asymptotically Euclidean manifold, and consider the Helmholtz(--Beltrami) operator $\triangle_g-1$. 
    By definition, a neighborhood of $\partial M$ can be identified with a neighborhood 
    \begin{equation} 
        O \subset \overline{\bbR^d}
    \end{equation} 
    of $\infty \bbS^{d-1}$. There, $\triangle_g-1$ has the form $P$ above. 
    Take $U\Supset O^\complement$ to be a large open ball in $\bbR^d$ and $A= \smash{U\backslash O^\complement}$ to be an annular region. Note that $A$ is identified with a region in $M$. 
    Then, we say that $u$ is admissible if it can be extended to a Helmholtz solution on all of $M$.
    In \cite[Prop.\ 12]{Me94}, Melrose established the well-posedness of the Sommerfeld problem on $M$, so the well-posedness assumption holds.
\end{example}

\begin{example}[Obstacle + potential scattering]
    Physicists sometimes consider the Schr\"odinger equation with a potential $V$ that they say satisfies 
    \begin{equation}
        z\in \Omega\Longrightarrow V(z)=\infty
    \end{equation}
    for some $\Omega\Subset \bbR^d$ with smooth boundary. For example, in order to model the fact that an electron cannot penetrate the hydrogen nucleus, one considers the modified Coulomb potential 
    \begin{equation}
        V = \begin{cases}
            -1/r &  (r >r_{\mathrm{proton}}) \\ 
            \infty & (r<r_{\mathrm{proton}}).
        \end{cases}
    \end{equation}

    This can be made precise in the framework of obstacle + potential scattering. 
    Let $U\Supset \overline{\Omega}$ be a big ball containing $\overline{\Omega}$, and let $A=U\backslash \overline{\Omega}$. 
    We assume that $\bbR^d\backslash \overline{\Omega}$ is path connected. 
    Consider the ordinary Helmholtz operator 
    \begin{equation}
        P=\triangle-1+V \text{ for }V\in\smash{ \langle r \rangle^{-2} C^\infty(\overline{\bbR^d})}.
    \end{equation}
    Say that a Helmholtz solution $u$ on $A$ is admissible if it extends to a global solution of the Dirichlet problem 
    \begin{equation}\label{eq:1681}
        \begin{cases}
            u\in C^2(\bbR^d\backslash \overline{\Omega}) \cap C^0(\bbR^d\backslash \Omega), \\
            
            Pu=0 \text{ on }U \backslash \overline \Omega , \\
            u|_{\partial \Omega}=0. 
        \end{cases}
    \end{equation}
    Then, it is a classical result that the Helmholtz equation is well-posed with the provided black-box condition. See e.g.\ \cite[Chp.\ 9.1, Exc.\ 9.1.5]{TaylorPDE}. The cited text treats obstacle scattering and potential scattering individually, but the same arguments handle their combination.
    In this spirit, the reader may note that the inhomogeneous Dirichlet problem on $\bbR^d\backslash \overline{\Omega}$ can be reduced to the homogeneous problem with inhomogeneous Dirichlet boundary condition. 
    
    Alternatively, the reader may note that the arguments in \cite{Me94} go through in this setting. 
\end{example}

\begin{example}[Singularities]\label{examp:singularities} 
    Suppose that $V$ has a second-order pole at the origin:
    \begin{equation}\label{eq:sing_as_1}
        \exists \alpha,\mathsf{Z}\in \bbR\text{ s.t. }V-\frac{\alpha}{r^2}+\frac{\mathsf{Z}}{r\langle r \rangle} \in C^\infty(\overline{\bbR^d} ).
    \end{equation}
    Then, take $U=B_2(0)$ to be an open ball of radius $2$ and $A=B_2(0)\backslash \overline{B_1(0)}$ to be an annulus. 
    Suppose that 
    \begin{equation}\label{eq:sing_as_2}
        \alpha > - \frac{(d-2)^2}{4}
    \end{equation}
    so that the Helmholtz operator $P=\triangle-1+V$, defined initially on $C_{\mathrm{c}}^\infty(\bbR^d\backslash \{0\})$, is semibounded; \cite[Chapter X]{ReedSimon2}\footnote{If $d\geq 3$, this is proven via Hardy's inequality (and the lower-order nature of the $\mathsf{Z}/r$ term  with respect to the b-calculus $\operatorname{Diff}_{\mathrm{b}}[0,1)_r$, relevant to behavior near $r=0$). If $d=2$, semiboundedness still holds as a consequence of a fractional version of Hardy's inequality and therefore admits a Friedrichs extension; see \cite[(2)]{dimtwocol}}. 
    We define a Helmholtz solution $u\in C^\infty(A)$ to be admissible if it extends to a Helmholtz solution on $U\backslash \{0\}$ lying in the Friedrichs domain. 
    The limiting absorption principle is known in this setting; cf.\ \cite[Thm.\ 3]{MechanicalBurq}\cite{BoucletMizutani2018}\cite{Zubeldia2014}, under some assumptions on $V$.
    Actually, the assumptions listed at the beginning of this paragraph 
    suffice for Melrose's arguments in \cite{Me94} to go through. Unfortunately, we could not find a citation for this fact in the literature, so see \S\ref{sec:black-box_LAP}. 
\end{example} 

The appendix \S\ref{sec:black-box_LAP} is devoted to an explanation of how Melrose's results can be generalized to the setting of the previous two examples.

Now we state the theorem:
\begin{theorem}
	\label{thm:main_black-box} 
    Consider the setup of \cref{eq:bbsteup} with a given admissible set of functions satisfying the  well-posedness assumption above \hyperlink{WP_asump}{(WP)}. Then, there exists a solution $u\in C^\infty(U^\complement \cup A)$ of the Schr\"odinger--Helmholtz equation 
    \begin{equation} 
        Pu=0
    \end{equation} 
    satisfying the black-box condition and such that, outside of the black-box, $u$ has the form \begin{equation}
        u = e^{ix} \Big( 1 + \frac{a}{r^{L-1}} \Big) + e^{ir} \frac{w}{r^{(d-1)/2}}
    \end{equation}
    for $a,w$ satisfying $a \in \rho_{\mathrm{ff}}^{-(L-1)} C^\infty(X)$,  $w\in  \calA^{(0,0),\calF}(X)$, for some index set $\calF\subset (\bbN\times \{0\})\cup (\bbZ^{\geq L-d}\times \bbN )$.
\end{theorem}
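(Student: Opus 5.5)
The plan is to repeat the construction of \S\ref{s:complete_proof} essentially verbatim. All of Steps one through three take place near spatial infinity, so they never see the black-box. The only global ingredient, the limiting absorption principle, will be replaced by the well-posedness assumption \hyperlink{WP_asump}{(WP)}.

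Fix $\chi\in C^\infty(\bbR^d)$ with $\chi=0$ on a neighborhood of $\overline{U}$ and $\chi=1$ outside a larger ball, and set $f_1=-P[\chi e^{ix}]$. By \Cref{rem:I_main_tilde}, \Cref{prop:I_main} still applies and yields $\tilde v_1\in\rho_{\mathrm{bf}}^{L-1}\rho_{\mathrm{ff}}^{L-1}C^\infty(X)$. The Borel sums and cutoffs used there can be taken supported outside an arbitrarily large ball, in particular outside $\overline U$. We then apply \Cref{lem:gaussian_conversion} to convert the error to $e^{ir}\rho_{\mathrm{bf}}^\infty\rho_{\mathrm{ff}}^{L+1}C^\infty(X)$. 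Next we apply \Cref{prop:II_main} to obtain $v_2$, and \Cref{cor:outgoing_spherical_wave} to obtain $v_3$, again inserting cutoffs supported away from $\overline U$. The output is
\[
u_3=\chi e^{ix}+v_1+v_2+v_3,
\]
which vanishes identically near $\overline U$ and satisfies $f_4\coloneqq -Pu_3\in\calS(\bbR^d)$. The coefficients of $P$ inside $B$ are irrelevant here, since every function involved is zero there. Moreover, $\operatorname{supp} f_4$ lies outside $U$: on $U$ we have $u_3\equiv 0$, so $f_4=0$ there. This last point is the only place where care is needed. All cutoffs must be arranged to vanish on a neighborhood of $\overline U$, and the commutator terms $[P,\chi]$ are compactly supported in the annulus between the two balls, outside $U$.

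Now invoke \hyperlink{WP_asump}{(WP)} with data $f_4$. This gives $v_4\in C^\infty(U^\complement\cup A)$ with $Pv_4=f_4$ there, $v_4|_A$ admissible, and $v_4\in e^{ir}r^{-(d-1)/2}C^\infty(\overline{\bbR^d})$ near infinity. Set $u=u_3+v_4$. Then $Pu=0$ on $U^\complement\cup A$. Since $u_3|_A=0$, we have $u|_A=v_4|_A$, which is admissible, so $u$ satisfies the black-box condition. (If admissibility were not a linear condition this step would need more, but the sum with a function vanishing on $A$ leaves the restriction unchanged, so no linearity is needed.)

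Finally, read off the asymptotic form exactly as before. Write $a=r^{L-1}e^{-ix}(v_1+(\chi-1)e^{ix})$; the term $(\chi-1)$ is compactly supported and hence harmless near $\partial X$. Write $w=r^{(d-1)/2}e^{-ir}(v_2+v_3+v_4)$. The previous steps place $a\in\rho_{\mathrm{ff}}^{-(L-1)}C^\infty(X)$, using $\rho_{\mathrm{bf}}^{L-1}\sim r^{-(L-1)}$ and that $r^{L-1}\rho_{\mathrm{bf}}^{L-1}\rho_{\mathrm{ff}}^{L-1}$ differs from $\rho_{\mathrm{ff}}^{L-1}$ by a smooth factor. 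They also place $w\in\calA^{(0,0),\calF}(X)$, since $C^\infty(\overline{\bbR^d})\subset C^\infty(X)$ and $\rho_{\mathrm{bf}}^{(d-1)/2}\rho_{\mathrm{ff}}^{L-1}\calA^{(0,0),\calF}$ times $r^{(d-1)/2}$ contributes indices shifted by $L-1-(d-1)=L-d$ at $\mathrm{ff}$. This gives the stated index set $\calF\subset(\bbN\times\{0\})\cup(\bbZ^{\ge L-d}\times\bbN)$, completing the proof. Only the support bookkeeping in the second paragraph is genuinely new relative to \S\ref{s:complete_proof}.
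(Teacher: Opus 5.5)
Your proposal is correct and is the paper's argument: Steps one through three are run near infinity with every cutoff vanishing near $\overline U$ (the reason for the cutoff $\chi$ in \S\ref{s:complete_proof}), and \hyperlink{WP_asump}{(WP)} replaces the limiting absorption step. Your explicit observation that $u_3\equiv 0$ on $U$, so that $f_4$ is supported off $U$ and $u|_A=v_4|_A$, is exactly what the paper's one-paragraph proof leaves implicit.

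There is one slip in your last paragraph. On $X$ one has $r^{-1}\sim\rho_{\mathrm{bf}}\rho_{\mathrm{ff}}^{2}$, not $r^{-1}\sim\rho_{\mathrm{bf}}$. Hence $r^{L-1}\rho_{\mathrm{bf}}^{L-1}\rho_{\mathrm{ff}}^{L-1}$ is a smooth positive multiple of $\rho_{\mathrm{ff}}^{-(L-1)}$, not of $\rho_{\mathrm{ff}}^{L-1}$. This is what actually gives your (correct) conclusion $a\in\rho_{\mathrm{ff}}^{-(L-1)}C^\infty(X)$, and it matches the shift by $L-d$ you correctly use for $w$.
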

\begin{proof}
    Most of the construction above was near $\infty \overline{\bbR^d}$, so proceeds identically here. The only step in which the deep interior is relevant is the final step, when the true resolvent is applied to solve away a Schwartz error, which we can take to be supported arbitrarily close to $\infty \bbS^{d-1}$, hence away from the black-box. The black-box assumption above was formulated in such a way to replace that step. 
\end{proof}

Assuming that the set of admissible functions is preserved under integration in $\omega$, and that the solution in (\textbf{WP}) depends continuously on $\omega$, then
the Poisson map makes sense in this generality. The conclusion and proof of
\Cref{prop:Poisson} apply verbatim.

\section{Some more about the inverse-square example}
\label{sec:dipole_some_more}

This ancillary section provides the rigorous back-end of \S\ref{subsec:inverse_square_example}.

For the inverse-square potential $V=\alpha/r^2 \in C^\infty(\bbR^3\backslash \{0\})$ with $\alpha>-1/4$, let $P=\triangle-1+V$ denote its Friedrichs realization. Applying
\Cref{thm:main_black-box}, with the singularity at the origin included
in the black box, shows that the perturbed plane wave has the form
\begin{equation}
    u=e^{ix}\Bigl(1+\frac{a}{r}\Bigr)
      +\frac{e^{ir}w}{r},
\end{equation}
for 
\begin{equation}
    a\in\rho_{\mathrm{ff}}^{-1}C^\infty(X),
    \qquad
    w\in\calA^{(0,0),\calF}(X),
\end{equation}
where $\calF$ is the index set furnished by \Cref{thm:main_black-box} with $L=2$ and $d=3$.
We have already explained how to compute $a$ to arbitrarily high order.
Focusing on the situation away from $\mathrm{bf}$, we can write $u=e^{ix} +e^{ir} \hat{v}$ for  
\begin{equation} 
    \hat{v} = \frac{w}{r} + e^{-i(r-x)} \frac{a}{r} \in  \rho_{\mathrm{ff}} \calA^{*,0}(X\backslash \mathrm{bf}) \cap \rho_{\mathrm{bf}}\rho_{\mathrm{ff}} L^\infty.
\end{equation}
Our goal in this appendix is to compute $\hat{v}$ modulo $O(1/r)$.
So far, we have mostly been using $\hat{\bfy}=\bfy/x^{1/2}$ to parametrize $\mathrm{ff}^\circ$. Here it is more convenient to use $\bfk\coloneq \bfy (r+x)^{-1/2}$ (away from the forward axis, this agrees with $y^{-1}\bfy\sqrt{r-x}$, and the former formula gives its smooth extension across $y=0$), because this is what allows us to combine the leading parts of the plane and spherical wave parts at $\mathrm{ff}$ to get the leading part of $\hat{v}$. (The point is that the ratio $e^{-i(r-x)}$ depends on $k$ alone.)

Extracting the leading-order term at $\mathrm{ff}$, 
\begin{equation}
    \exists 
    \hat{v}_1 \in C^\infty(\bbR^{d-1})\cap \langle k \rangle ^{-2} L^\infty(\bbR^{d-1}_{\bfk } )\text{ s.t. }\hat{v}- \rho_{\mathrm{ff}} \hat{v}_1\Big( \frac{\bfy}{\sqrt{r+x}}\Big) \in  \rho_{\mathrm{ff}}^2 \calA^{*,0-}(X\backslash \mathrm{bf}) \cap \rho_{\mathrm{bf}} \rho_{\mathrm{ff}}^{2-} L^\infty(X)
\end{equation}
near $\mathrm{ff}$. Indeed, take $\hat{v}_1(\bfk) = w_1(\bfk)+e^{-ik^2 } a_1(\bfk)$, where $a_1,w_1\in \langle k\rangle ^{-2} C^\infty( (\overline{\bbR^{d-1}_{\bfk}})_2)$ are the leading terms of $a,w$ at $\mathrm{ff}$, respectively: 
\begin{equation}
    a - r\rho_{\mathrm{ff}} a_1\Big( \frac{\bfy}{\sqrt{r+x}}\Big)   \in C^\infty(X),\quad w-r\rho_{\mathrm{ff}}w_1\Big( \frac{\bfy}{\sqrt{r+x}} \Big)\in \calA^{(0,0),0-}(X).
\end{equation}

On $\mathrm{ff}^\circ$, 
\begin{equation}
    \bfk = \hat{\bfy}/\sqrt{2}, 
\end{equation}
but the two differ slightly in the interior. 
Below, when we write `$\hat{v}_1$,' it is implicit that the argument is $\bfk$ (and actually $k=|\bfk|$ alone, by cylindrical symmetry). We note, for the reader's convenience, that
\begin{align}
     k^2=|\bfk|^2
    =\frac{y^2}{r+x}
    =r-x,
    \qquad
    z=\frac{r-x}{2}=\frac{k^2}{2}.
\end{align}

So, $ u
    =
    e^{ix}
    +e^{ir}\rho_{\mathrm{ff}}\hat v_1
    +O(r^{-1}\rho_{\mathrm{ff}}^{0-})$.
Being precise with logs, the $\rho_{\mathrm{ff}}^{0-}$ can be replaced by $\log \rho_{\mathrm{ff}}$. 

To complement the choice of coordinate $\bfk$, we use 
\begin{equation} 
    \rho_{\mathrm{ff}} = \sqrt{\frac{1+2k^2}{x} }
\end{equation} 
as a boundary-defining-function of $\mathrm{ff}$. 
Writing $\rho_{\mathrm{ff}}^{(0)}\coloneqq
x^{-1}\sqrt{x+y^2}$ for our usual choice, we have
$\rho_{\mathrm{ff}}=\gamma\rho_{\mathrm{ff}}^{(0)}$ for some smooth
positive function $\gamma$ with $\gamma|_{\mathrm{ff}}=1$. Thus, the two choices define the same weighted spaces, and $\gamma$ can be absorbed into the smooth coefficients.

Letting $I(j)$ denote the differential operator $I(j):\bullet\mapsto \rho^{-j}_{\mathrm{ff}} x N_{\mathrm{ff}}(\hat{P}) (\rho_{\mathrm{ff}}^j \bullet)$:
\begin{lemma}\label{lem:dipole_lead_vanish}
    $ I(1) \hat{v}_1 = 0$.
\end{lemma}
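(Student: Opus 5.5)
We derive the identity by evaluating the equation $\hat P\hat v = e^{-ir}(P u - Pe^{ix})$ at the leading order at $\mathrm{ff}$. Since $Pu=0$, away from $\mathrm{bf}$ and the origin we have
\begin{equation*}
\hat P\hat v=-e^{-ir}P[e^{ix}]=e^{-i(r-x)}\frac{-\alpha}{r^2}.
\end{equation*}
Here we use $P[e^{ix}]=(P-P_0)e^{ix}=Ve^{ix}$. The right-hand side is a smooth function of $k$ times $r^{-2}$, so it lies in $\rho_{\mathrm{ff}}^4\calA^{*,0}(X\backslash\mathrm{bf})$, because $r^{-1}\sim x^{-1}\sim\rho_{\mathrm{ff}}^2$ near $\mathrm{ff}^\circ$.

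On the left, we split $\hat P=N+(\hat P-N)$. By \cref{eq:N_tot}, the difference $\hat P-N$ lies in $\operatorname{Diff}_{\mathrm b}^{2,-2,-4}(X)$, so it maps $\rho_{\mathrm{ff}}\calA^{*,0}$ into $\rho_{\mathrm{ff}}^{5}\calA^{*,0}$, which is negligible. The operator $N\in\operatorname{Diff}_{\mathrm b}^{2,-1,-2}(X)$ applied to the remainder $\hat v-\rho_{\mathrm{ff}}\hat v_1\in\rho_{\mathrm{ff}}^{2}\calA^{*,0-}$ gives $O(\rho_{\mathrm{ff}}^{4-})$. Finally, by \cref{eq:misc_305} (with $\rho_{\mathrm{ff}}$ the new bdf, which changes nothing by the $\gamma$ remark),
\begin{equation*}
N(\rho_{\mathrm{ff}}\hat v_1)=x^{-1}\rho_{\mathrm{ff}}\,I(1)\hat v_1\in\rho_{\mathrm{ff}}^{3}\calA^{*,0}.
\end{equation*}

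Comparing both sides at order $\rho_{\mathrm{ff}}^3$, every other term is $O(\rho_{\mathrm{ff}}^{4-})$ locally uniformly on compact subsets of $\mathrm{ff}^\circ$. The coefficient of $\rho_{\mathrm{ff}}^3$, namely $x^{-1}\rho_{\mathrm{ff}}^{-2}I(1)\hat v_1$, which is a nonvanishing smooth factor times $I(1)\hat v_1$ on $\mathrm{ff}$, must therefore vanish. This gives $I(1)\hat v_1=0$ on $\mathrm{ff}^\circ$.

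The one point needing care is justifying the differentiation at the level of the remainder. The remainder class $\rho_{\mathrm{ff}}^{2}\calA^{*,0-}(X\backslash\mathrm{bf})$ is conormal, hence stable under $\operatorname{Diff}_{\mathrm b}$, so applying $\rho_{\mathrm{ff}}^{-2}\operatorname{Diff}_{\mathrm b}$ costs exactly two orders. We also need $\hat v_1$ to be smooth in $\bfk$, which holds because $a_1,w_1$ are smooth and $e^{-ik^2}$ is smooth.

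I expect the main obstacle to be bookkeeping rather than substance. We must confirm that the leading terms of $a$ and $w$ are at $\rho_{\mathrm{ff}}$ order exactly one after including the $r^{-1}$ prefactors. For $a$ this follows from $a\in\rho_{\mathrm{ff}}^{-1}C^\infty(X)$ together with $r^{-1}\sim\rho_{\mathrm{ff}}^2$. For $w/r$ it requires that the index set $\calF$ contain no term below $-1$ and no log at $-1$ when $L=2$, $d=3$. If a log term $\rho_{\mathrm{ff}}\log\rho_{\mathrm{ff}}$ were present, the argument would still yield $I(1)$ applied to the top log coefficient vanishing, so we would instead phrase the conclusion for that coefficient. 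We rely on the stated remainder estimate, which excludes this.
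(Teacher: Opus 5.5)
Your proposal is correct and is essentially the paper's argument. The paper writes $0=Pu=e^{ix}V+e^{ir}\bigl(N_{\mathrm{ff}}(\hat P)(\rho_{\mathrm{ff}}\hat v_1)+N_{\mathrm{ff}}(\hat P)(\hat v-\rho_{\mathrm{ff}}\hat v_1)+(\hat P-N_{\mathrm{ff}}(\hat P))\hat v\bigr)$ and notes that every term except $N_{\mathrm{ff}}(\hat P)(\rho_{\mathrm{ff}}\hat v_1)\in\rho_{\mathrm{ff}}^3C^\infty$ is $O(\rho_{\mathrm{ff}}^{4-})$ away from $\mathrm{bf}$, so the $\rho_{\mathrm{ff}}^3$ coefficient must vanish; your handling of the possible leading logarithm, by relying on the stated remainder estimate, is the same assumption the paper makes.
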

\begin{proof} 
Using the PDE $Pu=0$, 
\begin{equation}
    0 = Pu = e^{ix} V + e^{ir} ( N_{\mathrm{ff}}(\hat{P}) (\rho_{\mathrm{ff}} \hat{v}_1) + N_{\mathrm{ff}}(\hat{P}) (\hat{v} - \rho_{\mathrm{ff}} \hat{v}_1)  + (\hat{P}-N_{\mathrm{ff}}(\hat{P}) ) \hat{v} ) .
\end{equation}
Noting that $\hat{P}$ is a b-operator inducing two orders of decay at $\mathrm{ff}$, all of the terms on the right-hand side above are $O(\rho_{\mathrm{ff}}^{4-})$ away from $\mathrm{bf}$, with the exception of 
\begin{equation} 
    e^{ir} N_{\mathrm{ff}}(\hat{P}) (\rho_{\mathrm{ff}} \hat{v}_1) \in e^{ir} \rho_{\mathrm{ff}}^3 C^\infty(\bbR^{d-1}_{\bfk })  .
\end{equation} 
This cannot cancel out with something which is $O(\rho_{\mathrm{ff}}^{4-})$, so we must have $N_{\mathrm{ff}}(\hat{P}) (\rho_{\mathrm{ff}} \hat{v}_1)=0$. 
\end{proof}

The kernel $\ker_{\calS'(\mathrm{ff})} I(1)$ we work out in \S\ref{subsec:QHO_separation}; up to multiplicative constants, the only solution with cylindrical symmetry and the allowed behavior at $\hat{y}=\infty$ is
\begin{equation}
    \frac{1}{\sqrt{1+\hat{y}^2}} \cdot {}_1 F_1\Big( \frac{1}{2}, 1, - \frac{i\hat{y}^2}{2} \Big) =  \frac{1}{\sqrt{1+\hat{y}^2}} e^{-i\hat{y}^2/4} J_0 \Big( \frac{\hat{y}^2}{4} \Big) , 
\end{equation}
where we used the identity
${}_1 F_1(2^{-1},1,z) = e^{z/2} I_0(z/2)$ \cite[\href{http://dlmf.nist.gov/13.6.iii}{\S13.6.iii}]{NIST}. 
Thus, 
\begin{equation}
    \hat{v}_1|_{\mathrm{ff}^\circ} =  \frac{a_0}{\sqrt{1+\hat{y}^2}}   e^{- i\hat{y}^2/4} J_0 \Big( \frac{\hat{y}^2}{4} \Big)
\end{equation}
for some $a_0\in \bbC$.  This means 
\begin{equation}
    \hat{v}_1 = \frac{a_0}{\sqrt{1+2k^2}} e^{-ik^2/2} J_0\Big(\frac{k^2}{2}\Big) = \frac{a_0}{\sqrt{1+2k^2 }} e^{-iz} J_0(z).  
\end{equation}
We conclude 
\begin{equation}
    u = e^{ix} +  \frac{e^{ir}}{\sqrt{x}} a_0  e^{- iz} J_0 (z) + O \Big(\frac{\log \rho_{\mathrm{ff}} }{r} \Big) =  e^{ix} +  \frac{e^{ir}}{\sqrt{r}} a_0  e^{- iz} J_0 (z) + O \Big(\frac{\log \rho_{\mathrm{ff}} }{r} \Big). 
\end{equation}

This justifies  the formula for $u^{[1]}$ presented in \cref{eq:u_brak1}.

Next, we determine the log term. 
Let $\rho_{\mathrm{ff}}^2 (\log \rho_{\mathrm{ff}})  \hat{v}_2$ denote the next correction to $\hat{v}$. Thus, 
\begin{equation}\label{eq:dipole_v2_term}
    \hat v_2
\in
C^\infty(\bbR^{d-1})
\cap
\langle k\rangle^{-2}
L^\infty(\bbR^{d-1}_{\bfk}) \text{ and } \hat{v}-\rho_{\mathrm{ff}} \hat{v}_1- \rho_{\mathrm{ff}}^2 (\log \rho_{\mathrm{ff}}) \hat{v}_2 \in \rho_{\mathrm{ff}}^2 \calA^{*,0}(X\backslash \mathrm{bf}) \cap \underbrace{\rho_{\mathrm{bf}} \rho_{\mathrm{ff}}^2 L^\infty(X)}_{=O(1/r)} 
\end{equation}
near $\mathrm{ff}$. 

\begin{lemma}
    $I(2) \hat{v}_2=0$. 
\end{lemma}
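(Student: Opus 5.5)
I will mimic the proof of \Cref{lem:dipole_lead_vanish}, now keeping track of the next order in the expansion at $\mathrm{ff}$, away from $\mathrm{bf}$. Write $\hat{v} = \rho_{\mathrm{ff}}\hat{v}_1 + \rho_{\mathrm{ff}}^2(\log\rho_{\mathrm{ff}})\hat{v}_2 + \rho_{\mathrm{ff}}^2 \hat{r}$, with $\hat r\in\calA^{*,0}(X\backslash \mathrm{bf})$ as in \cref{eq:dipole_v2_term}. Then $u=e^{ix}+e^{ir}\hat v$, and since $P e^{ix}=e^{ix}V$ we get
\begin{equation}
    0=e^{-ir}Pu = e^{i(x-r)}V + N(\rho_{\mathrm{ff}}\hat v_1) + N(\rho_{\mathrm{ff}}^2\log\rho_{\mathrm{ff}}\,\hat v_2) + N(\rho_{\mathrm{ff}}^2\hat r) + (\hat P - N)\hat v,
\end{equation}
where $N=N_{\mathrm{ff}}(\hat P)$. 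The plan is to show that every term except the one involving $\hat v_2$ contributes no $\rho_{\mathrm{ff}}^4\log\rho_{\mathrm{ff}}$ term at $\mathrm{ff}$. The coefficient of that term must then vanish, and it is exactly $\varrho\, I(2)\hat v_2$.

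First I compute the $\hat v_2$ term. Differentiating \cref{eq:misc_305} in $j$ (as in the binomial formula used in \S\ref{subsec:v2_construction}) gives
\begin{equation}
    N(\rho_{\mathrm{ff}}^2\log\rho_{\mathrm{ff}}\,\hat v_2)=\varrho\rho_{\mathrm{ff}}^4\bigl(\log\rho_{\mathrm{ff}}\, I(2)\hat v_2 + I'(2)\hat v_2\bigr).
\end{equation}
Here $I(j)$ is defined with the modified bdf $\rho_{\mathrm{ff}}=\sqrt{(1+2k^2)/x}$. This differs from the usual one by a smooth factor $\gamma$ with $\gamma|_{\mathrm{ff}}=1$, so the indicial family is unchanged to the order needed.

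Next I check the other terms. $N(\rho_{\mathrm{ff}}\hat v_1)$ has the form $\varrho\rho_{\mathrm{ff}}^3 I(1)\hat v_1$ plus smooth lower-order corrections, and it is log-free. By \Cref{lem:dipole_lead_vanish} its leading part even vanishes.

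For $N(\rho_{\mathrm{ff}}^2\hat r)$, I need $\hat r$ to have index set at $\mathrm{ff}$ with no $(0,k)$ term for $k\geq1$. This is the role of "$\calA^{*,0}$" in \cref{eq:dipole_v2_term}: the log-free leading coefficient, call it $\hat r_0$, gives $\varrho\rho_{\mathrm{ff}}^4 I(2)\hat r_0$ with no log, and the remainder is $o(\rho_{\mathrm{ff}}^4)$.

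For $(\hat P-N)\hat v$ I use \cref{eq:N_tot}: $\hat P-N\in\operatorname{Diff}_{\mathrm b}^{2,-2,-4}(X)$. Applied to $\rho_{\mathrm{ff}}\hat v_1+O(\rho_{\mathrm{ff}}^{2-})$, this is $O(\rho_{\mathrm{ff}}^{5-})$, so it is negligible.

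The remaining term is $e^{i(x-r)}V$. Near $\mathrm{ff}$ away from $\mathrm{bf}$ we have $e^{i(x-r)}=e^{-ik^2}$ smooth and $V=\alpha/r^2\in\rho_{\mathrm{ff}}^4C^\infty(X\backslash\mathrm{bf})$. So this term contributes $\rho_{\mathrm{ff}}^4\times$(smooth), with no log.

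Collecting coefficients of $\rho_{\mathrm{ff}}^4\log\rho_{\mathrm{ff}}$ then gives $\varrho\, I(2)\hat v_2=0$, and since $\varrho>0$ on $\mathrm{ff}^\circ$, $I(2)\hat v_2=0$. The log-free part of the same order only yields a relation among $I'(2)\hat v_2$, $I(2)\hat r_0$, and the data, which we do not need.

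The main obstacle is justifying that the expansion of $\hat v$ at $\mathrm{ff}$ really has this form: leading power $\rho_{\mathrm{ff}}^1$ with no log, then $\rho_{\mathrm{ff}}^2$ with at most a single log. I also need that the coefficients are uniquely determined, so that comparing coefficients is legitimate; this uses linear independence of $\rho^j(\log\rho)^k$ in polyhomogeneous expansions. I would take the structure from \Cref{thm:main_black-box} with $L=2$, $d=3$. There $\calF\subset(\bbN\times\{0\})\cup(\bbZ^{\geq -1}\times\bbN)$, and for the construction of \S\ref{subsec:v2_construction} the index set is $\{(j,k):k\le j+1\}$ shifted by $L-1$. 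So I have to check that no $\rho_{\mathrm{ff}}\log\rho_{\mathrm{ff}}$ term actually appears. This follows from the argument of \Cref{lem:dipole_lead_vanish} applied one log-order up: such a term would force $I(1)$ of its coefficient to vanish, and the $\log$ coefficient at order $\rho_{\mathrm{ff}}^3$ would be inconsistent otherwise. I would treat this carefully via the recursion \cref{eq:II_RHS}.
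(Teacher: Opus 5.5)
Your proof is essentially the paper's: substitute the expansion \cref{eq:dipole_v2_term} into $e^{-ir}Pu=0$ and observe that the only $\rho_{\mathrm{ff}}^4\log\rho_{\mathrm{ff}}$ contribution is the one where $N_{\mathrm{ff}}(\hat P)$ does not differentiate the logarithm, namely $x^{-1}\rho_{\mathrm{ff}}^2(\log\rho_{\mathrm{ff}})\,I(2)\hat v_2$; everything else is log-free $O(\rho_{\mathrm{ff}}^4)$, and uniqueness of polyhomogeneous coefficients finishes the argument. Here "everything else" means the $I'(2)\hat v_2$ term, the $\hat v_1$ term killed by \Cref{lem:dipole_lead_vanish}, $e^{i(x-r)}V$, the remainder term, and $(\hat P-N_{\mathrm{ff}}(\hat P))\hat v$.

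The paper does not address your ``main obstacle'' at all: the absence of a $\rho_{\mathrm{ff}}\log\rho_{\mathrm{ff}}$ term, and the single logarithm at order $\rho_{\mathrm{ff}}^2$, are built into the definition of $\hat v_2$ via \cref{eq:dipole_v2_term}, so that paragraph is not needed. If you want to justify the structure anyway, use the recursion \cref{eq:II_RHS}. It forces the $\rho_{\mathrm{ff}}\log\rho_{\mathrm{ff}}$ coefficient of the Step~2 output into the conormal kernel of $I(1)$, and that kernel is trivial for $d=3$ because $a=(1+\ell)/2\notin-\bbN$. Your order-$\rho_{\mathrm{ff}}^3$ heuristic does not suffice for this: it only places that coefficient in the larger, oscillation-permitting kernel of $I(1)$, which is nontrivial (it is where $\hat v_1$ lives), so it does not exclude the term.
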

\begin{proof}
    Above, we showed that 
\begin{align}
\begin{split} 
    0 = Pu &= e^{ix} V+e^{ir} (N_{\mathrm{ff}}(\hat{P}) (\hat{v} - \rho_{\mathrm{ff}} \hat{v}_1)  + (\hat{P}-N_{\mathrm{ff}}(\hat{P}) ) \hat{v} ) \\ 
    &= e^{ix} V+e^{ir} (N_{\mathrm{ff}}(\hat{P}) (\rho_{\mathrm{ff}}^2 (\log \rho_{\mathrm{ff}})\hat{v}_2 ) + N_{\mathrm{ff}}(\hat{P}) (\hat{v} - \rho_{\mathrm{ff}} \hat{v}_1 - \rho_{\mathrm{ff}}^2 (\log \rho_{\mathrm{ff}}) \hat{v}_2 )  + (\hat{P}-N_{\mathrm{ff}}(\hat{P}) ) \hat{v} ).
    \end{split} 
\end{align}
When no derivative falls on $\log\rho_{\mathrm{ff}}$, the resulting
term is
\begin{align}
    x^{-1}\rho_{\mathrm{ff}}^2(\log\rho_{\mathrm{ff}})
I(2)\hat v_2
=
\frac{\rho_{\mathrm{ff}}^4\log\rho_{\mathrm{ff}}}
     {1+2k^2}
I(2)\hat v_2.
\end{align}
Apart from this displayed $\rho_{\mathrm{ff}}^4\log\rho_{\mathrm{ff}}$
term, all the remaining terms are smooth
$O(\rho_{\mathrm{ff}}^4)$. Uniqueness of polyhomogeneous coefficients
therefore implies $I(2)\hat v_2=0$.
\end{proof}

Referring again to \S\ref{subsec:obstruction} for the calculation of $\ker I(2)$: up to multiplicative constants, the only solution with the requisite cylindrical symmetry is 
\begin{equation}
    \frac{1}{1+ \hat{y}^2} \cdot {}_1 F_1 \Big( 0,1, -  \frac{i\hat{y}^2}{2}\Big)=\frac{1}{1+ \hat{y}^2}. 
\end{equation}
It so happens that the ${}_1F_1(0,1,z)=1$, hence our logarithmic correction $\hat{v}_2$ will lack an interesting $\mathrm{ff}$ profile: $\hat{v}_2 = a_1 /(1+\hat{y}^2)$ for some $a_1\in \bbC$.  We conclude that 
\begin{align}
    \begin{split} 
    u&=e^{ix} + \frac{e^{ir}}{\sqrt{x}} a_0 e^{-iz} J_0(z) + \frac{e^{ir}}{x} a_1 \log \rho_{\mathrm{ff}} + O \Big(\frac{1}{r} \Big)  \\
    &=e^{ix} + \frac{e^{ir}}{\sqrt{x}} a_0 e^{-iz} J_0(z) + \frac{e^{ir}}{r} a_1 \log \Big(\theta^2+\frac{1}{r} \Big) + O \Big(\frac{1}{r} \Big),
    \end{split} 
\end{align}
using the estimate in \cref{eq:dipole_v2_term} (here the second equality is understood modulo a smooth
$O(r^{-1})$ term, which is absorbed into the remainder, and the
constant normalization relating the two logarithms has been absorbed
into $a_1$). We have therefore recovered the correction $u^{[2,0]}$ presented in \cref{eq:u_brak20}, except that we have yet to determine the constant. 

The constant can be read off the $\log \theta$ singularity of $f(\theta)=w|_{\infty \bbS^{2}}$, as seen in \cref{eq:f_ap}. The resulting $a_1$ is recorded in \cref{eq:dipole_constants}.

\section*{Notation}

\begin{itemize}
    \item $\bbR^d=\bbR^d_z$ denotes Euclidean space, with coordinates split as $\bbR^d = \bbR_x\times\bbR^{d-1}_{\bfy}$. We let $y=\lvert\bfy\rvert$, and $r=\sqrt{x^2+y^2}$.
     \item $\bbN=\{0,1,2,3,\dots\}$ denotes the natural numbers (which includes $0$).
     \item $\langle u  \rangle \coloneqq \sqrt{1+\lvert u \rvert^2}$ is the Japanese bracket of an element $u\in \bbR^d$.
    \item $\overline{\bbR^d} = \bbR^d\sqcup\infty\bbS^{d-1}$ denotes the \textit{radial compactification} of $\bbR^d$, viewed as the union of $\bbR^d$ with the sphere at infinity. This is a manifold with boundary, with $1/r$ as a boundary defining function (at least near the boundary). 
    \item $X = [\overline{\bbR^d};\rightarrow]_{\mathrm{par}}$ is the manifold-with-corners obtained by beginning with $\overline{\bbR^d}$ and performing a parabolic blowup of  the forward direction $\rightarrow\, \in \infty \bbS^{d-1}$. 
    This manifold-with-corners has two faces (boundary hypersurfaces), the \textit{boundary face}, $\mathrm{bf}$, representing non-forward directions at infinity, and the \textit{forward face}, $\mathrm{ff}$, representing the endpoints of forward parabolic trajectories at infinity. See Figure \ref{fig:X} for coordinates.
    \item $\rho_{\mathrm{f}} $ denotes a boundary-defining-function of the face $\mathrm{f}\in \{\mathrm{bf},\mathrm{ff}\}$. 
    Depending on the context, this may be a local boundary-defining-function, valid only in some local coordinate chart, or it may be globally defined.

      \item $\hat \bfy \coloneqq \bfy x^{-1/2} \in \bbR^{d-1}$  (in the region $x>0$) is a useful coordinate used throughout the text.
    
    \item $Y=[\overline{\bbR^d};\rightarrow]$ is the manifold-with-corners obtained by performing an ordinary blowup of the forward direction $\to$ (instead of the parabolic blowup). 
    See Figure \ref{fig:Y} for coordinates.
    \item For any manifold-with-corners $M$ with $n\in \bbN^+$ boundary hypersurfaces, $\mathcal{A}^{\alpha_{1},\dots ,\alpha_n}(M)\subset C^\infty(M^\circ)$ is the space of all \textit{conormal functions} with $\alpha_1,\dots,\alpha_n\in \bbR$ orders of decay at the respective boundary hypersurfaces of $M$. 
    
    See \cref{eq:1349} and \cref{eq:1352} for the definition of conormal functions on our main manifold $X$.
    \item $\mathcal{A}^{\mathcal{E}_1,\dots \mathcal{E}_n}(M)$ is the space of all \textit{polyhomogeneous functions} with \textit{index sets} $\mathcal{E}_1,\dots,\mathcal E_n$ 
    at the various boundary hypersurfaces.
    
    Polyhomogeneous functions and index sets are defined in \S \ref{s:function_spaces}.
	\item $\operatorname{Diff}^{m,s,\ell}_{\mathrm{b}}(X)$ consists of \textit{b-operators} on $X$ with at most $m$ derivatives and with $s$ and $\ell$ orders of growth at bf and ff respectively.

    Specifically, if $\mathcal{V}_{\mathrm{b}}(X)\subset \calV(X) $ denotes the space of smooth vector fields tangent to the boundary of $X$, then $\operatorname{Diff}^{m,0,0}_{\mathrm{b}}(X)$ is the vector space spanned by products of at most $m$ elements of $\mathcal{V}_{\mathrm{b}}(X)$, and
    \begin{align}
       \operatorname{Diff}^{m,s,\ell}_{\mathrm{b}}(X) \coloneqq \rho_{\rm{bf}} ^{-s}\rho_{\rm{ff}}^{-\ell}\operatorname{Diff}^{m,0,0}_{\mathrm{b}}(X).
    \end{align}
    \item $P$ is the Helmholtz operator, as specified in \cref{eq:P}.
    \item $\tilde{P} = e^{-ix}Pe^{ix}$ is the plane-wave conjugated Helmholtz operator (first defined in \cref{eq:883}).
    \item $\widehat{P} = e^{-ir}Pe^{ir}$ is the spherical-wave conjugated Helmholtz operator (first defined in \cref{eq:883}).
    \item $N_{\mathrm{bf}}(\bullet)$, $N_{\mathrm{ff}}(\bullet)$ denote the b-normal operators of an operator $\bullet$ at $\mathrm{bf}$ and $\mathrm{ff}$, respectively. See \cref{eq:Ns} for explicit formulas.
	\item  $N^{-1}$ denotes the inversion of the relevant normal operator (depending on the step of the proof).
    In step 1, $N^{-1} \coloneqq (N_{\rm{bf}}(\tilde P))^{-1}= (-2i\p_x)^{-1}$ which (after choosing a constant of integration) we define as integration from $x=-\infty$, along level sets of $\bfy\in \bbR^{d-1}$.  That is, 
	\begin{equation}
		(N^{-1} f)(x,\bfy) = -\frac{1}{2i}\int_{-\infty}^x f(s,\bfy) \dd s \label{eq:1826}
	\end{equation}
	for any function $f\colon \bbR^{d-1}_{\bfy}\to L^1(\bbR_x)$.

\end{itemize}

We use tildes to signal that we are working with $\tilde{P}$ and hats to signal that we are working with $\hat{P}$.

\appendix

\section{The Born heuristic}
\label{sec:Born}

We recall the Born approximation, as presented by Landau--Lifshitz \cite[(126.12)]{LL} for a central short-range potential $V(r)$ on $\bbR^3$. It reads 
\begin{equation}
		f(\theta) \approx - \int_0^\infty \frac{\sin (q r)}{q} r V(r) \dd r  ,\quad q=2\sin(\theta/2) .
		\label{eq:Born_0}
\end{equation}
Here $0\leq\theta\leq\pi$, so $q\geq0$, with $q>0$ whenever
$\theta\neq0$.
Because this is merely an approximation, there is no guarantee about the accuracy of any qualitative predictions about the structure of the perturbed plane wave made on its basis.
Regardless, we can press ahead with the $\theta\to 0$ limit of the right-hand side.

Under the classical-symbol hypothesis
$V\in\langle r\rangle^{-L}C^\infty(\overline{\bbR^3})$,
with $L\geq 2$, the integral converges for each $\theta\neq 0$;
for $L=2$ the convergence is generally only conditional. 
If $\theta\approx 0$, then na\"ively plugging in $\theta=0$ yields 
\begin{equation}
    -f(\theta)  = \int_0^\infty \frac{\sin (qr)}{qr} r^2 V(r) \dd r \approx  \int_0^\infty r^2 V(r) \dd r .
\end{equation}
If $V(r)=O(r^{-4})$ as $r\to\infty$, then this substitution is
justified. On the other hand, if
$V(r)\sim c r^{-L}$ for some $c\neq 0$ and $L\leq 3$, then the final
integral diverges.
This suggests that $f(\theta)$ diverges as $\theta\to 0^+$. 
A more careful analysis reveals that it diverges at the same asymptotic rate as 
\begin{equation}
    \int_0^R r^2 V(r) \dd r 
    \label{eq:Born_numerology} 
\end{equation}
diverges as $R\to\infty$. 

More precisely, and more generally, we have polyhomogeneity: 
\begin{proposition}
    For any $L\in\bbN$ with $L\geq 2$ and any central
    $V\in \langle r\rangle^{-L} C^\infty(\overline{\bbR^3})$,
    \begin{equation}
        \int_0^\infty \frac{\sin (qr)}{qr} r^2 V(r) \dd r \in \calA^{\calE}([0,1)_q)  
    \end{equation}
    for some index set $\calE$. For one choice of admissible $\calE$, the first permitted element not of the form $(j,0)$ for $j\in \bbN$ is 
    \begin{equation}
        \begin{cases}
            (L-3,0) & (L= 2), \\ 
            (L-3,1) & (\text{odd }L\geq 3), \\
            (L-2,1) & (\text{even }L\geq 4)
        \end{cases}
    \end{equation}
\end{proposition}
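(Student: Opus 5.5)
Substituting $t=qr$ gives
\begin{equation}
    F(q)\coloneqq \int_0^\infty \frac{\sin (qr)}{qr} r^2 V(r) \dd r = q^{-3}\int_0^\infty \frac{\sin t}{t}\, t^2\, V(t/q)\dd t .
\end{equation}
The plan is to treat this as a Mellin convolution and read off polyhomogeneity from the pole structure of Mellin transforms. Since $V\in \langle r\rangle^{-L}C^\infty(\overline{\bbR^3})$ is central, $V(r)$ is smooth on $[0,\infty)$. At infinity it has a classical expansion $V\sim \sum_{k\ge 0} c_k r^{-L-k}$, and since $V$ is smooth on $\overline{\bbR^3}$, only integer powers of $1/r$ occur.

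First I split $V=\chi V+(1-\chi)V$ with $\chi\in C_{\mathrm c}^\infty$ equal to $1$ near $0$. For the compactly supported piece, $\int \frac{\sin(qr)}{qr} r^2\chi V\dd r$ is an entire even function of $q$, because $\sin(s)/s$ is entire and even. It therefore lies in $C^\infty([0,1)_q)$ with index set $(0,0)$ and contributes only $(j,0)$ terms.

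The piece $W=(1-\chi)V$ carries the content. Write $g(s)=\sin(s)/s$, so that the term is $\int_0^\infty g(qr)\, r^2 W(r)\dd r$. I take Mellin transforms in $q$, using $\calM[h](z)=\int_0^\infty q^{z-1}h(q)\dd q$. A change of variables gives
\begin{equation}
    \calM[F_W](z)=\calM[g](z)\,\calM[r^2W](1-z).
\end{equation}
This identity holds on the strip where both transforms converge, namely $0<\Re z<\min(1,L-3+ \cdot)$. I will adjust the strip by noting that only $\Re z$ slightly positive is needed: $F_W$ is bounded near $q=0$ when $L\ge 4$, and behaves like $q^{L-3}$ in general.

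The next step is to catalogue the poles of each factor.
\begin{itemize}
    \item $\calM[g](z)=\Gamma(z-1)\sin(\pi(z-1)/2)$, which equals $\Gamma(z)\sin(\pi z/2)/(z-1)$ up to normalization. It extends meromorphically to $\Re z>-\infty$ on the left, with poles only at $z=0,-2,-4,\dots$, since the sine kills the odd poles of $\Gamma$. These correspond to the Taylor terms $q^{0},q^{2},\dots$ of $g$, so when the strip is shifted rightward they are harmless.
    \item $\calM[r^2W](1-z)=\int_0^\infty r^{-z}\,r^{2}W(r)\dd r$. Its poles come from the expansion at infinity: a term $c_k r^{2-L-k}$ produces a simple pole at $z=3-L-k$. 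Because $W$ vanishes near $0$, this transform is entire to the right.
\end{itemize}
Hmm — with this orientation, the asymptotics as $q\to0$ are governed by the poles to the \emph{left} of the strip. So the relevant poles are $z=-2m$ from $g$ and $z=3-L-k$ from $W$.

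Shifting the inverse Mellin contour leftward, I need decay of the integrand on vertical lines. This is the step I expect to be the main technical obstacle. $\Gamma(z)\sin(\pi z/2)$ grows like $|\Im z|^{\Re z-1/2}$, whereas $\calM[r^2W](1-z)$ decays rapidly in $\Im z$ after repeated integration by parts (it is the transform of a conormal symbol), so the product is fine. I will justify this via the standard lemma that Mellin transforms of conormal functions decay faster than any polynomial along vertical lines away from poles.

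Collecting residues then gives the index set. Simple poles give $(j,0)$ terms; a coincidence of a pole $z=-2m$ with a pole $z=3-L-k$ gives a double pole and hence a $(j,1)$ term. The first non-smooth element is at $z=3-L$, i.e.\ the power $q^{L-3}$.
\begin{itemize}
    \item If $L=2$, this is $q^{-1}$, a non-integer-order-$\ge0$ power that is simple, giving $(L-3,0)$ ($3-L=1$ is not an even nonpositive integer).
    \item If $L$ is odd and $\ge3$, then $3-L$ is even and $\le 0$, so it collides with a pole of $\calM[g]$ and gives $(L-3,1)$.
    \item If $L$ is even and $\ge4$, then $3-L$ is odd, so the pole is simple and yields $q^{L-3}$. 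This is an odd integer, which I should flag as a $(j,0)$ entry since the index set allows all $(j,0)$, $j\in\bbN$. The first log instead comes from $k=1$: the pole $3-L-1=2-L$ is even, giving $(L-2,1)$.
\end{itemize}
Since the admissible index set is only claimed for ``one choice,'' taking $\calE$ to be the union of these residues (closed up under $(j,k)\mapsto(j+1,k)$ and lowering $k$) proves the proposition. In the $L=2$ case, conditional convergence is handled by the same Mellin analysis, because the strip of convergence is still nonempty.
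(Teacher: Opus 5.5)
Your Mellin-convolution argument takes a genuinely different route from the paper, and its core is sound.

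\textbf{Comparison with the paper.} The paper splits at $r=1$ and expands $V=V_{\mathrm{rem},K}+\sum_{j\le K}r^{-2-j}V_j$. It evaluates each monomial contribution explicitly as $V_j q^{j-1}\int_q^\infty t^{-1-j}\sin t\,\mathrm{d}t$ by Taylor-expanding $\sin t$, so a logarithm appears exactly when $j$ is odd. It shows the remainder contributes only a $C^k$ function. It then needs an auxiliary lemma, $\bigcap_k(\calA^{\calE}+C^k)=\calA^{\calE}$ (proved via Borel's lemma), to reach polyhomogeneity.

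Your route yields the whole expansion at once. After moving the contour past finitely many poles, the leftover line integral is conormal of any prescribed order: apply $(q\partial_q)^m$ under the integral and use the rapid vertical decay of $\mathcal{M}[r^2W](1-z)$. So no upgrade lemma is needed. The logarithms are explained as collisions of the simple poles $z=-2m$ of $\mathcal{M}[g]$ with the simple poles $z=3-L-k$ of $\mathcal{M}[r^2W](1-z)$. This happens exactly when $L+k$ is odd, which is the paper's condition that $j=L-2+k$ be odd. Your case analysis for the first index not of the form $(j,0)$ matches the statement. The price is the analytic overhead (meromorphic continuation, vertical-line estimates, contour shift), which is standard for classical symbols.

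\textbf{The gap at $L=2$.} Here $r^2W\to c_0$ at infinity, so $\int_0^\infty r^{-z}r^2W(r)\,\mathrm{d}r$ converges only for $\Re z>1$. Meanwhile $\int_0^\infty s^{z-2}\sin s\,\mathrm{d}s$ converges absolutely only for $0<\Re z<1$, and only conditionally for $\Re z<2$. The strips of absolute convergence are disjoint. So the Fubini step behind $\mathcal{M}[F_W](z)=\mathcal{M}[g](z)\,\mathcal{M}[r^2W](1-z)$ is unavailable, and ``the strip is still nonempty'' does not justify it.

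The fix is cheap. Write $r^2W=c_0\chi_\infty+h_1$, with $\chi_\infty$ vanishing near $0$ and equal to $1$ near infinity. Then $h_1$ is a classical symbol of order $-1$, so your $L=3$ analysis applies to it. Compute the remaining term by hand:
$\int_0^\infty g(qr)\chi_\infty(r)\,\mathrm{d}r=\frac{\pi}{2q}-\int_0^\infty g(qr)(1-\chi_\infty(r))\,\mathrm{d}r$.
The last integral is entire and even in $q$.

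\textbf{Minor slips} (none affect the outcome):
\begin{itemize}
\item $\mathcal{M}[g](z)=\Gamma(z-1)\sin(\pi(z-1)/2)=-\Gamma(z)\cos(\pi z/2)/(z-1)$, not $\Gamma(z)\sin(\pi z/2)/(z-1)$. The latter would have its poles at the odd negative integers instead.
\item For $L\ge 3$ the common strip of absolute convergence is $0<\Re z<1$. The constraint coming from $W$ is the lower bound $\Re z>3-L$, not an upper bound involving $L-3$.
\item $F_W$ behaves like $q^{L-3}$ only for $L=2$. It is $\sim\log q$ for $L=3$ and bounded for $L\ge 4$.
\end{itemize}
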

This agrees with our discussion of the inverse-square potential.

\begin{proof}
    First note that 
    \begin{equation}
        \int_0^1 \frac{\sin (qr)}{qr} r^2 V(r) \dd r = \sum_{j=0}^\infty \frac{(-1)^j}{(2j+1)!} q^{2j} \int_0^1 r^{2j+2} V(r) \dd r     
    \end{equation}
    is smooth (in fact, analytic) as a function of $q$, so we focus on the remainder of the integral.
For each $K\in\bbN$, expand
\begin{equation}
    V
    =
    V_{\mathrm{rem},K}
    +\sum_{j=0}^K r^{-2-j}V_j,
\end{equation}
where $V_j=0$ for $j<L-2$ and
$V_{\mathrm{rem},K}\in
\langle r\rangle^{-3-K}C^\infty(\overline{\bbR^3})$.
Thus, throughout the proof, $j$ records the power relative to $r^{-2}$; the decay order $r^{-L}$ corresponds to $j=L-2$.
Then, 
    \begin{equation}
        \int_1^\infty \frac{\sin (qr)}{qr} r^2 V(r) \dd r = \sum_{j=0}^K  \frac{V_j}{q} \int_1^\infty \frac{\sin (qr) }{r^{1+j}}   \dd r +  \int_1^\infty \frac{\sin (qr)}{qr} r^2 V_{\mathrm{rem},K}(r) \dd r .
    \end{equation}

    The first batch of integrals can be written 
    \begin{equation}
        \int_1^\infty \frac{\sin (qr) }{r^{1+j}}   \dd r =  q^j \int_q^\infty \frac{\sin t }{t^{1+j}}   \dd t . 
    \end{equation}
    Note that
    \begin{equation}
        \int_q^\infty \frac{\sin t }{t^{1+j}}   \dd t = \underbrace{\int_2^\infty \frac{\sin t }{t^{1+j}}   \dd t}_{<\infty} +\int_q^2 \frac{\sin t }{t^{1+j}}   \dd t. 
    \end{equation}
    The first term on the right-hand side is a (finite) constant. The second term can be calculated by Taylor expanding 
    \begin{equation}
        \int_q^2 \frac{\sin t }{t^{1+j}}   \dd t = \sum_{k=0}^{j} \frac{(-1)^{k} }{(2k+1)!} \int_q^2 t^{2k-j}   \dd t + \int_q^2 \underbrace{\frac{1}{t^{1+j}}\bigg[ \sin t -  \sum_{k=0}^j \frac{(-1)^k}{(2k+1)!}  t^{2k+1}\bigg]}_{\in t^{j+1} C^\infty([0,3)_t) } \dd t     . 
    \end{equation}
    By the fundamental theorem of calculus, the last term is a smooth function of $q$. The other terms on the right-hand side are all explicit: 
    \begin{equation}
        \int_q^2 t^{2k-j}   \dd t  = 
        \begin{cases}
            \log(2/q)  & (2k=j-1), \\ 
            \frac{1}{2k-j+1} (2^{2k-j+1} - q^{2k-j+1} ) & (\text{otherwise}),
        \end{cases}
    \end{equation}
    which is polyhomogeneous. In summary, 
    \begin{equation}\label{eq:Born_explicit_indices}
        \frac{1}{q}\int_1^\infty \frac{\sin (qr)}{r^{1+j}} \dd r \in  q^{\min\{0,j-1\}} C^\infty([0,1)_q) + q^{j-1} (\log q )C^\infty([0,1)_q),
    \end{equation}
    where the log term is absent if $j$ is even.

    On the other hand, given any fixed $k\in \bbN$, if we take $K\gg 1$ large enough, then  
    \begin{equation} 
        \int_1^\infty \frac{\sin (qr)}{qr} r^2 V_{\mathrm{rem},K}(r) \dd r\in C^k([0,1)_q), 
    \end{equation} 
    as differentiating under the integral sign shows. 
    Thus, what we have shown is that there exists a real index set $\calE\supset \bbN\times \{0\}$ such that 
    \begin{equation}
        \int_0^\infty \frac{\sin (qr)}{qr} r^2 V(r) \dd r \in \bigcap_{k\in\bbN}
\left(
    \calA^{\calE}([0,1)_q)
    +C^k([0,1)_q)
\right).
    \end{equation}
    This is sufficient to conclude polyhomogeneity, as \Cref{lem:funny_phg_alt_criterion} shows.
    The claim about the first nonsmooth index is read off \cref{eq:Born_explicit_indices}.
\end{proof}

\begin{lemma}\label{lem:funny_phg_alt_criterion}
    For any index set $\calE \subset \bbR\times \bbN$ containing $\bbN\times \{0\}$, 
   \begin{equation}
    \calA^{\calE}([0,1))
    =
    \bigcap_{k\in\bbN}
    \left(
        \calA^{\calE}([0,1))
        +C^k([0,1))
    \right).
\end{equation}
\end{lemma}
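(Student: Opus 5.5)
The inclusion $\subseteq$ is trivial: take the zero element of $C^k([0,1))$ for every $k$. For the reverse inclusion, let $u$ lie in the intersection. For each $k$, I will write $u=u_k+g_k$ with $u_k\in\calA^{\calE}([0,1))$ and $g_k\in C^k([0,1))$. The plan is to build a single polyhomogeneous function $u_\infty$ whose expansion agrees with that of each $u_k$ up to order about $k$. Then I will show $u-u_\infty\in C^\infty([0,1))$, hence in $\calA^{\calE}$, because $\bbN\times\{0\}\subset\calE$.

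\textbf{Step 1 (consistency of the expansions).} Fix $k<k'$. Then $u_k-u_{k'}=g_{k'}-g_k$ lies in both $\calA^{\calE}$ and $C^k$. A polyhomogeneous function that is $C^k$ can only have nonzero terms $q^j(\log q)^\ell$ with $\Re j<k$ of the form $j\in\bbN$, $\ell=0$. Otherwise some derivative of order at most $k$ would blow up or fail to be continuous at $0$. Checking this is elementary: take the finitely many terms of lowest real part, subtract them, and look at the leading non-smooth term. Consequently, the coefficients of all non-integer or logarithmic indices $(j,\ell)$ with $\Re j<k$ agree between $u_k$ and $u_{k'}$. Let $c_{j,\ell}$ denote this stable coefficient for each non-smooth index $(j,\ell)\in\calE$. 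It is determined by any $u_k$ with $k>\Re j$.

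\textbf{Step 2 (Borel).} Using \Cref{lem:Borel}, or the one-variable Borel lemma, I will choose $u_\infty\in\calA^{\calE}([0,1))$ whose expansion consists exactly of the terms $c_{j,\ell}q^j(\log q)^\ell$ over non-smooth indices, with zero coefficients on the smooth indices. Then, for each $k$, the difference $u_k-u_\infty$ is polyhomogeneous. Its expansion through real part $<k$ contains only integer-power, non-logarithmic terms. So $u_k-u_\infty=p_k+r_k$ with $p_k$ a polynomial and $r_k\in\calA^{k-}$, conormal of order $k-\varepsilon$. Conormality of order $k-\varepsilon$ gives $r_k\in C^{k-1}$, since the derivatives $\partial^m r_k=O(q^{k-\varepsilon-m})$ are continuous for $m\leq k-1$. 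Therefore $u-u_\infty=p_k+r_k+g_k\in C^{k-1}([0,1))$.

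\textbf{Step 3 (conclusion).} Since $k$ was arbitrary, $u-u_\infty\in C^\infty([0,1))\subset\calA^{\calE}$, so $u\in\calA^{\calE}$. The main point needing care is Step 1's claim that a polyhomogeneous $C^k$ function has no non-smooth terms below order $k$. This needs an argument for indices with $\Re j$ near an integer, and for terms $q^j$ with $j$ a non-integer below $k$ but with $\lfloor j\rfloor$ derivatives continuous. In that case I would differentiate $\lceil j\rceil$ times and compare with the remainder, which is $o(q^{\Re j-\lceil j\rceil})$, to force the coefficient to vanish. Proceeding inductively from the lowest index upward handles the finitely many relevant terms.
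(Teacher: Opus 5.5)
Your proof is correct, but it is organized differently from the paper's.

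\textbf{The paper's route.} It first uses Taylor's theorem to get $C^{2k}([0,1))\subset C^\infty([0,1))+q^kC^k([0,1))$. Because $\bbN\times\{0\}\subset\calE$, the Taylor polynomial is absorbed into the $\calA^{\calE}$ part, so each decomposition reads $f=f_k+g_k$ with $g_k\in q^kC^k$. Then $f_K-f_k=O(q^k)$, and \emph{all} coefficients of order $<k$, smooth ones included, stabilize by plain uniqueness of expansions. This is a size argument and needs no derivatives. Borel-summing the full stabilized expansion gives $f-f_\infty\in\bigcap_k q^kC^k$, which is rapidly vanishing.

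\textbf{Your route.} You skip the Taylor upgrade, so the smooth coefficients of your $u_k$ genuinely need not stabilize. You compensate with a regularity lemma: a polyhomogeneous function in $C^k$ has no non-smooth terms of order $<k$. Your proof of it is sound. Differentiate $\lceil j\rceil$ times for non-integer $j$, or $j$ times for integer $j$ with a logarithm, and compare with a remainder that is $o$ of the leading term. For $j<0$ no differentiation is needed, since the function itself is unbounded. You then Borel-sum only the non-smooth part and conclude $u-u_\infty\in C^\infty([0,1))$, which is smooth rather than rapidly vanishing.

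\textbf{Comparison.} The hypothesis $\bbN\times\{0\}\subset\calE$ enters at the very start in the paper and only at the very end in yours, through $C^\infty([0,1))\subset\calA^{\calE}([0,1))$. The paper trades your derivative-based lemma for a one-line Taylor step and recovers the complete expansion of $f$ directly. Your version makes explicit that only the non-smooth coefficients carry information, since the smooth part is automatically polyhomogeneous. Both arguments treat the behavior near $q=1$ equally loosely, so that is not a point of difference.
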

\begin{proof}
    The Taylor remainder theorem implies $C^{2k}[0,1)\subset C^\infty[0,1) + q^k C^k[0,1)_q$.
    So, $\calA^\calE + C^{2k} \subset \calA^\calE + q^k C^k[0,1)_q$, because $\calE\supset \bbN\times \{0\}$. This gives  
    \begin{equation}
        \bigcap_{k\in \bbN} ( \calA^{\calE}[0,1) + C^k[0,1) ) \subseteq \bigcap_{k\in \bbN} ( \calA^{\calE}[0,1) +  q^k C^k[0,1)_q ).
    \end{equation}
    That is, for each $k\in \bbN$, there exist $f_k \in \calA^\calE, g_k\in q^k C^k[0,1)_q$ such that $f=f_k+g_k$. 
    Each $f_k$ admits a polyhomogeneous expansion, and while the terms may depend on $k$, each term must stabilize as $k\to\infty$. 
    Indeed, if $K>k$, then
$f_K-f_k=g_k-g_K\in q^kC^k[0,1)_q$, so uniqueness of
polyhomogeneous coefficients implies agreement of all terms of order
strictly less than $k$.
    Consequently, for any $\alpha\in \bbR$, the terms in $f_k$'s expansion with index $\in (-\infty, \alpha] \times \bbN$ do not change with $k$ for $k>\alpha$.  
    By Borel's lemma (\Cref{lem:Borel}), $\exists$ an $f_\infty \in \calA^\calE$ whose polyhomogeneous expansion is the stabilized sequence. Thus, for any $k\in \bbN$, if $K\gg K_0\gg k$, then
    \begin{equation}
        f_\infty- f=(f_\infty - f_K)-g_K \in \calA^{K_0} + q^K C^{K}[0,1) \subset q^k C^k[0,1)_q,
    \end{equation}
    using that $\calA^{2k+1}[0,1) \subset q^{k} C^k[0,1)_q$. 
    Finally, note that $\bigcap_{k\in \bbN} q^k C^k[0,1)_q$ consists of functions that are rapidly decaying as $q\to 0^+$ along with all  of their derivatives. So, the equation above shows that $f\in \calA^\calE$. 
\end{proof}

\begin{example}[Inverse-square potential, cont.]
    When $V(r)=\alpha/r^2$, then 
    \begin{equation}
        \int_0^\infty  \frac{\sin (qr)}{q} r V(r) \dd r=
        \alpha \int_0^\infty \frac{\sin(qr)}{qr}\dd r
= \frac{\pi\alpha}{2q},
\qquad q>0  ,
    \end{equation}
    so the Born approximation says 
    \begin{equation}
       f(\theta) \approx -\frac{\pi\alpha}{2q}
= -\frac{\pi\alpha}{2\theta}
\bmod C^\infty([0,\pi/2)_\theta).
    \end{equation}
    The Born approximation has successfully  reproduced the leading singularity reported in \cref{eq:dipole_scat_sing}, but not the logarithmic singularity. 
\end{example}
Technically, the previous example is not a special case of the proposition, because $1/r^2$ is singular at $r=0$, but the proof still applies.

\section{Coulomb plane waves}
\label{sec:Coulomb} 
\subsection{Exact Rutherford scattering}\label{subsec:Coulomb_exact}
Let $d \geq 3$ and
\begin{equation} 
V=-\mathsf{Z}/r
\end{equation} 
be the  Coulomb potential generated by a point-particle with nonzero electric charge $\mathsf{Z}\in \bbR\backslash \{0\}$; if $\mathsf{Z}>0$, then the potential is attractive, and repulsive otherwise. 
The Coulomb potential is the archetypical example of a long-range interaction.
When $d=3$, both the classical and quantum scattering cross sections coincide and are given by the famous Rutherford formula
\begin{equation}
|f(\theta)|^2=\frac{\mathsf{Z}^2}{4}\frac{1}{(2\sin^2(\theta/2))^2},
\label{eq:Rutherford}
\end{equation}
see \cite[\S1]{YafCol} for definitions and references on Coulomb cross sections. As $\theta \to 0$, the amplitude $|f|$ diverges as $|\mathsf{Z}|/\theta^2$. 
Thus, $f$ is smooth away from the forward direction, with a genuine singularity in the forward direction. 

In this setting, an exact formula for the perturbed plane waves is known. 

When $d=3$, the observation that an exact Coulomb potential leads to a separable problem in parabolic coordinates goes back to Schr\"odinger himself \cite{Schrodinger}, leading to the following formula (which we state for general $d \geq 3$)\setcounter{footnote}{0}\footnote{ \cite[(2.1)]{Temple}\cite[(21)]{Mott}\cite[(7')]{Gordon}}:
\begin{equation}
u = e^{i x} F(r- x), 
\label{eq:Coulomb_plane_exact}
\end{equation}
where $F$ is a certain solution of the confluent hypergeometric equation,
\begin{equation}\label{eq:colexp}
    F(s) =  e^{\pi \mathsf{Z}/4} \frac{\Gamma(\frac{d-1}{2}- \frac{i\mathsf{Z}}{2} )}{\Gamma(\frac{d-1}{2})}\cdot {}_1F_1\Big( \frac{i\mathsf{Z}}{2},\frac{d-1}{2},is  \Big),
\end{equation}
where ${}_1F_1(a,b,-)$ is Kummer's confluent hypergeometric function \cite[\href{http://dlmf.nist.gov/13.2.E2}{Eq.\ 13.2.2}]{NIST}. The real part $\Re u$ of $u$ is plotted in \Cref{fig:Coulomb}. The imaginary parts look similar.

\begin{remark*}
    The normalization constant in \cref{eq:colexp} is a matter of convention. The choice presented above is standard and leads to the simplest asymptotic at $\mathrm{bf}$.
\end{remark*}

\begin{figure}[!htbp]
    \begin{center}
        \includegraphics[scale=1]{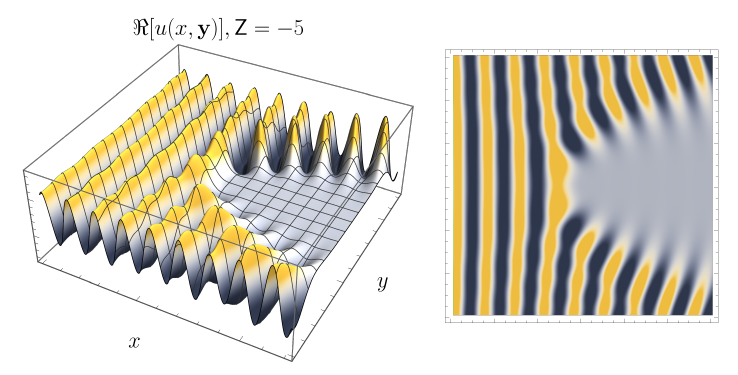}
        \includegraphics[scale=1]{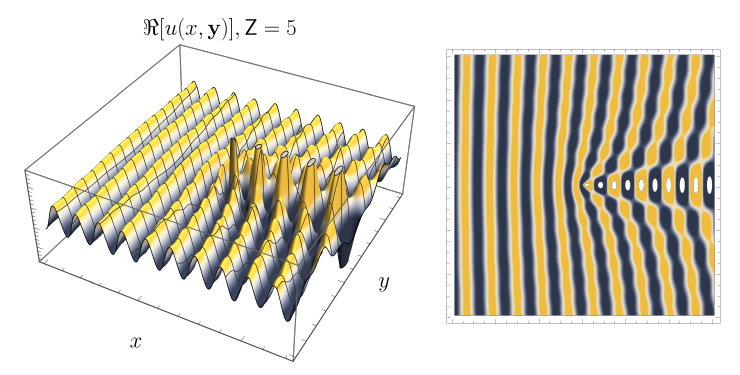}
    \end{center}
    \caption{The (real parts of the) Coulomb plane waves $u$, with $\mathsf{Z}=\pm 5$ and $d=3$. The horizontal scales are the same between the plots, but the vertical scales differ.}
    \label{fig:Coulomb}
\end{figure}

Dramatically apparent is the parabolic wake.  
For a repulsive Coulomb potential, the incoming plane 
wave is deflected, leaving a large parabolic shadow, something rediscovered several times \cite[Problem I.12]{Sommerfeld}\cite{CoulombShadow1}\cite{CoulombShadow2}\cite{CoulombShadow3}\cite{Hoffmann}. For an attractive Coulomb potential, the incoming plane wave is focused, generating an outgoing beam. 

Fixing $\theta\in (0,\pi]$ and sending $r\to\infty$, we have $r-x\to \infty$. So, we can approximate $f(r-x)$ using the large-argument expansions of the confluent hypergeometric functions \cite[\href{http://dlmf.nist.gov/13.7}{\S13.7}]{NIST}: 
\begin{equation}
 F(s) =  s^{-i\mathsf{Z}/2}\Big(1+\frac{D}{s}\Big)+C e^{is } s^{-\frac{d-1}{2}+i\mathsf{Z}/2}
 +O\Big(\frac{1}{s^2}\Big)
\end{equation}
as $s\to\infty$, where $C=e^{-i\pi(d-1)/4}  \Gamma(\frac{d-1}{2}-i\mathsf{Z}/2)\Gamma(i\mathsf{Z}/2)^{-1}$ and $D= \mathsf{Z}(d-3-\mathsf{Z}i)/4$ are some constants. 
Consequently, $u$ has the following form:
\begin{equation}
u=  \bigg[  \overbrace{(r-x)^{-i\mathsf{Z}/2}}^{\mathclap{\text{Dollard correction}}} \underbrace{e^{i  x}\Big(1+\frac{D}{r-x}\Big)}_{\text{perturbed plane wave}} + \overbrace{(r-x)^{i\mathsf{Z}/2}}^{\mathclap{\text{Dollard correction}}} \underbrace{\frac{C e^{i  r}}{(r-x)^{\frac{d-1}{2}}}}_{\mathclap{\text{spherical wave}}} \bigg]
+\underbrace{O\bigg(\frac{1}{(r-x)^2}\bigg)}_{\text{lower order at bf}} .
\end{equation} 
So, 
\begin{equation}
    |f(\theta)|^2 =\Bigg|\frac{Cr^{\frac{d-1}{2}}}{(r-x)^{\frac{d-1}{2}-i\mathsf{Z}/2}}\Bigg|^2 = \frac{|C|^2}{(1-\cos \theta)^{d-1}} =\frac{|\Gamma(\frac{d-1}{2}- \frac{i\mathsf{Z}}{2} )|^2}{|\Gamma(\frac{i\mathsf{Z}}{2})|^2}\frac{1}{ (2\sin^2(\theta/2))^{d-1}},
\end{equation}
recovering the Rutherford formula \cref{eq:Rutherford} when $d=3$.
The ``Dollard correction'' $(r-x)^{\pm i\mathsf{Z}/2}$ is a logarithmically oscillating term whose presence is owed to the long-range of the Coulomb potential.

To summarize, the Coulomb plane wave $u$ has the same structure as the perturbed plane waves for short-range potentials, except with the Dollard correction. 

\subsection{General Coulombic potentials}
Potentials of the form Coulomb + short-range can be handled using a variant of the method we apply in the main theorem. We state the main result in the general black-box setting similar to \S\ref{sec:black-box}.

Let $U\subset \bbR^d$ denote a bounded open set and $A\subset U$ an open subset, such that $U\backslash A \Subset U$, where we again think of $B=U\backslash A$ as the ``black-box.'' We study the PDE 
\begin{equation}\label{eq:bbsteup2}
    Pu=f \text{ on $U^\complement \cup A = \bbR^d\backslash B$,}
\end{equation}
where 
\begin{equation}
    P=\bigtriangleup-1-\frac{\mathsf{Z}}{ r}+Q
\end{equation}
for some $\mathsf{Z} \in \bbR \setminus \{0\}$ (positive $\mathsf{Z}$ is attractive) and $Q$ satisfies the assumptions in \S\ref{subsec:asymptotically_Euclidean}. Let $L \in \bbN^{\geq 2}$ be as in \cref{eq:L_def}. 

We modify the well-posedness assumption to handle long-range problems:
\begin{itemize}
    \item[\hypertarget{LRWP_asump}{\textbf{(LRWP)}}] 
    \textit{For every Schwartz $f\in \calS(\bbR^d)$ supported outside the black-box region $U$, there exists a unique solution  
\begin{equation}
    u\in C^\infty( U^\complement \cup A)
\end{equation}
to the inhomogeneous Helmholtz equation $Pu=f$ such that 
\begin{enumerate}[label=(\roman*)] 
    \item near spatial infinity, $u\in e^{ir} r^{-(d-1)/2+i\mathsf{Z}/2} C^\infty(\overline{\bbR^d})$, where $\mathsf Z$ is the fixed constant
in the definition of $P$,
    \item $u$ satisfies the black-box condition, i.e., $u|_A$ is admissible.
\end{enumerate}
}
\end{itemize}
This differs from the usual well-posedness assumption \hyperlink{WP_asump}{(WP)} in \S\ref{sec:black-box} in the form of $P$ and the extra factor $r^{i\mathsf{Z}/2}$ in (i). When $\mathsf{Z}=0$, we recover the old well-posedness assumption \hyperlink{WP_asump}{(WP)}.
\begin{theorem}
	\label{thm:main_black-box-coul} 
    Consider the setup of \cref{eq:bbsteup2} with a given admissible set of functions satisfying the long-range well-posedness assumption \hyperlink{LRWP_asump}{(LRWP)}. Then, there exists a solution $u\in C^\infty(U^\complement \cup A)$ of the Schr\"odinger--Helmholtz equation 
    \begin{equation} 
        Pu=0
    \end{equation} 
    satisfying the black-box condition and such that, outside of the black-box, $u$ has the form \begin{equation}\label{eq:Coul-bb}
        u =  u_{\mathsf{Z}}(x,\bfy)+ \rho_{\mathrm{bf}}^{i\mathsf{Z}/2} e^{ix}   \frac{a}{r^{L-1}}  + \rho_{\mathrm{bf}}^{-i\mathsf{Z}/2}e^{ir} \frac{w}{r^{(d-1)/2}} 
    \end{equation}
    for $a,w$ satisfying $a \in \rho_{\mathrm{ff}}^{-(L-1)} C^\infty(X)$,  $w\in  \calA^{(0,0),\calF}(X)$, for some index set $\calF\subset \bbC \times \bbN$ such that every $(j,k)\in\calF$ not of the form $(n,0)$ or $(n-i\mathsf Z,0)$,
$n\in\bbN$, satisfies $\Re j\ge L-d$, and $u_{\mathsf{Z}}$ is the exact Coulomb plane wave
    \begin{equation}\label{eq:Coulomb_plane_exact2}
        u_{\mathsf{Z}}(x,\bfy) \coloneqq e^{i x} F(r- x),\qquad  F(s) =  e^{\pi \mathsf{Z}/4} \frac{\Gamma(\frac{d-1}{2}- \frac{i\mathsf{Z}}{2} )}{\Gamma(\frac{d-1}{2})}\cdot {}_1F_1\Big( \frac{i\mathsf{Z}}{2},\frac{d-1}{2},is  \Big),
    \end{equation}
    as in \cref{eq:Coulomb_plane_exact}.
\end{theorem}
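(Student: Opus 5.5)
Since $u_{\mathsf{Z}}$ solves $(\triangle-1-\mathsf{Z}/r)u_{\mathsf{Z}}=0$ exactly, I would start from $u_0=\chi u_{\mathsf{Z}}$ instead of $\chi e^{ix}$ and run the three-step algorithm of \S\ref{sec:outline} with $Q$ as the only perturbation. The first step is to record the structure of $u_{\mathsf{Z}}$ on $X$. From the large-argument expansion of ${}_1F_1$ and the fact that $r-x$ parametrizes $\mathrm{ff}^\circ$, with $r-x\sim\hat y^2$ and $\rho_{\mathrm{bf}}\sim 1/(r-x)$ near $\mathrm{bf}$, I expect
\begin{equation*}
    u_{\mathsf{Z}}\in \rho_{\mathrm{bf}}^{i\mathsf{Z}/2}e^{ix}C^\infty(X)+\rho_{\mathrm{bf}}^{-i\mathsf{Z}/2}\rho_{\mathrm{bf}}^{(d-1)/2}e^{ir}C^\infty(X),
\end{equation*}
with both pieces smooth at $\mathrm{ff}$. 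The forcing $f_1=-Qu_0$ then has the same two-oscillation form, carrying an extra $\rho_{\mathrm{bf}}^{L}\rho_{\mathrm{ff}}^{2L}$.

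The plane-wave part $\rho_{\mathrm{bf}}^{i\mathsf{Z}/2}e^{ix}\tilde f$ is handled by Step~1 after conjugating by $\rho_{\mathrm{bf}}^{i\mathsf{Z}/2}e^{ix}$. The Coulomb term $-\mathsf{Z}/r$ enters the normal operator at $\mathrm{bf}$, giving $N=-2i\partial_x-\mathsf{Z}/r$ plus the conjugation term. Conjugation by $(r-x)^{-i\mathsf{Z}/2}$ is exactly what makes $N$ again equal to $-2i\partial_x$ modulo one order of extra decay, since $u_{\mathsf{Z}}$ is an exact solution. So the forward-integration mapping properties (\Cref{prop:mainI_init0}, \Cref{cor:mainI}) carry over, possibly with logarithms at $\mathrm{ff}$ that we would allow. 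The outcome is a term $\rho_{\mathrm{bf}}^{i\mathsf{Z}/2}e^{ix}a/r^{L-1}$ with $a\in\rho_{\mathrm{ff}}^{-(L-1)}C^\infty(X)$.

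The spherical part is handled by Step~3 after conjugating by $\rho_{\mathrm{bf}}^{-i\mathsf{Z}/2}e^{ir}$. There the normal operator is again $-2i\partial_r-i(d-1)/r$, because the Dollard factor cancels the $\mathsf{Z}/r$. The recursion of \Cref{prop:III_main} goes through verbatim.

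At $\mathrm{ff}$ the Coulomb term has b-order $-2$, the same as $N_{\mathrm{ff}}(\hat P)$, so it modifies the indicial family. After the conjugations of \Cref{lem:QHO_final}, I expect $I(j)$ to become a QIHO-type operator shifted by a constant multiple of $\mathsf{Z}$. This is exactly Schrödinger's parabolic separation behind $F$: the radial ODE remains confluent hypergeometric, with $a=a(j)$ shifted by $i\mathsf{Z}/2$. The kernel/cokernel analysis of \S\ref{subsec:obstruction} and \Cref{lem:coker_fund} should then apply with complex indicial roots, since the Wronskian's zeros in $j$ remain simple. This produces the index set $\calF\subset\bbC\times\bbN$. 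Its nonsmooth members have real part $\ge L-d$, together with the extra members $(n-i\mathsf{Z},0)$, which arise from rewriting $\rho_{\mathrm{bf}}^{i\mathsf{Z}/2}e^{ix}$ as $\rho_{\mathrm{bf}}^{-i\mathsf{Z}/2}e^{ir}\cdot(\text{stuff})$ near $\mathrm{ff}$, the Coulomb analogue of \Cref{lem:gaussian_conversion}.

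Finally, the Schwartz remainder is removed using \hyperlink{LRWP_asump}{(LRWP)}, exactly as in the proof of \Cref{thm:main_black-box}.

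I expect the main obstacle to be the $\mathrm{ff}$ step, in particular redoing \Cref{prop:QHO_mapping} for the $\mathsf{Z}$-shifted operator, where the outgoing solution now carries the phase $\hat y^{\pm i\mathsf{Z}}$. My first attempt would be to conjugate by $\hat y^{-i\mathsf{Z}}$ together with $e^{i\hat y^2/4}$ and check that the complex-spring-constant limiting absorption argument of \S\ref{sec:QHO} is unaffected by this lower-order change.
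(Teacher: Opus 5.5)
Your overall plan is the paper's: start from $\chi u_{\mathsf Z}$, split $-Qu_{\mathsf Z}$ into a plane-wave part and a spherical part, Dollard-conjugate, run the three steps, and finish with \textbf{(LRWP)}. The genuine gap is where you send the spherical part. After removing $\rho_{\mathrm{bf}}^{-i\mathsf Z/2}e^{ir}$, that part lies in $\rho_{\mathrm{bf}}^{L+(d-1)/2}\rho_{\mathrm{ff}}^{2L}C^\infty(X)=\langle r\rangle^{-L-(d-1)/2}\rho_{\mathrm{ff}}^{-(d-1)}C^\infty(X)$. This is because the factor $(r-x+1)^{-(d-1)/2}$ coming from $u_{\mathsf Z}$ has no decay at $\mathrm{ff}$. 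So, generically, it grows like $\theta^{-(d-1)}$ relative to $r^{-L-(d-1)/2}$ as $\theta\to0$, and it is not in $\langle r\rangle^{-(d+3)/2}C^\infty(\overline{\bbR^d})$. Consequently the recursion of \Cref{prop:III_main} does not go through ``verbatim''. The coefficients of the expansion at $\infty\bbS^{d-1}$ are singular at the forward point, and each angular operator $E_{k,\ell}$ costs another $\theta^{-2}$. If you sum on $X$ instead, you are left with an error that is rapidly decaying at $\mathrm{bf}$ but only finitely decaying at $\mathrm{ff}$, which is exactly what the $\mathrm{ff}$ step exists to remove. The paper instead leaves this piece ($\dbtilde{f}_{1,1}$) to Step~2. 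Since $L\ge 2$, it lies in $\rho_{\mathrm{bf}}^{(d+3)/2}\rho_{\mathrm{ff}}^{L+1}C^\infty(X)$, so \Cref{prop:II_main} (for $\dbhat{P}$) absorbs it together with the Step~1 residual. Relatedly, the $\mathrm{ff}$ step leaves an error in $e^{ir}r^{-(d+3)/2+i\mathsf Z/2}C^\infty(\overline{\bbR^d})$, not a Schwartz one. Step~3 must therefore come after the $\mathrm{ff}$ step; as written, you hand the $\mathrm{ff}$-step remainder directly to \textbf{(LRWP)}.

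Two further points.

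First, \textbf{(LRWP)} and Step~3 are phrased with the radial phase $r^{i\mathsf Z/2}$. The phase $\rho_{\mathrm{bf}}^{-i\mathsf Z/2}=(r-x+1)^{i\mathsf Z/2}$ differs from it by $(r\rho_{\mathrm{bf}})^{-i\mathsf Z/2}$, with $r\rho_{\mathrm{bf}}\sim\rho_{\mathrm{ff}}^{-2}$, so the two can be interchanged only on functions that are Schwartz at $\mathrm{ff}$. When you rewrite $v_3+v_4\in e^{ir}r^{-(d-1)/2+i\mathsf Z/2}C^\infty(\overline{\bbR^d})$ in the form required for $w$, this factor produces $\rho_{\mathrm{ff}}^{-i\mathsf Z}$; that is where the indices $(n-i\mathsf Z,0)$ come from. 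Your proposed source cannot produce them: $\rho_{\mathrm{bf}}^{i\mathsf Z/2}e^{ix}=\rho_{\mathrm{bf}}^{-i\mathsf Z/2}e^{ir}\cdot\rho_{\mathrm{bf}}^{i\mathsf Z}e^{-i(r-x)}$, and $\rho_{\mathrm{bf}}$ is smooth and positive on $\mathrm{ff}^\circ$.

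Second, in Step~1 you allow logarithms at $\mathrm{ff}$, yet you claim $a\in\rho_{\mathrm{ff}}^{-(L-1)}C^\infty(X)$. These are incompatible: logarithms in $a$ near $\mathrm{bf}\cap\mathrm{ff}$ cannot be traded into $w$, since $e^{ix}/e^{ir}$ oscillates at $\mathrm{bf}$. The log cancellation of \S\ref{sec:logs} assumes $E-E_0\in\operatorname{Diff}^{2,-2}_{\mathrm{sc}}(\overline{\bbR^d})$. That fails after Dollard conjugation, because coefficients such as $\mathsf Z/(r(r-x+1))$, and the forcing itself, are smooth only on $X$. So this needs its own argument; the paper's sketch simply invokes \Cref{prop:I_main} \emph{mutatis mutandis}.

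Your $\mathrm{ff}$ analysis, by contrast, is sound and matches the role of $\dbhat{P}$. Since $(r-x+1)|_{\mathrm{ff}}=1+\hat{y}^2/2$, the indicial family of $\dbhat{P}$ is the short-range one at $j+i\mathsf Z$, conjugated by the smooth nonvanishing function $((1+\varrho)/2)^{i\mathsf Z/2}$. Equivalently, $L(j)$ is replaced by $L(j)-\mathsf Z=L(j+i\mathsf Z)$. For $\mathsf Z\neq0$ and real $j$, the parameter $a$ is then non-real, so $\frakW_-$ never vanishes along the recursion and no cokernel obstruction arises. What must be checked is \Cref{prop:QHO_mapping} at complex spectral parameter, as you anticipated.
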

The conclusion of this theorem \cref{eq:Coul-bb} differs from that of \Cref{thm:main_black-box} in two aspects: the appearance of the Coulomb plane wave $u_{\mathsf{Z}}$ and the Dollard corrections $\rho_{\mathrm{bf}}^{\pm i \mathsf{Z}/2}$, and the ff index set of $w$ is possibly complex. Setting $\mathsf{Z}=0$ completely recovers \Cref{thm:main_black-box}. 
Theorem \ref{thm:main_black-box-coul} applies to the example of multiple Coulomb singularities:
\begin{example}[Multiple Coulomb Singularities]\label{examp:multsingularities}
Let $P \coloneqq \bigtriangleup-1+V$ where $V$ satisfies
    \begin{equation}\label{eq:coulsings}
        \exists \mathsf{Z}_1,\ldots,\mathsf{Z}_k\in \bbR,z_1,\ldots,z_k \in \bbR^d \text{ s.t. }V+\frac{\mathsf{Z}_1}{|z-z_1|}+\cdots+\frac{\mathsf{Z}_k}{|z-z_k|} \in \langle r \rangle^{-2}C^\infty(\overline{\bbR^d} ).
    \end{equation}
    Then, take $U$ to be an open ball centered at the origin containing all of the $z_k$'s, and let $A$ be an annulus constructed from $U$ by subtracting a closed ball containing all of the $z_k$'s with a slightly smaller radius than that of $U$.
   
    The Helmholtz operator $P=\triangle-1+V$, defined initially on $C_{\mathrm{c}}^\infty(\bbR^d\backslash \{z_1,\ldots,z_k\})$, is semibounded by localizing to each singularity individually and using Hardy's inequality (or the fractional version thereof in $d=2$, see \cite{dimtwocol}). We define a Helmholtz solution $u\in C^\infty(A)$ to be admissible if it extends to a Helmholtz solution on $U\backslash \{z_1\ldots,z_k\}$ lying in the Friedrichs domain.

    The limiting absorption principle in this setting can be shown by arguments in \S\ref{sec:black-box_LAP} with minor modifications, see \cite[\S16]{Me94} for the attractive $\mathsf{Z}>0$ case, and \cite[Theorem 1.2]{ACL} for a stronger, uniform result down to low energy. If $\mathsf Z\coloneqq\mathsf Z_1+\cdots+\mathsf Z_k\ne0$, this proves
\hyperlink{LRWP_asump}{(LRWP)} with that value of $\mathsf Z$, and
\Cref{thm:main_black-box-coul} applies. If $\mathsf Z=0$, the leading
Coulomb term cancels and the short-range \Cref{thm:main_black-box}
applies instead, with $L=2$.
\end{example} 
Using the boundary-defining $\rho_{\mathrm{bf}}=(r-x+1)^{-1}$ (valid away from the origin), the potential-scattering case (i.e.\ the Coulombic version of \Cref{thm:main}) becomes:
\begin{itemize}
    \item[] Suppose $V=-\mathsf{Z}/r+V_{\mathrm{sr}}$ where $V_{\mathrm{sr}} \in \langle r \rangle^{-L}C^{\infty}(\overline{\bbR^d};\bbR),L \geq 2$. Away from the origin, the perturbed plane wave has the form  
    \begin{equation}
        u(x,\bfy) = u_{\mathsf{Z}}(x,\bfy)+ (r-x+1)^{-i\mathsf{Z}/2} e^{ix}   \frac{a}{r^{L-1}}  + (r-x+1)^{i\mathsf{Z}/2}e^{ir} \frac{w}{r^{(d-1)/2}} 
    \end{equation}
    for some $a,w \in C^\infty(\bbR^{d})$ as above. 
\end{itemize}

We sketch the proof of \Cref{thm:main_black-box-coul} in the context of the potential-scattering case above since the reduction to this case is similar to that of  \Cref{thm:main_black-box}. The proof requires slight modifications, as explained below.  Unfortunately, the Coulomb potential enters some normal operators $N$. Indeed, 
\begin{equation}
    \frac{1}{r} , N \in \operatorname{Diff}_{\mathrm{b}}^{2,-1,-2}(X) 
\end{equation}
have the same decay orders at $\mathrm{bf},\mathrm{ff}$.
Thus, \cref{eq:Ns} gets modified to 
    \begin{equation}
    \label{eq:normalops}
    \begin{split}
        N_{\mathrm{bf}}(\tilde{P}) &=  -2i\partial_x -\mathsf{Z}/r\\
        N_{\mathrm{bf}}(\hat{P}) &= -2i\partial_r -  i(d-1) /r -\mathsf{Z}/r,\\
        N_{\mathrm{ff}}(\tilde{P}) &= \triangle_{\bfy}-2i\partial_x -\mathsf{Z}/r ,\\
        N_{\mathrm{ff}}(\hat{P}) &=   \triangle_{\bfy}-2i\partial_r-i(d-1)/r-\mathsf{Z}/r.
        \end{split} 
    \end{equation}
The normal operator solvability theory is virtually unchanged, except 
for the appearance of the Dollard factor at $\mathrm{bf}$. In the analysis of $N_{\mathrm{bf}}(\tilde{P})$, this is owed to the calculation 
\begin{equation}
    \partial_x \log (r-x+1)  = -\frac{1}{r}\Big(1-\frac{1}{r-x+1} \Big), 
\end{equation}
hence 
\begin{equation}\label{eq:nickbf}
    N_{\mathrm{bf}}\bigg(\exp \Big(- \frac{\mathsf{Z}}{2i} \log (r-x+1) \Big)   \tilde{P} \bigg[  \exp \Big( \frac{\mathsf{Z}}{2i}  \log(r-x+1) \Big) \bullet \bigg]\bigg) = -2i\partial_x.   
\end{equation}
So the solvability theory of $N_{\mathrm{bf}}(\tilde{P})$ is simply ``conjugated'' by 
\begin{equation}
    \exp \Big( \frac{\mathsf{Z}}{2i} \log (r-x+1) \Big)     = (r-x+1)^{\mathsf{Z}/2i} ,
\end{equation}
the Dollard correction. In the following, we give a sketch of the proof similar to that of \S\ref{s:complete_proof}, highlighting the modifications.
\begin{proof}[Sketch of the modified proof]
Like in \S\ref{s:complete_proof}, fix $\chi\in C^\infty(\bbR^d)$ such that $\chi=0$ identically in some large open ball and $\chi=1$ identically outside of some larger open ball. The very first step of the proof is changing our initial ``guess'' from $e^{ix}$ to the Coulomb plane wave $u_{\mathsf{Z}}.$ Letting $f_1\coloneqq -P[\chi u_{\mathsf{Z}}]$ and $\tilde{f}_1 = e^{-ix} f_1$, we perform a further conjugation by the Dollard correction. More specifically, define
\begin{equation}\label{eq:nicknotation}
    \dbtilde{P} \coloneqq (r-x+1)^{i\mathsf{Z}/2}\tilde{P}(r-x+1)^{-i\mathsf{Z}/2},\qquad 
    \begin{aligned}
        \dbtilde{f}_1 \coloneqq&\ (r-x+1)^{i\mathsf{Z}/2}\tilde{f}_1,\\
        =&\ (r-x+1)^{i\mathsf{Z}/2}e^{-ix} f_1.
    \end{aligned}
\end{equation}
\Cref{eq:nickbf} says that $\dbtilde{P}$ (unlike $\tilde{P}$!) satisfies 
\begin{equation} 
N_{\mathrm{bf}}(\dbtilde{P})=-2i\partial_x.
\end{equation} 
So, we can apply \Cref{prop:I_main}, \textit{mutatis mutandis}, with $\dbtilde{P}$ in place of the $\tilde{P}$ there.

Outside the big ball, $\dbtilde f_1=-(r-x+1)^{i\mathsf{Z}/2}e^{-ix}V_{\mathrm{sr}} u_{\mathsf{Z}}$.
By the large-argument expansions of the confluent hypergeometric functions \cite[\href{http://dlmf.nist.gov/13.7}{\S13.7}]{NIST}, we see that
\begin{equation}\label{eq:twopiece}
   \dbtilde{f}_1=\dbtilde{f}_{1,0}+\dbtilde{f}_{1,1},\quad
   \begin{array}{ll}
       \dbtilde{f}_{1,0} & \in  V_{\mathrm{sr}}\cdot C^{\infty}(X) \\
       \dbtilde{f}_{1,1}  & \in V_{\mathrm{sr}}\cdot e^{i(r-x)}(r-x+1)^{-\frac{d-1}{2}+i\mathsf{Z}}C^{\infty}(X)
   \end{array}
\end{equation}
outside some big ball. Instead of the entire error $\dbtilde{f}$ being solved away (up to a decaying term at bf), we instead only solve away $\dbtilde{f}_{1,0}$ at the first step, leaving $\dbtilde{f}_{1,1}$ for the second step.  \Cref{prop:I_main} gives
\begin{equation} 
    \dbtilde{v}_1 \in \rho_{\mathrm{bf}}^{L-1} \rho_{\mathrm{ff}}^{L-1}C^\infty(X)
\end{equation} 
 such that $\dbtilde{P} \dbtilde{v}_1 - \dbtilde{f}_{1,0} \in \rho_{\mathrm{bf}}^\infty \rho_{\mathrm{ff}}^{L+1} C^\infty(X)$. If $v_1 \coloneqq (r-x+1)^{-i\mathsf{Z}/2}e^{ix} \dbtilde{v}_1$, this means that $u_1\coloneqq u_{\mathsf{Z}}+v_1$ satisfies 
\begin{equation}\label{eq:onepiece}
    f_2 \coloneqq -f_{1,1}-Pu_1 \in e^{ix} \rho_{\mathrm{bf}}^\infty \rho_{\mathrm{ff}}^{L+1} C^\infty(X) \overset{\text{Lem.\ \ref{lem:gaussian_conversion} }}{=}  e^{ir} \rho_{\mathrm{bf}}^\infty \rho_{\mathrm{ff}}^{L+1} C^\infty(X) .
\end{equation}
Now let $\hat{f}_2+\hat{f}_{1,1} = e^{-ir} (f_2+f_{1,1})$ and define the further conjugations \begin{equation}\label{eq:nicknotation2}
    \dbhat{P} \coloneqq (r-x+1)^{-i\mathsf{Z}/2}\hat{P}(r-x+1)^{i\mathsf{Z}/2},\qquad 
    \begin{aligned}
        \dbhat{f}_2+\dbhat{f}_{1,1} \coloneqq&\ (r-x+1)^{-i\mathsf{Z}/2}(\hat{f}_2+\hat{f}_{1,1}),\\
        =&\ (r-x+1)^{-i\mathsf{Z}/2}e^{-ir}(f_2+f_{1,1}).
    \end{aligned}
\end{equation}
As with $\dbtilde{P}$, the conclusion of \Cref{prop:II_main} applies with $\dbhat{P}$ in place of our original $\hat{P}$. This is the purpose of the extra Dollard conjugation.

By \cref{eq:twopiece} and \cref{eq:onepiece}, we have
\begin{equation}
    \dbhat{f}_2+\dbhat{f}_{1,1}\in \rho_{\mathrm{bf}}^{L+\frac{d-1}{2}} \rho_{\mathrm{ff}}^{L+1} C^\infty(X)\subset \rho_{\mathrm{bf}}^{\frac{d+3}{2}} \rho_{\mathrm{ff}}^{L+1} C^\infty(X).
\end{equation}
\Cref{prop:II_main} applies with $\dbhat{P}$ in place of $\hat{P}$. This gives
\begin{equation} 
    \dbhat{v}_2 \in \rho_{\mathrm{bf}}^{(d-1)/2} \rho_{\mathrm{ff}}^{L-1} \calA^{(0,0),\calF}(X),
\end{equation} 
(supported outside some big ball), for some index set $\calF\subset \bbN^2$, such that $\dbhat{P} \dbhat{v}_2 - \dbhat{f}_2-\dbhat{f}_{1,1} \in \langle r \rangle^{-(d+3)/2} C^\infty(\overline{\bbR^d})$. Thus, setting $v_2 \coloneqq (r-x+1)^{i\mathsf{Z}/2}e^{ir} \dbhat{v}_2$ and $u_2 = u_1 + v_2$, 
\begin{equation}
    f_3\coloneqq -Pu_2  \in (r-x+1)^{i\mathsf{Z}/2}e^{ir} \langle r \rangle^{-(d+3)/2} \calA^{(0,0),\infty}(X)\subset e^{ir}r^{-(d+3)/2+i\mathsf{Z}/2}C^\infty(\overline{\bbR^d}). 
\end{equation}
Next, by a slight modification of \Cref{prop:III_main} (and the proof thereof), there exists some $v_3\in e^{ir}  r^{-(d+1)/2+i\mathsf{Z}/2}C^\infty(\overline{\bbR^d}) $, supported outside of some big ball, such that $Pv_3 - f_3\in \calS(\bbR^d)$. Then, $u_3 \coloneqq u_2+v_3$ satisfies 
\begin{equation}
    f_4\coloneqq -Pu_3  \in \calS(\bbR^d).   
\end{equation}
Finally, the limiting absorption principle in the long-range setting (again, see \cite[\S16]{Me94} for the idea and \cite[Theorem 1.2]{ACL} for the full, rigorous theorem) gives a $v_4 = \lim_{\varepsilon \to0^+}(P-i\varepsilon)^{-1} f_4 \in e^{i r} r^{-(d-1)/2+i\mathsf{Z}/2} C^\infty(\overline{\bbR^d})$ such that $Pv_4 = f_4$. Then, 
\begin{equation} 
    u_4\coloneqq u_3 + v_4 = u_{\mathsf{Z}} + v_1 +v_2 + v_3 + v_4 
\end{equation} 
satisfies $Pu_4 = 0$. 

Writing $a = e^{-ix} r^{L-1}(r-x+1)^{i\mathsf{Z}/2}v_1$ and $w=e^{-ir} r^{(d-1)/2}(r-x+1)^{-i\mathsf{Z}/2} (v_2+v_3+v_4)$, we have  
\begin{equation}
        u_{\mathsf{Z}}(x,\bfy)+ (r-x+1)^{-i\mathsf{Z}/2} e^{ix}   \frac{a}{r^{L-1}}  + (r-x+1)^{i\mathsf{Z}/2}e^{ir} \frac{w}{r^{(d-1)/2}} 
\end{equation}
away from the origin, and $a,w$ lie in the desired spaces since
\begin{equation}
     \frac{r}{r-x+1}=r \rho_{\mathrm{bf}}\sim \rho_{\mathrm{ff}}^{-2}
\end{equation} 
away from the origin. This completes the construction of the perturbed plane wave.

\end{proof}

We end this section by noting the related work of Yafaev \cite{YafaevDiagonalLong}, which investigated the forward singularity of the scattering matrix for arbitrary long-range potentials. 

\section{Perturbed plane waves for central potentials}
\label{sec:central}
In this section, we consider the case of a (real) short-range central potential $V \in \langle r \rangle^{-2} C^\infty(\overline{\bbR^d}\backslash \{0\})$, $d\geq 2$. Thus, 
\begin{equation} 
    \hspace{8em} V=V(r),\qquad r=\sqrt{x^2+\lVert \bfy \rVert^2}, 
\end{equation} 
depends only on the distance $r\geq 0$ from the origin $r=0$, at which $V(r)$ may have a singularity. We assume that the singularity at the origin is sufficiently mild; specifically, 
\begin{equation}
    \exists \alpha \geq  -\frac{(d-2)^2}{4} \text{ s.t. } V(r)  - \frac{\alpha}{r^2} \in r^{-1} \langle r \rangle^{-1} C^\infty(\overline{\bbR^d}) 
\end{equation}
suffices for the argument below. Then, the Helmholtz operator $P=\triangle-1+V$, acting on 
$C_{\mathrm{c}}^\infty(\bbR^d\backslash \{0\})$, is bounded below as a quadratic form on $L^2$ and therefore admits a canonical self-adjoint extension (the Friedrichs extension). 
This is not strictly necessary for the construction of perturbed plane waves, but it is necessary for their uniqueness. 

\subsection{Preliminaries on separation of variables}
Because the potential is central, we may leverage spherical symmetry by separating variables. The perturbed plane wave itself will only enjoy cylindrical symmetry, namely around the $x$-axis. Thus, the perturbed plane wave can be written as a pointwise convergent sum
\begin{equation}
    u = \sum_{\ell=0}^\infty Y_\ell(\theta) u_\ell(r), 
\end{equation}
where the sum is over 
the zonal (hyper)spherical harmonics $Y_\ell$, i.e.\ the (hyper)spherical harmonics invariant with respect to rotations/reflections about the $x$-axis.
We normalize $Y_\ell$ so that $\|Y_\ell\|_{L^2(\bbS^{d-1})}=1$.
Then, 
\begin{equation} 
\triangle_{\bbS^{d-1}} Y_\ell =\lambda_\ell Y_\ell, 
\end{equation}
where $\lambda_\ell = \ell(\ell+d-2)$. 
Since the zonal harmonics only depend on the variable $\theta = \operatorname{arccos}(x/r)$, we write their argument merely as `$\theta$.'

\begin{remark}
    When $d=2$, the zonal harmonics are those invariant under reflection across the $x$-axis. 
\end{remark}

Let
\begin{equation} 
    N(d,\ell) = 
    \begin{cases}
        1 & (d=2\text{ and }\ell=0), \\ 
         \frac{(\ell+d-3)!(2\ell+d-2)}{\ell!(d-2)!}  & (\text{otherwise}) 
    \end{cases}
\end{equation} 
denote the degeneracy of the $\ell$th eigenspace of (hyper)spherical harmonics. Then, in terms of  $\theta=\operatorname{arccos}(x/r)$:
\begin{itemize}
    \item if $d=2$, then $Y_0(\theta) = 1/\sqrt{2\pi}$ and $Y_\ell(\theta) = \cos(\ell \theta)/\sqrt{\pi}$ for $\ell\geq 1$,
    \item if $d\geq 3$, then 
    \begin{equation}\label{eq:Y_ell_m_form}
    Y_\ell = \overbrace{\frac{1}{C_\ell^\nu(1)} \sqrt{\frac{N(d,\ell)}{\operatorname{Area}(\bbS^{d-1})}}  }^{\text{Normalization factor}}  C_\ell^{\nu}(\cos \theta),
\end{equation}
where $\nu=\frac{d-2}{2}$, $\operatorname{Area}(\bbS^{d-1})=2\pi^{d/2}/\Gamma(d/2)$ is the surface area of the $(d-1)$-sphere, and
\begin{equation}
    C_\ell^{(\nu)}(z) = \sum_{k=0}^{\lfloor \ell/2 \rfloor} (-1)^k \frac{\Gamma(\ell-k+\nu)}{\Gamma(\nu) k! (\ell-2k)!} (2z)^{\ell-2k}
\end{equation}
is a Gegenbauer polynomial. 
\end{itemize}

When $d=3$, the formula \cref{eq:Y_ell_m_form} simplifies to the more familiar 
\begin{equation}
    Y_\ell = \sqrt{\frac{2\ell +1}{4\pi}}P_{\ell}(\cos \theta) .
\end{equation}

\subsection{Construction of perturbed plane waves}

Since $\triangle = -\partial_r^2 - \frac{d-1}{r}\partial_r + r^{-2}\triangle_{\bbS^{d-1}}$ in spherical coordinates, 
\begin{equation}
    P u = \sum_{\ell=0}^\infty Y_\ell(\theta) \bigg[ \underbrace{-\frac{\partial^2}{\partial r^2} - \frac{d-1}{r} \frac{\partial}{\partial r} -1+ V(r) + \frac{\lambda_\ell}{r^2} }_{P_\ell}\bigg] u_\ell(r). 
\end{equation}
So, $Pu=0\iff P_\ell u_\ell =0$ for all $\ell\geq 0$; 
thus, $u_\ell$ is an element of the two-dimensional kernel $\ker P_\ell \subset C^\infty(\bbR^+)$ of the ordinary differential operator $P_\ell\in \operatorname{Diff}^2(\bbR^+)$.   
Which element is determined by two constraints: 
\begin{enumerate}[label=(\roman*)]
    \item the ``recessive'' condition at $r=0$, which enforces $u \in L^2_{\mathrm{loc}}$ (or better), 
    \item a large-$r$ matching condition designed to ensure that $u$ has the desired form as $r\to\infty$, namely $u\approx e^{ix}$ modulo an outgoing term and $O(1/r)$ plane wave terms. 
\end{enumerate}

The $r\to 0$ behavior of elements of $\ker P_\ell$ is a classical application of the theory of regular singular ODE --- see e.g. \cite[Chp.\ 5.4, 5.5]{olver}. The behavior is governed by the terms in $P_\ell$ which have the worst behavior under dilations, namely 
\begin{equation}
    N_\ell = - \frac{\partial^2}{\partial r^2} - \frac{d-1}{r} \frac{\partial}{\partial r} + \frac{\alpha+\lambda_\ell}{r^2}.
\end{equation}
If $u_\ell(r) \sim r^c$ as $r\to 0^+$, then we should have $N_\ell r^c=0$. This is true if and only if $0= -c^2 - (d-2)c + \alpha+\lambda_\ell$. The solutions of this quadratic equation are 
\begin{equation}
    c_{\pm,\ell} =-\frac{d-2}{2} \pm \sqrt{\frac{(d-2)^2}{4}+\alpha+\lambda_\ell },
\end{equation}
the ``indicial roots.'' Note that, under our assumptions on $\alpha$, the indicial roots are real and satisfy $c_{-,\ell}<c_{+,\ell}$,
unless
\begin{equation}\label{eq:recessive_exception}
    \alpha= -\frac{(d-2)^2}{4} \text{ and }\ell=0. 
\end{equation}
This suggests that a generic element of $\ker P_\ell$ blows up like $r^{c_-}$ as $r\to 0^+$, while a one-dimensional subspace of \emph{recessive} solutions only go as $r^{c_+}$. This is indeed the case (with the one exception noted above), by the standard regular singular theory. 
In the exceptional case \cref{eq:recessive_exception}, the indicial roots $c_{\pm,\ell}$ are both equal to $c=-(d-2)/2$. In this case, a generic element of $\ker P_\ell$ blows up like $r^c \log r$ as $r\to 0^+$, whereas the recessive solutions only go as $r^c$. 
In either case, the condition (i) says that $u_\ell$ has to lie in the one-dimensional subspace of recessive solutions: 
\begin{equation}
    u_\ell = O(r^{c_{+,\ell}}) \text{ as }r\to 0^+.
\end{equation}

The large-$r$ asymptotics of general $v\in \ker P_\ell$ is a classic application of the Liouville--Green theory: 
\begin{equation}\label{eq:coeff_defs}
    \exists a_\pm \in \bbC\text{ s.t. }v- \frac{1}{r^{(d-1)/2}} (a_+ e^{ir} + a_- e^{-ir} ) = O\Big(\frac{1}{r^{(d+1)/2} } \Big) . 
\end{equation}
An \emph{outgoing} solution is one where $a_-=0$. An \emph{incoming} solution is one where $a_+=0$. 
The map $v\mapsto (a_-,a_+)\in \bbC^2$ is a linear isomorphism $ \ker P_\ell \cong \bbC^2$. So, solutions are uniquely characterized by their ``scattering data'' $a_\pm$, and any pair of scattering data come from some valid solution.

The expansion of the plane wave into spherical harmonics is
\begin{equation}
      e^{ix} = \sum_{\ell=0}^\infty Y_\ell(\theta) \int_{\bbS^{d-1}} Y_\ell(\omega)^* e^{ir\hat{\bfx}\cdot \omega} \dd \omega  
    =  \sum_{\ell=0}^\infty (2\pi)^{d/2}  i^\ell \sqrt{\frac{N(d,\ell)}{\operatorname{Area}(\bbS^{d-1})}}  \frac{J_{\ell+d/2-1}(r)}{r^{d/2-1}} Y_\ell(\theta),\label{eq:plane_Fourier_expansion}
\end{equation}
where $J_{\ell+d/2-1}(r)$ denotes the Bessel function of order $\ell+d/2-1$. Hankel's large-argument expansion of the Bessel function says 
\begin{equation}\label{eq:Hankel}
    J_{\nu}(r) = \sqrt{\frac{2}{\pi r}} \cos\Big( r  - \frac{\pi \nu}{2} - \frac{\pi}{4}  \Big) +O \Big( \frac{1}{r^{3/2}} \Big) 
\end{equation}
as $r\to\infty$ \cite[\href{http://dlmf.nist.gov/10.17.3}{\S10.17.3}, \href{http://dlmf.nist.gov/10.17.iii}{\S10.17(iii)}]{NIST}. 
The constant in the big-O is \emph{not} uniform as $\ell\to\infty$. This is crucial, because otherwise, plugging the Hankel asymptotic into \cref{eq:plane_Fourier_expansion}, we would reach the erroneous conclusion that $e^{ix}=O(1/r^{(d-1)/2})$ as $r\to\infty$.

Nevertheless, the Hankel asymptotic suggests the condition that should be imposed on the $r\to\infty$ behavior of $u_\ell$ 
to make $u$ a perturbed plane wave: \emph{the incoming part of $u_\ell$ should match that} (\cref{eq:coeff_defs}) \emph{in the $\ell$th mode of the plane wave}. 
This means that 
\begin{equation}
    \exists a_+\in \bbC\text{ s.t. }u_\ell - a_+ \frac{e^{ir}}{r^{(d-1)/2}}- (2\pi)^{d/2}  i^\ell \sqrt{\frac{N(d,\ell)}{\operatorname{Area}(\bbS^{d-1})}}  \frac{J_{\ell+d/2-1}(r)}{r^{(d-2)/2}}  = O\Big( \frac{1}{r^{(d+1)/2}} \Big) \text{ as }r\to\infty . 
\end{equation}
This is the precise statement of (ii). 
Via \cref{eq:Hankel}, we can replace the Bessel function here with its leading order incoming part, absorbing the leading outgoing part into a redefinition of $a_+$.

    \begin{proposition}
        There exists a unique solution satisfying (i) and (ii) above.
    \end{proposition}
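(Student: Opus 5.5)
The plan is a two-dimensional linear-algebra argument, mode by mode, on $\ker P_\ell$. Fix $\ell$. By the regular singular analysis above, the recessive solutions form a one-dimensional subspace of $\ker P_\ell$. It is spanned by some nonzero $\psi_\ell$ with $\psi_\ell=O(r^{c_{+,\ell}})$ as $r\to 0^+$; in the exceptional case \cref{eq:recessive_exception} one uses the root $c=-(d-2)/2$ without the logarithm. Condition (i) therefore says exactly that $u_\ell=\kappa\psi_\ell$ for some $\kappa\in\bbC$. Via the isomorphism $v\mapsto(a_-,a_+)$ of \cref{eq:coeff_defs}, write the scattering data of $\psi_\ell$ as $(a_-(\psi_\ell),a_+(\psi_\ell))$.

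Next, reformulate (ii) as a single linear condition on the incoming coefficient. Let $b_\ell$ denote the leading incoming coefficient of the plane-wave mode. By \cref{eq:Hankel},
\begin{equation*}
(2\pi)^{d/2} i^\ell \sqrt{\frac{N(d,\ell)}{\operatorname{Area}(\bbS^{d-1})}}\,\frac{J_{\ell+d/2-1}(r)}{r^{(d-2)/2}}=\frac{1}{r^{(d-1)/2}}\bigl(b_\ell e^{-ir}+b'_\ell e^{ir}\bigr)+O\bigl(r^{-(d+1)/2}\bigr),
\end{equation*}
with $b_\ell\neq 0$ explicit. The $O$-constant is allowed to depend on $\ell$, and this is harmless since we work at fixed $\ell$. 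Condition (ii) then says: there exists $a_+$ such that $u_\ell-r^{-(d-1)/2}(a_+e^{ir}+b_\ell e^{-ir})=O(r^{-(d+1)/2})$. Combining this with \cref{eq:coeff_defs} for $u_\ell$, and using that $e^{\pm ir}r^{-(d-1)/2}$ are linearly independent modulo $O(r^{-(d+1)/2})$, (ii) is equivalent to $a_-(u_\ell)=b_\ell$, with $a_+$ left free.

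Hence (i) and (ii) together reduce to the scalar equation $\kappa\,a_-(\psi_\ell)=b_\ell$. Existence and uniqueness are therefore both equivalent to $a_-(\psi_\ell)\neq 0$, i.e.\ to the statement that \emph{no nonzero recessive solution is purely outgoing}. This is the main step.

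To prove it, suppose $v=\psi_\ell$ had $a_-=0$. Since $V$ and $P_\ell$ are real, take the Wronskian-type flux $\mathcal{J}(r)=r^{d-1}\,\Im\bigl(\bar v\, v'\bigr)$. Because $P_\ell$ has real coefficients and $r^{d-1}(\bar v v'-v\bar v')$ is constant, $\mathcal{J}$ is constant in $r$. As $r\to0^+$, $v$ is a complex multiple of a real recessive solution, so $\mathcal{J}\equiv 0$. Here I expect to check that $r^{d-1}vv'\to0$, or, more robustly, that the phase is constant: the recessive space is spanned by a real function, since complex conjugation preserves it. As $r\to\infty$, the asymptotics \cref{eq:coeff_defs}, which may be differentiated by the Liouville--Green theory, give $\mathcal{J}\to|a_+|^2-|a_-|^2=|a_+|^2$. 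So $a_+=0$ as well, whence $v=0$ by the isomorphism $\ker P_\ell\cong\bbC^2$, a contradiction.

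The delicate points, where I would spend the care, are two. The first is justifying the differentiated large-$r$ asymptotics, which I would take from Liouville--Green with error $O(r^{-(d+1)/2})$ also for the derivative. The second is the exceptional case \cref{eq:recessive_exception}, where reality of the recessive solution still gives zero flux.
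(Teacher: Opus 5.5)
Your proof is correct, and its skeleton matches the paper's. Condition (i) confines $u_\ell$ to the recessive line, which is spanned by a real solution because $P_\ell$ has real coefficients and the recessive condition is invariant under conjugation. Condition (ii) prescribes only the incoming coefficient. Everything then reduces to showing that no nonzero recessive solution is purely outgoing.

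The difference is in how reality is used at that last step. The paper conjugates the large-$r$ expansion of the real recessive solution $v_{0,\ell}$. Uniqueness of the coefficients of $r^{-(d-1)/2}e^{\pm ir}$ then gives $b_{+,\ell}=\overline{b_{-,\ell}}$, the same relation the paper later uses to define the phase shifts. Hence $b_{-,\ell}=0$ forces $b_{+,\ell}=0$, and so $v_{0,\ell}=0$.

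That argument uses only the undifferentiated asymptotics. So your first ``delicate point'', differentiating the Liouville--Green expansion, disappears. It also gives the sharper relation $a_+=\overline{a_-}$ rather than merely $|a_+|=|a_-|$.

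Your conserved flux $r^{d-1}\Im(\bar v v')$ is the ODE version of the boundary-pairing argument the paper uses for injectivity in the black-box appendix. It is the argument that survives when zero flux at the inner boundary is known but no real recessive solution is available. In your proof, however, the vanishing of the flux is itself deduced from reality, so here it costs more than it buys.
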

    \begin{proof}
        Let $v_{0,\ell},v_{-,\ell},v_{+,\ell}\in \ker P_\ell$ denote nonzero recessive, incoming, and outgoing solutions, respectively. Because the potential $V$ is real, and because the recessive condition is closed under complex conjugation, we can choose $v_{0,\ell} $ to be real. We can choose $v_{\pm,\ell} $ such that 
        \begin{equation}
            v_{\pm,\ell} = \frac{e^{\pm i r}}{r^{(d-1)/2}}  + O\Big( \frac{1}{r^{(d+1)/2}} \Big)\text{ as }r\to\infty .
        \end{equation}

        \textit{Claim: $v_{\pm,\ell} $ is not recessive}. Indeed, since $v_{0,\ell} $ is real, no nonzero multiple thereof is purely incoming or purely outgoing. 

        The stated proposition follows immediately: the solution is $C v_{0,\ell}$ for $C$ chosen so as to satisfy (ii), which is possible because $v_{0,\ell}$ is not purely outgoing.
    \end{proof}

    Now, for each $\ell$, choose the solution $u_\ell$ which satisfies 
    \begin{enumerate}[label=(\roman*)] 
        \item $u_\ell = O(r^{c_{+,\ell}})$ as $r\to 0$, 
        \item $ \exists a_+\in \bbC$ such that 
        \begin{equation}\label{eq:bigrspec}
       r^{(d-1)/2} u_\ell - a_+ e^{ir} =(2\pi)^{d/2}  i^{2\ell} e^{\frac{i\pi}{4} (d-1)} \sqrt{\frac{N(d,\ell)}{2\pi \operatorname{Area}(\bbS^{d-1})}} e^{-ir}   +  O\Big( \frac{1}{r} \Big) 
    \end{equation}
    as $r\to\infty$.
    \end{enumerate}
    Then, the full perturbed plane wave $u$ is defined by 
    \begin{equation}
         u(z) = \sum_{\ell=0}^\infty Y_\ell(\theta) u_\ell(r) ,
    \end{equation}
    where $Y_\ell$ is the zonal (hyper)spherical harmonic, as above. 
	
	\vspace{.5em} 
	\noindent

\begin{example}[Inverse-square potential, cont.]
    Specializing to the case that $V=\alpha/r^2,\alpha>-(d-2)^2/4$, the ODE $P_{\ell}u_{\ell}=0$ simplifies to
\begin{equation}
    \bigg[ -\frac{\partial^2}{\partial r^2} - \frac{d-1}{r} \frac{\partial}{\partial r} -1 + \frac{\alpha+\lambda_\ell}{r^2} \bigg] u_\ell(r)=0, 
\end{equation}
which is a conjugated form of Bessel's ODE. The recessive solutions can be written in terms of the $J$-function. So, 
\begin{equation}\label{eq:dipole_plane_order_def}
    u_{\ell}(r)=a_{\ell}r^{-(d-2)/2}J_{\mu_{\ell}}(r), \quad \mu_{\ell}\coloneqq \sqrt{\alpha+\Big(\ell+\frac{d-2}{2}\Big)^2}, 
\end{equation}
for some normalization constant $a_\ell$. 
To find $a_{\ell}$, we use the large argument asymptotics of $J$ and compare with \cref{eq:bigrspec}. Indeed, using \cref{eq:Hankel}, we get 
\begin{equation}
    a_{\ell}=(2\pi)^{d/2}  i^{2\ell+d/2-1-\mu_{\ell}} \sqrt{\frac{N(d,\ell)}{ \operatorname{Area}(\bbS^{d-1})}},
\end{equation}
so overall 
\begin{equation}
u=\sum_{\ell=0}^\infty (2\pi)^{d/2}  i^{2\ell+d/2-1-\mu_\ell} \sqrt{\frac{N(d,\ell)}{\operatorname{Area}(\bbS^{d-1})}}  \frac{J_{\mu_\ell}(r)}{r^{(d-2)/2}} Y_\ell(\theta).
\end{equation}
Note that this reduces to $u=e^{ix}$ when $\alpha=0$ by \cref{eq:plane_Fourier_expansion}.

For dimensions $d=2,3$, the above simplifies to
\begin{equation}\label{eq:special_dipole_plane_wave}
    u(r,\theta)=\begin{cases}
        \displaystyle i^{-\sqrt{\alpha}}J_{\sqrt{\alpha}}(r)+2\sum_{\ell=1}^{\infty} i^{2\ell-\sqrt{\alpha+\ell^2}} J_{\sqrt{\alpha+\ell^2}}(r)\cos (\ell \theta), & \text{if }d=2,\\
        \displaystyle \sqrt{\frac{\pi } {2r}}\sum_{\ell=0}^{\infty} i^{2\ell-\mu_{\ell}+\frac{1}{2}} (2\ell+1)J_{\mu_{\ell}}(r)P_{\ell}(\cos \theta), &\text{if }d=3.
    \end{cases}
\end{equation}
\end{example}

\subsection{Phase shifts}

The scattering data $(a_{-,\ell},a_{+,\ell})\in \bbC^2$ of 
\begin{equation}
    u_\ell = \frac{1}{r^{(d-1)/2}} ( a_{-,\ell}e^{-ir} + a_{+,\ell}e^{ir} ) + O\Big( \frac{1}{r^{(d+1)/2}} \Big),\qquad r\gg 1   
\end{equation}
is not completely arbitrary. The condition (ii) above fixes 
\begin{equation}
    a_{-,\ell}= (2\pi)^{d/2}  i^{2\ell} e^{\frac{i\pi}{4} (d-1)} \sqrt{\frac{N(d,\ell)}{2\pi \operatorname{Area}(\bbS^{d-1})}} , 
\end{equation}
thus eliminating two real degrees-of-freedom. The recessivity condition (i) fixes the remaining two degrees-of-freedom. One of these can be solved for explicitly, using the fact that the recessivity condition is real. Letting $v_{0,\ell}$ denote a nonzero real recessive solution, as above, 
\begin{equation}
    u_{\ell}\propto v_{0,\ell} \iff\frac{a_{+,\ell}}{a_{-,\ell}} = \frac{b_{+,\ell}}{b_{-,\ell}},
\end{equation}
where $b_{\pm,\ell}\in \bbC$ is the scattering data of $v_{0,\ell}$, so that 
\begin{equation}
    v_{0,\ell} = \frac{1}{r^{(d-1)/2}} ( b_{-,\ell}e^{-ir} + b_{+,\ell}e^{ir} ) + O\Big( \frac{1}{r^{(d+1)/2}} \Big)
\end{equation}
as $r\to\infty$. The reality of $v_{0,\ell}$ implies $b_{+,\ell}=b_{-,\ell}^*$. Thus, the ratio $b_{+,\ell}/b_{-,\ell}$ is a phase. In summary, 
\begin{equation}
    \text{condition (i)}\Longrightarrow \exists \tilde{\delta}_\ell \in \bbR/\pi \bbZ\text{ s.t. }a_{+,\ell} =a_{-,\ell} e^{2i\tilde{\delta}_\ell} .
\end{equation}
The scattering data $(a_{-,\ell},a_{+,\ell})$ is therefore completely determined once we know the phase $\tilde{\delta}_\ell$. This remaining degree-of-freedom can be read off the scattering data of a recessive solution of the ODE, but this constitutes a ``connection formula,'' relating the small-$r$ behavior of solutions to large-$r$ behavior, and consequently depends on the specifics of the potential. 

Since $e^{-ir} + e^{2i\tilde{\delta}_\ell} e^{ir} = 2 e^{i\tilde{\delta}_\ell} \cos(r+\tilde{\delta}_\ell)$, we can express the large-$r$ asymptotics of $u_\ell$ entirely in terms of $\tilde{\delta}_\ell$ as 
\begin{equation}\label{eq:central_u_scat_form}
    u_\ell = (2\pi)^{d/2}  i^{2\ell} \sqrt{\frac{2N(d,\ell)}{\pi \operatorname{Area}(\bbS^{d-1})}} e^{\frac{i\pi}{4} (d-1)} e^{i\tilde{\delta}_\ell}\frac{\cos(r+\tilde{\delta}_\ell)}{r^{(d-1)/2}}  +  O\Big( \frac{1}{r^{(d+1)/2}} \Big).
\end{equation}
Consider 
\begin{equation}
    \delta_\ell = \tilde{\delta}_\ell - \frac{\pi}{4}(1-d-2\ell) . 
\end{equation}
Then, the asymptotic can be rewritten 
\begin{equation}
    u_\ell = (2\pi)^{d/2}  i^{\ell} \sqrt{\frac{2N(d,\ell)}{\pi \operatorname{Area}(\bbS^{d-1})}} e^{i\delta_\ell} \frac{1}{r^{(d-1)/2}} \cos\Big(r+\frac{\pi}{4} (1-d-2\ell)+\delta_\ell\Big)   +  O\Big( \frac{1}{r^{(d+1)/2}} \Big).
\end{equation}
Comparing with \cref{eq:Hankel}, we see that, if $u=e^{ix}$, then $\delta_\ell=0$.
Thus, $\delta_\ell$ measures the phase shift of $u_\ell$ relative to the exact plane wave. 

For general short-range potentials,  the representatives may be chosen so that $\lim_{\ell\to\infty}\delta_\ell=0$

The large-$r$ structure of the plane wave is encoded in the sequence $\delta_0,\delta_1,\delta_2,\cdots \in \bbR/\pi \bbZ$.

\begin{example}[Inverse-square potential, cont.]
    Whereas the $\ell$th mode of the plane wave involves $J_{\ell+d/2-1}$, the inverse-square perturbed plane wave involves $J_{\mu_\ell}$, with $\mu_\ell$ defined in \cref{eq:dipole_plane_order_def}. 
    Owing to the Hankel asymptotic
    \begin{equation} 
        J_{\nu}(r)\sim \sqrt{\frac{2}{\pi r}}\cos\Big(r-\frac{\pi \nu}{2}-\frac{\pi}{4}\Big),
    \end{equation} 
    the phase shift $\delta_\ell$  is exactly the change in the cosine argument resulting from changing the Bessel order from $\nu=\ell+d/2-1$ to $\mu_\ell$:
    \begin{equation}
        \delta_\ell = \frac{\pi}{4} \Big( 2\ell+d-2  -\sqrt{4\alpha+(2\ell+d-2)^2}\, \Big) .
    \end{equation}

    This is $O(1/\ell)$ as $\ell\to\infty$, \emph{not} $O(1/\ell^\infty)$, which signals the singularity in the forward direction. However, extracting any rigorous statement to this effect is difficult.
\end{example}

\subsection{The S-matrix}
The phase shifts provide the eigenvalues of the matrix. Indeed, the S-matrix $S:L^2(\bbS^{d-1})\to L^2(\bbS^{d-1})$ must be diagonal in the basis of (hyper)spherical harmonics, because the potential is spherically symmetric. The eigenvalue of the $\ell$th eigenspace of harmonics is $(-1)^\ell e^{2i\delta_\ell}$.
Thus, the S-matrix can be written 
\begin{equation}
    S(\omega,\omega') = \sum_{\ell=0}^\infty \sum_{Y\in \calY_\ell} (-1)^\ell  e^{2i\delta_\ell} Y(\omega) Y (\omega')^*,  
\end{equation}
where $\calY_\ell$ is an orthonormal basis for the $\ell$th eigenspace of harmonics. When we plug in $\omega'=\leftarrow$, only the zonal harmonics $Y_\ell$ survive, because (since we defined the azimuthal axis with respect to which the harmonics are defined to be the $x$-axis) $Y(\leftarrow)=0$ except when $Y$ is zonal. Thus:
\begin{equation}
    S(\omega,\leftarrow) = \sum_{\ell=0}^\infty (-1)^\ell e^{2i\delta_\ell} Y_\ell(\omega) Y_\ell(\leftarrow )^* =\sum_{\ell=0}^\infty \sqrt{\frac{N(d,\ell)}{\operatorname{Area}(\bbS^{d-1})}}  e^{2i\delta_\ell} Y_\ell(\omega). 
\end{equation}

\subsection{The scattered wave}

On the other hand, 
\begin{equation}
    \delta_{\rightarrow} (\omega) = \sum_{\ell=0}^\infty Y_\ell(\omega)  Y_\ell(\rightarrow)^* = \sum_{\ell=0}^\infty \sqrt{\frac{N(d,\ell)}{\operatorname{Area}(\bbS^{d-1})}} Y_\ell(\omega),  
\end{equation}
so the scattered wave $f(\omega) = e^{-\pi i (d-1)/4}(2\pi)^{(d-1)/2}(S(\omega,\leftarrow) - \delta_{\rightarrow} (\omega))$ is given by 
\begin{equation}
    f(\omega)=e^{-\pi i (d-1)/4}(2\pi)^{(d-1)/2}\sum_{\ell=0}^\infty   \sqrt{\frac{N(d,\ell)}{\operatorname{Area}(\bbS^{d-1})}}  \underbrace{(e^{2i\delta_\ell} - 1)}_{2i e^{i\delta_\ell} \sin(\delta_\ell ) }Y_\ell(\omega).
\end{equation}
For example, in the $d=3$ case, this simplifies to 
\begin{equation}
        f(\theta) = \sum_{\ell=0}^\infty   (2\ell+1) e^{i\delta_\ell} \sin(\delta_\ell )  P_\ell(\cos \theta)   .
\end{equation}

\begin{example}[Inverse-square potential, cont.]
    This sum is slowly convergent, owing to the slow decay as $\ell\to\infty$ of  
    \begin{equation}
        (2\ell+1)e^{i\delta_\ell} \sin(\delta_\ell) =-\frac{\pi \alpha}{2} +\frac{i\pi^2\alpha^2}{8(\ell+1/2)} + O\Big( \frac{1}{\ell^2} \Big),
    \end{equation}
    which implies
    \begin{equation}
f(\theta)=  - \frac{\pi\alpha}{2} \sum_{\ell=0}^\infty P_\ell (\cos \theta) + \frac{i\pi^2 \alpha^2}{8} \sum_{\ell=0}^\infty \frac{P_\ell (\cos\theta)}{\ell+1/2}+O(1) . 
    \end{equation}
    The two infinite sums in this expression can be summed explicitly using the generating function 
    \begin{equation}\label{eq:lengen}
        (1-2t \cos \theta +t^2)^{-1/2}=\sum_{\ell=0}^{\infty}P_{\ell}(\cos \theta)t^{\ell},\qquad \theta \neq 0,\pi,\ |t|\leq 1.    
    \end{equation}
Indeed, it follows that
    \begin{equation}
        \sum_{\ell=0}^\infty P_\ell (\cos \theta) =  \frac{1}{2|\sin(\theta/2)|},\qquad  
        \sum_{\ell=0}^\infty \frac{P_\ell (\cos\theta)}{\ell+1/2}= K \Big( \cos \frac{\theta}{2} \Big), \label{eq:infseries}
    \end{equation}
    for $\theta\ne 0$, where $K$ is the elliptic integral as in \cref{eq:K}.
    Note that the first equation in \cref{eq:infseries} follows from \cref{eq:lengen} by taking the Abel limit $t\to 1^-$, while the second follows by writing $(\ell+1/2)^{-1}=\int_0^1t^{\ell-1/2}dt$, using Fubini--Tonelli, and changing variables $t=1-u^2$. Now using $K(k)=-\frac{1}{2}\log(1-k^2)+O(1)$ as $k \to 1^-$, we have
    \begin{equation}\label{eq:f_ap}
        f(\theta) = -\frac{\pi \alpha}{2|\theta|} -\frac{i\pi^2\alpha^2}{8}\log|\theta|+O(1),
    \end{equation}
    as $\theta \to 0$.
\end{example}

\section{Mapping properties of forward integration on the polar forward blowup}
\label{sec:forward}

We carry on with the notation in \S\ref{sec:bf}. 
Recall that $Y=[\overline{\bbR^d};\rightarrow]$, and $N^{-1}$ is forward integration (up to a constant of proportionality).

Let $\mathrm{fd}\subset Y$ denote the singleton consisting of the midpoint of the front face, $\mathrm{fd} = (\partial Y)\cap  \operatorname{cl}_Y\{ x>1, \bfy=0\}$. 
In this section we prove:
\begin{proposition}\label{prop:intmain}
    Let $\alpha>1$. Then, 
	\begin{equation}
		N^{-1} \colon \mathcal{A}^{(\alpha,0),\mathcal{F}}(Y\backslash \mathrm{fd}) \to \mathcal{A}^{(\alpha-1,0),\mathcal{F}^+}( Y\backslash \mathrm{fd}),
	\end{equation}
    where 
    \begin{equation}\label{eq:F+}
        \calF^+\coloneqq  (\calF-1) \cup \{(n,0) : n\in \bbN \} \cup  \{ (n,j+1) : (1,j)\in \calF,n\in \bbN  \},
    \end{equation}
    and $\calF-1 \coloneqq \{(j-1,k) \,:\, (j,k) \in \calF\}.$
\end{proposition}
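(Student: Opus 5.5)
We prove the mapping property by localizing on $Y\backslash\mathrm{fd}$ and computing $N^{-1}f(x,\bfy)=-\frac{1}{2i}\int_{-\infty}^x f(s,\bfy)\dd s$ in adapted coordinates. We split $f$ with a partition of unity into three pieces:
\begin{itemize}
\item $f_{\mathrm{back}}$, supported in a cone $\{x<-c y\}$ around the backward direction;
\item $f_{\mathrm{mid}}$, supported in a region $\{|x|< Cy\}$ away from both the forward and backward directions;
\item $f_{\mathrm{fwd}}$, supported near the corner $\mathrm{bf}\cap\mathrm{ff}$ of $Y$, i.e.\ $\{x>cy,\ y>1\}$.
\end{itemize}
Since we work on $Y\backslash\mathrm{fd}$, the region $y\lesssim 1$, $x\to+\infty$ is excluded. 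The integral is linear, so we treat each piece and sum the outputs. This is legitimate because the integral along a line of constant $\bfy$ only sees values with $s\le x$. Points near the backward direction or at finite $y$ are handled by the first two regimes.

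\textbf{Backward and middle regions.} On $\{x<0\}$ and away from $\mathrm{ff}$, $Y$ is just $\overline{\bbR^d}$. Here $f\in\langle r\rangle^{-\alpha}C^\infty$, polyhomogeneous near $\mathrm{bf}$ with index set $(\alpha,0)$. We use coordinates $t=-1/x$ and $\bfw=-\bfy/x$, so that $\dd s=\dd t/t^2$ and
\[
\int_{-\infty}^x f\dd s=\int_0^t \tau^{\alpha-2}g(\tau,\tau\ldots)\dd\tau .
\]
Because $\alpha>1$, this gives smoothness with one order lost, i.e.\ index set $(\alpha-1,0)$.

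In the middle region we use the coordinates $\rho_{\mathrm{bf}}=1/y$ and $\theta$-like variable $x/y$. The substitution $s=y\sigma$ gives
\[
\int_{-\infty}^{x} f\dd s=y\int_{-\infty}^{x/y} f(y\sigma,\bfy)\dd\sigma .
\]
For $f=y^{-\alpha}a(1/y,\sigma,\phi)$ with $a$ smooth, the output is $y^{1-\alpha}$ times a smooth function of $(1/y,x/y,\phi)$. Again this has index set $(\alpha-1,0)$ at $\mathrm{bf}$, with no $\mathrm{ff}$ behavior yet.

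\textbf{Corner region (main step).} This is where $\calF^+$ arises. Near $\mathrm{bf}\cap\mathrm{ff}$ of $Y$ we use $\rho_{\mathrm{bf}}=1/y$ and $\rho_{\mathrm{ff}}=\tau=y/x$. Writing the integral in the variable $\tau'=y/s$ gives $\dd s=-y\,\dd\tau'/\tau'^2$. Splitting at $s=y$ yields
\[
N^{-1}f=(\text{middle-region value at }s=y)-\frac{1}{2i}\,y\int_{\tau}^{1} f(y/\tau',\bfy)\,\frac{\dd\tau'}{\tau'^2}.
\]
The first term is smooth in $1/y$ with weight $y^{1-\alpha}$ and constant in $\tau$, so it contributes $(n,0)$ at $\mathrm{ff}$. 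For the second term we insert $f\sim y^{-\alpha}\sum_{(j,k)\in\calF}\tau'^j(\log\tau')^k a_{jk}(1/y,\phi)$ and integrate term by term:
\begin{itemize}
\item If $j\neq 1$, then $\int_\tau^1\tau'^{j-2}(\log\tau')^k\dd\tau'$ equals a constant plus $\tau^{j-1}$ times a polynomial of degree $k$ in $\log\tau$. This gives the index set $(\calF-1)\cup\{(0,0)\}$.
\item If $j=1$, the integrand is $\tau'^{-1}(\log\tau')^k$, which integrates to $-(\log\tau)^{k+1}/(k+1)$. This produces the extra indices $(0,k+1)$, and their translates $(n,k+1)$ after including the smooth $1/y$-dependence and the $\bbN$-shifts of $\calF$.
\end{itemize}
The remainder after finitely many terms of the expansion is conormal of high order, and it integrates to conormal of order one lower. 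This is the usual statement that $\int_\tau^1$ maps $\calA^{\beta}$ to $\calA^{\beta-1}+\bbC$ for $\beta\neq1$, with a log at $\beta=1$. Combined with Borel summation this yields polyhomogeneity. Uniformity in the $\mathrm{bf}$ variable $1/y$ holds because $y$ is only a parameter in the integral, which keeps the $(\alpha-1,0)$ index at $\mathrm{bf}$.

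\textbf{Main obstacle.} The delicate point is the compatibility at the corner: the expansions at $\mathrm{bf}$ and $\mathrm{ff}$ must be joint. The danger is that the lower limit $-\infty$ and the split point $s=y$ introduce terms that are smooth in one face but not jointly. I would handle this by carrying out the $\tau'$ integration with the coefficients $a_{jk}$ kept as smooth functions of $(1/y,\phi)$, and by checking that the result is a finite sum of products of the form $y^{1-\alpha}\tau^{j}(\log\tau)^k b(1/y,\phi)$, plus a jointly conormal remainder. Derivatives are controlled by commuting the b-vector fields $y\partial_y$, $\tau\partial_\tau$ and $\partial_\phi$ through the integral. The boundary term at $\tau'=\tau$ is itself conormal, so these commutations stay within the same class.
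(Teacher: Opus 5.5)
Your proposal is essentially the paper's proof. Both localize to the back, side and corner charts, use that ahead of the support $N^{-1}f$ depends on $\bfy$ alone, and in the corner chart integrate the $\mathrm{ff}$-expansion of $f$ term by term in $\rho_{\mathrm{ff}}(s,\bfy)=y/s$ at fixed $(\rho_{\mathrm{bf}},\phi)$. That computation yields the indices $\calF-1$, the constants $(0,0)$, and the extra power of $\log\rho_{\mathrm{ff}}$ exactly when $j=1$, and the truncation remainder is handled by a conormal estimate.

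Your handling of that remainder is slightly more economical than the paper's. The integration happens at fixed $\rho_{\mathrm{bf}}=1/y$, so smoothness in $\rho_{\mathrm{bf}}$ passes straight through, and $\calA^{(\alpha,0),\beta}$ maps into $\calA^{(\alpha-1,0),\beta-1}+\calA^{(\alpha-1,0),(0,0)}$ for $\beta>1$. The paper instead re-expands the remainder at $\mathrm{bf}$ (its terms II and III) and feeds the last piece to \Cref{lem:conorm}. Both work. Also, no Borel summation is needed at the end: having explicit terms plus such a remainder for every $\beta$ is already the definition of $\calA^{(\alpha-1,0),\calF^+}$.

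One claim must be corrected. $Y\backslash\mathrm{fd}$ removes only the single point $\mathrm{fd}\in\mathrm{ff}$, so the region $\{y\lesssim 1,\ x\to+\infty\}$ is \emph{not} excluded. It accumulates at every point of $\mathrm{ff}$ with $|\bfy|\le 1$, and only $\bfy=0$ has been removed. Your three pieces (the forward one restricted to $y>1$, the other two conic) leave the points of $\mathrm{ff}$ with $0<|\bfy|\le 1$ unaccounted for, as well as compactly supported parts of $f$.

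The repair is easy:
\begin{enumerate}
\item Your corner computation never uses $y>1$, so it applies verbatim for $y$ in any compact subset of $(0,\infty)$; the paper's corner chart is all of $\{x>0,\ y>0\}$.
\item A piece of $f$ supported in $\{|\bfy|<\epsilon/2\}$ does not affect $N^{-1}f$ on $\{|\bfy|>\epsilon\}$, since the integration runs along lines of constant $\bfy$.
\item A compactly supported piece produces, ahead of its support, a smooth compactly supported function of $\bfy$ alone. This has index $(0,0)$ at $\mathrm{ff}$ and vanishes near $\mathrm{bf}$.
\end{enumerate}
The paper achieves the same reduction to the complement of the forward axis via translation invariance.
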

This mapping property is natural given that $\partial_x \in \operatorname{Diff}_{\mathrm{b}}^{1,-1,-1}(Y)$. Taking an $x$-derivative yields one order of decay at both of $\mathrm{bf},\mathrm{ff}$, so integrating removes the same amount of decay (possibly with a logarithmic loss).

In order to prove this, it suffices to prove 
\begin{equation}\label{eq:n}
		N^{-1} \colon \mathcal{A}^{(\alpha,0),\mathcal{E}}(Y\backslash \operatorname{cl}_Y\{(x,{\bf0}):x>0\} ) \to \mathcal{A}^{(\alpha-1,0),\mathcal{E}^+}( Y\backslash \operatorname{cl}_Y\{(x,{\bf0}):x>0\}),
\end{equation}
because translations lift to diffeomorphisms of $Y$ and commute with $N^{-1}$.

In the proof, we use the following partial atlas on $Y$.
\begin{itemize}
\item On the set $\{(x,\bfy)\in \bbR^d : x<0\}$, define the coordinates 
\begin{equation}\label{eq:backbdfs}
\rho_{\mathrm{bf}} \coloneqq -\frac{1}{x},\quad \tilde{\bfy} \coloneqq- \frac{\bfy}{x}. 
\end{equation}
These define a diffeomorphism between some open subset of $\overline{\bbR^d}$ (and hence of $Y$, since $Y,\overline{\bbR^d}$ agree in $x<-1$) and $[0,\infty)_{\rho_{\mathrm{bf}}}
\times\bbR^{d-1}_{\tilde{\bfy}}.$. We call this the \textit{back chart}.

\item On the set $\{(x,\bfy)\in \bbR^d:\bfy\neq 0\}$, define the coordinates  
\begin{equation}\label{eq:sidebdfs}
\rho_{\mathrm{bf}} \coloneqq \frac{1}{y},\quad s
\coloneqq \frac{x}{y},\quad \phi\coloneqq \frac{\bfy}{y}. 
\end{equation}
These define a diffeomorphism between some open subset of $Y$ and $[0,\infty)_{\rho_{\mathrm{bf}}} \times (-\infty,\infty)_s\times \bbS^{d-2}_\phi$. 
We call this the \textit{side chart}.

\item On the set $\{(x,\bfy)\in\bbR^d: x>0,y>0\}$, define the coordinates 
\begin{equation}\label{eq:cornerbdfs}
\rho_{\mathrm{bf}} \coloneqq \frac{1}{y},\quad \rho_{\mathrm{ff}} \coloneqq \frac{y}{x},\quad \phi \coloneqq \frac{\bfy}{y}.
\end{equation}
These define a diffeomorphism between some open subset of $Y$ (containing the corner $\mathrm{bf}\cap \mathrm{ff}$) and $[0,\infty)_{\rho_{\mathrm{bf}}}\times [0,\infty)_{\rho_{\mathrm{ff}}}\times \bbS^{d-2}_\phi$.
We call this the \textit{corner chart}.
\end{itemize}
The back and side charts are part of the usual atlas on $\overline{\bbR^d}$ via projective coordinates.  

\begin{remark*}
The charts above do not give a complete atlas for $Y$, because they exclude the forward axis $\operatorname{cl}_Y\{(x,0): x>0\}$. 
\end{remark*}

On these charts, the b-differential vector fields $\mathcal{V}_{\mathrm{b}}(Y)$ are spanned over $C^\infty(Y)$ by the following vector fields:
\begin{equation}\label{eq:bvecs}
\begin{array}{l}
x\partial_x,x\partial_{y_1},\ldots,x\partial_{y_{d-1}}
\text{ span }\mathcal{V}_{\mathrm{b}}(Y)
\text{ on compact subsets of the back chart},\\
y\partial_x,y\partial_{y_1},\ldots,y\partial_{y_{d-1}}
\text{ span }\mathcal{V}_{\mathrm{b}}(Y)
\text{ on compact subsets of the side chart},\\
x\partial_x,y\partial_{y_1},\ldots,y\partial_{y_{d-1}}
\text{ span }\mathcal{V}_{\mathrm{b}}(Y)
\text{ on compact subsets of the corner chart}.
\end{array}
\end{equation}
This is a standard result that can be seen in several ways; for example, $\partial_x,\partial_{y_j}$ span $r^{-1} \calV_{\mathrm{b}}(\overline{\bbR^d})$ over $C^\infty$ \cite{Me94}, and $-1/x,1/y$ serve as local boundary-defining-functions in the back and side charts, respectively. 
This yields the claim about the back and side charts. The claim about the corner chart is one about the b-vector fields on the mwcs resulting from blowing up an interior submanifold of the boundary; see e.g.\ performing a polar blowup of the origin in the half-plane $\bbR\times [0,\infty)$. 
Alternatively: in each chart, the relevant claim amounts to an elementary computation in local coordinates. The $d\geq 3$ case follows from the $d=2$ case together with the observation that the elements of $\calV(\bbS^{d-2}_\phi)$ are in the module generated by the $y\partial_{y_j}$ over $C^\infty(\bbS^{d-2}_\phi)$. In the $d=2$ case, within the corner chart, 
\begin{align}
\begin{split} 
    x\partial_x &= x\bigg[ \frac{\partial \rho_{\mathrm{bf}}}{\partial x}  \partial_{\rho_{\mathrm{bf}}} +   \frac{\partial \rho_{\mathrm{ff}}}{\partial x}  \partial_{\rho_{\mathrm{ff}}} \bigg] =- \rho_{\mathrm{ff}} \partial_{\rho_{\mathrm{ff}}} \\
    y\partial_y &= y\bigg[ \frac{\partial \rho_{\mathrm{bf}}}{\partial y}  \partial_{\rho_{\mathrm{bf}}} +   \frac{\partial \rho_{\mathrm{ff}}}{\partial y}  \partial_{\rho_{\mathrm{ff}}} \bigg] = - \rho_{\mathrm{bf}}\partial_{\rho_{\mathrm{bf}}} + \rho_{\mathrm{ff}} \partial_{\rho_{\mathrm{ff}}} 
    \end{split} 
\end{align}
and the right-hand sides are in $\calV_{\mathrm{b}}(Y)$. Conversely, 
\begin{equation}
    y\partial_y = - \rho_{\mathrm{bf}}\partial_{\rho_{\mathrm{bf}}} + \rho_{\mathrm{ff}} \partial_{\rho_{\mathrm{ff}}}  \Longrightarrow \rho_{\mathrm{bf}} \partial_{\rho_{\mathrm{bf}}}  = -x\partial_x -y\partial_y .
\end{equation}
So the local generators of $\calV_{\mathrm{b}}(Y)$ are linear combinations of $x\partial_x$ and $y\partial_y$, which therefore suffice to generate all of $\calV_{\mathrm{b}}(Y)$ (locally). 

\subsection{Conormality}\label{ss:fowardIntProof}

Before proving \Cref{prop:intmain}, we prove the corresponding mapping property between the function spaces of conormal functions:
\begin{lemma}\label{lem:conorm} For $\alpha,\beta>1$,
    \begin{equation}
		N^{-1} \colon \mathcal{A}^{\alpha,\beta}(Y\backslash \mathrm{fd}) \to \mathcal{A}^{\alpha-1,\beta-1}( Y\backslash \mathrm{fd})+\mathcal{A}^{\alpha-1,(0,0)}(Y\backslash \mathrm{fd}).
	\end{equation}
\end{lemma}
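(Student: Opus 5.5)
We prove this by direct estimates in the partial atlas (back, side and corner charts) introduced just above the statement, together with the spanning sets \cref{eq:bvecs} for $\calV_{\mathrm{b}}(Y)$. As in the reduction to \cref{eq:n}, translation invariance lets us work away from the forward axis, so only the three charts matter. The plan has three parts. First, prove the $L^\infty$ bound, meaning the weighted size estimate for $N^{-1}f$ with $f\in\calA^{\alpha,\beta}$. Second, commute b-vector fields through $N^{-1}$ to upgrade this to conormal bounds. Third, identify the extra term $\calA^{\alpha-1,(0,0)}$, which is the limiting value $\int_{-\infty}^\infty f(s,\bfy)\dd s$ appearing at $\mathrm{ff}$.

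\textbf{Size estimate.} Write $f\in\calA^{\alpha,\beta}$, so $|f|\lesssim \rho_{\mathrm{bf}}^\alpha\rho_{\mathrm{ff}}^\beta$.

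In the back chart, $|f(s,\bfy)|\lesssim \langle s\rangle^{-\alpha}$ for $s<0$ (uniformly in $\tilde\bfy$ on compacts). Integrating from $-\infty$ to $x$ gives $\lesssim |x|^{1-\alpha}$, using $\alpha>1$.

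In the side and corner charts, fix $\bfy$ with $y\gg1$ and split the integral at $s=-y$ and at $s=y$.
\begin{itemize}
\item For $s<-y$, the back-chart bound gives $\lesssim y^{1-\alpha}$.
\item For $|s|<y$, we have $|f|\lesssim y^{-\alpha}$ on an interval of length $2y$, giving $\lesssim y^{1-\alpha}$.
\item For $y<s<x$, the corner bound $|f|\lesssim y^{-\alpha}(y/s)^\beta$ with $\beta>1$ gives $\lesssim y^{1-\alpha}$.
\end{itemize}
So $|N^{-1}f|\lesssim \rho_{\mathrm{bf}}^{\alpha-1}$ uniformly, and $\rho_{\mathrm{bf}}^{\alpha-1}\lesssim \rho_{\mathrm{bf}}^{\alpha-1}\rho_{\mathrm{ff}}^{0}$.

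For the sharper decomposition near $\mathrm{ff}$, write
\begin{equation*}
N^{-1}f = -\tfrac{1}{2i}F(\bfy) + \tfrac{1}{2i}\int_x^\infty f(s,\bfy)\dd s,\qquad F(\bfy)=\int_{-\infty}^\infty f(s,\bfy)\dd s.
\end{equation*}
The tail is $\lesssim y^{-\alpha}y^\beta x^{1-\beta} = \rho_{\mathrm{bf}}^{\alpha-1}\rho_{\mathrm{ff}}^{\beta-1}$. The function $F$ depends only on $\bfy$, and by the bound above $|F|\lesssim y^{1-\alpha}$. Near the corner, $y^{1-\alpha}=\rho_{\mathrm{bf}}^{\alpha-1}$, and $F$ is independent of $\rho_{\mathrm{ff}}$ in the corner chart. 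Hence $F$, cut off to $y\gg 1$, should lie in $\calA^{\alpha-1,(0,0)}$ once its regularity is checked.

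\textbf{Conormality.} Next we show that b-derivatives of $N^{-1}f$ and of $F$ obey the same bounds.
\begin{itemize}
\item In the corner chart the generators are $x\partial_x$ and $y\partial_{y_j}$. We have $x\partial_x N^{-1}f = -\tfrac{1}{2i}xf$, which lies in $\calA^{\alpha-1,\beta-1}$ since $x\in\rho_{\mathrm{bf}}^{-1}\rho_{\mathrm{ff}}^{-1}C^\infty$.
\item The $y\partial_{y_j}$ commute with $N^{-1}$, and $y\partial_{y_j}f$ is again in $\calA^{\alpha,\beta}$ because $y\partial_{y_j}$ is a b-vector field in the side and corner charts. Angular derivatives are handled the same way.
\item In the back chart the generator $x\partial_{y_j}$ does not commute with $N^{-1}$. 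Write $x = s + (x-s)$ inside the integral, so that $x\partial_{y_j}N^{-1}f = N^{-1}(s\partial_{y_j}f) + \int_{-\infty}^x (x-s)\partial_{y_j}f\dd s$. The second term is handled by integrating by parts, producing $N^{-1}N^{-1}$ of a function with enough extra decay.
\end{itemize}
Iterating these identities, every product of b-vector fields applied to $N^{-1}f$ is expressed via $N^{-1}$ of conormal functions of the same class plus explicit terms, and the size estimate then closes an induction on the number of derivatives. For $F$, derivatives $(y\partial_y)^k\partial_\phi^m$ pass under the integral, so $F$ is conormal in $y$ with weight $y^{1-\alpha}$. Since $F$ is $\rho_{\mathrm{ff}}$-independent, this gives $F\in\calA^{\alpha-1,(0,0)}$.

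\textbf{Main obstacle.} We expect the hardest part to be the bookkeeping in the transition region where $s$ crosses from the side chart into the corner and back charts. There the natural b-vector fields change, and one must verify that the commutator terms (for example, $y\partial_y$ against the lower limit of integration) produce nothing worse than functions in $\calA^{\alpha,\beta}$. We would handle this with a partition of unity in $s/y$ subordinate to the charts, treating each piece as above.
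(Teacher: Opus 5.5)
Your proposal is correct and is essentially the paper's argument. It uses the same splitting near $\mathrm{ff}$ into the $x$-independent full-line integral $F(\bfy)$, which gives the $\mathcal{A}^{\alpha-1,(0,0)}$ piece, plus the tail $\int_x^\infty f$, which gives $\mathcal{A}^{\alpha-1,\beta-1}$; it likewise uses $x\partial_x N^{-1}f\propto xf$, passes $y$-derivatives under the integral, and inducts on the number of b-derivatives.

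The differences are organizational. The paper localizes $\operatorname{supp} f$ to a single chart and uses forward propagation, where you split the integration domain at $s=\pm y$. In the back chart the paper writes $x\partial_{y_j}N^{-1}f$ as $x\,N^{-1}\big(t^{-1}\cdot t\partial_{y_j}f\big)$. Your $x=s+(x-s)$ trick instead produces an iterated $N^{-1}N^{-1}$ term. That still works, but it requires phrasing the induction with finitely many controlled derivatives.
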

\begin{proof} 
As noted above, it suffices to forget about the entire forward axis $\operatorname{cl}_Y\{(x,\bf0):x>0\}$.

Suppose $f \in \mathcal{A}^{\alpha,\beta}(Y\backslash \mathrm{fd})$. Since $N^{-1}$ is linear, it suffices to prove the lemma when the support of $f$ lies in exactly one of the coordinate charts above (via a partition of unity). We need to check $N^{-1} f$ in each of the three coordinate charts above. Taking into account both the source chart and target chart, we have nine possibilities to consider. 
However, the charts have large overlap; when studying $N^{-1} f$ in a chart besides that which $f$ is supported in, it suffices to restrict attention to a small enlargement of the part of that chart not covered by the chart in which $f$ is supported. 
Consequently, because $N^{-1}$ propagates support forward, it is not necessary to consider the possibilities labeled `0' in the following table:
\begin{table}[!htbp]
    \centering
    \begin{tabular}{|c|c|c|c|c|}
        \hline
        \multirow{2}{*}{} & & \multicolumn{3}{c|}{$f$ support} \\ \cline{3-5} 
                          &  & Back Chart & Side Chart & Corner Chart \\ \hline
        \multirow{3}{*}{\rotatebox{90}{$N^{-1}f$}} 
                          & Back Chart  & \S\ref{subsec:same}   & $0$   & $0$     \\ \cline{2-5} 
                          & Side Chart  & \S\ref{subsec:offset}   & \S\ref{subsec:same2}  & $0$     \\ \cline{2-5} 
                          & Corner Chart & \S\ref{subsec:offset} & \S\ref{subsec:offset}  & \S\ref{subsec:same2} \\ \cline{1-5} 
    \end{tabular}
    \caption{Locations where the various cases are handled.}
    \label{tab:1939}
\end{table}
\par 
\noindent For example, if $f$ is supported in the side chart, then the side and corner charts suffice to cover the support of $N^{-1} f$ (besides the forward axis).

\subsubsection{back-back and side-side (conormality)}\label{subsec:same}
For $f \in \mathcal{A}^{\alpha,*}(Y)$ supported within the back chart, we prove that for all (unweighted) $\mathrm{b}$-differential operators $A$ that
\begin{equation} 
|A(N^{-1}f)| \lesssim \rho_{\mathrm{bf}}^{\alpha-1}
\end{equation} 
within the back chart. The `*' in $\calA^{\alpha,*}(Y)$ means that the order is arbitrary and unimportant, since here we are focused on the back chart.

We prove this by strong induction on the order of the
$\mathrm{b}$-differential operator $A$. For $0$th order
$\mathrm{b}$-differential operators (i.e.\ multiplication operators), the claim follows immediately from the bound
\begin{equation*}
|N^{-1}f(x,\bfy)|\leq \int_{-\infty}^x|f(t,\bfy)|\dd t \lesssim \int_{-\infty}^x |t|^{-\alpha}\dd t \lesssim \rho_{\mathrm{bf}}^{\alpha-1}.
\end{equation*}
We now induct on $\operatorname{deg} A=k$ (and all allowed $\alpha$). Assume that we have proven the estimate for operators of degree $j\leq k$. 
Any $(k+1)$th order
$\mathrm{b}$-operator can be written as a sum of operators of the form $AV$ where $A$ is
a $k$th order $\mathrm{b}$-operator and $V$ is a first-order operator (i.e.\ a 
$\mathrm{b}$-vector field). 
By \cref{eq:bvecs}, $V$ is in the span of $x \p_x , x \p_{\bfy_1},\dots, x\p_{\bfy_{d-1}}$.

Note that
\begin{equation}\label{eq:redman}
\begin{gathered}
x \partial_xN^{-1}f=-\frac{x}{2i} f \in \mathcal{A}^{\alpha-1,*}(Y)\\
\quad
x \partial_{y_j} N^{-1}f = -\frac{x}{2i} \int_{-\infty}^x \partial_{y_j}[f](t,\bfy)\dd t=  -\frac{x}{2i} \int_{-\infty}^x \frac{1}{t}\cdot t\partial_{y_j}[f](t,\bfy)\dd t.
\end{gathered}
\end{equation}
Since $f\in\mathcal{A}^{\alpha,*}(Y)$,
$t\partial_{y_j}f\in\mathcal{A}^{\alpha,*}(Y)$.
Multiplication by $t^{-1}$ gains one order at $\mathrm{bf}$, so the
integrand in \cref{eq:redman} belongs to
$\mathcal{A}^{\alpha+1,*}(Y)$. 
Therefore, we have
\begin{align}
    V(N^{-1}f)\in g+\sum_ja_j x\label{eq:603}
\int_{-\infty}^{x}g_j(t,\bfy)\dd t,
\end{align}
where $g\in \mathcal{A}^{\alpha-1,*}(Y)$, $ g_j \in
\mathcal{A}^{\alpha+1,*}(Y)$, and $a_j \in C^{\infty}(Y)$. Thus, 
\begin{equation}
    AV(N^{-1}f)\in Ag+\sum_j A\bigg[ \underbrace{a_jx}_{\in\mathcal A ^{-1,0} \text{ locally}}
\int_{-\infty}^{x}g_j(t,\bfy)\dd t\bigg],
\end{equation}
By the definition of conormal spaces, $Ag \in \mathcal{A}^{\alpha-1,*}$.
The terms in the sum are controlled using the product rule and the
inductive hypothesis with $\alpha+1$ in place of $\alpha$. Indeed,
forward integration loses one order, so
\begin{align}
     \int_{-\infty}^x g_j(t,\bfy)\dd t
    \in\mathcal{A}^{\alpha,*}(Y).
\end{align}
Moreover, $a_jx\in\mathcal{A}^{-1,*}(Y)$ in the back chart.
Because b-differentiation preserves these conormal orders, the product
rule gives
\begin{align}
      A\left[
        a_jx\int_{-\infty}^x g_j(t,\bfy)\dd t
    \right]
    \in\mathcal{A}^{\alpha-1,*}(Y).
\end{align}

So, altogether, we get  $|AV(N^{-1} f)| \lesssim \rho_{\mathrm{bf}}^{\alpha-1}$, as desired. 

When $f$ is supported within the side chart, the analysis of $N^{-1} f$ in the side chart proceeds identically. 

\subsubsection{corner-corner (conormality)}\label{subsec:same2}
The case where $f$ is supported within the corner chart and $N^{-1} f$ is analyzed there is similar to the two cases just discussed, except in that we need to keep track of behavior at $\mathrm{ff}$ in addition to $\mathrm{bf}$. 
Split $F\coloneqq -2i N^{-1} f= \int_{-\infty}^x f(t,\bfy)\dd t$ into two terms:
\begin{align}
 F (x,y) = \underbrace{\int_{-\infty}^\infty f(t,\bfy) \dd t}_{\coloneq F_1(\bfy)} - \underbrace{\int_{x}^\infty f(t,\bfy)\dd t}_{\coloneq F_2(x,\bfy)}.\label{eq:560}
\end{align} 
The first term is well-defined on account of $\beta>1$.
We will show that $F_1\in \calA^{\alpha-1,(0,0)}(Y)$ and $F_2\in \calA^{\alpha-1,\beta-1}(Y)$, near $\mathrm{bf}\cap\mathrm{ff}$. 
First consider $F_1$. In the coordinate system $(x,\bfy)$, this depends only on $\bfy$, so in the corner coordinate system $(\rho_{\mathrm{bf}},\rho_{\mathrm{ff}},\phi)$, it depends only on $\rho_{\mathrm{bf}},\phi$. This means that once $F_1$, considered as a function of only $\rho_{\mathrm{bf}},\phi$, is known to lie in $\calA^{\alpha-1}([0, \infty)_{\rho_{\mathrm{bf}}}\times \bbS^{d-2}_\phi)$, then it, when considered as a function on the higher dimensional space $Y$,  satisfies 
\begin{equation}
    F_1 \in 
    \calA^{\alpha-1,(0,0)}(Y ) \text{ locally}
\end{equation}
\emph{automatically}, since $\calA^{\alpha-1,(0,0)}(Y ) = C^\infty([0,\infty)_{\rho_{\mathrm{ff}}} ; \calA^{\alpha-1}([0, \infty)_{\rho_{\mathrm{bf}}}\times \bbS^{d-2}_\phi))$ locally. And since we would rather work in the coordinate system $x,\bfy$ than $\rho_{\mathrm{bf}},\phi$, note that 
\begin{equation}
    F_1 \in \calA^{\alpha-1,0}(Y )\text{ near }\mathrm{bf}\cap\mathrm{ff} \Longrightarrow F_1 \in \calA^{\alpha-1}([0, \infty)_{\rho_{\mathrm{bf}}}\times \bbS^{d-2}_\phi).  
\end{equation}
So, our goal for both $F_1,F_2$ is to establish conormal estimates 
\begin{equation}
    |A F_1 | \lesssim \rho_{\mathrm{bf}}^{\alpha-1},\quad |A F_2| \lesssim \rho_{\mathrm{bf}}^{\alpha-1} \rho_{\mathrm{ff}}^{\beta-1}   
\end{equation}
near $\mathrm{bf}\cap\mathrm{ff}$, for any polynomial $A$ in $x\partial_x,y \partial_{y_j}$. 

Beginning with $F_1$, note that $\partial_x F_1=0$, so it suffices to consider the case where $A$ is a polynomial in the $y\partial_{y_j}$. These derivatives fall under the integral sign:
\begin{equation}
    AF_1 = \int_{-\infty}^\infty A f(t,\bfy) \dd t = \int_{\varepsilon y}^\infty A f(t,\bfy) \dd t  ,  
\end{equation}
where to insert $\varepsilon y>0$ as the lower bound of the integral we used the assumption that $f$ is supported within the corner chart (and therefore a compact subset thereof). 
Because $f\in \calA^{\alpha,\beta}(Y)$, we have $Af\in \calA^{\alpha,\beta}(Y)\subset \rho_{\mathrm{bf}}^{\alpha}\rho_{\mathrm{ff}}^{\beta} L^\infty$ as well, giving 
\begin{equation}
    |AF_1| \lesssim \int_{\varepsilon y}^\infty \rho_{\mathrm{bf}}(t,\bfy)^{\alpha}\rho_{\mathrm{ff}}(t,\bfy)^{\beta} \dd t =  y^{\beta-\alpha}\int_{\varepsilon y}^\infty t^{-\beta} \dd t = \frac{y^{1-\alpha}}{\beta-1} = \frac{\rho_{\mathrm{bf}}^{\alpha-1}}{\beta-1}.
\end{equation}
This completes our discussion of $F_1$. 

Now consider $F_2$. It suffices to consider $A$ to be an individual monomial of the form $A=L (x\partial_x)^j$ for a polynomial $L$ in the $y\partial_{y_j}$ and $j\in \bbN$. When $j=0$, then $A$ consists only of $y$-derivatives, which fall under the integral sign:
\begin{equation}
    A F_2 = \int_x^\infty A f(t,\bfy) \dd t . 
\end{equation}
The integrand is in $\calA^{\alpha,\beta}(Y)\subset \rho_{\mathrm{bf}}^\alpha\rho_{\mathrm{ff}}^\beta L^\infty$, so 
\begin{equation}
    |A F_2| \lesssim \int_{x}^\infty \rho_{\mathrm{bf}}(t,\bfy)^{\alpha}\rho_{\mathrm{ff}}(t,\bfy)^{\beta} \dd t =  y^{\beta-\alpha}\int_{x}^\infty t^{-\beta} \dd t = \frac{y^{\beta-\alpha} x^{1-\beta}}{\beta-1} = \frac{\rho_{\mathrm{bf}}^{\alpha-1}
\rho_{\mathrm{ff}}^{\beta-1}}{\beta-1}.
\end{equation}
When $j\geq 1$, we write $A=Q (x\partial_x)$ for $Q\in \operatorname{Diff}_{\mathrm{b}}(Y)$, giving 
\begin{equation} 
    A F_2 = - Q (x f),
\end{equation} 
by the fundamental theorem of calculus. Since $x \in \calA^{-1,-1}(Y)$ near $\mathrm{bf}\cap\mathrm{ff}$, we have $xf\in \calA^{\alpha-1,\beta-1}(Y)$ locally, hence the same holds for $AF_2$. This completes the estimates for $F_2$. 

\subsubsection{$f$ and $N^{-1}f$ at offset charts}\label{subsec:offset}
The analysis handling $N^{-1} f$ in a chart \emph{ahead} of where $f$ is supported is straightforward, because $N^{-1} f$ is constant along lines of constant $\bfy$ ahead of the support of $f$.

For example, consider how $N^{-1} f$ looks in the side chart, when $f$ is supported in the back chart. In the part of the side chart not covered by the back chart, $N^{-1} f$ depends only on $\bfy$ with respect to the coordinate system $(x,\bfy)$. The relevant b-vector fields are $y\partial_x,y \partial_{y_j}$. The former annihilates $N^{-1} f$ past the support of $f$. The latter is a b-vector field on the overlap of the back and side charts, so hitting $N^{-1} f$ with any polynomial therein results in a conormal function $\in \calA^{\alpha-1,\beta-1}$. 
This completes the proof of \Cref{lem:conorm}.

\end{proof} 
\subsection{Polyhomogeneity (Proof of \Cref{prop:intmain})}

Let 
\begin{equation}
    F(x,\bfy) = \int_{-\infty}^x f(t,\bfy) \dd t \propto N^{-1} f, \label{eq:2370}
\end{equation}
for short.

The structure of the proof is the same as that of the previous proposition. That is, we break the support of $f$ into the back, side, and corner charts and then study $N^{-1} f$ in each of those charts. The nontrivial cases are the same as before. Additionally, the ``propagation'' argument used to handle the case where we analyze $N^{-1} f$ ahead of the support of $f$ proceeds verbatim. 
This means that it suffices to analyze $N^{-1} f$ in the same chart where $f$ is supported.

Suppose $f \in \rho_{\mathrm{bf}}^{\alpha}C^{\infty}(Y)$ (where $\alpha>1$) is supported in the back chart. 
By Taylor's theorem, for any $J\in \bbN$, we have 
\begin{equation}\label{eq:polybackexp}
f-\rho_{\mathrm{bf}}^\alpha \sum_{j=0}^{J-1}\rho_{\mathrm{bf}}^ja_j(\tilde{\bfy}) \in \mathcal{A}^{\alpha+J}(Y), \quad \tilde{\bfy}=-\bfy/x
\end{equation}
for some $a_j \in C_c^{\infty}(\bbR^{d-1})$. 
Thus, 
\begin{equation}
    F(x,\bfy) = \sum_{j=0}^{J-1} \int_{-\infty}^x  \rho_{\mathrm{bf}}^{j+\alpha}(t,\bfy) a_j(-\bfy/t ) \dd t + \int_{-\infty}^x \mathcal{A}^{\alpha+J}(Y).
\end{equation}
The final term is controlled using \Cref{lem:conorm}, which says that it is in $\calA^{\alpha+J-1}(Y)$ locally.
The other terms on the right-hand side are
\begin{equation}
\int_{-\infty}^x
\rho_{\mathrm{bf}}^{j+\alpha}(t,\bfy)
a_j(-\bfy/t)\dd t
=
\int_{-\infty}^x
(-t)^{-(j+\alpha)}
a_j(-\bfy/t)\dd t,
\end{equation}
where $\rho_{\mathrm{bf}}(x,\bfy)=-1/x$ in the back chart.
Letting $u=x/t$, the right-hand side becomes
\begin{equation}
\rho_{\mathrm{bf}}^{j+\alpha-1}
\int_0^1
u^{j+\alpha-2}
a_j(u\tilde{\bfy})\dd u
\in
\rho_{\mathrm{bf}}^{j+\alpha-1}
C^\infty(\bbR^{d-1}_{\tilde{\bfy}}).
\end{equation}
In summary, we have shown that 
\begin{equation}
    F \sim \rho_{\mathrm{bf}}^{\alpha-1} \sum_{j=0}^\infty  \rho_{\mathrm{bf}}^j  \underbrace{\int_{0}^1 u^{j+\alpha-2} a_j(u\tilde{\bfy}) \dd u}_{\in C^\infty(\bbR^{d-1}_{\tilde{\bfy}})}, 
\end{equation}
hence $F=N^{-1} f$ is in $\rho_{\mathrm{bf}}^{\alpha-1}C^\infty(Y)$ in the back chart.

Having analyzed the back-back case, regarding the side-side case we state only that it is strictly easier.

For the corner-corner case, the proof is slightly more involved. Suppose $f \in \mathcal{A}^{(\alpha,0),\calF}(Y)$ is supported in the corner chart, in $\{\varepsilon y< x\}$, for some fixed $\varepsilon>1$, so that 
\begin{equation}
    F(x,\bfy) = \int_{\varepsilon y}^x f(t,\bfy)\dd t. 
\end{equation}
The polyhomogeneous expansion at $\mathrm{ff}$ gives $a_{z,j}\in \rho_{\mathrm{bf}}^{\alpha} C^\infty([0,\infty)_{\rho_{\mathrm{bf}}}\times \bbS^{d-2}_\phi )$ such that 
\begin{equation}\label{eq:polycornerexp}
f-\sum_{\substack{ \Re z \leq \beta\\ (z,j) \in \mathcal{F}}}\rho_{\mathrm{ff}}^z\log(\rho_{\mathrm{ff}})^ja_{z,j}(\rho_{\mathrm{bf}},\phi) \coloneq f_\beta \in \mathcal{A}^{(\alpha,0),\beta}.
\end{equation}
The error $f_\beta$ can be expanded at $\mathrm{bf}$: $\exists b_{j,\beta} \in \calA^{\beta}([0,\infty)_{\rho_{\mathrm{ff}}}\times \bbS^{d-2}_\phi )$, supported in a small neighborhood of $\{\rho_{\mathrm{ff}}=0\}$, such that 
\begin{equation}
    f_\beta- \rho_{\mathrm{bf}}^{\alpha} \sum_{j=0}^{J-1} \rho_{\mathrm{bf}}^j b_{j,\beta}(\rho_{\mathrm{ff}},\phi) \in \calA^{\alpha+J,\beta}(Y),  
\end{equation}
for any $J\in \bbN$. 
Thus, 
\begin{multline}\label{eq:misc_218}
  F=\overbrace{\sum_{\substack{\Re z \leq \beta\\ (z,j) \in \mathcal{F}}}\int_{\varepsilon y }^x \rho_{\mathrm{ff}}(t,\bfy)^z\log(\rho_{\mathrm{ff}}(t,\bfy))^ja_{z,j}(\rho_{\mathrm{bf}}(t,\bfy),\phi) \dd t}^{\mathrm{I}}  \\  + \underbrace{ \rho_{\mathrm{bf}}^\alpha \sum_{j=0}^{J-1} \rho_{\mathrm{bf}}^j  \int_{\varepsilon y}^x b_{j,\beta}(\rho_{\mathrm{ff}}(t,\bfy),\phi  )   \dd t }_{\mathrm{II}}
  + \underbrace{\int_{\varepsilon y}^{x} \mathcal{A}^{\alpha+J,\beta}(Y)}_{\mathrm{III}},  
\end{multline}
where, in writing the sum labeled `II', we have pulled $\rho_{\mathrm{bf}}$ out of the integral, which can be done because $\rho_{\mathrm{bf}}(t,\bfy)=1/y$ is independent of $t$.

For $\beta>1$, \Cref{lem:conorm} tells us that 
\begin{equation} 
\int_{-\infty}^{x} \mathcal{A}^{\alpha+J,\beta}(Y) \in 
\calA^{\alpha+J-1,(0,0) }(Y)+\calA^{\alpha+J-1,\beta-1 }(Y).
\end{equation} 
Term III above is the same integral, except with the lower bound $\varepsilon y$ instead of $-\infty$. The difference between the two integrals is of the form  $\int_{-\infty}^{\varepsilon y} \calA^{\alpha+J}(\overline{\bbR^d})$.
This is in $\calA^{\alpha+J-1,(0,0)}(Y)$ near the corner $\mathrm{bf}\cap\mathrm{ff}$ (cf.\ the analogous argument in the proof of \Cref{lem:conorm}). So, term III satisfies 
\begin{equation} 
\mathrm{III} \in \calA^{\alpha+J-1,(0,0) }(Y)+\calA^{\alpha+J-1,\beta-1 }(Y).
\end{equation} 

Consider now the terms in $\mathrm{II}$. We have
\begin{multline}
    \int_{\varepsilon y}^x b_{j,\beta}(\rho_{\mathrm{ff}}(t,\bfy),\phi  )   \dd t = 
    \int_{\varepsilon y}^x b_{j,\beta}\left( \frac{y}{t} ,\phi  \right)   \dd t  =\frac{1}{\rho_{\mathrm{bf}}\rho_{\mathrm{ff}} } \int_{\varepsilon \rho_{\mathrm{ff}} }^1 b_{j,\beta}\left( \frac{\rho_{\mathrm{ff}} }{u } ,\phi  \right)   \dd u   \\ \in   \calA^{(-1,0),\beta-1 }(Y) + \calA^{(-1,0),(0,0)}(Y)  , 
\end{multline}
where the final inclusion follows from
\begin{multline}
    \int_{\varepsilon \rho_{\mathrm{ff}} }^1 b_{j,\beta}\left( \frac{\rho_{\mathrm{ff}} }{u } ,\phi  \right)   \dd u = \rho_{\mathrm{ff}}\int_{\rho_{\mathrm{ff}} }^{\varepsilon^{-1} } \frac{ b_{j,\beta}(s,\phi)}{s^2}   \dd s = \rho_{\mathrm{ff}}\int_{\rho_{\mathrm{ff}}}^{\varepsilon^{-1}}  \calA^{\beta-2} ([0,\infty)_s\times \bbS^{d-2}_\phi ) \\ \in (\calA^{\beta}+\calA^{(1,0)})([0,\infty)_{\rho_{\mathrm{ff}}}\times \bbS^{d-2}_\phi ). 
\end{multline}
Thus, term II is in $\calA^{(\alpha-1,0),\beta-1}(Y) + \calA^{(\alpha-1,0),(0,0) }(Y)$.

Finally, we turn to the terms in $\mathrm{I}$, which we integrate explicitly: because $\rho_{\mathrm{bf}}(t,\bfy)$ does not depend on $t$, 
\begin{align} \label{eq:misc_221}
    \begin{split} 
    \int_{\varepsilon y }^x \rho_{\mathrm{ff}}(t,\bfy)^z\log(\rho_{\mathrm{ff}}(t,\bfy))^ja_{z,j}(\rho_{\mathrm{bf}}(t,\bfy),\phi) \dd t &= a_{z,j} \int_{\varepsilon y }^x \rho_{\mathrm{ff}}(t,\bfy)^z\log(\rho_{\mathrm{ff}}(t,\bfy))^j\dd t \\
    &= a_{z,j} \int_{\varepsilon y }^x \Big( \frac{y}{t}\Big)^z\log\Big( \frac{y}{t} \Big)^j\dd t \\
    &= a_{z,j} y \int_{y/x}^{\varepsilon^{-1} } u^{z-2}(\log u)^j  \dd u.
    \end{split} 
\end{align}
The lower bound on the integral is therefore $\rho_{\mathrm{ff}}=y/x$. 
If $z=1$, then the integral is 
\begin{equation}\label{eq:piece2}
    \int_{\rho_{\mathrm{ff}}}^{\varepsilon^{-1}}u^{z-2}(\log u)^j\dd u=-\frac{1}{j+1}(\log \rho_{\mathrm{ff}})^{j+1}+C_{\varepsilon}.
\end{equation}
If $z \neq 1$, integration by parts yields
\begin{align}
\int_{\rho_{\mathrm{ff}}}^{\varepsilon^{-1}}u^{z-2}(\log u)^jdu &=\frac{1}{z-1}\int_{\rho_{\mathrm{ff}}}^{\varepsilon^{-1}}\partial_u[u^{z-1}](\log u)^j\dd u\nonumber \\
&=\frac{1}{z-1}\bigg[u^{z-1}(\log u)^j \bigg]_{u=\rho_{\mathrm{ff}}}^{u=\varepsilon^{-1}}-\frac{j}{z-1}\int_{\rho_{\mathrm{ff}}}^{\varepsilon^{-1}}u^{z-2}(\log u)^{j-1}\dd u\nonumber \\
&=\rho_{\mathrm{ff}}^{z-1}\sum_{k=0}^jc_{z,k}(\log\rho_{\mathrm{ff}})^k+C_{z,\varepsilon},\quad c_{z,k}\in \bbC \label{eq:finalpiece}
\end{align}
where the last line follows from repeated integration by parts. 
Thus, we have produced the polyhomogeneous expansion for the terms in $\mathrm{I}$. The indices found above are in $\calF^+$. Most are in $\calF-1$. The constant terms are covered by the $(0,0)$ part of $\calF^+$. The extra log terms in the $z=1$ case are covered by the final part of \cref{eq:F+}. 
The factor of $a_{z,j}y$ in \cref{eq:misc_221} is $a_{z,j}\rho_{\mathrm{bf}}^{-1} \in \calA^{(\alpha-1,0)}([0,\infty)_{\rho_{\mathrm{bf}}}\times \bbS^{d-2} )$. Thus, the sum I is in $\calA^{(\alpha-1,0),\calF^+}$. 

Thus, we have shown that the terms above lie in 
\begin{equation} 
\underbrace{\calA^{(\alpha-1,0),\calF^+}(Y)}_{\mathrm{I}}+ \underbrace{\calA^{(\alpha-1,0),\beta-1}(Y) + \calA^{(\alpha-1,0),(0,0)}}_{\mathrm{II}}+
\underbrace{\calA^{\alpha+J-1,\beta-1 }(Y)}_{\mathrm{III}} .
\end{equation} 
Hence, $F=\mathrm{I}+\mathrm{II}+\mathrm{III}$ is in a partially polyhomogeneous space: 
\begin{equation}
    F \in \calA^{((\alpha-1,0), \alpha+J-1),(\calF^+,\beta-1)}(Y) 
\end{equation}
(since $(0,0)\in \calF^+$). Taking the intersection over all $\beta,J$, 
\begin{equation}
  \bigcap_{\beta,J} \calA^{((\alpha-1,0), \alpha+J-1),(\calF^+,\beta-1)}(Y)  = \calA^{(\alpha-1,0),\calF^+}(Y),
\end{equation}
yields the final claim.

\section{Absence of logs in the plane wave contribution}
\label{sec:logs}

This appendix proves the absence of logarithmic terms in the conclusion
of \S\ref{sec:bf}. We continue using the notation introduced in that
section. The functions studied there are, up to alternating signs, the
iterates of $N^{-1}E$ applied to $1$. Here,
$N=-2i\partial_x$ and $E=\tilde P-N$.

In this appendix, we generalize the setup slightly by allowing more general $E$. (While $N$ will always be $-2i\partial_x$.) Fix 
\begin{equation} 
    E\in \operatorname{Diff}_{\mathrm{b}}^{2,-2}(\overline{\bbR^d})
\end{equation} 
such that $E$ differs from a (second-order) constant coefficient operator $E_0$ by an element of $ \operatorname{Diff}_{\mathrm{sc}}^{2,-2}(\overline{\bbR^d}) $. 
That is, 
\begin{equation}
    E_0 \in \operatorname{span}_{\bbC}\{\partial_{x}^2 ,\partial_x \partial_{y_j},\partial_{y_j}\partial_{y_k}\},
\end{equation}
\begin{equation}\label{eq:gen_E_form}
		E-E_0 \in \frac{1}{1+x^2+\lVert \bfy \rVert^2} \operatorname{span}_{C^\infty(\overline{\bbR^d}) }\{1, \partial_x, \partial_{y_j}, \partial_x^2,\partial_x \partial_{y_j}, \partial_{y_j}\partial_{y_k} \} =  \operatorname{Diff}_{\mathrm{sc}}^{2,-2}(\overline{\bbR^d}).
\end{equation}
For example, when $E=\tilde{P}-N$ (for any $\tilde{P}$ as in \S\ref{subsec:asymptotically_Euclidean}), then 
\begin{equation}
    E= \tilde{P}_0 - N+ (\tilde{P}-\tilde{P}_0) = \underbrace{- \partial_x^2  +\triangle_{\bfy}}_{\coloneq E_0}+ \underbrace{ (\tilde{P}-\tilde{P}_0)}_{\in \operatorname{Diff}_{\mathrm{sc}}^{2,-2}(\overline{\bbR^d} ) }.
\end{equation}
Here, $E_0 = \tilde{P}_0-N$. (Because $E$ has two orders of b-decay, $E_0$ is not allowed to have $1,\partial_x,\partial_{y_j}$ terms.)

Now define 
\begin{equation} 
\label{eq:uj_def}
u_j := (N^{-1}E)^j[1].
\end{equation}
\Cref{prop:mainI_init0} says that this is polyhomogeneous on the polar forward blowup $Y$, when $E=\tilde{P}-N$. The proof of that proposition only used the fact that $E\in \operatorname{Diff}_{\mathrm{b}}^{2,-2}(X)$, and any $E$ of the form above lies in this same space. Consequently, the proof goes through in the present generality, and we conclude that $u_j$ is polyhomogeneous on $Y$.

The goal of this appendix is to prove:

\begin{proposition}\label{prop:aut} 
The function $u_j$ has no log terms in its polyhomogeneous expansion at the front face $\mathrm{ff}\subset Y$. 
In other words, for each fixed $\bfy\in \bbR^{d-1}$, 
\begin{equation}
    u_j(x,\bfy) \in \bbC[ x] + C^\infty([0,1)_{1/x}).
\end{equation}
\end{proposition}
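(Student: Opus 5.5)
Fix $\bfy$ and study $u_j(x,\bfy)$ as $x\to+\infty$. I would prove, by induction on $j$, a refined statement that is stable under $N^{-1}E$. For fixed $\bfy$, and uniformly with $\bfy$-derivatives on compact sets, I claim
\begin{equation}
u_j(x,\bfy)=p_j(x,\bfy)+g_j(x,\bfy),
\end{equation}
where $p_j$ is a polynomial in $x$ of degree $\le j$ with coefficients smooth in $\bfy$, and $g_j\in C^\infty([0,1)_{1/x})$. The corner-chart analysis in Proposition \ref{prop:intmain} shows that logs arise only from integrating a term with index $(1,k)$ at $\mathrm{ff}$, namely a $t^{-1}$ term. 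So it suffices to show that the $x^{-1}$ coefficient of $E u_{j}$ in its large-$x$ expansion at fixed $\bfy$ vanishes, or at least is killed before integration.

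\textbf{Step 1: how $E$ acts on the two pieces.} Write $E=E_0+E_1$, where $E_0$ has constant coefficients and $E_1\in\operatorname{Diff}_{\mathrm{sc}}^{2,-2}(\overline{\bbR^d})$.
\begin{itemize}
\item $E_0$ maps polynomials in $x$ (with coefficients smooth in $\bfy$) to polynomials in $x$, and maps $C^\infty([0,1)_{1/x})$ to $x^{-2}C^\infty([0,1)_{1/x})$ in the $\partial_x$ directions. The $\partial_{\bfy}$ derivatives preserve the class.
\item The coefficients of $E_1$ lie in $\langle r\rangle^{-2}C^\infty(\overline{\bbR^d})$. At fixed $\bfy$, as $x\to\infty$, such a coefficient is a smooth function of $1/x$ with a double zero; this uses that $r^{-1}$ is smooth in $1/x$ at fixed $\bfy$, since $r=x\sqrt{1+y^2/x^2}$. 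Applied to $x^m$, it gives $x^{m-2}\cdot C^\infty([0,1)_{1/x})$. That is a polynomial plus a full Taylor series in $1/x$, which may include an $x^{-1}$ term.
\end{itemize}

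\textbf{Step 2: integrating.} Next apply $N^{-1}$, i.e.\ $-\frac{1}{2i}\int_{-\infty}^x$. Split the integral at $x=0$; the piece over $(-\infty,0]$ is a constant in $x$, absorbed into the polynomial part. On $[0,x]$:
\begin{itemize}
\item integrating $x^m$ with $m\ge0$ gives a polynomial;
\item integrating $x^{-k}h$ with $k\ge2$ and $h$ smooth in $1/x$ gives a constant plus $C^\infty([0,1)_{1/x})$;
\item the $x^{-1}$ coefficient $c_{j}(\bfy)$ of $Eu_j$ is the only obstruction, producing $c_j(\bfy)\log x$.
\end{itemize}
So the proof reduces to showing $c_j\equiv0$.

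\textbf{Step 3 (main obstacle): the $x^{-1}$ residue vanishes.} I expect this to be the hard part, and I would attack it by a parity argument. The key observation is that $\langle r\rangle^{-2}C^\infty(\overline{\bbR^d})$ coefficients, expanded at fixed $\bfy$ in $1/x$, have a special structure: after expanding $r=x\sqrt{1+y^2/x^2}$, the expansion in $1/|x|$ agrees, up to a sign pattern, with the expansion as $x\to-\infty$. I would therefore prove jointly that $u_j$ has the same structure as $x\to-\infty$, namely that $u_j$ is a polynomial as $x\to-\infty$ with no constant-in-$\bfy$ subtleties. Indeed, in the backward region the integral from $-\infty$ produces no logs, since there is no lower boundary term, and Proposition \ref{prop:mainI_init0} gives smoothness at $\mathrm{bf}$ in the back chart.

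The residue $c_j$ is then computed as follows. Suppose $x^{-1}$ terms at $+\infty$ and at $-\infty$ carry matching coefficients because the functions are expansions of a single smooth function of $(1/x)$ across $1/x=0$, or more precisely of $1/r$ and $x/r$ restricted to the line $\{\bfy\}\times\bbR$. Then
\begin{equation}
\int_{-\infty}^{\infty}\big(Eu_j-\text{polynomial part}\big)\,dx
\end{equation}
can be evaluated by contour deformation in the complex $x$-plane. The decay is symmetric, so a $\log$ could only come from an unbalanced $x^{-1}$ term. I would show this term appears with opposite sign conventions at $\pm\infty$ and thus must cancel by the structure $E_1\in\langle r\rangle^{-2}C^\infty$.

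Concretely, the planned route is an induction proving the following. At fixed $\bfy$, $u_j$ restricted to the line $\{\bfy\}\times\bbR$ is a polynomial in $x$ plus a function $G(1/x)$ that extends smoothly across $1/x=0$ as a single function on the projective line, from $+\infty$ to $-\infty$. Such a $G$, being Taylor-expandable in $1/x$ at $0$, has $x^{-1}$ coefficient equal to that of its $-\infty$ expansion. As $x\to-\infty$, $u_j$ has no $x^{-1}$ term coming from integration, because integrating from $-\infty$ starts the Taylor series with $x^{-1}$ only if the integrand had an $x^{-2}$ term, and that is allowed. So I would track instead that the primitive of a function smooth in $1/x$ across infinity is again smooth across infinity exactly when its $x^{-1}$ residue vanishes. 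I would verify that residue vanishing using the fact that $Eu_j=\partial_x(\cdot)$ modulo smooth-across-infinity terms, an exactness statement to be checked from the divergence form of $Q$ in \cref{eq:P}. This exactness check is the step I am least sure of.
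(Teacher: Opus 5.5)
Your Steps 1 and 2 are a correct reduction, but they only restate the proposition. Everything hinges on Step 3, the vanishing of the $x^{-1}$ coefficient of $Eu_j$ at fixed $\bfy$, and the mechanism you propose for it does not work.

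\textbf{The mechanism in Step 3 fails.} The restriction of an element of $\langle r\rangle^{-2}C^\infty(\overline{\bbR^d})$ to a line $\{\bfy\}\times\bbR$ is \emph{not} a single smooth function of $1/x$ across $1/x=0$. Its expansions as $x\to+\infty$ and as $x\to-\infty$ are governed by the jets at the two antipodal points $\rightarrow$ and $\leftarrow$ of $\infty\bbS^{d-1}$, and these are unrelated.
\begin{itemize}
\item Take $V=\chi(x/r)\langle r\rangle^{-2}$ with $\chi=1$ near $1$ and $\chi=0$ near $-1$. Then $V\sim x^{-2}$ at $+\infty$ and $V\equiv 0$ for $x\ll 0$.
\item Even for radial symbols there is no smooth matching: $r^{-3}$ has a $|s|^3$ singularity at $s=1/x=0$.
\item Your inductive hypothesis is internally inconsistent. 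We have $u_j\to 0$ as $x\to-\infty$, while $u_j$ typically grows polynomially as $x\to+\infty$ (for instance $u_2\sim x\,\triangle_{\bfy}C_1$ below). So $u_j$ cannot be a polynomial plus a function smooth in $1/x$ across $0$ unless that polynomial is constant.
\item There is no analyticity in $x$ available for a contour deformation.
\item $Eu_j$ is not $\partial_x$-exact in any useful sense; the term $Vu_j$ already is not. The divergence form of $Q$ does not change this.
\end{itemize}

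\textbf{What actually happens is a cancellation between different terms of $Eu_j$.} Take potential scattering, so $E=-\partial_x^2+\triangle_{\bfy}+V$, with $V\sim V_2x^{-2}+V_3(\bfy)x^{-3}+\cdots$ at fixed $\bfy$. Drop the constants $(-1/(2i))^j$ and write $C_1(\bfy)=\int_{\bbR}V(t,\bfy)\,dt$. Then one finds
\[
u_2=x\,\triangle_{\bfy}C_1+C_2(\bfy)-C_1V_2\,x^{-1}+O(x^{-2}).
\]
In $Eu_2$, the term $Vu_2$ contributes $V_2\,\triangle_{\bfy}C_1\,x^{-1}$, and the term $\triangle_{\bfy}u_2$ contributes $-\triangle_{\bfy}(C_1V_2)\,x^{-1}$. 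These cancel exactly because $V_2$ is constant in $\bfy$.

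In general this cancellation needs two facts.
\begin{enumerate}
\item[(a)] The forward Taylor coefficients $V_\kappa(\bfy)$ are polynomials of degree $\le\kappa-2$. This follows from smoothness at $\rightarrow$ alone and has nothing to do with $\leftarrow$.
\item[(b)] The same $E$ is applied at every step.
\end{enumerate}
The paper's two counterexamples show the proposition fails if either (a) or (b) is dropped. An argument that does not visibly use both therefore cannot be right.

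The paper gets the cancellation as follows. It writes $N^{-1}=\kappa(I+C)$ and expands $u_j=\sum_k(IE)^kc_{j-k}$, where the $c_\bullet$ depend only on $\bfy$. It then reduces to showing that symmetrized products of building blocks are log-free. Finally it inducts on the number of factors using the identity $IQI=[I,BQ]$. Telescoping this identity over the insertion points of $IQ$ produces commutators $[BQ,E_\kappa]$, and these gain an order of decay.
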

The proposition is more subtle than might appear. For example: 
	\begin{itemize}
		\item Even though the proposition is naturally regarded as about the behavior of $u_j$ on $Y$, it can fail if $E$ is only well-behaved on $Y$ instead of $\overline{\bbR^d}$. 

        Indeed, consider $E=\triangle_{\bfy}+V$ for 
        \begin{equation} \label{eq:V_y_ce}
            V = \chi\left(\frac{x}{\langle y\rangle}\right)\frac{1}{x^2\langle y\rangle},
            \end{equation} 
            where $\chi \in C^\infty(\bbR)$ is a smooth function such that $\chi=0$ on $(-\infty,\epsilon)$ for some $\epsilon>0$ and such that $\chi=1$ on $(2\epsilon,\infty)$. 
            
            Then $V$ is well-behaved on $Y$ and decays well, but it is not smooth on $\overline{\bbR^d}$, because $1/\langle y \rangle$ is not constant on $\mathrm{ff}\subset Y$.

            Then, 
            \begin{equation} 
                u_1 = N^{-1}E[1]= N^{-1}[V] = \frac{i}{2\langle y\rangle^2}F\left(\frac{x}{\langle y\rangle}\right),\quad F(t) = \int_{-\infty}^t \frac{\chi(s)}{s^2}\dd s. 
            \end{equation} 
            In particular, for $x\gg \langle y\rangle$, 
            \begin{equation} 
                u_1 = \frac{iC}{2\langle y\rangle^2}-\frac{i}{2\langle y\rangle x},
            \end{equation}
            where $C = \int_{-\infty}^\infty \frac{\chi(u)}{u^2}\dd u$.
            No log terms are present.

            Next consider $u_2$. We have
            \begin{equation} u_2 = N^{-1}Eu_1 = N^{-1}\triangle_{\bfy} u_1 + N^{-1}V u_1.
            \end{equation}
            Since $V = -2i\partial_xu_1$, the latter term is proportional to $ N^{-1}\partial_x (u_1^2)$ and hence to  $u_1^2$. This has no log terms at $\mathrm{ff}$. In contrast,
    \begin{equation} 
        \triangle_{\bfy} u_1 = \triangle_{\bfy}\left(i\frac{C}{2\langle y\rangle^2}\right) - \triangle_{\bfy}\left(\frac{i}{2\langle y\rangle}\right)\cdot\frac{1}{x}.
    \end{equation} 
    Hence integrating produces a nontrivial logarithmic term proportional to $\triangle_{\bfy} 1/\langle y \rangle$.
		\item The proposition's truth hinges on the presence of certain cancellations. If instead we consider
        \begin{equation}\label{eq:misc_888}
    u_j
    =
    N^{-1}E^{(j)}
    N^{-1}E^{(j-1)}
    \cdots
    N^{-1}E^{(1)}[1].
\end{equation}
		for \emph{different} $E^{(\ell)}$, then the proposition can fail.

        We illustrate this with $d=3$, $j=3$, with $E^{(1)} = V$, $E^{(2)} = \triangle_{\bfy}$, and $E^{(3)} = V$, where 
        \begin{equation} 
            V(x,\bfy) =\frac{1}{1+r^2}= \frac{1}{x^2+\langle\bfy\rangle^2}\in\mathcal{A}^2(\overline{\bbR^d}).
        \end{equation} 
        Then, $u_3 = N^{-1} (V N^{-1} \triangle_{\bfy} N^{-1} V  )$.
        Since $V$ has the expansion $V = x^{-2}+O_y(x^{-4})$ as $x\to\infty$ with $y$-fixed,
        \begin{equation}
            N^{-1}V \propto C_1(\bfy) - \frac{1}{x}+O_y(x^{-3}),\quad C_1(\bfy) = \int_{\bbR}\frac{1}{t^2+\langle\bfy\rangle^2}\,dt = \frac{\pi}{\langle\bfy\rangle}.
        \end{equation}
        The big-$O$ term is conormal at $\mathrm{ff}\backslash \mathrm{bf}$, so $\triangle_{\bfy}N^{-1}V = \triangle_{\bfy}C_1(\bfy) + O_y(x^{-3})$ as $x\to\infty$, so
        \begin{equation} 
            N^{-1}\triangle_{\bfy}N^{-1}V = (\triangle_{\bfy}C_1(\bfy))x + C_2(\bfy) + O_y(x^{-1})
            \end{equation} 
        as $x\to\infty$, for some $C_2(\bfy)$. Multiplying by $V$ thus yields
        \begin{equation} 
            VN^{-1}\triangle_{\bfy}N^{-1}V = (\triangle_{\bfy}C_1(\bfy))xV + C_2(\bfy)V + O(x^{-3}) = \frac{(\triangle_{\bfy}C_1(\bfy))}{x}+O(x^{-2}).
        \end{equation} 
        The coefficient of $x^{-1}$ is nonzero, so integrating again then yields a log term in $u_3$.
	\end{itemize}

The counter-examples above show that \Cref{prop:aut} cannot follow solely from computations regarding index sets on $Y$. 

Now we prove the proposition, outsourcing the main combinatorial lemma:
\begin{proof}
Set $\kappa=-1/(2i)$. Then $ N^{-1}=\kappa(I+C),$
where
\begin{align}
        Iu(x,\bfy)=\int_0^x u(t,\bfy)\dd t,
    \qquad
    Cu(x,\bfy)=\int_{-\infty}^0u(t,\bfy)\dd t.
\end{align}
Consequently,
\begin{align}
    u_j=\kappa^j\big((I+C)E\big)^j[1].
\end{align}
Since the nonzero scalar factor $\kappa^j$ has no effect on the
presence or absence of logarithmic terms, for the remainder of this
proof we use $u_j$ to denote the normalized iterate $\big((I+C)E\big)^j[1]$.

Note that $I,C$ are \emph{operators} taking functions of $x,\bfy$ to functions. In $C$'s case, the output is independent of $x$. Consequently, we write 
\begin{equation} 
    u_j = IEu_{j-1} + c_j,\quad c_j = CEu_{j-1}.
\end{equation} 
By induction, we have
\begin{equation}
\label{eq:(IE)^k-sum}
u_j= \sum_{k=0}^j (IE)^kc_{j-k},
\end{equation}
where $c_0 = 1$. Because the $c_\bullet$ depend only on $\bfy$, it suffices to study the mapping properties of $(IE)^k$ on smooth functions on $\bbR^{d-1}_\bfy$. 

Note that 
\begin{equation} 
    c_j\in C^\infty(\bbR^{d-1}_{\bfy})
\end{equation} 
(e.g.\ by the mapping properties of forward integration proven in \S\ref{subsec:same}).
The operator $(IE)^k$, acting on $C^\infty(\bbR^{d-1}_{\bfy})$, is a $x$-dependent \emph{differential operator}. That is, for each fixed $x\in \bbR$, 
\begin{equation}
    [ C^\infty(\bbR^{d-1})\ni f\mapsto 
    ((I E)^k f ) (x,-) ] \in  \operatorname{Diff}(\bbR^{d-1}_{\bfy}).
\end{equation}
The coefficients of this differential operator are smooth functions of $x$, so we write 
\begin{equation}
    (I E)^k \in C^\infty(\bbR_x; \operatorname{Diff}(\bbR^{d-1}_{\bfy})  ), 
\end{equation}
identifying $(I E)^k$ with the map $x\mapsto [f\mapsto ((I E)^k f)(x,-)]$.\footnote{In order to clarify this harmless identification, consider the case where $E= \Upsilon(x,y)\partial_{y_1}+\partial_x^2$, for some function $\Upsilon$. Then, if $f\in C^\infty(\bbR^{d-1}_\bfy)$,   
\begin{align}
    I E f  = \Big(\int_0^x \Upsilon(s,\bfy) \dd s \Big) \partial_{y_1} f.
\end{align}
So, $IE$ is identified with 
\begin{align}
     \Big(\int_0^x \Upsilon(s,\bfy) \dd s \Big) \partial_{y_1} \in C^\infty(\bbR_x; \operatorname{Diff}^1(\bbR^{d-1}_{\bfy})).
\end{align} }
The coefficients of $(I E)^k$ are some smooth functions of $x$ valued in $C^\infty(\bbR^{d-1})$. These amount to functions on $\bbR^d_{x,\bfy}$: we can write 
\begin{equation}
    (I E)^k = \sum_{\alpha} a_\alpha(x,\bfy) \partial_{y}^\alpha,\quad a_\alpha\in C^\infty(\bbR^d_{x,\bfy}). 
\end{equation}
Our key claim is that, for each $\bfy\in \bbR^{d-1}$, we have $a_{\alpha}(x,\bfy)\in \bbC[x]+C^\infty[0,1)_{1/x}$. That (combined with \cref{eq:(IE)^k-sum}) will complete the proof. 

Break up $E=\sum_\bullet  E_\bullet$ into finitely many building blocks $E_\bullet$ (satisfying the same assumptions as $E$ itself). A convenient choice of building blocks are:
\begin{equation}
    E_\bullet \in \bbC\{ \partial_x^2,\partial_x \partial_{y_j},\partial_{y_j}\partial_{y_k}\} \cup \Big\{ \Upsilon \partial_x^\nu \partial_y^\beta : \Upsilon \in  \frac{1}{1+x^2+y^2} C^\infty(\overline{\bbR^d}) \Big\}
\end{equation}
Expanding the power $(IE)^k = \left(\sum_\bullet IE_\bullet\right)^k$, and noting that the operators involve do not necessarily commute, we get
\begin{equation}
    (IE)^k = \sum_{S}  \Big[ c(S) \underbrace{\Big( \prod_{E_\bullet \in S } I E_\bullet\Big)+\text{permutations}}_{ \calS[S]} \Big].
\end{equation}
Here, $c(S)\in \bbR$ is a combinatorial normalization factor, and we use the ``$+\text{permutations}$'' notation to mean a symmetrized sum of the (ordered) products of the $IE_1,\dots,IE_k$, i.e.\ 
\begin{equation}
\calS[S] = \Big(\prod_{E_\bullet \in S } I E_\bullet\Big)+\text{permutations}= \sum_{\sigma \in \frakS_k} \prod_{\kappa=1}^k (I E_{\sigma(\kappa)}).
\end{equation}

We will prove that each one of these symmetrized sums has the desired property:
\begin{equation}
    \calS[S]= \sum_{\alpha} a_{\alpha,S}(x,\bfy) \partial_y^\alpha
\end{equation}
for 
\begin{equation}
    a_{\alpha,S}(x,\bfy)\in \bbC[x]+C^\infty[0,1)_{1/x}\text{ for each }\bfy\in \bbR^{d-1}. 
\end{equation}
This combinatorial lemma we outsource to \Cref{lem:logs}.
When these coefficients are used in Step I, the resulting log-free formal expansion is realized by the Borel summation and localization in \S\ref{subsec:stepI}; the present proposition concerns the individual coefficients $u_j$.
\end{proof}

\begin{lemma} 
	\label{lem:logs}
	Fix $k\in \bbN$, and suppose that 
	\begin{equation} 
		E_1,\dots,E_k \in \{\partial_x ,\partial_{y_j} , \partial_x^2,\partial_{y_j}\partial_{y_k}, \partial_x \partial_{y_j},\partial_x^3,\cdots \} \cup  \Big\{ \Upsilon \partial_x^\nu \partial_y^\beta : \Upsilon \in  \frac{1}{1+x^2+y^2} C^\infty(\overline{\bbR^d}) \Big\}.
	\end{equation} 
	Now consider the symmetrized product $\calS$. Then, the coefficients of this operator (see above) are, for each fixed $U\subset \bbR^{d-1}_{\bfy}$, of the form $C^\infty(U)[x]+C^\infty([0,1)_{1/x}\times U)$, i.e.\ a polynomial in $x$ plus a smooth function of $1/x$ (uniformly in $\bfy$, locally)
\end{lemma}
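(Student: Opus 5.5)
The plan has two stages. First, convert the symmetrized product into a multi-parameter generating function, so that the combinatorics disappears. Second, prove absence of logs for a single ``total'' operator by a homogeneity argument on $\overline{\bbR^d}$.

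\textbf{Step 1: generating function.} Introduce formal parameters $\lambda_1,\dots,\lambda_k$ and set $E_\lambda=\sum_i\lambda_i E_i$. Expanding $(IE_\lambda)^k$, the coefficient of $\lambda_1\cdots\lambda_k$ is exactly $\calS[S]$. It therefore suffices to show that $(IE_\lambda)^k$ has coefficients in $C^\infty(U)[x]+C^\infty([0,1)_{1/x}\times U)$, and then extract the multilinear part. Equivalently, let $w_k=(IE_\lambda)^kf$ for $f\in C^\infty(\bbR^{d-1})$. Then $\partial_xw_k=E_\lambda w_{k-1}$ with $w_k(0,\bfy)=0$, and $E_\lambda$ lies in the same class as the $E_i$: constant-coefficient pieces plus pieces $\Upsilon\partial_x^\nu\partial_y^\beta$ with $\Upsilon\in\langle r\rangle^{-2}C^\infty(\overline{\bbR^d})$. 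This is precisely where symmetrization matters. The counterexample \cref{eq:misc_888} uses different operators in a fixed order, which is not of the form $(IE)^k$ for a single $E$ in the class.

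\textbf{Step 2: homogeneous decomposition.} Expand each coefficient of $E_\lambda$ at $\infty\bbS^{d-1}$ in jointly homogeneous pieces in $(x,\bfy)$. The constant-coefficient pieces have degree $-\#$derivatives, and $\Upsilon$ contributes degrees $-2,-3,\dots$. Smoothness on $\overline{\bbR^d}$ means each degree-$m$ piece $h$ has the form $h=x^{m}\,h(1,\bfy/x)$ for $x>0$, with $h(1,\cdot)$ smooth near $0$. At fixed $\bfy$, such a piece therefore contributes to the $x\to\infty$ expansion only powers $x^{m-n}$, $n\in\bbN$, with polynomial-in-$\bfy$ coefficients.

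By induction on $k$, I would show the following. The coefficients of $(IE_\lambda)^k$, modulo $C^\infty(U)[x]$ and the part of $C^\infty([0,1)_{1/x}\times U)$ coming from compact-$x$ contributions, are finite sums of jointly homogeneous functions of integer degree that are smooth at the forward ray, together with their $x^{-n}$ tails.

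A log is produced only when $I$ hits a term whose fixed-$\bfy$ expansion contains $x^{-1}$. Because of homogeneity, that $x^{-1}$ coefficient is a polynomial in $\bfy$ determined by Taylor coefficients at the forward direction $(1,0)$ of homogeneous pieces of degrees $-1,0,1,\dots$. Integrating a homogeneous piece of degree $m\neq-1$ along $x$ gives a homogeneous piece of degree $m+1$ with no log. So the log coefficient equals a finite sum of $\bfy$-Taylor coefficients at the forward ray.

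\textbf{Step 3: the main obstacle.} The hard part is showing that these Taylor coefficients cancel. My first attempt is to compare the two normalizations of the constant of integration, $\int_0^x$ versus $\int_{-\infty}^x$. By \cref{eq:(IE)^k-sum}, $\int_0^x$ differs from $\int_{-\infty}^x$ only by $\bfy$-dependent constants, and these are absorbed into $C^\infty(\bbR^{d-1}_{\bfy})$ inputs.

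With $\int_{-\infty}^x$, the iterates $(N^{-1}E_\lambda)^k$ are computed on $\overline{\bbR^d}$ near the backward hemisphere. There they are honestly smooth (the back-chart analysis of \S\ref{subsec:same}). A jointly homogeneous function that is smooth on the whole sphere minus the forward point, of degree $m$, has fixed-$\bfy$ expansion at the forward ray with coefficients polynomial in $\bfy$ of degree at most $m+n$ in the $x^{-n}$ term. The $x^{-1}$ term would then need a degree-$0$ homogeneous function with a polynomial (hence constant) leading coefficient.

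I would then show that the degree-$(-1)$ part of $E_\lambda w$ is $\partial_x$ of a degree-$0$ homogeneous function smooth across the forward ray. The motivation is the identity used in the first counterexample: a potential term equals $\partial_x$ of the previous iterate, so $V u_1\propto\partial_x(u_1^2)$. If this works, the integral has no log.

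If this direct route fails, the fallback is a bookkeeping argument. Track the Leibniz terms in which $\partial_y$ from later factors lands on earlier coefficients, and verify, via the shuffle identity $\sum_\sigma\int\!\!\int\cdots=\prod\int$, that the symmetrized sum reorganizes into products of single integrals $\int_0^x\partial^\gamma\Upsilon$. These are log-free because $\Upsilon$ has no $x^{-1}$ term. The extra commutator terms would then be handled inductively at lower $k$.
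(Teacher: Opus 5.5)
Your Step~1 is a correct reformulation. The multilinear part of $(IE_\lambda)^k$ is exactly $\calS$, so the lemma is equivalent to log-freeness of $(IE)^k$ for one admissible $E$. This is also why the ordered example \cref{eq:misc_888} is not a counterexample. But Steps~2--3, which must produce the cancellation, have a genuine gap.

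The Step~2 invariant is not preserved by $I$, because forward integration of a smooth short-range coefficient is already singular at the forward ray. At fixed $\bfy$, $\int_0^x(1+t^2+y^2)^{-1}\dd t=\frac{\pi}{2\langle y\rangle}-\frac{1}{x}+O(x^{-3})$. You discard the non-polynomial, $x$-independent coefficient $\pi/(2\langle y\rangle)$ ``modulo $C^\infty(U)[x]$''. A later $I\triangle_{\bfy}$ turns it into $x\,\triangle_{\bfy}\big(\pi/(2\langle y\rangle)\big)$, and a further $IV$ turns that into the $\log x$ of the paper's three-factor counterexample. So the $x^{-1}$ coefficients of individual terms are not polynomials built from forward Taylor coefficients, and homogeneity cannot force them to cancel.

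Step~3's first attempt also fails, for three reasons. First, for general $f\in C^\infty(\bbR^{d-1}_{\bfy})$, the constant-coefficient part of $E_\lambda$ maps $f$ to something that does not decay as $x\to-\infty$, so $(N^{-1}E_\lambda)^kf$ is undefined. Second, the identity linking $\int_{-\infty}^x$ to $\int_0^x$ runs the other way: it is how \Cref{prop:aut} is deduced from this lemma. Third, your claim about homogeneous functions smooth off the forward point is false. For instance, $y^{-1}(\pi/2+\arctan(x/y))$ is homogeneous of degree $-1$ and smooth away from the forward point, yet its $x^0$-coefficient at fixed $\bfy$ is $\pi/y$. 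The identity $Vu_1\propto\partial_x(u_1^2)$ is special to the first iterate and gives no general mechanism.

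What is missing is a way to make an induction on the number of factors close. With the $\int_0^x$ normalization in your fallback, the commutators carry non-decaying $x$-independent parts and are not admissible building blocks. The paper first removes $x$-derivatives by the fundamental theorem of calculus: for $E_1=\Upsilon\partial_x^\nu\partial_y^\alpha$ one has $E_1I=\Upsilon\partial_x^{\nu-1}\partial_y^\alpha$, and $E_1$ kills functions of $\bfy$. It then integrates by parts against backward integration $B=\int_x^\infty$, using $IQI=[I,BQ]$ and $IQ=\int_0^\infty Q-BQ$. Summing over the slot of $IQ$ telescopes $\calS$ into $-BQ\,\calS'+\calS'\int_0^\infty Q$, where $\calS'$ is the symmetrized product of the other $k-1$ factors. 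The remainder consists of $(k-1)$-factor symmetrized products in which one $E_\kappa$ is replaced by $[BQ,E_\kappa]$. These commutators are again short-range in $\{x>-1\}$: $BQ$ has coefficients in $\langle r\rangle^{-1}C^\infty(\overline{\bbR^d})$ there, and either $E_\kappa$ supplies the decay or $E_\kappa=\partial_y^\alpha$ differentiates $BQ$. Choosing $B$ rather than $I$ is the key point.

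Alternatively, your polarized form can be finished directly. Take $E=E_0+Q$ with no $x$-derivatives. Then $W=\sum_k\mu^k(IE)^kf$ solves $\partial_xW=\mu EW$ with $W|_{x=0}=f$. Writing $W=e^{\mu xE_0}\tilde W$ gives $\partial_x\tilde W=\mu\sum_m\frac{(-\mu x)^m}{m!}\operatorname{ad}_{E_0}^m(Q)\,\tilde W$. Each commutator puts a $\partial_{\bfy}$ on a coefficient of $Q$, so for large $x$ the operator $x^m\operatorname{ad}_{E_0}^m(Q)$ still has coefficients in $x^{-2}C^\infty([0,1)_{1/x}\times U)$. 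Hence every order of $\tilde W$ stays in $C^\infty([0,1)_{1/x}\times U)$. Multiplying back by $x^mE_0^m/m!$ then gives a polynomial in $x$ plus a function smooth in $1/x$.
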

Proving this lemma is the main remaining task of this appendix.

\begin{remark*}
    This lemma allows $E_\bullet=\partial_x,\partial_{y_j},\partial_x^3,\cdots$ (any constant-coefficient monomial except $E_\bullet=1$), even though these terms are not allowed in \Cref{prop:aut}. 
\end{remark*}

As mentioned above, this argument utilizes algebraic cancellations --- it is essential that we are work with the symmetrized product (hence the ``$+\text{permutations}$''). As the counter-example above shows, given a short-range classical potential $V$, the operator 
\begin{align} 
    (IV) (I\triangle_{\bfy})(IV)   &\in C^\infty(\bbR_x; \operatorname{Diff}(\bbR^{d-1}_{\bfy}))\\
    &: f(\bfy) \mapsto I (V I \triangle_{\bfy} IV )(x,\bfy)
\end{align} 
can have log terms in its $x\to\infty$ expansion, because $(IV) (I\triangle_{\bfy})(IV)1$ does.  The key fact expressed by the lemma above is that the symmetrized product 
\begin{equation}\label{eq:symmetrized_example}
    \calS=(I\triangle_{\bfy})(IV)^2+(IV) (I\triangle_{\bfy})(IV)+(IV)^2 (I\triangle_{\bfy})
\end{equation}
does not. See the next subsection.
(Note that here $V$ denotes the multiplication operator $M_V$, so $\triangle_{\bfy} I V$ should be read as the composition $f\mapsto \triangle_{\bfy} (IVf)$.)

\subsection{An example}
Consider the case 
\begin{equation}
    E = \triangle_{\bfy} + V , 
\end{equation}
for $V\in \langle r \rangle^{-2} C^\infty(\overline{\bbR^d})$ a short-range classical potential. Instead of taking $V$ as a whole building block $E_\bullet$, Taylor expand 
\begin{equation}
    V \sim \sum_{\kappa=2}^\infty \frac{V_\kappa(\bfy)}{(1+x)^\kappa}
\end{equation}
at $(\infty,\bfy)\in \mathrm{ff}^\circ \subset Y$ to some high-order $K$, and take 
the building blocks $E_\bullet$ to be (in addition to the constant-coefficient building blocks $\partial_x,\partial_{y_j},\partial_x^2,\cdots$.) those $K$ terms, with another $E_\bullet$ as the Taylor remainder. If $K$ is large enough, then the remainder has many orders of decay, so the absence of log terms in the symmetrized sum 
\begin{equation}
    \calS = \Big(\prod_{E_\bullet \in S } I E_\bullet\Big)+\text{permutations}
\end{equation}
can be deduced from the corresponding statement for sums with one fewer term. 
The main task is therefore to prove that $\calS$ lacks log terms when the $E_\bullet$ are each either a single term in the Taylor series or the constant-coefficient building blocks.

\textit{Claim: $\calS$ lacks log terms when the $V_\kappa(\bfy)$ are polynomials of degree $\leq \kappa-2$.} Note that this claim is purely algebraic. (And it does not hold for the individual $\prod I E_\bullet$, without summing over permutations.)

The assumption on $V_\kappa$ is true when $V\in \langle r \rangle^{-2} C^\infty(\overline{\bbR^d})$. For example,
\begin{equation}
    V = \frac{V_{2,\circ}(\omega)}{r^2}+O\Big(\frac{1}{r^3}\Big)\text{ as }r\to\infty ,\quad \omega=z/r,\quad z=(x,\bfy) 
\end{equation}
for $V_{2,\circ} \in C^\infty(\bbS^{d-1})$. Consequently, 
\begin{equation} 
    V_2=V_{2,\circ}(\rightarrow)
\end{equation} 
is the value of $V_{2,\circ}$ at the forward direction, hence constant.  More generally, $V_\kappa$ is a polynomial whose terms of degree $\nu\in \bbN$ come from the $(\mu-\nu)$th terms in the Taylor expansion of $V_{\mu,\circ}$ at the forward direction. It is because $V_{\mu,\circ}=0$ for $\mu\leq 1$ that $\deg V_\kappa \leq \kappa-2$.  In contrast, given a general $V\in \langle r \rangle^{-2} C^\infty(Y)$, the $V_\kappa$ are not polynomials in $\bfy$, let alone ones of bounded degree.

Thus, if we can prove the italicized claim above, we will have a complete outline for a proof of \Cref{lem:logs}. Our actual proof will proceed via a different route, but the argument above isolates the algebraic core of the phenomenon and allows it to be tested systematically.

The case where $\calS$ consists of $k=1$ factors is  essentially trivial. The $k=2$ case is non-trivial, with the main case being that where one $E_\bullet$ is $\partial_y^\alpha$, for some multi-index $\alpha$ (which is not allowed to be 0), and the other is $V_\kappa/(1+x)^\kappa$. 
The symmetrized product is therefore 
\begin{equation}
    \calS = I \partial_y^\alpha I \frac{V_\kappa}{(1+x)^\kappa} + I   \frac{V_\kappa}{(1+x)^\kappa} I \partial_y^\alpha.
\end{equation}
One must remember that $V_\kappa$ is to be read as the multiplication operator $f\mapsto fV_\kappa$. 
Explicitly, 
\begin{align}
    I   \frac{V_\kappa}{(1+x)^\kappa} I \partial_y^\alpha &= \Big( \int_0^x   \frac{t}{(1+t)^\kappa} \dd t \Big)  (V_\kappa\partial_y^\alpha) \\
     I \partial_y^\alpha I \frac{V_\kappa}{(1+x)^\kappa} &= \Big( \int_0^x \int_0^t \frac{1}{(1+s)^\kappa} \dd s \dd t \Big) \sum_{\beta\leq \alpha} \binom{\alpha}{\beta} (\partial_y^\beta V_\kappa) \partial_y^{\alpha-\beta}.
\end{align}
These are both log-free unless $\kappa=2$. When $\kappa=2$, the two integrals above are 
\begin{align}
    \int_0^x   \frac{t}{(1+t)^2} \dd t &= \log (1+x) +\text{rational} \\ 
    \int_0^x \int_0^t \frac{1}{(1+s)^2} \dd s \dd t  &= -\log (1+x)+\text{rational}, 
\end{align}
where `rational' means a rational function of $x$.  Thus, unless $V_2=0$, the individual terms in $\calS$ have log terms, $\propto V_\kappa \partial_y^\alpha$. But $\calS$ itself is 
\begin{align}
    \begin{split} 
    \calS &= (\log (1+x) +\text{rational}) V_2 \partial_y^\alpha - (\log (1+x)+\text{rational}) ( \partial_y^\alpha V_2 + \text{intermediate terms} + V_2 \partial_y^\alpha) \\ &= (\partial_y^\alpha V_2+\text{intermediate terms}) \log (1+x)+\text{rational},
    \end{split}
\end{align}
where `rational' means a linear combination of $1,\partial_y^\alpha$ with coefficients which, for each individual $\bfy$, are rational functions of $x$. For general $V_2$, the symmetrized sum $\calS$ still has a log term, but if $V_2$ is \emph{constant}, then (because $\alpha$ is nonzero) 
\begin{equation} 
    \partial_y^\alpha V_2=0
\end{equation}
and therefore $\calS$ is log-free. Note that our assumption that $V_\kappa$ is a polynomial of degree $\leq \kappa-2$ says that $V_2$ is constant. 
Thus, the claim above is true when $\calS$ involves $k=2$ factors, due to a cancellation of log terms between the terms in the symmetrization.

Now consider the case of $k=3$ factors. For simplicity, we discuss just the example
\begin{multline}\label{eq:S_log_example}
    \calS = (I\triangle_{\bfy})\Big(I\frac{V_\kappa}{(1+x)^\kappa}\Big)\Big(I\frac{V_\ell}{(1+x)^\ell}\Big) + (I\triangle_{\bfy})\Big(I\frac{V_\ell}{(1+x)^\ell}\Big)  \Big(I\frac{V_\kappa}{(1+x)^\kappa}\Big)\\ 
    +\Big(I\frac{V_\kappa}{(1+x)^\kappa}\Big)(I\triangle_{\bfy})\Big(I\frac{V_\ell}{(1+x)^\ell}\Big) + \Big(I\frac{V_\ell}{(1+x)^\ell}\Big)(I\triangle_{\bfy})\Big(I\frac{V_\kappa}{(1+x)^\kappa}\Big) \\ 
    +\Big(I\frac{V_\ell}{(1+x)^\ell}\Big)\Big(I\frac{V_\kappa}{(1+x)^\kappa}\Big) (I\triangle_{\bfy})+\Big(I\frac{V_\kappa}{(1+x)^\kappa}\Big) \Big(I\frac{V_\ell}{(1+x)^\ell}\Big) (I\triangle_{\bfy}) ,
\end{multline}
which suffices to convey the main points.
Explicitly, 
\begin{equation}\label{eq:S_ex}
    \calS = \calS 1 + \sum_{j=1}^{d-1} [\cdots ]\partial_{y_j} + V_\kappa V_\ell C^\infty(\bbR_x) \triangle_{\bfy}.
\end{equation}
The coefficients here are ordinary functions of $x,\bfy$. The coefficient of $\partial_{y_j}$ is abbreviated.

First consider the zeroth order terms, $\calS 1$.
The contributions from the terms in the bottom line vanish (since $\triangle_{\bfy} 1=0$), and we can simplify:
\begin{align}
    (I\triangle_{\bfy})\Big(I\frac{V_\kappa}{(1+x)^\kappa}\Big)\Big(I\frac{V_\ell}{(1+x)^\ell}\Big) &= (\triangle_{\bfy}( V_\kappa V_\ell)) I^2 \bigg[ \frac{1}{(1+x)^\kappa} I \frac{1}{(1+x)^\ell}  \bigg] \label{eq:annoying} \\
    \Big(I\frac{V_\kappa}{(1+x)^\kappa}\Big)(I\triangle_{\bfy})\Big(I\frac{V_\ell}{(1+x)^\ell}\Big) &= (V_\kappa \triangle_{\bfy}  V_\ell) I \bigg[ \frac{1}{(1+x)^\kappa} I^2 \frac{1}{(1+x)^\ell}  \bigg] .  \label{eq:annoying_2}
\end{align}
We also have two more contributions to $\calS 1$ arising from swapping $\kappa\leftrightarrow \ell$ in the two expressions above. We will show that the sum of the two terms above are log-free. Then, swapping $\kappa \leftrightarrow \ell$ shows that the sum of the remaining two terms is also log-free.

If $\kappa=2,\ell=2$, then both terms above vanish, because $V_\kappa,V_\ell$ are a constant and hence killed by $\triangle_{\bfy}$. 
When $\kappa,\ell \geq 3$, neither of the functions of $x$ above have logarithms, because the integrands have sufficient decay. 
If $\ell=2$ and $\kappa\geq 3$, then \cref{eq:annoying} is log-free, and \cref{eq:annoying_2} vanishes because $\triangle_{\bfy} V_\ell=0$. 
If $\kappa=2$ and $\ell\geq 3$, then we have
\begin{align}
    I^2 \bigg[ \frac{1}{(1+x)^2} I \frac{1}{(1+x)^\ell}  \bigg] &= - \frac{1}{\ell-1} \log (1+x)+\text{rational},   \\
    I \bigg[ \frac{1}{(1+x)^\kappa} I^2 \frac{1}{(1+x)^\ell}  \bigg] &= \frac{1}{\ell-1} \log (1+x)+\text{rational},
\end{align} 
where ``rational'' means rational in $x$ for each fixed $\bfy$.  Their logarithmic contributions to $\calS 1$ cancel (because $\triangle_{\bfy} (V_\kappa V_\ell) = V_\kappa \triangle_{\bfy} V_\ell$ when $V_\kappa=V_2$ is constant).

So, the individual terms in $\calS 1$ can contain logarithmic terms. But the fully symmetrized sum $\calS 1$ has its log term proportional to the \emph{difference} 
\begin{equation} 
    V_2 \triangle_{\bfy} V_\ell - \triangle_{\bfy} (V_2V_\ell) \in C^\infty(\bbR^{d-1}_{\bfy}),
\end{equation} 
which vanishes identically if $V_2$ is constant, regardless of $V_\ell$. Thus, the claim above holds true, in this particular example.

Now consider the remaining terms in \cref{eq:S_ex}. The first-order terms $\propto \partial_{y_j}$ can be handled by an argument virtually identical to that above, so we omit the details. 
For the second-order terms, $\propto \triangle_{\bfy}$, the cancellation is more intricate, when $\kappa=2$ or $\ell=2$. When $\kappa=2$ and $\ell \geq 3$, we have, modulo terms with fewer derivatives,
\begin{align} 
\begin{split}
    (I\triangle_{\bfy})\Big(I\frac{1}{(1+x)^\kappa}\Big)\Big(I\frac{1}{(1+x)^\ell}\Big) &= \Big(- \frac{1}{\ell-1} \log(1+x)+\text{rational} \Big)\triangle_{\bfy} \\
    \Big(I\frac{1}{(1+x)^\kappa}\Big)(I\triangle_{\bfy})\Big(I\frac{1}{(1+x)^\ell}\Big) &= \Big( \frac{1}{\ell-1} \log (1+x) + \text{rational} \Big) \triangle_{\bfy} \\
    \Big(I\frac{1}{(1+x)^\ell}\Big)(I\triangle_{\bfy})\Big(I\frac{1}{(1+x)^\kappa}\Big) &= \Big( \frac{(1+x)^{1-\ell}}{\ell-1}\log(1+x)+\text{rational} \Big)\triangle_{\bfy} \\
    (I\triangle_{\bfy})\Big(I\frac{1}{(1+x)^\ell}\Big)\Big(I\frac{1}{(1+x)^\kappa}\Big) &= (\text{rational})\triangle_{\bfy}\\
    \Big(I\frac{1}{(1+x)^\ell}\Big)\Big(I\frac{1}{(1+x)^\kappa}\Big) (I\triangle_{\bfy}) &= \Big( -\frac{(1+x)^{1-\ell}}{\ell-1}
\log(1+x)+\text{rational}\Big)\triangle_{\bfy} \\
    \Big(I\frac{1}{(1+x)^\kappa}\Big) \Big(I\frac{1}{(1+x)^\ell}\Big) (I\triangle_{\bfy}) &= (\text{rational})\triangle_{\bfy}.
\end{split}
\end{align} 
Adding these together, the log terms cancel. The $\ell=2$ case is similar. 
This completes our discussion of the case of $k=3$ factors.

Proceeding similarly, one can check the italicized claim above for small values of $k$. Each individual $k$ takes finite time. In this way, we algorithmically verified the claim above for all $k\leq 6$, with the assistance of computer algebra software. This served as convincing evidence of the claim, before we found the inductive argument presented in the next subsection. 
Unfortunately, the algebra becomes increasingly arduous for $k\geq 4$, so we cannot present it here. The argument in the next subsection is presented not as a proof of the italicized claim above but as a proof of \Cref{lem:logs}. The two are essentially equivalent, so a proof of one can be converted into a proof of the other.

\subsection{Proof of \Cref{lem:logs}}

We will say that an element of $C^\infty(\bbR_x; \operatorname{Diff}(\bbR^{d-1}))$ is ``log-free'' if the coefficients are of the form $C^\infty(U)[x]+C^\infty([0,1)_{1/x}\times U)$ for each open $U\subset \bbR^{d-1}_{\bfy}$. 

First, we reduce to the special case where the $E_\bullet$ have no $x$-derivatives:
\begin{lemma}
	Fix $k\in \bbN$, and suppose that
	\begin{align} 
	E_1,\dots,E_k \in &\{ \partial_y^\alpha:\text{nonzero multi-index }\alpha \}  \\ &\cup \frac{1}{1+x^2+\lVert \bfy \rVert^2}\operatorname{span}_{C^\infty(\overline{\bbR^d})} \{\partial_y^\alpha:\alpha\text{ a multi-index}\}  .
	\end{align} 
    Then, $\calS$ (see above) is log-free.
	\label{lem:logs_baby}
\end{lemma}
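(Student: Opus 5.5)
We need to prove that the symmetrized product $\calS=\sum_{\sigma\in\frakS_k}\prod_{\kappa}(IE_{\sigma(\kappa)})$ is log-free when every $E_\bullet$ is either a constant-coefficient pure $\bfy$-derivative $\partial_y^\alpha$ ($\alpha\neq 0$) or a term $\Upsilon\,\partial_y^\alpha$ with $\Upsilon\in\langle r\rangle^{-2}C^\infty(\overline{\bbR^d})$. The plan is to reduce to the model case described in the example subsection, and then prove the algebraic cancellation by induction on $k$ using a generating-function identity for symmetrized iterated integrals.

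\textbf{Reduction to Taylor monomials.} Each $\Upsilon$ has an expansion at $\mathrm{ff}^\circ\subset Y$ of the form
\begin{equation*}
\Upsilon\sim\sum_{\kappa\geq 2}(1+x)^{-\kappa}\Upsilon_\kappa(\bfy),
\end{equation*}
where $\Upsilon_\kappa$ is a polynomial in $\bfy$ of degree $\leq\kappa-2$. This degree bound comes from smoothness on $\overline{\bbR^d}$, as explained in the example. By multilinearity of $\calS$ in the $E_\bullet$, we split each $\Upsilon$ into a finite Taylor sum plus a remainder $R_K\in\langle x\rangle^{-K}$ (locally uniform in $\bfy$). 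A symmetrized product containing a remainder factor reduces to the case $k-1$: the remainder factor has so much decay that integrating it produces only $C^\infty([0,1)_{1/x})$ plus constant contributions. To make this precise, we regroup $\calS$ according to the position of the remainder. The sub-products to the right of it form a symmetrized product of the remaining factors, and $I$ applied to (remainder)$\times$(polynomial $+$ smooth in $1/x$) is log-free once $K$ is large. Since the sub-products in front are log-free and log-free functions are closed under $I$ and under multiplication by log-free functions, the log-free condition propagates. Taking $K$ large relative to $k$ and the polynomial degrees involved (all bounded by $k$), it suffices to treat factors of the form $\partial_y^\alpha$ or $(1+x)^{-\kappa}p_\kappa(\bfy)\partial_y^\alpha$ with $\deg p_\kappa\leq\kappa-2$.

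\textbf{The algebraic core.} For these model factors, each $IE_\bullet$ is $I\circ m_\bullet(x)\circ D_\bullet$, where $m_\bullet(x)\in\{1,(1+x)^{-\kappa}\}$ and $D_\bullet$ is an $x$-independent differential operator in $\bfy$ with polynomial coefficients. Composing the factors gives terms of the form
\begin{equation*}
(\text{iterated integral of a product of the }m\text{'s})\times(\text{composition of the }D\text{'s}).
\end{equation*}
A logarithm can arise only from $I$ applied to an exact $(1+x)^{-1}$ coefficient. We track this by a weight count: assign $\partial_y^\alpha$ weight $|\alpha|$, assign $y$ weight $-1$, and assign $(1+x)^{-\kappa}p_\kappa$ weight at least $2$ (since $\deg p_\kappa\leq\kappa-2$). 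With these weights, a log can only come from a balanced configuration in which the $\bfy$-homogeneity exactly matches the $x$-power $-1$ before integration. We would show that the total log coefficient in $\calS$ equals a sum over the positions of the last integrated factor of commutators, of the form
\begin{equation*}
\sum\bigl[\,D_\bullet,\;p_2\,\bigr]\cdots,
\end{equation*}
as in the cases $k=2,3$ worked out above.

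\textbf{Main obstacle.} The hard step is organizing the cancellation for general $k$. We would first try the shuffle identity: symmetrizing iterated integrals over all orders converts $\sum_\sigma I m_{\sigma(1)}I m_{\sigma(2)}\cdots$ into products of single integrals, but only when the $D$'s commute with the multiplications. The error terms are commutators $[\partial_y^\alpha,p_\kappa]$, and each such commutator lowers the polynomial degree. Consequently every commutator factor with $\kappa=2$ vanishes, and factors with $\kappa\geq 3$ acquire enough extra $x$-decay relative to their weight to avoid producing exact $(1+x)^{-1}$ integrands. The induction on $k$ therefore runs as follows: write
\begin{equation*}
\calS_k=\sum_j (IE_j)\,\calS_{k-1}^{(\hat\jmath)}+(\text{commutator corrections}),
\end{equation*}
apply the induction hypothesis to $\calS_{k-1}^{(\hat\jmath)}$, and show that the corrections are log-free by the degree count above. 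If the degree bookkeeping fails, the fallback is to verify the log-coefficient identity directly via the Mellin symbol of $I$ on $(1+x)^{-s}$. In that formulation, logs correspond to double poles at $s=1$, and symmetrization should cancel the residues, giving only simple poles.
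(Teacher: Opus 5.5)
Your reduction to Taylor monomials is sound in outline, and the degree bound $\deg p_\kappa\le\kappa-2$ is correct. It leaves you with exactly the paper's ``italicized claim'': the symmetrized product of model factors is log-free. That claim is the whole difficulty, and for it your proposal lists strategies rather than giving an argument.

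Your shuffle-identity observation is correct, but it only covers the commuting case. Since $I$ and $m_\bullet(x)$ commute with every $D_\bullet$, you have $\calS=\sum_\sigma J_\sigma\,D_{\sigma(1)}\cdots D_{\sigma(k)}$ with $J_\sigma=Im_{\sigma(1)}\cdots Im_{\sigma(k)}[1]$. Moreover $\sum_\sigma J_\sigma=\prod_i\int_0^x m_i$, which is log-free.

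Once some $p_\kappa$ with $\kappa\ge 3$ is non-constant, the leftover $\sum_\sigma J_\sigma(D_\sigma-D_{\mathrm{id}})$ is the problem. It pairs individual iterated integrals with nonzero commutators, and those integrals really do contain $\log x$, $\log^2 x$, and terms such as $x^{-3}\log x$. Nothing in your proposal shows these cancel. The ``degree count'' is asserted, not derived. Brackets of polynomial-coefficient operators preserve the weight you assign (for example $[y_1\partial_{y_1},y_1]=y_1$) and lower the total coefficient degree only by one per bracket. So it is not automatic that commutator terms gain decay ``relative to their weight.''

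Your induction also does not close as stated. The identity $\calS_k=\sum_j (IE_j)\,\calS[\{E_i\}_{i\ne j}]$ holds exactly, with no corrections. But applying $IE_j$ with $E_j=(1+x)^{-\kappa}p_\kappa\partial_y^\alpha$ to a log-free coefficient growing like $x^{\kappa-1}$ produces an $x^{-1}$ integrand. You do not say what ``commutator corrections'' would remove it. The Mellin fallback is likewise stated without argument. The paper itself notes that this direct combinatorial route was only checked by computer for $k\le 6$.

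The idea you are missing is the paper's integration by parts against backward integration $B=\int_x^\infty$. For one short-range factor $Q$, one has $IQI=I(BQ)-(BQ)I$ and $IQ=\int_0^\infty Q\,\mathrm{d}t-BQ$. Summing over the insertion positions of $IQ$ in the symmetrized product telescopes $\calS$ into three pieces:
\[
-(BQ)\,\calS_{k-1}\;+\;\calS_{k-1}\int_0^\infty Q\,\mathrm{d}t\;+\;\sum_\nu \calS_{k-1}\big[E_\nu\to[BQ,E_\nu]\big],
\]
where the last sum consists of symmetrized $(k-1)$-fold products with one $E_\nu$ replaced by $[BQ,E_\nu]$. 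Away from the backward direction, $BQ$ has log-free coefficients in $\langle r\rangle^{-1}C^\infty(\overline{\bbR^d})$. A $\bfy$-derivative gains a full order of decay on $\overline{\bbR^d}$. Hence each $[BQ,E_\nu]$ is again an admissible short-range building block, after a harmless cutoff near the backward direction. The induction on $k$ then closes with no log bookkeeping. This is exactly where smoothness on $\overline{\bbR^d}$ (your degree bound) enters, and no Taylor expansion is needed.

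Separately, your remainder step relies on log-free functions being closed under $I$. That is false, since $I[(1+x)^{-1}]=\log(1+x)$. The step can be repaired:
\begin{enumerate}
\item Write $\calS=\sum_{A\sqcup B}\calS[A]\,(IR)\,\calS[B]$.
\item Observe that $IR\,\calS[B]f=c(\bfy)+(\text{rapidly decaying, smooth in }1/x)$.
\item Treat $\calS[A]$ acting on rapidly decaying inputs via $\calS[A]=\sum_{e\in A}\calS[A\setminus\{e\}](IE_e)$. This keeps the left-hand factors symmetrized.
\end{enumerate}
The induction hypothesis must then be formulated for your model class (constant-coefficient derivatives, Taylor monomials and remainders), not for the lemma's class.
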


\begin{proof}[Reduction of \Cref{lem:logs} to \Cref{lem:logs_baby}]
    Consider the setup of \Cref{lem:logs}. We proceed inductively on the total number $\#$ of $x$-derivatives in the $E_\bullet$. An $E_\bullet$ containing an $x$-derivative has the form $\Upsilon \partial_x^\nu \partial_y^\alpha$ for $\nu\geq 1$, where 
    \begin{equation}
        \Upsilon=1\text{ or }\Upsilon \in \frac{1}{1+x^2+\lVert \bfy \rVert^2} C^\infty(\overline{\bbR^d}).
    \end{equation}
    The basic inductive idea is that a $\partial_x$ cancels with one $I$, reducing $\#$.
    
    If $\#=0$, then the desired conclusion is immediate from \Cref{lem:logs_baby}. 

    Suppose $\#\geq 1$, and that we have proven the lemma for all smaller values of $\#$. 
    Pick one particular $E_\kappa = \Upsilon \partial_x^\nu \partial_y^\alpha$. 
    Without loss of generality, we can assume $\kappa=1$. 
    Note that $E_1$ kills any function of $\bfy$ alone, so, an individual summand $\prod_\bullet (I E_\bullet)$ in which $E_1$ appears in the right-most slot is zero (considered as an element of $C^\infty(\bbR_x; \operatorname{Diff}(\bbR^{d-1}_{\bfy}))$). Otherwise, $E_1$ appears immediately to the left of an $I E_\ell$, for some $\ell\neq 1$, and the fundamental theorem of calculus gives  
    \begin{equation}
        E_1 I E_\ell = \Upsilon \partial_x^{\nu-1} \partial_y^\alpha E_\ell. 
    \end{equation}
    Thus, 
    \begin{equation} 
        \calS= \sum_{\ell=2}^k \calS[\ell],
    \end{equation} 
    where $\calS[\ell]$ is the symmetrized product with $k-1$ terms in which $E_1$ has been excluded and $E_\ell$ has been replaced by  $\Upsilon \partial_x^{\nu-1} \partial_y^\alpha E_\ell$. If $E_\ell\in \{\partial_x,\partial_{y_j},\cdots\}$ is a constant-coefficient building block, then  $\Upsilon \partial_x^{\nu-1} \partial_y^\alpha E_\ell$ has the form of an allowed $E_\bullet$. Thus, 
    $\calS[\ell]$ has the form covered by \Cref{lem:logs} for $\#-1$ in place of $\#$. Otherwise, if 
    \begin{equation} 
        E_\ell =\digamma \partial_x^\mu \partial_y^\beta 
    \end{equation} 
    for some $\digamma \in \langle r \rangle^{-2} C^\infty(\overline{\bbR^d})$, then, 
    \begin{equation}
        \Upsilon \partial_x^{\nu-1} \partial_y^\alpha E_\ell=\Upsilon \partial_x^{\nu-1} \partial_y^\alpha (\digamma \partial_x^\mu \partial_y^\beta) \in \operatorname{span}_\bbC\{ \Upsilon (\partial_x^\tau \partial_y^\gamma \digamma) \partial_x^{\mu+ \nu-\tau-1} \partial_y^{\beta+\alpha-\gamma} \}
    \end{equation}
    is sum of allowed building blocks. Consequently, $\calS[\ell]$ is a sum of symmetrized products of the form covered by \Cref{lem:logs} with $\#-1,\#-2,\cdots$ in place of $\#$. 
    The inductive hypothesis therefore yields that $\calS$ is log free.
\end{proof}

\begin{proof}[Proof of \Cref{lem:logs_baby}]
    We proceed by induction on the number
    $k$ of factors. If $k=1$, then the desired result is trivial. Suppose $k\geq 2$, and that we have proven the result for smaller values of $k$.

    If every $E_\bullet$ is a constant-coefficient building block, then $\calS$ has coefficients which are polynomials in $x$, so the desired result holds in this case. Thus, suppose among the $E_\bullet$ there is one of the form $\Upsilon \partial_y^\alpha$ for 
    \begin{equation}    
        \Upsilon \in \langle r \rangle^{-2} C^\infty(\overline{\bbR^d}).
    \end{equation} 
    Call this $Q$.

    Examining $\calS$, we integrate-by-parts in each occurrence of $IQ I$,  using
	\begin{equation}
	I Q I \bullet = -\int_0^x \frac{\mathrm{d}}{\mathrm{d} s} \Big( \int_s^{\infty} Q(t,-) \dd t \Big) I \bullet \dd s = - \bigg( \int_x^\infty Q(t,-)\dd t  \bigg) (I\bullet(x))   + \int_0^x \Big(\int_s^\infty Q(t,-) \dd t \Big) \bullet  \dd s ,   
	\end{equation}
	which holds because $Q$ only involves $y$-derivatives.
    Here, $\bullet$ stands for an arbitrary element of $C^\infty(\bbR_x; \operatorname{Diff}(\bbR^{d-1}_{\bfy}))$.
	(The reason why only one boundary term arises when integrating-by-parts is that $I\bullet$ vanishes at $0$, since whatever goes in the $\bullet$ is smooth at $0$). 
	Let $B$ denote integration in the backwards direction from $x=\infty$ (along level sets of $\bfy$). The formula above reads 
	\begin{equation}
	I Q I \bullet =  -  (B Q(x) ) I \bullet +I (B Q) \bullet = [I,BQ] \bullet.  
	\end{equation}
    Here $BQ\in C^\infty(\bbR_x; \operatorname{Diff}(\bbR^{d-1}_{\bfy}))$ just denotes the operator that we get from integrating the coefficients of $Q$ backwards from $x=\infty$. The integral converges because the coefficients of $Q$ are short-range.

    A similar identity handles the case where $IQ$ is last:
    \begin{equation}
        IQ = -\int_0^x \frac{\mathrm{d}}{\mathrm{d} s} \Big( \int_s^{\infty} Q(t,-) \dd t \Big) \dd s =  \underbrace{\int_0^\infty Q(t,-)\dd t  }_{\in \operatorname{Diff}(\bbR^{d-1})}-  \underbrace{\int_x^\infty Q(t,-)\dd t }_{BQ} 
    \end{equation}
    as an identity holding in $C^\infty(\bbR_x; \operatorname{Diff}(\bbR^{d-1}_{\bfy}))$. That is, 
    \begin{equation}
        IQf = \int_0^\infty Q(t,-)\dd t  f - (BQ) f 
    \end{equation}
    for any $f\in C^\infty(\bbR^{d-1}_{\bfy})$. 
    
    Note that $E_\kappa BQ I$, $I(BQ)E_\kappa$ have only one $I$, whereas $IQI$ has two. This is the basic strategy for the induction, reducing the problem to one about a symmetrized product with one fewer factors.
    However, $E_\kappa BQ,BQ E_\kappa$ are not linear combinations of admissible building blocks for two reasons: 
    \begin{enumerate}[label=(\roman*)]
        \item it is poorly behaved in the backwards direction, 
        \item if $Q$ has a $O(1/r^2)$ term, then $BQ$ is long-range in the forward direction.
    \end{enumerate}
    The second issue is fixed by noting that, in the full sum $\calS$, a telescoping phenomenon occurs, replacing $E_\kappa BQ$ with a commutator $[E_\kappa,BQ]$. This has an extra order of decay. The first issue is fixed by noting that $[E_\kappa,BQ]$ agrees with a linear combination of admissible building blocks in $\{x>-1\}$. Indeed, the coefficients of $BQ$ are smooth on $\overline{\bbR^d}$ away from the backwards direction (by \S\ref{subsec:same2}, switching the forward/backward direction), but lie in $\langle r \rangle^{-1} C^\infty(\overline{\bbR^d})$, not $\langle r \rangle^{-2} C^\infty(\overline{\bbR^d})$, because integration reduces the decay rate by one (as explained in \S\ref{subsec:same2}). 
    If $E_\kappa$ is short-range, then $E_\kappa BQ$, $BQE_\kappa$ both have coefficients in $\langle r \rangle^{-3} C^\infty(\overline{\bbR^d})$, picking up the extra decay from $E_\kappa$. If $E_\kappa=\partial_y^\alpha$, $\alpha\neq 0$, is a constant coefficient building block, then the commutator $[E_\kappa,BQ]$ is a linear combination of $B\tilde{Q}$ for $\tilde{Q}$ the result of differentiating the coefficients of $Q$ at least once. Hence, $\tilde{Q}$ has coefficients in $\langle r \rangle^{-3} C^\infty(\overline{\bbR^d})$, which means that $B\tilde{Q}$ is short-range, at least in $\{x>-1\}$. 
    The behavior in $\{x\leq -1\}$ is irrelevant to the truth of the lemma.
    More precisely, fix
$\chi_{\rightarrow}\in C^\infty(\overline{\bbR^d})$ which is identically one on $\{x\geq 0\}$ and vanishes in a neighborhood of the backward direction. 
Multiplying the coefficients of the locally defined commutators by $\chi_{\rightarrow}$ and extending across the backward direction produces globally admissible building blocks that agree with the original operators in the region used below. 
Terms in which derivatives fall on $\chi_{\rightarrow}$ are supported away from
the forward face and therefore do not affect the log-free expansion.

    Before presenting the details, we illustrate the telescoping phenomenon.
    Consider a term of the form $I Q I A I \bullet  + I A I Q I \bullet$.
	The summands individually evaluate to 
	\begin{align}
	\begin{split} 
	I Q I A  I\bullet  &=- (B Q) I A I \bullet   + I (BQ) A I \bullet \\
	I A I Q  I\bullet  &= -I A (B Q)  I\bullet + I A   I (B Q)\bullet. 
	\end{split} 
	\end{align}
	Summing these, we get: 
    \begin{equation}
        I Q I A I \bullet  + I A I Q I \bullet  = I [(BQ),A]  I\bullet -  (B Q) I A I \bullet + I A   I (B Q)\bullet.
    \end{equation}
    The key point is the appearance of the commutator $[(BQ),A]$.

    Now we present the details.
    Concretely, in $\calS$, we look at the terms in which all the other factors besides $IQ$ appear in a fixed relative order. There are $k$ such terms,  differing in the location where $IQ$ is inserted.
    Assuming for notational simplicity that $Q=E_k$, and relabeling the $E_\bullet$'s if necessary, the sum in question is 
    \begin{align}
    \begin{split}
        \sum_{\kappa=1}^k \Big(\prod_{\varkappa=1}^{\kappa-1} I E_\varkappa \Big)  (IQ) \Big(\prod_{\varkappa=\kappa}^{k-1} IE_\varkappa\Big) &= \Big(\prod_{\varkappa=1}^{k-1} I E_\varkappa \Big)  (IQ)+  \sum_{\kappa=1}^{k-1} \Big(\prod_{\varkappa=1}^{\kappa-1} I E_\varkappa \Big)  (IQ) \Big(\prod_{\varkappa=\kappa}^{k-1} IE_\varkappa\Big)  \\
        &= \Big(\prod_{\varkappa=1}^{k-1} I E_\varkappa \Big)  (IQ)+  \sum_{\kappa=1}^{k-1} \Big(\prod_{\varkappa=1}^{\kappa-1} I E_\varkappa \Big)  (IQ I) \Big(\prod_{\varkappa=\kappa}^{k-2} E_\varkappa I \Big) E_{k-1} .
        \end{split} 
    \end{align}
    The full sum $\calS$ is the result of symmetrizing this with respect to all permutations of $E_1,\cdots,E_{k-1}$. 
    So, 
    \begin{multline}\label{eq:misc_E5}
        \hspace{5em}\calS = \Big(\prod_{\varkappa=1}^{k-1} I E_\varkappa +\text{permutations}\Big)  (IQ)\\ 
        + \sum_\sigma  \sum_{\kappa=1}^{k-1} \Big(\prod_{\varkappa=1}^{\kappa-1} I E_\varkappa^\sigma \Big)  (IQ I) \Big(\prod_{\varkappa=\kappa}^{k-2} E_\varkappa^\sigma I \Big) E_{k-1}^\sigma , \hspace{5em}
    \end{multline}
    where $E_\nu^\sigma = E_{\sigma(\nu)}$. The sum is over all $\sigma \in \frakS_{k-1}$.

    Using our endpoint integration-by-parts identity, the first term is 
    \begin{equation}\label{eq:misc_E6}
        \Big(\prod_{\varkappa=1}^{k-1} I E_\varkappa +\text{permutations}\Big)  \int_0^\infty Q(t,-)\dd t  - \Big(\prod_{\varkappa=1}^{k-1} I E_\varkappa +\text{permutations}\Big)  BQ. 
    \end{equation}
    On the other hand, using our main 
   integration-by-parts identity, 
    \begin{multline}
        \Big(\prod_{\varkappa=1}^{\kappa-1} I E_\varkappa^\sigma \Big)  (IQ I) \Big(\prod_{\varkappa=\kappa}^{k-2} E_\varkappa^\sigma I \Big) E_{k-1}^\sigma = \Big(\prod_{\varkappa=1}^{\kappa-1} I E_\varkappa^\sigma \Big)  I BQ \Big(\prod_{\varkappa=\kappa}^{k-2} E_\varkappa^\sigma I \Big) E_{k-1}^\sigma \\ - \Big(\prod_{\varkappa=1}^{\kappa-1} I E_\varkappa^\sigma \Big)  BQ\underbrace{I \Big(\prod_{\varkappa=\kappa}^{k-2} E_\varkappa^\sigma I \Big) E_{k-1}^\sigma }_{ =\prod_{\varkappa=\kappa}^{k-1} I E_\varkappa^\sigma } . 
    \end{multline}
    Now when we sum over the insertion point $\kappa$, we get 
    \begin{multline}
        \sum_{\kappa=1}^{k-1} \Big(\prod_{\varkappa=1}^{\kappa-1} I E_\varkappa^\sigma \Big)  (IQ I) \Big(\prod_{\varkappa=\kappa}^{k-2} E_\varkappa^\sigma I \Big) E_{k-1}^\sigma  = - BQ  \Big( \prod_{\varkappa=1}^{k-1} IE_\varkappa^\sigma  \Big) 
        \\ 
        + \sum_{\kappa=1}^{k-2}  \Big( \prod_{\varkappa=1}^{\kappa-1} IE_\varkappa^\sigma \Big) I[(BQ),E_\kappa^\sigma] \Big( \prod_{\varkappa=\kappa+1}^{k-1} I E_{\varkappa}^\sigma  \Big)\\ 
         + \Big( \prod_{\varkappa=1}^{k-2} I E_\varkappa^\sigma \Big) I(BQ) E_{k-1}^\sigma 
    \end{multline}
    In total, 
    \begin{multline}
        \calS =  -\overbrace{ BQ  \Big( \prod_{\varkappa=1}^{k-1} IE_\varkappa^\sigma +\text{permutations} \Big)}^{\mathrm{I}} +\overbrace{\Big(\prod_{\varkappa=1}^{k-1} I E_\varkappa +\text{permutations}\Big)  \int_0^\infty Q(t,-)\dd t}^{\mathrm{II}} \\
        +  \underbrace{\sum_\sigma \sum_{\kappa=1}^{k-1}  \Big( \prod_{\varkappa=1}^{\kappa-1} IE_\varkappa^\sigma \Big) I[(BQ),E_\kappa^\sigma] \Big( \prod_{\varkappa=\kappa+1}^{k-1} I E_{\varkappa}^\sigma  \Big)}_{\mathrm{III}}.
    \end{multline}
    Let us now discuss each of the terms labeled $\mathrm{I}$, $\mathrm{II}$, $\mathrm{III}$,  above. 

    The easiest is term $\mathrm{I}$, because the parenthetical factor to the right of $BQ$ is covered by inductive hypothesis. It is therefore log-free. Since $BQ$ is log-free, the same applies to the composition $\mathrm{I}$. 
    Likewise, the parenthetical factor in term $\mathrm{II}$, to the left of $\int_0^\infty Q(t,-)\dd t$, is log-free by the inductive hypothesis. Since $\int_0^\infty Q(t,-) \dd t$ is just a differential operator on $\bbR^{d-1}_{\bfy}$ (with no $x$ dependence), it follows that term $\mathrm{II}$ is log-free.

    We break up $\mathrm{III}$ into $k-1$ components based on the value of the index $\nu$ labeling which $E_\nu$ the operator $BQ$ is commuted with.
    Then, $\mathrm{III}$ is the sum over $\nu$ of the symmetrized product of the factors 
    \begin{equation}
        \{ I [(BQ),E_\nu] \} \cup \{ I E_1,\dots,I E_{k-1} \} \backslash \{I E_\nu\} 
    \end{equation}
    Now we use that $[(BQ),E_\nu]$ is a linear combination of admissible building blocks in $\{x>-1\}$. Thus, term $\mathrm{III}$ is, in the region $\{x>0\}$, a linear combination of symmetrized products covered by the lemma, with $Q=E_k$ removed and $E_\nu$ replaced by some other admissible building block. Thus, the inductive hypothesis applies to tell us that these individual terms are all log-free, hence $\mathrm{III}$ is as well. 
    \end{proof}

\section{Some theory of the quantum inverted harmonic oscillator (QIHO)}
\label{sec:QHO}
In this appendix we discuss some theory about the spectral family of the QIHO: 
\begin{equation}
	L(j) = \triangle_{\hat{\bfy}} - \frac{\hat{y}^2}{4} + i \Big(j-\frac{d-1}{2} \Big) \in \operatorname{Diff}^2(\bbR^{d-1}_{\hat{\bfy}}),
\end{equation}
where $j \in \bbC$ (the other notational conventions are carried from \S\ref{sec:ff}.) 
Our main task is to construct the two partial right-inverses $R_\pm(j)$ described in \S\ref{subsec:indicial}, each satisfying 
\begin{equation} 
    L(j) R_\pm(j) = 1
\end{equation} 
on their domain,
but differing in terms of the sort of oscillations produced at large-$\hat{y}$ when fed $C_{\mathrm{c}}^\infty$ input $f$: 
\begin{equation} 
	R_\pm(j) f\sim e^{\mp i\hat{y}^2/4} \times \operatorname{polyhomogeneous}.
\end{equation}

This appendix has four subsections. In \S\ref{subsec:QHO_separation}, we solve $L(j)u=f$ via separation of variables, utilizing the spherical symmetry of $L(j)$. This produces an explicit description of $R_\pm(j)$ in terms of special functions, making it straightforward to verify the mapping property 
\begin{equation}
R_-(j): e^{i\hat{y}^2/4} \calA^{( (d+1-j)/2,0 ) }(\mathrm{ff})\supseteq  \calD(R_-(j)) \to e^{i\hat{y}^2/4} \calA^{ ( (d-1-j)/2,0) } (\mathrm{ff}) 
\label{eq:misc_163}
\end{equation} 
(\Cref{prop:QHO_mapping}) when only finitely many angular modes are allowed. See \Cref{prop:separated_mapping}. For simplicity, we discuss only generic $j$. 
Since this subsection is incidental to our main line of argumentation, we omit full details.
Nevertheless, this subsection serves as an elementary crosscheck for the following subsections. 

In the first of these, \S\ref{subsec:QHO_rig}, we relate $L(j)$ to the spectral family of a Schr\"odinger operator (with complex potential) whose potential is \emph{decaying} at infinity. This enables us to cite general facts about such operators from the literature on geometric scattering theory; see \S\ref{subsec:QIHO_Fredholm}. This is how we prove \Cref{prop:QHO_mapping} in full generality --- see \S\ref{subsec:QIHO_mapping}, which upgrades the results in the preceding subsection using a standard argument.

\subsection{Separation of variables}
\label{subsec:QHO_separation}
Let $Y\in C^\infty(\bbS^{d-2})$ denote a spherical harmonic with 
\begin{equation} 
    \triangle_{\bbS^{d-2}} Y = \lambda Y.
\end{equation} 
If $d=2$, then there are two harmonics, $Y_+$ (the even one), and $Y_-$ (the odd one).
The eigenvalues $\lambda\geq 0$ all have the form 
\begin{equation} 
\lambda = \ell(\ell+d-3)\label{eq:azimuthal}
\end{equation}
\cite{SimonHarmonic}, where $\ell\in \bbN$ is the ``azimuthal quantum number:''  
\begin{itemize}
    \item when $d\geq 3$, the azimuthal quantum number can be any nonnegative integer.
    \item  when $d=2$, then the azimuthal quantum number of $Y_+$ is $\ell=0$ and the azimuthal quantum number of $Y_-$ is $\ell=1$.
\end{itemize} 
For example, when $d=4$, then $Y\in C^\infty(\bbS^{2})$ is an ordinary spherical harmonic, and the possible eigenvalues are given by the familiar formula $\lambda=\ell(\ell+1)$.

Then, if
\begin{equation} 
	u(\hat{\bfy})= \hat{y}^{-(d-2)/2}v(\hat{y}) Y(\hat{\bfy}/\hat{y}) \quad \text{ and }\quad 
	f(\hat{\bfy}) = \hat{y}^{-(d-2)/2} g(\hat{y})Y(\hat{\bfy}/\hat{y}),
	\label{eq:misc_1021103}
\end{equation} 
the PDE $L(j)u=f $, on $\bbR^{d-1}_{\hat{\bfy}}\backslash \{0\}$, is equivalent to the inhomogeneous ODE 
\begin{equation}
	v''(s)+ \Big[ \frac{s^2}{4}+i\Big( \frac{d-1}{2}-j \Big) -\Big(\lambda+\frac{(d-2)(d-4)}{4}\Big)\frac{1}{s^2}\Big] v(s) = -g(s).
	\label{eq:QHO_inh}
\end{equation}
(The $\hat{y}^{-(d-2)/2}$ factors in \cref{eq:misc_1021103} are for later convenience.)

The homogeneous problem is when $g=0$. The inhomogeneous problem can be solved using one of this ODE's Green functions, which are built out of the solutions of the homogeneous problem.
Indeed, whenever $v_0,v_\infty$ are linearly independent solutions of the homogeneous problem, then, letting $\frakW=v_\infty(s)v_0'(s)-v_\infty'(s)v_0(s)\in \bbC^\times$ denote their Wronskian, 
\begin{equation}
	G[\lambda](s,s') = \frac{1}{\frakW} \begin{cases}
		v_\infty(s) v_0(s') & (s>s'), \\ 
		v_\infty(s') v_0(s) & (s<s'), 
	\end{cases}
	\label{eq:Green}
\end{equation}
satisfies the inhomogeneous ODE with forcing  $g(s)=\delta(s-s')$, hence is a Green function.
When $G(s,s')$ is integrated against reasonable $g(s')$ (say $g\in C_{\mathrm{c}}^\infty(\bbR^+)$), the result 
\begin{equation}
	v=\int_0^\infty G[\lambda](s,s') g(s')\dd s'
\end{equation}
is a solution to the inhomogeneous ODE \cref{eq:QHO_inh} with forcing $g(s)$. It is the unique solution which behaves like $v_\infty(s)$ as $s\to\infty$ and behaves like $v_0(s)$ as $s\to 0^+$. When $g\in C_{\mathrm{c}}^\infty(\bbR^+)$, this means $v\propto v_\bullet$ in the specified regions. 

For each choice of independent $v_\infty,v_0$, we get a right inverse $R[\lambda]$ to $L(j)$ acting on $Y$-modes; the Schwartz kernel is
\begin{equation}
	R[\lambda](\hat{y},\hat{y}') = \frac{ (\hat{y}'/\hat{y})^{(d-2)/2}}{\frakW} \begin{cases}
		v_{\infty}(\hat{y}) v_0(\hat{y}') & (\hat{y}>\hat{y}'),\\
		v_{\infty}(\hat{y}') v_0(\hat{y}) & (\hat{y}<\hat{y}').
	\end{cases}
\end{equation}

\subsubsection{Solution in terms of the confluent hypergeometric equations}

The homogeneous ODE 
\begin{equation}
	v''(s)+ \Big[ \frac{s^2}{4}+i\Big( \frac{d-1}{2}-j \Big) -\Big(\lambda+\frac{(d-2)(d-4)}{4}\Big)\frac{1}{s^2}\Big] v(s) = 0
	\tag{$\star$}
\end{equation}
is a conjugated form of the confluent hypergeometric equation. Letting
\begin{align}
	\begin{split} 
	\alpha &= \ell + \frac{d-3}{2}, \\
	a&=(d+1)/4-j/2+\alpha/2 = (d-1)/2-j/2+\ell/2 ,
	\end{split} 
\end{align}
the solution space $\calS=\{v\in C^\infty(\bbR^+_s) :\text{($\star$) holds}\}$ of the ODE ($\star$) is given by:
\begin{equation}
	\calS = \Big\{ e^{is^2/4} s^{\alpha+1/2} w\Big(-\frac{is^2}{2}\Big):\underbrace{zw''(z)+(b-z) w'(z)-aw(z)=0}_{\text{confluent hypergeometric equation with }b=1+\alpha} \Big\}.
\end{equation}

One special solution of the confluent hypergeometric equation is \emph{Tricomi's confluent hypergeometric function} $U(a,b,z)$ \cite[\href{http://dlmf.nist.gov/13.2.i}{\S13.2.1}]{NIST}. Two linearly independent solutions are  $U(a,b,z)$ and $e^z U(b-a,b,e^{-\pi i}z)$ \cite[\href{http://dlmf.nist.gov/13.2.E24}{13.2.24}]{NIST}. So, 
\begin{equation}
	\calS = s^{\alpha+1/2} \operatorname{span}_\bbC\Big\{  e^{is^2/4}U\Big(a,1+\alpha,-\frac{is^2}{2}\Big),  e^{-is^2/4}U\Big(1+\alpha-a,1+\alpha,\frac{is^2}{2}\Big)\Big\}
	\label{eq:misc_1023113}
\end{equation}

Another useful solution of the confluent hypergeometric equation is Kummer's ${}_1F_1(a,b,z)$. This is also commonly denoted $M(a,b,z)$ \cite{NIST}; the weighted version \begin{equation} 
	\bfM(a,b,z)=\Gamma(b)^{-1} M(a,b,z)
\end{equation} 
is also used \cite[\href{http://dlmf.nist.gov/13.2.E4}{13.2.4}]{NIST}. Unlike $U(a,b,z)$, the $M$-function is only defined for $b\neq 0,-1,-2,\cdots$, but since we have $b=1+\alpha \geq 1/2$, this issue does not arise here. The regularized function $\bfM(a,b,z)$, however, is defined for all $b$.

So, 
\begin{equation}
	e^{is^2/4} s^{\alpha+1/2} {}_1 F_1\Big(a,1+\alpha,-\frac{is^2}{2}\Big) \in \calS. 
	\label{eq:misc_1023115}
\end{equation}
It turns out that $z^{1-b} \bfM(a-b+1,2-b,z)$ satisfies the same confluent hypergeometric equation as ${}_1F_1(a,b,z)$, and consequently 
\begin{equation}
	e^{is^2/4} s^{-\alpha+1/2} \bfM\Big(a-\alpha,1-\alpha,-\frac{is^2}{2} \Big) \in \calS
	\label{eq:misc_1023116}
\end{equation}
as well; note that $1-\alpha$ might lie in $-\bbN$, necessitating the use of $\bfM(\bullet,1-\alpha,z)$ instead of ${}_1F_1(\bullet,1-\alpha,z)$, which might not be defined.

Since $\calS$ is two-dimensional, 
there must be linear dependencies among the solutions written above. 
We will discuss this more below.

\subsubsection{Recessive, incoming, and outgoing solutions}\label{subsec:odesolutions}
 
Three one-dimensional subspaces of $\calS$ are of interest. These are the spans of:

\begin{itemize}
	\item The \emph{outgoing solution}, which is formed from the Tricomi $U$-function:
	\begin{equation}
		v_{\infty,-}(s) = e^{is^2/4} s^{\alpha+1/2}  U\Big(   a,1+\alpha, - \frac{is^2}{2} \Big).
	\end{equation}
	So, $v_{\infty,-}$ oscillates like $\sim e^{is^2/4} s^{-d/2 +j}$ as $s\to\infty$.  The defining feature of $U(a,b,z)$ is that $U(a,b,z)\sim z^{-a}$ as $z\to\infty$ \cite[\href{http://dlmf.nist.gov/13.2.E6}{13.2.6}]{NIST}. (This includes the $z\to \pm i\infty$ limit.) 
	\item  
	The \emph{incoming solution}, which is the other solution in \cref{eq:misc_1023113}: 
	\begin{align}
		\begin{split} 
		v_{\infty,+} &= e^{-is^2/4}s^{\alpha+1/2}U\Big(1+\alpha-a,1+\alpha,\frac{is^2}{2}\Big) 
		\end{split} 
	\end{align} 
	So, $v_{\infty,+}$ oscillates like: 
    \begin{equation} 
        v_{\infty,+}\sim e^{-is^2/4} s^{(d-2)/2 -j}
        \label{eq:outgoing_osc}
    \end{equation} 
    as $s\to\infty$.

	\Cref{eq:misc_1023113} makes manifest that solutions $v(s)$ of the ODE can have two sorts of oscillations present in their  large-$s$ asymptotics: 
	\begin{equation} 
		e^{ is^2/4} s^{-d/2+j},\quad e^{-is^2/4} s^{(d-2)/2-j}.
	\end{equation} 
	What makes $v_{\infty,\pm}$ (and their multiples) special is that they have only one sort of oscillation present.
	\item The \emph{recessive} solution is the solution that is better behaved as $\hat{y}\to 0^+$. It is built out of ${}_1F_1$: 
	\begin{equation}
		v_0(s) = e^{is^2/4} s^{\alpha+1/2} {}_1F_1\Big(a,1+\alpha ,-\frac{is^2}{2}\Big) .
	\end{equation}
	The feature of ${}_1F_1$ distinguishing it from the other solutions of the confluent hypergeometric equation is that it is analytic at $z=0$, and ${}_1F_1(-,-,0)=1$ \cite[\href{http://dlmf.nist.gov/13.2.E2}{13.2.2}]{NIST}. So, 
	\begin{equation} 
	v_0(s)= e^{is^2/4} s^{\alpha+1/2} (1 +O(s^2))
	\label{eq:misc_1020109}
	\end{equation} 
	as $s\to 0^+$. In fact, $v_0(s) \in e^{is^2/4} s^{\alpha+1/2} C^\infty([0,\infty)_{s^2})$, via the analyticity of the ${}_1F_1$ function in its last argument.
	
	Let us contrast this with the other solutions in $\calS$. First consider $e^{is^2/4}s^{-\alpha+1/2} \bfM(a-\alpha,1-\alpha,-is^2/2)$. If $\alpha=0$, this is just $v_0(s)$, so suppose $\alpha>0$. (We will return to $\alpha=0$ momentarily. The case when $d=2$ and $\alpha=-1/2$ requires separate discussion.) $\bfM(a,b,0) = \Gamma(b)^{-1}$ (by definition). So, as long as $1-\alpha\notin -\bbN$, 
	\begin{equation}
		e^{is^2/4}s^{-\alpha+1/2} \bfM(a-\alpha,1-\alpha,-is^2/2) = \Omega(s^{-\alpha+1/2})
	\end{equation}
	as $s\to 0^+$. This is $s^{-2\alpha}$ worse than $v_0(s)$.
	
	On the other hand, for $a \notin -\bbN$, the Tricomi-$U$ function satisfies 
	\begin{equation} 
		U(a,1+\alpha,z)= (1+o(1))\Gamma(a)^{-1}\begin{cases}
			\Gamma(\alpha) z^{-\alpha} & (\alpha> 0) \\
			-\log z & (\alpha=0)
		\end{cases} 
	\end{equation} 
	as $z\to 0$ in $\bbC \setminus (-\infty,0]$ \cite[\href{http://dlmf.nist.gov/13.2.iii}{\S13.2.iii}]{NIST}. So, unless $a\in -\bbN$, then  
	\begin{equation} 
		v_{\infty,-}= \Omega(s^{-\alpha+1/2} )
	\end{equation} 
	if $\alpha>0$ and  $\Omega(s^{1/2}\log |z|)$ if $\alpha=0$. 
	In either case, $v_{\infty,-}$ is worse as $s\to 0^+$ than $v_0(s)$, by at least a $\log z$. 
	The same statements apply to $v_{\infty,+}$ under the condition $1+\alpha-a\notin -\bbN$, instead of $a\notin -\bbN$. Note that $a\in -\bbN\Rightarrow 1+\alpha-a\notin -\bbN$. So, we can conclude that, in all cases relevant to us, $v_0(s)$ is less singular at $s=0$ than a generic element of $\calS$.

	The same conclusion can be reached with less casework by noting that the indicial roots of ($\star$) are the $c$ such that  
	\begin{equation}
		c(c-1) - \Big(\lambda + \frac{(d-2)(d-4)}{4}\Big) = 0,
	\end{equation}
	i.e.\ 
	\begin{equation}
		c = \frac{1\pm \sqrt{(d-3)^2+4\lambda}}{2} = \frac{1}{2} \pm| \alpha|.  
	\end{equation}
\end{itemize}

\emph{The recessive solution $v_0$ is not always linearly independent of $v_{\infty,-}$.} We saw that $v_{\infty,-}(s)$ was more singular than $v_0(s)$ as $s\to 0^+$ when $a\notin -\bbN$; this implies linear independence. But, when $a\in -\bbN$, 
\begin{equation}
	U(a,b,z) = (-1)^{-a} \frac{\Gamma(b-a)}{\Gamma(b)} \cdot {}_1F_1(a,b,z)
\end{equation}
\cite[\href{http://dlmf.nist.gov/13.2.E7}{13.2.7}]{NIST}.
So, $v_{\infty,-},v_0$ are proportional in this degenerate case. In fact, $U(a,b,z),{}_1F_1(a,b,z)$ are polynomials of degree $m=-a$:
\begin{equation}
		U(-m,b,z) = (-1)^m \sum_{s=0}^m \binom{m}{s} \frac{\Gamma(b+m)}{\Gamma(b+s)} (-z)^s.
\end{equation}
The right-hand side is the Laguerre polynomial $L_m^{(b-1)}(z)$, up to a constant of proportionality \cite[\href{http://dlmf.nist.gov/13.6.19}{\S13.6.19}]{NIST}\cite[\href{http://dlmf.nist.gov/18.8}{\S18.8}]{NIST}: 
\begin{equation} \label{eq:Laguerre_relation}
	L_m^{(\alpha)}(z) = \binom{m+\alpha}{m}\cdot  {}_1F_1(-m,1+\alpha,z).
\end{equation}

\subsubsection{$R_-$ for generic $j$}
Excepting the cases $a\in -\bbN$ noted above, the recessive solution $v_0(s)$ is linearly independent from the outgoing solution $v_{\infty,-}(s)$. Consequently, the Wronskian $\frakW_-$ of $v_0$ and $v_{\infty,-}$ is nonzero. This can be calculated using the Wronskian of the hypergeometric functions, which can be looked up \cite[\href{http://dlmf.nist.gov/13.2.vi}{\S13.2.vi}]{NIST}. Alternatively, we can appeal to the small-argument asymptotics of the hypergeometric functions (and the fact that these can be differentiated) to get 
\begin{align}
v_{\infty,-}(s) v_0'(s)-v_{\infty,-}'(s) v_0(s)  =o(1)+2 \Gamma(a)^{-1}\Gamma(\alpha+1)(-i/2)^{-\alpha}.  
\end{align}
And since the Wronskian is constant, the $o(1)$ term must actually vanish identically:
\begin{equation}
\frakW_- = 2 \Gamma(a)^{-1}\Gamma(\alpha+1)(-i/2)^{-\alpha}.
\label{eq:key_Wronskian}
\end{equation}
As expected, this is nonzero if (and only if) $a\notin -\bbN$. 

So, we now have a formula for the Schwartz kernel $R_\pm[\lambda](\hat{y},\hat{y}')$ of $R_\pm$ acting on $\lambda$-modes: 
\begin{equation}
	R_\pm[\lambda](\hat{y},\hat{y}') = \frac{ (\hat{y}'/\hat{y})^{(d-2)/2}}{\frakW_\pm} \begin{cases}
		v_{\infty,\pm}(\hat{y}) v_0(\hat{y}') & (\hat{y}>\hat{y}')\\
		v_{\infty,\pm}(\hat{y}') v_0(\hat{y}) & (\hat{y}<\hat{y}'). 
	\end{cases}
\end{equation}
In the case of $R_-$, we have been totally explicit about what the various terms above are.

Now we state the main proposition of this subsection:
\begin{proposition}\label{prop:separated_mapping}
For $a=(d-1)/2+\ell/2-j/2 \not\in -\bbN$,
    \begin{equation}
        R_-[\lambda]: e^{i\hat{y}^2/4} \calA^{ ((d+1-j)/2,0),(\ell/2,0)} ([0,\infty]_{\hat{y}^2} ) \to e^{i\hat{y}^2/4} \calA^{ ((d-1-j)/2,0),(\ell/2,0)} ([0,\infty]_{\hat{y}^2} ) 
    \end{equation}
    where the `$(\ell/2,0)$' denotes the index set at $\hat{y}=0$, and $\ell$ is the azimuthal quantum number, as in \cref{eq:azimuthal}. 
\end{proposition}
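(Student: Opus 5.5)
The plan is to work directly with the explicit Green kernel $R_-[\lambda](\hat y,\hat y')$ and split the output $u=R_-[\lambda]f$ into its behavior near $\hat y=0$ and near $\hat y=\infty$, using a partition of unity in the input $f$. Writing $f=e^{i\hat y^2/4}\hat y^{-(d-2)/2}g(\hat y)$ and $u=\hat y^{-(d-2)/2}v$, we have
\begin{equation*}
v(s)=\frac{1}{\frakW_-}\Big(v_{\infty,-}(s)\int_0^s v_0(s')g(s')\dd s' + v_0(s)\int_s^\infty v_{\infty,-}(s')g(s')\dd s'\Big),
\end{equation*}
and $\frakW_-\neq 0$ by \cref{eq:key_Wronskian} since $a\notin-\bbN$. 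The structural inputs are:
\begin{itemize}
\item $v_0\in e^{is^2/4}s^{\alpha+1/2}C^\infty([0,\infty)_{s^2})$, with $\alpha+1/2-(d-2)/2=\ell$, so that after removing the $\hat y^{-(d-2)/2}$ weight $v_0$ gives index $(\ell/2,0)$ in $\hat y^2$;
\item $v_{\infty,-}\in e^{is^2/4}s^{\alpha+1/2-2a}\,C^\infty([0,\infty]_{1/s^2})$ near infinity, by the large-$z$ asymptotic expansion of $U(a,b,z)$ along $z\to -i\infty$ (term-by-term differentiable, \cite[\S13.7]{NIST});
\item the conjugate solution $v_0^*$-type oscillation $e^{-is^2/4}$ enters only through the connection formula writing $v_0$ as a combination of $v_{\infty,\pm}$.
\end{itemize}

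\emph{Near $\hat y=0$.} For $s$ small, the second integral is a constant plus an integral over $[s,1]$, and the first is over $[0,s]$. Plugging in the small-$s$ expansions of $v_0$ and $v_{\infty,-}$ (the latter having indicial root $1/2-\alpha$, possibly with a log when $\alpha=0$, or for $1-\alpha\in-\bbN$), together with $g\in s^{\alpha+1/2}C^\infty([0,1)_{s^2})$, both terms reduce to integrals of power series. The usual cancellation then applies: the combination $v_{\infty,-}(s)\int_0^s v_0g+v_0(s)\int_s^1 v_{\infty,-}g$ is the regular solution of an ODE with regular-singular forcing. Equivalently, by Frobenius, the unique solution of $(\star)$ with smooth forcing and recessive behavior at $0$ lies in $s^{\alpha+1/2}C^\infty([0,1)_{s^2})$, because forcing indices $\alpha+1/2+2\bbN+2$ never hit the other indicial root $1/2-\alpha$ with the wrong parity. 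This yields the index set $(\ell/2,0)$.

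\emph{Near $\hat y=\infty$.} Conjugating out $e^{is^2/4}$, the ODE for $w=e^{-is^2/4}v$ has first-order leading part $is\partial_s+$const, which is regular singular in $t=1/s^2$. This is the b-normal operator heuristic (i) and (ii) after \Cref{cor:main_II}. For the term $v_0(s)\int_s^\infty v_{\infty,-}g$, the integrand is $e^{is'^2/2}\times$ a symbol, so it is non-oscillatory only after noting $v_0=c_-v_{\infty,-}+c_+v_{\infty,+}$. The $v_{\infty,+}$ part gives $e^{-is^2/4}\int_s^\infty e^{is'^2/2}(\cdots)$. Repeated integration by parts in $s'$ (using $\partial_{s'}e^{is'^2/2}=is'e^{is'^2/2}$) produces $e^{is^2/2}\times$ a full expansion in $1/s^2$, hence $e^{is^2/4}\times$ a polyhomogeneous function. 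The $v_{\infty,-}$ parts give non-oscillatory integrands $s'^{2\alpha+1-2a}g$; integrating the expansion term by term yields powers of $1/s^2$. A log could only appear from an exact $s'^{-1}$ term, which is excluded by the one-order gain assumed in the input weight $(d+1-j)/2$ versus the output $(d-1-j)/2$. A constant of integration times $v_{\infty,-}$ sits exactly at weight $\alpha+1/2-2a$, which matches $(d-1-j)/2$ after accounting for the $\hat y^{-(d-2)/2}$ and the $\hat y^2$ convention.

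The main obstacle I expect is this bookkeeping at infinity. The exponents must match exactly. No resonance may produce a $\log\hat y$. The $v_{\infty,+}$ oscillation must cancel completely in the final answer (only $e^{+i\hat y^2/4}$ survives), which follows from the choice of $v_{\infty,-}$ in the kernel for $s>s'$. I would verify the exponent identity $\alpha+1/2-2a-(d-2)/2=-(d-1-j)$ first, then carry out the integration-by-parts argument for the oscillatory piece.
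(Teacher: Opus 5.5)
Your strategy matches the paper's sketch. You use the explicit variation-of-parameters kernel built from $v_0$ and $v_{\infty,-}$, noting that away from the support of the input the output is a multiple of $v_0$ near $0$ and of $v_{\infty,-}$ near $\infty$. The Frobenius argument near $\hat y=0$ is a fine substitute for analyzing the integrals there. You also correctly identify what the one-order gain in the input weight is for. However, the step at infinity that you flag as the main obstacle is carried out incorrectly.

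Write $F\in e^{is^2/4}s^{j-d/2-2}C^\infty([0,\varepsilon)_{s^{-2}})$ for the actual forcing of the $v$-equation; your displayed formula drops the factor $e^{is'^2/4}$. Then $v_{\infty,-}F\in e^{is^2/2}s^{2j-d-2}C^\infty([0,\varepsilon)_{s^{-2}})$ is oscillatory, so it is false that ``the $v_{\infty,-}$ parts give non-oscillatory integrands.''

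Suppose you decompose $v_0=c_-v_{\infty,-}+c_+v_{\infty,+}$ only in the term $v_0(s)\int_s^\infty v_{\infty,-}F$. After integration by parts, its $c_-$ piece is $e^{3is^2/4}$ times a power of $s$ times a smooth function of $s^{-2}$. That is generically nonzero and not even conormal. It is cancelled only by the $c_-$ piece of the other term, $v_{\infty,-}(s)\int_0^s v_0F$. So the cancellation you need is of $e^{3i\hat y^2/4}$. A pure $e^{-i\hat y^2/4}$ term never arises, because $\int_s^\infty$ carries no constant of integration.

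The correct bookkeeping is, for $s>s_0$,
\begin{equation*}
\frakW_-\,v(s)=C\,v_{\infty,-}(s)+c_+\Big(v_{\infty,-}(s)\int_{s_0}^s v_{\infty,+}F\dd s'+v_{\infty,+}(s)\int_s^\infty v_{\infty,-}F\dd s'\Big),
\end{equation*}
where $C=\int_0^{s_0}v_0F\dd s'+c_-\int_{s_0}^\infty v_{\infty,-}F\dd s'$ is a constant. The only non-oscillatory integrand is now $v_{\infty,+}F\in s^{-3}C^\infty([0,\varepsilon)_{s^{-2}})$. It integrates to a constant plus $s^{-2}C^\infty$ with no logarithm. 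If the input were at the output weight, this integrand would be $s^{-1}$ and produce a log; that is exactly where the gain is used. The last term lies in $e^{is^2/4}s^{j-d/2-4}C^\infty([0,\varepsilon)_{s^{-2}})$ by repeated integration by parts. Hence $v\in e^{is^2/4}s^{j-d/2}C^\infty([0,\varepsilon)_{s^{-2}})$, which gives the claim after multiplying by $s^{-(d-2)/2}$.

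One more point should be made explicit. For $\Re j\ge (d+3)/2$, which includes the values $j=L-1+k$ used in Step two, the integrals $\int_s^\infty v_{\infty,-}F$ and $C$ do not converge even conditionally. They must be read in the regularized sense that defines $R_-$: for example, insert the Gaussian damping from the complexified spring constant, or integrate by parts until the integral is absolutely convergent and discard boundary terms at infinity. The paper's \cref{eq:res_form} relies on the same implicit convention.
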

\begin{proof}[Proof sketch]
    We check that 
    \begin{equation} 
        g\in C_{\mathrm{c}}^\infty(\bbR^+_{\hat{y}}) \Longrightarrow R_-[\lambda]g\in  e^{i\hat{y}^2/4} \calA^{( (d-1-j)/2,0),(\ell/2,0)} ([0,\infty]_{\hat{y}^2} ).
    \end{equation} 
    Certainly, $R_-[\lambda]g$ is smooth as a function of $\hat{y}\in \bbR^+$, so the only claim to check is that it has appropriate behavior as $\hat{y}\to 0^+,\infty$. Outside of $\operatorname{supp} g\Subset \bbR^+$, 
    \begin{equation}
        R_-[\lambda] g(\hat{y}) = 
        \frac{1}{\frakW_-} 
        \begin{cases}
        \hat{y}^{-(d-2)/2} v_{\infty,-}(\hat{y})
        \int_0^\infty v_0(\hat{y}') g(\hat{y}') \hat{y}'^{(d-2)/2} \dd \hat{y}'  &   (\hat{y}\gg 1), \\
        \hat{y}^{-(d-2)/2} v_{0}(\hat{y})
        \int_0^\infty v_{\infty,-}(\hat{y}') g(\hat{y}') \hat{y}'^{(d-2)/2} \dd \hat{y}' &    (\hat{y} \ll 1). 
        \end{cases}
    \end{equation}
    Note that the integrals above are constant. Thus, near $\hat{y}=0$, we have 
    \begin{equation} 
        R_-[\lambda](g) \propto \hat{y}^{-(d-2)/2} v_0, 
        \end{equation} 
        and near $\hat{y}=\infty$, we have $R_-[\lambda] g \propto \hat{y}^{-(d-2)/2} v_{\infty,-}$. So, the desired $e^{i\hat{y}^2/4}\times \text{polyhomogeneity}$ follows directly from that of $v_0,v_{\infty,-}$.

    The case of general $g\in  e^{i\hat{y}^2/4} \calA^{( (d+1-j)/2,0),(\ell/2,0)} ([0,\infty]_{\hat{y}^2} )$ proceeds similarly. We have 
    \begin{multline}\label{eq:res_form}
        R_-[\lambda] g (\hat{y}) = \frac{\hat{y}^{-(d-2)/2}}{\frakW_-} \bigg[ v_{\infty,-}(\hat{y}) \int_0^{\hat{y}} v_0(\hat{y}') g(\hat{y}') \hat{y}'^{(d-2)/2} \dd \hat{y}'  \\ + v_{0}(\hat{y}) \int_{\hat{y}}^{\infty} v_{\infty,-}(\hat{y}') g(\hat{y}') \hat{y}'^{(d-2)/2} \dd \hat{y}'   \bigg].
    \end{multline}
    Now it is necessary to analyze the $\hat{y}\to 0^+,\infty$ behavior of the integrals. This can be done using the asymptotics of $v_0,v_{\infty,-}$. 
    We omit the details of the analysis. 
\end{proof}

Also, we sketch the following result relating $R_\pm(j)$ to the $L^2$-bounded inverse $L(j)^{-1}$ (i.e.\ \cref{eq:resolvent_relations}):
\begin{proposition}
If $\Re j-(d-1)/2$ has a definite sign $\pm$, then:
    \begin{enumerate}[label=(\alph*)]
        \item $R_\pm(j)$, initially defined only on terms involving finitely many spherical harmonics with $C_{\mathrm{c}}^\infty(\bbR^+)$-coefficients, extends boundedly to $L^2$,  
        \item the extension is the $L^2$-bounded inverse $L(j)^{-1}$.
    \end{enumerate}
\end{proposition}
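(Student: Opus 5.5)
We need a proof plan for the last proposition: if $\Re j-(d-1)/2$ has sign $\pm$, then $R_\pm(j)$ extends boundedly to $L^2$ and equals $L(j)^{-1}$.

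The plan is to work mode by mode and use uniqueness of $L^2$ solutions. Fix a spherical harmonic $Y$ with azimuthal number $\ell$, and consider $g\in C_{\mathrm c}^\infty(\bbR^+)$ together with $f=\hat y^{-(d-2)/2}g\,Y$. Since $\Im(\text{spectral parameter})\neq 0$ when $\Re j\neq (d-1)/2$, essential self-adjointness of $H$ (\cite[Thm.\ X.38]{ReedSimon2}) says $L(j)^{-1}f$ is the unique element $u\in\calD(H)$ with $L(j)u=f$. The strategy is therefore to show $R_\pm(j)f\in\calD(H)$. Then $R_\pm(j)f=L(j)^{-1}f$ on a dense subspace of $L^2$, namely finite sums of such modes. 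The bounded extension in (a) and the identification in (b) both follow from this, since $L(j)^{-1}$ is bounded.

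First, I check that $R_\pm[\lambda]g$ is square-integrable. The key step is the large-$\hat y$ decay of $v_{\infty,\pm}$:
\begin{itemize}
\item From $U(a,b,z)\sim z^{-a}$, we have $|v_{\infty,-}(s)|\sim s^{\Re j-d/2}$.
\item Similarly, $|v_{\infty,+}(s)|\sim s^{(d-2)/2-\Re j}$.
\end{itemize}
Outside $\operatorname{supp} g$, $R_\pm[\lambda]g$ equals a constant times $\hat y^{-(d-2)/2}v_{\infty,\pm}$ for large $\hat y$. Against the measure $\hat y^{d-2}\dd\hat y$, this gives an integrand of size
\begin{itemize}
\item $\hat y^{2\Re j-d}$ for the $-$ choice, which is integrable at infinity iff $\Re j<(d-1)/2$;
\item $\hat y^{d-2-2\Re j}$ for the $+$ choice, which is integrable iff $\Re j>(d-1)/2$.
\end{itemize}
So the choice $R_+$ when $\Re j>(d-1)/2$ and $R_-$ when $\Re j<(d-1)/2$ gives $L^2$ decay at infinity.

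This matches \cref{eq:resolvent_relations}, which is the precise form of the statement. I read the sign convention ``sign $\pm$ means $R_\pm$'' as: positive sign goes with $R_+$.

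Near $\hat y=0$ the solution is a multiple of $\hat y^{-(d-2)/2}v_0\sim \hat y^{\ell}$ times the harmonic, which is smooth. For this I also need $\frakW_\pm\neq 0$. For $\frakW_-$, note that $a\in-\bbN$ forces $\Re j=d-1+\ell+2m>(d-1)/2$, so this case is excluded exactly when $R_-$ is used. The analogous condition for $v_{\infty,+}$ is $1+\alpha-a\in-\bbN$, and the same computation shows it is excluded when $R_+$ is used.

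Second, I verify that $R_\pm[\lambda]g$ lies in $\calD(H)$. The function is smooth on $\bbR^{d-1}$, lies in $L^2$, and satisfies $L(j)u=f\in L^2$, so $Hu\in L^2$. Since $H$ is essentially self-adjoint on $C_{\mathrm c}^\infty$, its closure is the maximal operator, and membership in $\calD(H)$ follows. Smoothness at the origin comes from the recessive behavior.

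Finally, the modes are orthogonal and finite sums of them are dense in $L^2$. Since $R_\pm(j)=L(j)^{-1}$ on this dense set, $R_\pm(j)$ extends to the bounded operator $L(j)^{-1}$, which proves (a) and (b).

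The main obstacle I expect is the uniqueness and domain claim, i.e.\ that $H$ is limit point at infinity so that no boundary condition is needed there. Essential self-adjointness already supplies this. The remaining care is the sign bookkeeping between the limiting-absorption definition \cref{eq:R_pm_def} and the explicit kernel. I would settle that by checking that $e^{\mp i\hat y^2/4}$ is the $L^2$ branch under the complex spring-constant deformation.
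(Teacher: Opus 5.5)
Your proof is correct, but it runs in the opposite direction from the paper's.

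\textbf{The paper's route.} The paper starts from the true inverse, $L(j)^{-1}(Yg)=Y\tilde u$, and deduces the form of $\tilde u$ from the ODE at both ends.
\begin{itemize}
\item Beyond $\operatorname{supp} g$, $\tilde u$ is a combination of the two functions $\hat y^{-(d-2)/2}v_{\infty,\pm}$, and square-integrability discards the wrong branch.
\item Near $\hat y=0$, $\tilde u$ must be recessive. The paper argues this either from essential self-adjointness or from the small-$z$ asymptotics of $U$. This needs separate casework for the s-wave when $d\le 4$ and for $d=2$: there the non-recessive solution is locally $L^2$, and is excluded only because it produces a $\delta$-type term at the origin.
\item The difference $\tilde u-R_\pm[\lambda]g$ is then a homogeneous solution that is both recessive and outgoing/incoming, hence zero.
\end{itemize}

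\textbf{Your route.} You instead verify that the explicit candidate $u=R_\pm(j)f$ is smooth on $\bbR^{d-1}$, lies in $L^2$, and solves $L(j)u=f$. You then use that the closure of an essentially self-adjoint operator equals the maximal operator, so $u\in\calD(H)$. You conclude by injectivity of $H-\lambda$ for nonreal $\lambda$.

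\textbf{What each approach buys.} Your direction sidesteps the question of which singular behaviours at the origin are excluded from $\calD(H)$. It needs only the positive fact that $\hat y^{-(d-2)/2}v_0\,Y$ equals $\hat y^{\ell}Y$ times a smooth function of $\hat y^2$. It also makes explicit something the paper's final step uses tacitly: $\frakW_\pm\neq 0$ under the sign condition. You show this via $a\in-\bbN\Rightarrow j\ge d-1$ and $1+\alpha-a=(j+\ell)/2\in-\bbN\Rightarrow j\le 0$. The paper's direction stays at the level of the radial ODE and avoids the abstract identification of $\calD(H)$ with the maximal domain, at the cost of the casework at the origin.

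Your closing worry about reconciling with the limiting-absorption definition of $R_\pm(j)$ is not needed here. The statement concerns the operator defined mode by mode through the explicit Green kernel.
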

\begin{proof}[Proof sketch]
	It suffices to check that $L(j)^{-1}f$ agrees with $R_\pm(j) f$ when $f$ has the form  $Yg$ with $g\in C_{\mathrm{c}}^\infty(\bbR^+)$. Then, 
	\begin{align}
	\begin{split} 
		R_\pm f &= Y R_\pm[\lambda] g \\
		L(j)^{-1} f &=  Y \tilde{u}
		\end{split} 
	\end{align}
	for some $\tilde{u}$. Beyond the support of $g$, the function $\tilde{u}$ must (by virtue of the ODE it satisfies) be a linear combination of the two $\hat{y}^{-(d-2)/2}v_{\infty,\pm}$. These satisfy 
	\begin{align}
	\begin{split} 
	\hat{y}^{-(d-2)/2}v_{\infty,-} &\sim e^{i\hat{y}^2/4} \hat{y}^{-(d-1)+j} \\
	\hat{y}^{-(d-2)/2}v_{\infty,+} &\overset{\text{using \cref{eq:outgoing_osc}}}{\sim} e^{-i\hat{y}^2/4} \hat{y}^{-j} 
	\end{split} 
	\end{align}
	as $\hat{y}\to\infty$. (Note that the decay rates agree when $\Re j=(d-1)/2$, as expected.)

    For these to lie in $L^2([1,\infty)_{\hat{y}}, \hat{y}^{d-2} \dd \hat{y} )$ (equivalently $L^2(\{\hat{y}>1\}; \bbR^{d-1}_{\hat{\bfy}} )$), we have the following conditions:
	\begin{align}
    \begin{split} 
	\hat{y}^{-(d-1)+j} \in L^2([1,\infty)_{\hat{y}}, \hat{y}^{d-2} \dd \hat{y} ) &\iff \Re j < (d-1)/2 \\ 
    \hat{y}^{-j} \in L^2([1,\infty)_{\hat{y}}, \hat{y}^{d-2} \dd \hat{y} ) &\iff\Re j>(d-1)/2.
    \end{split} 
	\end{align}
	So, $\tilde{u}$, being in $L^2$, must be proportional to $\hat{y}^{-(d-2)/2}v_{\infty,\pm}$ for $\hat{y}\gg 1$.  

	On the other hand, $\tilde{u}$ must be recessive for small $\hat{y}$. This is because, if the non-recessive solutions $\hat{y}^{-(d-2)/2} v$ managed to be in $L^2([0,1); \hat{y}^{d-2} \dd \hat{y})$ and $Y \hat{y}^{-(d-2)/2}v $ solved the homogeneous PDE near $\hat{y}=0$, then essential self-adjointness would fail. (By separation of variables, the PDE is automatically satisfied away from the origin. The only reason it might not satisfy the PDE globally is because of a $\delta$-function or derivative thereof at the origin.) Alternatively, use the small-$z$ expansions of $U(a,b,z)$ to show that $\hat{y}^{-(d-2)/2} v_{\infty,\pm}$ both fail to be in $L^2([0,1); \hat{y}^{d-2} \dd \hat{y})$ for most values of $\alpha$. Indeed, this failure obtains if $\alpha\geq 1$. This is automatic if $d\geq 5$, leaving only $d=2,3,4$, and $\lambda$ sufficiently small; in fact, plugging in the formula for the eigenvalues of $\bbS^{d-2}$, only $\lambda=0$ (i.e.\ the s-wave case) is a problem, if $d\geq 3$. When $d=2$, there are two $Y_\pm$ to consider. (The existence of these special cases is linked to the fact that the Laplacian is not essentially self-adjoint on $C_{\mathrm{c}}^\infty(\bbR^n\backslash \{0\})$ if $n\leq 3$.)  For these latter cases, it still must be true that $\tilde{u}$ is proportional to the recessive solution, but the reason is that the non-recessive solution fails to satisfy the PDE on the unpunctured $\bbR^{d-1}$. (For example, $1/\hat{y}$ solves $\triangle (1/\hat{y})\propto \delta$ on $\bbR^3_{\hat{\bfy}}$. When separating variables, you do not see the $\delta$ function.) 

    The above shows that $w\coloneqq \tilde{u} - R_\pm[\lambda] g$, which solves the homogeneous equation $L(j)w=0$, is both recessive at $0$ and incoming/outgoing as $\hat{y}\to\infty$. This forces $w=0$ (otherwise the recessive solution would not be linearly independent from the incoming/outgoing solution).
\end{proof}

\subsubsection{Cokernel}

If $a\in -\bbN$ when $j=j_0$, the construction above fails, because the Wronskian $\frakW_-$ vanishes, because $v_{\infty,-} = C v_0$ for some $C\neq 0$. Instead, we can try to define $R_-[\lambda]g$ by taking a limit as $j\to j_0$. 
The explicit formula for $\frakW_-$ shows that it vanishes simply at $j=j_0$.

Using the formula \cref{eq:res_form},
\begin{equation}
    R_-[\lambda]g = \frac{C\hat{y}^{-(d-2)/2} }{\frakW_-} v_{0}(\hat{y}) \int_0^\infty v_0(\hat{y}')g(\hat{y}') {\hat{y}'}^{ (d-2)/2} \dd \hat{y}' \bigg|_{j=j_0} +\text{ finite} 
\end{equation}
as $j\to j_0$. Thus, $R_-[\lambda]g$ is well-defined at $j=j_0$ if 
\begin{equation}\label{eq:coker_form}
     \int_0^\infty v_0(\hat{y}')g(\hat{y}') {\hat{y}'}^{ (d-2)/2} \dd \hat{y}' =0. 
\end{equation}
Then, it can be shown that $R_-[\lambda]g$ satisfies $L(j_0) R_-[\lambda]g=g.$ 
 
\subsection{Reduction to the short-range case}
\label{subsec:QHO_rig}
We are going to rewrite $L(j)$ in terms of a new coordinate system on $\mathrm{ff}\cong \bbR^{d-1}$. 
Consider a coordinate $z=z(\hat{y})\in C^\infty(\bbR^+)$ given by $\hat{y}$ in $\{\hat{y}<1\}$ and by $\hat{y}^2$ in $\{\hat{y}>2\}$, constructed such that $z$ is strictly increasing with $\hat{y}$. Then, the map \begin{equation} \iota: \bbR^{d-1} \ni \hat{\bfy} \mapsto \underbrace{z \theta}_{\coloneqq \bfz} = z(\hat{y}) \hat{y}^{-1} \hat{\bfy} \in \bbR^{d-1} \end{equation} is a diffeomorphism of $\bbR^{d-1}$. Then, we can rewrite $L(j)$ in terms of the new coordinate system $\bfz$.  

We will show that (up to a polynomial weight which can be factored out) $L(j)$ is a shorter-range Schr\"odinger operator on  $(\bbR^{d-1}_{\bfz},g_{\mathrm{QHO}})$ for some asymptotically conic metric $g_{\mathrm{QHO}}$ on $\bbR^{d-1}$.

\subsubsection{$d=2$ case} 

First, we illustrate the idea in the $d=2$ case. For simplicity, we only discuss $z\gg 1$, where $z=\hat{y}^2$. The remaining range of $z$ is easily handled. What we want to emphasize is how the change of coordinates $\hat{\bfy} \to \bfz$ affects the form of the PDE at large-$\hat{y}$, i.e.\ large-$z$.

Concretely:
\begin{equation}
	L(j)  = - \frac{\partial^2}{\partial \hat{y}^2} - \frac{\hat{y}^2}{4} + i\Big(j-\frac{1}{2}\Big) =  -4 z \frac{\partial^2}{\partial z^2} - 2\frac{\partial}{\partial z}  - \frac{z}{4}+i\Big(j-\frac{1}{2}\Big),
	\label{eq:misc_1023127}
\end{equation}
since 
\begin{equation}
	\frac{\partial}{\partial \hat{y}} = \Big(\frac{\partial z}{\partial \hat{y}}\Big) \frac{\partial}{\partial z} = 2 \hat{y}\frac{\partial}{\partial z}
\end{equation}
for $z\gg 1$. We may factor a $z$ from the right-hand side of \cref{eq:misc_1023127}, getting 
\begin{equation}
	\frac{1}{4z}  L(j) = -\frac{\partial^2}{\partial z^2} - \frac{1}{2z} \frac{\partial}{\partial z} - \frac{1}{16} + \frac{i}{4z} \Big(j-\frac{1}{2} \Big). 
\end{equation}
Recall the form of the radial part of the Coulomb--Helmholtz operator in $D$ dimensions:
\begin{equation}
	-\frac{\partial^2}{\partial z^2} - \frac{D-1}{z} \frac{\partial}{\partial z} - E + \frac{\mathsf{Z}}{z} + \frac{\lambda}{z^2}, 
\end{equation}
where $E>0$ is the spectral parameter, $\mathsf{Z}$ the Coulomb charge, and $\lambda$ the angular eigenvalue. The operator $(4z)^{-1}L(j)$ has, in $z\gg 1$, the same form, with 
\begin{itemize}
	\item a fractional $D=3/2$ ``number of dimensions,''
	\item $E=1/16$
	\item a complex Coulomb charge $\mathsf{Z} = 4^{-1} i(j-1/2)$, and 
	\item $\lambda =0$. 
\end{itemize}
Note that the Hooke term $\hat{y}^2/4$ has become the spectral term after the change of coordinates.

\subsubsection{General case}
Now let us discuss the general case. 
The aforementioned metric $g_{\mathrm{QHO}}$ on $\bbR^{d-1}_{\bfz}$ is defined by 
\begin{equation} 
	g_{\mathrm{QHO}} = 4 \langle z \rangle g_{\mathrm{Euclid},*},
\end{equation} 
where $g_{\mathrm{Euclid}}=\sum_{j=1}^{d-1}\dd \hat{y}_j^2$ on $\bbR^{d-1}$ is the Euclidean metric on $\bbR^{d-1}_{\hat{\bfy}}$, and the $*$ subscript denotes pullback via the coordinate change $\hat{\bfy}\mapsto \bfz$.

\begin{proposition} The metric 
	$g_{\mathrm{QHO}}$ is asymptotically conic.
\end{proposition}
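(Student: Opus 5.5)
The strategy is to compute $g_{\mathrm{QHO}}$ directly in polar coordinates on $\bbR^{d-1}_{\bfz}$ and compare it with an exact cone metric $\dd z^2 + z^2 h$ near infinity. Asymptotic conicity only concerns the region $z\gg1$, and $g_{\mathrm{QHO}}$ is a smooth Riemannian metric on all of $\bbR^{d-1}_{\bfz}$. Indeed, $\iota$ is a diffeomorphism, $g_{\mathrm{Euclid},*}$ is a smooth positive-definite form, and $4\langle z\rangle$ is smooth and positive. So it suffices to work in $\{\hat y>2\}$, where $z=\hat y^2$.

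There, write $\hat{\bfy}=\hat y\,\theta$ with $\theta\in\bbS^{d-2}$, so that the Euclidean metric is $g_{\mathrm{Euclid}}=\dd\hat y^2+\hat y^2\,g_{\bbS^{d-2}}$. Substituting $\hat y=z^{1/2}$ gives $\dd\hat y=\tfrac12 z^{-1/2}\dd z$, and hence
\begin{equation}
g_{\mathrm{Euclid},*}=\frac{\dd z^2}{4z}+z\,g_{\bbS^{d-2}} .
\end{equation}
Multiplying by $4\langle z\rangle=4z\,(1+z^{-2})^{1/2}$ yields
\begin{equation}
g_{\mathrm{QHO}}=(1+z^{-2})^{1/2}\Big(\dd z^2+z^2\,(4g_{\bbS^{d-2}})\Big).
\end{equation}
This is the exact cone metric over the cross-section $(\bbS^{d-2},4g_{\bbS^{d-2}})$, a round sphere of radius $2$, multiplied by the conformal factor $(1+z^{-2})^{1/2}$.

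It remains to put this into the standard scattering form. Take $\varrho=1/z$ as boundary-defining function on the radial compactification $\overline{\bbR^{d-1}_{\bfz}}$. The cone metric becomes $\varrho^{-4}\dd\varrho^2+\varrho^{-2}(4g_{\bbS^{d-2}})$. The conformal factor equals $(1+\varrho^2)^{1/2}=1+O(\varrho^2)$, which is smooth in $\varrho$ up to $\varrho=0$. Therefore $g_{\mathrm{QHO}}$ is a smooth scattering metric on $\overline{\bbR^{d-1}_{\bfz}}$: it is exactly conic modulo $\varrho^2$ times a smooth symmetric $2$-tensor in the scattering sense. In particular, the boundary metric is $4g_{\bbS^{d-2}}$, and there are no $\dd\varrho\,\dd\theta$ cross terms.

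The only point needing care is the convention. If the paper's definition of "asymptotically conic" (as in Melrose \cite{Me94}) requires the form $\varrho^{-4}\dd\varrho^2+\varrho^{-2}h(\varrho)$ with $h$ smooth up to the boundary, then the conformal factor must be absorbed. This can be done with the reparametrization $\tilde z=\int^z(1+s^{-2})^{1/4}\dd s=z+O(z^{-1})$, which is a smooth change of boundary-defining function $\tilde\varrho=\varrho(1+O(\varrho^2))$. The resulting $h$ is then smooth in $\tilde\varrho$. I expect no real obstacle here beyond this bookkeeping. The smoothness of $\iota$ across the transition region $1\le\hat y\le2$ is guaranteed by the construction of $z$.
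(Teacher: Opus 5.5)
Your computation is exactly the paper's: in $\{z>4\}$ you rewrite $g_{\mathrm{Euclid}}$ in terms of $z=\hat y^2$ and multiply by $4\langle z\rangle$ to obtain $g_{\mathrm{QHO}}=\sqrt{1+z^{-2}}\,(\dd z^2+4z^2 g_{\bbS^{d-2}})$, which is where the paper's proof stops. Your additional bookkeeping is correct and makes explicit what the paper leaves implicit. The point is that the factor $1+O(\varrho^2)$ is harmless in Melrose's scattering-metric form, or can alternatively be absorbed by reparametrizing $z$.
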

\begin{proof}
	In $\{z>4\}$, 
	\begin{equation}
		g_{\mathrm{Euclid}} = \mathrm{d} \hat{y}^2 + \hat{y}^2 g_{\bbS^{d-2}} = \Big(\frac{\mathrm{d} \hat{y}}{\mathrm{d} z} \Big)^2\dd z^2  + z g_{\bbS^{d-2}} = \frac{1}{4z} (\mathrm{d} z^2 + 4z^2g_{\bbS^{d-2}} ).
	\end{equation}
	So, $g_{\mathrm{QHO}} = \sqrt{1+z^{-2}} ( \mathrm{d} z^2 +4 z^2 g_{\bbS^{d-2}})$ for large $z$. 
\end{proof}

So, letting $\triangle_{\mathrm{QHO}}=\triangle_{g_{\mathrm{QHO}}}$, 
\begin{equation}
	\frac{1}{4\langle z \rangle} L(j)= \triangle_{\mathrm{QHO}} + \Gamma + \frac{i}{4 \langle z \rangle} \Big(j-\frac{d-1}{2}\Big) - \frac{1}{16} + W(\bfz),  
\end{equation}
where 
\begin{align}
	W(\bfz) &= - \frac{ (\hat{y}^2 - \langle z \rangle) }{16\langle z \rangle}  \in \langle z \rangle^{-2} C^\infty(\overline{\bbR^{d-1}_{\bfz}} ),  \\
	\begin{split} 
		\Gamma &= \frac{1}{4\langle z \rangle} \triangle  - \triangle_{\mathrm{QHO}}  
		\overset{z\gg 1}{=}  \frac{(d-3)z^2}{2\langle z \rangle^3} \frac{\partial}{\partial z}. 
	\end{split} 
\end{align}
The computation of $\Gamma$ follows from: since 
\begin{equation}
	\triangle_g = -\frac{1}{\sqrt{|g|}} \sum_{i,j=1}^{d-1} \partial_i \big( \sqrt{|g|} g^{ij} \partial_j\big), 
\end{equation}
for any $\Omega\in C^\infty(\bbR^{d-1};\bbR^+)$,
\begin{multline}
	-\triangle_{\Omega g} = \frac{1}{\sqrt{\Omega^{d-1} |g|}} \sum_{i,j=1}^{d-1} \partial_i \big( \sqrt{\Omega^{d-1} |g|} \Omega^{-1} g^{ij} \partial_j\big) = -\frac{1}{\sqrt{\Omega^{d-1} |g|}} \sum_{i,j=1}^{d-1} \partial_i ( |g|^{1/2} \Omega^{(d-3)/2} g^{ij} \partial_j)\\ = -\Omega^{-1} \triangle_g + \Omega^{-(d-1)/2} \sum_{i,j=1}^{d-1} (\partial_i \Omega^{(d-3)/2}) g^{ij} \partial_j \\ = -\Omega^{-1} \triangle_g + \frac{d-3}{2} \Omega^{-2}\sum_{i,j=1}^{d-1} (\partial_i \Omega) g^{ij} \partial_j .
\end{multline}
This is applied with $g=g_{\mathrm{Euclid}}$ and $\Omega = 4\langle z \rangle$ to get 
\begin{multline}
	\triangle_{\mathrm{QHO}} \overset{z\gg 1}{=}\frac{1}{4 \langle z \rangle} \triangle  - \frac{d-3}{8 \langle z \rangle^2}  (\partial_{\hat{y}} (1+\hat{y}^4)^{1/2}) \partial_{\hat{y}} \\ =\frac{1}{4 \langle z \rangle} \triangle  - \frac{d-3}{4 \langle z \rangle^2}  \frac{\hat{y}^3}{(1+\hat{y}^4)^{1/2}} \partial_{\hat{y}} 
	= \frac{1}{4\langle z \rangle} \triangle - \underbrace{\frac{(d-3)z^2}{{2}\langle z \rangle^3}}_{\Gamma} \partial_z. 
\end{multline}

The radial part of $\triangle$ is $-\partial_{\hat{y}}^2 - \frac{d-2}{\hat{y}} \partial_{\hat{y}}$, so, when $z>4$, 
\begin{align}
    \begin{split} 
    \triangle_{\mathrm{QHO}} &= -\frac{1}{4\langle z \rangle}  \Big( \frac{\partial^2}{\partial \hat{y}^2} + \frac{d-2}{\hat{y}}\frac{\partial}{\partial \hat{y}}  \Big) + \frac{1}{4\langle z \rangle z} \triangle_{\bbS^{d-2}} - \frac{(d-3) z^2}{2 \langle z \rangle^3} \frac{\partial}{\partial z}\\ 
    &=  - \frac{z}{\langle z \rangle}\frac{\partial^2}{\partial z^2} + \frac{1}{4\langle z \rangle z} \triangle_{\bbS^{d-2}} - \frac{1}{2\langle z \rangle} \frac{ (d-1) + 2(d-2) z^2}{1+z^2} \frac{\partial}{\partial z}.
    \end{split} 
\end{align}
Note the appearance of a complex Coulomb term 
\begin{equation} 
    \mathsf{Z}=\frac{i}{4} \Big(j-\frac{d-1}{2}\Big) .
\end{equation}

\subsection{Fredholm setup}\label{subsec:QIHO_Fredholm}
Above, we showed that, using the coordinate $\bfz\in \bbR^{d-1}$,  the operator $K(j) \coloneqq  \frac{1}{4\langle z \rangle} L(j)$  can be written 
\begin{equation}
K(j)= \triangle_{\mathrm{QHO}} + \Gamma + \frac{i}{4\langle z \rangle } \Big( j - \frac{d-1}{2} \Big) - \frac{1}{16} +W,
\end{equation}
for $\Gamma,W$ as above. 
The term $W$ is $O(1/z^2)$ as $z\to\infty$. 
Moreover, 
\begin{equation}
    \Gamma = \frac{(d-3)}{2z} \frac{\partial}{\partial z} \text{ in }z\gg1 \mod \operatorname{Diff}^{1,-2}_{\mathrm{sc}}(\overline{\bbR^{d-1}_{\bfz}}).
\end{equation}
Elements of $\operatorname{Diff}^{1,-2}_{\mathrm{sc}}$ are negligible vis-\`a-vis the microlocal solvability theory of $ K(j)$. The main term in $\Gamma$, as well as the Coulomb term, is exactly subprincipal. 
The notion of a subprincipal part is not invariantly defined, but 
\begin{equation} 
    \Im  K(j) = (2i)^{-1} (K(j)-K(j)^*),
\end{equation} 
defined with respect to the $L^2(\bbR^{d-1},g_{\mathrm{QHO}})$-inner product, satisfies 
\begin{equation} 
\Im K(j) \in 
\operatorname{Diff}^{1,-1}_{\mathrm{sc}}(\overline{\bbR^{d-1}_{\bfz}}).
\end{equation} 
Its principal part is exactly 
\begin{equation}
-i\Gamma + \frac{1}{4 z} \Big(\Re j-\frac{d-1}{2} \Big)  \in \operatorname{Diff}_{\mathrm{sc}}^{1,-1}(\overline{\dot{\bbR}^{d-1}_{\bfz}}).
\end{equation}
The leading order part of $K(j)$, which has the largest effect on its solvability theory, is the Helmholtz operator 
\begin{equation}
\triangle_{\mathrm{QHO}} -\frac{1}{16} \in \operatorname{Diff}_{\mathrm{sc}}^{2,0}(\overline{\bbR^{d-1}_{\bfz}}).
\end{equation}

We can now cite the existing literature on  
Schr\"odinger operators of this form. The estimates that $K(j)$ satisfy are almost identical to those for the Helmholtz operator. 
The effect of the subprincipal term is to merely shift the absorption threshold. The particular form of the limiting absorption we use can be found in Vasy's \cite{VasyLA}. 
Note that the full operator $K(j) \in \operatorname{Diff}^2(\bbR^{d-1}_{\bfz})$, when written in terms of $\bfz$, satisfies the hypotheses in \cite[\S3]{VasyLA}, under which Vasy proves his main theorems. Vasy's frequency parameter is $\sigma=1/4$. 

\begin{remark*}
  What Vasy calls `$x$' is just $\langle z \rangle^{-1}$ here.   
\end{remark*}

Vasy makes use of particular Sobolev spaces, 
\begin{equation} 
H_{\mathrm{b}}^{m,\ell} = \Psi_{\mathrm{b}}^{-m,-\ell}( \overline{\bbR^{d-1}_{\bfz}} ) L^2(\bbR^{d-1}_{\bfz}) \subset \calS',\quad m,\ell\in \bbR,
\end{equation} 
the b-Sobolev spaces, capturing a finite amount of $L^2$-regularity under the application of b-vector fields. The exact definition need not concern us, because we will quickly transition to working with conormal function spaces. Vasy proves:
\begin{propositionp}[\cite{VasyLA}, Thm.\ 1.1]
    Let $\alpha_\pm$ be as defined in \cite[\S3]{VasyLA}. Suppose that $r,\ell\in \bbR$ satisfy 
    \begin{equation}\label{eq:vasy_threshold_cond}
        r+\ell+1/2 - \Im \alpha_- >0,\quad \ell+1/2  - \Im \alpha_+ <0. 
    \end{equation}
    Then, the conjugated operator $\check{K}(j)=e^{-i \langle z \rangle /4} K(j) e^{i\langle z \rangle /4}$, which maps 
    \begin{equation}
        \calX_{r,\ell}\coloneqq \{u\in H_{\mathrm{b}}^{r,\ell} : \check{K}(j) u \in H_{\mathrm{b}}^{r,\ell+1} \}\to H_{\mathrm{b}}^{r,\ell+1 } \coloneq \calY_{r,\ell},
    \end{equation}
    is Fredholm as a map between these spaces (with a suitable graph norm on the domain). In particular, it has finite-dimensional kernel and cokernel. 
\end{propositionp}
\begin{remark*}
    Let $s = \sigma_{\mathrm{sc}}^{1,0}(z \Im K(j)) =  (d-3)\xi/2 + \Im \mathsf{Z}$ denote the principal symbol of $\Im K(j)$, weighted to make it order-0 in the sense of decay. Here $\xi$ is the frequency coordinate dual to $z$. Then, the $\alpha_\pm$ in the previous theorem are (modulo an unimportant real term) defined by
    \begin{equation}
        \Im \alpha_\pm = \mp \frac{1}{2\sigma} s|_{ \operatorname{graph} \pm \sigma \dd z  }   = \mp \frac{1}{2\sigma} s|_{ \xi = \pm \sigma }   ;  
    \end{equation}
    Plugging in,   
    \begin{equation}
          s|_{ \xi = \pm \sigma } = \frac{d-3}{2} (\pm \sigma)  + \Im \mathsf{Z}  \Longrightarrow  \Im \alpha_\pm = - \frac{d-3}{4}  \mp 2 \Im \mathsf{Z} = -\frac{d-3}{4} \mp \frac{1}{2} \Big(\Re j-\frac{d-1}{2} \Big) .
    \end{equation} 
    In particular, $\Im \alpha_+ = -(\Re j-1)/2$. 
\end{remark*}
\begin{remark}\label{rem:j-independence}
    Because $j$ enters the operator $\check{K}(j)$ only through a Coulomb term, if $u\in H_{\mathrm{b}}^{r,\ell}$, then 
    \begin{equation}
        \check{K}(j) u \in H_{\mathrm{b}}^{r,\ell+1} \iff \check{K}(j') u \in H_{\mathrm{b}}^{r,\ell+1} ,
    \end{equation}
    for any $j'$. Thus, $\calX_{r,\ell}$ does not depend on $j$. However, the threshold conditions on $r,\ell$ depend on $j$. 
\end{remark}

Thus, we have a right-inverse 
\begin{equation} 
    \check{K}(j)^{-1}:H_{\mathrm{b}}^{r,\ell+1 } \supset \calD\to H_{\mathrm{b}}^{r,\ell}
\end{equation} 
to $\check{K}(j)$, 
defined on the closed subspace $\calD=\{ \check{K}(j) u : u \in \calX_{r,\ell} \}$. 
Part of \cite[Thm. 1.1]{VasyLA} is that this agrees with the resolvent defined via the limiting absorption principle, defined via adding a small imaginary part to the spectral parameter $\sigma$ of $K(j)$.\footnote{That part is only proven under the symmetry of the operator. However the proof only uses bijectivity of the limiting problem, which we have proven for generic $j$ via separation-of-variables. For non-generic $j$, the same proof works once we restrict the codomain to the range and the domain to the orthogonal complement of the kernel. In both cases, this is a standard argument where uniform estimates are parlayed into strong continuity.} In particular, $\check{K}(j)^{-1}$ does not depend on the parameters $r,\ell\in \bbR$ (assuming these satisfy the threshold assumptions above), in the sense that if $f$ lies in the intersection of multiple $H_{\mathrm{b}}^{r,\ell+1}$, then $\check{K}(j)^{-1} f$ is unambiguous.

This yields a right-inverse 
\begin{equation} 
    K(j)^{-1} : e^{i\langle z \rangle/4} \calD \to e^{i\langle z \rangle/4} H_{\mathrm{b}}^{r,\ell}
\end{equation} 
to the \emph{unconjugated} operator $K(j)$. Since $L(j) = 4\langle z \rangle K(j)$, 
\begin{equation}\label{eq:Vasy_res}
    R_-(j)\coloneqq K(j)^{-1} \circ \frac{1}{4\langle z \rangle} :  e^{i\langle z \rangle/4} \langle z \rangle \calD \to e^{i\langle z \rangle/4} H_{\mathrm{b}}^{r,\ell} 
\end{equation}
is a right-inverse to $L(j)$. 
Suppose that $f\in C_{\mathrm{c}}^\infty(\bbR^{d-1}_{\bfz})$ consists of only a single angular mode. In \S\ref{subsec:QHO_separation} we constructed (for most $j$) a solution $w$ of 
\begin{equation} \label{eq:misc_462}
L(j)w=f 
\end{equation}
regular at the origin and satisfying the outgoing Sommerfeld condition; hence $w\in \calX_{r,\ell}$.  We previously called this `$R_-(j)f$.' The notation is consistent, because if $R_-(j)$ is defined by \cref{eq:Vasy_res}, then $R_-(j)f$ is regular at the origin and satisfies the outgoing Sommerfeld condition, and these properties uniquely characterize solutions of the inhomogeneous problem. 
This shows that, for such $j$, the subspace $\calD\subset H_{\mathrm{b}}^{r,\ell+1}$ is dense therein. Therefore, it cannot have positive codimension, which means 
\begin{equation} 
    \calD=H_{\mathrm{b}}^{r,\ell+1}
\end{equation} 
is all of $H_{\mathrm{b}}^{r,\ell+1}$. 

This construction failed for those modes characterized by azimuthal quantum number $\ell\in \bbN$ such that the parameter $a=(d-1)/2-j/2+\ell/2$ was a nonpositive integer, $a\in -\bbN$. For each $j\in \bbC$, there are at most finitely many $\ell\in \bbN$ such that $a\in -\bbN$. The elements of $\calD$ in the problematic modes are characterized by \cref{eq:coker_form}. A more careful proof of the same fact is via a duality argument, finding the kernel of $\check{K}(j)^*$ in 
\begin{equation} 
    (H_{\mathrm{b}}^{r,\ell})^*\cong H_{\mathrm{b}}^{-r,-\ell}.
\end{equation} 
The formal adjoint $\check{K}(j)^*$ is a partial differential operator, understood distributionally on its domain.

\subsection{Refined mapping properties} 
\label{subsec:QIHO_mapping}

Consequently, taking an intersection over $r\gg 1$, 
\begin{equation}
    \check{K}(j)^{-1}: H_{\mathrm{b}}^{\infty,\ell+1 } \supset \calD\to H_{\mathrm{b}}^{\infty,\ell};
\end{equation}
here and below, $\calD$ is a finite-codimension subspace, changing line-by-line. 

The $L^2(\bbR^{d-1}_{\bfz})$-based spaces $H_{\mathrm{b}}^{\infty,\ell}$ can be related to the $L^\infty$-based spaces $\calA^{\bullet}(\mathrm{ff})=\calA^{\bullet}(\overline{\bbR^{d-1}_{\bfz}})$ of conormal functions by Sobolev embedding: 
\begin{equation}
    \calA^{(d-1)/2+\ell+} (\mathrm{ff}) \subseteq 
    H^{\infty,\ell}_{\mathrm{b}} (\overline{\bbR^{d-1}_{\bfz}} ) \subset  \calA^{(d-1)/2+\ell-} (\mathrm{ff}). 
\end{equation}
Thus, 
\begin{equation}
    \check{K}(j)^{-1} :  \calA^{(d-1)/2+\ell+1+} (\mathrm{ff}) \supseteq \calD \to \calA^{(d-1)/2+\ell-} (\mathrm{ff}).  
\end{equation} 
This holds for all $\ell< -1/2+\Im \alpha_+ = -\Re j/2$. 
In particular, $\check{K}(j)^{-1}:\calA^{ (d+1)/2-\Re j/2+ } (\mathrm{ff}) \supseteq \calD \to  \calA^{(d-1)/2-\Re j/2-} (\mathrm{ff})$.

Conormality is upgraded to polyhomogeneity via a standard argument; see \cite{Me94}\cite[\S6]{ACL}\cite[\S3]{phgFull}. Indeed,  
\begin{equation}
    \check{K}(j) u = f \Longrightarrow 
    N_{\mathrm{ff}\cap\mathrm{bf}}(\check{K}(j))   u=f - \underbrace{(\check{K}(j) - N_{\mathrm{ff}\cap\mathrm{bf}}(\check{K}(j)))}_{\in \operatorname{Diff}_{\mathrm{b}}^{2,-2}([0,\infty)_{1/z} \times \bbS^{d-2}_{\bfz/z}) } u  .
\end{equation}
Explicitly, 
\begin{equation}
    N_{\mathrm{ff}\cap\mathrm{bf}}(\check{K}(j)) = -\frac{i}{2} \frac{\partial}{\partial z}+\frac{i}{4z} (j-d+1) \in \operatorname{Diff}_{\mathrm{b}}^{1,-1}([0,\infty)_{1/z} \times \bbS^{d-2}_{\bfz/z}) . 
\end{equation}
This is inverted by simply integrating: 
\begin{equation}
    N_{\mathrm{ff}\cap\mathrm{bf}}(\check{K}(j)) u =g \Longrightarrow \exists c\in C^\infty(\bbS^{d-2}_{\bfz/z}) \text{ s.t. }u= z^{j/2-(d-1)/2} \bigg[ c  +2i  \int_1^z g(s\bfz/z)  s^{-j/2+(d-1)/2} \dd s  \bigg] .
\end{equation}
Now apply this with $g=f - (\check{K}(j) - N_{\mathrm{ff}\cap\mathrm{bf}}(\check{K}(j)))u$: 
\begin{multline}
    u = z^{j/2-(d-1)/2} \bigg[ c  +2i  \int_1^z f(s\bfz/z)  s^{-j/2+(d-1)/2} \dd s \\  -2i  \int_1^z ((\check{K}(j) - N_{\mathrm{ff}\cap\mathrm{bf}}(\check{K}(j)))u) (s\bfz/z)  s^{-j/2+(d-1)/2} \dd s  \bigg] 
\end{multline}
If $f$ is polyhomogeneous on $\mathrm{ff}$, then the terms in the first line on the right-hand side are also polyhomogeneous. If $u\in \calA^{\calE}+\calA^\alpha$ is known to be partially polyhomogeneous, with a conormal error of order $\alpha\in \bbR$, 
then the contribution to the right-hand side from the polyhomogeneous $\calA^\calE$ term is also polyhomogeneous. On the other hand, the contribution of the conormal term to the right-hand side is $\calA^{\alpha+1-}$, because applying $\check{K}(j) - N_{\mathrm{ff}\cap\mathrm{bf}}(\check{K}(j))$ gains two orders of decay, while integrating loses only one. Thus, $u$ is partially polyhomogeneous, with polyhomogeneity holding up to one order higher than we assumed. Proceeding inductively, the entire polyhomogeneous expansion of $u$ is uncovered. So, if $f$ is polyhomogeneous, we conclude that 
\begin{equation}
    u\in z^{j/2-(d-1)/2}C^\infty(\overline{\bbR^{d-1}_{\bfz}}) + \calA^\calE(\mathrm{ff})  
\end{equation}
for some index set $\calE$ depending on that of $f$. 

The simplest case is when $f$ is Schwartz. Then, the $f$-term above is also in $\langle z\rangle ^{j/2-(d-1)/2}C^\infty(\overline{\bbR^{d-1}_{\bfz}})$, so 
\begin{equation} 
u\in \langle z\rangle^{j/2-(d-1)/2}C^\infty(\overline{\bbR^{d-1}_{\bfz}}).
\end{equation} 
The same conclusion holds as long as 
\begin{equation} 
    f\in  \langle z \rangle^{j/2-(d+3)/2}C^\infty(\overline{\bbR^{d-1}_{\bfz}}).
\end{equation} 
Indeed, if $z>1/2\Longrightarrow f(\bfz) = z^{j/2-(d+3)/2} f_\circ(\bfz)$ for some $f_\circ \in C^\infty(\overline{\bbR^{d-1}_{\bfz}})$, then the corresponding contribution to $u$ above is 
\begin{equation}
    2i z^{j/2-(d-1)/2} \int_1^z f_\circ(s\bfz/z) \frac{\dd s }{s^2}  \in z^{j/2-(d-1)/2} C^\infty([0,1)_{1/z}\times \bbS^{d-2}_{\bfz/z})
\end{equation}
when $z>1$. 
Thus, we conclude 
\begin{equation}
    \check{K}(j)^{-1} :   \calD \cap  \langle z\rangle^{j/2-(d+3)/2}C^\infty(\overline{\bbR^{d-1}_{\bfz}}) \to \langle z\rangle^{j/2-(d-1)/2}C^\infty(\overline{\bbR^{d-1}_{\bfz}}).
\end{equation}

This yields a right-inverse 
\begin{equation} 
    K(j)^{-1} : e^{i\langle z \rangle/4} (\calD \cap  \langle z \rangle^{j/2-(d+3)/2}C^\infty(\overline{\bbR^{d-1}_{\bfz}})) \to e^{i\langle z \rangle/4} \langle z\rangle^{j/2-(d-1)/2}C^\infty(\overline{\bbR^{d-1}_{\bfz}})
\end{equation} 
to the \emph{unconjugated} operator $K(j)$. Since $L(j) = 4\langle z \rangle K(j)$, 
\begin{equation}
    R_-(j)\coloneqq K(j)^{-1} \circ \frac{1}{4\langle z \rangle} : \langle z \rangle e^{i\langle z \rangle/4} (\calD \cap  \langle z \rangle^{j/2-(d+3)/2}C^\infty(\overline{\bbR^{d-1}_{\bfz}}) ) \to e^{i\langle z \rangle/4} \langle z\rangle^{j/2-(d-1)/2}C^\infty(\overline{\bbR^{d-1}_{\bfz}})
\end{equation}
is a right-inverse to $L(j)$. The previous equation is a restatement of the desired mapping property \cref{eq:misc_163}.

\section{\texorpdfstring{The large-$s$ conormality of $\partial_a U(a,b,-is)$}
{The large-s conormality of the a-derivative of U(a,b,-is)}}
\label{sec:tech}
This appendix contains the proof that $U(a,b,-is)$, which is conormal at $s=\infty$ for each individual $a$, has a derivative in $a$ which is also conormal for $b>0$. This claim is highly plausible --- it follows by formally differentiating in $a$ the standard large-$z$ expansion of $U(a,b,z)$. Nevertheless, a rigorous argument is required as input to the special function-theoretic proof of \Cref{lem:coker_fund}. 
\begin{remark*}
The Fredholm-theoretic method in \S\ref{sec:Grushin} provides one avenue of proof, making use of the ODE that $U$ satisfies. 
Here we pursue a more direct, essentially self-contained proof.    
\end{remark*}

We begin with the integral formula 
\begin{equation}
    U(a,b,z)=\frac{1}{\Gamma(a)}\int_0^{\infty}e^{-zt}t^{a-1}(1+t)^{b-a-1} \dd t,
\end{equation}
which holds for all $(a,b,z)\in \bbC^3$ such that $\Re a>0$ and $\Re z>0$
by \cite[\href{http://dlmf.nist.gov/13.4.4}{\S13.4.4}]{NIST}. 
Unfortunately, we care about $z\in -i\bbR^+$, for which $\Re z=0$. To handle this borderline case, note that, as long as $z$ is in the lower-right quadrant, we can rotate the contour over which the integral is taken from $\bbR^+_t$ to $i\bbR^+_t$ (using principal branches for the power terms in the integrand):
\begin{equation}
    U(a,b,z)=\frac{1}{\Gamma(a)}\int_0^{i\infty}e^{-zt}t^{a-1}(1+t)^{b-a-1}\dd t =\frac{e^{\pi i a/2}}{\Gamma(a)}\int_0^{\infty }e^{-izt}t^{a-1}(1+it)^{b-a-1} \dd t  ,
\end{equation}
this holding for all $z$ in the lower-right (open) quadrant of the complex plane, and $a,b$ as above, where we use the principal branch of the log in the integrand. The final integral extends continuously to $z\in -i \bbR^+$. 
The $U$-function itself is continuous on $\bbC\backslash (-\infty,0]$, so, plugging in $z=-is$,   
\begin{equation}
    U(a,b,-is)= \frac{e^{\pi i a/2}}{\Gamma(a)}\int_0^{\infty }e^{-st}t^{a-1}(1+it)^{b-a-1} \dd t
\end{equation}
for $s>0$. 

To ease our analysis of the integral on the right-hand side, apply the substitution $u=st$. Then, the formula becomes 
\begin{equation}\label{eq:U_RHS}
    U(a,b,-is)= \frac{e^{\pi i a/2}}{s^a \Gamma(a)}\int_0^{\infty }e^{-u}u^{a-1}\Big(1+\frac{iu}{s}\Big)^{b-a-1} \dd u. 
\end{equation}

\begin{lemma}
    For $\Re a>0$, $\partial_a U(a,b,-is)$ is conormal at $s=\infty$. 
\end{lemma}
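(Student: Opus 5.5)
We plan to work directly from the integral representation \cref{eq:U_RHS}. Write
\begin{equation*}
U(a,b,-is)=\frac{e^{\pi i a/2}}{\Gamma(a)}\, s^{-a}\, J(a,s),\qquad J(a,s)=\int_0^\infty e^{-u}u^{a-1}\Big(1+\frac{iu}{s}\Big)^{b-a-1}\dd u .
\end{equation*}
The prefactor $e^{\pi i a/2}/\Gamma(a)$ is entire in $a$. Differentiating $s^{-a}$ in $a$ produces $-s^{-a}\log s$, which is conormal at $s=\infty$ (a log loss is allowed in conormal spaces $\calA^{\Re a-}$). By the product rule, it therefore suffices to show that $J$ and $\partial_a J$ are conormal at $s=\infty$, say bounded together with all powers of $s\partial_s$ applied, uniformly for $s\geq 1$ and $a$ in compact subsets of $\{\Re a>0\}$.

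For $J$ itself, set $\tau=1/s\in(0,1]$ and $F(a,\tau,u)=(1+i\tau u)^{b-a-1}$, using the principal branch. Since $u,\tau\geq 0$, the base $1+i\tau u$ lies in the closed right half-plane, with modulus at least $1$ and argument in $[0,\pi/2)$. Hence
\begin{equation*}
|F|\leq (1+\tau^2u^2)^{(\Re(b-a)-1)/2}e^{\pi|\Im(b-a)|/2}\lesssim \langle u\rangle^{C}.
\end{equation*}
Next, $s\partial_s=-\tau\partial_\tau$ and $\tau\partial_\tau F=(b-a-1)\frac{i\tau u}{1+i\tau u}F$. The factor $\frac{i\tau u}{1+i\tau u}$ has modulus at most $1$, and it is stable under further $\tau\partial_\tau$. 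By induction, $(\tau\partial_\tau)^k F$ is $F$ times a polynomial in $\frac{i\tau u}{1+i\tau u}$, and so is $O(\langle u\rangle^C)$ uniformly in $\tau$. The weight $e^{-u}u^{\Re a-1}$ is integrable, so differentiating under the integral sign by dominated convergence gives the uniform bounds $|(s\partial_s)^kJ|\lesssim 1$.

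For $\partial_a J$, differentiating under the integral sign produces two extra factors: $\log u$ from $u^{a-1}$, and $-\log(1+i\tau u)$ from $F$. The first is harmless against $e^{-u}u^{\Re a-1}$, because $\int_0^\infty e^{-u}u^{\Re a-1}|\log u|\langle u\rangle^C\dd u<\infty$. For the second we use $|\log(1+i\tau u)|\leq \log(1+u)+\pi/2$ for $\tau\leq 1$. Moreover $\tau\partial_\tau\log(1+i\tau u)=\frac{i\tau u}{1+i\tau u}$ is bounded, so the same inductive bookkeeping shows that $(\tau\partial_\tau)^k$ of the new integrand is dominated by $e^{-u}u^{\Re a-1}(1+|\log u|)\langle u\rangle^{C}$, uniformly in $\tau\in(0,1]$ and locally uniformly in $a$. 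Dominated convergence then also justifies interchanging $\partial_a$ with the integral and with the $s\partial_s$ derivatives, since all the integrands are jointly continuous in $(a,\tau)$ and holomorphic in $a$.

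The main obstacle is making the domination uniform near $\tau\to 0$ while tracking the branch of $(1+i\tau u)^{b-a-1}$ and its logarithm. This is handled by the observation that $1+i\tau u$ stays in a fixed sector, which bounds its argument by $\pi/2$ and its modulus below by $1$. Combining the three pieces gives $\partial_a U(a,b,-is)\in s^{-\Re a}\big(\calA^{0}+\log s\,\calA^{0}\big)\subset\calA^{\Re a-}$ at $s=\infty$, which is conormality.
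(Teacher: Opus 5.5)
Your proposal is correct and follows essentially the same route as the paper: you differentiate the integral representation \cref{eq:U_RHS} and split $\partial_a U$ into the prefactor term (a multiple of $U$ carrying a $\log s$), a $\log u$ term and a $\log(1+iu/s)$ term, then bound $(s\partial_s)^k$ of each by dominated convergence against $e^{-u}u^{\Re a-1}$ times logarithmic and polynomial weights. Your version is slightly more self-contained than the paper's: you prove the bounds for $J$ directly instead of quoting conormality of $U$, and you track the factor $e^{\pi|\Im(b-a)|/2}$ coming from the argument of $1+i\tau u$, which the paper's estimate glosses over by treating the exponent as real.
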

\begin{proof}
    Explicitly, 
    \begin{multline}\label{eq:U_RHS_II}
        \partial_a U(a,b,-is) = \Big( \frac{\partial}{\partial a} \frac{e^{\pi i a/2}}{s^a \Gamma(a)} \Big) \int_0^{\infty }e^{-u}u^{a-1}\Big(1+\frac{iu}{s}\Big)^{b-a-1} \dd u \\ 
        + \frac{e^{\pi i a/2}}{s^a \Gamma(a)} \int_0^{\infty }e^{-u}u^{a-1}\Big(1+\frac{iu}{s}\Big)^{b-a-1} \log u\dd u   \\ - \frac{e^{\pi i a/2}}{s^a \Gamma(a)}\int_0^{\infty }e^{-u}u^{a-1}\Big(1+\frac{iu}{s}\Big)^{b-a-1} \log \Big( 1+\frac{iu}{s} \Big) \dd u
    \end{multline}

    The first term is 
    \begin{equation}
        \Big(\frac{\pi i}{2} - \log s  - \frac{\Gamma'(a)}{\Gamma(a)} \Big) U(a,b,-is)
    \end{equation}
    which is conormal because $U(a,b,-is)$ is. 
    
    The second integral is seen to be conormal by an analysis similar to that showing that $U(a,b,-is)$ is conormal. Indeed, it suffices to note that 
    \begin{equation}
        (s\partial_s)^\nu \int_0^{\infty }e^{-u}u^{a-1}\Big(1+\frac{iu}{s}\Big)^{b-a-1} \log u\dd u \in L^\infty[1,\infty)_s 
    \end{equation}
    for any $\nu\geq 0$. This is a linear combination of integrals of the same form, with the real part of $a$ increased to some (real) $a'\geq \Re a$ and extra factors of $s^{-1}$ out front. Explicitly,
    \begin{equation}
        \Big| \int_0^{\infty }e^{-u}u^{a'-1}\Big(1+\frac{iu}{s}\Big)^{b-a'-1}  (\log u)\dd u \Big| \leq  
        \begin{cases}
             \int_0^{\infty }e^{-u}u^{a'-1}(1+u)^{b-a'-1} |\log u| \dd u  & (b-a'\geq 1), \\ 
             \int_0^{\infty }e^{-u}u^{a'-1} |\log u|\dd u  & (b-a'<1),
        \end{cases}
    \end{equation}
    for $s\geq 1$. The integrals on the right-hand side are finite, so we are done.

    The third integral in \cref{eq:U_RHS_II} is handled almost identically. 
    \begin{equation}
        (s\partial_s)^\nu \int_0^{\infty }e^{-u}u^{a-1}\Big(1+\frac{iu}{s}\Big)^{b-a-1} \log \Big( 1+\frac{iu}{s} \Big) \dd u \in L^\infty [1,\infty)_s. 
    \end{equation}
    The only additional complication to the analysis is that some derivatives may fall on the log. Since 
    \begin{equation}
        \int_0^{\infty }e^{-u}u^{a-1}\Big(1+\frac{iu}{s}\Big)^{b-a-1} \partial_s \log \Big( 1+\frac{iu}{s} \Big) \dd u = -\frac{i}{s^2}\int_0^{\infty }e^{-u}u^{a'-1}\Big(1+\frac{iu}{s}\Big)^{b-a'-1}  \dd u 
    \end{equation}
    for $a'=a+1$, 
    these terms are estimated using what we have already observed.
\end{proof}

It remains to remove the requirement $\Re a>0$, because we need $a\approx -j$ for $j\in \bbN$. For negative $a$, the integral on the right-hand side of \cref{eq:U_RHS} fails to converge absolutely near the end $u=0$. 

We can make use of a recurrence relation for $U$, namely:
\begin{equation}\label{eq:U_rec}
    U(a,b,-is) = -isU(a+1,b+1,-is) - (b-a-1) U(a+1,b,-is)
\end{equation}
\cite[\href{https://dlmf.nist.gov/13.3.10}{13.3.10}]{NIST}. This relates $U(a,b,-is)$ to $U(a+1,-,-is)$. 
\begin{remark*}
    If $\Re a>0$, this identity is a simple consequence of integration-by-parts in the integral formula above. Indeed, 
    \begin{equation}\label{eq:ufull}
    U(a,b,-is) = \frac{e^{\pi i a/2}}{s^a }\frac{1}{ \Gamma(a+1)} \int_0^{\infty }e^{-u} \Big(\frac{\mathrm{d}}{\mathrm{d} u}  u^{a} \Big) \Big(1+\frac{iu}{s}\Big)^{b-a-1} \dd u. 
\end{equation}
Integrating-by-parts, noting that the boundary terms vanish:
\begin{align}
\begin{split}
    &\int_0^{\infty }e^{-u} \Big(\frac{\mathrm{d}}{\mathrm{d} u}  u^{a} \Big) \Big(1+\frac{iu}{s}\Big)^{b-a-1} \dd u =  (-1)  \int_0^{\infty } u^{a} \frac{\mathrm{d}}{\mathrm{d} u} \Big(  e^{-u}\Big(1+\frac{iu}{s}\Big)^{b-a-1} \Big) \dd u,\\
    &=\int_0^{\infty } u^{a}   e^{-u}\Big(1+\frac{iu}{s}\Big)^{b-a-1}  \dd u- (b-a-1)\frac{i}{s} \int_0^{\infty } u^{a}   e^{-u}\Big(1+\frac{iu}{s}\Big)^{b-a-2} \dd u.
    \end{split}
\end{align}
This is \cref{eq:U_rec}.

Once we know the recursion identity for $\Re a>0$, it follows for all $a$ via analyticity.
\end{remark*}

Differentiating \cref{eq:U_rec} in $a$ yields
\begin{equation}
     \partial_a U(a,b,-is) = -is\partial_a U(a+1,b+1,-is) + U(a+1,b,-is) - (b-a-1) \partial_a U(a+1,b,-is).
\end{equation}
If the functions on the right-hand side are known to be conormal at $s=\infty$, then we conclude the same of  $\partial_a U(a,b,-is)$.
So, we can conclude the desired result for $a$ if we know it for $a+1$. Proceeding inductively, we deduce the result for any $a\in \bbC$ from the already handled $\Re a>0$ case. In summary:
\begin{propositionp}
    For any $a\in \bbC$, $\partial_a U(a,b,-is)$ is conormal at $s=\infty$. 
\end{propositionp}

\section{Borel's lemma for Fr\'echet-valued functions}
\label{sec:Borel}

Polyhomogeneous series admit a variant of Borel's lemma, stating that any formal polyhomogeneous series is the expansion of some actual polyhomogeneous function. For scalar-valued functions, an exposition of this well-known fact can be found in \cite{HintzMicrolocal}. 
For \emph{Fr\'echet-valued} functions, the statement is:

\begin{lemma}\label{lem:Borel}
    Suppose $\mathcal{E}$ is an index set and $\mathscr{F}$ is a Fr\'echet space. Given $a_{z,j}\in \mathscr{F}$ for any $(z,j) \in \mathcal{E}$, there exists $u \in \mathcal{A}_{\mathrm{phg}}^{\mathcal{E}}([0,1)_x;\mathscr{F})$ such that
        \begin{equation}
    u(x)-\sum_{\substack{(z,j) \in \mathcal{E}\\\Re z \leq \alpha}}a_{z,j}x^z(\log x)^j \in \mathcal{A}^{\alpha}([0,1);\mathscr{F})
    \label{eq:u_borel_exp}
\end{equation}
for all $\alpha\in \bbR.$
\end{lemma}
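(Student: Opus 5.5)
The plan is to imitate the standard proof of Borel's lemma, with a rapidly shrinking cutoff for each term. Fix a cutoff $\chi\in C_{\mathrm{c}}^\infty([0,1))$ with $\chi=1$ near $0$. Let $(p_n)_{n\in\bbN}$ be an increasing sequence of seminorms defining the topology of $\mathscr{F}$. Enumerate the index set $\calE$ in nondecreasing order of $\Re z$ as $(z_k,j_k)_{k\in\bbN}$; this is possible because only finitely many elements of $\calE$ have $\Re z<\alpha$ for any $\alpha$. We then set
\begin{equation}
	u(x)=\sum_{k=0}^\infty \chi(x/\varepsilon_k)\, a_{z_k,j_k}\, x^{z_k}(\log x)^{j_k},
\end{equation}
where the constants $\varepsilon_k\in(0,1]$ decrease to $0$ and are still to be chosen. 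Each summand lies in $\calA^{\calE}([0,1);\mathscr{F})$. Only finitely many summands are nonzero on any set $\{x>\delta\}$, so $u$ is smooth on $(0,1)$.

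The main step is choosing $\varepsilon_k$ so that the tails converge in the conormal topology. Write $w_k(x)=\chi(x/\varepsilon_k)x^{z_k}(\log x)^{j_k}$ and let $m_k=\lfloor\Re z_k\rfloor-1$. On $\operatorname{supp} w_k\subset[0,C\varepsilon_k]$ we have $x^{\Re z_k}|\log x|^{j_k}\le (C\varepsilon_k)^{1/2}x^{m_k}$ once $\varepsilon_k$ is small. The b-derivatives $(x\partial_x)^\ell w_k$ obey the same bound up to constants, because $x\partial_x$ preserves the form of $\chi(x/\varepsilon_k)$ uniformly in $\varepsilon_k$. We therefore choose $\varepsilon_k$ so that
\begin{equation}
	p_n(a_{z_k,j_k})\,\sup_{x}\, x^{-m_k}\,|(x\partial_x)^\ell w_k(x)|\le 2^{-k}\quad\text{for all } n,\ell\le k.
\end{equation}
Only finitely many conditions are imposed at each $k$, so such an $\varepsilon_k$ exists. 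Then for any $\alpha$, any seminorm $p_n$ and any number $\ell$ of b-derivatives, the tail $\sum_{k\ge K}$, with $K$ so large that $m_k\ge\alpha$ and $k\ge n,\ell$ for $k\ge K$, converges absolutely in the corresponding seminorm of $\calA^{\alpha}([0,1);\mathscr{F})$. This is the step I expect to need the most care: making sure the weights $m_k\to\infty$ and the growing family of seminorms are handled by a single diagonal choice. The monotone enumeration of $\calE$ is exactly what makes this work.

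Finally we verify \cref{eq:u_borel_exp}. For a fixed $\alpha$, the difference between $u$ and the truncated series consists of three pieces. The first is the finitely many terms $(\chi(x/\varepsilon_k)-1)a_{z_k,j_k}x^{z_k}(\log x)^{j_k}$ with $\Re z_k\le\alpha$; these vanish near $0$ and are therefore in $\calA^\infty$. The second is the finitely many terms with $\Re z_k>\alpha$ and $k<K$; each is in $\calA^{\alpha}$ because $\Re z_k>\alpha$. The third is the tail $\sum_{k\ge K}$, which lies in $\calA^{\alpha}$ by the estimate above. Since the partial sums are polyhomogeneous and the errors improve as $\alpha$ increases, $u\in\calA^{\calE}_{\mathrm{phg}}([0,1);\mathscr{F})$. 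For the vector-valued setting we only use that conormal spaces valued in $\mathscr{F}$ are defined seminorm-by-seminorm, so every scalar estimate carries over verbatim.
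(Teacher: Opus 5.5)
Your proposal is correct and follows essentially the same route as the paper. Both use a monotone enumeration of $\mathcal{E}$ and shrinking cutoffs $\chi(x/\varepsilon_k)$. Both choose $\varepsilon_k$ diagonally, so that step $k$ controls only the first $k$ Fr\'echet seminorms and $k$ b-derivatives, with a weight one order below $\Re z_k$. Both then split the error into the cut-off low-order terms, which are supported away from $0$, plus a geometrically convergent tail.

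The differences are cosmetic: you use the integer weight $\lfloor \Re z_k\rfloor-1$ instead of $\Re z_k-1$, and you separate the finitely many intermediate terms into a third piece.
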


This has the status of a folk theorem. Here we record the proof. 
\begin{proof}
    The proof is a modification of that for scalar-valued functions.
    Let $(z_n,j_n),n=1,2,\ldots$ enumerate $\mathcal{E}$ (where $\Re z_n$ is nondecreasing). For any sequence of $\epsilon_{n}>0$ such that $\epsilon_n \to 0$ as $n \to \infty$, define \begin{equation}\label{eq:ethlazy}  
        u(x) \coloneqq  \sum_{n=1}^{\infty}\underbrace{a_{z_n,j_n}\chi(x/\epsilon_{n})x^{z_n}(\log x)^{j_n}}_{\in \mathcal{A}^{\Re z_n-}},
    \end{equation}
    where $\chi \in C_{\mathrm{c}}^{\infty}(\bbR;[0,1])$ is some cutoff that is identically $1$ in a neighborhood of the origin. 
    Note that, in any compact subset $K\Subset (0,1)$, the sum above consists of only finitely many terms, so $u$ is well-defined and satisfies 
    \begin{equation} 
        u\in C^\infty((0,1);\mathscr{F}).
    \end{equation}
    What we want to show is that if the $\epsilon_n$ decrease to zero sufficiently fast, then \cref{eq:u_borel_exp} holds. That would mean that $u$ lies in the desired $\mathcal{A}^{\mathcal{E}}_{\mathrm{phg}}$, with the desired expansion. 

    The proof that we can choose such $\epsilon_n$ proceeds similarly to how it does in the case of scalar-valued functions, the only difference being that now we have a countable number of Fr\'echet seminorms $\lVert - \rVert_m : \mathscr{F}\to [0,\infty)$, $m\in \bbN$, that need to be controlled. 
    The fact that there could be infinitely many might be a cause for concern, but this can be handled using a diagonal-type argument, in which, when choosing $\epsilon_n$, we only consider the first $n$ seminorms $\lVert - \rVert_1,\cdots,\lVert - \rVert_n$. (Because the $a_{z,j}$ are assumed to lie in $\mathscr{F}$, this will suffice.)
    This is analogous to the argument used to handle the fact that infinitely many seminorms are required to define the conormal function spaces. 

    Indeed, the left-hand side of \cref{eq:u_borel_exp} is a sum of two terms:
    \begin{enumerate}
        \item $\sum_{\Re z_n \leq \alpha}a_{z_n,j_n}(\chi(x/\epsilon_n)-1)x^{z_n}(\log x)^{j_n}$, 
        \item  $v\coloneq \sum_{\Re z_n > \alpha}a_{z_n,j_n}\chi(x/\epsilon_n)x^{z_n}(\log x)^{j_n}$
    \end{enumerate}
    The first of these two terms is supported away from zero and therefore in $C_{\mathrm{c}}^\infty((0,1];\mathscr{F})\subset \mathcal{A}^\alpha$. 
    
    The second term is in $C^\infty((0,1);\mathscr{F})$ for the same reason that $u$ is. In order for this to lie in $\mathcal{A}^\alpha$, there are countably many seminorms which we need to be finite, namely
    \begin{equation}
        \lVert v \rVert_{\alpha,m,k}\coloneqq \sup_{x \in (0,1)}x^{-\alpha} \big\lVert(x\partial_x)^{k}v\big\rVert_m, 
    \end{equation}
    for $m,k\in\bbN$.
    We will choose the $\epsilon_n$ such that the series $v$ is geometrically convergent in each seminorm -- more specifically, we want to arrange that there exists a $N=N(\alpha,m,k)\in\bbN$ such that 
    \begin{equation}
        \lVert a_{z_n,j_n} \chi(x/\epsilon_n) x^{z_n} (\log x)^{j_n} \rVert_{\alpha,m,k} < 2^{-n} \text{ for all } n\geq 
        N(\alpha,m,k)\label{eq:Borel_gl}.
    \end{equation}
    This would give the desired geometric convergence.

    By making $N(\alpha,m,k)$ large enough, $n\geq N(\alpha,m,k) \Rightarrow  \Re z_n> \alpha+1$. Because the seminorms $\lVert - \rVert_{\alpha,m,k}$, in their dependence on $\alpha$, get \emph{worse} as $\alpha$ gets bigger, it suffices to arrange the bound above for $\alpha = \Re z_n-1$, and then it holds automatically for all smaller $\alpha$, for those same $n$.
    It therefore suffices to arrange
    \begin{equation}\label{eq:Borel_gl_0}
        \lVert a_{z_n,j_n} \chi(x/\epsilon_n) x^{z_n} (\log x)^{j_n} \rVert_{\Re z_n-1,m,k} < 2^{-n} \text{ for all }n\geq N(\alpha,m,k).
    \end{equation}
    Note that, for each $n$, we are considering only finitely many seminorms. Thus, we have finitely many restrictions on $\epsilon_n$. 
    The reason why it is possible to satisfy \cref{eq:Borel_gl} by taking $\epsilon_n$ small enough is that, for any $\beta< \Re z_n$, the term $x^{z_n} (\log x)^{j_n}$ lies in $\mathcal{A}^{\beta-}$ with $O(x^{\Re z_n-\beta})$ to spare. So, by making $\epsilon_n$ smaller, and thus shrinking the support of the cutoff $\chi(x/\epsilon_n)$ to where $x^{\Re z_n-\beta}$ has a suppressive effect,
    we can make 
    \begin{equation} 
    \lVert a_{z_n,j_n} \chi(x/\epsilon_n) x^{z_n} (\log x)^{j_n} \rVert_{\Re z_n-1,m,k}
    \end{equation} 
    as small as we like. More precisely, note that 
    \begin{align} 
    \begin{split}
    \lVert a_{z_n,j_n} \chi(x/\epsilon_n) x^{z_n} (\log x)^{j_n} \rVert_{\Re z_n-1,m,k} &\lesssim \epsilon^{1/2}_n \lVert a_{z_n,j_n} \chi(x/\epsilon_n) x^{z_n-1/2} (\log x)^{j_n} \rVert_{\Re z_n-1,m,k}  \\ 
    &\lesssim \epsilon_n^{1/2}  \underbrace{\lVert a_{z_n,j_n} x^{z_n - 1/2} (\log x)^{j_n} \rVert_{\Re z_n-1,m,k}}_{<\infty},
    \end{split}
    \end{align} 
    and the right-hand side $\to 0$ as $\epsilon_n\to 0$.\footnote{Here we used that multiplication by $\chi(x/\epsilon)$ is bounded with respect to the $\lVert-\rVert_{\alpha,m,k}$ seminorm uniformly as $\epsilon\to 0$: 
    \[
    \exists C_{\alpha,m,k}\text{ s.t. } \lVert \chi(x/\epsilon) v \rVert_{\alpha,m,k} \leq C_{\alpha,m,k} \lVert v \rVert_{\alpha,m,k} .
    \]
    This is obvious for $k=0$. For $k\geq 1$, this holds because $(x\partial_x \chi)^\nu(x/\epsilon) = (t\partial_t)^\nu  \chi(t)|_{t=x/\epsilon}$, which has $L^\infty$ norm $\lVert t\partial_t \chi \rVert_{L^\infty}$ independent of $\epsilon$.}
\end{proof}

\section{The limiting absorption principle in the black-box setting}
\label{sec:black-box_LAP}

Consider the setup of \S\ref{sec:black-box}. In particular, $P$ is a Helmholtz operator differing from the free case by a short-range perturbation.

By a \emph{tempered} distribution on $U^\complement \cup A \subset \bbR^d$, we mean one tempered in $\{r>R\}$ for some large $R\gg 1$. 

\begin{proposition}[LAP implies Sommerfeld]\label{prop:Sommerfeld_reduction}
    Fix $f\in \calS(\bbR^d)$ supported outside the black-box neighborhood $\overline{U}$. Let $\varepsilon_k\downarrow 0$. 
    Suppose that, 
    for each $k\in \bbN$,
    there exists a tempered classical solution $u[\varepsilon_k]$ (defined on $U^\complement \cup A$) to 
    \begin{equation}
        (P-i\varepsilon_k) u[\varepsilon_k]=f .
    \end{equation}
    Suppose moreover that there exists a tempered $u$ such that \begin{equation} 
        \lim_{k\to\infty} u[\varepsilon_k]=u,
    \end{equation} 
    in the topology of $\calS'(U^\complement \cup A)$. Then, $u$ satisfies the radiation condition: 
    \begin{equation} 
        u\in e^{ir} r^{-(d-1)/2} C^\infty(\overline{\bbR^d})
    \end{equation} 
    outside of some sufficiently large ball.  
\end{proposition}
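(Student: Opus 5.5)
The plan is to cut off away from the black box and reduce everything to Melrose's propagation theory on $\overline{\bbR^d}$ in \cite{Me94}. Fix $\chi\in C^\infty(\overline{\bbR^d})$ with $\chi=1$ near $\infty\bbS^{d-1}$ and $\chi=0$ near $\overline{U}$. Set $w_k=\chi u[\varepsilon_k]$ and $w=\chi u$. Then
\begin{equation}
(P-i\varepsilon_k)w_k=\chi f+[P,\chi]u[\varepsilon_k],
\end{equation}
and the commutator term is supported in a fixed compact set inside $A\cup U^\complement$. There $u[\varepsilon_k]$ is a classical solution, so interior elliptic regularity applies. Since $u[\varepsilon_k]\to u$ in $\calS'$, the elliptic estimates give convergence of $[P,\chi]u[\varepsilon_k]$ in $C_{\mathrm{c}}^\infty$. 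Hence the forcings $g_k$ converge in $\calS(\bbR^d)$ to $g=\chi f+[P,\chi]u$, and $Pw=g$ on all of $\bbR^d$ with $g$ Schwartz. This removes the black box entirely.

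Next we locate the scattering wavefront set of $w$ at $\infty\bbS^{d-1}$.

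1. Ellipticity handles everything off the characteristic set $\{|\zeta|=1\}$ over infinity.
2. On the characteristic set, the real-principal-type propagation estimates of \cite{Me94} propagate regularity along the bicharacteristics of the sc-principal symbol, which run from $\calR_-$ to $\calR_+$. These estimates hold uniformly in $\varepsilon$ when the sign of the imaginary part is correct.
3. The key step is the incoming radial set $\calR_-$. For $\varepsilon>0$ the operator $P-i\varepsilon$ has the favorable sign for propagating estimates out of $\calR_-$. Melrose's radial estimate at $\calR_-$ then gives uniform bounds: $w_k$ is bounded in $\langle r\rangle^{-s}H^m$ microlocally near $\calR_-$ for some $s>1/2$ (above threshold). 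Its only input is an a priori weaker weighted bound, which is supplied by temperedness at some fixed weight.

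The main obstacle is the uniformity in $k$ of that a priori weak bound. Convergence in $\calS'$ only gives boundedness in some $\langle r\rangle^{N}H^{-N}$ by Banach--Steinhaus applied to the convergent sequence. That is enough, because the radial estimate at $\calR_-$ requires only some weak norm on the error side, with arbitrary weight.

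Passing to the limit, the estimate gives $\operatorname{WF}_{\mathrm{sc}}^{m,1/2+\delta}(w)\cap\calR_-=\varnothing$. Propagation then forces $\operatorname{WF}_{\mathrm{sc}}(w)\subset\calR_+$.

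Finally we invoke the structure theorem in \cite{Me94} (Prop.~12 and its proof). A tempered solution of $Pw\in\calS$ that is above threshold at $\calR_-$ has the form $e^{ir}r^{-(d-1)/2}C^\infty(\overline{\bbR^d})$ plus a Schwartz term. This is the outgoing Frobenius expansion: iterate $N_{\mathrm{bf}}(\hat P)$ as in the proof of \Cref{prop:III_main}, and upgrade from conormal to smooth via the usual radial-set conormality argument at $\calR_+$. Since $\chi=1$ outside a large ball, $u=w$ there, which is the claim.
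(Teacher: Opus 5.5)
Your cut-off reduction, the $C_{\mathrm{c}}^\infty$ convergence of $[P,\chi]u[\varepsilon_k]$, the Banach--Steinhaus bound and the closing appeal to Melrose's structure theorem are all fine. The gap is in the central step, which misstates what the radial estimate at $\calR_-$ requires. The above-threshold estimate at a radial set does not hold for arbitrary tempered functions with a weak error term. It applies only to functions already known to lie microlocally above threshold there, i.e.\ in $\langle r\rangle^{-s'}H^{m'}$ near $\calR_-$ for some $s'>-1/2$. (That is the threshold for the solution; your ``$s>1/2$'' is the forcing threshold, a harmless slip.) Below threshold the regularization in the positive-commutator argument breaks down, and no such estimate can hold. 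For example, take an incoming wave $e^{-ir}r^{-(d-1)/2}a$ with $a\in C^\infty(\overline{\bbR^d})$, corrected by the Frobenius construction of step three with $e^{-ir}$ in place of $e^{ir}$. It is tempered, satisfies $Pu\in\calS(\bbR^d)$, and has wavefront set at $\calR_-$. So your sentence ``its only input is an a priori weaker weighted bound, which is supplied by temperedness'' is exactly where the argument fails as written. The uniformity of the weak bound, which you single out as the main obstacle, is the routine part.

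The missing idea is the hinge of the paper's proof. For each fixed $k$, the operator $P-i\varepsilon_k$ is elliptic in the full sc-sense: its boundary symbol $|\zeta|^2-1-i\varepsilon_k$ never vanishes over $\infty\bbS^{d-1}$. Hence the tempered solution $w_k$ of $(P-i\varepsilon_k)w_k\in\calS(\bbR^d)$ is itself Schwartz, which supplies the above-threshold membership at $\calR_-$ for every $k$. With that in hand, the rest of your argument goes through: the uniform radial estimate (constants independent of $k$ because of the sign of $-\varepsilon_k$), weak compactness, propagation for $Pw\in\calS$, and the structure theorem.

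Once repaired, your route genuinely differs from the paper's. The paper uses $w_k\in\calS(\bbR^d)$ to identify $w_k=(P-i\varepsilon_k)^{-1}(\chi f+[P,\chi]u[\varepsilon_k])$ with the $L^2$ resolvent of the essentially self-adjoint global operator. It then cites Melrose's limiting absorption principle \cite[Prop.~14]{Me94} for convergence in a weighted space to an outgoing solution, and matches the two limits in $\calS'$. Your version avoids the self-adjoint realization and the citation of the LAP. The cost is that you must carry out the uniform-in-$\varepsilon$ radial and propagation estimates yourself.
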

\begin{proof}
Automatically, $Pu=f$. 
We can now reduce to a problem outside the black-box region: letting $\chi\in C^\infty(\overline{\bbR^d})$ vanish near the black-box neighborhood $\overline{U}$ and $=1$ identically outside of a larger ball,  
\begin{equation}
    (P-i\varepsilon)(\chi u[\varepsilon])= \chi f + \underbrace{[P,\chi] u[\varepsilon]}_{\in C_{\mathrm{c}}^\infty(\bbR^d) } .
\end{equation}
For the rest of the proof, we work with $P-i\varepsilon$ defined as an operator on all of $\bbR^d$. We do not need to worry about topological obstructions to extending the coefficients, because we assumed at the outset, before tossing out the black-box region, that $P$ was defined globally. One of the advantages to working globally is that $P$ is essentially self-adjoint acting on $C_{\mathrm{c}}^\infty(\bbR^d)$, with respect to the $L^2(\bbR^d,g)$-inner product. Therefore, we have an $L^2$-bounded resolvent 
\begin{equation} 
    (P-i\varepsilon)^{-1} : L^2(\bbR^d)\to L^2(\bbR^d),
\end{equation} 
defined using the functional calculus.

Note that $P-i\varepsilon$ is (totally) elliptic in the sc-calculus. 
Elliptic regularity in the sc-calculus (see e.g.\ \cite{HintzMicrolocal}) tells us that $\chi u[\varepsilon] \in \calS(\bbR^d)$ for  each $\varepsilon>0$. 
Consequently, 
\begin{equation}
    \chi u[\varepsilon] = (P-i\varepsilon)^{-1}(\chi f+[P,\chi] u[\varepsilon]).
\end{equation}

We can now apply the limiting absorption principle in the form \cite[Prop.\ 14]{Me94}. The cited proposition tells us that $\lim_{\varepsilon \to 0^+} (P-i\varepsilon)^{-1} f$ exists in a weighted Sobolev space $\calX$, and satisfies the radiation condition. By a quantitative version of that argument, the same holds for 
\begin{equation}
     u[0]\coloneqq  \lim_{k\to\infty}  (P-i\varepsilon_k)^{-1}(\chi f+[P,\chi] u[\varepsilon_k]), 
\end{equation}
because ordinary elliptic regularity tells us that $[P,\chi]u[\varepsilon_k] \to [P,\chi]u$ in $C_{\mathrm{c}}^\infty$. 
The topology of $\calX$ is finer than that of $\calS'$, so
\begin{equation}
    \chi u[\varepsilon_k]\longrightarrow u[0]
    \quad\text{in }\calS'.
\end{equation}
On the other hand, the assumed convergence
$u[\varepsilon_k]\to u$ in $\calS'$ implies
\begin{equation}
    \chi u[\varepsilon_k]\longrightarrow \chi u
    \quad\text{in }\calS'.
\end{equation}
Since $\calS'$ is Hausdorff, these limits agree:
\begin{equation}
    u[0]=\chi u.
\end{equation}
Thus, $\chi u$ satisfies the radiation condition. Since $\chi=1$
outside a sufficiently large ball, the same is true of $u$.
\end{proof}

In summary, the Sommerfeld radiation condition is automatic if our solution arises as a tempered limit of solutions $u[\varepsilon]$ of the equation with spectral parameter shifted by $+i\varepsilon$. 
The existence of $u[\varepsilon]$ typically comes about from a self-adjoint realization of $P$ on an appropriate domain capturing the black-box boundary condition. 
Formally, 
\begin{equation}
    u[\varepsilon] = (P-i\varepsilon)^{-1} f. 
\end{equation}
Since $P$ is symmetric with respect to an $L^2$-space, temperedness will be automatic. Our main task is therefore to prove the convergence (in the $\calS'$ topology) of $u[\varepsilon]$ to some admissible limit $u$. 

This can often be proven by means of an estimate. 
Suppose that $\calX$, $\calY$, and $\calZ$ are Hilbert spaces with
continuous embeddings
\begin{equation}
    \calX\hookrightarrow\calY\hookrightarrow\calS'(\bbR^d),
    \qquad
    \calX\hookrightarrow\calZ\hookrightarrow\calS'(\bbR^d),
\end{equation}
such that the embedding $\calX\hookrightarrow\calZ$ is compact.
Suppose in addition that
\begin{equation}
    P:\calX\longrightarrow\calY
\end{equation}
is bounded, with $(P-i \varepsilon)$ mapping $\calX\to \calY$ bijectively for each $\varepsilon>0$, and that the half-Fredholm estimate
\begin{equation}
    \lVert u\rVert_{\calX}
    \lesssim
    \lVert(P-i\varepsilon)u\rVert_{\calY}
    +\lVert u\rVert_{\calZ}
\end{equation}
holds for every $u\in\calX$, uniformly in
$\varepsilon\in[0,1]$.

If we know that $P-i\varepsilon$ is \emph{injective} on $\calX$ for each $\varepsilon \in [0,1]$, then a standard functional analytic argument allows us to remove the $\lVert u \rVert_{\calZ}$ term from the right-hand side of the estimate above, getting 
\begin{equation} \label{eq:LAP_improved}
    \lVert u \rVert_{\calX} \lesssim \lVert (P-i\varepsilon) u \rVert_{\calY}.
\end{equation} 
Indeed, suppose to the contrary that, for any $n\in \bbN^+$, there exist $\varepsilon_n \in [0,1]$ and $u_n \in \calX$ such that 
\begin{equation}\label{eq:LAP_contradictor}
    \lVert u_n \rVert_{\calX} \geq n \lVert (P-i\varepsilon_n) u_n \rVert_{\calY}
\end{equation} 
Because both sides of this equation scale linearly with $u_n$, we may assume without loss of generality that $\Vert u_n\rVert_{\calX}=1$ for all $n$.
By passing to a subsequence if necessary, we may assume without loss of generality that $\varepsilon_n\to \varepsilon_\infty$ for some $\varepsilon_\infty \in [0,1]$. 
Similarly, by Banach--Alaoglu, we may assume (after passing to a subsequence if necessary) that $u_n\to u_\infty$ weakly in $\calX$, for some $u_\infty\in \calX$. 
Now our goal is to establish a contradiction by proving (i) that $(P-i\varepsilon_\infty)u_\infty =0$, (ii) $u_\infty \neq 0$. Indeed:
\begin{enumerate}[label=(\roman*)]
    \item \Cref{eq:LAP_contradictor} (together with $\lVert u_n \rVert_{\calX}=1$) yields 
    \begin{equation}
        (P-i\varepsilon_n) u_n\to 0 \text{ in }\calY.
    \end{equation}
    But we also have $(P-i\varepsilon_n) u_n\to (P-i\varepsilon_\infty)u_\infty$ weakly in $\calY$. Since the weak topology on a Hilbert space is Hausdorff, limits in it are unique, so $(P-i\varepsilon_\infty)u_\infty=0$. 
    \item  The half-Fredholm estimate gives us a lower bound 
    \begin{equation} 
        1=\lVert u_n \rVert_{\calX} \lesssim \lVert u_n \rVert_{\calZ}.
    \end{equation} 
    Compact maps send weakly convergent sequences to strongly convergent sequences, so $u_n\to u_\infty$ strongly in $\calZ$. 
    This implies $1\lesssim \lVert u_\infty \rVert_{\calZ}$. Consequently, $u_\infty$ cannot be zero. 
\end{enumerate}
This completes the argument.

Now suppose, given $f\in \calS(\bbR^d)$ supported outside the black-box neighborhood $U$, we can find $u[\varepsilon]\in \calX$ such that 
\begin{equation} 
    (P-i\varepsilon)u[\varepsilon]=f.
\end{equation} 
By the estimate above, \cref{eq:LAP_improved}, the family $\{u[\varepsilon]\}_{0< \varepsilon\leq 1}$ is bounded in $\calX$, so 
\begin{itemize}
    \item $\varepsilon u[\varepsilon]\to 0$ strongly in $\calX$ and therefore in $\calY$, 
    \item Banach--Alaoglu implies that there is a weakly convergent subsequence, $u[\varepsilon_k]\to u$.
\end{itemize}
So, 
\begin{equation}
   Pu=\lim_{k\to\infty} (P-i\varepsilon_k) u[\varepsilon_k]=f ,   
\end{equation}
where the limit is with respect to the weak topology on $\calY$. Thus, we get a solution $u\in \calX$ of $Pu=f$, and $u[\varepsilon_k] \to u$ in $\calS'$.  
We have therefore proven: 

\begin{proposition}
    Suppose that we have Hilbertizable function spaces as above,
    such that 
    \begin{enumerate}[label=(\roman*)]
        \item any solution $u\in \calX$ of $Pu=f$ is admissible, 
        \item if $u\in \calX$ and $(P-i\varepsilon)u=0$ for some $\varepsilon \in [0,1]$, then $u=0$, 
        \item any admissible solution of $Pu=0$ lies in $\calX$. 
    \end{enumerate}
    Suppose moreover that we have a half-Fredholm estimate 
    \begin{equation}
        \lVert u \rVert_{\calX} \lesssim \lVert (P-i\varepsilon) u \rVert_{\calY} + \lVert u \rVert_{\calZ},
    \end{equation}
    uniformly in $\varepsilon \in [0,1]$. 
    Then, the black-box well-posedness assumption holds. 
\end{proposition}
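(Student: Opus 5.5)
The plan is to verify the well-posedness assumption \hyperlink{WP_asump}{(WP)} in three pieces: existence, uniqueness and the radiation condition, then linear and smooth dependence on $f$. Throughout, fix $f\in\calS(\bbR^d)$ supported outside $\overline{U}$.

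\emph{Existence and the radiation condition.} I will follow exactly the argument preceding the statement, using a fixed self-adjoint realization of $P$ that encodes the black-box condition.
\begin{itemize}
\item For $\varepsilon\in(0,1]$, bijectivity of $P-i\varepsilon:\calX\to\calY$ gives $u[\varepsilon]\in\calX$ with $(P-i\varepsilon)u[\varepsilon]=f$.
\item Hypothesis (ii) supplies injectivity for all $\varepsilon\in[0,1]$. The compactness/contradiction argument above then upgrades the half-Fredholm estimate to the uniform bound \cref{eq:LAP_improved}, so $\{u[\varepsilon]\}$ is bounded in $\calX$.
\item Banach--Alaoglu gives a sequence $\varepsilon_k\downarrow0$ with $u[\varepsilon_k]\rightharpoonup u$ weakly in $\calX$. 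Since the embeddings are continuous, the convergence also holds in $\calS'$, and $Pu=f$.
\item By hypothesis (i), $u$ is admissible.
\end{itemize}
Each $u[\varepsilon_k]$ is a tempered classical solution, by elliptic regularity away from the black box. The limit $u$ is tempered, being in $\calX\subset\calS'$. Hence \Cref{prop:Sommerfeld_reduction} applies and yields $u\in e^{ir}r^{-(d-1)/2}C^\infty(\overline{\bbR^d})$ near infinity. This gives (i) and (ii) of \hyperlink{WP_asump}{(WP)}, apart from the smooth dependence.

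\emph{Uniqueness.} Suppose two admissible solutions satisfy the radiation condition. Their difference $w$ is an admissible solution of $Pw=0$, so by hypothesis (iii) it lies in $\calX$. Hypothesis (ii) with $\varepsilon=0$ then forces $w=0$. In particular every subsequential limit above is the same $u$, so the full family $u[\varepsilon]$ converges and the choice of subsequence plays no role.

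\emph{Linearity and smooth dependence.} Linearity in $f$ follows from uniqueness. For continuity, \cref{eq:LAP_improved} at $\varepsilon=0$ gives $\lVert u\rVert_\calX\lesssim\lVert f\rVert_\calY$, and $\calY$-norms are controlled by Schwartz seminorms. The remaining task is to upgrade this to continuity into $e^{ir}r^{-(d-1)/2}C^\infty(\overline{\bbR^d})$. I would do this with a cutoff $\chi$: the function $\chi u$ solves a global equation with forcing $\chi f+[P,\chi]u$. The forcing depends continuously on $f$ in $C_{\mathrm{c}}^\infty+\calS$, by interior elliptic regularity applied to the $\calX$-bound. Melrose's global result \cite{Me94} then maps such forcings continuously into the outgoing class, identified with $\chi u$ by uniqueness.

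I expect this last step to be the main obstacle: it needs a quantitative form of the continuity statement of \cite[Prop.\ 14]{Me94}, together with local elliptic estimates controlling $[P,\chi]u$ in $C^\infty$ by $\lVert u\rVert_\calX$.
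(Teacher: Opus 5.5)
Your proposal is correct and follows the paper's own argument: injectivity (ii) plus compactness upgrades the half-Fredholm estimate to \cref{eq:LAP_improved}, the resulting uniform $\calX$-bound gives a weak limit $u\in\calX$ with $Pu=f$ and $u[\varepsilon_k]\to u$ in $\calS'$, and then \Cref{prop:Sommerfeld_reduction} and (i) give the radiation condition and admissibility. Your uniqueness step via (iii) and (ii), applied as you do only to outgoing solutions, spells out what the paper leaves implicit. The continuous dependence on $f$ that you flag as the main obstacle is not treated in the paper, and it does not need a quantitative form of \cite[Prop.\ 14]{Me94}: the linear map $f\mapsto\chi u$ between Fr\'echet spaces has closed graph, because the bound $\lVert u\rVert_{\calX}\lesssim\lVert f\rVert_{\calY}$ forces $u_n\to u$ in $\calS'$ whenever $f_n\to f$ in $\calS$, and the outgoing class embeds continuously in $\calS'$, so continuity follows from the closed graph theorem.
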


Thus, a proof of the limiting absorption principle for a problem with a black-box consists of two components: a half-Fredholm estimate in suitable Sobolev spaces, and an injectivity guarantee. 

A half-Fredholm estimate can often be stitched together from two separate half-Fredholm estimates, one near the black-box and one away from the black-box.
Suppose we have (Hilbertizable) function spaces $\calX,\calY,\calZ$ as above such that, for any $\chi \in C^\infty(\bbR^d)$ supported near either the black-box or spatial infinity, a half-Fredholm estimate 
\begin{equation}
    \lVert \chi u \rVert_{\calX}\lesssim \lVert (P-i\varepsilon) \chi u \rVert_{\calY}+\lVert \chi u \rVert_{\calZ}
\end{equation}
holds (uniformly in $\varepsilon$).
We can apply this for a partition of unity $1=\chi_{\mathrm{B}}+\chi_\infty$, where $\chi_{\mathrm{B}}\in C_{\mathrm{c}}^\infty(\bbR^d)$ is supported near the black-box and $\chi_\infty$ is supported away from the black-box. Then, assuming that multiplication by $\chi_\bullet$ is a bounded operator on $\calZ$, 
\begin{equation}
    \lVert u \rVert_{\calX} \leq \lVert \chi_{\mathrm{B}} u \rVert_{\calX} + \lVert \chi_\infty u \rVert_{\calX} \lesssim \lVert (P-i\varepsilon) (\chi_{\mathrm{B}} u) \rVert_{\calY} +\lVert (P-i\varepsilon) (\chi_\infty u)  \rVert_{\calY}+\lVert u \rVert_{\calZ} .
\end{equation}
Thus, assuming also that multiplication by $\chi_\bullet$ is a bounded operator on $\calY$, 
\begin{equation}
    \lVert u \rVert_{\calX} \lesssim \lVert (P-i\varepsilon) u \rVert_{\calY} + \lVert [P,\chi_{\mathrm{B}}] u \rVert_{\calY} +  \lVert [P,\chi_{\infty} ] u \rVert_{\calY} +\lVert u \rVert_{\calZ}. 
\end{equation}
The commutators $[P,\chi_\bullet]$ are first order differential operators with coefficients supported away from the black-box and spatial infinity. Consequently, we should be able to combine them with $\lVert u \rVert_{\calZ}$ to get an overall compact error. Indeed, suppose that $\calX$ is equivalent, for some $m\in \bbR$, to $H^m$ away from the black-box and spatial infinity. Suppose also that $\calY$ is equivalent to $H^{m-2}$. This is a typical occurrence  (because $P$ is elliptic in the ordinary differential sense). Then, 
\begin{equation}
    \lVert [P,\chi_{\bullet }] u \rVert_{\calY}\lesssim \lVert [P,\chi_{\bullet }] u \rVert_{H^{m-2}}\lesssim \lVert \chi_0 u \rVert_{H^{m-1}} 
\end{equation}
for $\chi_0\in C_{\mathrm{c}}^\infty(\bbR^d)$ identically equal to one on a slightly larger neighborhood than $\operatorname{supp} \nabla \chi_\bullet$. The map $\calX \ni u\mapsto \chi_0 u\in H^{m-1}$ is compact, so by modifying $\calZ\rightsquigarrow \calZ_\circ$ slightly, we get a global half-Fredholm estimate 
\begin{equation}
    \lVert u \rVert_{\calX} \lesssim \lVert (P-i\varepsilon) u \rVert_{\calY} + \lVert u \rVert_{\calZ_\circ}. 
\end{equation}
Thus, in order to prove a Fredholm estimate for a black-box setup, it suffices to give a Fredholm setup \emph{near the black-box}. Then, we construct a global function space $\calX$ (using the usual Fredholm setup away from the black-box), and the remaining question is whether $P-i\varepsilon$ is injective on $\calX$.

As stated above, the injectivity of $P-i\varepsilon$ on $\calX$ for $\varepsilon>0$ typically follows from a self-adjoint realization of $P$, so the main issue is injectivity at $\varepsilon=0$.

The injectivity argument in \cite[\S14]{Me94}, for $\varepsilon=0$, combines two ingredients: 
\begin{itemize}
    \item a unique continuation lemma for elliptic second-order PDE, cited in \cite[\S10]{Me94} from \cite{Hormander}, 
    \item a boundary-pairing argument \cite[\S13]{Me94}, based on Green's second identity 
    \begin{equation}
        \int_U (\phi \triangle \psi - \psi \triangle \phi ) \dd V = - \int_{\partial U} (\phi \nabla \psi - \psi \nabla \phi  )\cdot  \dd A ,
    \end{equation}
    for reasonable domains $U$ and $\phi,\psi\in C^2(\bbR^d)$. 
\end{itemize}
The unique continuation lemma says that any Schwartz $u$ satisfying $(P-i\varepsilon) u=0$ must vanish near infinity. As long as the black-box exterior $\bbR^d\backslash B=B^\complement$ is path-connected, a unique continuation lemma (say that of Aronszajn) forces $u=0$ everywhere on the black-box exterior. So, given the half-Fredholm setup, the only remaining question is whether the boundary-pairing argument goes through on $\calX$, for elements of $\ker (P-i\varepsilon)$.

\begin{example}[Obstacle + potential scattering, cont.]
    In the presence of an obstacle, the usual local Fredholm setup is to take a function space $\calX$ that is $H^2\cap H^1_0$ near the obstacle.
    In the boundary-pairing argument, the enemy is the boundary term 
    \begin{equation}
        \int_{\partial \Omega} (\bar{v} \partial_\nu u - u \partial_\nu \bar{v} ) \dd A ,\quad \partial_\nu=\text{normal derivative}.
    \end{equation}
    But this vanishes for $u,v \in H_0^1$, by virtue of the Dirichlet boundary condition. So Melrose's argument goes through.
\end{example}

\begin{example}[Singularities, cont.]
    Consider again the setting of singular potential scattering:
    \begin{equation}
        \exists \alpha,\mathsf{Z}\in \bbR\text{ s.t. }V-\frac{\alpha}{r^2}+\frac{\mathsf{Z}}{r\langle r \rangle} \in C^\infty(\overline{\bbR^d} ),\quad 
        \alpha > - \frac{(d-2)^2}{4}.
    \end{equation}
    To modify the Fredholm analysis in \cite{Me94} for the present setting, one merely needs to supplement it with a Fredholm analysis at the regular singularity at $r=0$. (The two estimates are easily combined.)
    At $r=0$, the Helmholtz equation has the form of a regular singular PDE, so Melrose's b-tools can be applied. For $\alpha>-(d-2)^2/4$, the two indicial roots associated with an angular mode $Y\in C^\infty(\bbS^{d-1})$ are distinct. The Friedrichs extension selects for the recessive boundary condition -- hence the greater of the two indicial roots. So, via the elliptic parametrix construction in the b-calculus \cite{Me93}, one has a Fredholm setup between spaces $\calX,\calY,\calZ$ identical to those in \cite{Me94} (sc-Sobolev spaces) away from the origin, with $\calX$ the Friedrichs domain near the origin and $\calY,\calZ=L^2$ near the origin.

    Regarding the boundary-pairing argument: applying the Green identity on a punctured ball in which a small ball of radius $\varepsilon>0$ has been removed, we get an additional boundary term: 
    \begin{equation}
        \varepsilon^{d-1} \int_{\bbS^{d-1}} ( \bar{v} \partial_r u - u \partial_r \bar{v}) \dd \omega. 
    \end{equation}
    What we want to show is that, when $u,v$ are recessive solutions of $Pu,Pv=0$ (say, proportional to a single angular mode $Y$), the boundary term above is $o(1)$ as $\varepsilon\to 0^+$. 
    Letting $c\in \bbR$ denote the smallest recessive indicial root, the integrand is $O( r^{2c-1})$, using that $u,v$ are conormal (which follows from the assumptions above) and that 
    \begin{equation} 
        r\partial_r\in \operatorname{Diff}_{\mathrm{b}}^{1,0}[0,1)_r.
    \end{equation}
    So, the integral in question is 
    \begin{equation}
        \varepsilon^{d-1} \int_{\bbS^{d-1}} ( \bar{v} \partial_r u - u \partial_r \bar{v}) \dd \omega  = O(\varepsilon^{ d-2 +2c }). 
    \end{equation}
    We therefore win if $c>-(d-2)/2$. A quick computation of the indicial roots reveals that this is indeed the case ---- among all the angular modes $Y$, the one with the smallest larger indicial root is the s-wave $Y=1$. The b-normal operator controlling the corresponding radial profiles is 
    \begin{equation}
        -\frac{\mathrm{d}^2}{\mathrm{d} r^2} - \frac{d-1}{r} \frac{\mathrm{d}}{\mathrm{d}r} + \frac{\alpha}{r^2} \in \operatorname{Diff}_{\mathrm{b}}^{2,2}([0,1)_r).
    \end{equation}
    Thus, the two indicial roots are the zeros of $-c(c-1)-(d-1) c + \alpha=0$. That is, 
    \begin{equation}
        c = -\frac{d-2}{2} \pm \sqrt{\frac{(d-2)^2}{4}+\alpha}. 
    \end{equation} 
    The better indicial root is the one with the $+$ sign in front of the square root, and this indicial root is $>-(d-2)/2$. 
\end{example} 

\section{A Grushin lemma}
\label{sec:Grushin} 

Fix a point $j_0\in \bbC$ and an open neighborhood $U\ni j_0$. 
Fix Hilbert spaces $\calX,\calY$, and suppose that $L=\{L(j)\}_{j\in U}$ is an analytic family of Fredholm operators $L(j):\calX\to \calY$, such that $L(j)$ is invertible for each $j\in U\backslash \{j_0\}$. Now let $\calK = \ker L(j_0)\subset \calX$ and $\calX_1 = \calK^\perp$ denote the orthogonal complement in $\calX$, so that 
\begin{equation} 
    \calX=\calK\oplus \calX_1.
\end{equation} 
Similarly, let $\calY_1 =\operatorname{range} L(j_0) \subset \calY$, and let $\calC  = \operatorname{coker} L(j_0)= \calY_1^\perp$ denote the orthogonal complement in $\calY$, so that 
\begin{equation}
    \calY = \calC\oplus \calY_1. 
\end{equation}

We can decompose 
\begin{equation}
\label{eq:block-decomp}
    L(j) = 
    \begin{pmatrix}
        a(j) & \beta(j) \\ 
        \gamma(j) & D(j) 
    \end{pmatrix} : 
    \calK\oplus \calX_1
    \to 
    \begin{array}{c}
           \calC\\ \oplus \\ \calY_1
    \end{array}
\end{equation}
for $a(j):\calK\to \calC$, $\beta(j):\calX_1\to \calC$, $\gamma(j):\calK\to \calY_1$, and $D(j):\calX_1\to \calY_1$.

By construction, $D(j_0)$ is invertible, so the same applies for $D(j)$ for $j$ near $j_0$. Thus, we can define the \emph{Schur complement} 
\begin{equation}
\label{eq:schur}
    S(j)=a(j) - \beta(j) D(j)^{-1} \gamma(j) : \calK\to \calC,
\end{equation}
and this makes sense for $j$ close to $j_0$. 
The Schur complement formula says that the invertibility of $L(j)$ is equivalent to that of $S(j)$. Concretely, we have the formula 
\begin{equation}
    \begin{pmatrix}
        S(j)^{-1} & 0 \\ 
        0 & D(j)^{-1} 
    \end{pmatrix}
    = \begin{pmatrix}
        1 & 0 \\
         D(j)^{-1}\gamma & 1
    \end{pmatrix}
    L(j)^{-1} 
    \begin{pmatrix}
        1 & \beta D(j)^{-1} \\ 
        0 & 1
    \end{pmatrix}.
\end{equation}
Since $L(j)$ is invertible for $j$ in a punctured neighborhood of $j_0$, the Schur complement $S(j):\calK\to \calC$ is an \emph{isomorphism} for such $j$. In particular, $\calC,\calK$ have the same (finite) dimension. This same fact can be deduced from the continuity of the Fredholm index.

Because $L(j_0)\calK=0$, the operators $a(j),\gamma(j)$ vanish at $j=j_0$. Consequently, so does $S(j)$.

We now consider $L'(j)=\partial_j L(j)$ and $[L'(j)]:\calK\to\calC$, $[L'(j)]:k\mapsto L'(j)k \bmod\calY_1$, to prove the main result of this appendix:
\begin{lemma}\label{lem:Grushin_magic}
   Suppose that $S(j)$ vanishes simply at $j=j_0$, meaning that $S'(j_0):\calK\to \calC$ is an isomorphism. Then, $[L'(j_0)]$ is an isomorphism $\calK\to \calC$.  
\end{lemma}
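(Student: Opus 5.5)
The plan is to differentiate the Schur complement formula \cref{eq:schur} at $j=j_0$ and show that $S'(j_0)$ coincides with $[L'(j_0)]$ after the identifications $\calC\cong\calY/\calY_1$ (orthogonal projection onto $\calC$). Once these agree, the hypothesis that $S'(j_0)$ is an isomorphism $\calK\to\calC$ gives the conclusion directly.

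Let $\Pi_{\calC}:\calY\to\calC$ denote orthogonal projection. In the block decomposition \cref{eq:block-decomp}, $a(j)=\Pi_{\calC}L(j)|_{\calK}$. Then $a'(j_0)=\Pi_{\calC}L'(j_0)|_{\calK}$, because the projections and inclusions do not depend on $j$. Since $\calC=\calY_1^\perp$, the map $\Pi_{\calC}$ induces the canonical isomorphism $\calY/\calY_1\to\calC$. Hence $a'(j_0)$ is exactly $[L'(j_0)]$ under this identification. It therefore suffices to prove $S'(j_0)=a'(j_0)$.

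To prove this, differentiate $S(j)=a(j)-\beta(j)D(j)^{-1}\gamma(j)$ by the product rule. The derivative of the second term is a sum of three pieces, and each piece contains a factor of $\beta$ or $\gamma$ evaluated at $j_0$ except the piece $\beta(j_0)D(j_0)^{-1}\gamma'(j_0)$. We have $\gamma(j_0)=0$ because $L(j_0)\calK=0$. We also have $\beta(j_0)=0$, since $\beta(j_0)=\Pi_{\calC}L(j_0)|_{\calX_1}$ and $L(j_0)$ maps into $\calY_1=\calC^\perp$. Therefore every piece vanishes, including $\beta(j_0)D(j_0)^{-1}\gamma'(j_0)$. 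The operator $D(j)^{-1}$ is analytic near $j_0$ because $D(j_0)$ is invertible, so differentiating it is legitimate. This gives $S'(j_0)=a'(j_0)$.

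The only point needing care is the vanishing of $\beta(j_0)$. The text notes only that $a(j_0)$ and $\gamma(j_0)$ vanish, but $\beta(j_0)=0$ follows by the argument just given. Analyticity of the blocks follows from that of $L$, since composition with fixed bounded projections and inclusions preserves analyticity. I expect no real obstacle beyond this bookkeeping.
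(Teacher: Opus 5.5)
Your proposal is correct and is essentially the paper's proof: identify $[L'(j_0)]$ with $a'(j_0)$ via the block decomposition, then use $\beta(j_0)=0$ (by the definition of $\calC$) together with $\gamma(j_0)=0$ to see that the $\beta D^{-1}\gamma$ term vanishes to second order, which gives $S'(j_0)=a'(j_0)$. One small wording slip: the piece $\beta(j_0)D(j_0)^{-1}\gamma'(j_0)$ does contain a factor of $\beta(j_0)$, so what you mean is that it is the only piece not already killed by $\gamma(j_0)=0$.
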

\begin{proof}
    From the block decomposition \eqref{eq:block-decomp}, we have $[L'(j)] = a'(j)$. 
    By the definition of $\calC$, we have $\beta(j_0)=0$. Combining this with $\gamma(j_0)=0$, we see that the $\beta(j)D(j)^{-1}\gamma(j)$ term in \eqref{eq:schur} vanishes quadratically at $j=j_0$, and hence 
    \begin{equation}
        S'(j_0)= a'(j_0).
    \end{equation}
    The lemma follows from the assumption that $S'(j_0)$ is an isomorphism.
\end{proof}

\section*{Acknowledgements}

ES is supported by NSF grant \texttt{DMS-2401636}.  \Cref{lem:coker_fund} and its proof were suggested by LLM. The computational details in \S\ref{subsec:inverse_square_example} were derived with LLM assistance.

\printbibliography

\end{document}